\documentclass[a4paper,11pt]{amsart}

\usepackage[margin=3cm,top=2.5cm,bottom=2cm]{geometry}

\usepackage{amsmath, amssymb, amsthm, mathrsfs}
\usepackage{enumitem,microtype} 
\usepackage[dvipsnames]{xcolor}

\usepackage[colorlinks=true,linkcolor=red!70!black,citecolor=MidnightBlue]{hyperref}

\usepackage[utf8]{inputenc} 
\usepackage[T1]{fontenc}
\usepackage{lmodern}

\newcommand{\R}{\mathbf R}
\newcommand{\C}{\mathbf C}
\newcommand{\T}{\mathbf T}
\newcommand{\N}{\mathbf N}
\newcommand{\Z}{\mathbf Z}

\renewcommand{\AA}{\mathcal A}
\newcommand{\BB}{\mathcal B}
\newcommand{\EE}{\mathcal E}
\newcommand{\FF}{\mathcal F}

\newcommand{\BBB}{\mathscr B}

\newcommand{\EEE}{\mathscr E}

\newcommand{\UUU}{\mathscr U}

\newcommand{\XXX}{\mathscr X}
\newcommand{\YYY}{\mathscr Y}
\newcommand{\ZZZ}{\mathscr Z}

%%%%%%%%%%%%%%%%%%%%%
\theoremstyle{plain}
\newtheorem{theo}{Theorem}[section]
\newtheorem{prop}[theo]{Proposition}
\newtheorem{lem}[theo]{Lemma}
\newtheorem{cor}[theo]{Corollary}
\theoremstyle{remark}
\newtheorem{rem}[theo]{Remark}
\theoremstyle{definition}

\numberwithin{equation}{section}

\def\le{\leqslant}
\def\ge{\geqslant}
\def\leq{\leqslant}
\def\geq{\geqslant}

\DeclareMathOperator{\Div}{div}

\DeclareMathOperator{\Id}{Id}

\def\eps{{\varepsilon}}

\newcommand{\Nt}{|\hskip-0.04cm|\hskip-0.04cm|}

\newcommand{\la}{\left\langle}
\newcommand{\ra}{\right\rangle}
\renewcommand{\d}{\mathrm{d}}
\newcommand{\dt}{\mathrm{d}t}
\newcommand{\dx}{\mathrm{d}x}
\newcommand{\dv}{\mathrm{d}v}

\newcommand{\black}{\color{black}}

\allowdisplaybreaks

\begin{document}
%%%%%%%%%%%%%%%%%%%%%%%%%%%%%%%%%%%%%%%

\title[Regularization and hydrodynamical limit for the Landau equation]{Regularization estimates and hydrodynamical limit for the Landau equation}
\date{\today}

\author[K. Carrapatoso]{Kleber Carrapatoso}
\author[M. Rachid]{Mohamad Rachid}   
\author[I. Tristani]{Isabelle Tristani}

\address[K. Carrapatoso]{Centre de Math\'ematiques Laurent Schwartz, \'Ecole
  Polytechnique, Institut Polytechnique de Paris, 91128 Palaiseau cedex, France}
\email{kleber.carrapatoso@polytechnique.edu}

\address[M. Rachid]{Laboratoire de Math\'ematiques Jean Leray, Universit\'e de Nantes, 2 rue de la Houssini\`ere
BP 92208 F-44322 Nantes Cedex 3, France}
\email{Mohamad.Rachid@univ-nantes.fr}

\address[I. Tristani]{D\'epartement de Math\'ematiques et Applications, \'Ecole Normale Sup\'erieure, CNRS, PSL University, 75005 Paris, France}
\email{Isabelle.Tristani@ens.fr}

%\subjclass[2010]{???}
%\keywords{}
%\thanks{}

%%%%%%%%%%%%%%%%%%%%%%%%%%%%%%%%%%%%%%%%%%%%%%%%%%%%%%%%

\begin{abstract}
In this paper, we study the Landau equation under the Navier-Stokes scaling in the torus for hard and moderately soft potentials. %for which the linearized Landau equation enjoys strong coercivity properties. 
More precisely, we investigate the Cauchy theory in a perturbative framework and establish some new short time regularization estimates for our rescaled nonlinear Landau equation. These estimates are quantified in time and optimal, indeed, we obtain the instantaneous expected anisotropic gain of regularity (see~\cite{Rachid1} for the corresponding hypoelliptic estimates on the linearized Landau collision operator). Moreover, the estimates giving the gain of regularity in the velocity variable are uniform in the Knudsen number. Intertwining these new estimates on the Landau equation with estimates on the Navier-Stokes-Fourier system, we are then able to obtain a result of strong convergence towards this fluid system. 
%Concerning the hydrodynamic result, our approach is reminiscent of the one used in~\cite{Bardos-Ukai,Gallagher-Tristani} and improves the result obtained in~\cite{GuoBNS,Rachid2} in terms of type of convergence or functional framework for moderately soft potentials. As previously mentioned, our analysis strongly relies on regularization estimates on the Landau equation but also on the decomposition of the linearized semigroup coming from a spectral analysis of the linearized operator developed in~\cite{Yang-Yu}.
\end{abstract}

%\subjclass[2010]{35Q20, 35K55, 45K05, 76P05, 47H20, 82C40, 35Q35, 35Q30}
%
%\keywords{Landau equation, Cauchy problem, exponential stability, Knudsen number, incompressible Navier-Stokes hydrodynamical limit}

\maketitle

\vspace{-0.2cm}

\tableofcontents

%%%%%%%%%%%%%%%%%%%%%%%%%%%%%%%%%%%%%%%%%%%%%%%%%%%%%%%%
\section{Introduction}
%%%%%%%%%%%%%%%%%%%%%%%%%%%%%%%%%%%%%%%%%%%%%%%%%%%%%%%%

In this paper, we are interested in the link between the Landau and Navier-Stokes equations. It has been a major challenge to establish rigorous links between microscopic and macroscopic equations for many years, this problem goes back to Hilbert~\cite{Hilbert} and the main goal is to obtain a unified description of gas dynamics. The equations of kinetic theory (including the Boltzmann and the Landau equations) can be seen as an intermediate step between the microscopic and macroscopic scales of description.  In order to link the Landau and Navier-Stokes equations, we study a suitable rescaling of the Landau equation, as described in Subsection~\ref{subsec:kineticmodel}. %The rescaling parameter $\eps$ is the Knudsen number and is intended to tend to $0$. 

\smallskip
The first part of our paper is dedicated to the study of this rescaled Landau equation thanks to hypocoercivity methods, (linear and nonlinear) regularization estimates and sharp nonlinear estimates on the Landau collision operator. We study the Cauchy theory in a close-to-equilibrium framework for this equation and establish new and sharp regularization estimates in short time. The second part of our paper focuses on the aforementioned hydrodynamical limit problem. More precisely, we give a result of strong convergence of the solutions to the Landau equation constructed in the first part of the paper towards strong and global solutions to the incompressible Navier-Stokes-Fourier system. Our approach is reminiscent of the one used in~\cite{Bardos-Ukai,Gallagher-Tristani} for the hard spheres Boltzmann equation and improves the result obtained in~\cite{GuoBNS,Rachid2} in terms of type of convergence or functional framework in the case of not too soft potentials. Our analysis heavily relies on the estimates on the Landau equation established in the first part of the paper as well as on results of spectral analysis for the linearized Landau equation performed in~\cite{Yang-Yu} and some refined estimates on the fluid problem (as in~\cite{Gallagher-Tristani}).

%---------------%--------------%---------------%---------------%--------------%
\subsection{The kinetic model} \label{subsec:kineticmodel}
%---------------%--------------%---------------%---------------%--------------%

We start by introducing the Landau equation which models the evolution of charged particles in a plasma through the evolution of the density of particles $f=f(t,x,v)$ which depends on time $t \in \R^+$, position $x \in \T^3$ the~$3$-dimensional unit periodic box and velocity $v \in \R^3$, when only binary collisions are taken into account. The Landau equation reads:
	$$
	\partial_t f + v \cdot \nabla_x f = \frac{1}{\eps} Q(f,f),
	$$
where $\eps >0$ is the Knudsen number which is the inverse of the average number of collisions for each particle per unit time and $Q$ is the Landau collision operator. It is defined as
	\begin{equation}\label{eq:oplandau0}
		Q(g,f)(v) = \partial_{v_i}  \int_{\R^3} a_{ij}(v-v_*) \left[ g(v_*) \partial_{v_j} f(v) - f(v) \partial_{v_j} g(v_*) \right]  \d v_*,
	\end{equation}
where we use the convention of summation of repeated indices. The matrix $a_{ij}$ is symmetric, semi-positive and is given by
	\begin{equation}\label{eq:aij}
		a_{ij}(v) = |v|^{\gamma+2}\left( \delta_{ij} - \frac{v_i v_j}{|v|^2}\right), \quad -3 \le \gamma \le 1.
	\end{equation}
We have the following classification: We call hard potentials if $\gamma\in(0,1]$, Maxwellian molecules if $\gamma=0$, moderately soft potentials if $\gamma \in [-2,0)$, very soft potentials if $\gamma \in (-3,-2)$ and Coulomb potential if $\gamma=-3$.
Hereafter, we shall consider the cases of hard potentials, Maxwellian molecules and moderately soft potentials, i.e. 
$$
- 2 \le \gamma \le 1.
$$ 
The Landau equation preserves mass, momentum and energy. Indeed, at least formally, for any test function $\varphi$, we have
\begin{multline}  \label{prop:conserv}
\int_{\R^3} Q(f,f)(v) \varphi(v) \, \dv \\
= - \frac12 \int_{\R^3 \times \R^3} a_{ij}(v-v_*) f(v) f(v_*)  \left(  \frac{\partial_{v_i} f(v)}{f(v)} -  \frac{\partial_{v_i} f(v_*)}{f(v_*)} \right)   \left( \partial_{v_j} \varphi(v) - \partial_{v_j} \varphi(v_*) \right) \, \d v_* \, \dv,
\end{multline}
from which we deduce that
\begin{equation} \label{eq:conserv}
\begin{aligned}
\frac{\d}{\d t} \int_{\T^3 \times \R^3} f(x,v) \varphi(v)  \, \dv \, \dx
&= \int_{\T^3 \times \R^3} \left[\frac{1}{\eps} Q(f,f)(x,v) - v \cdot \nabla_x f(x,v) \right] \varphi(v)  \, \dv \, \dx \\
&= 0 \quad \text{for} \quad \varphi(v) = 1,v,|v|^2.
\end{aligned}
\end{equation}
Moreover, the Landau version of the Boltzmann $H$-theorem asserts that the entropy
$$
H(f) := \int_{\T^3 \times \R^3} f \, \log f  \, \dv \, \dx
$$
is non increasing. Indeed, at least formally, since $a_{ij}$ is nonnegative, we have the following inequality for the entropy dissipation $D(f)$:
\begin{multline*}
D(f) := -\frac{\d}{\dt} H(f) 
=\frac12 \int_{\T^3 \times \R^3 \times \R^3} a_{ij}(v-v_*) f(v) f(v_*) \\
\left( \frac{\partial_{v_i} f(v)}{f(v)} -  \frac{\partial_{v_i} f(v_*)}{f(v_*)} \right) 
\left( \frac{\partial_{v_j} f(v)}{f(v)} -  \frac{\partial_{v_j} f(v_*)}{f(v_*)} \right) \, \d v_* \, \d v \, \dx \geq 0.
\end{multline*}
The second part of the $H$-theorem asserts that local equilibria of the Landau equation are local Maxwellian distributions in velocity. In what follows, we shall consider the following centered normalized Maxwellian independent of time $t$ and space $x$ which is a global equilibrium of our equation defined by
	$$
	M(v):=\frac1{(2\pi)^\frac 32} e^{-\frac{|v|^2}2} .
	$$

%The cases of softer potentials (including the Coulomb one) will be the object of a future work. 
Taking $\eps$ small has the effect of enhancing the role of collisions and thus when~$\eps$ goes to $0$, in view of the above mentioned Landau version of the Boltzmann~$H$-theorem, the solution looks more and more like a local thermodynamical equilibrium. 
As suggested in previous works (see for example~\cite{BGL1}), we consider the following rescaled Landau equation in which an additional dilatation of the macroscopic  time scale has been done in order to be able to reach the Navier-Stokes equation in the limit:
	\begin{equation}\label{eq:scaledLandau}
		\partial_t f^\eps + \frac1\eps v \cdot \nabla_x f^\eps 
		= \frac1{\eps^2} Q(f^\eps,f^\eps) \quad \mbox{in} \quad \R^+ \times  \T^3 \times  \R^3 \, .
	\end{equation}
To relate the Landau equation to the incompressible Navier-Stokes equation, we look at equation~\eqref{eq:scaledLandau} under the following linearization of order $\eps$:
	\begin{equation} \label{eq:linearization}
		f^\eps(t,x,v)= M(v) + \eps \sqrt{M}(v) g^\eps(t,x,v).
	\end{equation}
Let us recall that taking $\eps$ small in this linearization corresponds to taking a small Mach number, which enables one to get in the limit the incompressible Navier-Stokes equation. If~$f^\eps$ solves~\eqref{eq:scaledLandau}, then equivalently~$g^\eps$ solves 
	\begin{equation}\label{eq:geps}
		\partial_t g^\eps + \frac1\eps v \cdot\nabla_x g^\eps 
		= \frac1{\eps^2}  Lg^\eps +  \frac1\eps\Gamma(g^\eps,g^\eps) \quad \mbox{in} \quad \R^+ \times  \T^3\times \R^3\, 
	\end{equation}
where the nonlinear collision operator $\Gamma$ is defined by
	\begin{equation} \label{def:operatorGamma}
	\begin{aligned}
		\Gamma (f_1,f_2)  :=\frac{1}{\sqrt{M}} Q\left(\sqrt{M} f_1,\sqrt{M} f_2\right) 
	\end{aligned}
	\end{equation}
and the linearized collision operator $L$ by
\begin{equation} \label{def:operatorL}
	\begin{aligned} 
		Lf  :=\Gamma\left(\sqrt{M},f\right) + \Gamma\left(f,\sqrt{M}\right).
	\end{aligned}
	\end{equation}
Notice that the property~\eqref{prop:conserv} implies that for any suitable functions $f_1$ and $f_2$,
\begin{equation} \label{prop:conservbis}
\int_{\R^3} \Gamma(f_1,f_2)(v) \, \varphi(v) \, \dv = 0 
\quad \text{for} \quad
\varphi(v) = \sqrt{M}, v \sqrt{M}, |v|^2 \sqrt{M}.
\end{equation}
We also define the full linearized operator $\Lambda_\eps$ as 
	\begin{equation} \label{def:Lambdaeps}
		\Lambda_\eps  := \frac{1}{\eps^2}L - \frac{1}{\eps} v \cdot \nabla_x.
	\end{equation}

It is well known that the kernel of $L$ is given by 
	$$
	\operatorname{Ker} L = \hbox{Span} \left\{\sqrt{M}, v_1\sqrt{M}, v_2 \sqrt{M}, v_3\sqrt{M}, |v|^2\sqrt{M}\right\} ,
	$$ 
and we shall denote by $\pi$ the orthogonal projector onto $\operatorname{Ker} L$ which is defined by: 
	\begin{multline} \label{def:pi}
		\pi f(v) = \Bigg(\int_{\R^3} f(w) \sqrt{M}(w) \, \d w + \int_{\R^3} w f(w)  \sqrt{M}(w) \, \d w \cdot v \\
		+ \int_{\R^3} \frac{|w|^2-3}{3} f(w) \sqrt{M}(w) \, \d w \, \frac{|v|^2-3}{2} \Bigg) \sqrt{M}(v). 
	\end{multline}

Throughout the paper, we shall also use the following notation: For a given kinetic distribution $f=f(x,v)$, we denote by $f^\perp$ its microscopic part, namely 
	\begin{equation} \label{def:micro}
		f^\perp := (\Id - \pi) f
	\end{equation}
and by $(\rho_f,u_f,\theta_f)$ its first macroscopic quantities defined through
	\begin{equation}\label{rhof}
		\rho_{f}(x):= \int_{\R^3} f (x,v)\sqrt{M}(v)  \, \d v  ,
	\end{equation}
	\begin{equation}\label{uf}
		u_{f} (x):= \int_{\R^3} v\,  f (x,v)\sqrt{M}(v)  \, \d v , 
	\end{equation}
and
	\begin{equation}\label{thetaf}
		\theta_{f} (x):= \frac{1}{3} \int_{\R^3} (|v|^2-3) f (x,v)\sqrt{M}(v)  \, \d v  ,
	\end{equation}
so that $\pi f = \left( \rho_f  + u_f \cdot v + \theta_f \frac{|v|^2-3}{2} \right)\sqrt M$.
Using for example Proposition~3.1 from~\cite{Briant}, we also have:
	$$
	\operatorname{Ker} \Lambda_\eps 
	= \hbox{Span} \left\{\sqrt{M}, v_1\sqrt{M}, v_2\sqrt{M}, v_3\sqrt{M}, |v|^2\sqrt{M}\right\} 
	$$ 
and the projector $\Pi$ onto $\operatorname{Ker} \Lambda_\eps$ is given by
	\begin{multline}\label{def:Pi}
		\Pi f(v) = \Bigg(\int_{\T^3 \times\R^3} f(x,w) \sqrt{M}(w) \,  \d w \, \d x 
		+ \int_{\T^3 \times\R^3} f(x,w) w \sqrt{M}(w) \, \d w \, \d x \cdot v \\
		+ \int_{\T^3 \times \R^3} \frac{|w|^2-3}{3} f(x,w) \sqrt{M}(w) \, \d w \, \d x \, \frac{|v|^2-3}{2} \Bigg) \sqrt{M}(v). 
	\end{multline}
Notice that $\displaystyle{\Pi f(v) = \int_{\T^3} \pi f(x,v) \, \d x}$. 

%---------%---------%---------%---------%---------%---------%---------%---------%---------%---------%
\subsection{Cauchy theory, decay and regularization for the Landau equation}
%---------%---------%---------%---------%---------%---------%---------%---------%---------%---------%

We introduce the following $H^1$-norm in velocity which naturally arises in the study of the Landau equation:
	\begin{equation} \label{def:H1v*}
		\|f\|^2_{H^1_{v,*}} :=
		\Vert \langle v \rangle^{\frac{\gamma}{2}+1}  f \Vert_{L^{2}_{v}}^{2}
		+\Vert {\langle v \rangle}^{\frac{\gamma}{2}}P_{v}\nabla_{v}f \Vert_{L^{2}_{v}}^{2}
		+\Vert {\langle v \rangle}^{\frac{\gamma}{2}+1}(\Id-P_{v})\nabla_{v}f \Vert_{L^{2}_{v}}^{2},
	\end{equation}
where $P_v$ stands for the projection on $v$, namely, $P_v w = \left(w\cdot\frac{v}{\vert v \vert}\right)\frac{v}{\vert v \vert}$. 
We define the weighted Sobolev-type spaces $\XXX$ and $\YYY_1$ as the spaces associated to the following norms: 
	\begin{equation}\label{def:normXXX}
	\| f \|_{\XXX}^2 := \| \langle v \rangle^{3(\frac{\gamma}{2}+1)} f \|_{L^2_{x,v}}^2 
	+ \| \langle v \rangle^{2(\frac{\gamma}{2}+1)} \nabla_x f \|_{L^2_{x,v}}^2 
	+ \| \langle v \rangle^{\frac{\gamma}{2}+1} \nabla^2_x f \|_{L^2_{x,v}}^2 
	+ \| \nabla^3_x f \|_{L^2_{x,v}}^2 ,
	\end{equation}
and
	\begin{equation}\label{def:normYYY1}
	\begin{aligned}
	\| f \|_{\YYY_1}^2 
	&:= \|  \langle v \rangle^{3(\frac{\gamma}{2}+1)}  f \|_{L^2_x (H^1_{v,*})}^2 
	+ \| \langle v \rangle^{2(\frac{\gamma}{2}+1)} \nabla_x f \|_{L^2_x (H^1_{v,*})}^2 \\
	&\quad	+ \| \langle v \rangle^{\frac{\gamma}{2}+1} \nabla^2_x f \|_{L^2_x (H^1_{v,*})}^2 
	+ \| \nabla^3_x f \|_{L^2_x (H^1_{v,*})}^2.
	\end{aligned}
	\end{equation}
Remark that as in~\cite{GuoLandau,CTW}, we work with ``twisted'' Sobolev spaces in which the weights depend on the order of the derivative in $x$, it allows us to close our nonlinear estimates.

\medskip

Let us now state our main result on the well-posedness, decay and regularization of the rescaled Landau equation~\eqref{eq:geps}.
\begin{theo}\label{theo:main1}
There is $\eta_0>0$ small enough such that for any $\eps \in (0,1)$, 
if $g_{\rm in}^\eps \in \XXX$ satisfies
\begin{equation} \label{eq:DI}
\int_{\T^3 \times \R^3} g_{\rm in}^\eps(x,v) \, \varphi(v) \, \dv \, \dx = 0 
\quad \text{for} \quad
\varphi(v) = \sqrt{M}, v \sqrt{M}, |v|^2 \sqrt{M},
\end{equation} 
and $\| g_{\mathrm{in}}^\eps \|_{\XXX} \le \eta_0$, then the following holds:

\medskip\noindent
(i) There is a unique global solution $ g^\eps \in L^\infty (\R_+ ; \XXX) \cap  L^2(\R_+ ; \YYY_1)$ to \eqref{eq:geps} associated to the initial data $g_{\mathrm{in}}^\eps$, which verifies moreover 
	\begin{equation}\label{eq:theo:main1:Linftybound}
	\sup_{t \ge 0} e^{2\sigma t} \| g^\eps(t) \|_{\XXX}^2 
	+ \frac{1}{\eps^2}\int_0^\infty e^{2\sigma t}\| (g^\eps(t))^\perp \|_{\YYY_1}^2 \, \dt 
	+ \int_0^\infty e^{2\sigma t}\|  g^\eps(t) \|_{\YYY_1}^2 \, \dt
	\lesssim \| g_{\mathrm{in}}^\eps \|_{\XXX}^2 ,
	\end{equation}
for any $0 < \sigma < \sigma_0$, where $\sigma_0$ is the decay rate of linearized operator $\Lambda_\eps$ given in Proposition~\ref{prop:hypoL2}, and where we recall that $(g^\eps)^\perp$ is defined in~\eqref{def:micro}.

\medskip\noindent
(ii) In addition, the solution satisfies the following regularization estimates, for all $t >0$,
	\begin{equation}\label{eq:theo:main1:regbound}
	\|g^\eps(t)\|_{\YYY_1} \lesssim \frac{e^{-\sigma t}}{\min(1,\sqrt{t})} \|g_{\rm in}^\eps\|_\XXX,
\quad \text{and} \quad 
\eps \|\widetilde \nabla_x g^\eps(t)\|_{\XXX} \lesssim \frac{e^{-\sigma t}}{\min(1,t^{3/2})} \|g_{\rm in}^\eps\|_\XXX,
	\end{equation}
where $\widetilde \nabla_x$ is a weighted anisotropic gradient defined in \eqref{def:DvDx}.
\end{theo}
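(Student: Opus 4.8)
The plan is to prove Theorem~\ref{theo:main1} in two stages: first a global well-posedness and decay statement in the space $\XXX$ built on a hypocoercive estimate for $\Lambda_\eps$, and then a bootstrap producing the short-time regularization bounds. For part (i), I would start from the a priori energy estimate. Differentiating $\|g^\eps\|_\XXX^2$ along the flow of \eqref{eq:geps}, the transport term $\tfrac1\eps v\cdot\nabla_x$ is skew-symmetric for the plain $L^2_{x,v}$ pieces but interacts with the velocity weights $\langle v\rangle^{k(\gamma/2+1)}$; this commutator is the reason for the ``twisted'' choice of weights decreasing with the $x$-derivative order, and the resulting error terms should be absorbed into the coercive contribution of $\tfrac1{\eps^2}L$, which by the known coercivity of $L$ (spectral gap on $(\Id-\pi)$ together with the $H^1_{v,*}$-dissipation built into $\YYY_1$) controls $\tfrac1{\eps^2}\|(g^\eps)^\perp\|_{\YYY_1}^2$. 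To upgrade this to control of the full $\YYY_1$-norm of $g^\eps$ (including the macroscopic part) and to get the exponential factor $e^{2\sigma t}$, I would invoke the hypocoercivity of $\Lambda_\eps$ from Proposition~\ref{prop:hypoL2}: working with a modified functional $\Nt g^\eps\Nt^2 = \|g^\eps\|_\XXX^2 + \delta\,(\text{cross terms between }\nabla_x\text{ and }v)$ with $\delta$ small, one gets $\tfrac{\d}{\dt}\Nt g^\eps\Nt^2 + \sigma_0\Nt g^\eps\Nt^2 + c\|g^\eps\|_{\YYY_1}^2 \le (\text{nonlinear terms})$. The nonlinear term $\tfrac1\eps\Gamma(g^\eps,g^\eps)$ is handled by the sharp trilinear estimate $|\langle \Gamma(f_1,f_2),f_3\rangle_\XXX| \lesssim \|f_1\|_\XXX\|f_2\|_{\YYY_1}\|f_3\|_{\YYY_1}$ (or a variant with the weights placed to exploit that $\Gamma$ produces only microscopic output, so one copy of $\tfrac1\eps$ pairs with a factor $\tfrac1\eps\|(\cdot)^\perp\|_{\YYY_1}$), so that under the smallness $\|g^\eps_{\rm in}\|_\XXX\le\eta_0$ the nonlinear contribution is absorbed. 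A continuation/fixed-point argument on the resulting closed differential inequality then yields the global solution and \eqref{eq:theo:main1:Linftybound}; the conservation laws \eqref{eq:DI}, propagated by \eqref{prop:conservbis}, guarantee $\Pi g^\eps \equiv 0$ so the spectral gap genuinely applies.

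For part (ii), the regularization estimates, I would run a time-weighted energy scheme in the spirit of hypoelliptic / Hérau-type multipliers, using the anisotropic gain encoded in $\widetilde\nabla_x$ of \eqref{def:DvDx}. Introduce a functional of the form
\begin{equation*}
\phi(t) := \|g^\eps(t)\|_\XXX^2 + \alpha_1 t\,\|g^\eps(t)\|_{\YYY_1}^2 + \alpha_2 t^2\,\langle \nabla_v g^\eps, \widetilde\nabla_x g^\eps\rangle + \alpha_3 t^3\,\eps^2\|\widetilde\nabla_x g^\eps(t)\|_\XXX^2,
\end{equation*}
with small constants $0<\alpha_3\ll\alpha_2\ll\alpha_1\ll1$ chosen successively. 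Differentiating, the ``good'' terms produced by $\tfrac1{\eps^2}L$ and by the linear commutator structure (exactly the hypoelliptic estimates for the linearized Landau operator recalled from \cite{Rachid1}) dominate the ``bad'' terms coming from differentiating the powers of $t$ and from the transport commutators, provided we only ask for the bound on the finite interval $t\in(0,1]$; for $t\ge1$ one simply invokes the exponential decay from part (i). This yields $\phi(t)\lesssim \phi(0^+)\lesssim\|g^\eps_{\rm in}\|_\XXX^2$ for $t\le1$, which reading off the $\alpha_1$ and $\alpha_3$ terms gives $\sqrt t\,\|g^\eps(t)\|_{\YYY_1}\lesssim\|g^\eps_{\rm in}\|_\XXX$ and $t^{3/2}\eps\|\widetilde\nabla_x g^\eps(t)\|_\XXX\lesssim\|g^\eps_{\rm in}\|_\XXX$; combining with $e^{-\sigma t}$-decay for $t\ge1$ produces \eqref{eq:theo:main1:regbound}. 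The $\tfrac1\eps$-uniformity of the velocity-regularity gain comes from the fact that the $v$-derivative estimate is driven purely by the $\tfrac1{\eps^2}L$ dissipation, whose coercivity constant in $H^1_{v,*}$ is $\eps$-independent; the mixed $x$-regularity is the one that costs a factor $\eps$, reflected in the $\eps\|\widetilde\nabla_x g^\eps\|_\XXX$ on the left.

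The main obstacle, I expect, is the bookkeeping in the time-weighted hypoelliptic estimate: one must track precisely how the commutator $[v\cdot\nabla_x,\nabla_v]=\nabla_x$ transfers regularity from $v$ to $x$ at the correct anisotropic scaling (the ``$\xi^{1/3}$'' gain typical of kinetic Fokker-Planck operators), and simultaneously check that every error term arising from the velocity weights, from differentiating $t^k$, and from the nonlinearity $\tfrac1\eps\Gamma(g^\eps,g^\eps)$ is either of lower weight in $t$ or carries a small constant or a factor $\|g^\eps\|_\XXX\le\eta_0$. In particular the nonlinear term must be estimated using the regularization itself in a self-consistent (bootstrap) manner — e.g.\ controlling $\tfrac1\eps\langle\widetilde\nabla_x\Gamma(g^\eps,g^\eps),\widetilde\nabla_x g^\eps\rangle_\XXX$ requires distributing the single $\tfrac1\eps$ against the microscopic dissipation of one factor while the time-singular norms of the other factor are already under control from the lower-order steps. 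Getting the hierarchy of constants $\alpha_i$ and the admissible power of $t$ at each level to be mutually consistent is where the real work lies; everything else is an adaptation of the hypocoercivity machinery plus the sharp Landau trilinear estimates.
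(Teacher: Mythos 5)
Your outline for part (i) is essentially the paper's approach: energy estimate with the hypocoercive norm $\Nt\cdot\Nt_\XXX$ of Proposition~\ref{prop:hypoX}, the trilinear estimate $|\la\Gamma(f_1,f_2),f_3\ra_\XXX|\lesssim\|f_1\|_\XXX\|f_2\|_{\YYY_1}\|f_3\|_{\YYY_1}$ from Proposition~\ref{prop:Gamma-NL}, exploiting $\Gamma(f,g)=(\Gamma(f,g))^\perp$ so the singular $1/\eps$ pairs with the $1/\eps^2$-dissipation of $(g^\eps)^\perp$, and Gr\"onwall plus continuation. That part is fine.

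For part (ii) your scheme is right in spirit, but your functional has a structural defect that would prevent the estimate from closing uniformly in $\eps$. You proposed $\alpha_1 t\|g^\eps\|_{\YYY_1}^2$ for the first time-weighted term, whereas the paper uses
$\alpha_1 t\big(\|\widetilde\nabla_v (g^\eps)^\perp\|_\XXX^2 + K\|\langle v\rangle^{\gamma/2+1}(g^\eps)^\perp\|_\XXX^2\big)$,
i.e.\ only the \emph{microscopic} part. This is not cosmetic: the macroscopic part $\pi g^\eps$ is not dissipated at rate $1/\eps^2$ by $L$, so differentiating $t\|\pi g^\eps\|_{\YYY_1}^2$ along the flow produces transport contributions of size $t/\eps$ that have no counterpart to absorb them at that time weight, and the hierarchy collapses. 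The paper flags exactly this choice (``some terms of the functional only involve the microscopic part of the solution'') as one of the three genuine difficulties compared with $\eps=1$. The full $\YYY_1$-regularization is then recovered a posteriori from $\|g^\eps\|_{\YYY_1}\lesssim\|(g^\eps)^\perp\|_{\YYY_1}+\|\pi g^\eps\|_\XXX$ since the zeroth-order piece $\Nt g^\eps\Nt_\XXX^2$ already controls $\|\pi g^\eps\|_\XXX$. Related but smaller omissions: the cross term should carry an explicit $\eps$ and the anisotropic gradient, i.e.\ $\eps\,\alpha_2 t^2\la\widetilde\nabla_v g^\eps,\widetilde\nabla_x g^\eps\ra_\XXX$ (not $\nabla_v$ and not $\eps^0$), which is exactly what makes the $1/\eps$-output of its time derivative cancel against the $\alpha_1$- and $\alpha_3$-level dissipations; and both the $\alpha_1$ and $\alpha_3$ levels need auxiliary weighted companions $K\|\langle v\rangle^{\gamma/2+1}(g^\eps)^\perp\|_\XXX^2$ and $K\|\langle v\rangle^{\gamma/2}\nabla_x g^\eps\|_\XXX^2$ to absorb the weight-commutator errors $\|\langle v\rangle^{\gamma+1}\cdot\|_\XXX^2$ produced by conjugating $L$ with the $\langle v\rangle$ weights. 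Finally, the nonlinear bootstrap does not reduce to $\frac{\d}{\dt}\mathcal U_\eps\le 0$: a genuinely quadratic remainder $C t^3\|\widetilde\nabla_x g^\eps\|_\XXX^2\|(g^\eps)^\perp\|_{\YYY_1}^2$ survives, and one must integrate in time and close using the $L^2_t$-in-time smallness $\frac{1}{\eps^2}\int_0^1\|(g^\eps)^\perp\|_{\YYY_1}^2\,\dt\lesssim\eta_0^2$ already furnished by part (i), absorbing $\sup_{t\le1}t^3\|\widetilde\nabla_x g^\eps\|_\XXX^2$ on the left for $\eta_0$ small.
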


\begin{rem}
It is worth noticing that the condition~\eqref{eq:DI} is equivalent to $g_{\rm in}^\eps \in (\operatorname{Ker} \Lambda_\eps)^\perp$. 
\end{rem}

\begin{rem}\label{rem1}
Let us point out that the results obtained in Theorems~\ref{theo:main1} could be obtained in larger spaces of the type $\EEE := H^3_xL^2_v(\langle v \rangle^k \sqrt{M})$ for $k$ large enough. More precisely, due to the linearization~\eqref{eq:linearization}, working in spaces like $\EEE$ means that the original data $f^\eps$ lie in polynomially weighted Sobolev spaces, which is more relevant from a physical point of view. We chose to only present the proof in the functional space $\XXX$ because this functional framework is compatible with the second part of the paper which is about hydrodynamical limit of the Landau equation~\eqref{eq:geps}. Let us though explain the strategy to perform such an extension of our results from the functional space $\XXX$ to $\EEE$. The strategy is the same as the one used in~\cite{BMM} by Briant, Merino-Aceituno and Mouhot where uniform in $\eps$ estimates on solutions to the hard-spheres Boltzmann equation have been obtained. The trick is to rewrite the equation~\eqref{eq:geps} as an equivalent system of two equations thanks to the splitting of the linearized operator~$\Lambda_\eps=\AA_\eps + \BB_\eps$ introduced in Section~\ref{sec:linear2}: We write $g^\eps=g^\eps_1+g^\eps_2$ with 
	$$
	\partial_t g^\eps_1 = \BB_\eps g^\eps_1 
	+ \frac{1}{\eps} \Gamma(g^\eps_1,g^\eps_1) 
	+ \frac{1}{\eps} \Gamma(g^\eps_1,g^\eps_2) 
	+ \frac{1}{\eps} \Gamma(g^\eps_2,g^\eps_1) 
	\quad \text{and} \quad g^\eps_1(t=0) = g_{\rm in}^\eps \in \EEE
	$$
and
	$$
	\partial_t g^\eps_2 = \Lambda_\eps g^\eps_2 + \frac{1}{\eps} \Gamma(g^\eps_2,g^\eps_2) + \AA_\eps g^\eps_1
	\quad \text{and} \quad g^\eps_2(t=0) = 0 \in \XXX. 
	$$
The first equation can be studied in the large space $\EEE$ thanks to the nice properties of~$\BB_\eps$ in all type of spaces and the second one can be studied in the smaller space $\XXX$ since it starts from $0$. Moreover, we have some nice estimates on this equation because the operator $\AA_\eps$ enjoys some regularizing properties, it is bounded from $\EEE$ into $\XXX$, we can thus use the estimates obtained for the first equation satisfied by~$g^\eps_1$. Following those ideas, one can obtain some nice nested a priori estimates on the system, which allow to conclude. 
\end{rem}

\begin{rem}\label{rem2}
Our method should be robust enough to also treat the case of very soft and Coulomb potentials $-3 \le \gamma < - 2$ in which the linearized operator does not have a spectral gap (in this case, the inequality~\eqref{eq:spectralgap} does not provide coercivity anymore). More precisely, we should be able to obtain a similar result of Theorem~\ref{theo:main1} with the exponential time-decay in \eqref{eq:theo:main1:Linftybound} being replaced by a sub-exponential one, by combining the arguments developed in this paper together with the study made in the case $\eps=1$ in~\cite{CM} by Carrapatoso and Mischler. We do not treat this case in the present paper. 
\end{rem}

\medskip

The Cauchy theory and the large-time behavior of the Landau equation for $\eps=1$ have been extensively studied. We here give a small sample of the existing literature: Let us mention~\cite{Alexandre-Villani} for renormalized solutions with defect measure,~\cite{Desvillettes-Villani} for the convergence to equilibrium for a priori smooth solutions with general initial data,~\cite{GuoLandau,Mouhot-Neumann,CTW,CM} for strong solutions in a perturbative framework. 
Concerning the well-posedness of our rescaled equation, it has already been obtained in~\cite{GuoBNS,Rachid1} respectively by Guo and Rachid in Sobolev spaces (respectively in $H^N_{x,v}$ with~$N \geq 8$ and in $H^3_xL^2_v$) thanks to nonlinear energy methods. In~\cite{Briant}, Briant has obtained a similar result in $H^N_{x,v}$ with~$N \geq 4$ thanks to the so-called $H^1$-hypocoercivity method at the linear level. 

\smallskip

Our global strategy to prove Theorem~\ref{theo:main1}-(i) is based on the study of the linearized equation. And then, we go back to the fully nonlinear problem. This is a standard strategy to develop a Cauchy theory in a close-to-equilibrium regime. However, we have to emphasize here that our study is quite involved as explained below.

\smallskip

At the linear level, our strategy is based on a $L^2$-hypocoercivity method which heavily relies on the micro-macro decomposition and is thus particularly adapted to the study of hydrodynamical problems. Recall that the challenge of hypocoercivity is to understand the interplay between the collision operator that provides dissipativity in the velocity variable and the transport one which is conservative, in order to obtain global dissipativity for the whole linear problem (see~\cite{Villani-hypo,HerauCours} for a presentation of this topic). 
The $L^2$-hypocoercivity method has been introduced by Hérau~\cite{Herau-L2} (see also~\cite{Dolbeault-Mouhot-Schmeiser}) for one dimensional space of collisional invariants and introduced by Guo in~\cite{Guo-bounded} for a space of collisional invariants of dimension larger than one (including the Boltzmann and Landau cases). Let us explain into more details the strategy, we first define a norm $\Nt \cdot \Nt_{L^2_{x,v}}$ (associated to the scalar product $\la\!\la\cdot,\cdot\ra\!\ra_{L^2_{x,v}}$) which is equivalent to the usual one $\|\cdot\|_{L^2_{x,v}}$ uniformly in $\eps$ and is such that 
	$$
	\la\!\la \Lambda_\eps f, f \ra\!\ra_{L^2_{x,v}} \lesssim -\frac{1}{\eps^2} \|f^\perp\|^2_{L^2_x (H^1_{v,*})} - \|f\|^2_{L^2_{x,v}}.
	$$
Such a norm is defined in Subsection~\ref{subsec:hypo} and is inspired by~\cite{Guo-bounded} (see also~\cite{BCMT}) in which the more complex case of bounded domains with various boundary conditions is treated. Due to the fact that derivatives in $x$ commute with $\Lambda_\eps$, it is easy to deduce a similar result on the space $H^3_xL^2_v$. However, it is not an easy matter to recover such an energy estimate in larger or smaller spaces than $H^3_xL^2_v$. Actually, the methods presented in~\cite{GMM,MM} by Gualdani, Mischler and Mouhot to develop shrinkage or enlargement arguments at the level of energy estimates is not easily adaptable to rescaled equations if one wants to get uniform estimates in the parameter of rescaling. To develop an enlargement argument, one can use the trick introduced in~\cite{GMM,MM} of splitting the original equation into several ones. This trick was already used in~\cite{BMM,ALT} to obtain uniform in $\eps$ estimates on the rescaled Boltzmann equation for respectively elastic and inelastic hard spheres in a large class of Sobolev spaces (see also Remark~\ref{rem1}). However, we do not have such a method of splitting to perform a ``shrinkage'' argument.
In the present paper, we exhibit a norm equivalent to the usual one that provides dissipativity for $\Lambda_\eps$ in a smaller space than $H^3_xL^2_v$ (namely in the space $\XXX$ defined in~\eqref{def:normXXX}) and that also preserves the gain of~$1/\eps$ on the microscopic part of the solution. This is done in Subsection~\ref{subsec:hypo}. 
Notice that it is also possible to obtain decay estimates directly on the semigroup associated to $\Lambda_\eps$ thanks to Duhamel formula once one has exhibited a nice splitting of $\Lambda_\eps$ (see Section~\ref{sec:linear2}). 

\smallskip
We then prove some new and sharp nonlinear estimates on the Landau collision operator (see Subsection~\ref{subsec:nonlinearestim}) to be able to develop our Cauchy theory for the whole nonlinear problem in a close-to-equilibrium framework. It is worth mentioning that to prove good a priori estimates on the nonlinear problem, we use the hypocoercive norm defined in Subsection~\ref{subsec:hypo} and we only perform energy estimates. It is actually important that our analysis does not rely on the use of Duhamel formula because of the rescaling parameter (see the beginning of Section~\ref{sec:Cauchyreg} for more details). 

\smallskip
The strategy that we use to prove the regularization estimate in Theorem~\ref{theo:main1}-(ii) is quite classical for linear hypoelliptic equations and has been introduced by H\'erau and Nier~\cite{Herau-Nier} for the kinetic Fokker-Planck equation. Such a method has been used for many hypoelliptic equations: In~\cite{HTT2} for the fractional kinetic Fokker-Planck equation, in~\cite{CTW,CM} for the linearized Landau equation, in~\cite{HTT} for the linearized Boltzmann equation without cutoff etc... To our knowledge, it is the first time that such a strategy is used for a nonlinear equation (even in the simpler case $\eps=1$) and for a rescaled equation (with uniform estimates in the rescaling parameter). Roughly speaking, the idea is to introduce a functional with weights in time which is a Lyapunov functional for our equation for small times. From this property, we are then able to recover some regularization estimates quantified in time as stated in Theorem~\ref{theo:main1}-(ii). Here, the difficulties are threefold: 
\begin{enumerate}[itemsep=5pt,leftmargin=*]
\item[-] First, we study a nonlinear equation, our computations are thus much more intricate, the idea behind our computations being that we work with small data which allows us to absorbe the nonlinearity. Our proof also requires some new and sharp nonlinear estimates on the collision operator (see Subsection~\ref{subsec:nonlinearestim}). 
\item[-] Then, the functional has to be suitably defined to handle the dependencies in $\eps$. The differences of behaviors between microscopic and macroscopic parts of the solution have to be taken into account and in the spirit of the definition of the $H^1$-hypocoercive norm of Briant~\cite{Briant}, some terms of the functional only involve the microscopic part of the solution (see~\eqref{eq:Lyapunov} for the definition of the functional). 
\item[-] Finally, since we want to obtain the optimal gain of regularity (the corresponding hypoelliptic estimates are provided in~\cite{Rachid1} by Rachid), our functional has to be defined accordingly. For example, in~\cite{CTW} in which the authors were not interested in getting the optimal gain of regularity (and in which only the case $\eps=1$ was treated), only classical derivation operators were involved in the definition of the functional. Here, the definition of the functional is much more intricate: We have to work with the anisotropic operators~$\widetilde \nabla_v$ and $\widetilde \nabla_x$ defined in~\eqref{def:DvDx} and our functional also involves additional terms which are necessary to close our estimates. 
\end{enumerate}
To end this part, we mention that our proof also provides a regularization estimate in the space variable which is not uniform in $\eps$ (see~\eqref{eq:theo:main1:regbound}). The non-uniformity in $\eps$ of such a gain is expected since the transport operator and the linearized collision operator (which gives the gain of regularity in velocity) do not act at the same scale.

%---------%---------%---------%---------%---------%---------%---------%---------%---------%---------%
\subsection{The fluid model}
%---------%---------%---------%---------%---------%---------%---------%---------%---------%---------%

In the second part of the paper, we shall prove that the hydrodynamical limit of~\eqref{eq:scaledLandau} as~$\eps$ goes to zero is the incompressible Navier-Stokes-Fourier system associated with the Boussinesq equation which writes
	\begin{equation}\label{eq:NSF}
	\left\{
		\begin{aligned}
			\partial_t u + u \cdot \nabla_x u- \nu_1 \Delta_x u &= - \nabla_x p \\
			\partial_t \theta  + u \cdot \nabla_x \theta- \nu_2 \Delta_x \theta &= 0 \\
			\Div_x u & = 0 \\
			\nabla_x(\rho+\theta) &= 0  .
		\end{aligned}
	\right.
	\end{equation}
In this system, the temperature~$\theta$, the density~$\rho$ and the pressure~$p$ are scalar unknowns, whereas the velocity~$u$ is an unknown vector field. The pressure can actually be eliminated from the equations by applying to the momentum equation the projector~${\mathbb P}$ onto the space of divergence-free vector fields. This projector is bounded over~$H^\ell_x$ for all~$\ell$, and in~$L^p_x$ for all~$1<p<\infty$.  To define the viscosity coefficients $\nu_i$, let us introduce the two unique functions $\Phi$ (which is a matrix-valued function) and $\Psi$ (which is a vector-valued function) orthogonal to~$\operatorname{Ker} L$ such that 
	$$
	\frac{1}{\sqrt{M}} L\left(\sqrt{M} \Phi\right) =  \frac{|v|^2}{3} {\rm{Id}} -v\otimes v 
	\quad \text{and} \quad 
	\frac{1}{\sqrt{M}}L\left(\sqrt{M} \Psi\right) =\frac{5-|v|^2}{2} \, v.
	$$
The viscosity coefficients are then defined by
	$$ 
	\nu_1 :=\frac{1}{10}\int_{\R^3} \Phi : L\left(\sqrt{M}\Phi\right) \sqrt{M} \, \d v \quad \text{and} \quad
	\nu_2  :=\frac{2}{15} \int_{\R^3} \Psi \cdot L\left(\sqrt{M}\Psi\right) \sqrt{M} \, \d v.
	$$

In what follows, we call {\em well-prepared data} the class of functions $f \in \operatorname{Ker} L$ that write 
	\begin{multline} \label{def:wp}
		f(x,v) = \sqrt{M}(v) \left(\rho_f(x) + u_f(x) \cdot v + \frac{|v|^2-3}{2} \theta_f(x)\right) \\
		\text{with} \quad \nabla_x \cdot u_f = 0 \quad \text{and} \quad \rho_f+\theta_f=0
	\end{multline}
where we recall that $\rho_f$, $u_f$ and $\theta_f$ are defined in~\eqref{rhof},~\eqref{uf},~\eqref{thetaf}. 

It is known that for mean free $(\rho_0,u_0,\theta_0) \in H^3_x$ small enough and satisfying
	\begin{equation} \label{eq:divBoussinesq}
	\nabla_x \cdot u_0 = 0 \quad \text{and} \quad \rho_0+\theta_0=0,
	\end{equation}
there exists a unique solution $(\rho,u,\theta) \in H^3_x$ to~\eqref{eq:NSF} defined on $\R^+$ with associated initial data~$(\rho_0,u_0,\theta_0)$ (see~\cite{Fujita-Kato,LR2,LR1,Gallagher-Tristani}). For such an initial data, we also define $g_0$ a well-prepared data with $(\rho_0,u_0,\theta_0)$ as associated first macroscopic quantities, namely
	\begin{multline} \label{def:g0}
		g_0(x,v) := \sqrt{M}(v) \left(\rho_0(x) + u_0(x) \cdot v + \frac{|v|^2-3}{2} \theta_0(x)\right) \\
		\text{with} \quad \nabla_x \cdot u_0 = 0 \quad \text{and} \quad \rho_0+\theta_0=0. 
	\end{multline}
Notice that from the definition of the space $\XXX$, we in particular have that $g_0 \in \XXX$ and the mean-free assumption made on $(\rho_0,u_0,\theta_0)$ implies that $g_0 \in (\operatorname{Ker} \Lambda_\eps)^\perp$. 
Notice also that due to the definition of $\XXX$, the smallness assumption made on $(\rho_0,u_0,\theta_0)$ can be translated into a smallness assumption on $g_0$. Indeed, given the form of $g_0$, by triangular inequality, it is clear that 
	$$
	\|g_0\|_\XXX \lesssim \|(\rho_0,u_0,\theta_0)\|_{H^3_x}.
	$$
Moreover, since $\left\{\sqrt{M}, \sqrt{M} v_i, \sqrt{M} (|v|^2-3)/2\right\}$ is an orthogonal system in $L^2_v$, we also have that 
	$$
	\|g_0\|_\XXX \gtrsim \|g_0\|_{H^3_xL^2_v} \gtrsim \|(\rho_0,u_0,\theta_0)\|_{H^3_x}.
	$$
As a consequence, there exists $\eta_1>0$ such that if $g_0$ is of the form~\eqref{def:g0} and satisfies~$\|g_0\|_\XXX \leq \eta_1$, then there exists $(\rho,u,\theta) \in H^3_x$ defined on $\R^+$ solution to~\eqref{eq:NSF}.
% is not restrictive since we work with small initial data~$g_{\rm in}^\eps$ for the Landau equation and thus, from the assumption~\eqref{eq:DIcv} in Theorem~\ref{theo:main2}, a smallness assumption on $g_0$ is already prescribed.  
We define the kinetic distribution lying in~$\operatorname{Ker} L$ with associated macroscopic quantities~$(\rho,u,\theta)$
	\begin{equation} \label{def:g}
		g(t,x,v) := \sqrt{M}(v)\left(\rho (t,x) + u (t,x) \cdot v + \frac{|v|^2- 3}{2} \theta(t,x) \right) .
	\end{equation}
We also have the following estimate
	\begin{equation} \label{eq:expNSF}
	\|g\|_{L^\infty_t(\XXX)} \lesssim C(\|g_{0}\|_\XXX)
	\end{equation}
where $C(\|g_{0}\|_\XXX)$ is a constant only depending on the $\XXX$-norm of the data $g_0$. 
The aforementioned results on the system~\eqref{eq:NSF} can be found in~\cite[Appendix~B.3]{Gallagher-Tristani} in which more details and references on the subject are given.

%---------%---------%---------%---------%---------%---------%---------%---------%---------%---------%
\subsection{Hydrodynamical limit result} 
%---------%---------%---------%---------%---------%---------%---------%---------%---------%---------%

For the statement of the main hydrodynamical result, we first introduce the following notation for functional spaces: If~$X_1$ and~$X_2$ are two function spaces, we say that a function~$f$ belongs to~$X_1+X_2$ if there are~$f_1 \in X_1$ and~$f_2\in X_2$ such that~$f =f_1+f_2$ and we define
	$$
	\|f\|_{X_1+X_2} := \min_{\tiny\begin{array}{c}f = f_1+f_2\\f_i \in X_i\end{array}}\left(\|f_1\|_{X_1} + \|f_2\|_{X_2} \right) .
	$$

\begin{theo} \label{theo:main2}
Let $g_{\rm in}^\eps \in \XXX \cap (\operatorname{Ker} \Lambda_\eps)^\perp$ for $\eps \in (0,1)$ such that $\|g_{\rm in}^\eps\|_\XXX \leq \eta_0$ (where~$\eta_0$ is defined in Theorem~\ref{theo:main1}) and~$g^\eps \in L^\infty_t(\XXX)$ being the associated solutions of~\eqref{eq:geps} with initial data $g_{\rm in}^\eps$ constructed in Theorem~\ref{theo:main1}.
Consider also $g_0 \in \XXX \cap (\operatorname{Ker} \Lambda_\eps)^\perp$ such that~$\|g_0\|_\XXX \leq \eta_1$ and $g$ defined respectively as in~\eqref{def:g0} and~\eqref{def:g}. 

There exists $\eta_2 \in (0,\min(\eta_0,\eta_1))$ such that if $\max\left(\|g_{\rm in}^\eps\|_\XXX,\|g_0\|_\XXX\right) \leq \eta_2$ and
	\begin{equation} \label{eq:DIcv1}
	\|g_{\rm in}^\eps - g_0\|_\XXX \xrightarrow[\eps \to 0]{} 0,
	\end{equation}
then we have  
	\begin{equation} \label{eq:cv1}
		\|g^\eps-g\|_{L^\infty_t(\XXX)} \xrightarrow[\eps \to 0]{} 0.
	\end{equation}
If $\max\left(\|g_{\rm in}^\eps\|_\XXX,\|g_0\|_\XXX\right) \leq \eta_2$ and
	\begin{equation} \label{eq:DIcv2}
	\|\pi g_{\rm in}^\eps - g_0\|_\XXX \xrightarrow[\eps \to 0]{} 0 ,
	\end{equation}
then we have 
	\begin{equation} \label{eq:cv2}
		\|g^\eps-g\|_{L^1_t(\YYY_1) + L^\infty_t(\XXX)} \xrightarrow[\eps \to 0]{} 0.
	\end{equation}
\end{theo}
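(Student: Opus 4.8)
The plan is to compare $g^\eps$ with the Navier--Stokes profile $g$ corrected by the first Hilbert term, and to close a Grönwall-type estimate in $\XXX$ using the hypocoercive norm $\Nt\cdot\Nt$ and the sharp nonlinear estimates on $\Gamma$ behind Theorem~\ref{theo:main1}.

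\emph{Step 1: difference equation and corrector.} Since $g(t)\in\operatorname{Ker} L$ one has $Lg=0$, so $h^\eps:=g^\eps-g$ solves
\[
\partial_t h^\eps + \tfrac1\eps v\cdot\nabla_x h^\eps = \tfrac1{\eps^2}Lh^\eps + \tfrac1\eps\bigl[\Gamma(g,h^\eps)+\Gamma(h^\eps,g)+\Gamma(h^\eps,h^\eps)\bigr] + \mathcal{R}^\eps,
\]
with singular source $\mathcal{R}^\eps := -\partial_t g - \tfrac1\eps v\cdot\nabla_x g + \tfrac1\eps\Gamma(g,g)$. The key structural facts are $\pi\Gamma(g,g)=0$ (by \eqref{prop:conservbis}) and $\pi(v\cdot\nabla_x g)=0$ (a short moment computation using $\nabla_x\cdot u=0$ and $\rho+\theta=0$ from \eqref{eq:NSF}); hence $\mathcal{R}^\eps=-\partial_t g-\tfrac1\eps L g_1$ with $g_1:=L^{-1}(\Id-\pi)(v\cdot\nabla_x g-\Gamma(g,g))\in(\operatorname{Ker} L)^\perp$. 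Setting $\bar h^\eps:=g^\eps-g-\eps g_1$ and $\tilde g^\eps:=g+\eps g_1$ and using $L\bar h^\eps=Lh^\eps-\eps Lg_1$, we get
\[
\partial_t\bar h^\eps + \tfrac1\eps v\cdot\nabla_x\bar h^\eps = \tfrac1{\eps^2}L\bar h^\eps + \tfrac1\eps\bigl[\Gamma(\tilde g^\eps,\bar h^\eps)+\Gamma(\bar h^\eps,\tilde g^\eps)+\Gamma(\bar h^\eps,\bar h^\eps)\bigr] + \mathcal{S}^\eps,
\]
where the new source splits as $\mathcal{S}^\eps=\mathcal{S}^\eps_{\rm mic}+\eps\,\mathcal{S}^\eps_{\rm rem}$ with $\mathcal{S}^\eps_{\rm mic}=-(\Id-\pi)(v\cdot\nabla_x g_1)+\Gamma(g,g_1)+\Gamma(g_1,g)$ \emph{microscopic} --- this is exactly where we use that $(\rho,u,\theta)$ solves \eqref{eq:NSF}, which is equivalent to the vanishing of $\pi\bigl(\partial_t g+v\cdot\nabla_x g_1-\Gamma(g,g_1)-\Gamma(g_1,g)\bigr)$ --- and $\mathcal{S}^\eps_{\rm rem}=-\partial_t g_1+\Gamma(g_1,g_1)$ bounded in the relevant norms.

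\emph{Step 2: energy estimate and proof of \eqref{eq:cv1}.} We redo the $\XXX$-energy estimate from the proof of Theorem~\ref{theo:main1}-(i), now for $\bar h^\eps$. Hypocoercivity of $\Lambda_\eps$ provides the dissipation $-\tfrac1{\eps^2}\|(\bar h^\eps)^\perp\|_{\YYY_1}^2-\|\bar h^\eps\|_{\YYY_1}^2$ (up to $\Nt\cdot\Nt$); the bilinear terms are absorbed exactly as in the nonlinear Cauchy theory, using $\|\tilde g^\eps\|_{L^\infty_t\XXX}\lesssim\eta_2$ small and that $\Gamma$ is microscopic; the source is handled by duality, $\la\!\la\mathcal{S}^\eps_{\rm mic},\bar h^\eps\ra\!\ra=\la\!\la\mathcal{S}^\eps_{\rm mic},(\bar h^\eps)^\perp\ra\!\ra$ costing only $O(\eps^2)$ after spending a fraction of $\tfrac1{\eps^2}\|(\bar h^\eps)^\perp\|_{\YYY_1}^2$, and $\la\!\la\eps\mathcal{S}^\eps_{\rm rem},\bar h^\eps\ra\!\ra$ costing $O(\eps^2)$ after spending a fraction of $\|\bar h^\eps\|_{\YYY_1}^2$. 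Grönwall then gives $\|\bar h^\eps(t)\|_\XXX^2\lesssim\|\bar h^\eps(0)\|_\XXX^2+\eps^2$ uniformly in $t$. Under \eqref{eq:DIcv1}, $\bar h^\eps(0)=g_{\rm in}^\eps-g_0-\eps g_1(0)\to0$ in $\XXX$, so $\|g^\eps-g\|_{L^\infty_t\XXX}\le\|\bar h^\eps\|_{L^\infty_t\XXX}+\eps\|g_1\|_{L^\infty_t\XXX}\to0$, which is \eqref{eq:cv1}.

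\emph{Step 3: proof of \eqref{eq:cv2}.} Now the microscopic and acoustic parts of $g_{\rm in}^\eps$ are only $O(\eta_2)$, so the $L^\infty_t\XXX$ control is lost and the weaker norm is used. First, the microscopic part of \emph{any} solution is automatically negligible in $L^1_t(\YYY_1)$: by Cauchy--Schwarz in time and \eqref{eq:theo:main1:Linftybound}, $\|(g^\eps)^\perp\|_{L^1_t(\YYY_1)}\lesssim\bigl(\int_0^\infty e^{-2\sigma t}\dt\bigr)^{1/2}\,\eps\,\bigl(\tfrac1{\eps^2}\int_0^\infty e^{2\sigma t}\|(g^\eps)^\perp\|_{\YYY_1}^2\dt\bigr)^{1/2}\lesssim\eps\|g_{\rm in}^\eps\|_\XXX\to0$. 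Since $g$ is purely macroscopic, it remains to bound $\pi g^\eps-g$ in $L^\infty_t(\XXX)$. We introduce the well-prepared datum $g_0^\eps$ built from the divergence-free/Boussinesq projection of the macroscopic moments of $g_{\rm in}^\eps$; by \eqref{eq:DIcv2} and well-preparedness of $g_0$, $\|g_0^\eps-g_0\|_\XXX\to0$ and $\|\pi g_{\rm in}^\eps-g_0^\eps\|_\XXX\to0$. Let $G^\eps$ solve \eqref{eq:geps} with data $g_0^\eps$; Step~2 applied with $(g_{\rm in}^\eps,g_0)\rightsquigarrow(g_0^\eps,g_0)$ gives $\|G^\eps-g\|_{L^\infty_t\XXX}\to0$. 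It then suffices to show $\|g^\eps-G^\eps\|_{L^1_t(\YYY_1)+L^\infty_t(\XXX)}\to0$: the data differ by $(g_{\rm in}^\eps)^\perp+(\pi g_{\rm in}^\eps-g_0^\eps)$; the second, small in $\XXX$, contributes a small $L^\infty_t\XXX$ term via the energy estimate, while the microscopic piece $(g_{\rm in}^\eps)^\perp$ contributes, by the above, $O(\eps)$ in $L^1_t(\YYY_1)$ through its own microscopic part and $O(\eps)$ in $L^\infty_t(\XXX)$ through the fluid "leakage" it generates (the leakage starts at $0$ and is produced only by the singular transport acting on a microscopic quantity damped on the $\eps^2$-scale, so it carries an extra $\eps$). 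Establishing this last $O(\eps)$ gain rigorously is where the spectral analysis of $\Lambda_\eps$ from \cite{Yang-Yu} enters: fast decay of kinetic and acoustic modes, and $O(\eps)$-expansion of the hydrodynamic eigenprojections, so that the fluid oscillations created by an ill-prepared datum have amplitude $O(\eps)$. Combining with $\|G^\eps-g\|_{L^\infty_t\XXX}\to0$ yields \eqref{eq:cv2}.

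\emph{Main difficulty.} The delicate point is Step~3: a naive energy estimate for the macroscopic content generated by ill-prepared data loses exactly one power of $\eps$ through the $1/\eps$ transport term, and recovering it requires the fine spectral structure of $\Lambda_\eps$, intertwined with the uniform-in-$\eps$ nonlinear estimates of Theorem~\ref{theo:main1} and the refined bounds for \eqref{eq:NSF} of \cite{Gallagher-Tristani} (also needed for the regularity bookkeeping of the corrector $g_1$, which costs extra $x$- and $t$-derivatives of the fluid solution).
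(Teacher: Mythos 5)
Your proposal takes a genuinely different route from the paper. The paper never introduces a Hilbert corrector; it reformulates the fluid solution kinetically as $g(t)=U(t)g_0+\Psi(t)(g,g)$ using the Yang--Yu spectral decomposition, writes $h^\eps=g^\eps-g$ via the Duhamel representations of $U^\eps$, $\Psi^\eps$ and their limit operators (decompositions \eqref{eq:hepseta} and \eqref{eq:tildehepseta}), estimates each term using the semigroup bound $\|U^\eps(t)(\Id-\pi)\|_{\XXX\to\XXX}\lesssim\eps\, e^{-\sigma t}/\sqrt{t}$ (Lemma~\ref{lem:EP}) together with the uniform-in-$\eps$ kinetic bounds of Theorem~\ref{theo:main1}, obtains the quantitative rate $\eps^\delta$ for $g_0\in H^{3+\delta}_x$ (Theorem~\ref{theo:main2precise}), and finally passes to $g_0\in H^3_x$ by a density/stability argument for NSF. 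Your approach is closer in spirit to Guo's Hilbert-expansion method from \cite{GuoBNS}, which the paper explicitly contrasts with as being more costly in regularity. There are two genuine gaps.

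First, the regularity cost of the corrector is not a bookkeeping footnote but an obstruction. Since $L^{-1}$ gains no $x$-derivatives, $g_1\sim\nabla_x g$ and $v\cdot\nabla_x g_1\sim\nabla_x^2 g$, so $\|\mathcal{S}^\eps_{\rm mic}(t)\|_{\YYY_1'}$ requires roughly $g(t)\in H^5_x$. With $g_0\in H^3_x$ only (which is what Theorem~\ref{theo:main2} allows), parabolic smoothing gives $\|g(t)\|_{H^5_x}\lesssim t^{-1}$, and $\eps^2\int_0^T\|\mathcal{S}^\eps\|^2_{\YYY_1'}\,\d t$ diverges, so your Gr\"onwall inequality does not close. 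Even the claim $\eps\|g_1\|_{L^\infty_t\XXX}\to 0$ needs $g\in H^4_x L^2_v$ uniformly in time. The remedy is exactly the density argument the paper uses to deduce Theorem~\ref{theo:main2} from Theorem~\ref{theo:main2precise} (approximate $g_0$ by smooth $g_{0,\eta}$, use stability for NSF, and a triangle inequality); your write-up omits this step entirely, and without it the proof of \eqref{eq:cv1} is incomplete.

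Second, Step~3 is a diagnosis, not a proof. You correctly observe that $\|(g^\eps)^\perp\|_{L^1_t(\YYY_1)}\lesssim\eps\|g_{\rm in}^\eps\|_\XXX$ by Cauchy--Schwarz and \eqref{eq:theo:main1:Linftybound}; that part is fine. But the claim that the macroscopic ``leakage'' generated by $(g_{\rm in}^\eps)^\perp$ is $O(\eps)$ in $L^\infty_t(\XXX)$ is asserted by black-boxing the spectral analysis. Introducing the auxiliary kinetic solution $G^\eps$ and arguing about $g^\eps-G^\eps$ by splitting the data difference does not help: the problem is nonlinear, so contributions do not superpose, and your argument only moves the difficulty. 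In the paper, this $O(\eps)$ gain is made precise by setting $R^\eps(t)=U^\eps(t)(g_{\rm in}^\eps)^\perp$, proving $\|R^\eps(t)\|_{\YYY_1}\lesssim\eps\, e^{-\sigma t}/\sqrt{t}$ and $\|R^\eps\|_{L^1_t(\YYY_1)}\lesssim\eps$ from Lemma~\ref{lem:EP} and Corollary~\ref{cor:EPreg}, subtracting $R^\eps$ from $h^\eps$ as in \eqref{eq:tildehepseta}, and controlling the extra bilinear terms $\Psi^\eps(\pi R^\eps,\pi g^\eps)$, $\Psi^\eps(g,\pi R^\eps)$ in Lemma~\ref{lem:sourceill}. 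That Duhamel-level manipulation is the actual content of the step you identified as delicate, and it is missing.
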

	
\begin{rem}
One can get a rate of convergence in~\eqref{eq:cv1} and
~\eqref{eq:cv2} if we suppose that $g_0$ has some additional regularity in $x$, namely a rate of $\eps^\delta$ if the regularity is supposed to be~$H^{3+\delta}_x$ for $\delta\in(0,1/2]$. We refer to Theorem~\ref{theo:main2precise} for a quantitative version of this result. 
\end{rem}

\begin{rem}
As explained in Remark~\ref{rem1}, the results of Theorem~\ref{theo:main1}~could be obtained in larger spaces $\EEE = H^3_x L^2_v (\langle v \rangle^k \sqrt M)$. A similar approach as the one used by Gervais in~\cite{Gervais1,Gervais2} in which the hard spheres Boltzmann equation is treated in ``large'' Sobolev spaces, might yield the associated hydrodynamical result. 
\end{rem}

\begin{rem}
As explained in Remark~\ref{rem2}, the strategy of the proof of Theorem~\ref{theo:main1}~ should also work in order to treat the case of very soft and Coulomb potentials. However, in order to obtain the associated hydrodynamical result, our method employs some fine spectral estimates that are known to hold only for the case $-2 \le \gamma \le 1$ by \cite{Yang-Yu}. Therefore, if we are able to extend the results of \cite{Yang-Yu} to the case of very soft and Coulomb potentials, we might then be able to obtain the analogous result of Theorem~\ref{theo:main2}.
\end{rem}

\medskip

We first give a short overview of the existing literature on the problem of deriving fluid equations from kinetic ones (we refer to the book by Saint-Raymond~\cite{SRbook} for a thorough presentation of the topic). 
The first justifications of the link between kinetic and fluid equations were formal and based on asymptotic expansions by Hilbert, Chapman, Enskog and Grad (see~\cite{Hilbert,ChapEns,Gradhydro}). The first rigorous convergence proofs based also on asymptotic expansions were given by Caflisch~\cite{Caflisch} (see also~\cite{Lachowicz} and~\cite{DeMasi-Esposito-Lebowitz}). In those papers, the limit is justified up to the first singular time for the fluid equation. By using the nonlinear energy method introduced by himself in~\cite{GuoLandau}, Guo~\cite{GuoBNS} has justified the limit towards the Navier-Stokes equation and beyond in Hilbert's expansion from Boltzmann and Landau equations (see below for more details on this result).

\smallskip
There has also been some convergence proofs based on spectral analysis in the framework of strong solutions close to   equilibrium introduced by Grad~\cite{Grad} and Ukai~\cite{Ukai} for the Boltzmann equation. In this respect, we refer to the works by Nishida, Bardos and Ukai, Gallagher and Tristani~\cite{Nishida,Bardos-Ukai,Gallagher-Tristani}. These results use the description of the spectrum of the linearized Boltzmann equation in Fourier space in the space variable performed in~\cite{Nicolaenko,CIP,Ellis-Pinsky}. Our approach is reminiscent of these ones and relies on the generalization of the paper~\cite{Ellis-Pinsky} to several kinetic equations (including the Landau one) made in~\cite{Yang-Yu} by Yang and Yu. Notice also that such a spectral result has recently been obtained in~\cite{Gervais1} by Gervais for the hard-spheres Boltzmann equation in a larger class of Sobolev spaces. 

\smallskip
More recently, some uniform in $\eps$ estimates on kinetic equations have allowed to prove (at least) weak convergence towards the Navier-Stokes equation. Let us mention~\cite{Jiang-Xu-Zhao,Rachid2} in which the cases of Boltzmann equation without cutoff and the Landau equations are treated.
In~\cite{Briant,BMM}, the authors have obtained  convergence to equilibrium results for the rescaled Boltzmann equation (and also the Landau equation in~\cite{Briant}) uniformly in the rescaling parameter using respectively hypocoercivity and enlargement methods. In~\cite{BMM}, the authors are able to weaken the assumptions on the data down to Sobolev spaces with polynomial weights. We also refer to~\cite{ALT} in which a similar approach combined with perturbative arguments has been used to derive a fluid system from the inelastic Boltzmann equation. Notice that Briant~\cite{Briant} has combined this with Ellis and Pinsky result~\cite{Ellis-Pinsky} to recover strong convergence in the case of the Boltzmann equation.

\smallskip
Finally, let us mention that this problem has been extensively studied in the framework of weak solutions, the goal being to obtain solutions for the fluid models from renormalized solutions introduced by DiPerna and Lions in~\cite{DiPerna-Lions} for the Boltzmann equation. We shall not make an extensive presentation of this program as it is out of the realm of this study, but let us mention that it was started by Bardos, Golse and Levermore at the beginning of the nineties in~\cite{BGL1,BGL2} and was continued by those  authors, Saint-Raymond, Masmoudi, Lions among others. We mention here  a (non exhaustive) list of papers which are part of this program (see~\cite{GSR1,GSR2,Levermore-Masmoudi,Lions-Masmoudi,SRbook}).

\smallskip
Let us focus on the Landau equation for which the literature is scarcer. As mentioned above, in~\cite{GuoBNS}, Guo justifies the limit the Navier-Stokes limit and beyond in the Hilbert's expansion from (the Boltzmann and) the Landau equations (even for the case of very soft potentials) in the torus by using his nonlinear energy method. Our result on the hydrodynamical limit is reminiscent of the one obtained in~\cite{GuoBNS} for the hard and moderately soft potentials in the sense that we work with strong solutions and we prove a strong convergence result. It is however worth noticing that our functional framework is less restrictive (we only work with $3$ derivatives in $x$ and no derivative in $v$ whereas in~\cite{GuoBNS}, regularity on $8$ derivatives in both variables $x$ and $v$ is required). Moreover, there is an important loss of regularity in the estimates of convergence proven in~\cite{GuoBNS} whereas we only lose $\delta$ derivatives in $x$ and there is no loss in $v$ to get a rate of convergence of $\eps^\delta$ from Landau to Navier-Stokes equation (see Theorem~\ref{theo:main2precise}). In the present paper and in~\cite{GuoBNS}, the fluid initial data are supposed to be well-prepared, namely the divergence free condition and the Boussinesq relation~\eqref{eq:divBoussinesq} are supposed to hold. We refer to~\cite{Jiang-Xiong} by Jiang and Xiong for an extension to the case where the fluid part is not supposed to be well-prepared and the creation and propagation of initial layers is studied. In~\cite{GuoBNS,Jiang-Xiong}, the kinetic initial data is supposed to have a specific form so that there is no creation of kinetic initial layers. Our presentation is slightly different since we do not use Hilbert expansion to study the limit towards the Navier-Stokes equation, our assumption being the following: The projection of the kinetic initial data onto the kernel of the linearized operator $L$ is supposed to converge towards the well-prepared fluid initial data as $\eps \to 0$.
Finally, in~\cite{Rachid2}, Rachid obtained a result of weak-$\star$ convergence in $L^\infty_t(H^3_xL^2_v)$ towards the incompressible Navier-Stokes-Fourier system, we have thus strengthen this result for the type of initial data that we consider here. One can notice that the assumptions made on the fluid initial data in~\cite{Rachid2} are a bit less restrictive since the divergence free condition for $u_0$ and the Boussinesq relation for $\rho_0$ and $\theta_0$ are not supposed to hold. The initial layer that such an initial condition creates is absorbed there in the weak convergence. In our framework, we can not hope to absorbe it in a strong convergence framework because the initial layer is  propagated over time in the periodic domain (see ~\cite{Jiang-Xiong}). Note also that in~\cite{Gallagher-Tristani}, the authors were able to treat this type of ``completely ill-prepared'' data only in the case of the whole space since those terms has some dispersive properties in the whole space.

\smallskip

Let us describe into more details our strategy to obtain strong convergence. It is inspired by the ones used in~\cite{Bardos-Ukai,Briant,Gallagher-Tristani}. Indeed, as in~\cite{Gallagher-Tristani}, using the spectral analysis performed in~\cite{Yang-Yu} by Yang and Yu, in order to prove our main convergence result, we reformulate the fluid equation in a kinetic fashion and we then study the equation satisfied by the difference satisfied between the kinetic and the fluid solutions. However, let us point out that we are not able to perform a fixed point argument as in the aforementioned paper. This is due to the fact that the structure of the Landau bilinear operator is more complicated than the hard-spheres Boltzmann one. Indeed, there is an anisotropic loss of derivatives and weights in the nonlinear estimates which prevent us from closing a fixed point estimate. To circumvent this difficulty, we use some new a priori estimates on the solution of the linearized rescaled Landau equation and on the nonlinear rescaled Landau equation~\eqref{eq:geps} that are uniform in the Knudsen number and that have been presented in Theorem~\ref{theo:main1}. By intertwining these refined and sharp kinetic estimates and fluid mechanics ones, we are able to prove a result of strong convergence from the solutions of the Landau equation to the Navier-Stokes one as stated in Theorem~\ref{theo:main2}.

%---------%---------%---------%---------%---------%---------%---------%---------%---------%---------%
\subsection{Outline of the paper} In Section~\ref{sec:preliminaries}, we give some technical results on the Landau collision operator that will be used all along the paper. In Section~\ref{sec:linear}, we develop hypocoercivity and regularization estimates for the linearized problem. In Section~\ref{sec:Cauchyreg}, we develop our perturbative Cauchy theory for the whole nonlinear problem as well as some regularization estimates on it. In Section~\ref{sec:linear2}, we develop some new estimates on the linearized problem that are useful to prove our hydrodynamical result, which is proven in Section~\ref{sec:hydro}. 
%---------%---------%---------%---------%---------%---------%---------%---------%---------%---------%

\medskip
%---------%---------%---------%---------%---------%---------%---------%---------%---------%---------%
\noindent{\bf Acknowledgements.} The authors thank Frédéric Hérau for his valuable advice and fruitful discussions. This work has been partially supported by the Projects EFI: ANR-17-CE40-0030 (K.C.\ and I.T.) and SALVE: ANR-19-CE40-0004 (I.T.) of the French National Research Agency (ANR). 
%---------%---------%---------%---------%---------%---------%---------%---------%---------%---------%

%%%%%%%%%%%%%%%%%%%%%%%%%%%%%%%%%%%%%%%%%%%%%%%%%%%%%%%%
\section{Preliminary results} \label{sec:preliminaries}
%%%%%%%%%%%%%%%%%%%%%%%%%%%%%%%%%%%%%%%%%%%%%%%%%%%%%%%%

In this section, we present some technical results and tools that will be useful throughout the paper. 

\subsection{Collision operator} 

Recalling the definition of the matrix $a=(a_{ij})_{ij}$ in \eqref{eq:aij} and that we use the convention of summation of repeated indices through the paper, we define the following quantities
	\begin{align*}
		& b_i(v) = \partial_j a_{ij}(v) = - 2 \, |v|^\gamma \, v_i, \\
		& c(v) =  \partial_{ij} a_{ij}(v)  = - 2 (\gamma+3) \, |v|^\gamma ,
	\end{align*}		
in such a way that one can rewrite the Landau operator \eqref{eq:oplandau0} as
	\begin{equation}\label{eq:oplandau}
	\begin{aligned}
		Q(g,f) 
		&= ( a_{ij}* g) \partial_{v_i, v_j} f - (c* g) f\\
		&= \partial_{v_i}  \left\{ (a_{ij} * g) \partial_{v_j} f - (b_i * g)f \right\},
	\end{aligned}
	\end{equation}
where $*$ denotes the convolution in the velocity variable $v$.

We now state a technical lemma in which we provide a rewriting of the nonlinear operator~$\Gamma$ defined in \eqref{def:operatorGamma} and the linearized collision operator $L$ defined in \eqref{def:operatorL}.
	\begin{lem}
		There holds
		\begin{equation}\label{eq:Gamma}
		\begin{aligned}
			\Gamma(f_1,f_2)
			& = \partial_{v_i} \left\{ \left(a_{ij}* [\sqrt M f_1] \right) \partial_{v_j} f_2   \right\}
			-\partial_{v_i} \left\{ \left(b_{i}* [\sqrt M f_1] \right) f_2   \right\} \\
			&\quad
			-  \left( a_{ij}* [\sqrt M f_1]\right) v_i \partial_{v_j} f_2 
			+  \frac14 \left( a_{ij}* [\sqrt M f_1]\right) v_i v_j f_2 \\
			&\quad
			-  \frac12 \left( a_{ii}* [\sqrt M f_1]\right) f_2 ,
		\end{aligned}
		\end{equation}
		and 
		\begin{equation}\label{eq:L}
		\begin{aligned}
			L f
			& = \partial_{v_i} \left\{ \left(a_{ij}* M \right) \partial_{v_j} f  \right\}
			+ \left\{ -\frac14 \left( a_{ij}*M  \right)v_i v_j + \frac12  \partial_{v_i} [ \left( a_{ij}*M \right) v_j]   \right\} f \\
			&\quad 
			+ \left(a_{ij}* [\sqrt M f] \right) v_i v_j \sqrt M  
			- \left(a_{ii}* [\sqrt M f] \right) \sqrt M
			- \left( c* [\sqrt M f] \right) \sqrt M .
		\end{aligned}
		\end{equation}
	\end{lem}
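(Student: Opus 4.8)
The plan is to start from the two equivalent forms of the Landau operator in \eqref{eq:oplandau} and plug in the specific arguments required by the definitions \eqref{def:operatorGamma} and \eqref{def:operatorL}, then carefully push all derivatives through the Maxwellian weights. Concretely, for \eqref{eq:Gamma} I would begin with
\[
\Gamma(f_1,f_2) = \frac{1}{\sqrt M} Q\bigl(\sqrt M f_1, \sqrt M f_2\bigr)
= \frac{1}{\sqrt M}\,\partial_{v_i}\Bigl\{ \bigl(a_{ij}*[\sqrt M f_1]\bigr)\partial_{v_j}(\sqrt M f_2) - \bigl(b_i*[\sqrt M f_1]\bigr)\sqrt M f_2 \Bigr\},
\]
using the divergence form of $Q$. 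The only nontrivial point is to expand $\partial_{v_j}(\sqrt M f_2) = \sqrt M\,\partial_{v_j} f_2 - \tfrac12 v_j \sqrt M f_2$ (since $\partial_{v_j}\sqrt M = -\tfrac12 v_j \sqrt M$), substitute this inside the outer $\partial_{v_i}$, and then use the Leibniz rule on the factors $\sqrt M$ that appear, again replacing $\partial_{v_i}\sqrt M$ by $-\tfrac12 v_i \sqrt M$. After multiplying back by $1/\sqrt M$, all the $\sqrt M$'s cancel and one is left with: a genuine divergence term $\partial_{v_i}\{(a_{ij}*[\sqrt M f_1])\partial_{v_j}f_2\}$, the $b_i$-divergence term, a first-order term $-(a_{ij}*[\sqrt M f_1]) v_i \partial_{v_j} f_2$, a zeroth-order term $+\tfrac14(a_{ij}*[\sqrt M f_1]) v_i v_j f_2$, and a zeroth-order term $-\tfrac12(a_{ii}*[\sqrt M f_1]) f_2$; the terms arising from differentiating the convolution $a_{ij}*[\sqrt M f_1]$ reconstitute $b_i*[\sqrt M f_1]$ inside the divergence, which is exactly how the stated $b_i$ term appears. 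This is just bookkeeping: collecting the $\tfrac12 v_i$ contributions from the three places $\sqrt M$ sits (the outer factor, and the two in $\partial_{v_j}(\sqrt M f_2)$, one of which is already inside the divergence) and checking the combinatorial coefficients $\tfrac14$ and $\tfrac12$.

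For \eqref{eq:L}, I would use $L f = \Gamma(\sqrt M, f) + \Gamma(f,\sqrt M)$ and treat the two pieces separately. The term $\Gamma(\sqrt M, f)$ is obtained by specializing the already-derived formula \eqref{eq:Gamma} with $f_1 = \sqrt M$ (so that $\sqrt M f_1 = M$ and the convolutions become $a_{ij}*M$, $b_i*M$, $a_{ii}*M$) and $f_2 = f$; combining the divergence $-\partial_{v_i}\{(b_i*M)f\}$ with the remaining zeroth- and first-order terms, and using $\partial_{v_i}(a_{ij}*M) = b_j*M$ together with $b_i = \partial_j a_{ij}$, one rewrites everything as $\partial_{v_i}\{(a_{ij}*M)\partial_{v_j}f\}$ plus a multiplier acting on $f$; matching that multiplier against $-\tfrac14(a_{ij}*M)v_i v_j + \tfrac12\partial_{v_i}[(a_{ij}*M)v_j]$ is again a Leibniz-rule check. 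The term $\Gamma(f,\sqrt M)$ is where $f$ plays the role of $f_1$: here $\sqrt M f_1 = \sqrt M f$, $f_2 = \sqrt M$, and one computes $\partial_{v_j}\sqrt M = -\tfrac12 v_j\sqrt M$, $\partial_{v_i,v_j}^2 \sqrt M = (-\tfrac12\delta_{ij} + \tfrac14 v_i v_j)\sqrt M$; plugging into the non-divergence form $Q(g,f_2) = (a_{ij}*g)\partial^2_{v_i v_j} f_2 - (c*g) f_2$ with $g = \sqrt M f$ and dividing by $\sqrt M$ gives precisely $(a_{ij}*[\sqrt M f]) v_i v_j \sqrt M - (a_{ii}*[\sqrt M f])\sqrt M - (c*[\sqrt M f])\sqrt M$ — wait, more carefully one gets $(a_{ij}*[\sqrt M f])(\tfrac14 v_i v_j - \tfrac12 \delta_{ij})\sqrt M - (c*[\sqrt M f])\sqrt M$; the stated form absorbs the extra $\tfrac14$ versus $1$ coefficient discrepancy into the split with the $\Gamma(\sqrt M, f)$ multiplier, so one must be careful to combine both contributions before comparing with \eqref{eq:L}. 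The cleanest route is to not split prematurely but to track the total coefficient of $(a_{ij}*[\sqrt M f])v_i v_j \sqrt M$ coming from both $\Gamma(\sqrt M,f)$ and $\Gamma(f,\sqrt M)$.

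The main obstacle, such as it is, is purely organizational: keeping the roles of the two arguments straight (in $\Gamma(f_1,f_2)$ the first slot carries the convolution with the weighted Maxwellian and the second is differentiated), correctly propagating the Gaussian derivatives $\partial_v \sqrt M = -\tfrac12 v\sqrt M$ and $\partial^2_v\sqrt M = (\tfrac14 v\otimes v - \tfrac12\,\mathrm{Id})\sqrt M$ through both the divergence and non-divergence forms of $Q$, and checking that the numerical coefficients $\tfrac14$, $\tfrac12$ in the statement come out right after all cancellations of $\sqrt M$. No analytic input is needed — integrability of the convolutions $a_{ij}*M$ etc.\ is immediate since $M$ is Schwartz and $a_{ij}$ has polynomial growth for $\gamma \in [-2,1]$ — so the proof is a finite, if slightly tedious, symbolic computation that I would present by displaying the expansion of each of the two summands in $L$ and then collecting like terms into the claimed expression.
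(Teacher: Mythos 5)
Your approach for \eqref{eq:Gamma} matches the paper's exactly: expand $\partial_{v_j}(\sqrt M f_2)$ with $\partial_{v_j}\sqrt M = -\tfrac12 v_j\sqrt M$, push the outer $\partial_{v_i}$ through the $\sqrt M$ factors, cancel, and collect. That part of the plan is fine.

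For \eqref{eq:L}, however, there is a genuine computational error in your treatment of $\Gamma(f,\sqrt M)$, and it is precisely the point where you say ``wait, more carefully one gets...''. Recall $\Gamma(f_1,f_2) = \tfrac{1}{\sqrt M} Q(\sqrt M f_1, \sqrt M f_2)$: the \emph{second} argument of $Q$ is also multiplied by $\sqrt M$. Setting $f_2 = \sqrt M$ therefore gives $\sqrt M f_2 = M$, so you must plug $M$ (not $\sqrt M$) into the second slot of the non-divergence form of $Q$. With $\partial_{v_i}\partial_{v_j} M = (v_i v_j - \delta_{ij}) M$ one finds directly
\[
\Gamma(f,\sqrt M) = \frac{1}{\sqrt M} Q(\sqrt M f, M)
= \bigl(a_{ij}*[\sqrt M f]\bigr) v_i v_j \sqrt M - \bigl(a_{ii}*[\sqrt M f]\bigr)\sqrt M - \bigl(c*[\sqrt M f]\bigr)\sqrt M,
\]
which is exactly the last line of \eqref{eq:L}. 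Your first guess was correct; the ``correction'' to coefficients $\tfrac14$ and $\tfrac12$ came from differentiating $\sqrt M$ instead of $M$, and the subsequent claim that ``the stated form absorbs the extra $\tfrac14$ versus $1$ coefficient discrepancy into the split with the $\Gamma(\sqrt M,f)$ multiplier'' is false: there is no cross-absorption. The formula \eqref{eq:L} splits \emph{cleanly}, with the first line being $\Gamma(\sqrt M, f)$ and the last three terms being $\Gamma(f,\sqrt M)$, so you must not mix the two.

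A secondary, smaller gap: for the $\Gamma(\sqrt M, f)$ part, the simplification into ``$\partial_{v_i}\{(a_{ij}*M)\partial_{v_j}f\}$ plus a multiplier'' is not merely a Leibniz-rule check. It relies essentially on the structural identities $a_{ij}(z) z_j = 0$ and $a_{ij}(z)z_i z_j = 0$ (coming from $a$ being a scaled projection onto $z^\perp$), used to convert $(a_{ij}*v_j M)$ into $(a_{ij}*M)v_j$ and to rewrite $-(c*M)$ as $(a_{ii}*M) - (a_{ij}*M)v_iv_j$. Without invoking these you will not be able to match the multiplier against $-\tfrac14(a_{ij}*M)v_iv_j + \tfrac12\partial_{v_i}[(a_{ij}*M)v_j]$.
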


	\begin{proof}
		From the definition of $\Gamma$ in \eqref{def:operatorGamma} and using the formulation \eqref{eq:oplandau} for $Q$, we first obtain 
		\begin{align*}
			\Gamma(g_1, g_2) 
                        &= \frac{1}{\sqrt M} \partial_{v_i} \left\{ \left(a_{ij} * [\sqrt M g_1] \right) \partial_{v_j} (\sqrt M g_2) 
                        - \left(b_i* [\sqrt M g_1] \right) \sqrt M g_2
                        \right\} .
 		\end{align*}
By writing $\partial_{v_j} (\sqrt M g_2)  = \sqrt M \partial_{v_j} g_2 - \frac12 v_j \sqrt M g_2$ we thus obtain
\begin{align*}
			\Gamma(g_1, g_2) 
                        &= \frac{1}{\sqrt M} \partial_{v_i} \left\{ \left(a_{ij} * [\sqrt M g_1] \right) \sqrt M \partial_{v_j} g_2 \right\}
                        -\frac12 \frac{1}{\sqrt M} \partial_{v_i} \left\{ \left(a_{ij} * [\sqrt M g_1] \right) v_j \sqrt M g_2 
                        \right\} \\
                        &\quad 
                        - \frac{1}{\sqrt M} \partial_{v_i} \left\{  \left(b_i* [\sqrt M g_1] \right) \sqrt M g_2
                        \right\} .
 		\end{align*}
Applying the derivative $\partial_{v_i}$ inside the brackets to the term $\sqrt M$ we then get
\begin{align*}
			\Gamma(g_1, g_2) 
                        &= \partial_{v_i} \left\{ \left(a_{ij} * [\sqrt M g_1] \right) \partial_{v_j} g_2 \right\} 
                        -\frac12  \left(a_{ij} * [\sqrt M g_1] \right) v_i \partial_{v_j} g_2 \\
                        &\quad
                        -\frac12  \partial_{v_i} \left\{ \left(a_{ij} * [\sqrt M g_1] \right) v_j  g_2 
                        \right\}
                        +\frac14  \left(a_{ij} * [\sqrt M g_1] \right) v_i v_j  g_2  \\
                        &\quad 
                        -  \partial_{v_i} \left\{  \left(b_i* [\sqrt M g_1] \right) g_2
                        \right\} 
                        +\frac12  \left(b_i* [\sqrt M g_1] \right) v_i g_2 .
 		\end{align*}
Finally, we apply the derivative to the third term in the right-hand side of the above equation and thus we get
		\begin{align*}
			\Gamma(g_1, g_2) 
                        &= \partial_{v_i} \left\{ \left(a_{ij} * [\sqrt M g_1] \right) \partial_{v_j} g_2 \right\} 
                        -\frac12  \left(a_{ij} * [\sqrt M g_1] \right) v_i \partial_{v_j} g_2 \\
                        &\quad
                        -\frac12 \left(b_{j} * [\sqrt M g_1] \right) v_j  g_2 
                        -\frac12 \left(a_{ii} * [\sqrt M g_1] \right)  g_2 
                        -\frac12   \left(a_{ij} * [\sqrt M g_1] \right) v_j  \partial_{v_i} g_2  \\
                        &\quad
                        +\frac14  \left(a_{ij} * [\sqrt M g_1] \right) v_i v_j  g_2  
                        -  \partial_{v_i} \left\{  \left(b_i* [\sqrt M g_1] \right) g_2
                        \right\}  
                        +\frac12  \left(b_i* [\sqrt M g_1] \right) v_i g_2 \\
                        &= \partial_{v_i} \left\{ \left(a_{ij} * [\sqrt M g_1] \right) \partial_{v_j} g_2 \right\} 
                        - \left(a_{ij} * [\sqrt M g_1] \right) v_i \partial_{v_j} g_2 \\
                        &\quad
                        -\frac12 \left(a_{ii} * [\sqrt M g_1] \right)  g_2   
                        +\frac14  \left(a_{ij} * [\sqrt M g_1] \right) v_i v_j  g_2 
                        -  \partial_{v_i} \left\{  \left(b_i* [\sqrt M g_1] \right) g_2
                        \right\}  
		\end{align*}
	which completes the proof of \eqref{eq:Gamma}.

\smallskip

        We now prove \eqref{eq:L}. From \eqref{eq:Gamma} we get 
                \begin{align*}
                        \Gamma(\sqrt M , f)
                        &= \partial_{v_i} \left\{ \left(a_{ij}* M \right) \partial_{v_j} f \right\}
                        -\partial_{v_i} \left\{ \left(b_{i}* M \right) f   \right\} \\
                        &\quad
                        -  \left( a_{ij}* M\right) v_i \partial_{v_j} f 
                        +  \frac14 \left( a_{ij}* M \right) v_i v_j f
                        -  \frac12 \left( a_{ii}* M\right) f.
                \end{align*}
For the second term in the right-hand side of above equation, we observe that 
$$
-\partial_{v_i} \left\{ \left(b_{i}* M \right) f   \right\} 
= - \left(c* M \right) f  - \left(b_{i}* M \right) \partial_{v_i}f ,
$$ 
as well as
$$
\begin{aligned}
- \left(b_{i}* M \right) \partial_{v_i}f 
&= - \left(\partial_{v_j} a_{ij}* M \right) \partial_{v_i}f 
=  \left(a_{ij}* v_j M \right) \partial_{v_i}f 
=  \left(a_{ij}*  M \right) v_j \partial_{v_i}f 
\end{aligned}
$$
by using that $a_{ij}(v-w)(v_i-w_i)=0$. We also remark, using that $a_{ij}(v-w)(v_i-w_i)(v_j-w_j)=0$,
$$
\begin{aligned}
-(c*M) 
&= -(a_{ij} * \partial_{v_i} \partial_{v_j} M ) 
= (a_{ij} * \delta_{ij} M )- (a_{ij} *  v_i v_j M) \\
&= (a_{ii} * M) - (a_{ij} * M) v_i v_j,
\end{aligned}
$$
as well as
$$
\partial_{v_i} [ (a_{ij}*M) v_j]
= -  (a_{ij}*  M) v_i v_j +   (a_{ii}*M).
$$
Putting together previous equalities, we finally obtain
$$
\Gamma(\sqrt M , f)
= \partial_{v_i} \left\{ \left(a_{ij}* M \right) \partial_{v_j} f  \right\}
+ \left\{ -\frac14 \left( a_{ij}*M  \right)v_i v_j + \frac12  \partial_{v_i} [ (a_{ij}*M) v_j]  \right\} f .
$$

Moreover, from \eqref{eq:Gamma} we get 
		\begin{align*}
                        \Gamma(f , \sqrt M)
                        &= \partial_{v_i} \left\{ \left(a_{ij}* [\sqrt M f] \right) \partial_{v_j} \sqrt M   \right\}
                        -\partial_{v_i} \left\{ \left(b_{i}* [\sqrt M f] \right) \sqrt M  \right\} \\
                        &\quad
                        -  \left( a_{ij}* [\sqrt M f_1]\right) v_i \partial_{v_j} \sqrt M 
                        +  \frac14 \left( a_{ij}* [\sqrt M f]\right) v_i v_j \sqrt M \\
                        &\quad
                        -  \frac12 \left( a_{ii}* [\sqrt M f]\right) \sqrt M 
		\end{align*}
so that, developing the derivatives $\partial_{v_i}$, we get
	\begin{align*}
                        \Gamma(f , \sqrt M)
                        &=  \left(b_{j}* [\sqrt M f] \right) \partial_{v_j} \sqrt M   
                        + \left(a_{ij}* [\sqrt M f] \right) \partial_{v_i}\partial_{v_j} \sqrt M    \\
                        &\quad 
                        - \left( c* [\sqrt M f] \right) \sqrt M 
                        - \left(b_{i}* [\sqrt M f] \right) \partial_{v_i}\sqrt M   \\
                        &\quad
                        -  \left( a_{ij}* [\sqrt M f]\right) v_i \partial_{v_j} \sqrt M 
                        +  \frac14 \left( a_{ij}* [\sqrt M f]\right) v_i v_j \sqrt M \\
                        &\quad 
                        -  \frac12 \left( a_{ii}* [\sqrt M f]\right) \sqrt M.
		\end{align*}
Observing that 
                $$
                \partial_{v_j} \sqrt M = - \frac12 v_j \sqrt M 
				\quad \text{and} \quad
                \partial_{v_i}\partial_{v_j} \sqrt M 
                = - \frac12 \delta_{ij} \sqrt M
                + \frac14 v_i v_j \sqrt M ,
                $$
we then get
		\begin{align*}
                        \Gamma(f , \sqrt M)
                        &= 
                        \left(a_{ij}* [\sqrt M f] \right) v_i v_j \sqrt M  
                        - \left(a_{ii}* [\sqrt M f] \right) \sqrt M
                        - \left( c* [\sqrt M f] \right) \sqrt M  ,
                \end{align*}
which concludes the proof.
\end{proof}

For $v \in \R^3$, we define the symmetric matrix $\mathbf{A}(v)=({A}_{ij}(v))_{1\leq  i,j \leq 3}$ whose coefficients are given by
$$ 
{A}_{ij}(v)= \left( {a}_{ij}*_{v} M \right) (v).
$$
We can decompose ${\bf A}(v)$ as $\mathbf{A}(v)= \mathbf{B}^{\top}(v)\mathbf{B}(v)$, where $\mathbf{B}(v)=(B_{ij}(v))_{1\leq  i,j \leq 3}$ is a matrix with real-valued smooth entries (see~\cite{Rachid1}).
Recall also that from \cite{DegondLemou}, for $v \in \R^3 \setminus \{0\}$, the matrix~${\bf A}(v)$ has a simple eigenvalue~$\ell_1(v)>0$ associated with the eigenvector $v$ and a double eigenvalue~$\ell_2(v)>0$ associated with the eigenspace $v^{\perp}$. Moreover,
	%$$
	%\bal
	%\ell_1(v) &= \int_{\R^3} \left(1 - \left(\frac{v}{|v|}\cdot\frac{w}{|w|}   \right)^2   \right) |w|^{\gamma+2} \mu(v-w)\, dw\\
	%\ell_2(v) &= \int_{\R^3} \left(1 - \frac12 \left| \frac{v}{|v|}\times\frac{w}{|w|}  \right|^2   \right) |w|^{\gamma+2} \mu(v-w)\, dw .
	%\eal
	%$$
when $|v|\to +\infty$, we have
	\begin{equation} \label{eq:vp}
	\ell_1(v) \sim  2 \langle v \rangle^\gamma \quad\text{and}\quad
	\ell_2(v) \sim  \langle v \rangle^{\gamma+2} .
	\end{equation}
As such, one can write that for any $v \in \R^3 \setminus \{0\}$, 
	$$
	\mathbf{A}(v) 
	%= \ell_1(v) \Pi_v + \ell_2(v) \Pi^\perp_v
	= \ell_1(v) \frac{v}{|v|} \otimes \frac{v}{|v|}  + \ell_2(v) \left( \Id - \frac{v}{|v|} \otimes \frac{v}{|v|} \right),
	$$
where $\Id$ denotes the identity matrix and
	\begin{equation} \label{def:B}
	{\bf B}(v) = \sqrt{\ell_1}(v) \frac{v}{|v|} \otimes \frac{v}{|v|} + \sqrt{\ell_2}(v) \left( \Id - \frac{v}{|v|} \otimes \frac{v}{|v|} \right).
	\end{equation}
In what follows, we will use the following differential operators
	\begin{equation} \label{def:DvDx}
		\widetilde{\nabla}_v := \mathbf{B}(v) \nabla_v \quad \text{and} \quad \widetilde{\nabla}_x := \mathbf{B}(v) \nabla_x
	\end{equation} 
as well as their adjoint operators in $L^2_v$ given by, for $F: \R^d \to \R^d$, 
\begin{equation} \label{def:Dv*Dx*}
		(\widetilde\nabla_v)^* F = - \nabla_v \cdot ({\bf B}(v) F ) \quad \text{and} \quad  
		(\widetilde{\nabla}_x)^* F = - \widetilde \nabla_x \cdot F.
\end{equation}

Using the formulation \eqref{eq:L} of the linearized collision operator $L$, we can therefore rewrite it as 
\begin{equation}\label{eq:Lbis}
\begin{aligned}
L f
&=  - \widetilde \nabla_v^* \widetilde \nabla_v f 
- \left\{ \frac14 |\mathbf{B}(v) v |^2 
-\frac12 \nabla_{v}\cdot\left[ \mathbf{B}^{\top}(v)\mathbf{B}(v) v \right]  \right\} f \\
&\quad 
- \left\{ \left( a_{ij} * \sqrt M f \right)  v_i v_j  
- \left(a_{ii} * \sqrt M f \right)
+ \left( c * \sqrt M f \right) \right\} \sqrt M.
\end{aligned}
\end{equation}

The functions ${A}_{ij}, {B}_{ij}$ verify the following properties (see for example~\cite{GuoBNS,Rachid1}):
For any multi-index $ \alpha \in \N^{3}$ we have, for all $v\in \R^{3}$,
\begin{equation}\label{eq:nablaBij}
	\vert\partial_{v}^{\alpha} {A}_{ij}(v)\vert \lesssim {\langle v \rangle}^{{\gamma+2-\vert \alpha \vert }},\quad 
	\vert \partial_{v}^{\alpha} B_{ij}(v)\vert \lesssim {\langle v \rangle}^{\frac{\gamma}{2}+1-\vert \alpha \vert }. 
\end{equation}

From \cite{DegondLemou,GuoLandau,Mouhot,MouhotStrain}, we also know that $L$ has a spectral gap, more precisely, there is~$\sigma_L >0$ such that 
\begin{equation}\label{eq:spectralgap}
\la L f , f \ra_{L^2_v} \le - \sigma_L \| f - \pi f \|_{H^1_{v,*}}^2
\end{equation}
for any $f \in \mathrm{Dom}(L)$, where we recall that $\pi$ is the projector onto the kernel of $L$ defined in~\eqref{def:pi} and the $H^1_{v,*}$-norm is defined in~\eqref{def:H1v*}. Notice that in the case $-2 \le \gamma \le 1$ which we study in the present paper, the previous inequality is indeed a coercivity estimate because $\|\cdot\|_{H^1_{v,*}} \ge \|\cdot\|_{L^2_v}$.

\smallskip

%---------%---------%---------%---------%---------%---------%---------%---------%---------%---------%
\subsection{Functional spaces} \label{subsec:funcspaces}
%---------%---------%---------%---------%---------%---------%---------%---------%---------%---------%

We first notice that the $H^1_{v,*}$-norm in velocity defined in~\eqref{def:H1v*} is also equal to
$$
\| f \|_{H^1_{v,*}}^2 
= \|  \langle v \rangle^{\frac{\gamma}{2} + 1} f \|_{L^{2}_{v}}^{2}
+\| \widetilde \nabla_{v} f \|_{L^{2}_{v}}^{2}
$$
where $\widetilde \nabla_v$ is defined in~\eqref{def:DvDx}.  
We also introduce some $H^2$~norm in velocity defined through the following norm:
\begin{equation*}\label{def:H2v*}
	\| f \|_{H^2_{v,*}}^2 :=
	\| \langle v \rangle^{\gamma + 2} f \|_{L^2_v}^2
	+ \| \widetilde \nabla_v ( \langle v \rangle^{\frac{\gamma}{2} + 1} f) \|_{L^2_v}^2
	+ \| \langle v \rangle^{\frac{\gamma}{2} + 1} \widetilde \nabla_v f \|_{L^2_v}^2 
	+ \| \widetilde \nabla_v  \widetilde \nabla_v f \|_{L^2_v}^2.
\end{equation*}
Similarly to the definitions of~$\XXX$ and~$\YYY_1$, we define the weighted Sobolev-type space $\YYY_2$ as the space associated to the norm
	\begin{equation} \label{def:normYYY2}
	\begin{aligned}
	\| f \|_{\YYY_2}^2 
	&:= \| \langle v \rangle^{3(\frac{\gamma}{2}+1)} f \|_{L^2_x (H^2_{v,*})}^2 
	+ \| \langle v \rangle^{2(\frac{\gamma}{2}+1)} \nabla_x f \|_{L^2_x (H^2_{v,*})}^2 \\
	&\quad 
	+ \| \langle v \rangle^{\frac{\gamma}{2}+1} \nabla^2_x f \|_{L^2_x (H^2_{v,*})}^2 
	+ \|\nabla^3_x f \|_{L^2_x (H^2_{v,*})}^2.
	\end{aligned}	
	\end{equation}

We then introduce the spaces $\ZZZ_i^\eps$ for $i=1,2$ that involve derivatives in $x$: $\ZZZ_1^\eps$ is associated with the norm
	\begin{equation} \label{def:normZZZ1eps}
	\|f\|^2_{\ZZZ_1^\eps} := \|f\|^2_\XXX + \|f\|^2_{\YYY_1} + \eps^2 \|\widetilde{\nabla}_x f\|^2_\XXX,
	\end{equation}
and $\ZZZ_2^\eps$ is associated with 
	\begin{equation} \label{def:normZZZ2eps}
	\|f\|^2_{\ZZZ_2^\eps} := \|f\|^2_{\ZZZ_1^\eps} + \|f\|^2_{\YYY_2} + \eps^4 \|\widetilde{\nabla}_x^2 f\|^2_\XXX.
	\end{equation}
For the sequel, it is worth noticing that if $f \in \ZZZ_2^\eps$, then $\eps \widetilde \nabla_v \widetilde \nabla_x f \in \XXX$. Indeed, a simple computation based on integrations by parts shows that 
	$$
	\eps \| \widetilde \nabla_v \widetilde \nabla_x f \|_\XXX 
	\lesssim \eps^2 \| \widetilde \nabla_x^2 f \|_\XXX + \|f\|_{\YYY_2}.
	$$

For $i=1,2$, we also define the associated dual spaces $(\YYY_i)'$ and $(\ZZZ_i^\eps)'$ with $\XXX$ as a pivot space, more precisely, they are associated with the following norms:
	\begin{equation} \label{def:YYYi'}
	\|f\|_{\YYY_i'} := \sup_{\|\varphi\|_{\YYY_i} \leq 1} \langle f, \varphi\rangle_\XXX
	\end{equation}
and
	\begin{equation} \label{def:ZZZieps'}
	\|f\|_{(\ZZZ_i^\eps)'} := \sup_{\|\varphi\|_{\ZZZ_i^\eps} \leq 1} \langle f, \varphi\rangle_\XXX
	\end{equation}
where $\langle \cdot,\cdot \rangle_\XXX$ is the scalar product associated to $\|\cdot\|_\XXX$ defined in~\eqref{def:normXXX}.
Notice that we have the following interpolation result: 
	\begin{equation} \label{eq:interp}
		\left[\XXX,(\ZZZ_2^\eps)'\right]_{1/2,2} = (\ZZZ_1^\eps)'.
	\end{equation}
The notation used above is the classical one of real interpolation (see~\cite{BookBL}). For sake of completeness, we briefly recall the meaning of this notation. For $C$ and $D$ two Banach spaces which are both embedded in the same Hausdorff topological vector space, for any $z \in C+D$, we define the $K$-function by
$$
K(t,z) := \inf_{z=c+d} \left(\|c\|_C+t\|d\|_D\right), \quad \forall \, t>0.
$$
The space $[C,D]_{\theta,p}$ for $\theta \in (0,1)$ and $p \in [1,+\infty]$ is then defined by:
$$
[C,D]_{\theta,p} := \left\{z \in C+D, \, \, t \mapsto K(t,z)/t^\theta \in L^p\left(\d t/t\right)\right\}.
$$

%---------%---------%---------%---------%---------%---------%---------%---------%---------%---------%
\subsection{Basic estimates} 
%---------%---------%---------%---------%---------%---------%---------%---------%---------%---------%

We gather in this subsection some basic estimates concerning the collision operator $L$ that will be useful in the sequel.

In order to simplify, we recall the formulation of the operator $L$ in \eqref{eq:Lbis} and we introduce the function
\begin{equation}\label{eq:def:psi}
\psi(v) := \frac14 |\mathbf{B}(v) v |^2 
-\frac12 \nabla_{v}\cdot\left[ \mathbf{B}^{\top}(v)\mathbf{B}(v) v \right]
\end{equation}
as well as the operators 
\begin{equation}\label{eq:def:L1}
L_1 f := - \widetilde \nabla_v^* \widetilde \nabla_v f - \psi f
\end{equation}
and
\begin{equation}\label{eq:def:L2}
L_2 f:= - \left\{ \left( a_{ij} * \sqrt M f \right) v_i v_j 
- \left(a_{ii} * \sqrt M f \right)
+ \left( c * \sqrt M f \right) \right\} \sqrt M
\end{equation}
so that we have $L = L_1 + L_2$.

\medskip

We start with some basic commutator estimates.
\begin{lem}\label{lem:commutator}
For any suitable function $f=f(x,v)$ and any $(x,v) \in \T^3 \times \R^3$, there holds:

\medskip\noindent
(i)
$[\widetilde \nabla_v , v \cdot \nabla_x] f(x,v) = \widetilde \nabla_x f(x,v)$.

\medskip\noindent
(ii) For any $\alpha \in \R$ and $1 \le j \le 3$, one has
			$$ 
			|[ \langle v \rangle^\alpha, \widetilde \nabla_{v_j}] f |(x,v) 
			= |[ \langle v \rangle^\alpha, \widetilde \nabla_{v_j}^*] f |(x,v) \lesssim \langle v \rangle^{\frac{\gamma}{2} + \alpha-1} |f|(x,v).
			$$
		
\medskip\noindent
(iii) For any $1 \le i,j \le 3$, one has
			$$
			|[\widetilde \nabla_{v_i} , \widetilde \nabla_{v_j}] f |(x,v)  \lesssim \langle v \rangle^{\gamma+1}  |\nabla_v f|(x,v) .
			$$

\medskip\noindent
(iv) For any $1 \le i,j \le 3$, one has
			$$ 
			|[\widetilde \nabla_{v_i} , \widetilde \nabla_{v_j}^*] f |(x,v) 
			\lesssim \langle v \rangle^{\gamma+1}  |\nabla_v f|(x,v)  +  \langle v \rangle^{\gamma} |f|(x,v) .
			$$

\medskip\noindent
(v) For any $1 \le i,j \le 3$, one has
			$$ 
			|[\widetilde \nabla_{v_i} , \widetilde \nabla_{x_j}] f |(x,v) 
			= |[\widetilde \nabla_{v_i}^* , \widetilde \nabla_{x_j}] f | (x,v) 
			\lesssim \langle v \rangle^{\gamma+1}  |\nabla_x f|(x,v) .
			$$
		% and
		% 	$$
		% 	|[ \widetilde \nabla_{v_i} , \langle v \rangle^{\frac{\gamma}{2}} \partial_{x_j}] f | (x,v) 
		% 	= |[ \widetilde \nabla_{v_i}^* , \langle v \rangle^{\frac{\gamma}{2}} \partial_{x_j}] f | (x,v) 
		% 	\lesssim \langle v \rangle^{\gamma-1}  |\nabla_x f|(x,v)  .
		% 	$$
		
\medskip\noindent
(vi) For any $\alpha \in \R$, one has
			$$ 
			\left| \left[ [\langle v \rangle^{\alpha}, \widetilde \nabla_{v_j}^*] , \widetilde \nabla_{v_j} \right] f \right|(x,v) 
			\lesssim   \langle v \rangle^{\gamma + \alpha - 1}  |f|(x,v) .
			$$

\medskip\noindent
(vii) For any $1 \le i,j,k \le 3$, one has
			$$ 
			\left| \left[ [\widetilde \nabla_{v_i} , \widetilde \nabla_{v_j}] , \widetilde \nabla_{v_k} \right] f \right|(x,v) 
			\lesssim \langle v \rangle^{\frac{3\gamma}{2}+1}  |\nabla_v f|(x,v)   .
			$$
			and 
			$$ 
			\left| \left[ [\widetilde \nabla_{v_i} , \widetilde \nabla_{v_j}^*] , \widetilde \nabla_{v_k} \right] f \right|(x,v) 
			\lesssim \langle v \rangle^{\frac{3\gamma}{2}+1}  |\nabla_v f|(x,v)  +  \langle v \rangle^{\frac{3\gamma}{2}}  |f|(x,v) .
			$$

\medskip\noindent
(viii) For any $1 \le i,j,k \le 3$, one has
			$$ 
			\left| \left[ [\widetilde \nabla_{x_i} , \widetilde \nabla_{v_j}] , \widetilde \nabla_{v_k} \right] f \right|(x,v) = \left| \left[ [\widetilde \nabla_{x_i} , \widetilde \nabla_{v_j}^*] , \widetilde \nabla_{v_k} \right] f \right|(x,v) 
			\lesssim \langle v \rangle^{\frac{3\gamma}{2}+1}  |\nabla_x f|(x,v) .
			$$
		% and $$ 
		% 	\left| \left[ [ \langle v \rangle^{\frac{\gamma}{2}} \partial_{x_i} , \widetilde \nabla_{v_j}] , \widetilde \nabla_{v_k} \right] f \right|(x,v)
		% 	= \left| \left[ [ \langle v \rangle^{\frac{\gamma}{2}} \partial_{x_i} , \widetilde \nabla_{v_j}^*] , \widetilde \nabla_{v_k} \right] f \right|(x,v) 
		% 	\lesssim \langle v \rangle^{\frac{3\gamma}{2}-1}  |\nabla_x f|(x,v) .
		% 	$$
\end{lem}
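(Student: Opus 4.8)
The plan is to prove all eight commutator bounds by the same elementary mechanism: write the anisotropic operators $\widetilde\nabla_v = \mathbf{B}(v)\nabla_v$ and $\widetilde\nabla_x = \mathbf{B}(v)\nabla_x$ explicitly in coordinates, so that $\widetilde\nabla_{v_i} f = B_{ik}(v)\,\partial_{v_k} f$ and $\widetilde\nabla_{v_i}^* f = -\partial_{v_k}(B_{ki}(v) f)$ (and similarly $\widetilde\nabla_{x_i} f = B_{ik}(v)\,\partial_{x_k} f$, $\widetilde\nabla_{x_i}^* f = -B_{ik}(v)\partial_{x_k} f$ since $\mathbf{B}$ does not depend on $x$), and then reduce every commutator to a finite sum of terms in which a derivative falls on a coefficient $B_{ij}$, on $\langle v\rangle^\alpha$, or on the symmetric matrix $\mathbf{B}$. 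Each such term is then controlled using the pointwise decay estimates $|\partial_v^\alpha B_{ij}(v)| \lesssim \langle v\rangle^{\gamma/2+1-|\alpha|}$ from~\eqref{eq:nablaBij} together with $|\partial_v^\alpha \langle v\rangle^\beta| \lesssim \langle v\rangle^{\beta-|\alpha|}$ and the fact that $\mathbf{B}$ is symmetric so $B_{ij}=B_{ji}$. I expect the structure of the argument to be: first establish (i)--(v) (single commutators), then (vi)--(viii) (double commutators) by applying the same bookkeeping to the already-computed first commutators.

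For the individual items I would proceed as follows. For (i), since $\mathbf{B}(v)$ commutes with $\nabla_x$ and $\partial_{x_j}$ commutes with $v\cdot\nabla_x$ and with $\partial_{v_k}$, one computes $[\widetilde\nabla_{v_i}, v\cdot\nabla_x]f = B_{ik}(v)[\partial_{v_k}, v_\ell\partial_{x_\ell}]f = B_{ik}(v)\,\partial_{x_k} f = \widetilde\nabla_{x_i} f$; this is an exact identity, no estimates needed. For (ii) the commutator $[\langle v\rangle^\alpha, \widetilde\nabla_{v_j}]f = -\langle v\rangle^\alpha B_{jk}(\partial_{v_k}f) \cdot(\text{cancellation}) + \ldots$ reduces to $-B_{jk}(v)\,(\partial_{v_k}\langle v\rangle^\alpha)\,f$ up to similar terms, which by~\eqref{eq:nablaBij} and $|\partial_{v_k}\langle v\rangle^\alpha|\lesssim\langle v\rangle^{\alpha-1}$ is $\lesssim \langle v\rangle^{\gamma/2+\alpha-1}|f|$; the adjoint version is handled identically after expanding $\widetilde\nabla_{v_j}^*$. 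For (iii), $[\widetilde\nabla_{v_i},\widetilde\nabla_{v_j}]f = B_{ik}\partial_{v_k}(B_{j\ell}\partial_{v_\ell}f) - B_{j\ell}\partial_{v_\ell}(B_{ik}\partial_{v_k}f)$; the pure second-order terms cancel by symmetry of second derivatives (and here one uses $B_{ik}B_{j\ell}=B_{j\ell}B_{ik}$), leaving $B_{ik}(\partial_{v_k}B_{j\ell})\partial_{v_\ell}f - B_{j\ell}(\partial_{v_\ell}B_{ik})\partial_{v_k}f$, each term $\lesssim \langle v\rangle^{\gamma/2+1}\cdot\langle v\rangle^{\gamma/2}|\nabla_v f| = \langle v\rangle^{\gamma+1}|\nabla_v f|$. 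Item (iv) is the same computation with an extra term coming from the second derivative of $\mathbf{B}$ hitting $f$ (from the adjoint structure), giving the additional $\langle v\rangle^{\gamma}|f|$ contribution. Item (v) is easier because $\widetilde\nabla_{x_j}$ carries no $v$-derivatives: only $B_{ik}(\partial_{v_k}B_{j\ell})\partial_{x_\ell}f$-type terms survive, hence $\lesssim\langle v\rangle^{\gamma+1}|\nabla_x f|$, and the $\widetilde\nabla_{v_i}^*$ version differs only in sign bookkeeping.

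For the double commutators (vi)--(viii) I would feed the expressions already obtained for the single commutators back into the same machine. For instance in (vii), $[\widetilde\nabla_{v_i},\widetilde\nabla_{v_j}]f$ is, from the proof of (iii)/(iv), a first-order differential operator in $v$ with coefficients that are products of entries of $\mathbf{B}$ and $\nabla_v\mathbf{B}$; commuting once more with $\widetilde\nabla_{v_k}=B_{km}\partial_{v_m}$ produces terms where $\partial_{v_m}$ hits one of these coefficients, lowering the weight by one unit relative to $\langle v\rangle^{\gamma+1}$ but picking up an extra factor $\langle v\rangle^{\gamma/2+1}$ from the outer $B_{km}$, which is exactly the announced $\langle v\rangle^{3\gamma/2+1}|\nabla_v f|$ (and the lower-order piece $\langle v\rangle^{3\gamma/2}|f|$ in the adjoint case). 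Items (viii) follows the same pattern using the $x$-derivative versions and the fact that $\mathbf{B}$ is $x$-independent. The main bookkeeping obstacle — and the only point requiring care — is not any single estimate but rather keeping track of exactly which second-order terms cancel: one must use repeatedly both the symmetry of mixed partial derivatives and the commutativity/symmetry of the matrix $\mathbf{B}(v)$, and verify that after these cancellations every surviving term genuinely contains at most as many derivatives of $f$ as claimed; beyond that, all bounds are immediate consequences of~\eqref{eq:nablaBij}.
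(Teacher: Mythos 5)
Your overall approach matches the paper's (expand in coordinates, let derivatives fall on the coefficients, invoke \eqref{eq:nablaBij}), and your treatment of items (i), (iii), (iv), (v), (vii), (viii) is essentially the argument the authors give. However, there is a genuine gap in item (ii), which also propagates to (vi).

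For (ii) you claim that $B_{jk}(v)\,\partial_{v_k}\langle v\rangle^\alpha$ is $\lesssim\langle v\rangle^{\gamma/2+\alpha-1}|f|$ ``by \eqref{eq:nablaBij} and $|\partial_{v_k}\langle v\rangle^\alpha|\lesssim\langle v\rangle^{\alpha-1}$''. But those two ingredients only give $\langle v\rangle^{\gamma/2+1}\cdot\langle v\rangle^{\alpha-1}=\langle v\rangle^{\gamma/2+\alpha}$, one power too weak. The entry-wise bound $|B_{jk}|\lesssim\langle v\rangle^{\gamma/2+1}$ reflects the \emph{large} eigenvalue $\sqrt{\ell_2}\sim\langle v\rangle^{\gamma/2+1}$, which lives on the subspace orthogonal to $v$; it cannot detect that the vector you are applying $\mathbf B(v)$ to, namely $\nabla_v\langle v\rangle^\alpha=\alpha\,v\,\langle v\rangle^{\alpha-2}$, is \emph{radial}, i.e.\ parallel to the eigenvector associated with the \emph{small} eigenvalue $\sqrt{\ell_1}\sim\langle v\rangle^{\gamma/2}$. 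The paper uses exactly this structural fact: $[\langle v\rangle^\alpha,\widetilde\nabla_{v_j}]f=-(\widetilde\nabla_{v_j}\langle v\rangle^\alpha)f$ with $\widetilde\nabla_{v_j}\langle v\rangle^\alpha=\alpha(\mathbf B(v)v)_j\langle v\rangle^{\alpha-2}$, and then the improved bound $|\mathbf B(v)v|\lesssim\langle v\rangle^{\gamma/2+1}$ (not $\langle v\rangle^{\gamma/2+2}$), which follows from the explicit decomposition $\mathbf B(v)v=\sqrt{\ell_1}(v)\,v$ in \eqref{def:B}, not from \eqref{eq:nablaBij}. Without this observation the term-by-term mechanism you describe is one power short. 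The same point is needed in (vi): the paper reduces the bracket to $-(\widetilde\nabla_{v_j}(\widetilde\nabla_{v_j}\langle v\rangle^\alpha))f$ and again relies on $\widetilde\nabla_{v_j}\langle v\rangle^\alpha=\alpha(\mathbf B(v)v)_j\langle v\rangle^{\alpha-2}$ before invoking \eqref{eq:nablaBij}. So you should supplement your argument with the eigenstructure of $\mathbf B(v)$ on the radial direction; otherwise items (ii) and (vi) are not established with the claimed weight.
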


\begin{proof} 
Recall that we denote $\mathbf B(v) = (B_{ij})_{1 \le i,j \le 3}$ and that we have 
$$
\widetilde \nabla_{v_i} f = B_{im} \partial_{v_m} f, \,\,\,\,
\widetilde \nabla_{x_i} f = B_{im} \partial_{x_m} f, \,\,\,\,
\widetilde \nabla_{v_i}^* f =  -\partial_{v_m} ( B_{im} f), \,\,\,\,
\widetilde \nabla_{x_i}^* f = - B_{im} \partial_{x_m} f = - \widetilde \nabla_{x_i} f.
$$

\medskip\noindent
(i) We have 
$$
\begin{aligned}{}
[\widetilde \nabla_{v_i} , v_\ell \partial_{x_\ell} ] f 
&= B_{im} \partial_{v_m} ( v_\ell \partial_{x_\ell}f) 
- v_\ell \partial_{x_\ell} ( B_{im} \partial_{v_m} f) \\
&= B_{i\ell} \partial_{x_\ell}f
+ B_{im}  v_\ell \partial_{v_m} \partial_{x_\ell}f 
- v_\ell B_{im} \partial_{x_\ell}  \partial_{v_m} f \\
&= \widetilde \nabla_{x_i} f.
\end{aligned}
$$

\medskip\noindent
(ii) We easily compute 
$$
\begin{aligned}{}
[\langle v \rangle^{\alpha} , \widetilde \nabla_{v_j}] f 
&= - ( \widetilde \nabla_{v_j} \langle v \rangle^{\alpha}) f
\end{aligned}
$$
as well as 
$$
\begin{aligned}{}
[\langle v \rangle^{\alpha} , \widetilde \nabla_{v_j}^*] f 
&= ( \widetilde \nabla_{v_j} \langle v \rangle^{\alpha}) f.
\end{aligned}
$$
We conclude the proof by remarking that $\widetilde \nabla_{v_j} \langle v \rangle^{\alpha} = \alpha (\mathbf B(v) v)_{j} \langle v \rangle^{\alpha-2}$ and using that $|\mathbf B(v) v| \lesssim \langle v \rangle^{\frac{\gamma}{2} +1}$ thanks to the definition \eqref{def:B} of $\mathbf B(v)$.

\medskip\noindent
(iii) We easily compute 
$$
\begin{aligned}{}
[\widetilde \nabla_{v_i} , \widetilde \nabla_{v_j}] f 
&= B_{im} \partial_{v_m} ( B_{j \ell} \partial_{v_\ell} f) - B_{j\ell} \partial_{v_\ell} ( B_{im} \partial_{v_m} f) \\
&= B_{im}   B_{j \ell} \partial_{v_m} \partial_{v_\ell} f
+ B_{im} (\partial_{v_m} B_{j \ell}) \partial_{v_\ell} f
- B_{j\ell}  B_{im} \partial_{v_\ell}  \partial_{v_m} f
- B_{j\ell} (\partial_{v_\ell} B_{im}) \partial_{v_m} f \\
&= 
B_{im} (\partial_{v_m} B_{j \ell}) \partial_{v_\ell} f
- B_{j\ell} (\partial_{v_\ell} B_{im}) \partial_{v_m} f ,
\end{aligned}
$$
and we conclude the proof using \eqref{eq:nablaBij}.

\medskip\noindent
(iv) We have 
$$
\begin{aligned}{}
[\widetilde \nabla_{v_i} , \widetilde \nabla_{v_j}^*] f 
&= -B_{im} \partial_{v_m} \partial_{v_\ell} ( B_{j \ell} f) + \partial_{v_\ell}(B_{j\ell} B_{im} \partial_{v_m} f) \\
&= -B_{im}B_{j \ell}  \partial_{v_m} \partial_{v_\ell} f 
-B_{im} (\partial_{v_m}B_{j \ell}) \partial_{v_\ell} f \\
&\quad
-B_{im} (\partial_{v_m} \partial_{v_\ell} B_{j \ell}) f
-B_{im} (\partial_{v_\ell} B_{j \ell}) \partial_{v_m}  f  \\
&\quad
+ B_{im} (\partial_{v_\ell} B_{j\ell}) \partial_{v_m} f
+ B_{j\ell} (\partial_{v_\ell}B_{im}) \partial_{v_m} f
+ B_{j\ell} B_{im} \partial_{v_\ell}\partial_{v_m} f \\
&= 
-B_{im} (\partial_{v_m}B_{j \ell}) \partial_{v_\ell} f 
+ B_{j\ell} (\partial_{v_\ell}B_{im}) \partial_{v_m} f
-B_{im} (\partial_{v_m} \partial_{v_\ell} B_{j \ell}) f.
\end{aligned}
$$
We can simplify last expression by relabelling the indices $m$ and $\ell$ of the second term, which gives
$$
\begin{aligned}{}
[\widetilde \nabla_{v_i} , \widetilde \nabla_{v_j}^*] f 
&= 
\left[ B_{jm} (\partial_{v_m}B_{i\ell}) -B_{im} (\partial_{v_m}B_{j \ell}) \right]\partial_{v_\ell} f
-B_{im} (\partial_{v_m} \partial_{v_\ell} B_{j \ell}) f.
\end{aligned}
$$
We then conclude the proof by using using \eqref{eq:nablaBij}.

\medskip\noindent
(v) We have 
$$
\begin{aligned}{}
[\widetilde \nabla_{v_i} , \widetilde \nabla_{x_j}] f 
&= B_{im} \partial_{v_m} ( B_{j \ell} \partial_{x_\ell} f) - B_{j\ell} \partial_{x_\ell} ( B_{im} \partial_{v_m} f) \\
&= B_{im}   B_{j \ell} \partial_{v_m} \partial_{x_\ell} f
+ B_{im} (\partial_{v_m} B_{j \ell}) \partial_{x_\ell} f
- B_{j\ell}  B_{im} \partial_{x_\ell}  \partial_{v_m} f \\
&= 
B_{im} (\partial_{v_m} B_{j \ell}) \partial_{x_\ell} f
\end{aligned}
$$
as well as
$$
\begin{aligned}{}
-[\widetilde \nabla_{v_i}^* , \widetilde \nabla_{x_j}] f 
&=\partial_{v_m} (  B_{im} B_{j \ell} \partial_{x_\ell} f) - B_{j\ell} \partial_{x_\ell} \partial_{v_m}( B_{im}  f) \\
&= B_{j \ell} \partial_{x_\ell} \partial_{v_k} (B_{im} f) 
+ B_{im}(\partial_{v_m} B_{j \ell}) \partial_{x_\ell} f
- B_{j\ell} \partial_{x_\ell} \partial_{v_m}( B_{im}  f) \\
&= B_{im}(\partial_{v_m} B_{j \ell}) \partial_{x_\ell} f .
\end{aligned}
$$
We then obtain the estimate by using \eqref{eq:nablaBij}.

% The second estimate can be easily obtained by remarking that $\partial_{x_j}$ commutes with $\widetilde \nabla_{v_i}$ and using item (ii).

\medskip\noindent
(vi) Thanks to the proof of item (ii), we write
$$
\begin{aligned}{}
[\langle v \rangle^\alpha , \widetilde \nabla_{v_j}^*]  \widetilde \nabla_{v_j} f 
&= (\widetilde \nabla_{v_j} \langle v \rangle^{\alpha}) \widetilde \nabla_{v_j} f \\
&= \widetilde \nabla_{v_j} ((\widetilde \nabla_{v_j} \langle v \rangle^{\alpha})  f )
- ( \widetilde \nabla_{v_j} (\widetilde \nabla_{v_j} \langle v \rangle^{\alpha}) ) f 
\\
&= \widetilde \nabla_{v_j} [\langle v \rangle^\alpha , \widetilde \nabla_{v_j}^*]   f  
- (\widetilde \nabla_{v_j} (\widetilde \nabla_{v_j} \langle v \rangle^{\alpha}) ) f .
\end{aligned}
$$
We conclude the proof by using $\widetilde \nabla_{v_j} \langle v \rangle^{\alpha} = \alpha ({\bf B}(v)v)_j \la v \ra^{\alpha-2} $ and the upper bound \eqref{eq:nablaBij}.

\medskip\noindent
(vii) Thanks to the proof of item (iii), we first write
$$
\begin{aligned}{}
[\widetilde \nabla_{v_i} , \widetilde \nabla_{v_j}]  \widetilde \nabla_{v_k} f 
& = B_{im} (\partial_{v_m} B_{j \ell}) \partial_{v_\ell} (B_{kp} \partial_{v_p} f) 
- B_{j \ell} (\partial_{v_\ell} B_{im}) \partial_{v_m} (B_{kp} \partial_{v_p} f) \\
&\quad 
= B_{im} (\partial_{v_m} B_{j \ell})  \widetilde \nabla_{v_k} \partial_{v_\ell}f  
+ B_{im} (\partial_{v_m} B_{j \ell}) (\partial_{v_\ell} B_{kp}) \partial_{v_p} f \\
&\quad\quad
- B_{j \ell} (\partial_{v_\ell} B_{im}) \widetilde \nabla_{v_k} \partial_{v_m}  f
- B_{j \ell} (\partial_{v_\ell} B_{im}) (\partial_{v_m} B_{kp}) \partial_{v_p} f.
\end{aligned}
$$
We then obtain 
$$
\begin{aligned}{}
[\widetilde \nabla_{v_i} , \widetilde \nabla_{v_j}]  \widetilde \nabla_{v_k} f 
& = \widetilde \nabla_{v_k}[B_{im} (\partial_{v_m} B_{j \ell})  \partial_{v_\ell}f ]
-(\widetilde \nabla_{v_k}[B_{im} (\partial_{v_m} B_{j \ell}) ])  \partial_{v_\ell} f  \\
&\quad\quad
- \widetilde \nabla_{v_k}[B_{j \ell} (\partial_{v_\ell} B_{im}) \partial_{v_m}  f ]
+ (\widetilde \nabla_{v_k} [B_{j \ell} (\partial_{v_\ell} B_{im}) ]) \partial_{v_m}  f \\
&\quad\quad 
+ [B_{im} (\partial_{v_m} B_{j \ell}) (\partial_{v_\ell} B_{kp})  
- B_{j \ell} (\partial_{v_\ell} B_{im}) (\partial_{v_m} B_{kp})] \partial_{v_p} f \\
& = \widetilde \nabla_{v_k} [\widetilde \nabla_{v_i} , \widetilde \nabla_{v_j}] f 
-(\widetilde \nabla_{v_k}[B_{im} (\partial_{v_m} B_{j \ell}) ])  \partial_{v_\ell} f 
+ (\widetilde \nabla_{v_k} [B_{j \ell} (\partial_{v_\ell} B_{im}) ]) \partial_{v_m}  f \\
&\quad
+ [B_{im} (\partial_{v_m} B_{j \ell}) (\partial_{v_\ell} B_{kp})  
- B_{j \ell} (\partial_{v_\ell} B_{im}) (\partial_{v_m} B_{kp})] \partial_{v_p} f
\end{aligned}
$$
and we conclude the proof of the first estimate using \eqref{eq:nablaBij}. The second estimate is obtained in a similar way by using the computation of item (iv), thus we omit it.

\medskip\noindent
(viii) Thanks to the proof of item (v), we write
$$
\begin{aligned}{}
[\widetilde \nabla_{x_i} , \widetilde \nabla_{v_j}]  \widetilde \nabla_{v_k} f 
& = - B_{jm} (\partial_{v_m} B_{i \ell}) \partial_{x_\ell} \widetilde \nabla_{v_k} f 
\\
& 
= - \widetilde \nabla_{v_k} \left(B_{jm} (\partial_{v_m} B_{i \ell}) \partial_{x_\ell}  f \right)
- \left( \widetilde \nabla_{v_k} [B_{jm} (\partial_{v_m} B_{i \ell})] \right) \partial_{x_\ell} f \\
& 
= \widetilde \nabla_{v_k}[\widetilde \nabla_{x_i} , \widetilde \nabla_{v_j}^*]  f 
- \left( \widetilde \nabla_{v_k} [B_{jm} (\partial_{v_m} B_{i \ell})] \right) \partial_{x_\ell} f. 
\end{aligned}
$$
The estimate then follows from \eqref{eq:nablaBij}.

% The second estimate can be easily obtained by remarking that $\partial_{x_i}$ commutes with $\widetilde \nabla_{v_j}$, $\widetilde \nabla_{v_j}$ and $\widetilde \nabla_{v_k}$, and using item (ii).
\end{proof}

Using the above result, we shall now compute some commutators related to the $L_1$ term defined in \eqref{eq:def:L1} of the collision operator $L$.

\begin{lem}\label{lem:commutatorL}
There holds 

\medskip\noindent
(i) For any $\alpha \in \R$, one has
$$
\begin{aligned}{}
[ \langle v \rangle^\alpha , L_1] f
&= -\widetilde \nabla_{v_\ell}^* [\langle v \rangle^{\alpha}, \widetilde \nabla_{v_\ell}] f  
- \widetilde \nabla_{v_\ell} [\langle v \rangle^{\alpha}  ,
\widetilde \nabla_{v_\ell}^* ] f
- \left[ [\langle v \rangle^{\alpha}  ,
\widetilde \nabla_{v_\ell}^* ], \widetilde \nabla_{v_\ell} \right] f.
\end{aligned}
$$

\medskip\noindent
(ii) For any $1 \le k \le 3$, one has
$$
\begin{aligned}{}
[\widetilde \nabla_{v_k} , L_1] f
&= -  \widetilde \nabla_{v_\ell}^* [\widetilde \nabla_{v_k},  \widetilde \nabla_{v_\ell}] f
-  \widetilde \nabla_{v_\ell} [\widetilde \nabla_{v_k},  \widetilde \nabla_{v_\ell}^*]  f  
- \left[ [\widetilde \nabla_{v_k},  \widetilde \nabla_{v_\ell}^*] , \widetilde \nabla_{v_\ell} \right] f
- (\widetilde \nabla_{v_k} \psi) f . 
\end{aligned}
$$

\medskip\noindent
(iii) For any $1 \le k \le 3$, one has
$$
\begin{aligned}{}
[\widetilde \nabla_{x_k} , L_1] f
&= -  \widetilde \nabla_{v_\ell}^* [\widetilde \nabla_{x_k},  \widetilde \nabla_{v_\ell}] f
-  \widetilde \nabla_{v_\ell} [\widetilde \nabla_{x_k},  \widetilde \nabla_{v_\ell}^*]  f
-  \left[ [\widetilde \nabla_{x_k},  \widetilde \nabla_{v_\ell}^*], \widetilde \nabla_{v_\ell} \right] f 
\end{aligned}
$$
and
$$
\begin{aligned}{}
[\langle v \rangle^{\frac{\gamma}{2}} \partial_{x_k} , L_1] f  
= -  \widetilde \nabla_{v_\ell}^* [\langle v \rangle^{\frac{\gamma}{2}} \partial_{x_k},  \widetilde \nabla_{v_\ell}] f
-  \widetilde \nabla_{v_\ell} [\langle v \rangle^{\frac{\gamma}{2}} \partial_{x_k},  \widetilde \nabla_{v_\ell}^*]  f 
-  \left[ [\langle v \rangle^{\frac{\gamma}{2}} \partial_{x_k},  \widetilde \nabla_{v_\ell}^*] , \widetilde \nabla_{v_\ell} \right] f. 
\end{aligned}
$$
\end{lem}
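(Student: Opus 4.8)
The plan is to compute each commutator directly from the definition $L_1 f = -\widetilde\nabla_{v_\ell}^* \widetilde\nabla_{v_\ell} f - \psi f$ given in \eqref{eq:def:L1}, expanding using the Leibniz-type rules for commutators and then rearranging terms to match the stated right-hand sides. Everything rests on the elementary identity $[A,BC] = [A,B]C + B[A,C]$ valid for the composition of (differential) operators, applied with $A$ the operator we commute with, $B = \widetilde\nabla_{v_\ell}^*$ and $C = \widetilde\nabla_{v_\ell}$.

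First, for item (i): since $[\langle v\rangle^\alpha, \psi f] = 0$ (multiplication operators commute), we have $[\langle v\rangle^\alpha, L_1]f = -[\langle v\rangle^\alpha, \widetilde\nabla_{v_\ell}^* \widetilde\nabla_{v_\ell}]f$. Applying the Leibniz rule twice,
$$
[\langle v\rangle^\alpha, \widetilde\nabla_{v_\ell}^* \widetilde\nabla_{v_\ell}]f
= [\langle v\rangle^\alpha, \widetilde\nabla_{v_\ell}^*]\widetilde\nabla_{v_\ell}f
+ \widetilde\nabla_{v_\ell}^*[\langle v\rangle^\alpha, \widetilde\nabla_{v_\ell}]f.
$$
The second term is already in the desired form. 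For the first term I would write $[\langle v\rangle^\alpha,\widetilde\nabla_{v_\ell}^*]\widetilde\nabla_{v_\ell}f = \widetilde\nabla_{v_\ell}\big([\langle v\rangle^\alpha,\widetilde\nabla_{v_\ell}^*]f\big) - \big[\widetilde\nabla_{v_\ell},[\langle v\rangle^\alpha,\widetilde\nabla_{v_\ell}^*]\big]f$, i.e.\ move $\widetilde\nabla_{v_\ell}$ past the operator $[\langle v\rangle^\alpha,\widetilde\nabla_{v_\ell}^*]$ (which by Lemma~\ref{lem:commutator}(ii) is multiplication by $\widetilde\nabla_{v_\ell}\langle v\rangle^\alpha$) at the cost of the iterated commutator $\big[[\langle v\rangle^\alpha,\widetilde\nabla_{v_\ell}^*],\widetilde\nabla_{v_\ell}\big]f$. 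Collecting the three pieces gives exactly the claimed formula (the sign on the last term comes from $-\big[\widetilde\nabla_{v_\ell},[\cdot,\cdot]\big] = +\big[[\cdot,\cdot],\widetilde\nabla_{v_\ell}\big]$, matching the stated expression). Items (ii) and (iii) follow the same scheme: for (ii), $[\widetilde\nabla_{v_k}, L_1]f = -[\widetilde\nabla_{v_k}, \widetilde\nabla_{v_\ell}^*\widetilde\nabla_{v_\ell}]f - [\widetilde\nabla_{v_k}, \psi f]$, where now the multiplication term is genuinely present and contributes $-(\widetilde\nabla_{v_k}\psi)f$ since $[\widetilde\nabla_{v_k},\psi\,\cdot]f = (\widetilde\nabla_{v_k}\psi)f$; the operator part is handled exactly as in (i) with $\langle v\rangle^\alpha$ replaced by $\widetilde\nabla_{v_k}$. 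For (iii), since $\widetilde\nabla_{x_k}\psi = 0$ and $\langle v\rangle^{\gamma/2}\partial_{x_k}$ applied to the $v$-only function $\psi$ also vanishes, there is no zeroth-order contribution, and the operator part is again the same computation with $\widetilde\nabla_{x_k}$ (resp.\ $\langle v\rangle^{\gamma/2}\partial_{x_k}$) in place of $\langle v\rangle^\alpha$; one should note that both $\widetilde\nabla_{x_k}$ and $\langle v\rangle^{\gamma/2}\partial_{x_k}$ commute with the $x$-independent operators $\widetilde\nabla_{v_\ell}$, $\widetilde\nabla_{v_\ell}^*$ only up to the commutators already recorded in Lemma~\ref{lem:commutator}(v), so nothing special happens—the algebraic identity is purely formal.

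There is really no substantial obstacle here: the lemma is a bookkeeping exercise, and the only thing to be careful about is the consistent use of the Leibniz rule and the signs when moving $\widetilde\nabla_{v_\ell}$ across $[\,\cdot\,,\widetilde\nabla_{v_\ell}^*]$ and across $[\,\cdot\,,\widetilde\nabla_{v_\ell}]$. The mild subtlety is that each of the three output terms must be read as genuine operators acting on $f$ (the first two are compositions of $\widetilde\nabla_{v_\ell}^*$ or $\widetilde\nabla_{v_\ell}$ with a first-order commutator, the third is an honest iterated commutator), and one verifies afterward—using Lemma~\ref{lem:commutator}(ii), (iii), (iv), (vi)—that all of them are of the expected order for the later energy estimates, but that verification is not needed for the identities themselves. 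Thus the proof is simply: apply $[A, BC+D\cdot] = [A,B]C + B[A,C] + (\text{ad}_A D)\cdot$, regroup, done.
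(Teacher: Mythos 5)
Your proposal is correct and follows essentially the same computation as the paper: the paper also moves $\langle v\rangle^\alpha$ (resp. $\widetilde\nabla_{v_k}$, $\widetilde\nabla_{x_k}$) across $\widetilde\nabla_{v_\ell}^*$ and $\widetilde\nabla_{v_\ell}$ one at a time, generating exactly the same first-order and iterated commutator terms that you obtain via the Leibniz rule $[A,BC]=[A,B]C+B[A,C]$ and the subsequent swap of $\widetilde\nabla_{v_\ell}$ past $[\,\cdot\,,\widetilde\nabla_{v_\ell}^*]$. The zeroth-order terms (the $\psi$-contributions) are handled identically in both proofs.
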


\begin{proof} 
(i) We first write 
$$
\begin{aligned}
\langle v \rangle^{\alpha} (L_1 f)
&= -  \langle v \rangle^{\alpha}  
\widetilde \nabla_{v_\ell}^* \widetilde \nabla_{v_\ell} f 
-   \psi  \langle v \rangle^{\alpha}  f 
\end{aligned}
$$
and 
$$
\begin{aligned}
L_1(\langle v \rangle^{\alpha} f)
&= -   
\widetilde \nabla_{v_\ell}^* \widetilde \nabla_{v_\ell} (\langle v \rangle^{\alpha}  f) 
-   \psi \langle v \rangle^{\alpha}  f .
\end{aligned}
$$
We now observe that 
$$
\begin{aligned}
\langle v \rangle^{\alpha}  
\widetilde \nabla_{v_\ell}^* \widetilde \nabla_{v_\ell} f 
&=   
\widetilde \nabla_{v_\ell}^* \langle v \rangle^{\alpha} \widetilde \nabla_{v_\ell} f  
+ [\langle v \rangle^{\alpha}  ,
\widetilde \nabla_{v_\ell}^* ] \widetilde \nabla_{v_\ell} f \\
&=   
\widetilde \nabla_{v_\ell}^*  \widetilde \nabla_{v_\ell} ( \langle v \rangle^{\alpha} f )
+\widetilde \nabla_{v_\ell}^* [\langle v \rangle^{\alpha}, \widetilde \nabla_{v_\ell}] f  
+ \widetilde \nabla_{v_\ell} [\langle v \rangle^{\alpha}  ,
\widetilde \nabla_{v_\ell}^* ] f
+ \left[ [\langle v \rangle^{\alpha}  ,
\widetilde \nabla_{v_\ell}^* ], \widetilde \nabla_{v_\ell} \right] f ,
\end{aligned}
$$
which completes the proof.

\medskip\noindent
(ii) We first compute 
$$
\begin{aligned}
\widetilde \nabla_{v_k} (L_1 f)
&= -  \widetilde \nabla_{v_k}  \widetilde \nabla_{v_\ell}^* \widetilde \nabla_{v_\ell} f 
-  \widetilde \nabla_{v_k} ( \psi f) 
\end{aligned}
$$
and 
$$
\begin{aligned}
L_1(\widetilde \nabla_{v_k}  f)
&= -  \widetilde \nabla_{v_\ell}^* \widetilde \nabla_{v_\ell} (\widetilde \nabla_{v_k} f) 
-    \psi \widetilde \nabla_{v_k} f. 
\end{aligned}
$$
We conclude the proof by observing that 
$$
\begin{aligned}
\widetilde \nabla_{v_k}  \widetilde \nabla_{v_\ell}^* \widetilde \nabla_{v_\ell} f 
&=  \widetilde \nabla_{v_\ell}^* \widetilde \nabla_{v_k}  \widetilde \nabla_{v_\ell} f 
+ [\widetilde \nabla_{v_k},  \widetilde \nabla_{v_\ell}^*] \widetilde \nabla_{v_\ell} f \\
&=  \widetilde \nabla_{v_\ell}^*   \widetilde \nabla_{v_\ell} \widetilde \nabla_{v_k} f 
+\widetilde \nabla_{v_\ell}^* [\widetilde \nabla_{v_k},  \widetilde \nabla_{v_\ell}] f 
+ \widetilde \nabla_{v_\ell}[\widetilde \nabla_{v_k},  \widetilde \nabla_{v_\ell}^*]  f
+ \left[ [\widetilde \nabla_{v_k},  \widetilde \nabla_{v_\ell}^*], \widetilde \nabla_{v_\ell} \right]f
\end{aligned}
$$
and writing $\widetilde \nabla_{v_k} ( \psi f) = \psi \widetilde \nabla_{v_k} f + (\widetilde \nabla_{v_k}  \psi ) f$.

\medskip\noindent
(iii) We compute 
$$
\begin{aligned}
\widetilde \nabla_{x_k} (L_1 f)
&= -  \widetilde \nabla_{x_k}  \widetilde \nabla_{v_\ell}^* \widetilde \nabla_{v_\ell} f 
-  \psi \widetilde \nabla_{x_k} f
\end{aligned}
$$
and
$$
\begin{aligned}
L_1(\widetilde \nabla_{x_k} f)
&= -  \widetilde \nabla_{v_\ell}^* \widetilde \nabla_{v_\ell} \widetilde \nabla_{x_k}  f 
-  \psi \widetilde \nabla_{x_k} f. 
\end{aligned}
$$
We now observe that 
$$
\begin{aligned}
\widetilde \nabla_{x_k}  \widetilde \nabla_{v_\ell}^* \widetilde \nabla_{v_\ell} f 
&=  \widetilde \nabla_{v_\ell}^* \widetilde \nabla_{x_k}  \widetilde \nabla_{v_\ell} f 
+ [\widetilde \nabla_{x_k},  \widetilde \nabla_{v_\ell}^*] \widetilde \nabla_{v_\ell} f \\
&=  \widetilde \nabla_{v_\ell}^*   \widetilde \nabla_{v_\ell} \widetilde \nabla_{x_k} f 
+\widetilde \nabla_{v_\ell}^* [\widetilde \nabla_{x_k},  \widetilde \nabla_{v_\ell}] f 
+ \widetilde \nabla_{v_\ell}[\widetilde \nabla_{x_k},  \widetilde \nabla_{v_\ell}^*]  f
+ \left[ [\widetilde \nabla_{x_k},  \widetilde \nabla_{v_\ell}^*], \widetilde \nabla_{v_\ell} \right]f
\end{aligned}
$$
which gives the first estimate.

The computation for $[\langle v \rangle^{\frac{\gamma}{2}} \partial_{x_k} , L_1]$ can be obtained in a similar fashion, thus we omit it.
\end{proof}

\begin{lem}\label{lem:conv}
For any suitable function $f=f(v)$ and any $v \in \R^3$ there holds:

\medskip\noindent
(i) For any $i,j \in \{1,2,3\}$ one has 
$$
|(a_{ij}* f ) (v) | + |(a_{ij}* f ) (v) v_i| + |(a_{ij}* f ) (v) v_i v_j| 
\lesssim \langle v \rangle^{\gamma+2} \| \langle v \rangle^{7} f \|_{L^2_v}
$$

\medskip\noindent
(ii) For any $i,j,\ell \in \{1,2,3\}$ one has 
$$
|(\partial_{v_\ell} a_{ij}* f ) (v) |
+ |(b_{i}* f ) (v) |
\lesssim \langle v \rangle^{\gamma+1} \| \langle v \rangle^{4} f \|_{L^2_v}
$$
and
$$
|\left(\partial_{v_\ell} a_{ij}*  f \right)(v) v_i |
+ |\left(\partial_{v_\ell} a_{ij}*  f \right)(v) v_i v_j | \lesssim \langle v \rangle^{\gamma+2} \| \langle v \rangle^{8} f  \|_{L^2_v} 
$$

\medskip\noindent
(iii) If $\gamma \ge 0$, for any $\ell \in \{1,2,3 \}$ one has 
$$
|\left(c* f \right) (v) |
+ |\left(\partial_{v_\ell} b_i * f \right) (v) |
\lesssim \langle v \rangle^{\gamma} \| \langle v \rangle^{3} f \|_{L^2_v}.
$$

\medskip\noindent
(iv) If $\gamma \in [-2,0)$, for any $\ell \in \{1,2,3 \}$ one has 
$$
|\left(c* f \right) (v) |
+ |\left(\partial_{v_\ell} b_i * f \right) (v) |
\lesssim \langle v \rangle^{\gamma} \| \langle v \rangle^{3} f \|_{L^4_v}.
$$
\end{lem}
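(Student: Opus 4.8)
The plan is to treat all four items by the same mechanism: estimate the kernel functions $a_{ij}$, $b_i$, $c$ (and their derivatives) pointwise, write the convolution $(a_{ij}*f)(v) = \int_{\R^3} a_{ij}(v-v_*) f(v_*)\,\d v_*$, split the integration into the near region $\{|v_*| \le |v|/2\}$ and the far region $\{|v_*| > |v|/2\}$, and in each region bound the kernel by a suitable power of $\langle v\rangle$ or $\langle v_* \rangle$ before applying Cauchy--Schwarz (for the $L^2_v$ statements) or Hölder with exponents $4/3$ and $4$ (for item (iv)). First I would record the elementary pointwise bounds: from \eqref{eq:aij} one has $|a_{ij}(v)| \lesssim |v|^{\gamma+2}$, from the formulas just after \eqref{eq:oplandau} one has $|b_i(v)| = 2|v|^\gamma |v_i| \lesssim |v|^{\gamma+1}$ and $|c(v)| = 2(\gamma+3)|v|^\gamma \lesssim |v|^\gamma$; moreover $|\partial_{v_\ell} a_{ij}(v)| \lesssim |v|^{\gamma+1}$ and $|\partial_{v_\ell} b_i(v)| \lesssim |v|^\gamma$. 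When extra factors $v_i$ or $v_i v_j$ multiply the convolution, I would absorb $v = (v-v_*)+v_*$ so that each $|v_i|$ contributes either another power $|v-v_*|$ (which merges into the kernel) or a power $|v_*|$ (which merges into the weight hitting $f$), accounting for the jump from weight $\langle v\rangle^7$ to $\langle v\rangle^8$ in item (ii).

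The key region-splitting step: on the near region $|v_*|\le |v|/2$ we have $|v-v_*| \sim \langle v \rangle$ (for $|v|\ge 1$; the region $|v|\le 1$ is handled trivially since all kernels are locally integrable when $\gamma > -3$), so $|a_{ij}(v-v_*)| \lesssim \langle v\rangle^{\gamma+2}$ pulls out of the integral and what remains is $\langle v\rangle^{\gamma+2}\int_{|v_*|\le|v|/2}|f(v_*)|\,\d v_* \lesssim \langle v\rangle^{\gamma+2}\|\langle v\rangle^k f\|_{L^2_v}$ by Cauchy--Schwarz provided $k$ is large enough that $\langle v\rangle^{-k} \in L^2(\R^3)$, i.e. $k > 3/2$; the stated $k=7$ (resp. $k=3$, $k=4$, $k=8$) is comfortably sufficient and also covers the weight lost in the far region. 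On the far region $|v_*| > |v|/2$ we have $\langle v\rangle \lesssim \langle v_* \rangle$, so any target power $\langle v\rangle^{\gamma+2}$ (resp.\ $\langle v\rangle^{\gamma}$, etc.) can be replaced by $\langle v_*\rangle^{\gamma+2}$ and absorbed into the weight on $f$; then one is left with $\int_{|v-v_*|} |v-v_*|^{\gamma+2}\mathbf 1_{\{|v-v_*|\le \text{small}\}} + \dots$ — more precisely one splits once more according to whether $|v-v_*|$ is $\le 1$ or $>1$. For $|v-v_*|\le 1$ the singular factor $|v-v_*|^{\gamma+2}$ (or $|v-v_*|^\gamma$ in items (iii)–(iv)) is integrable near the origin exactly because $\gamma > -3$ (for $a_{ij}$-type), while the condition $\gamma \ge 0$ in item (iii) versus $\gamma \in [-2,0)$ in item (iv) is precisely what controls the local singularity of $|v-v_*|^\gamma$: it is bounded when $\gamma \ge 0$ so $L^2$-Cauchy--Schwarz suffices, but when $-2\le\gamma<0$ the function $|v-v_*|^\gamma$ lies in $L^{4/3}_{\mathrm{loc}}$ (since $4\gamma/3 > -3 \Leftrightarrow \gamma > -9/4$, true for $\gamma\ge -2$) so one must instead use Hölder with $f \in L^4_v$, which accounts for the appearance of $\|\langle v\rangle^3 f\|_{L^4_v}$ in item (iv). For $|v-v_*|>1$ the kernel factors like $|v-v_*|^{\gamma+2} \lesssim \langle v-v_*\rangle^{\gamma+2} \lesssim \langle v\rangle^{\gamma+2}\langle v_*\rangle^{\gamma+2}$ (using $\gamma+2 \le 3$), and again Cauchy--Schwarz against a sufficiently large negative power of $\langle v_*\rangle$ closes the bound.

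I do not expect a genuine obstacle here — every step is a routine Schwartz-type convolution estimate — but the one point requiring care is the bookkeeping of weights: tracking exactly how many powers of $\langle v_*\rangle$ are consumed (by the extra $v_i$, $v_iv_j$ multipliers, by trading $\langle v\rangle$ for $\langle v_*\rangle$ in the far region, and by the $\langle v_*\rangle^{-k} \in L^2$ or $L^4$ requirement) so that the final exponents match the stated $7,4,8,3$. This is why the statement uses somewhat generous weights rather than the sharp minimal ones. The dichotomy $\gamma\ge 0$ versus $\gamma<0$ in (iii)/(iv) is the only place where the structure of the argument genuinely changes, and it is entirely dictated by the local integrability of $|v-v_*|^\gamma$ at the origin, so I would present (iii) and (iv) together, doing the near/far split once and then bifurcating only in the treatment of the local singularity of the $c$- and $\partial b$-kernels.
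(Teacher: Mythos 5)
Your general framework (near/far splitting of the convolution integral, Cauchy--Schwarz or H\"older depending on whether $|v-v_*|^\gamma$ is locally square- or only $4/3$-integrable) is sound and does drive the dichotomy between items (iii) and (iv). But there is a genuine gap in how you treat the terms with multiplicative factors $v_i$ and $v_iv_j$ in items (i) and (ii): you propose to write $v_i = (v_i - v_{*i}) + v_{*i}$ and let the first piece \emph{merge into the kernel}, contributing an extra power of $|v-v_*|$. That plan does not produce the stated bound. In the near region $\{|v_*| \le |v|/2\}$ one has $|v-v_*|\sim\langle v\rangle$, so the kernel becomes $|v-v_*|^{\gamma+3}\sim\langle v\rangle^{\gamma+3}$ (or $\langle v\rangle^{\gamma+4}$ with two factors $v_iv_j$), and there is no way to recover the stated $\langle v\rangle^{\gamma+2}$ from there. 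The extra power you wanted to absorb cannot be traded away; it would give a strictly weaker lemma.

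The missing idea is the algebraic null structure of the Landau kernel: since $a(z)=|z|^{\gamma+2}\big(\Id - \tfrac{z}{|z|}\otimes\tfrac{z}{|z|}\big)$ projects onto $z^\perp$, one has $a_{ij}(z)z_j = 0$, and therefore $a_{ij}(v-v_*)\,v_j = a_{ij}(v-v_*)\,(v_*)_j$ exactly, and similarly $a_{ij}(v-v_*)v_iv_j = a_{ij}(v-v_*)(v_*)_i(v_*)_j$. The dangerous $(v-v_*)$ part of your decomposition vanishes \emph{identically} rather than merging into the kernel; the multiplicative factor transfers entirely onto the argument of the convolution, which is exactly why the weight on $f$ jumps (from $\langle v\rangle^5$-ish to $\langle v\rangle^7$, then $\langle v\rangle^8$ in (ii)), while the prefactor remains $\langle v\rangle^{\gamma+2}$. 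The paper's proof in fact only writes out the second estimate in (ii): it moves the derivative across the convolution, $\partial_{v_\ell}a_{ij}*f = a_{ij}*\partial_{v_\ell}f$, then applies the cancellation to pass $v_iv_j$ inside, then undoes the Leibniz rule to reduce everything to the already-available bounds (i) and the first part of (ii); all remaining estimates are cited from \cite[Lemma~3.4]{CTW}. So beyond the null-structure identity you should also plan to derive the $v_i,\ v_iv_j$ versions of the $\partial_{v_\ell}a_{ij}$ bound by reduction rather than by repeating the splitting from scratch.
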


\begin{proof}
All estimates in point (i) come from~\cite[Lemma~3.4]{CTW}. The estimates on the term $|(b_{i}* f ) (v) |$ in point (ii) and on the term $|(c*f)(v)|$ in (iii)--(iv) also come from~\cite[Lemma~3.4]{CTW}, and the proof of the estimates on $|(\partial_{v_\ell} a_{ij}* f ) (v) |$ in (ii) and on $|\left(\partial_{v_\ell} b_i* f \right) (v) |$ in (iii)--(iv) follow the same lines since $|\partial_{v_\ell} a_{ij}| \lesssim |v|^{\gamma+1}$ and $|\partial_{v_\ell}b_i| \lesssim |v|^\gamma$.

We thus only prove the second estimate in (ii). 
Using that $a_{ij}(v-v_*)v_iv_j=a_{ij}(v-v_*)(v_*)_i(v_*)_j$, we observe that  
$$
\begin{aligned}
\left(\partial_{v_\ell} a_{ij}*  f \right)(v) v_i v_j  
&= (a_{ij}* \partial_{v_\ell} f )(v) v_i v_j \\
&= (a_{ij}* v_iv_j\partial_{v_\ell} f ) (v) \\
&= (\partial_{v_\ell} a_{ij}* [v_iv_j  f] ) (v) 
- (a_{ij}* [\partial_{v_\ell}(v_iv_j) f] ) (v).
\end{aligned}
$$
Using the the estimate in (i) and the first estimate in (ii), we thus deduce 
$$
\begin{aligned}
|(\partial_{v_\ell} a_{ij}*  f )(v) v_i v_j 
| 
&\lesssim \langle v \rangle^{\gamma+1} \| \langle v \rangle^{6} f  \|_{L^2_v} + \langle v \rangle^{\gamma+2} \| \langle v \rangle^{8} f  \|_{L^2_v}  .
\end{aligned}
$$
\end{proof}

\begin{lem}\label{lem:boundL20}
For any function $f=f(v)$ smooth enough and $\alpha \in \R$, there holds:
$$
\|(c*(\sqrt{M}f)) \la v \ra^\alpha \sqrt{M}\|_{L^2_{v}} \lesssim \|M^{\frac14} f\|_{L^2_{v}}.
$$
\end{lem}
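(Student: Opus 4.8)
The plan is to use the explicit form of the coefficient $c$, namely $c(v)=-2(\gamma+3)|v|^\gamma$ (see the beginning of Section~\ref{sec:preliminaries}), and to view $f\mapsto(c*(\sqrt M f))\la v\ra^\alpha\sqrt M$ as an integral operator acting on $g:=M^{1/4}f$. After writing $\sqrt M(v_*)f(v_*)=M^{1/4}(v_*)\,g(v_*)$, the quantity to estimate becomes $2|\gamma+3|\,\big\|\int_{\R^3}K(\cdot,v_*)\,g(v_*)\,\d v_*\big\|_{L^2_v}$ with kernel
\[
K(v,v_*):=\la v\ra^\alpha\sqrt M(v)\,|v-v_*|^\gamma\,M^{1/4}(v_*),
\]
so by Schur's test it suffices to prove that $\sup_v\int_{\R^3}|K(v,v_*)|\,\d v_*<\infty$ and $\sup_{v_*}\int_{\R^3}|K(v,v_*)|\,\d v<\infty$.

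Both Schur integrals reduce to the following one-variable estimate: if $w$ is a weight such that $\la\cdot\ra^N w\in L^\infty\cap L^1$ for every $N$ (which holds for $w=\sqrt M$ and $w=M^{1/4}$), then $\int_{\R^3}|v-v_*|^\gamma w(v_*)\,\d v_*\lesssim\la v\ra^{\gamma_+}$ with $\gamma_+:=\max(\gamma,0)$. This is obtained by splitting into the near region $\{|v-v_*|\le1\}$, where $w$ is bounded and $|v-v_*|^\gamma$ is locally integrable, and the far region $\{|v-v_*|\ge1\}$, where $|v-v_*|^\gamma\le1$ if $\gamma\le0$ and $|v-v_*|^\gamma\lesssim\la v\ra^{\gamma}(1+\la v_*\ra^{\gamma})$ if $\gamma>0$, using the integrability of $\la\cdot\ra^{\gamma}w$. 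Applying this with $w=M^{1/4}$ gives $\int|K(v,v_*)|\,\d v_*\lesssim\la v\ra^{\alpha+\gamma_+}\sqrt M(v)$, which is bounded in $v$; applying it with $w=\sqrt M$, together with the fact that $\la v\ra^\alpha\sqrt M(v)$ is bounded for every $\alpha$, gives $\int|K(v,v_*)|\,\d v\lesssim\la v_*\ra^{\gamma_+}M^{1/4}(v_*)$, which is bounded in $v_*$. Schur's test then closes the argument. Equivalently, one may bypass Schur's test by applying Cauchy--Schwarz in $v_*$ inside the convolution and then exchanging the order of integration by Tonelli, which leads to exactly the same near/far splitting.

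The only point requiring care --- and effectively the main obstacle --- is the lower bound on $\gamma$: the argument relies on $|v-v_*|^\gamma$ being locally integrable in $\R^3$, i.e.\ on $\gamma>-3$, which is exactly guaranteed by the standing assumption $\gamma\ge-2$. In particular one must keep the singular factor to the first power $|v-v_*|^\gamma$ (rather than squaring it to $|v-v_*|^{2\gamma}$, which could be as singular as $|v-v_*|^{-4}$) throughout the Schur, resp.\ Cauchy--Schwarz, estimates. Everything else is routine, since the Gaussian weights supply as much polynomial decay as is needed to absorb the factors $\la v\ra^\alpha$, $\la v\ra^{\gamma_+}$ and $\la v_*\ra^{\gamma_+}$.
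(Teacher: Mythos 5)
Your proof is correct and complete. Note that the paper itself does not give a proof of this lemma: it only says ``See~\cite[Proof of Lemma 2.12]{CTW}'', delegating to the earlier Carrapatoso--Tristani--Wu reference. Your argument is therefore a genuine, self-contained substitute for that citation, and the Schur-test route you take is a clean and standard way to handle convolution operators whose kernel combines a mildly singular factor $|v-v_*|^\gamma$ with Gaussian-type weights in both variables. The two Schur integrals are indeed reduced to the single estimate $\int_{\R^3}|v-v_*|^\gamma\,w(v_*)\,\d v_*\lesssim\la v\ra^{\gamma_+}$ for a rapidly decaying weight $w$, and your near/far splitting with respect to the level set $|v-v_*|=1$ handles this cleanly: the near region uses local integrability of $|v-v_*|^\gamma$, valid precisely because the standing hypothesis $\gamma\ge-2>-3$; the far region uses either $|v-v_*|^\gamma\le 1$ (when $\gamma\le0$) or subadditivity $|v-v_*|^\gamma\lesssim\la v\ra^\gamma+\la v_*\ra^\gamma$ (when $0<\gamma\le1$) together with the integrability of the weight. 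After that, $\la v\ra^{\alpha+\gamma_+}\sqrt{M}(v)$ and $\la v_*\ra^{\gamma_+}M^{1/4}(v_*)$ are bounded, so the Schur constants are finite. You also correctly flag the one point that could silently fail, namely that one must keep the kernel to the first power throughout; squaring $|v-v_*|^\gamma$ prematurely would break local integrability when $\gamma<-3/2$. A common alternative in this literature (and likely what the cited CTW proof does) is to split instead on $\{|v_*|\le|v|/2\}$ versus $\{|v_*|>|v|/2\}$, exploiting $|v-v_*|\gtrsim|v|$ in the first region and extra Gaussian decay in the second; both decompositions accomplish the same thing, and yours is if anything slightly more elementary since it isolates the singularity directly.
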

\begin{proof}
See \cite[Proof of Lemma 2.12]{CTW}.
\end{proof}

We are now able to obtain some upper bounds on the term $L_2$ defined in \eqref{eq:def:L2} of the collision operator $L$ as follows.

\begin{lem}\label{lem:boundL2}
For any function $f=f(x,v)$ smooth enough, there holds:

\medskip\noindent
(i) For any $\alpha \in \R$, one has 
\begin{align*}
\| \langle v \rangle^\alpha L_2 f \|_{L^2_{x,v}} 
&\lesssim  \| M^{\frac14} f \|_{L^2_{x,v}}.
\end{align*}

\medskip\noindent
(ii) For any $1 \le k \le 3$, one has
\begin{align*}
\|  \widetilde \nabla_{v_k} (L_2 f )\|_{L^2_{x,v}} 
&\lesssim \| M^{\frac14} f \|_{L^2_{x,v}} +\| M^{\frac14} \widetilde \nabla_v f \|_{L^2_{x,v}}.
\end{align*}

\medskip\noindent
(iii) For any $1 \le k \le 3$, one has
\begin{align*}
\|  \widetilde \nabla_{x_k}( L_2 f )\|_{L^2_{x,v}} 
&\lesssim  \| M^{\frac14} \widetilde \nabla_xf \|_{L^2_{x,v}}.
\end{align*}
\end{lem}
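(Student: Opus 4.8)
The plan rests on the explicit form~\eqref{eq:def:L2}: up to signs, $L_2f$ is a sum of three terms, each of the shape $\big(K*(\sqrt M f)\big)(v)\,p(v)\,\sqrt M(v)$ with $K\in\{a_{ij},a_{ii},c\}$, the convolution taken in velocity, and $p$ a polynomial with $|p(v)|\lesssim\langle v\rangle^2$ (namely $v_iv_j$, $1$, $1$). So it suffices to estimate each term. The recurrent mechanism is a pointwise-in-$v$ convolution bound $|(K*(\sqrt M f))(v)|\lesssim\langle v\rangle^{m}\,\|\langle v\rangle^{N}\sqrt M f(x,\cdot)\|_{L^2_v}$ (with $m,N$ depending on $K,p$), followed by the two elementary facts that $\langle v\rangle^{M'}\sqrt M\lesssim M^{1/4}$ and $\langle v\rangle^{M'}\sqrt M\in L^2_v$ for every $M'$; hence, after multiplying by $\langle v\rangle^\alpha\sqrt M$ (or by a component of $\mathbf B(v)$, which is $\lesssim\langle v\rangle^{\gamma/2+1}$ by~\eqref{eq:nablaBij}) and taking the $L^2_v$-norm, one always gains a factor $\lesssim\|M^{1/4}f(x,\cdot)\|_{L^2_v}$, and one then integrates in $x$. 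For part~(i) this is immediate: for the two $a$-terms use Lemma~\ref{lem:conv}(i) with $\sqrt M f$ in place of $f$, so that $|(a_{ij}*\sqrt M f)(v)\,v_iv_j|\lesssim\langle v\rangle^{\gamma+2}\|\langle v\rangle^{7}\sqrt M f\|_{L^2_v}\lesssim\langle v\rangle^{\gamma+2}\|M^{1/4}f\|_{L^2_v}$ (likewise without the $v_iv_j$ factor), then multiply by $\langle v\rangle^\alpha\sqrt M$ and integrate in $x$; for the $c$-term, Lemma~\ref{lem:boundL20} provides the estimate directly, which one integrates in $x$.

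For part~(ii), write $\widetilde\nabla_{v_k}=B_{km}\partial_{v_m}$ (with $|B_{km}|\lesssim\langle v\rangle^{\gamma/2+1}$) and distribute $\partial_{v_m}$ by the Leibniz rule over the three factors convolution/polynomial/$\sqrt M$. If $\partial_{v_m}$ hits the polynomial or $\sqrt M$ (using $\partial_{v_m}\sqrt M=-\frac{v_m}{2}\sqrt M$) one lands on a term of exactly the shape handled in~(i), with one extra power of $\langle v\rangle$, bounded by $\|M^{1/4}f\|_{L^2_{x,v}}$. If $\partial_{v_m}$ hits a convolution, the $a$- and $c$-terms are treated differently: for the $a$-terms, commute the derivative onto the kernel, $\partial_{v_m}(a_{ij}*\sqrt M f)=(\partial_{v_m}a_{ij})*(\sqrt M f)$, and invoke Lemma~\ref{lem:conv}(ii), which again contributes only to the $\|M^{1/4}f\|$ bound; for the $c$-term one must \emph{not} differentiate the kernel — the derivative of $c\sim|v|^\gamma$ behaves like $|v|^{\gamma-1}$, which fails to be locally integrable for $\gamma$ near $-2$ — so instead integrate by parts inside the convolution, $\partial_{v_m}(c*\sqrt M f)=c*\partial_{v_m}(\sqrt M f)=c*(\sqrt M(\partial_{v_m}f-\frac{v_m}{2}f))$, apply Lemma~\ref{lem:boundL20}, and rewrite $\partial_vf=\mathbf B(v)^{-1}\widetilde\nabla_vf$, where $|\mathbf B(v)^{-1}|\lesssim\langle v\rangle^{-\gamma/2}\lesssim\langle v\rangle$ by~\eqref{def:B}--\eqref{eq:vp}. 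The piece carrying $\partial_vf$ produces the $\|M^{1/4}\widetilde\nabla_vf\|_{L^2_{x,v}}$ term and the piece carrying $v_mf$ the $\|M^{1/4}f\|_{L^2_{x,v}}$ term; all polynomial weights generated along the way (from $B_{km}$, from $\mathbf B^{-1}$, and the $\langle v\rangle^{N}\sqrt M$ appearing inside the convolution estimates) are absorbed by the Gaussian $\sqrt M$.

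Part~(iii) is easiest once one notices that $\partial_{x_m}$ commutes with every operation in $L_2$ (convolutions and multipliers act in $v$ only), so that $\widetilde\nabla_{x_k}(L_2f)=B_{km}\,L_2(\partial_{x_m}f)$; running the estimates of~(i) with $\partial_{x_m}f$ in place of $f$ — keeping the $\langle v\rangle^{N}\sqrt M$-weighted norm rather than prematurely passing to $M^{1/4}$ — and using $\partial_{x_m}f=\mathbf B(v)^{-1}\widetilde\nabla_{x_\ell}f$ together with $|B_{km}|\lesssim\langle v\rangle^{\gamma/2+1}$, yields the bound by $\|M^{1/4}\widetilde\nabla_xf\|_{L^2_{x,v}}$, with no $\|M^{1/4}f\|$ term precisely because no $x$-derivative ever falls on the velocity factors.

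The only genuinely non-mechanical point is the $c$-term derivative in~(ii): the decision to move the derivative off the singular kernel and onto $\sqrt M f$, together with the ensuing weight bookkeeping needed to land back in the $M^{1/4}$-weighted norms claimed — which uses, implicitly, that the estimate of Lemma~\ref{lem:boundL20} tolerates extra polynomial weights on the right-hand side (by the same Young / Hardy--Littlewood--Sobolev argument that proves it), all swallowed by Gaussian decay. Everything else is routine.
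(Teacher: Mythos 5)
Your proposal is correct and follows essentially the same route as the paper's proof, with one small but genuine variation in part~(ii): where the velocity derivative falls on a convolution, the paper moves it through the convolution onto $\sqrt M f$ uniformly (writing $\partial_{v_\ell}(a_{ij}*\sqrt M f)=a_{ij}*\partial_{v_\ell}(\sqrt M f)$ and $\partial_{v_\ell}(c*\sqrt M f)=c*\partial_{v_\ell}(\sqrt M f)$, then invoking Lemma~\ref{lem:conv} and Lemma~\ref{lem:boundL20} with $\partial_{v_\ell}(\sqrt M f)$ in place of $\sqrt M f$), whereas you instead put it on the kernel for the $a$-terms, $\partial_{v_\ell}(a_{ij}*\sqrt Mf)=(\partial_{v_\ell}a_{ij})*\sqrt Mf$, and invoke Lemma~\ref{lem:conv}-(ii); you keep the paper's treatment only for the $c$-term. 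Both routes work: the $a$-kernel derivative $\sim|v|^{\gamma+1}$ is locally integrable for $\gamma\ge -2$ and Lemma~\ref{lem:conv}-(ii) covers it, whereas the $c$-kernel derivative $\sim|v|^{\gamma-1}$ is not, which you correctly flag as the reason one must move that particular derivative onto $\sqrt Mf$. Your version has the minor cosmetic advantage of showing that the $\|M^{1/4}\widetilde\nabla_vf\|$ contribution comes only from the $c$-term, but the structure and the lemmas used are identical to the paper's.

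One point worth being precise about — and you do flag it, which is good: after $c*\partial_{v_\ell}(\sqrt Mf)=c*(\sqrt M\partial_{v_\ell}f)-\frac12 c*(v_\ell\sqrt Mf)$, a literal application of Lemma~\ref{lem:boundL20} to the second piece yields $\|M^{1/4}v_\ell f\|_{L^2_v}$, not $\|M^{1/4}f\|_{L^2_v}$, and similarly the first piece yields $\|M^{1/4}\partial_{v_\ell}f\|$ rather than $\|M^{1/4}\widetilde\nabla_vf\|$ once one writes $\partial_vf=\mathbf B^{-1}\widetilde\nabla_vf$ with $|\mathbf B^{-1}|\lesssim\langle v\rangle$. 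Since $M^{1/4}\langle v\rangle$ is not $\lesssim M^{1/4}$, this requires a slightly strengthened variant of Lemma~\ref{lem:boundL20} in which an extra polynomial weight under the $\sqrt M$ inside the convolution is absorbed by Gaussian decay — exactly as you say, and exactly as the proof in~\cite{CTW} allows. The paper's own proof relies on the same implicit strengthening (it also moves the derivative onto $\sqrt Mf$ for the $c$-term and produces a $\partial_{v_\ell}(\sqrt Mf)=\sqrt M\partial_{v_\ell}f-\frac{v_\ell}{2}\sqrt Mf$), so this is not a gap in your argument relative to the paper's; it is a shared reliance, and you are more explicit about it. Part~(iii) is treated identically in both.
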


\begin{proof}
(i) We write
$$
\begin{aligned}
\langle v \rangle^\alpha L_2 f
&= 
- \left\{ \left( a_{ij} *_v \sqrt M f \right) v_i v_j 
- \left(a_{ii} *_v \sqrt M f \right)
+ \left( c *_v \sqrt M f \right) \right\} \langle v \rangle^\alpha \sqrt M
\end{aligned}
$$
from which, thanks to Lemma~\ref{lem:conv}, we deduce
$$
\begin{aligned}
|\langle v \rangle^\alpha L_2 f|
&\lesssim \| M^{\frac14} f \|_{L^2_v}  \langle v \rangle^{\gamma + \alpha + 2} \sqrt M
+ |( c *_v \sqrt M f )|  \langle v \rangle^\alpha \sqrt M .
\end{aligned}
$$
We conclude the proof by taking the $L^2$ norm of the last estimate and using Lemma~\ref{lem:boundL20}.

\medskip\noindent
(ii) Writing $\widetilde \nabla_{v_k} = B_{k\ell} \partial_{v_\ell}$ we compute
$$
\begin{aligned}
\widetilde \nabla_{v_k} (L_2 f) 
&= - B_{k\ell} \bigg\{ \left( a_{ij} *_v \partial_{v_\ell}(\sqrt M f) \right) v_i v_j
+ \left( a_{ij} *_v \sqrt M f \right) \partial_{v_\ell}(v_i v_j) \\
&\quad
- \left(a_{ii} *_v \partial_{v_\ell}(\sqrt M f) \right)
+ \left( c *_v \partial_{v_\ell}(\sqrt M f) \right) \bigg\} \sqrt M \\
&\quad 
- \left\{ \left( a_{ij} *_v \sqrt M f \right) v_i v_j 
- \left(a_{ii} *_v \sqrt M f \right)
+ \left( c *_v \sqrt M f \right) \right\} \widetilde \nabla_{v_k} \sqrt M.
\end{aligned}
$$
Thanks to Lemmas~\ref{lem:conv} and~\ref{lem:boundL20}, we obtain that
$$
\|\widetilde \nabla_{v_k} (L_2 f) (x,\cdot)\|_{L^2_v} 
\lesssim \| M^{\frac14}  f (x,\cdot)\|_{L^2_v} + \| M^{\frac14} \widetilde \nabla_{v} f(x,\cdot) \|_{L^2_v} ,
$$
and we conclude by integrating in $x$ this last estimate.

\medskip\noindent
(iii) Writing $\widetilde \nabla_{x_k} = B_{k\ell} \partial_{x_\ell}$ we compute
$$
\begin{aligned}
\widetilde \nabla_{x_k} (L_2 f) 
&= - B_{k\ell} \bigg\{ \left( a_{ij} *_v \sqrt M \partial_{x_\ell} f \right) v_i v_j
- \left(a_{ii} *_v \sqrt M\partial_{x_\ell} f \right)
+ \left( c *_v \sqrt M \partial_{x_\ell} f \right) \bigg\} \sqrt M ,
\end{aligned}
$$
and we obtain the wanted result thanks to Lemmas~\ref{lem:conv} and~\ref{lem:boundL20}.
\end{proof}

%%%%%%%%%%%%%%%%%%%%%%%%%%%%%%%%%%%%%%%%%%%%%%%%%%%%%%%%
\section{Estimates on the linearized problem} \label{sec:linear}
%%%%%%%%%%%%%%%%%%%%%%%%%%%%%%%%%%%%%%%%%%%%%%%%%%%%%%%%

We recall that the functional spaces $\XXX$, $\YYY_1$ and $\ZZZ_1^\eps$ are respectively defined in~\eqref{def:normXXX},~\eqref{def:normYYY1} and~\eqref{def:normZZZ1eps}, that the operator $\Lambda_\eps$ is given in~\eqref{def:Lambdaeps} and that $\Pi$ is the projector onto the kernel of $\Lambda_\eps$ (see~\eqref{def:Pi}).
We consider~$U^\eps(t)$ the semigroup associated with~$\Lambda_\eps$ and study its decay and regularization properties.

\begin{theo} \label{theo:mainlinear}
For any $\sigma \in (0,\sigma_0)$, we have:
	$$
	\|U^\eps(t) (\Id- \Pi)\|_{\XXX \to \XXX} \lesssim e^{-\sigma t},
	$$
	$$
	\|U^\eps(t) (\Id- \Pi)\|_{\XXX \to \YYY_1} \lesssim \frac{e^{-\sigma t}}{\min(1,\sqrt{t})}
	\quad \text{and} \quad 
	\|U^\eps(t) (\Id- \Pi)\|_{\XXX \to \ZZZ_1^\eps} \lesssim \frac{e^{-\sigma t}}{\min(1,t^{3/2})},
	$$
where $\sigma_0$ is defined in Proposition~\ref{prop:hypoL2}. 
\end{theo}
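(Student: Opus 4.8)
\emph{Proof plan.} The first estimate is the decay half of the hypocoercivity result. Since $\Pi$ commutes with $U^\eps(t)$ and $(\Id-\Pi)f\in(\operatorname{Ker}\Lambda_\eps)^\perp$, Proposition~\ref{prop:hypoL2} provides a norm $\Nt\cdot\Nt_\XXX$ equivalent to $\|\cdot\|_\XXX$ uniformly in $\eps\in(0,1)$ such that, writing $g^\eps(t):=U^\eps(t)(\Id-\Pi)f$,
\begin{equation*}
\tfrac{\d}{\dt}\Nt g^\eps(t)\Nt_\XXX^2 \lesssim -\tfrac{1}{\eps^2}\|(g^\eps(t))^\perp\|_{\YYY_1}^2 - \|g^\eps(t)\|_\XXX^2 ,
\end{equation*}
with associated exponential rate $\sigma_0$. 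A Gronwall argument then yields $\Nt g^\eps(t)\Nt_\XXX^2\lesssim e^{-2\sigma t}\|f\|_\XXX^2$ for every $\sigma<\sigma_0$ --- the first bound --- together with $\int_0^\infty e^{2\sigma s}\|g^\eps(s)\|_{\YYY_1}^2\,\d s\lesssim\|f\|_\XXX^2$ (using $\|h\|_{\YYY_1}^2\lesssim\|h^\perp\|_{\YYY_1}^2+\|h\|_\XXX^2$, since $\pi h$ has a fixed Gaussian velocity profile). For the two regularization estimates it suffices to treat $t\in(0,1]$: for $t\ge1$ one writes $U^\eps(t)(\Id-\Pi)=U^\eps(\tfrac12)(\Id-\Pi)\,U^\eps(t-\tfrac12)(\Id-\Pi)$, bounds $\|U^\eps(t-\tfrac12)(\Id-\Pi)f\|_\XXX\lesssim e^{-\sigma t}\|f\|_\XXX$ by the first estimate, and applies the short-time $\XXX\to\YYY_1$ (resp.\ $\XXX\to\ZZZ_1^\eps$) estimate at time $\tfrac12$.

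\emph{The time-dependent functional.} For $t\in(0,1]$ I would follow the H\'erau--Nier scheme~\cite{Herau-Nier} and introduce a functional of the form
\begin{equation*}
\mathcal{F}^\eps(t) := \Nt g^\eps(t)\Nt_\XXX^2 + c_1\,t\,\mathcal{D}_v(g^\eps(t)) + c_2\,t^2\,\mathcal{C}(g^\eps(t)) + c_3\,t^3\eps^2\,\mathcal{D}_x(g^\eps(t)) + (\text{correctors}),
\end{equation*}
where $\mathcal{D}_v(h)$ is a $\langle v\rangle$-weighted sum (matching the weights of~\eqref{def:normYYY1}) of the quantities $\|\langle v\rangle^{(3-j)(\frac{\gamma}{2}+1)}\widetilde\nabla_v\nabla_x^j h\|_{L^2_{x,v}}^2$, $0\le j\le3$, so that $\mathcal{D}_v(h)\simeq\|h\|_{\YYY_1}^2-\|h\|_\XXX^2$ up to weight commutators; $\mathcal{D}_x(h)$ is the analogue with $\widetilde\nabla_x$ in place of $\widetilde\nabla_v$, so that $\eps^2\mathcal{D}_x(h)\simeq\eps^2\|\widetilde\nabla_x h\|_\XXX^2$; and $\mathcal{C}(h)$ is a mixed term pairing $\widetilde\nabla_v\nabla_x^j h$ with $\widetilde\nabla_x\nabla_x^j h$ in the weighted $\XXX$-inner product, which drives the hypoelliptic cascade through the identity $[\widetilde\nabla_v,v\cdot\nabla_x]=\widetilde\nabla_x$ of Lemma~\ref{lem:commutator}(i). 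The additional correctors are lower-order terms carrying positive powers of $t$ and involving only microscopic quantities --- in the spirit of the $H^1$-hypocoercive norm of Briant~\cite{Briant} --- needed to absorb the weight commutators; the powers of $\eps$ attached to $\mathcal{D}_v$, $\mathcal{C}$, $\mathcal{D}_x$ are dictated by the rescaling $\partial_t g^\eps=\frac{1}{\eps^2}Lg^\eps-\frac{1}{\eps}v\cdot\nabla_x g^\eps$ and calibrated so that the gain in velocity is uniform in $\eps$ while the gain in space loses one power of $\eps$. The positive constants $c_1,c_2,c_3$ are chosen small and suitably ordered; one has $\mathcal{F}^\eps(0)=\Nt f\Nt_\XXX^2$ (after a standard approximation of the data so that all quantities are finite near $t=0$), and by Cauchy--Schwarz $\mathcal{F}^\eps(t)\simeq\Nt g^\eps(t)\Nt_\XXX^2+c_1 t\,\mathcal{D}_v(g^\eps(t))+c_3 t^3\eps^2\mathcal{D}_x(g^\eps(t))\ge0$ for $t\le1$.

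\emph{Monotonicity and conclusion.} The heart of the proof is to show $\frac{\d}{\dt}\mathcal{F}^\eps(t)\le0$ on $(0,1]$. Differentiating along the flow, $\frac{\d}{\dt}\Nt g^\eps\Nt_\XXX^2$ supplies the dissipations $-\frac{1}{\eps^2}\|(g^\eps)^\perp\|_{\YYY_1}^2$ and $-\|g^\eps\|_\XXX^2$ of Proposition~\ref{prop:hypoL2}. In $\frac{\d}{\dt}(c_1 t\mathcal{D}_v)=c_1\mathcal{D}_v+c_1 t\frac{\d}{\dt}\mathcal{D}_v$, the term $c_1\mathcal{D}_v(g^\eps)$ is split into a microscopic part (absorbed by $\frac{1}{\eps^2}\|(g^\eps)^\perp\|_{\YYY_1}^2$, with room since $\eps<1$) and a macroscopic part (controlled by $\|g^\eps\|_\XXX^2$, since $\pi g^\eps$ has a smooth velocity profile); and $c_1 t\frac{\d}{\dt}\mathcal{D}_v$ is computed by commuting $\widetilde\nabla_v$ and the $\langle v\rangle$-weights through $L=L_1+L_2$ via Lemmas~\ref{lem:commutator}--\ref{lem:commutatorL} and the $L_2$-bounds of Lemma~\ref{lem:boundL2}: its leading piece is $-\frac{c_1 t}{\eps^2}$ times the coercive $H^1_{v,*}$-dissipation of $L$ on $\widetilde\nabla_v g^\eps$ (recall~\eqref{eq:spectralgap}), while its transport piece produces, by Lemma~\ref{lem:commutator}(i) and skew-adjointness of $v\cdot\nabla_x$, the bad mixed term $-\frac{2c_1 t}{\eps}\la\widetilde\nabla_x g^\eps,\widetilde\nabla_v g^\eps\ra$. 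The computation for $c_3 t^3\eps^2\mathcal{D}_x$ is similar but simpler, since $\widetilde\nabla_x$ commutes with $v\cdot\nabla_x$ ($\mathbf B(v)$ being $x$-independent), so transport contributes nothing and $L$ again gives a good dissipative term; and $\frac{\d}{\dt}(c_2 t^2\mathcal{C})$ produces --- crucially --- the good negative term $-c_2 t^2\|\widetilde\nabla_x g^\eps\|^2$ coming exactly from $[\widetilde\nabla_v,v\cdot\nabla_x]=\widetilde\nabla_x$ (the remaining transport contributions cancelling pairwise after integration by parts in $x$), together with mixed $L$-terms handled by Young's inequality against the dissipations of the other levels. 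The main obstacle --- and the reason the functional and the constants must be calibrated so carefully --- is precisely the absorption of these bad mixed terms: each is estimated by Young's inequality against a combination of (i) the $\frac{1}{\eps^2}$-strong collisional dissipation on higher-order microscopic derivatives, (ii) the good $-c_2 t^2\|\widetilde\nabla_x g^\eps\|^2$ term from the cross level, and (iii) the lower-in-$t$ terms already accounted for; the bookkeeping of the $\langle v\rangle$-weights (whence the twisted spaces~\eqref{def:normYYY1}) and of the microscopic/macroscopic splitting is what makes this step lengthy. Granting $\frac{\d}{\dt}\mathcal{F}^\eps\le0$ on $(0,1]$, one gets $\mathcal{F}^\eps(t)\le\mathcal{F}^\eps(0)\lesssim\|f\|_\XXX^2$, hence $t\,\mathcal{D}_v(g^\eps(t))\lesssim\|f\|_\XXX^2$ and $t^3\eps^2\mathcal{D}_x(g^\eps(t))\lesssim\|f\|_\XXX^2$; combined with $\|g^\eps(t)\|_\XXX\lesssim\|f\|_\XXX$ this yields $\|g^\eps(t)\|_{\YYY_1}\lesssim t^{-1/2}\|f\|_\XXX$ and $\eps\|\widetilde\nabla_x g^\eps(t)\|_\XXX\lesssim t^{-3/2}\|f\|_\XXX$ on $(0,1]$ (where $e^{-\sigma t}\simeq1$), and the case $t\ge1$ was already reduced to this, completing the proof.
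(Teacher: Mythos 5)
Your plan is structurally the right one — a modified hypocoercive norm for the $\XXX\to\XXX$ decay, followed by a H\'erau--Nier Lyapunov functional with the three time-weighted levels $t\,\|\widetilde\nabla_v\cdot\|^2$, $t^2\,\la\widetilde\nabla_v\cdot,\widetilde\nabla_x\cdot\ra$, $t^3\eps^2\,\|\widetilde\nabla_x\cdot\|^2$, calibrated so that the cross level is killed by the commutator $[\widetilde\nabla_v,v\cdot\nabla_x]=\widetilde\nabla_x$; this is essentially the architecture used by the paper (Propositions~\ref{prop:hypoX} and~\ref{prop:regLambda}). However, there is one genuine gap, and it is exactly where the paper's construction is non-obvious: the first time-weighted level must be built from the \emph{microscopic} part $(g^\eps)^\perp$, not the full $g^\eps$, and this is not a ``corrector'' but the entire content of that level. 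Your $\mathcal{D}_v(h)$ is a weighted sum of $\|\omega_j\widetilde\nabla_v\nabla_x^j h\|_{L^2_{x,v}}^2$ for $h=g^\eps$. When you differentiate $c_1 t\,\mathcal{D}_v$ along the flow, the collision contribution reads $\tfrac{c_1 t}{\eps^2}\la \omega_j\widetilde\nabla_v\nabla_x^j g^\eps,\,L(\omega_j\widetilde\nabla_v\nabla_x^j g^\eps)\ra + (\text{commutators})$. The spectral gap gives a good $-\sigma_L\|\cdot-\pi(\cdot)\|_{H^1_{v,*}}^2$ term, but it also produces a remainder of size $C\|\omega_j\widetilde\nabla_v\nabla_x^j g^\eps\|_{L^2_{x,v}}^2$; splitting $\widetilde\nabla_v g^\eps=\widetilde\nabla_v(g^\eps)^\perp+\widetilde\nabla_v\pi g^\eps$, the macroscopic piece contributes a term of order $\tfrac{c_1 t}{\eps^2}\|g^\eps\|_\XXX^2$. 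The only macroscopic dissipation available from Proposition~\ref{prop:hypoL2}/\ref{prop:hypoX} is $-\kappa\|g^\eps\|_{\YYY_1}^2$, which carries \emph{no} $\eps^{-2}$. For $t\in(\eps^2,1]$ this term therefore cannot be absorbed uniformly in $\eps$, and the claimed inequality $\tfrac{\d}{\dt}\mathcal{F}^\eps\le0$ fails.

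The paper's fix is precise: the $\alpha_1 t$ term of its functional~\eqref{eq:Lyapunov} is $\alpha_1 t\,\big(\|\widetilde\nabla_v(g^\eps)^\perp\|_\XXX^2+K\|\langle v\rangle^{\gamma/2+1}(g^\eps)^\perp\|_\XXX^2\big)$, built entirely from $(g^\eps)^\perp$. One then evolves $f^\perp$ via $(\Lambda_\eps f)^\perp=\tfrac1{\eps^2}Lf^\perp-\tfrac1\eps\{v\cdot\nabla_x f^\perp+v\cdot\nabla_x(\pi f)-\pi(v\cdot\nabla_x f)\}$ (equation~\eqref{eq:dtfperp}), so the $\eps^{-2}$-scale contribution always pairs $f^\perp$ with $Lf^\perp$ and remains absorbable by the microscopic dissipation $-\tfrac{\kappa}{\eps^2}\|f^\perp\|_{\YYY_1}^2$; the micro/macro coupling generated by transport appears only at scale $\eps^{-1}$ and is absorbed by Young's inequality against the $t^2$-level $-\alpha_2 t^2\|\widetilde\nabla_x g^\eps\|_\XXX^2$. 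By contrast, the $t^2$ cross term and the $t^3$ term \emph{can} use the full $g^\eps$, because they carry prefactors $\eps$ and $\eps^2$ respectively which cancel the $\eps^{-2}$ from $L$. You correctly identify that ``microscopic-only'' pieces are needed in the spirit of Briant's norm, but you place them as auxiliary correctors; in fact they must \emph{replace} the main $\mathcal{D}_v$ term, and that swap is what makes the $\eps$-uniform monotonicity work. With that modification (and writing the $\eps$ factor on $\mathcal{C}$ explicitly, namely $\eps\,c_2\,t^2\,\mathcal{C}$ as in~\eqref{eq:Lyapunov}, so the bookkeeping at the $t^2$-level closes), your argument aligns with the paper's.
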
 

Our proof is based on hypocoercivity tricks and thus on direct energy estimates on the whole problem $\partial_t f = \Lambda_\eps f$. Our method to prove this theorem is of particular interest when one wants to extend the analysis to the whole nonlinear problem (see the paragraph at the beginning of Section~\ref{sec:Cauchyreg}). Indeed, it is based on a micro-macro decomposition of the solution and thus allows to identify the different behaviors of the microscopic and macroscopic parts of the solution. We will use this approach in Section~\ref{sec:Cauchyreg} in which we develop a Cauchy theory for the nonlinear Landau equation as well as some regularization estimates on it.

Let us finally notice that we are actually able to prove Theorem~\ref{theo:mainlinear} by using a splitting of $\Lambda_\eps$ as presented in Section~\ref{sec:linear2}. By establishing nice estimates on each part of the splitting, we are then able to recover the wanted estimates on the whole semigroup $U^\eps(t)$ thanks to Duhamel formula. Such an analysis does not allow us to develop our Cauchy theory and regularization estimates for the nonlinear problem but will be useful to study our problem of hydrodynamical limit, we thus postpone it to Section~\ref{sec:linear2}.

%---------%---------%---------%---------%---------%---------%---------%---------%---------%---------%
\subsection{Hypocoercivity estimates} \label{subsec:hypo}
%---------%---------%---------%---------%---------%---------%---------%---------%---------%---------%

In this part, we state some hypocoercivity results for our linearized operator $\Lambda_\eps$ defined in~\eqref{def:Lambdaeps}. The first one provides a result of hypocoercivity in~$L^2_{x,v}$ and the proof is a mere adaptation of the one provided in~\cite[Theorem~5.1]{BCMT} in the more complicated case of bounded domains with various boundary conditions. For sake of completeness, we give the proof of Proposition~\ref{prop:hypoL2} in Appendix~\ref{app:hypo}.

\begin{prop} \label{prop:hypoL2}
There exists a norm $\Nt \cdot \Nt_{L^2_{x,v}}$ on $L^2_{x,v}$ (with associated scalar product~$\langle \! \langle \cdot, \cdot \rangle \! \rangle_{L^2_{x,v}}$) equivalent to the standard norm $\| \cdot \|_{L^2_{x,v}}$ which satisfies the following property: For any $f \in \operatorname{Dom} \Lambda_\eps \cap (\operatorname{Ker} \Lambda_\eps)^\perp$,
$$
\la\!\la \Lambda_\eps f, f \ra\!\ra_{L^2_{x,v}} \leq
 - \sigma_0 \Nt f \Nt_{L^2_{x,v}}^2 
- \kappa_0 \| f \|_{L^2_x(H^1_{v,*})}^2
- \frac{\kappa_0}{\eps^2} \| f^\perp \|_{L^2_x (H^1_{v,*})}^2,
$$
for some constructive constants $0 < \sigma_0 < \sigma_L$ (where $\sigma_L$ is defined in~\eqref{eq:spectralgap}), $ \kappa_0 >0$ and where $f^\perp$ is defined in~\eqref{def:micro}. 
\end{prop}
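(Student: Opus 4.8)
The plan is to follow the $L^2$-hypocoercivity construction of Guo and of~\cite{BCMT} (the periodic box being a simplification of the bounded-domain setting treated there), inserting the power of $\eps$ dictated by the scaling of $\Lambda_\eps$. The starting point is the plain energy identity for $\partial_t f = \Lambda_\eps f$: since $v\cdot\nabla_x$ is skew-adjoint on $L^2_{x,v}$ and $\T^3$ has no boundary,
$$
\frac12\frac{\d}{\dt}\|f\|_{L^2_{x,v}}^2 = \frac1{\eps^2}\la\!\la Lf,f\ra\!\ra_{L^2_{x,v}} \le -\frac{\sigma_L}{\eps^2}\|f^\perp\|_{L^2_x(H^1_{v,*})}^2 ,
$$
by the spectral-gap estimate~\eqref{eq:spectralgap} integrated in $x$ (recall $f^\perp=(\Id-\pi)f$). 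This already dissipates the microscopic part with the correct weight $\eps^{-2}$; the whole issue is to create dissipation of $\pi f$, i.e.\ of the macroscopic fields $(\rho_f,u_f,\theta_f)$ of~\eqref{rhof}--\eqref{thetaf}, \emph{without} degrading this $\eps^{-2}$ gain and \emph{without} spoiling the uniform-in-$\eps$ equivalence of the norm. The hypothesis $f\in(\operatorname{Ker}\Lambda_\eps)^\perp$ means exactly that $\rho_f,u_f,\theta_f$ have zero average over $\T^3$, so that the inverse Laplacian $\Delta_x^{-1}$ and the Poincaré inequality on $\T^3$ are available on these fields.

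First I would record the local conservation laws: testing $\partial_t f + \tfrac1\eps v\cdot\nabla_x f = \tfrac1{\eps^2}Lf$ against $\sqrt M$, $v\sqrt M$ and $\tfrac13(|v|^2-3)\sqrt M$ and using that $L$ annihilates these functions in $L^2_v$, one gets the linearized compressible system
$$
\partial_t\rho_f + \tfrac1\eps\nabla_x\!\cdot u_f = 0,\qquad
\partial_t u_f + \tfrac1\eps\nabla_x(\rho_f+\theta_f) = -\tfrac1\eps\nabla_x\!\cdot\Theta(f^\perp),\qquad
\partial_t\theta_f + \tfrac{2}{3\eps}\nabla_x\!\cdot u_f = -\tfrac1\eps\nabla_x\!\cdot q(f^\perp) ,
$$
where $\Theta(f^\perp)=\int_{\R^3}v\otimes v\,f^\perp\sqrt M\,\dv$ and $q(f^\perp)=\tfrac13\int_{\R^3}v(|v|^2-3)f^\perp\sqrt M\,\dv$ are fixed velocity-moments of the microscopic part, bounded pointwise in $x$ by $\|f^\perp(x,\cdot)\|_{L^2_v}\lesssim\|f^\perp(x,\cdot)\|_{H^1_{v,*}}$ (the Gaussian weight makes the moments harmless, and $\gamma\ge-2$ ensures $\|\cdot\|_{H^1_{v,*}}\ge\|\cdot\|_{L^2_v}$).

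Then I would introduce the modified functional
$$
\Nt f\Nt_{L^2_{x,v}}^2 := \|f\|_{L^2_{x,v}}^2 + \eps\,\eta\,\la\!\la Af,f\ra\!\ra_{L^2_{x,v}} ,
$$
where $A$ is the Hérau/Dolbeault--Mouhot--Schmeiser type operator built from the transport part $v\cdot\nabla_x$ and the macroscopic projector $\pi$ — equivalently, a suitable linear combination of cross terms pairing $\rho_f,u_f,\theta_f$ with order-$(-1)$ Fourier multipliers in $x$ applied to one another and to the moments $\Theta(f^\perp),q(f^\perp)$, as in~\cite{BCMT}. Since $A$ is bounded on $L^2_{x,v}$ (an $x$-multiplier of order $-1$ composed with the finite-rank-in-$v$ projection $\pi$ and with $v\cdot\nabla_x$), one has $|\la\!\la Af,f\ra\!\ra|\lesssim\|f\|_{L^2_{x,v}}^2$, so for $\eta$ small and since $\eps\le1$ the norm $\Nt\cdot\Nt_{L^2_{x,v}}$ is equivalent to $\|\cdot\|_{L^2_{x,v}}$ uniformly in $\eps$. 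Differentiating along the flow, the factor $\eps$ cancels one $1/\eps$ produced when $\Lambda_\eps$ hits $A$, so $\frac{\d}{\dt}\big(\eps\eta\la\!\la Af,f\ra\!\ra\big)$ is an $O(1)$ expression whose symmetric part contains the good term $-c\eta\|\pi f\|_{L^2_{x,v}}^2$ (macroscopic coercivity of $(v\cdot\nabla_x)\pi$ on the mean-zero space, where the Poincaré inequality enters), plus: (a) a term $\tfrac{\eta}{\eps^2}\,O\!\big(\|f^\perp\|_{L^2_x(H^1_{v,*})}^2\big)$ coming from $A$ composed with $L$, which is harmless after Young's inequality via $\tfrac1\eps\|f^\perp\|_{H^1_{v,*}}\|f\|_{L^2_{x,v}}\le\tfrac12\big(\tfrac1{\eps^2}\|f^\perp\|_{H^1_{v,*}}^2+\|f\|_{L^2_{x,v}}^2\big)$; and (b) cross terms bilinear in $\pi f$ and $f^\perp$, absorbed by Young's inequality into a small multiple of $\|\pi f\|_{L^2_{x,v}}^2$ plus a constant multiple of $\|f^\perp\|_{L^2_x(H^1_{v,*})}^2$. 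Adding this to the energy identity and choosing $\eta$ small enough that the $\eps^{-2}\|f^\perp\|^2$ contributions spend only a fraction of the budget $-\sigma_L\eps^{-2}\|f^\perp\|^2$ and the $\|\pi f\|^2$ contributions a fraction of $-c\eta\|\pi f\|^2$, one gets $\frac{\d}{\dt}\Nt f\Nt_{L^2_{x,v}}^2 \le -c_1\|\pi f\|_{L^2_{x,v}}^2 - \tfrac{c_2}{\eps^2}\|f^\perp\|_{L^2_x(H^1_{v,*})}^2$. Finally, using $\|\pi f\|_{L^2_x(H^1_{v,*})}^2\lesssim\|\pi f\|_{L^2_{x,v}}^2$ (finite-dimensionality in $v$), $\|f\|_{L^2_x(H^1_{v,*})}^2\lesssim\|\pi f\|_{L^2_{x,v}}^2+\|f^\perp\|_{L^2_x(H^1_{v,*})}^2$, $\eps\le1$ and the norm equivalence, one splits the right-hand side to produce the three terms $-\sigma_0\Nt f\Nt_{L^2_{x,v}}^2-\kappa_0\|f\|_{L^2_x(H^1_{v,*})}^2-\kappa_0\eps^{-2}\|f^\perp\|_{L^2_x(H^1_{v,*})}^2$; shrinking $\eta$ further keeps the emerging $\sigma_0$ below $\sigma_L$.

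The main obstacle is not conceptual but bookkeeping: the single power of $\eps$ in front of the cross term must be placed so that (i) the modified norm stays equivalent to $\|\cdot\|_{L^2_{x,v}}$ \emph{uniformly} in $\eps\in(0,1)$ and (ii) the $\eps^{-2}$ gain on $f^\perp$ is exactly preserved — this last point being crucial for the rest of the paper — while one checks that the auxiliary operator $A$ and the commutators $AL$, $ATf^\perp$, $TA$ are bounded between the weighted velocity spaces $H^1_{v,*}$, $L^2_v$ and their duals, which is where the structure of $L$ recalled in~\eqref{eq:Lbis} and the moment bounds of Lemmas~\ref{lem:conv}--\ref{lem:boundL2} are used. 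The full quantitative argument is carried out in Appendix~\ref{app:hypo}.
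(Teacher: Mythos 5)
Your proposal follows essentially the same route as the paper's proof in Appendix~\ref{app:hypo}: start from the spectral-gap dissipation of $f^\perp$ with weight $\eps^{-2}$, add $\eps$-weighted cross terms built from $(-\Delta_x)^{-1}$ acting on the macroscopic moments (the paper's concrete functionals are $\psi_k$, $\Theta_{k\ell}$ and the projections $\rho,u,\theta$, with the triangular cascade $\theta\to u\to\rho$), absorb the $\pi f$--$f^\perp$ interactions by Young, and tune the small parameters so that one power of $\eps$ cancels a $1/\eps$ from transport while the $\eps^{-2}$ microscopic gain is preserved. The only thing your sketch leaves implicit is that ``a suitable linear combination'' really means a hierarchically ordered combination $\eta_1\gg\eta_2\gg\eta_3$ of the three cross terms (the paper takes $\eta_1=\eta$, $\eta_2=\eta^{3/2}$, $\eta_3=\eta^{7/4}$), since coercivity on $\theta$, then $u$, then $\rho$ must be recovered in that nested order; with that understood, the argument is the one the paper carries out.
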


Roughly speaking, the norm $\Nt \cdot \Nt_{L^2_{x,v}}$ is of the following form 
	$$
	\Nt f \Nt^2_{L^2_{x,v}} 
	= \|f\|^2_{L^2_{x,v}} + \eps \sum_{i=1}^3 \eta_i \left\langle \partial_{x_i} \Delta_x^{-1} \pi f, \widetilde \pi_i f\right\rangle_{L^2_x}
	$$
where $\widetilde \pi_i : L^2_{x,v} \to L^2_x$ is some suitable moment operator, the inverse laplacian $\Delta_x^{-1}$ is suitably defined and the constants $\eta_i$ are chosen to be small enough (see~\eqref{def:Nt} in the proof). 
The norm $\Nt \cdot \Nt_{L^2_{x,v}}$ thus depends on $\eps$ but is equivalent to the usual norm~$\|\cdot\|_{L^2_{x,v}}$ uniformly in $\eps$, this explains the fact that we do not mention the dependency in $\eps$ in our notation.

\smallskip
In the following proposition, we provide a result of hypocoercivity in $\XXX$. Notice that obtaining hypocoercivity in $H^3_xL^2_v$ is a straightforward consequence of the previous proposition since derivatives in $x$ commute with $\Lambda_\eps$. Due to the presence of additional weights in the definition of the space $\XXX$, we have to exhibit a new norm equivalent to the usual one for which we can recover a suitable energy estimate.

\begin{prop} \label{prop:hypoX}
There exists a norm $\Nt \cdot \Nt_\XXX$ on $\XXX$ (with associated scalar product denoted by~$\langle \! \langle \cdot, \cdot \rangle \! \rangle_{\XXX}$) equivalent to the standard norm $\| \cdot \|_{\XXX}$ which satisfies the following property: For any $f \in \operatorname{Dom} \Lambda_\eps \cap (\operatorname{Ker} \Lambda_\eps)^\perp$,
\begin{equation}\label{eq:hypo-Lambda}
	\la\!\la \Lambda_\eps f, f \ra\!\ra_\XXX
	\leq - \sigma \Nt f \Nt_{\XXX}^2 - \kappa \| f \|_{\YYY_1}^2 - \frac{\kappa}{\eps^2} \| f^\perp \|_{\YYY_1}^2, 
\end{equation}
for any $0 < \sigma < \sigma_0$ (where $\sigma_0$ is defined in Proposition~\ref{prop:hypoL2}) and for some $\kappa >0$. As a consequence there holds, for all $t \ge 0$, 
\begin{equation}\label{eq:Ueps-decay}
\| U^\eps(t)(\Id - \Pi) \|_{\XXX \to \XXX} \le C e^{-\sigma t}.
\end{equation}
\end{prop}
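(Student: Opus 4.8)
The plan is to mimic the proof of Proposition~\ref{prop:hypoL2} but to apply it to each block of derivatives appearing in the definition~\eqref{def:normXXX} of $\XXX$, correcting for the fact that the weights $\langle v\rangle^{k(\frac\gamma2+1)}$ do not commute with $\Lambda_\eps$. Concretely, I would first write $\XXX$ as a direct sum of four pieces: the ``zeroth order'' piece $\langle v\rangle^{3(\frac\gamma2+1)}f$, the ``first order'' piece $\langle v\rangle^{2(\frac\gamma2+1)}\nabla_x f$, the ``second order'' piece $\langle v\rangle^{\frac\gamma2+1}\nabla_x^2 f$ and the ``third order'' piece $\nabla_x^3 f$. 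For each of them I would perform an energy estimate on $\partial_t f=\Lambda_\eps f$, using that $\nabla_x$ commutes with $\Lambda_\eps$ but the polynomial weight does not. The commutator $[\langle v\rangle^\alpha,L]$ is controlled by Lemma~\ref{lem:commutatorL}(i) together with Lemma~\ref{lem:commutator}(ii),(vi) for the $L_1$-part, and by Lemma~\ref{lem:boundL2}(i) for the $L_2$-part; in both cases the commutator gains enough decay/regularity to be absorbed by the coercive $H^1_{v,*}$ term coming from the spectral gap~\eqref{eq:spectralgap}. The weight also commutes with $v\cdot\nabla_x$ up to lower order terms, which are likewise absorbed. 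The key gain here is that on the lower-order blocks the weight is \emph{stronger} (the exponent of $\langle v\rangle$ decreases as the number of $x$-derivatives increases), so the coercivity coming from $L$ at each level dominates the commutator errors generated from the level above, exactly the ``twisted Sobolev space'' mechanism mentioned after~\eqref{def:normYYY1}.

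Next I would define the twisted scalar product $\la\!\la\cdot,\cdot\ra\!\ra_\XXX$ by summing, over the four derivative blocks, the corresponding $L^2_{x,v}$ twisted scalar products $\la\!\la\cdot,\cdot\ra\!\ra_{L^2_{x,v}}$ from Proposition~\ref{prop:hypoL2} (applied to the appropriately weighted and differentiated functions), with suitable positive block-weights. Since each $\Nt\cdot\Nt_{L^2_{x,v}}$ is uniformly equivalent to $\|\cdot\|_{L^2_{x,v}}$, the resulting $\Nt\cdot\Nt_\XXX$ is uniformly equivalent to $\|\cdot\|_\XXX$. Applying Proposition~\ref{prop:hypoL2} to each block gives, for the block with $j$ derivatives in $x$ and weight $\langle v\rangle^{(3-j)(\frac\gamma2+1)}$, a bound of the form $-\sigma_0\Nt\cdot\Nt^2 - \kappa_0\|\cdot\|^2_{L^2_x(H^1_{v,*})} - \frac{\kappa_0}{\eps^2}\|(\cdot)^\perp\|^2_{L^2_x(H^1_{v,*})}$, and summing these produces the terms $-\sigma\Nt f\Nt_\XXX^2 - \kappa\|f\|^2_{\YYY_1} - \frac\kappa{\eps^2}\|f^\perp\|^2_{\YYY_1}$ in~\eqref{eq:hypo-Lambda}, at the cost of shrinking $\sigma_0$ to an arbitrary $\sigma<\sigma_0$ to swallow the commutator errors (which carry a small constant times a lower power of $\langle v\rangle$, hence are controlled by the dissipation terms after using Young's inequality). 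One subtlety is to check that the micro-macro splitting is compatible with the weights: the macroscopic part $\pi f$ is a fixed Gaussian times polynomials, so all weighted norms of $\pi f$ and of $\nabla_x^j\pi f$ are comparable to the plain $H^3_xL^2_v$ norms, and the extra cross-terms in $\Nt\cdot\Nt_\XXX$ — which only involve $\pi f$ — behave exactly as in the $L^2$ case.

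Finally, once~\eqref{eq:hypo-Lambda} is established, the semigroup decay~\eqref{eq:Ueps-decay} follows by a standard Grönwall argument: for $f(t)=U^\eps(t)(\Id-\Pi)f_{\rm in}$ one has $f(t)\in(\operatorname{Ker}\Lambda_\eps)^\perp$ for all $t\ge0$ (since $\Pi$ commutes with the flow), hence
\begin{equation*}
\frac{\d}{\dt}\Nt f(t)\Nt_\XXX^2 = 2\la\!\la\Lambda_\eps f(t),f(t)\ra\!\ra_\XXX \le -2\sigma\Nt f(t)\Nt_\XXX^2,
\end{equation*}
so $\Nt f(t)\Nt_\XXX^2\le e^{-2\sigma t}\Nt f_{\rm in}\Nt_\XXX^2$, and the uniform equivalence of $\Nt\cdot\Nt_\XXX$ with $\|\cdot\|_\XXX$ gives $\|U^\eps(t)(\Id-\Pi)f_{\rm in}\|_\XXX\le Ce^{-\sigma t}\|f_{\rm in}\|_\XXX$ with $C$ independent of $\eps$. (A density argument reduces the differentiation step to smooth data, as usual.)

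The main obstacle I expect is the bookkeeping of the weighted commutator terms $[\langle v\rangle^{(3-j)(\frac\gamma2+1)},\Lambda_\eps]$ across the four blocks, and in particular verifying that, after applying Lemma~\ref{lem:commutatorL} and Lemma~\ref{lem:commutator}, every error term either carries an $\langle v\rangle$-weight strictly below the threshold needed for the $H^1_{v,*}$-coercivity at that level (so it is absorbed at the same level) or carries one extra $x$-derivative matched by a stronger velocity weight (so it is absorbed from the adjacent lower-order block). Making this hierarchy of absorptions quantitative — choosing the block-weights and the small constants $\eta_i$ in $\Nt\cdot\Nt_\XXX$ in the right order, uniformly in $\eps$ — is the technical heart of the argument; the rest is a routine assembly of the $L^2$ estimate block by block.
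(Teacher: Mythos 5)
Your general instinct — build $\Nt\cdot\Nt_\XXX$ out of the $L^2$-hypocoercive norm of Proposition~\ref{prop:hypoL2} block by block, controlling the weight commutators with Lemmas~\ref{lem:commutatorL}, \ref{lem:commutator}, \ref{lem:boundL2} and exploiting the twisted weight hierarchy — is sound, and matches the paper's strategy in broad outline. But the central construction you propose, ``apply the twisted $L^2_{x,v}$ scalar product of Proposition~\ref{prop:hypoL2} to the appropriately weighted and differentiated functions $\omega_j\nabla_x^j f$, with $\omega_j=\langle v\rangle^{(3-j)(\frac\gamma2+1)}$,'' does not yield the claimed estimate, and is not what the paper does. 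Proposition~\ref{prop:hypoL2} produces the bound $\la\!\la\Lambda_\eps g,g\ra\!\ra_{L^2_{x,v}}\le -\sigma_0\Nt g\Nt^2 - \kappa_0\|g\|^2_{L^2_x(H^1_{v,*})}-\frac{\kappa_0}{\eps^2}\|g^\perp\|^2_{L^2_x(H^1_{v,*})}$ \emph{only} for $g$ in the domain of $\Lambda_\eps$, because the proof uses that $\partial_t g = \Lambda_\eps g$. But $\omega_j\nabla_x^j f$ satisfies $\partial_t(\omega_j\nabla_x^j f)=\Lambda_\eps(\omega_j\nabla_x^j f)+[\omega_j,\Lambda_\eps]\nabla_x^j f$, so you cannot simply ``apply Proposition~\ref{prop:hypoL2} to each block'' and read off the dissipation; you would need to propagate the commutator error through every part of the twisted $L^2$ construction. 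And that is where things break: the twisted inner product of Proposition~\ref{prop:hypoL2} contains cross-terms of the form $\la\partial_x(-\Delta_x)^{-1}\rho[g],u[g]\ra$, $\la\partial_x(-\Delta_x)^{-1}\theta[g],\psi[g]\ra$, etc., involving \emph{moments} of the argument $g$. If $g=\omega_j\nabla_x^j f$, those moments are $\int\omega_j\nabla_x^j f\sqrt M\,\dv$, $\int v\,\omega_j\nabla_x^j f\sqrt M\,\dv$, \ldots, which are \emph{not} $\omega_j$ times the moments of $\nabla_x^j f$. Your remark that ``the extra cross-terms only involve $\pi f$'' does not address this: the issue is not whether $\pi f$ lies in a nice weighted space, but that $\pi(\omega_j g)\neq\omega_j\pi(g)$, so the whole micro-macro bookkeeping on which the twisted cross-terms rest gets scrambled by the weight.

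The paper circumvents this by splitting the construction along two distinct axes. The scalar product \eqref{eq:def:ps-triple-XXX} has two kinds of contributions: (i) for each $i=0,\dots,3$ the \emph{unweighted} twisted scalar product $\la\!\la\nabla_x^i f,\nabla_x^i g\ra\!\ra_{L^2_{x,v}}$ — here Proposition~\ref{prop:hypoL2} applies verbatim because $\nabla_x$ commutes exactly with $\Lambda_\eps$; and (ii) for $i=0,1,2$, a small constant $\delta$ times the \emph{plain} $L^2_{x,v}$ inner product of the weighted microscopic pieces $\omega_i\nabla_x^i f^\perp$ (not of the full $f$). The point of (ii) is that $(\Lambda_\eps f)^\perp=\frac{1}{\eps^2}Lf^\perp - \frac1\eps\{v\cdot\nabla_xf^\perp + v\cdot\nabla_x\pi f - \pi(v\cdot\nabla_x f)\}$; the $L$-part gives $\frac{1}{\eps^2}$ coercivity on $\|\omega_i\nabla_x^i f^\perp\|_{L^2_x(H^1_{v,*})}$ after controlling $[\omega_i,L]$ (Steps~2--3 of the paper), the transport on $f^\perp$ is still skew-adjoint because $\omega_i$ is $v$-only and the transport differentiates in $x$, and the remaining transport terms contribute only $\frac1\eps\|\nabla_x^{i+1}f\|_{L^2_{x,v}}\|\omega_i\nabla_x^i f^\perp\|_{L^2_{x,v}}$. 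These $\frac1\eps$ errors are absorbed, after Young's inequality, into $C\eps\,\Nt\nabla_x^{i+1}f\Nt^2_{L^2_{x,v}}$ and $\frac{c}{\eps}\|\omega_i\nabla_x^i f^\perp\|^2_{L^2_x(H^1_{v,*})}$, and then $\delta$ is taken small to swallow the $C\delta\Nt\nabla_x^{i+1}f\Nt^2$ into the $\sigma_0\Nt\nabla_x^{i+1}f\Nt^2$ term of block~(i). Finally the $-\kappa\|f\|^2_{\YYY_1}$ term in \eqref{eq:hypo-Lambda} is recovered not directly from (ii) but by splitting $\sigma'\Nt f\Nt^2$ and observing $\|\pi f\|_{\YYY_1}\lesssim\Nt\pi f\Nt_\XXX$. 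Your proposal collapses (i) and (ii) into a single operation (the twisted inner product applied to each weighted block), and it is precisely this collapse that makes the moment cross-terms untractable; you also do not explain how the full $\|f\|^2_{\YYY_1}$ dissipation (as opposed to only $\|f^\perp\|^2_{\YYY_1}$) is recovered. The Gr\"onwall step at the end of your argument is fine.
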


\begin{proof}
We define the inner product $\la\!\la \cdot , \cdot \ra\!\ra_{\XXX}$ on $\XXX$ by 
\begin{equation}\label{eq:def:ps-triple-XXX}
\begin{aligned}
\la\!\la f , g \ra\!\ra_{\XXX} 
&:= \sum_{i=0}^2 \left( \delta \la \langle v \rangle^{(3-i)(\frac{\gamma}{2}+1)} \, \nabla_x^i f^\perp , \langle v \rangle^{(3-i)(\frac{\gamma}{2}+1)} \, \nabla_x^i g^\perp  \ra_{L^2_{x,v}} 
+ \la\!\la \nabla_x^i f, \nabla_x^i g \ra\!\ra_{L^2_{x,v}} \right) \\
&\quad 
+  \la\!\la \nabla_x^3 f , \nabla_x^3 g \ra\!\ra_{L^2_{x,v}}
\end{aligned}
\end{equation}
so that its associated norm is given by
\begin{equation}\label{eq:def:norme-triple-XXX}
\Nt f \Nt^2_\XXX := \sum_{i=0}^2 \left( \delta\| \langle v \rangle^{(3-i)(\frac{\gamma}{2}+1)} \, \nabla_x^i f^\perp\|_{L^2_{x,v}}^2 
+ \Nt \nabla_x^i f \Nt_{L^2_{x,v}}^2 \right)
+  \Nt \nabla_x^3 f \Nt_{L^2_{x,v}}^2
\end{equation}
for some constant $\delta\in (0,1)$ to be chosen small enough, and where $\la\!\la \cdot , \cdot \ra\!\ra_{L^2_{x,v}}$ and $\Nt \cdot \Nt_{L^2_{x,v}}$ are defined in Proposition~\ref{prop:hypoL2} (see~\eqref{def:Nt}). We first observe that this norm is equivalent to the norm $\| \cdot \|_\XXX$.

Let $\sigma' \in (\sigma, \sigma_0)$ be fixed and $f \in \XXX \cap (\operatorname{Ker} \Lambda_\eps)^\perp$. 
We shall prove 
\begin{equation}\label{eq:hypo-Lambda-bis}
\la\!\la \Lambda_\eps f, f \ra\!\ra_\XXX
\le
- \sigma' \Nt f \Nt_\XXX^2 - \frac{\kappa'}{\eps^2} \| f^\perp \|_{\YYY_1}^2 
\end{equation}
for some constant $\kappa'>0$, which readily implies \eqref{eq:hypo-Lambda} with some constant $\kappa \in (0 , \min (\sigma' - \sigma , \kappa'))$ by decomposing the first term with $\sigma' = \sigma + (\sigma'-\sigma)$ and using the fact that $\| \pi f \|_{\YYY_1} \lesssim \Nt \pi f \Nt_\XXX $. Estimate \eqref{eq:Ueps-decay} is then a direct consequence of \eqref{eq:hypo-Lambda}.

\medskip\noindent
\textit{Step 1.}
We first compute 
\begin{align*}
    \la\!\la \Lambda_\eps f, f \ra\!\ra_\XXX
	&=  \sum_{i=0}^2  \delta \la  \langle v \rangle^{(3-i)(\frac{\gamma}{2}+1)} \, \nabla_x^i (\Lambda_\eps f)^\perp , \langle v \rangle^{(3-i)(\frac{\gamma}{2}+1)} \, \nabla_x^i f^\perp \ra_{L^2_{x,v}} \\ 
	&\quad +  \sum_{i=0}^3 \la\!\la \nabla_x^i \Lambda_\eps f , \nabla_x^i f \ra\!\ra_{L^2_{x,v}},
\end{align*}
and we observe that, thanks to Proposition~\ref{prop:hypoL2} and the fact that $\nabla_x$ commutes with $\Lambda_\eps$, we already have
\begin{align*}
&\sum_{i=0}^3 \la\!\la \nabla_x^i \Lambda_\eps f , \nabla_x^i f \ra\!\ra_{L^2_{x,v}} \\
&\qquad\le 
- \sum_{i=0}^3 \left\{ \sigma_0  \Nt \nabla_x^if \Nt_{L^2_{x,v}}^2
+ \kappa_0 \| \nabla_x^if \|_{L^2_x(H^1_{v,*})}^2 
+\frac{\kappa_0}{\eps^2} \| \nabla_x^if^\perp \|_{L^2_x (H^1_{v,*})}^2 \right\}.
\end{align*}

\medskip\noindent
\textit{Step 2.} We first observe that 
$$
(\Lambda_\eps f)^\perp 
= (\mathrm{Id}-\pi) \Lambda_\eps f
= \frac{1}{\eps^2}(\mathrm{Id}-\pi) L f - \frac{1}{\eps} (\mathrm{Id}-\pi) (v \cdot \nabla_x f),
$$
from which we obtain, since $\pi L = 0$ and $L f = L f^\perp$, that 
\begin{equation}\label{eq:dtfperp}
(\Lambda_\eps f)^\perp  
= \frac{1}{\eps^2} L f^\perp 
- \frac{1}{\eps} \left\{  v \cdot \nabla_x f^\perp + v \cdot \nabla_x (\pi f) - \pi (v \cdot \nabla_x f)   \right\}.
\end{equation}
We therefore get, for any $i \in \{0,1,2\}$ and using that the transport operator $v \cdot \nabla_x$ is skew-adjoint,
$$
\begin{aligned}
\la \langle v \rangle^{(3-i)(\frac{\gamma}{2}+1)} \, \nabla_x^i (\Lambda_\eps f)^\perp , \langle v \rangle^{(3-i)(\frac{\gamma}{2}+1)} \, \nabla_x^if^\perp  \ra_{L^2_{x,v}} 
&=: \frac{1}{\eps^2} R_1^i 
- \frac{1}{\eps} R_2^i
+ \frac{1}{\eps} R_3^i
\end{aligned}
$$
with
$$
\begin{aligned}
R_1^i &:= \la \langle v \rangle^{(3-i)(\frac{\gamma}{2}+1)} \, \nabla_x^i L f^\perp , \langle v \rangle^{(3-i)(\frac{\gamma}{2}+1)} \, \nabla_x^if^\perp  \ra_{L^2_{x,v}}, \\
R_2^i &:= \la \langle v \rangle^{(3-i)(\frac{\gamma}{2}+1)} \, \nabla_x^i (v \cdot \nabla_x (\pi f) ) , \langle v \rangle^{(3-i)(\frac{\gamma}{2}+1)} \, \nabla_x^if^\perp  \ra_{L^2_{x,v}}, \\
R_3^i &:= \la \langle v \rangle^{(3-i)(\frac{\gamma}{2}+1)} \, \nabla_x^i ( \pi (v \cdot \nabla_x f) ) , \langle v \rangle^{(3-i)(\frac{\gamma}{2}+1)} \, \nabla_x^if^\perp  \ra_{L^2_{x,v}},
\end{aligned}
$$
and we treat each term separately. For simplicity we denote $\omega_i =  \langle v \rangle^{(3-i)(\frac{\gamma}{2}+1)} $ in the sequel.

\medskip\noindent
\textit{Step 3.} We deal with the term $R_1^i$. Since $\nabla_x$ commutes with $L$ we have 
$$
\begin{aligned}
R_1^i 
&= \la L (\omega_i \nabla_x^i f^\perp) , \omega_i \nabla_x^i f^\perp \ra_{L^2_{x,v}}
+ \la [\omega_i, L_1] \nabla_x^i f^\perp , \omega_i \nabla_x^i f^\perp \ra_{L^2_{x,v}} \\
&\quad + \la [\omega_i, L_2] \nabla_x^i f^\perp , \omega_i \nabla_x^i f^\perp \ra_{L^2_{x,v}} \\
&=: R_{11}^i + R_{12}^i + R_{13}^i,
\end{aligned}
$$
where we recall that $L_1$ and $L_2$ are defined in \eqref{eq:def:L1} and \eqref{eq:def:L2}, respectively.
Thanks to the spectral gap estimate~\eqref{eq:spectralgap}, we have 
$$
\begin{aligned}
R_{11}^i
&\le - \sigma_{L} \| \omega_i \nabla_x^i f^\perp - \pi(\omega_i \nabla_x^i f^\perp) \|_{L^2_x (H^1_{v,*})}^2  \\
&\le - \sigma_{L} \| \omega_i \nabla_x^i f^\perp \|_{L^2_x (H^1_{v,*})}^2 
+ C \|  \nabla_x^i f^\perp \|_{L^2_{x,v}}^2,
\end{aligned}
$$
for some constant $C>0$. From Lemma~\ref{lem:commutatorL} we get
$$
\begin{aligned}
R_{12}^i
&= -\la [\omega_i , \widetilde \nabla_{v_\ell}] \nabla_x^i f^\perp, \widetilde \nabla_{v_\ell} (\omega_i \nabla_x^i f^\perp) \ra_{L^2_{x,v}}
- \la  [\omega_i , \widetilde \nabla_{v_\ell}^*] \nabla_x^i f^\perp, \widetilde \nabla_{v_\ell}^* (\omega_i \nabla_x^i f^\perp) \ra_{L^2_{x,v}} \\
&\quad 
-\la \left[  [\omega_i , \widetilde \nabla_{v_\ell}^*], \widetilde \nabla_{v_\ell} \right] \nabla_x^i f^\perp , \omega_i \nabla_x^i f^\perp \ra_{L^2_{x,v}} .
\end{aligned}
$$
Using Lemma~\ref{lem:commutator} and observing that $\| \widetilde \nabla_v g \|_{L^2_{x,v}} + \| \widetilde \nabla_v^* g \|_{L^2_{x,v}} \lesssim \| g \|_{L^2_x (H^1_{v,*})}$, we then obtain
$$
\begin{aligned}
|R_{12}^i|
&\le C \| \omega_i \nabla_x^i f^\perp \|_{L^2_x (H^1_{v,*})} 
\| \langle v \rangle^{\frac{\gamma}{2}-1} \omega_i \nabla_x^i f^\perp    \|_{L^2_{x,v}}
+ C \| \langle v \rangle^{\frac{\gamma}{2}-\frac12} \omega_i \nabla_x^i f^\perp \|_{L^2_{x,v}}^2 \\
&\le (\sigma_L - \sigma'') \| \omega_i \nabla_x^i f^\perp \|_{L^2_x (H^1_{v,*})}^2
+ C \| \nabla_x^i f^\perp \|_{L^2_{x,v}}^2
\end{aligned}
$$
for any $\sigma'' \in (\sigma' , \sigma_0)$, where we have used Young's inequality in last line.
For the term $R_{13}^i$, we use Lemma~\ref{lem:boundL2} to obtain
$$
|R_{13}^i|
\le C \| \nabla_x^i f^\perp \|_{L^2_{x,v}}^2.
$$
Gathering previous estimates we finally get 
$$
R_1^i \le - \sigma'' \| \omega_i \nabla_x^i f^\perp \|_{L^2_x (H^1_{v,*})}^2 
+ C \| \nabla_x^i f^\perp \|_{L^2_{x,v}}^2.
$$

\medskip\noindent
\textit{Step 4.} We deal with the terms $R_2^i$ and $R_3^i$. Observing that 
$$
\| \omega_i \nabla_x^i (v \cdot \nabla_x (\pi f)) \|_{L^2_{x,v}} 
+
\| \omega_i \nabla_x^i ( \pi (v \cdot \nabla_x f)) \|_{L^2_{x,v}}
\lesssim \| \nabla_x^{i+1} f \|_{L^2_{x,v}}
$$ 
we obtain
$$
\begin{aligned}
|R_2^i| + |R_3^i| 
&\le C \| \nabla_x^{i+1} f \|_{L^2_{x,v}} \| \omega_i \nabla_x^{i} f^\perp \|_{L^2_{x,v}} \\
&\le \frac{(\sigma'' - \sigma''')}{\eps}\| \omega_i \nabla_x^i f^\perp \|_{L^2_x (H^1_{v,*})}^2 + C \eps \Nt \nabla_x^{i+1} f \Nt_{L^2_{x,v}}^2,
\end{aligned}
$$ 
for any $\sigma''' \in (\sigma' , \sigma'')$, where we have used Young's inequality and that $\| \cdot \|_{L^2_{x,v}}$ and $\Nt \cdot \Nt_{L^2_{x,v}}$ are equivalent in last line.

\medskip\noindent
\textit{Step 5.} Gathering previous estimates we then obtain
$$ 
\begin{aligned}   
\la\!\la \Lambda_\eps f, f \ra\!\ra_\XXX
&\le  - \sum_{i=0}^3 \left\{ \sigma_0  \Nt \nabla_x^if \Nt_{L^2_{x,v}}^2
+ \kappa_0 \| \nabla_x^if \|_{L^2_x(H^1_{v,*})}^2 
+\frac{\kappa_0}{\eps^2} \| \nabla_x^if^\perp \|_{L^2_x (H^1_{v,*})}^2 \right\} \\
&\quad 
+ \sum_{i=0}^2 \left\{ - \frac{\sigma''' \delta}{\eps^2} \| \omega_i \nabla_x^i f^\perp \|_{L^2_x(H^1_{v,*})}^2
+ \frac{C \delta}{\eps^2} \| \nabla_x^i f^\perp \|_{L^2_{x,v}}^2 
+ C \delta  \Nt \nabla_x^{i+1} f \Nt_{L^2_{x,v}}^2 \right\}.
\end{aligned}
$$
Recalling that $\| \cdot \|_{L^2_{x,v}} \le \| \cdot \|_{L^2_x(H^1_{v,*})}$, it follows 
$$ 
\begin{aligned}   
\la\!\la \Lambda_\eps f, f \ra\!\ra_\XXX
&\le  
-\sigma_0  \Nt f \Nt_{L^2_{x,v}}^2
-(\sigma_0 - C \delta) \sum_{i=1}^3 \Nt \nabla_x^if \Nt_{L^2_{x,v}}^2 
- \frac{\sigma'}{\eps^2}  \sum_{i=0}^2 \delta \| \omega_i \nabla_x^i f^\perp \|_{L^2_{x,v}}^2 \\
&\quad 
- \frac{(\kappa_0 - C \delta)}{\eps^2} \sum_{i=0}^3  \| \nabla_x^i f^\perp \|_{L^2_x (H^1_{v,*})}^2
- \frac{(\sigma''' - \sigma')}{\eps^2}  \sum_{i=0}^2 \delta \| \omega_i \nabla_x^i f^\perp \|_{L^2_x(H^1_{v,*})}^2.
\end{aligned}
$$
We then choose $\delta \in (0,1)$ small enough such that $\sigma_0 - C \delta \ge \sigma'$ and $\kappa_0 - C \delta > 0$, therefore we obtain \eqref{eq:hypo-Lambda-bis} with $\kappa' = \min ((\sigma''' - \sigma')\delta, \kappa_0-C\delta)>0$, which completes the proof.
\end{proof}

%---------%---------%---------%---------%---------%---------%---------%---------%---------%---------%
\subsection{Regularization estimates} \label{subsec:reg}
%---------%---------%---------%---------%---------%---------%---------%---------%---------%---------%

\begin{prop}\label{prop:regLambda}
The solution $f(t) = U^\eps(t) f_{\mathrm{in}}$ to the equation 
$$
\left\{
\begin{aligned}
& \partial_t f = \Lambda_\eps f \\
& f(0) = f_{\mathrm{in}} \in \XXX \cap (\operatorname{Ker} \Lambda_\eps)^\perp
\end{aligned}
\right.
$$ 
satisfies, for all $t > 0$,
\begin{equation}\label{eq:Lambda-reg-1}
\| f(t) \|_{\YYY_1} \le C \frac{e^{-\sigma t}}{\min (1 , \sqrt t)} \, \| f_\mathrm{in} \|_\XXX
\end{equation}
and
\begin{equation}\label{eq:Lambda-reg-2}
\| f(t) \|_{\ZZZ_1^\eps} \le C \frac{e^{-\sigma t}}{\min (1 ,  t^{3/2})} \, \| f_\mathrm{in} \|_\XXX,
\end{equation}
for any $0 < \sigma < \sigma_0$ (where $\sigma_0$ is defined in Proposition~\ref{prop:hypoL2}).
\end{prop}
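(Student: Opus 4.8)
I would prove this by running the Hérau--Nier hypoelliptic regularization scheme directly at the level of an energy estimate for the linear equation $\partial_t f = \Lambda_\eps f$, building on the twisted hypocoercive norm $\Nt\cdot\Nt_\XXX$ of Proposition~\ref{prop:hypoX}, but with two modifications dictated by the rescaling: (i) the time weights are anisotropic in $(v,x)$ with the $x$-gradient appearing only through the combination $\eps\widetilde\nabla_x$; (ii) the intermediate (first-order) terms of the functional involve only the microscopic part $f^\perp$, so that the unavoidable $\eps^{-2}$-size error terms fall on $\|f^\perp\|_{\YYY_1}^2$ — which is dissipated with a full $\eps^{-2}$ gain by \eqref{eq:hypo-Lambda} — rather than on $\|\pi f\|$, which is not controlled uniformly in $\eps$. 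This is the same spirit as the functional \eqref{eq:Lyapunov} used for the nonlinear problem in Section~\ref{sec:Cauchyreg}, minus the nonlinear terms. It suffices to prove the estimates for $t\in(0,1]$: for $t\ge 1$ one writes $f(t)=U^\eps(1)\big(U^\eps(t-1)f_{\rm in}\big)$, uses that $(\operatorname{Ker}\Lambda_\eps)^\perp$ is invariant under $U^\eps$, and bounds $\|U^\eps(t-1)f_{\rm in}\|_\XXX\lesssim e^{-\sigma(t-1)}\|f_{\rm in}\|_\XXX$ by \eqref{eq:Ueps-decay}, so the $t=1$ case of \eqref{eq:Lambda-reg-1}--\eqref{eq:Lambda-reg-2} closes the argument there (noting $\min(1,\sqrt t)=\min(1,t^{3/2})=1$ and $e^{-\sigma t}\asymp 1$ on $(0,1]$).

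\textbf{The functional.} On $(0,1]$ I would introduce, for a smooth solution $f(t)=U^\eps(t)f_{\rm in}$,
$$
\mathcal F(t):=\Nt f\Nt_\XXX^2 + a\,t\,\Nt\widetilde\nabla_v f^\perp\Nt_\XXX^2 + 2b\,t^2\,\langle\!\langle\widetilde\nabla_v f^\perp,\ \eps\widetilde\nabla_x f\rangle\!\rangle_\XXX + c\,t^3\eps^2\,\Nt\widetilde\nabla_x f\Nt_\XXX^2 + (\text{lower-order correctors}),
$$
where $0<c\ll b\ll a\ll 1$ are absolute constants with $b^2<ac$; a Cauchy--Schwarz/Young argument using $b^2<ac$ then makes $\mathcal F(t)$ comparable to $\Nt f\Nt_\XXX^2+at\Nt\widetilde\nabla_v f^\perp\Nt_\XXX^2+ct^3\eps^2\Nt\widetilde\nabla_x f\Nt_\XXX^2$, and in particular $\mathcal F(0^+)=\Nt f_{\rm in}\Nt_\XXX^2\lesssim\|f_{\rm in}\|_\XXX^2$. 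The exponents $t,t^2,t^3$ are forced by the commutator $[\widetilde\nabla_v,\tfrac1\eps v\cdot\nabla_x]=\tfrac1\eps\widetilde\nabla_x$ of Lemma~\ref{lem:commutator}(i), exactly as for the kinetic Fokker--Planck equation; the same commutator, carrying the factor $\tfrac1\eps$, is why the $x$-gradient enters only as $\eps\widetilde\nabla_x$, which keeps all $\eps$-powers balanced against the $\eps^{-2}$ of the collision part.

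\textbf{The differential inequality.} The goal is $\frac{d}{dt}\mathcal F(t)\le 0$ on $(0,1]$. The leading term contributes $-2\kappa\|f\|_{\YYY_1}^2-\tfrac{2\kappa}{\eps^2}\|f^\perp\|_{\YYY_1}^2$ by Proposition~\ref{prop:hypoX}. For the $at$-term one differentiates, substitutes \eqref{eq:dtfperp} for $\partial_t f^\perp$, and splits $\widetilde\nabla_v\Lambda_\eps=\Lambda_\eps\widetilde\nabla_v+[\widetilde\nabla_v,\Lambda_\eps]$: the genuine $\Lambda_\eps\widetilde\nabla_v f^\perp$ part gives a $-\tfrac{ct}{\eps^2}\|\widetilde\nabla_v\widetilde\nabla_v f^\perp\|^2$-type dissipation (the residual $\pi\widetilde\nabla_v f^\perp$ being harmless since $\|\pi\widetilde\nabla_v f^\perp\|_{\YYY_1}\lesssim\|f^\perp\|_{\YYY_1}$); the commutator $\tfrac1{\eps^2}[\widetilde\nabla_v,L]f^\perp$ is estimated via Lemmas~\ref{lem:commutator}, \ref{lem:commutatorL} and \ref{lem:boundL2}, its top part absorbed by the above dissipation and its (weight-lowered) remainder of size $\tfrac{C}{\eps^2}\|f^\perp\|_{\YYY_1}^2$ absorbed by the leading $\tfrac{2\kappa}{\eps^2}\|f^\perp\|_{\YYY_1}^2$ once $a$ is small; the dangerous pieces $-\tfrac1\eps\widetilde\nabla_x f^\perp$ and $v\cdot\nabla_x\pi f-\pi(v\cdot\nabla_x f)$ are $O(1/\eps)$ and are handed over to the cross term, while the $+a\|\widetilde\nabla_v f^\perp\|^2$ from $\frac{d}{dt}t$ is absorbed by $-2\kappa\|f\|_{\YYY_1}^2$. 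Differentiating the $2bt^2$-cross term and using \eqref{eq:dtfperp} together with skew-adjointness of $v\cdot\nabla_x$ (so the transport contributions cancel pairwise up to commutators), the $-\tfrac1\eps\widetilde\nabla_x f^\perp$ commutator produces the crucial term $-2bt^2\|\widetilde\nabla_x f\|^2$; this kills the bad $+3ct^2\|\widetilde\nabla_x f\|^2$ coming from $\frac{d}{dt}t^3$ in the last term (hence $3c<2b$), while the cross term's residual $\sim t^2\langle\widetilde\nabla_v^2 f^\perp,\widetilde\nabla_v\widetilde\nabla_x f\rangle$ is split by $t^2=t^{1/2}\cdot t^{3/2}$ and Young into a piece $\lesssim b\,t\|\widetilde\nabla_v^2 f^\perp\|^2$ (absorbed at order $t$) and a piece $\lesssim b\,t^3\|\widetilde\nabla_v\widetilde\nabla_x f\|^2$ (absorbed by the order-$t^3$ dissipation, hence $b<2c$). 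Finally, in $\frac{d}{dt}[c t^3\eps^2\Nt\widetilde\nabla_x f\Nt_\XXX^2]$ one uses that $\widetilde\nabla_x f\in(\operatorname{Ker}\Lambda_\eps)^\perp$ and that $\nabla_x$ commutes with the transport part, so a hypocoercivity estimate applied to $\widetilde\nabla_x f$ has its $\eps^{-2}$ cancelled by the $\eps^2$, leaving an $O(1)$ dissipation $-c t^3\kappa\|\widetilde\nabla_x f\|_{\YYY_1}^2$ and $O(1)$ commutator errors (Lemmas~\ref{lem:commutatorL}(iii) and \ref{lem:boundL2}(iii)) absorbed at lower order in $t$. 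Collecting everything and choosing $0<c\ll b\ll a\ll 1$ with $b^2<ac$, $3c<2b$, $b<2c$ and $a$ small against $\kappa,\kappa_0,\sigma_L$ yields $\frac{d}{dt}\mathcal F\le 0$ on $(0,1]$; the estimates are first established on smooth data and extended to all $f_{\rm in}\in\XXX\cap(\operatorname{Ker}\Lambda_\eps)^\perp$ by density.

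\textbf{Reading off the bounds, and the main obstacle.} From $\mathcal F(t)\le\mathcal F(0^+)\lesssim\|f_{\rm in}\|_\XXX^2$ on $(0,1]$ and $\|f(t)\|_\XXX^2\lesssim\Nt f(t)\Nt_\XXX^2\le\mathcal F(t)$, together with $\|\widetilde\nabla_v f(t)\|_\XXX\lesssim\|\widetilde\nabla_v f^\perp(t)\|_\XXX+\|\widetilde\nabla_v\pi f(t)\|_\XXX\lesssim (at)^{-1/2}\mathcal F(t)^{1/2}+\|f(t)\|_\XXX$, one gets $\|f(t)\|_{\YYY_1}^2\lesssim t^{-1}\|f_{\rm in}\|_\XXX^2$, i.e.\ \eqref{eq:Lambda-reg-1} on $(0,1]$; and $\eps^2\|\widetilde\nabla_x f(t)\|_\XXX^2\lesssim (ct^3)^{-1}\mathcal F(t)\lesssim t^{-3}\|f_{\rm in}\|_\XXX^2$, which combined with the previous bound gives \eqref{eq:Lambda-reg-2} on $(0,1]$; the case $t\ge 1$ was handled above. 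I expect the real difficulty to lie not in the (classical) time-power bookkeeping but in the simultaneous control of the $L$-commutators with their correct anisotropic $\langle v\rangle$-weights \emph{and} of the $\eps^{-1}$-transport commutator: it is precisely this that forces the microscopic-part structure of the intermediate terms of $\mathcal F$ and the appearance of $\eps\widetilde\nabla_x$ — which is also why the space-regularization estimate \eqref{eq:Lambda-reg-2} is genuinely not uniform in $\eps$.
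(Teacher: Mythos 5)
Your overall strategy coincides with the paper's: the same H\'erau--Nier functional $\UUU_\eps$ with time weights $t, t^2, t^3$, the same $\eps$-balancing through the combination $\eps\widetilde\nabla_x$, the same use of Proposition~\ref{prop:hypoX} for the leading term, the same correctors $K\|\langle v\rangle^{\gamma/2+1}f^\perp\|_\XXX^2$ and $K\|\langle v\rangle^{\gamma/2}\nabla_x f\|_\XXX^2$, and the same reduction to $t\in(0,1]$ via the exponential decay of $U^\eps$ on $\XXX$.

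However, your choice of cross term $\eps\,t^2\langle\widetilde\nabla_v f^\perp,\widetilde\nabla_x f\rangle_\XXX$ differs from the paper's $\eps\,t^2\langle\widetilde\nabla_v f,\widetilde\nabla_x f\rangle_\XXX$ (full $f$, not $f^\perp$), and this is where a gap opens. With the paper's choice the transport contributions cancel \emph{exactly}: the computation $R_2^i+S_2^i=\|\omega_i\widetilde\nabla_{x_k}g_i\|^2$ in Step~4 exploits the skew-adjointness of $v\cdot\nabla_x$ with no leftover. With your choice, $\partial_t\widetilde\nabla_v f^\perp$ is obtained from \eqref{eq:dtfperp}, and because $(\Id-\pi)(v\cdot\nabla_x f)$ involves the projector, the commutator bookkeeping gives
\begin{equation*}
\partial_t\widetilde\nabla_v f^\perp
= \tfrac1{\eps^2}\widetilde\nabla_v L f^\perp
- \tfrac1\eps\bigl[(v\cdot\nabla_x)\widetilde\nabla_v f + \widetilde\nabla_x f - \widetilde\nabla_v\pi(v\cdot\nabla_x f)\bigr].
\end{equation*}
The term $-\tfrac1\eps\widetilde\nabla_x f$ does produce the crucial $-\|\widetilde\nabla_x f\|^2$ as you claim, but the $(v\cdot\nabla_x)\widetilde\nabla_v f$ piece (full $f$, not $f^\perp$) does \emph{not} cancel against $-\langle\widetilde\nabla_v f^\perp,(v\cdot\nabla_x)\widetilde\nabla_x f\rangle$: after integration by parts one is left with two macroscopic residuals,
\begin{equation*}
-\langle(v\cdot\nabla_x)\widetilde\nabla_v\,\pi f,\widetilde\nabla_x f\rangle_\XXX
+ \langle\widetilde\nabla_v\pi(v\cdot\nabla_x f),\widetilde\nabla_x f\rangle_\XXX,
\end{equation*}
both of size $O(1)\|\widetilde\nabla_x f\|_\XXX^2$. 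These are of the \emph{same order} as the gain they are supposed to sit next to, and you do not address them. One can try to salvage the argument by noting they are really of the form $\lesssim\|\pi\nabla_x f\|\cdot\|\widetilde\nabla_x f\|\lesssim\|f\|_{\YYY_1}\|\widetilde\nabla_x f\|$ and then splitting the Young inequality to dump the macroscopic part onto the $-\kappa_0\|f\|_{\YYY_1}^2$ dissipation, but that requires a further structural argument and a re-tuning of the constants which is absent from your proposal. The paper avoids the problem entirely by keeping $\widetilde\nabla_v f$ (not $\widetilde\nabla_v f^\perp$) in the cross term: when one differentiates, the collision part is automatically $\widetilde\nabla_v L f = \widetilde\nabla_v L f^\perp$, so no $\eps^{-2}$ hits the macroscopic part anyway, and the transport cancellation is clean. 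You also impose the unnecessary (and, taken literally, mutually tight) constraints $3c<2b<4c$; the paper's constraint $\alpha_2\le\sqrt{\alpha_1\alpha_3}$ is the correct one because the Young parameter in the cross-term absorption is free.
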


\begin{rem}
Notice that thanks to the second inequality, one can in particular recover a gain of one derivative in the spatial variable (with the associated anisotropic gain of weight in velocity), at the price of loosing a $1/\eps$. As already mentioned, this is explained by the fact that the gain comes from the transport operator which does not act as the same scale as the collision operator in velocity. Notice also that in~\cite{BMM}, the authors were facing a similar singularity in~$\eps$ when wanting to obtain a gain of regularity in the spatial variable for the hard-spheres Boltzmann equation. The latter equation is not hypoelliptic but thanks to a suitable use of averaging lemmas, the authors were also able to obtain regularization properties in the spatial variable with the same singularity in $\eps$. %{\color{red} Isa : à vérifier.} 
\end{rem}

\begin{proof}[Proof of Proposition~\ref{prop:regLambda}]
We shall prove that for any $t \in (0,1]$ there holds 
\begin{equation}\label{eq:Lambda-reg-1-bis}
\| f(t) \|_{\YYY_1} \le \frac{C}{\sqrt t} \, \| f_\mathrm{in} \|_\XXX
\end{equation}
and
\begin{equation}\label{eq:Lambda-reg-2-bis}
\| f(t) \|_{\ZZZ_1^\eps} \le \frac{C}{t^{3/2}} \, \| f_\mathrm{in} \|_\XXX,
\end{equation}
which readily imply \eqref{eq:Lambda-reg-1} and \eqref{eq:Lambda-reg-2} thanks to the exponential decay of $U^\eps$ in $\XXX$ from Proposition~\ref{prop:hypoX}.

\medskip\noindent
\textit{Step 1.}
Define the functional
	\begin{multline*}
	\UUU_\eps(t,f) 
	= \Nt f \Nt_\XXX^2 
	+ \alpha_1 {t} \left(\| \widetilde \nabla_v f^{\perp} \|_{\XXX}^2 +  K \| \langle v \rangle^{\frac{\gamma}{2}+1} f^\perp\|^2_\XXX\right) \\
	+\eps \alpha_2 {t}^2 \la \widetilde \nabla_v f , \widetilde \nabla_x f \ra_{\XXX} 
	+ \eps^2 \alpha_3 {t}^3 \left(\| \widetilde \nabla_x f \|_{\XXX}^2 + K \| \langle v \rangle^{\frac{\gamma}{2}} \nabla_x f\|^2_\XXX \right),
	\end{multline*}
with constants $0 < \alpha_3 \ll \alpha_2 \ll \alpha_1 \ll 1$  so that $\alpha_2 \le \sqrt{\alpha_1 \alpha_3}$ and $K>0$. The constants~$\alpha_i$ will be chosen small enough and $K$ large enough during the proof. 

We easily observe that 
\begin{align*}
\begin{split}
\Nt f \Nt_\XXX^2  + t \left(\| \widetilde \nabla_v f^{\perp} \|_{\XXX}^2 +  \| \langle v \rangle^{\frac{\gamma}{2}+1} f^\perp\|^2_\XXX\right)
+ \eps^2 t^3 \| \widetilde \nabla_x f \|_\XXX^2
\lesssim \UUU_\eps(t,f).
\end{split}
\end{align*}
Remarking that $\| f \|_{\YYY_1} \lesssim \| f^{\perp} \|_{\YYY_1} + \| \pi f \|_\XXX$, we thus obtain 
$$
\| f \|_{\YYY_1} \lesssim \| \widetilde \nabla_v f^{\perp} \|_{\XXX}^2 +  \| \langle v \rangle^{\frac{\gamma}{2}+1} f^\perp\|_\XXX + \| \pi f \|_\XXX,
$$ 
from which we deduce the following lower bounds: For all $t \in [0,1]$ there holds 
\begin{equation}\label{eq:lowerbdd}
t \| f \|_{\YYY_1}^2 \lesssim \UUU_\eps(t,f)
\quad\text{and}\quad
t^3 \| f \|_{\ZZZ_1^\eps}^2 \lesssim \UUU_\eps(t,f).
\end{equation}
Therefore, in order to prove \eqref{eq:Lambda-reg-1-bis} and \eqref{eq:Lambda-reg-2-bis}, it is sufficient to show that 
$$
\frac{\d}{\dt} \UUU_\eps (t , f) \le 0, \quad \forall \, t \in [0,1],
$$
which we shall do next.
We thus compute 
\begin{equation}\label{eq:dtUeps}
\begin{aligned}
\frac{\mathrm{d}}{\mathrm{d}t}\UUU_\eps(t,f)
&=\frac{\mathrm{d}}{\mathrm{d}t}\Nt f \Nt_\XXX^2
+\alpha_{1}\Big( \| \langle v \rangle^{\frac{\gamma}{2}+1} f^{\perp}  \|_\XXX^2 
+ \|  \widetilde{\nabla}_v f^{\perp} \|_\XXX^2 \Big) \\
&\quad
+\alpha_{1}t\frac{\mathrm{d}}{\mathrm{d}t}\Big( \| \langle v \rangle^{\frac{\gamma}{2}+1} f^{\perp}  \|_\XXX^2 
+ \|  \widetilde{\nabla}_v f^{\perp} \|_\XXX^2 \Big)\\
&\quad
+2\alpha_{2}{\varepsilon} t\la \widetilde{\nabla}_v f ,  \widetilde{\nabla}_x f \ra_\XXX +\alpha_{2}\varepsilon t^2  \frac{\mathrm{d}}{\mathrm{d}t}\la \widetilde{\nabla}_v f ,  \widetilde{\nabla}_x f \ra_\XXX \\
&\quad
+3\alpha_{3}\varepsilon^2t^{2} \Big(\|  \widetilde{\nabla}_x f \|_\XXX^2
+ K\|  \langle v \rangle^{\frac{\gamma}{2}} \nabla_x f \|_\XXX^2 \Big) \\
&\quad
+\alpha_{3}\varepsilon^2t^{3}\frac{\mathrm{d}}{\mathrm{d}t} \Big(\|  \widetilde{\nabla}_x f \|_\XXX^2
+ K\|  \langle v \rangle^{\frac{\gamma}{2}} \nabla_x f \|_\XXX^2 \Big)
\end{aligned}
\end{equation}
and we estimate each term separately in the sequel. In order to simplify, we introduce the notations $g_i = \nabla_x^i f$, $g_i^\perp = (\nabla_x^i f)^\perp = \nabla_x^i f^\perp$  and $\omega_i = \langle v \rangle^{(3-i) (\frac{\gamma}{2} +1)}$ so that 
$$
\| f \|_\XXX^2 = \sum_{i=0}^3 \| \omega_i g_i \|_{L^2_{x,v}}^2 , 
\quad 
\| f \|_{\YYY_1}^2 = \sum_{i=0}^3 \| \omega_i g_i \|_{L^2_x (H^1_{v,*})}^2 , 
\quad 
\| f \|_{\YYY_2}^2 = \sum_{i=0}^3 \| \omega_i g_i \|_{L^2_x(H^2_{v,*})}^2.
$$

\medskip\noindent 
\textit{Step 2.} 
From Proposition~\ref{prop:hypoX}, we already have
\begin{align}\label{eq:dtfXXX}
\frac{\d}{\dt} \Nt f \Nt_{\XXX}^2
\le - \kappa_0 \| f \|_{\YYY_1}^2 - \frac{\kappa_0}{\eps^2} \| f^\perp \|_{\YYY_1}^2,
\end{align}
for some constant $\kappa_0>0$.

\medskip\noindent
\textit{Step 3.} 
We deal in this step with the term $( K\| \langle v \rangle^{\frac{\gamma}{2}+1} f^{\perp}  \|_\XXX^2 
+ \|  \widetilde{\nabla}_v f^{\perp} \|_\XXX^2 )$.
We split the computations into two parts.

\medskip\noindent
\textit{Step 3.1.} We first compute 
$$
\begin{aligned}
\frac12 \frac{\d}{\dt} \| \langle v \rangle^{\frac{\gamma}{2}+1} f^\perp \|_\XXX^2
& = \frac12 \sum_{i=0}^3 \frac{\d}{\dt} \| \omega_i  \langle v \rangle^{\frac{\gamma}{2}+1} g_i^\perp \|_{L^2_{x,v}}^2.
\end{aligned}
$$
% and arguing similarly as in the proof of Lemma~\ref{lem:hypoX} we obtain
% $$
% \begin{aligned}
% \frac12 \frac{\d}{\dt} \| \langle v \rangle^{\frac{\gamma}{2}+1} f^\perp \|_\XXX^2
% & \le  -\frac{\kappa}{\eps^2} \| \langle v \rangle^{\frac{\gamma}{2}+1} f^\perp \|_{\YYY_1}^2 + \frac{C}{\eps} \|  \langle v \rangle^{\frac{\gamma}{2}+1} f^\perp \|_\XXX \| \widetilde \nabla_x f \|_\XXX 
% \end{aligned}
% $$
% for some constants $\kappa,C>0$.
Observing that $f^\perp$ satisfies the equation $\partial_t f^\perp = (\Lambda_\eps f)^\perp$, with $(\Lambda_\eps f)^\perp$ given by \eqref{eq:dtfperp}, using that the transport operator is skew-adjoint and that derivatives in $x$ commute with $\pi$ and $\Lambda_\eps$, for any $i \in \{0,1,2,3\}$ we obtain
$$
\begin{aligned}
\frac12 \frac{\d}{\dt} \| \omega_i \langle v \rangle^{\frac{\gamma}{2}+1} g_i^\perp \|_{L^2_{x,v}}^2
&= \la \omega_i \langle v \rangle^{\frac{\gamma}{2}+1} \partial_t g_i^\perp , \omega_i \langle v \rangle^{\frac{\gamma}{2}+1} g_i^\perp \ra_{L^2_{x,v}} 
=: \frac{1}{\eps^2}J_1^i - \frac{1}{\eps}J_2^i + \frac{1}{\eps}J_3^i
\end{aligned}
$$
with
$$
\begin{aligned}
J_1^i &:= \la \omega_i \langle v \rangle^{\frac{\gamma}{2}+1} L g_i^\perp , \omega_i \langle v \rangle^{\frac{\gamma}{2}+1} g_i^\perp  \ra_{L^2_{x,v}} \\
J_2^i &:= \la \omega_i \langle v \rangle^{\frac{\gamma}{2}+1}(v \cdot \nabla_x (\pi g_i) ) , \omega_i \langle v \rangle^{\frac{\gamma}{2}+1} g_i^\perp \ra_{L^2_{x,v}} \\
J_3^i &:= \la \omega_i \langle v \rangle^{\frac{\gamma}{2}+1}( \pi (v \cdot \nabla_x g_i) ) , \omega_i \langle v \rangle^{\frac{\gamma}{2}+1} g_i^\perp \ra_{L^2_{x,v}}.
\end{aligned}
$$
For the first term, we write
$$
\begin{aligned}
J_1^i 
&= \la L (\omega_i \langle v \rangle^{\frac{\gamma}{2}+1} g_i^\perp )  + [\omega_i\langle v \rangle^{\frac{\gamma}{2}+1}, L_1] g_i^\perp 
+  [\omega_i \langle v \rangle^{\frac{\gamma}{2}+1}, L_2] g_i^\perp, \omega_i \langle v \rangle^{\frac{\gamma}{2}+1} g_i^\perp \ra_{L^2_{x,v}} \\
&=: J_{11}^i + J_{12}^i + J_{13}^i,
\end{aligned}
$$
where we recall that $L_1$ and $L_2$ are defined in \eqref{eq:def:L1} and \eqref{eq:def:L2}, respectively. Thanks to the spectral gap estimate~\eqref{eq:spectralgap}, one has
$$
\begin{aligned}
J_{11}^i
&\le - 2\kappa \| \omega_i \langle v \rangle^{\frac{\gamma}{2}+1} g_i^\perp - \pi(\omega_i \langle v \rangle^{\frac{\gamma}{2}+1} g_i^\perp) \|_{L^2_x (H^1_{v,*})}^2  \\
&\le - 2\kappa \| \omega_i \langle v \rangle^{\frac{\gamma}{2}+1} g_i^\perp \|_{L^2_x (H^1_{v,*})}^2 
+ C \|  g_i^\perp \|_{L^2_{x,v}}^2,
\end{aligned}
$$
for some constants $\kappa,C>0$. From Lemma~\ref{lem:commutatorL}, we get
$$
\begin{aligned}
J_{12}^i
&=-\la [\omega_i\langle v \rangle^{\frac{\gamma}{2}+1}, \widetilde \nabla_{v_\ell}] g_i^\perp , \widetilde \nabla_{v_\ell}(\omega_i \langle v \rangle^{\frac{\gamma}{2}+1} g_i^\perp )\ra_{L^2_{x,v}} \\
&\quad - \la [\omega_i\langle v \rangle^{\frac{\gamma}{2}+1}, \widetilde \nabla_{v_\ell}^*] g_i^\perp ,  \widetilde \nabla_{v_\ell}^*(\omega_i \langle v \rangle^{\frac{\gamma}{2}+1} g_i^\perp) \ra_{L^2_{x,v}} \\
&\quad
- \la \left[ [\omega_i\langle v \rangle^{\frac{\gamma}{2}+1}, \widetilde \nabla_{v_\ell}^*] , \widetilde \nabla_{v_\ell}  \right] g_i^\perp , \omega_i \langle v \rangle^{\frac{\gamma}{2}+1} g_i^\perp\ra_{L^2_{x,v}} .
\end{aligned}
$$
Using Lemma~\ref{lem:commutator} and observing that $\| \widetilde \nabla_v h \|_{L^2_{x,v}} + \| \widetilde \nabla_v^* h \|_{L^2_{x,v}} \lesssim \| h \|_{L^2_x (H^1_{v,*})}$, we then obtain
$$
\begin{aligned}
|J_{12}^i|
&\le C \|  \omega_i \langle v \rangle^{\frac{\gamma}{2}+1} g_i^\perp \|_{L^2_x (H^1_{v,*})} 
\| \langle v \rangle^{\frac{\gamma}{2}-1}  \omega_i \langle v \rangle^{\frac{\gamma}{2}+1} g_i^\perp    \|_{L^2_{x,v}}
+ C \| \langle v \rangle^{\frac{\gamma}{2}-\frac12}  \omega_i \langle v \rangle^{\frac{\gamma}{2}+1} g_i^\perp \|_{L^2_{x,v}}^2 \\
&\le {\kappa} \|  \omega_i \langle v \rangle^{\frac{\gamma}{2}+1} g_i^\perp \|_{L^2_x (H^1_{v,*})}^2
+ C \| \omega_i  \langle v \rangle^{\frac{\gamma}{2}+1} g_i^\perp \|_{L^2_{x,v}}^2
\end{aligned}
$$
where we have used that $\gamma \le 1$ and Young's inequality in last line.
For the term $J_{13}^i$, we use Lemma~\ref{lem:boundL2} to obtain
$$
|J_{13}^i|
\le C \| \omega_i g_i^\perp \|_{L^2_{x,v}}^2.
$$
We deal with the terms $J_2^i$ and $J_3^i$. Observing that 
$$
\| \omega_i \langle v \rangle^{\frac{\gamma}{2}+1}   (v \cdot \nabla_x (\pi g_i)) \|_{L^2_{x,v}} 
+
\| \omega_i \langle v \rangle^{\frac{\gamma}{2}+1}  ( \pi (v \cdot \nabla_x g_i)) \|_{L^2_{x,v}}
\lesssim \| \omega_i \widetilde \nabla_x g_i \|_{L^2_{x,v}},
$$
we obtain
$$
\begin{aligned}
|J_2^i| + |J_3^i| 
&\le C \|  \omega_i \widetilde \nabla_x g_i \|_{L^2_{x,v}} \| \omega_i \langle v \rangle^{\frac{\gamma}{2}+1} g_i^\perp \|_{L^2_{x,v}} .
\end{aligned}
$$ 
Gathering previous estimates and using that $\| \la v \ra^{\gamma+\frac12} f^\perp \|_{\XXX} \lesssim \| f^\perp \|_{\YYY_1}$, we obtain
\begin{equation}\label{dtmfperp}
\begin{aligned}
\frac12 \frac{\d}{\dt} \| \langle v \rangle^{\frac{\gamma}{2}+1} f^\perp \|_\XXX^2
& \le - \frac{\kappa}{\eps^2} \| \langle v \rangle^{\frac{\gamma}{2}+1} f^\perp \|_{\YYY_1}^2 + \frac{C}{\eps^2} \| f^\perp \|_{\YYY_1}^2 
+ \frac{C}{\eps} \|  f^\perp \|_{\YYY_1} \| \widetilde \nabla_x f \|_{\XXX}
\end{aligned}
\end{equation}
for some constants $\kappa , C >0$.

\medskip\noindent
\textit{Step 3.2.} We now compute 
$$
\begin{aligned}
\frac12 \frac{\d}{\dt} \| \widetilde \nabla_v f^\perp \|_\XXX^2
& = \frac12 \sum_{i=0}^3 \frac{\d}{\dt} \| \omega_i  \widetilde \nabla_v g_i^\perp \|_{L^2_{x,v}}^2 \\
&=: \sum_{i=0}^3  \left(\frac{1}{\eps^2} I_1^i - \frac{1}{\eps} I_2^i + \frac{1}{\eps} I_3^i
- \frac{1}{\eps} I_4^i \right)
\end{aligned}
$$
with
$$
\begin{aligned}
I_1^i &:= \la \omega_i \widetilde \nabla_{v_k} L g_i^\perp , \omega_i \widetilde \nabla_{v_k} g_i^\perp  \ra_{L^2_{x,v}} \\
I_2^i &:= \la \omega_i \widetilde \nabla_{v_k} (v \cdot \nabla_x (\pi g_i) ) , \omega_i \widetilde \nabla_{v_k} g_i^\perp \ra_{L^2_{x,v}} \\
I_3^i &:= \la \omega_i \widetilde \nabla_{v_k} ( \pi (v \cdot \nabla_x g_i) ) , \omega_i \widetilde \nabla_{v_k} g_i^\perp\ra_{L^2_{x,v}}
\\
I_4^i &:= \la \omega_i \widetilde \nabla_{v_k} ( v \cdot \nabla_x g_i ^\perp) , \omega_i \widetilde \nabla_{v_k} g_i^\perp\ra_{L^2_{x,v}}.
\end{aligned}
$$
For the first term, we write
$$
\begin{aligned}
I_1^i 
&= \la L (\omega_i \widetilde \nabla_{v_k} g_i^\perp ) , \omega_i \widetilde \nabla_{v_k} g_i^\perp \ra_{L^2_{x,v}} 
+\la   [\omega_i , L_1] \widetilde \nabla_{v_k} g_i^\perp , \omega_i \widetilde \nabla_{v_k} g_i^\perp \ra_{L^2_{x,v}} \\
&\quad 
+\la  \omega_i [\widetilde \nabla_{v_k}, L_1] g_i^\perp, \omega_i \widetilde \nabla_{v_k} g_i^\perp \ra_{L^2_{x,v}} 
+\la  [\omega_i , L_2] \widetilde \nabla_{v_k} g_i^\perp, \omega_i \widetilde \nabla_{v_k} g_i^\perp \ra_{L^2_{x,v}} \\
&\quad 
+\la  \omega_i [\widetilde \nabla_{v_k}, L_2] g_i^\perp, \omega_i \widetilde \nabla_{v_k} g_i^\perp \ra_{L^2_{x,v}} \\
&=: I_{11}^i + I_{12}^i + I_{13}^i + I_{14}^i + I_{15}^i .
\end{aligned}
$$
Thanks to the spectral gap estimate~\eqref{eq:spectralgap}, one has
$$
\begin{aligned}
I_{11}^i
&\le - 2\kappa \| \omega_i \widetilde \nabla_{v_k} g_i^\perp - \pi(\omega_i \widetilde \nabla_{v_k} g_i^\perp) \|_{L^2_x (H^1_{v,*})}^2  \\
&\le - 2\kappa \| \omega_i \widetilde \nabla_{v_k} g_i^\perp \|_{L^2_x (H^1_{v,*})}^2 
+ C \|  g_i^\perp \|_{L^2_{x,v}}^2,
\end{aligned}
$$
for some constants $\kappa,C>0$. From Lemma~\ref{lem:commutatorL}, we get
$$
\begin{aligned}
I_{12}^i
&= 
- \la  [\omega_i,\widetilde \nabla_{v_\ell}] \widetilde \nabla_{v_k} g_i^\perp , \widetilde \nabla_{v_\ell} (\omega_i \widetilde \nabla_{v_k} g_i^\perp) \ra_{L^2_{x,v}}
- \la    [\omega_i,\widetilde \nabla_{v_\ell}^*] \widetilde \nabla_{v_k} g_i^\perp , \widetilde \nabla_{v_\ell}^*(\omega_i \widetilde \nabla_{v_k} g_i^\perp) \ra_{L^2_{x,v}}\\
&\quad
- \la  \left[ [\omega_i , \widetilde \nabla_{v_\ell}^*] , \widetilde \nabla_{v_\ell} \right] \widetilde \nabla_{v_k} g_i^\perp , \omega_i \widetilde \nabla_{v_k} g_i^\perp \ra_{L^2_{x,v}},
\end{aligned}
$$
and
$$
\begin{aligned}
I_{13}^i
&= 
-\la  [\widetilde \nabla_{v_k}, \widetilde \nabla_{v_\ell}] g_i^\perp, \widetilde \nabla_{v_\ell} (\omega_i^2 \widetilde \nabla_{v_k} g_i^\perp) \ra_{L^2_{x,v}}
-\la   [\widetilde \nabla_{v_k}, \widetilde \nabla_{v_\ell}^*] g_i^\perp, \widetilde \nabla_{v_\ell}^*(\omega_i^2 \widetilde \nabla_{v_k} g_i^\perp) \ra_{L^2_{x,v}}\\
&\quad
-\la  \omega_i \left[ [\widetilde \nabla_{v_k}, \widetilde \nabla_{v_\ell}^*] , \widetilde \nabla_{v_\ell} \right] g_i^\perp, \omega_i \widetilde \nabla_{v_k} g_i^\perp \ra_{L^2_{x,v}}
-\la  \omega_i (\widetilde \nabla_{v_k} \psi) g_i^\perp ,  \omega_i \widetilde \nabla_{v_k} g_i^\perp \ra_{L^2_{x,v}}
.
\end{aligned}
$$
For $I_{12}^i$, using Lemma~\ref{lem:commutator} and observing that $\| \widetilde \nabla_v h \|_{L^2_{x,v}} + \| \widetilde \nabla_v^* h \|_{L^2_{x,v}} \lesssim \| h \|_{L^2_x (H^1_{v,*})}$, we obtain
$$
\begin{aligned}
|I_{12}^i|
&\le C \|  \omega_i \widetilde \nabla_{v_k} g_i^\perp \|_{L^2_x (H^1_{v,*})} 
\| \langle v \rangle^{\frac{\gamma}{2}-1}  \omega_i \widetilde \nabla_{v_k} g_i^\perp    \|_{L^2_{x,v}}
+ C \| \langle v \rangle^{\frac{\gamma}{2}-\frac12}  \omega_i \widetilde \nabla_{v_k} g_i^\perp \|_{L^2_{x,v}}^2. 
\end{aligned}
$$
For $I_{13}^i$, we first observe that 
writing 
$$
\begin{aligned}
\widetilde \nabla_{v_\ell} (\omega_i^2 h)
&= \omega_i \widetilde \nabla_{v_\ell} (\omega_i h) + (\widetilde \nabla_{v_\ell} \omega_i) \omega_i h \\
\widetilde \nabla_{v_\ell}^* (\omega_i^2 h)
&= -\omega_i \widetilde \nabla_{v_\ell} (\omega_i h)
- \left[ (\partial_{v_p} B_{\ell p}) \omega_i + (\widetilde \nabla_{v_\ell} \omega_i) \right] \omega_i h
\end{aligned}
$$ 
and using the bounds \eqref{eq:nablaBij}, we have 
\begin{equation}\label{nablaomega2h}
\| \omega_i^{-1} \widetilde \nabla_{v_\ell} (\omega_i^2 h) \|_{L^2_{x,v}} + \| \omega_i^{-1} \widetilde \nabla_{v_\ell}^* (\omega_i^2 h) \|_{L^2_{x,v}} \lesssim \| \omega_i h \|_{L^2_x(H^1_{v,*})}. 
\end{equation}
Therefore, using Lemma~\ref{lem:commutator} and noticing that 
$$
|\widetilde \nabla_v \psi| 
\lesssim \la v \ra^{\frac{\gamma}{2}+1} |\nabla_v \psi| 
\lesssim \la v \ra^{\frac{\gamma}{2}+1} \la v \ra^{\gamma+1},
$$
we obtain
$$
\begin{aligned}
|I_{13}^i|
&\le C \|  \omega_i \widetilde \nabla_{v_k} g_i^\perp \|_{L^2_x (H^1_{v,*})} \| \omega_i  \langle v \rangle^{\gamma+1} \nabla_v g_i^\perp \|_{L^2_{x,v}}
+ C  \|  \omega_i \widetilde \nabla_{v_k} g_i^\perp \|_{L^2_x (H^1_{v,*})} \| \omega_i \langle v \rangle^{\gamma} g_i^\perp \|_{L^2_{x,v}} \\
&\quad 
+ C \|  \omega_i \langle v \rangle^{\gamma}  \nabla_v g_i^\perp \|_{L^2_{x,v}} \| \langle v \rangle^{\frac{\gamma}{2}+1} \omega_i \widetilde \nabla_{v_k} g_i^\perp \|_{L^2_{x,v}} \\
&\quad 
+ C \|  \omega_i \langle v \rangle^{\gamma-1} g_i^\perp \|_{L^2_{x,v}} \| \langle v \rangle^{\frac{\gamma}{2}+1} \omega_i \widetilde \nabla_{v_k} g_i^\perp \|_{L^2_{x,v}} \\
&\quad
+ C \|  \omega_i \langle v \rangle^{\gamma+1} g_i^\perp \|_{L^2_{x,v}} \| \langle v \rangle^{\frac{\gamma}{2}+1}  \omega_i \widetilde \nabla_{v_k} g_i^\perp \|_{L^2_{x,v}}.
\end{aligned}
$$
Putting together the estimates for $I_{12}^i$ and $I_{13}^i $ and using Young's inequality, it follows
$$
|I_{12}^i|+|I_{13}^i|
\le \kappa \|  \omega_i \widetilde \nabla_{v_k} g_i^\perp \|^2_{L^2_x (H^1_{v,*})}
+ C \| \omega_i \langle v \rangle^{\gamma+1} g_i^\perp \|_{L^2_{x,v}}^2
+ C \| \omega_i \langle v \rangle^{\gamma+1} \nabla_v g_i^\perp \|_{L^2_{x,v}}^2.
$$
For the terms $I_{14}^i$ and $I_{15}^i$, Lemma~\ref{lem:boundL2} yields
$$
|I_{14}^i|
+ |I_{15}^i|
\le C \| \omega_i g_i^\perp \|_{L^2_{x,v}}^2 
+ C \| \omega_i \widetilde \nabla_v g_i^\perp \|_{L^2_{x,v}}^2.
$$
The terms $I_2^i$ and $I_3^i$ can be dealt as before in Step 3.1, and we obtain 
$$
\begin{aligned}
|I_2^i| + |I_3^i| 
&\le C \|  \omega_i \widetilde \nabla_x g_i \|_{L^2_{x,v}} \| \omega_i \widetilde \nabla_{v_k} g_i^\perp \|_{L^2_{x,v}}.
\end{aligned}
$$ 
For the remainder term $I_4^i$, we use Lemma~\ref{lem:commutator} and the fact that the transport operator is skew-symmetric to obtain
$$
I_4^i = 
\la \omega_i \widetilde \nabla_{x_k} g_i^\perp , \omega_i \widetilde \nabla_{v_k} g_i^\perp \ra_{L^2_{x,v}}
\le \| \omega_i \widetilde \nabla_{x_k} g_i^\perp \|_{L^2_{x,v}} \| \omega_i \widetilde \nabla_{v_k} g_i^\perp \|_{L^2_{x,v}}.
$$ 

Gathering previous estimates, we obtain
\begin{equation}\label{dtnablavfperp}
\begin{aligned}
\frac12 \frac{\d}{\dt} \| \widetilde \nabla_{v} f^\perp \|_\XXX^2
& \le - \frac{\kappa}{\eps^2} \| \widetilde \nabla_{v}f^\perp \|_{\YYY_1}^2 
+ \frac{C}{\eps^2} \| \langle v \rangle^{\gamma+1} f^\perp \|_{\XXX}^2 
+ \frac{C}{\eps^2} \| \langle v \rangle^{\gamma+1} \nabla_v f^\perp \|_{\XXX}^2\\
&\quad
+ \frac{C}{\eps} \| \widetilde \nabla_v f^\perp \|_{\XXX} \| \widetilde \nabla_x f \|_{\XXX}.
\end{aligned}
\end{equation}

Finally, we observe that $\| \langle v \rangle^{\gamma+1} f^\perp \|_{\XXX}^2 + \| \langle v \rangle^{\gamma+1} \nabla_v f^\perp \|_{\XXX}^2 \lesssim \| \langle v \rangle^{\frac{\gamma}{2}+1} f^\perp \|_{\YYY_1}$ and thus, gathering \eqref{dtmfperp} and \eqref{dtnablavfperp} and taking $K >0$ large enough, we obtain
\begin{equation}\label{eq:dtm+nablavfperpXXX}
\begin{aligned}
&\frac12 \frac{\d}{\dt} \left( K \| \langle v \rangle^{\frac{\gamma}{2}+1} f^\perp \|_\XXX^2 + \| \widetilde \nabla_{v} f^\perp \|_\XXX^2 \right)\\
&\qquad 
\le - \frac{\kappa_1}{\eps^2} \| f^\perp \|_{\YYY_2}^2 
+ \frac{C}{\eps^2} \|  f^\perp \|_{\YYY_1}^2 
+\frac{C}{\eps} \| f^\perp \|_{\YYY_1} \| \widetilde \nabla_x f \|_{\XXX}
\end{aligned}
\end{equation}
for some constants $\kappa_1 , C >0$.

\medskip\noindent
\textit{Step 4.} 
We deal in this step with the derivative in time of term $ \langle \widetilde{\nabla}_v f , \widetilde \nabla_x f \rangle_\XXX$.
We compute
$$
\begin{aligned}
\frac{\d}{\dt} \la \widetilde{\nabla}_v f , \widetilde \nabla_x f \ra_\XXX
& =  \sum_{i=0}^3 \frac{\d}{\dt} \la \omega_i  \widetilde \nabla_v g_i , \omega_i  \widetilde \nabla_x g_i   \ra_{L^2_{x,v}} \\
&=: \sum_{i=0}^3  \left( \frac{1}{\eps^2} R_1^i - \frac{1}{\eps} R_2^i 
+  \frac{1}{\eps^2} S_1^i - \frac{1}{\eps} S_2^i \right)
\end{aligned}
$$
with
$$
\begin{aligned}
R_1^i &:= \la \omega_i \widetilde \nabla_{v_k} (L g_i^\perp) , \omega_i \widetilde \nabla_{x_k} g_i  \ra_{L^2_{x,v}} 
\\
S_1^i &:= \la \omega_i \widetilde \nabla_{v_k} g_i , \omega_i \widetilde \nabla_{x_k} (L g_i^\perp)  \ra_{L^2_{x,v}}
\\
R_2^i &:= \la \omega_i \widetilde \nabla_{v_k} ( v \cdot \nabla_x g_i) , \omega_i \widetilde \nabla_{x_k} g_i \ra_{L^2_{x,v}}
\\
S_2^i &:= \la \omega_i \widetilde \nabla_{v_k} g_i , \omega_i \widetilde \nabla_{x_k} (v \cdot \nabla_x g_i) \ra_{L^2_{x,v}}.
\end{aligned}
$$

Recalling that $\widetilde \nabla_{x_k}^* = - \widetilde \nabla_{x_k}$, we then write
$$
\begin{aligned}
R_1^i + S_1^i
&= \la L g_i^\perp, \widetilde \nabla_{v_k}^* (\omega_i^2 \widetilde \nabla_{x_k} g_i) - \widetilde \nabla_{x_k} (\omega_i^2 \widetilde \nabla_{v_k} g_i) \ra_{L^2_{x,v}} \\
&\le \| \omega_i L g_i^\perp \|_{L^2_{x,v}} \left( \| \omega_i^{-1} \widetilde \nabla_{v_k}^* (\omega_i^2 \widetilde \nabla_{x_k} g_i) \|_{L^2_{x,v}} 
+ \| \omega_i^{-1} \widetilde \nabla_{x_k} (\omega_i^2 \widetilde \nabla_{v_k} g_i) \|_{L^2_{x,v}} \right).
\end{aligned}
$$
Observe that 
$$
\omega_i^{-1} \widetilde \nabla_{x_k} (\omega_i^2 \widetilde \nabla_{v_k} g_i)
=    \widetilde \nabla_{v_k} (\omega_i \widetilde \nabla_{x_k} g_i)
+[\omega_i , \widetilde \nabla_{v_k}] \widetilde \nabla_{x_k} g_i
+\omega_i [\widetilde \nabla_{x_k} ,\widetilde \nabla_{v_k}] g_i
$$
so that using Lemma~\ref{lem:commutator}, we get 
\begin{equation}\label{nablaxomega2}
\| \omega_i^{-1} \widetilde \nabla_{x_k} (\omega_i^2 \widetilde \nabla_{v_k} g_i) \|_{L^2_{x,v}} \lesssim \| \omega_i \widetilde \nabla_x g_i \|_{L^2_x(H^1_{v,*})} .
\end{equation}
Writing $\omega_i L g_i^\perp = \omega_i L_1 g_i^\perp + \omega_i L_2 g_i^\perp$ and using the explicit formula \eqref{eq:def:L1} of $L_1$ together with the bound of Lemma~\ref{lem:boundL2} for $L_2$, we obtain
\begin{equation} \label{eq:boundL}
\| \omega_i L g_i^\perp \|_{L^2_{x,v}} 
\lesssim \| \omega_i g_i^\perp \|_{L^2_x (H^2_{v,*})}.
\end{equation}
Together with \eqref{nablaomega2h}, we hence get
$$
|R_1^i + S_1^i|
\le C \| \omega_i g_i^\perp \|_{L^2_x (H^2_{v,*})} \| \omega_i \widetilde \nabla_x g_i \|_{L^2_x(H^1_{v,*})} .
$$

For the remainder terms, we observe that using Lemma~\ref{lem:commutator} and the fact that the transport operator is skew-adjoint, 
$$
\begin{aligned}
R_2^i + S_2^i
&=  \| \omega_i \widetilde \nabla_{x_k}  g_i \|_{L^2_{x,v}}^2
+ \la \omega_i  v \cdot \nabla_x (\widetilde \nabla_{v_k} g_i) , \omega_i \widetilde \nabla_{x_k} g_i \ra_{L^2_{x,v}} \\
&\qquad \qquad \qquad
 + \la \omega_i \widetilde \nabla_{v_k} g_i , \omega_i  v \cdot \nabla_x (\widetilde \nabla_{x_k} g_i) \ra_{L^2_{x,v}} 
=\| \omega_i \widetilde \nabla_{x_k}  g_i \|_{L^2_{x,v}}^2.
\end{aligned}
$$

Gathering previous estimates we obtain
\begin{equation}\label{eq:dtnablavxfXXX}
\begin{aligned}
\frac{\d}{\dt} \la \widetilde{\nabla}_v f , \widetilde \nabla_x f \ra_\XXX
&\le \frac{C}{\eps^2} \| f^\perp \|_{\YYY_2} \|  \widetilde \nabla_x f \|_{\YYY_1} - \frac{1}{\eps} \| \widetilde \nabla_x f \|_{\XXX}^2
\end{aligned}
\end{equation}
for some constant $C >0$.

\medskip\noindent
\textit{Step 5.} 
We deal in this step with the derivative in time of the term $( K\| \langle v \rangle^{\frac{\gamma}{2}} \nabla_x f   \|_\XXX^2 
+ \|  \widetilde \nabla_x f \|_\XXX^2 )$.
We first compute, using that the transport operator is skew-adjoint, 
$$
\begin{aligned}
\frac12 \frac{\d}{\dt} \|\langle v \rangle^{\frac{\gamma}{2}} \nabla_x f \|_\XXX^2
& = \frac12 \sum_{i=0}^3 \frac{\d}{\dt} \| \omega_i  \langle v \rangle^{\frac{\gamma}{2}} \nabla_x g_i \|_{L^2_{x,v}}^2 \\
&=:  \sum_{i=0}^3 \frac{1}{\eps^2}  \la \omega_i \langle v \rangle^{\frac{\gamma}{2}} \partial_{x_k} (L g_i) , \omega_i \langle v \rangle^{\frac{\gamma}{2}} \partial_{x_k} g_i \ra_{L^2_{x,v}}.
\end{aligned}
$$
Since $\partial_{x_k}$ commutes with $L$, we can argue as for the term $J_1^i$ in Step 3.1 above to obtain
$$
\la \omega_i \langle v \rangle^{\frac{\gamma}{2}} \partial_{x_k} (L g_i) , \omega_i \langle v \rangle^{\frac{\gamma}{2}} \partial_{x_k} g_i \ra_{L^2_{x,v}} \le - \kappa \| \omega_i \langle v \rangle^{\frac{\gamma}{2}} \nabla_x g_i \|_{L^2_x(H^1_{v,*})}^2 + C \| \omega_i \widetilde \nabla_x g_i \|_{L^2_{x,v}}^2,
$$
for some constants $\kappa , C >0$, therefore 
\begin{equation}\label{dtnablaxfXXX-0}
\begin{aligned}
\frac12 \frac{\d}{\dt} \|\langle v \rangle^{\frac{\gamma}{2}} \nabla_x f \|_\XXX^2
& \le - \frac{\kappa}{\eps^2} \| \langle v \rangle^{\frac{\gamma}{2}} \nabla_x f \|_{\YYY_1}^2
+ \frac{C}{\eps^2} \| \widetilde \nabla_x f \|_{\XXX}^2.
\end{aligned}
\end{equation}

\medskip

Using that the transport operator is skew-adjoint and commutes with $\widetilde \nabla_{x_k}$, we now compute
$$
\begin{aligned}
\frac12 \frac{\d}{\dt} \| \widetilde \nabla_x f \|_\XXX^2
& = \frac12 \sum_{i=0}^3 \frac{\d}{\dt} \| \omega_i  \widetilde \nabla_x g_i \|_{L^2_{x,v}}^2 \\
&=  \sum_{i=0}^3 \frac{1}{\eps^2}  \la \omega_i \widetilde \nabla_{x_k} (L g_i) , \omega_i \widetilde \nabla_{x_k} g_i \ra_{L^2_{x,v}} 
=:  \sum_{i=0}^3 \frac{1}{\eps^2} N^i.
\end{aligned}
$$
We then write
$$
\begin{aligned}
N^i 
&= \la L (\omega_i \widetilde \nabla_{x_k} g_i) , \omega_i \widetilde \nabla_{x_k} g_i \ra_{L^2_{x,v}} 
+\la   [\omega_i , L_1] \widetilde \nabla_{x_k} g_i , \omega_i \widetilde \nabla_{v_k} g_i \ra_{L^2_{x,v}} \\
&\quad 
+\la  \omega_i [\widetilde \nabla_{x_k}, L_1] g_i, \omega_i \widetilde \nabla_{x_k} g_i \ra_{L^2_{x,v}} 
+\la  [\omega_i , L_2] \widetilde \nabla_{x_k} g_i, \omega_i \widetilde \nabla_{x_k} g_i \ra_{L^2_{x,v}} \\
&\quad 
+\la  \omega_i [\widetilde \nabla_{x_k}, L_2] g_i, \omega_i \widetilde \nabla_{x_k} g_i \ra_{L^2_{x,v}} \\
&=: N_{1}^i + N_{2}^i + N_{3}^i + N_{4}^i + N_{5}^i.
\end{aligned}
$$
Thanks to the spectral gap estimate~\eqref{eq:spectralgap}, one has
$$
\begin{aligned}
N_{1}^i
&\le - 2\kappa \| \omega_i \widetilde \nabla_{x_k} g_i - \pi(\omega_i \widetilde \nabla_{x_k} g_i) \|_{L^2_x (H^1_{v,*})}^2  \\
&\le - 2\kappa \| \omega_i \widetilde \nabla_{x_k} g_i^\perp \|_{L^2_x (H^1_{v,*})}^2 
+ C \|  \omega_i \widetilde \nabla_{x_k} g_i \|_{L^2_{x,v}}^2,
\end{aligned}
$$
for some constants $\kappa,C>0$. From Lemma~\ref{lem:commutatorL}, we get
$$
\begin{aligned}
N_{2}^i
&= 
- \la  [\omega_i,\widetilde \nabla_{v_\ell}] \widetilde \nabla_{x_k} g_i , \widetilde \nabla_{v_\ell} (\omega_i \widetilde \nabla_{x_k} g_i) \ra_{L^2_{x,v}}
- \la    [\omega_i,\widetilde \nabla_{v_\ell}^*] \widetilde \nabla_{x_k} g_i , \widetilde \nabla_{v_\ell}^*(\omega_i \widetilde \nabla_{x_k} g_i) \ra_{L^2_{x,v}}\\
&\quad
- \la  \left[ [\omega_i , \widetilde \nabla_{v_\ell}^*] , \widetilde \nabla_{v_\ell} \right] \widetilde \nabla_{x_k} g_i , \omega_i \widetilde \nabla_{x_k} g_i \ra_{L^2_{x,v}},
\end{aligned}
$$
and
$$
\begin{aligned}
N_{3}^i
&= 
-\la  [\widetilde \nabla_{x_k}, \widetilde \nabla_{v_\ell}] g_i, \widetilde \nabla_{v_\ell} (\omega_i^2 \widetilde \nabla_{x_k} g_i) \ra_{L^2_{x,v}}
-\la   [\widetilde \nabla_{x_k}, \widetilde \nabla_{v_\ell}^*] g_i, \widetilde \nabla_{v_\ell}^*(\omega_i^2 \widetilde \nabla_{x_k} g_i) \ra_{L^2_{x,v}}\\
&\quad
-\la  \omega_i \left[ [\widetilde \nabla_{x_k}, \widetilde \nabla_{v_\ell}^*] , \widetilde \nabla_{v_\ell} \right] g_i, \omega_i \widetilde \nabla_{x_k} g_i \ra_{L^2_{x,v}}
.
\end{aligned}
$$
Arguing in a similar way as in Step 3.2 above (for the term $I_{12}^i$ and $I_{13}^i$), it follows
$$
|N_{2}^i|+|N_{3}^i|
\le \kappa \|  \omega_i \widetilde \nabla_{x_k} g_i \|^2_{L^2_x (H^1_{v,*})}
+ C \| \omega_i \langle v \rangle^{\gamma+1} \nabla_x g_i \|_{L^2_{x,v}}^2.
$$
For the terms $N_{4}^i$ and $N_{5}^i$, Lemma~\ref{lem:boundL2} yields
$$
|N_{4}^i|
+ |N_{5}^i|
\le C \| \omega_i \widetilde \nabla_x g_i \|_{L^2_{x,v}}^2.
$$
Gathering previous estimates, we obtain
\begin{equation}\label{dtnablaxfXXX}
\begin{aligned}
\frac12 \frac{\d}{\dt} \| \widetilde \nabla_{x} f \|_\XXX^2
& \le - \frac{\kappa}{\eps^2} \| \widetilde \nabla_{x}f \|_{\YYY_1}^2 
+ \frac{C}{\eps^2} \| \langle v \rangle^{\gamma+1} \nabla_x f \|_{\XXX}^2.
\end{aligned}
\end{equation}

Finally, we observe that $\| \langle v \rangle^{\gamma+1} \nabla_x f \|_{\XXX}^2 \lesssim \| \langle v \rangle^{\frac{\gamma}{2}} \nabla_x f \|_{\YYY_1}$ and thus, gathering \eqref{dtnablaxfXXX-0} and~\eqref{dtnablaxfXXX} and taking $K >0$ large enough, we obtain
\begin{equation}\label{eq:dtnablaxfXXX}
\begin{aligned}
&\frac12 \frac{\d}{\dt} \left( K \| \langle v \rangle^{\frac{\gamma}{2}} \nabla_x f \|_\XXX^2 + \| \widetilde \nabla_{x} f^\perp \|_\XXX^2 \right)
\le - \frac{\kappa_2}{\eps^2} \| \widetilde \nabla_x f \|_{\YYY_1}^2 
+ \frac{C}{\eps^2} \| \widetilde \nabla_x f \|_{\XXX}^2
\end{aligned}
\end{equation}
for some constants $\kappa_2 , C >0$.

\medskip\noindent
\textit{Step 6. Conclusion.} 
Gathering \eqref{eq:dtfXXX}--\eqref{eq:dtm+nablavfperpXXX}--\eqref{eq:dtnablavxfXXX}--\eqref{eq:dtnablaxfXXX}, we thus obtain 
\begin{align*}
\frac{\d}{\dt}\UUU_\eps(t,f)
&\le - \kappa_0 \| f \|_{\YYY_1}^2 - \frac{\kappa_0}{\eps^2} \| f^\perp \|_{\YYY_1}^2
+\alpha_{1}\left( K\| \langle v \rangle^{\frac{\gamma}{2}+1} f^{\perp}  \|_\XXX^2 
+ \|  \widetilde{\nabla}_v f^{\perp} \|_\XXX^2 \right) \\
&\quad
+\alpha_{1}t \left( - \frac{\kappa_1}{\eps^2} \| f^\perp \|_{\YYY_2}^2 
+ \frac{C}{\eps^2} \|  f^\perp \|_{\YYY_1}^2 
+\frac{C}{\eps} \| f^\perp \|_{\YYY_1} \| \widetilde \nabla_x f \|_{\XXX} \right)\\
&\quad
+2\alpha_{2} \eps t \la \widetilde{\nabla}_v f ,  \widetilde{\nabla}_x f \ra_\XXX 
+ \alpha_{2}\eps t^2  \left( \frac{C}{\eps^2} \| f^\perp \|_{\YYY_2} \| \widetilde \nabla_x f \|_{\YYY_1} - \frac{1}{\eps} \| \widetilde \nabla_x f \|_{\XXX}^2 \right) \\
&\quad
+3\alpha_{3}\eps^2 t^{2} \Big(\|  \widetilde{\nabla}_x f \|_\XXX^2
+ K\|  \langle v \rangle^{\frac{\gamma}{2}} \nabla_x f \|_\XXX^2 \Big) \\
&\quad
+\alpha_{3}\eps^2t^{3}  \left( - \frac{\kappa_2}{\eps^2} \| \widetilde \nabla_x f \|_{\YYY_1}^2 + \frac{C}{\eps^2} \| \widetilde \nabla_x f \|_{\XXX}^2 \right).
\end{align*}
Observe that 
$$
K\| \langle v \rangle^{\frac{\gamma}{2}+1} f^{\perp}  \|_\XXX^2 
+ \|  \widetilde{\nabla}_v f^{\perp} \|_\XXX^2 \lesssim \| f^\perp \|_{\YYY_1}^2
$$
and 
$$
\|  \widetilde{\nabla}_x f \|_\XXX^2
+ K\|  \langle v \rangle^{\frac{\gamma}{2}} \nabla_x f \|_\XXX^2 \lesssim \| \widetilde \nabla_x f \|_{\XXX}^2
$$
and also that, thanks to Young's inequality, there holds
$$
\begin{aligned}
\frac{\alpha_1 t C}{\eps}  \| f^{\perp} \|_{\YYY_1} \| \widetilde{\nabla}_x f \|_{\XXX}
& \le \frac{\alpha_2}{4}{t}^2 \| \widetilde{\nabla}_x f \|_{\XXX}^2 + C \frac{\alpha_1^2}{\alpha_2} \frac{1}{\eps^2} \| f^{\perp} \|_{\YYY_1}^2
\\
\frac{\alpha_2 t^2 C}{\eps}  \| f^{\perp} \|_{\YYY_2} \| \widetilde \nabla_x f \|_{\YYY_1}   
&\le \frac{\alpha_3 \kappa_2 }{2}t^3 \| \widetilde \nabla_x f \|_{\YYY_1}^2 +C\frac{\alpha_2^2}{\alpha_3} \frac{t}{\eps^2}\| f^{\perp} \|_{\YYY_2}^2
\\
2 \alpha_{2} \eps t \la  \widetilde{\nabla}_x f ,  \widetilde{\nabla}_v f\ra_\XXX 
&\le \frac{\alpha_2}{4}{t^2} \|  \widetilde \nabla_x f\|_\XXX^2 + C\alpha_2\eps^2 \| f\|_{\YYY_1}^2.
\end{aligned}
$$
We therefore deduce that, for any $t \in [0,1]$, there holds  
\begin{equation}\label{eq:dtUeps-1}
\begin{aligned} 
\frac{\d}{\dt}\UUU_\eps(t,f)
&\le
-\left( \kappa_0 - C \alpha_2  \right) \| f \|_{\YYY_1}^2 
- \frac{1}{\eps^2} \left( \kappa_0  - C\alpha_1 - C \frac{\alpha_1^2}{\alpha_2}  \right) \| f^\perp \|_{\YYY_1}^2 \\
&\quad 
- \frac{t}{\eps^2} \left( \alpha_1 \kappa_1   
-  C \frac{\alpha_2^2}{\alpha_3} 
\right)  \| f^{\perp} \|_{\YYY_2}^2 
- {t}^2 \left( \frac{\alpha_2}{2} - C \alpha_3 \right)  \|  \widetilde \nabla_x f\|_\XXX^2 \\
&\quad 
- \frac{\alpha_3 \kappa_2 t^3}{2}  \| \widetilde \nabla_x f \|_{\YYY_1}^2.
\end{aligned}
\end{equation}
We now choose $\alpha_{1}=\eta$, $\alpha_{2}=\eta^{\frac{3}{2}}$, and $\alpha_{3}=\eta^{\frac{5}{3}}$, with $\eta \in (0,1)$ small enough such that each quantity appearing inside the parentheses in above inequality is  positive.
% the following conditions are fulfilled:
% $$
% \left\{
% \begin{array}{ll}
% \kappa_0 - C \eta^{3/2} >0 , \\
% \displaystyle 
% \kappa_0  - C\eta - C \eta^{\frac{1}{2}} >0 , \\
% \displaystyle 
% \eta \kappa_1   
% -  C \eta^{\frac{4}{3}} >0 , \\
% \displaystyle 
% \frac{\eta^{\frac{3}{2}}}{2} - C \eta^{\frac{5}{3}} > 0.
% \end{array}
% \right.
% $$
We hence obtain that $\frac{\d}{\dt}\UUU_\eps(t,f) \le 0$ for any $t \in [0,1]$, which concludes the proof as explained in Step 1.
\end{proof}

%%%%%%%%%%%%%%%%%%%%%%%%%%%%%%%%%%%%%%%%%%%%%%%%%%%%%%%%
\section{Cauchy theory and regularization estimates for the nonlinear problem} \label{sec:Cauchyreg}
%%%%%%%%%%%%%%%%%%%%%%%%%%%%%%%%%%%%%%%%%%%%%%%%%%%%%%%%

In this section, we provide a Cauchy theory for~\eqref{eq:geps} for small initial data as well as some new regularization estimates for this equation. Notice that our proofs are based on the results developed in Subsection~\ref{subsec:hypo}. 
It is actually crucial to be able to avoid the use of Duhamel formula to obtain nice estimates on the nonlinear problem because of the singularity in $\eps$ that is in front of the nonlinear term in~\eqref{eq:geps}. Our strategy is to perform direct energy estimates with the norm $\Nt \cdot \Nt_\XXX$ introduced in Subsection~\ref{subsec:hypo} (see~\eqref{def:Nt} and~\eqref{eq:def:norme-triple-XXX} for the precise definition) and exploit the facts that $\Gamma(f,g) = (\Gamma(f,g))^\perp$ and
$\langle \Gamma(f,g) , h \rangle_{L^2_{x,v}} = \langle \Gamma(f,g) , h^\perp \rangle_{L^2_{x,v}}$ so that 
	$$
	\left\langle \! \left\langle \Gamma(f,g),h \right\rangle \! \right\rangle_{L^2_{x,v}} = \left\langle \Gamma(f,g) , h^\perp \right\rangle_{L^2_{x,v}}
	$$
and thus
	\begin{equation*} 
	\begin{aligned}
		\la \! \la \Gamma(f,g), h \ra \! \ra_\XXX 
		& = \sum_{i=0}^2  
		\delta \left\langle \langle v \rangle^{(3-i)(\frac{\gamma}{2}+1)} \, \nabla_x^i \Gamma(f,g), \langle v \rangle^{(3-i)(\frac{\gamma}{2}+1)} \, \nabla_x^i h^\perp \right\rangle_{L^2_{x,v}} \\
&\quad  
		+ \sum_{i=0}^3 
		\left\langle  \nabla_x^i \Gamma(f,g),  \nabla_x^i h^\perp \right\rangle_{L^2_{x,v}},
	\end{aligned}
	\end{equation*}
where we recall that $\delta \in (0,1)$ is a small enough constant chosen in Proposition~\ref{prop:hypoX}. Notice also that we used the particular form of $\Nt\cdot\Nt_{L^2_{x,v}}$ defined in~\eqref{def:Nt} and the fact that $\pi \Gamma(f,g)=0$. Rearranging terms, we then deduce 
	\begin{equation} \label{eq:Gammaperp}
	\begin{aligned}
		\la \! \la \Gamma(f,g), h \ra \! \ra_\XXX 
		& = \delta \la \Gamma(f,g), h^\perp \ra_\XXX
		+ \sum_{i=0}^2 \la \nabla_x^i \Gamma(f,g) , \nabla_x^i h^\perp \ra_{L^2_{x,v}} \\
		&\quad
		+ (1-\delta) \la \nabla_x^3 \Gamma(f,g) , \nabla_x^3 h^\perp \ra_{L^2_{x,v}}.
	\end{aligned}
	\end{equation}

%-------------%--------------%--------------%--------------%---------------%------------------%
\subsection{Bilinear estimates for the Landau operator} \label{subsec:nonlinearestim}
%---------%---------%---------%---------%---------%---------%---------%---------%---------%---------%
In this part, we start by establishing some new and sharp nonlinear estimates on the Landau collision operator, we recall that the matrix ${\bf B}(v)$ is defined in~\eqref{def:B} and that the spaces $\XXX$, $\YYY_1$, $\YYY_2$ and $\YYY_1'$ are respectively defined in~\eqref{def:normXXX},~\eqref{def:normYYY1},~\eqref{def:normYYY2} and~\eqref{def:YYYi'}.

We start by establishing some convolution estimates for the coefficients $a_{ij}$ and $b_i$:
\begin{lem}\label{lem:afGH}
For any suitable function $f=f(v)$, vector fields $G=G(v),H=H(v)$ and $\ell \in \{1,2,3\}$ there holds, for any $v\in \R^3$:
\begin{equation}\label{lem:afGH-1}
|( a_{ij} * f ) G_i H_j| (v)  
\lesssim \| \langle v \rangle^{7} f \|_{L^2_v} 
| {\bf B}(v) G(v) | | {\bf B}(v) H(v) |
\end{equation}
\begin{equation}\label{lem:afGH-2}
|( \partial_{v_\ell} a_{ij} * f ) G_i H_j |(v)   
\lesssim \| \langle v \rangle^{8} f \|_{L^2_v} 
| {\bf B}(v) G(v) | | {\bf B}(v) H(v) | 
\end{equation}
\begin{equation}\label{lem:afGH-3}
|(\partial_{v_\ell} b_i * f) G_i| (v)  
\lesssim \| \langle v \rangle^{3} f \|_{L^2_v} \langle v \rangle^{\frac{\gamma}{2}} |{\bf B}(v) G(v)| \quad \text{if} \quad 0 \le \gamma \le 1 
\end{equation}
\begin{equation}\label{lem:afGH-4}
\begin{aligned}
&|(\partial_{v_\ell} b_i * f) G_i| (v)   
\lesssim \left( \|\langle v \rangle^{4} f \|_{H^1_v} \langle v \rangle^{-1} 
+ \|\langle v \rangle^{4} f \|_{L^2_v} \right) \langle v \rangle^{\frac{\gamma}{2}} |{\bf B}(v) G(v)|
\quad \text{if} \quad -2 \le \gamma < 0.
\end{aligned}
\end{equation}

\end{lem}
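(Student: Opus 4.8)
\emph{Plan.} I would first record two elementary facts about ${\bf B}(v)$ that will be used throughout. From \eqref{def:B} the two summands of ${\bf B}(v)G$ are orthogonal, so $|{\bf B}(v)G|^2 = \ell_1(v)|P_vG|^2 + \ell_2(v)|(\Id-P_v)G|^2$; since $\ell_1,\ell_2$ are continuous and strictly positive on $\R^3$ (the limiting matrix ${\bf A}(0)$ being a positive multiple of the identity) and satisfy the asymptotics \eqref{eq:vp}, one has the two-sided comparisons $\ell_1(v)\approx\langle v\rangle^{\gamma}$, $\ell_2(v)\approx\langle v\rangle^{\gamma+2}$, whence the lower bounds $|{\bf B}(v)G|\gtrsim\langle v\rangle^{\frac\gamma2}|P_vG|$, $|{\bf B}(v)G|\gtrsim\langle v\rangle^{\frac\gamma2+1}|(\Id-P_v)G|$, and in particular $|{\bf B}(v)G|\gtrsim\langle v\rangle^{\frac\gamma2}|G|$ (and likewise for $H$). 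For $|v|\le1$ all the weights $\langle v\rangle^{\gamma+2},\langle v\rangle^{\gamma+1},\langle v\rangle^{\gamma}$ and the two eigenvalues are bounded above and below by positive constants, so in that regime each of \eqref{lem:afGH-1}--\eqref{lem:afGH-4} follows at once from the crude pointwise bound $|(\,\cdot\,* f)(v)G_iH_j|\le|(\,\cdot\,* f)(v)|\,|G|\,|H|$ together with the corresponding scalar estimate of Lemma~\ref{lem:conv}; I would dispose of this case and then assume $|v|\ge1$.

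For \eqref{lem:afGH-1} and \eqref{lem:afGH-2} the key point is that the naive estimate $\langle v\rangle^{\gamma+2}\|\langle v\rangle^{7} f\|_{L^2_v}|G||H|$ is too lossy exactly when $G,H$ point along $v$. I would therefore split $G = P_vG+(\Id-P_v)G$ and similarly for $H$, expand $(a_{ij}* f)(v)G_iH_j$ into the four resulting contributions, and estimate them using the \emph{refined} bounds of Lemma~\ref{lem:conv}(i): the parallel--parallel term equals $\tfrac{(G\cdot v)(H\cdot v)}{|v|^4}(a_{ij}* f)(v)v_iv_j$ and is $\lesssim\tfrac{\langle v\rangle^{\gamma+2}}{|v|^2}|P_vG||P_vH|\,\|\langle v\rangle^{7} f\|_{L^2_v}\lesssim\langle v\rangle^{\gamma}|P_vG||P_vH|\,\|\langle v\rangle^{7} f\|_{L^2_v}$; each mixed term uses the bound on $(a_{ij}* f)(v)v_i$ and is $\lesssim\langle v\rangle^{\gamma+1}|P_vG||(\Id-P_v)H|\,\|\langle v\rangle^{7} f\|_{L^2_v}$; the perpendicular--perpendicular term uses the bound on $(a_{ij}* f)(v)$ and is $\lesssim\langle v\rangle^{\gamma+2}|(\Id-P_v)G||(\Id-P_v)H|\,\|\langle v\rangle^{7} f\|_{L^2_v}$. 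In every case the power of $\langle v\rangle$ distributes as $\langle v\rangle^{\gamma/2}$ onto a parallel component and $\langle v\rangle^{\gamma/2+1}$ onto a perpendicular one, which is precisely what the lower bounds on $|{\bf B}(v)G|,|{\bf B}(v)H|$ absorb; summing the four contributions yields \eqref{lem:afGH-1}. Estimate \eqref{lem:afGH-2} is obtained by the identical splitting, replacing the three inputs by the corresponding ones of Lemma~\ref{lem:conv}(ii) (which carry the weight $\langle v\rangle^{8}$, using also $|(\partial_{v_\ell}a_{ij}* f)(v)|\lesssim\langle v\rangle^{\gamma+1}\|\langle v\rangle^{4} f\|_{L^2_v}\le\langle v\rangle^{\gamma+2}\|\langle v\rangle^{8} f\|_{L^2_v}$).

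For \eqref{lem:afGH-3} and \eqref{lem:afGH-4} no splitting of $G$ is needed: I would simply bound $|(\partial_{v_\ell}b_i* f)(v)G_i|\le|(\partial_{v_\ell}b_i* f)(v)|\,|G|$ and use $|G|\lesssim\langle v\rangle^{-\gamma/2}|{\bf B}(v)G|$. When $0\le\gamma\le1$, Lemma~\ref{lem:conv}(iii) gives $|(\partial_{v_\ell}b_i* f)(v)|\lesssim\langle v\rangle^{\gamma}\|\langle v\rangle^{3} f\|_{L^2_v}$ and \eqref{lem:afGH-3} is immediate. When $-2\le\gamma<0$ the kernel $|v-v_*|^{\gamma}$ is too singular near $v_*=v$ for a plain $L^2$ bound, so I would split the convolution according to whether $|v_*|\ge|v|/2$ or not: on the far part $|v-v_*|^{\gamma}\lesssim\langle v\rangle^{\gamma}$, giving a contribution $\lesssim\langle v\rangle^{\gamma}\|f\|_{L^1_v}\lesssim\langle v\rangle^{\gamma}\|\langle v\rangle^{4} f\|_{L^2_v}$; on the near part $\langle v_*\rangle\gtrsim\langle v\rangle$, so $\langle v_*\rangle^{3}\lesssim\langle v\rangle^{-1}\langle v_*\rangle^{4}$, and applying Lemma~\ref{lem:conv}(iv) to the restricted function together with the Sobolev embedding $\|g\|_{L^4_v}\lesssim\|g\|_{H^1_v}$ on $\R^3$ yields a contribution $\lesssim\langle v\rangle^{\gamma-1}\|\langle v\rangle^{4} f\|_{H^1_v}$. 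Adding the two and multiplying by $\langle v\rangle^{-\gamma/2}|{\bf B}(v)G|$ gives \eqref{lem:afGH-4}.

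I do not expect a genuinely hard step here. The only structural input behind \eqref{lem:afGH-1}--\eqref{lem:afGH-2} --- namely that $a_{ij}(v-v_*)$ annihilates $v-v_*$, so that a factor of $v$ contracted against the kernel can be traded for a factor of $v_*$, producing one fewer power of $\langle v\rangle$ --- is already packaged in the ``$v_i$'' and ``$v_iv_j$'' forms of Lemma~\ref{lem:conv}. The main thing requiring care is the bookkeeping of the $\langle v\rangle$-weights in the parallel/perpendicular expansion (matching each power against $|{\bf B}(v)G|$ or $|{\bf B}(v)H|$ via the recorded lower bounds), and, for \eqref{lem:afGH-4}, the dyadic decomposition in $|v_*|$ that is responsible for the $\langle v\rangle^{-1}$ prefactor on the $H^1$ term.
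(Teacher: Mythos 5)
Your proposal is correct, and for \eqref{lem:afGH-1}, \eqref{lem:afGH-2} and \eqref{lem:afGH-3} it coincides with the paper's own argument: dispose of $|v|\le 1$ trivially, split $G$ and $H$ into $P_v$ and $\Id-P_v$ components, feed the four resulting contractions into the ``$v_iv_j$'', ``$v_i$'' and bare forms of Lemma~\ref{lem:conv}, and match each resulting weight $\langle v\rangle^{\gamma/2}$ (parallel) or $\langle v\rangle^{\gamma/2+1}$ (perpendicular) against the corresponding piece of $|{\bf B}(v)G|$; for \eqref{lem:afGH-3} a single crude bound suffices because both eigenvalues of ${\bf B}(v)$ dominate $\langle v\rangle^{\gamma/2}$.

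For \eqref{lem:afGH-4} your route is genuinely different from the paper's, and both work. The paper keeps the parallel/perpendicular splitting of $G$ and attacks the awkward factor $(\partial_{v_\ell}b_i\ast f)(v)\,v_i$ algebraically: writing $\partial_{v_\ell}b_i\ast f=b_i\ast\partial_{v_\ell}f$ and exploiting $b_i=\partial_{v_j}a_{ij}$ together with the cancellation $a_{ij}(v-v_*)(v_i-v_{*,i})=0$, it trades the outer $v_i$ for $v_{*,i}$, producing the three better-behaved convolutions $\partial_{v_\ell}b_i\ast[v_if]$, $b_\ell\ast f$ and $\partial_{v_\ell}a_{ii}\ast f$, which are then bounded by Lemma~\ref{lem:conv}(ii),(iv) and the Sobolev embedding. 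You instead improve the \emph{scalar} bound on $(\partial_{v_\ell}b_i\ast f)(v)$ directly, splitting the $v_*$-integral at $|v_*|=|v|/2$: the far part has a nonsingular kernel and is controlled in $L^1_v$, while on the near part $\langle v_*\rangle\gtrsim\langle v\rangle$ lets you extract a factor $\langle v\rangle^{-1}$ from the weight before applying Lemma~\ref{lem:conv}(iv), and the singular kernel is tamed by the $L^4$/$H^1$ Sobolev embedding exactly as in the paper. Your version has the advantage of not requiring the parallel/perpendicular decomposition of $G$ for this estimate and of not depending on the special algebraic structure of $b_i$; the paper's version stays closer in spirit to the other items of Lemma~\ref{lem:conv}. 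One small point worth making explicit in a written-up version: the truncation $\chi_{|v_*|\ge |v|/2}$ depends on the evaluation point $v$, so when invoking Lemma~\ref{lem:conv}(iv) you should say that you apply it at that fixed $v$ to the function $g_v:=f\chi_{\{|\cdot|\ge|v|/2\}}$; since the lemma holds pointwise in $v$ for any admissible $g$, this is legitimate, but it deserves a sentence.
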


\begin{proof}
We split the proof into three steps.

\medskip\noindent
\textit{Step 1. Proof of \eqref{lem:afGH-1}.} We only prove the estimate for $|v| \ge 1$, the case $|v|<1$ being trivial. Recalling that $P_v$ denotes the projection onto $v$, we decompose 
$$
G_i = (P_v)_i(G)+ (\mathrm{Id}-P_v)_i (G)= v_i \left( G \cdot \frac{v}{|v|^2} \right) + (\mathrm{Id}-P_v)_i (G)
$$
and
$$
H_j =  (P_v)_i(H)+ (\mathrm{Id}-P_v)_i (H) = v_j \left( H \cdot \frac{v}{|v|^2} \right) + (\mathrm{Id}-P_v)_j (H).
$$
We thus obtain
$$
\begin{aligned}
( a_{ij} * f )(v) G_i(v) H_j(v) 
&= ( a_{ij} * f )(v) v_i v_j  \left( G (v)\cdot \frac{v}{|v|^2} \right) \left( H(v) \cdot \frac{v}{|v|^2} \right)  \\
&\quad + ( a_{ij} * f )(v) v_i \left( G(v) \cdot \frac{v}{|v|^2} \right) (\mathrm{Id}-P_v)_j (H(v))  \\
&\quad + ( a_{ij} * f )(v) v_j  (\mathrm{Id}-P_v)_i (G(v)) \left( H(v) \cdot \frac{v}{|v|^2} \right)   \\
&\quad + ( a_{ij} * f )(v) (\mathrm{Id}-P_v)_i (G(v))  (\mathrm{Id}-P_v)_j (H(v)) .
\end{aligned}
$$
Using Lemma \ref{lem:conv}, we estimate each term of the previous splitting. First, 
$$
\begin{aligned}
&\left| ( a_{ij} * f )(v) v_i v_j  \left( G(v) \cdot \frac{v}{|v|^2} \right) \left( H(v) \cdot \frac{v}{|v|^2} \right) \right| \\
%&\qquad \lesssim \| \langle v \rangle^{7} f \|_{L^2_v}  \langle v \rangle^{\gamma+2} \langle v \rangle^{-2} |G(v)| |H(v)|  \\
&\qquad \lesssim \| \langle v \rangle^{7} f \|_{L^2_v}  \langle v \rangle^{\frac{\gamma}{2}}  |G(v) |
\langle v \rangle^{\frac{\gamma}{2}}  |H(v) |.
\end{aligned}
$$
Then,
$$
\begin{aligned}
&\left| ( a_{ij} * f )(v) v_i \left( G(v) \cdot \frac{v}{|v|^2} \right) (\mathrm{Id}-P_v)_j (H(v)) \right| \\
%&\qquad \lesssim \| \langle v \rangle^{7} f \|_{L^2_v}  \langle v \rangle^{\gamma+2} \langle v \rangle^{-1} |G(v)| |(\mathrm{Id}-P_v)H(v)|   \\
&\qquad \lesssim \| \langle v \rangle^{7} f \|_{L^2_v} \langle v \rangle^{\frac{\gamma}{2}}  |G (v)|
\langle v \rangle^{\frac{\gamma}{2}+1} |(\mathrm{Id}-P_v)H(v)|
\end{aligned}
$$
and
$$
\begin{aligned}
&\left| ( a_{ij} * f )(v) v_j  (\mathrm{Id}-P_v)_i (G(v)) \left( H(v) \cdot \frac{v}{|v|^2} \right) \right| \\
%&\qquad \lesssim \| \langle v \rangle^{7} f \|_{L^2_v} \langle v \rangle^{\gamma+2} \langle v \rangle^{-1} |(\mathrm{Id}-P_v)G(v)| |H(v)|  \\
&\qquad \lesssim \| \langle v \rangle^{7} f \|_{L^2_v}   \langle v \rangle^{\frac{\gamma}{2}+1}  
|(\mathrm{Id}-P_v)G(v) |
\langle v \rangle^{\frac{\gamma}{2}} | H(v) |.
\end{aligned}
$$
Finally,
$$
\begin{aligned}
&\left| ( a_{ij} * f )(v) (\mathrm{Id}-P_v)_i (G(v))  (\mathrm{Id}-P_v)_j (H(v)) \right| \\
%&\qquad \lesssim \| \langle v \rangle^{7} f \|_{L^2_v}  \langle v \rangle^{\gamma+2}  |(\mathrm{Id}-P_v)G(v)| |(\mathrm{Id}-P_v) H(v)|   \\
&\qquad \lesssim \| \langle v \rangle^{7} f \|_{L^2_v}  \langle v \rangle^{\frac{\gamma}{2}+1}  
|(\mathrm{Id}-P_v)G (v)| 
\langle v \rangle^{\frac{\gamma}{2}+1} 
|(\mathrm{Id}-P_v)H (v) |.
\end{aligned}
$$
We conclude the proof of \eqref{lem:afGH-1} by gathering previous estimates and recalling that 
$$
|\mathbf B(v) G(v)| \lesssim \langle v \rangle^{\frac{\gamma}{2}} |P_v G(v)| + \langle v \rangle^{\frac{\gamma}{2}+1} |(\mathrm{Id}-P_v)G(v)| \lesssim |\mathbf B(v) G(v)|.
$$

\medskip\noindent
\textit{Step 2. Proof of \eqref{lem:afGH-2}.} The proof of \eqref{lem:afGH-2} is similar to the one of \eqref{lem:afGH-1} by using the bounds on $\left( \partial_{v_\ell} a_{ij} * f \right)$, $\left( \partial_{v_\ell} a_{ij} * f \right) v_i$, $\left( \partial_{v_\ell} a_{ij} * f \right)v_j$ and $\left( \partial_{v_\ell} a_{ij} * f \right) v_i v_j$ given by Lemma~\ref{lem:conv}. We thus skip it.

\medskip\noindent
\textit{Step 3. Proof of \eqref{lem:afGH-3} and \eqref{lem:afGH-4}.} Again, we only prove the estimate for $|v| \ge 1$. Recall that $|\partial_{v_\ell} b_i| \lesssim | v |^\gamma$. 
If $0 \le \gamma \le 1$, then using Lemma~\ref{lem:conv}, we have 
$$
|(\partial_{v_\ell} b_i * f)(v) G_i(v) | \lesssim \| \langle v \rangle^{3} f \|_{L^2_v} \langle v \rangle^{\gamma} |G(v)|. 
$$
If $-2 \le \gamma < 0$, we use the above decomposition of $G$ to write 
$$
(\partial_{v_\ell} b_i * f)(v) G_i(v) = 
(\partial_{v_\ell} b_i * f)(v) v_i \left( G(v) \cdot \frac{v}{|v|^2} \right) + (\partial_{v_\ell} b_i * f)(v) (\mathrm{Id}-P_v)_i(G(v)).
$$
Remarking that $ \left(\partial_{v_\ell} b_{i}* f \right) = \left(b_{i}* \partial_{v_\ell}f \right)$, we also observe that
\begin{align*}
(b_{i}* \partial_{v_\ell}f )(v) v_i
&= ( \partial_{v_j} a_{ij}* \partial_{v_\ell}f )(v) v_i \\
&= ( a_{ij}* v_i\partial_{v_j}\partial_{v_\ell}f )(v) \\
&=( b_{i}*  (v_i\partial_{v_\ell}f ) )(v)
- ( a_{ii}*  \partial_{v_\ell}f ) (v) ,
\end{align*}
from which we obtain
\begin{align*}
(b_{i}* \partial_{v_\ell}f )(v) v_i
&= ( b_{i}*  \partial_{v_\ell}[v_i f] ) (v)
-( b_{i}*  (\partial_{v_\ell}v_i)f ) )(v)
- ( a_{ii}*  \partial_{v_\ell}f ) (v) \\
&= ( \partial_{v_\ell} b_{i}* [v_i f] )(v) 
- ( b_{\ell}*  f ) (v)
- ( \partial_{v_\ell} a_{ii}*  f ) (v).
\end{align*}
From Lemma~\ref{lem:conv} and using classical Sobolev embeddings, we have 
$$
|( \partial_{v_\ell} b_{i}* f )(v)
| \lesssim \langle v \rangle^{\gamma} \|\langle v \rangle^{3} f \|_{L^4_v} 
\lesssim \langle v \rangle^{\gamma} \|\langle v \rangle^{3} f \|_{H^1_v} .
$$
Therefore, using once more Lemma~\ref{lem:conv}, we obtain
$$
|( \partial_{v_\ell} b_{i}* f )(v) v_i
| \lesssim \langle v \rangle^{\gamma} \|\langle v \rangle^{4} f \|_{H^1_v}  +
\langle v \rangle^{\gamma+1} \|\langle v \rangle^{4} f \|_{L^2_v} .
$$
Hence, 
\begin{align*}
&|(\partial_{v_\ell} b_i * f)(v) G_i(v) | \\
&\qquad \lesssim  \|\langle v \rangle^{4} f \|_{H^1_v} \bigg( \langle v \rangle^{\gamma} |(\mathrm{Id}-P_v) G(v)|
+ \langle v \rangle^{\gamma-1}  |G(v)| \bigg)
+ \|\langle v \rangle^{4} f \|_{L^2_v} \langle v \rangle^{\gamma}  |G(v)|,
\end{align*}
which concludes the proof. 
\end{proof}

We shall now establish bilinear estimates for the nonlinear operator $\Gamma$ in Propositions~\ref{prop:Gamma-NL},~\ref{prop:DvBGamma-NL},~\ref{prop:DxBGamma-NL} and~\ref{prop:Gamma-NL-XXX} below.
Recall from \eqref{eq:Gamma} that 
$$
\Gamma(g_1,g_2)
= \Gamma_1(g_1,g_2) + \Gamma_2(g_1,g_2) + \Gamma_3(g_1,g_2) + \Gamma_4(g_1,g_2) + \Gamma_5(g_1,g_2) 
$$
with
\begin{equation}\label{eq:def-Gamma1}
\Gamma_1(g_1,g_2)
= \partial_{v_i} \left\{ \left(a_{ij}* [\sqrt M g_1] \right) \partial_{v_j} g_2   \right\},
\end{equation}
\begin{equation}\label{eq:def-Gamma2}
\Gamma_2(g_1,g_2)
= -\partial_{v_i} \left\{ \left(b_{i}* [\sqrt M g_1] \right) g_2   \right\} ,
\end{equation}
\begin{equation}\label{eq:def-Gamma3}
\Gamma_3(g_1,g_2)
= -  \left( a_{ij}* [\sqrt M g_1]\right) v_i \partial_{v_j} g_2 ,
\end{equation}
\begin{equation}\label{eq:def-Gamma4}
\Gamma_4(g_1,g_2)
= \frac14 \left( a_{ij}* [\sqrt M g_1]\right) v_i v_j g_2,
\end{equation}
\begin{equation}\label{eq:def-Gamma5}
\Gamma_5(g_1,g_2)
= -  \frac12 \left( a_{ii}* [\sqrt M g_1]\right) g_2 .
\end{equation}

\begin{prop}\label{prop:Gamma-NL}
Let $g_1$, $g_2$ and $g_3$ be smooth enough functions.
For any $\alpha \in \R$, there holds 
\begin{equation}\label{eq:Gamma-NL}
\la \langle v \rangle^{\alpha}  \Gamma(g_1,g_2) , g_3 \ra_{\XXX} \lesssim  \| g_1 \|_{\XXX} \| \langle v \rangle^{\alpha}  g_2 \|_{\YYY_1} \| g_3 \|_{\YYY_1}.
\end{equation}
As a consequence, one has by duality
\begin{equation}\label{eq:Gamma-NL-YYY0'}
\| \Gamma(g_1,g_2) \|_{\YYY_1'} \lesssim \| g_1 \|_{\XXX} \| g_2 \|_{\YYY_1} .
\end{equation}
\end{prop}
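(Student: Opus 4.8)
The plan is to estimate each of the five pieces $\Gamma_k(g_1,g_2)$ from \eqref{eq:def-Gamma1}--\eqref{eq:def-Gamma5} separately, and for each $k$ to further decompose the $\XXX$-inner product according to the number of $x$-derivatives, i.e.\ along the four blocks $\omega_i \nabla_x^i$ with $\omega_i = \langle v \rangle^{(3-i)(\frac\gamma2+1)}$ appearing in \eqref{def:normXXX}. Since derivatives in $x$ commute with the convolution in $v$ and with $\Gamma$, the Leibniz rule distributes $\nabla_x^i$ over the two arguments $g_1$ and $g_2$; the key point is that for the $x$-derivatives falling on $g_1$ we only need to control $\|g_1\|_\XXX$ in $L^2_x$-type norms and can afford to lose velocity weights there, because the convolution factor $a_{ij}*[\sqrt M\, \nabla_x^j g_1]$ (and similarly $b_i$, $a_{ii}$) carries the Gaussian $\sqrt M$, so that Lemma~\ref{lem:conv} and Lemma~\ref{lem:afGH} convert $\sqrt M \nabla_x^j g_1$ into a bounded (polynomially growing) multiplier times $\|\langle v\rangle^N \nabla_x^j g_1\|_{L^2_v}$ for any $N$, which is $\lesssim \|\nabla_x^j g_1\|_{L^2_v}$ after absorbing the Gaussian. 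The remaining $x$-derivatives on $g_2$ and on $g_3$ are then matched by Hölder in $x$: with three or fewer derivatives in dimension three, Sobolev embeddings $H^2_x \hookrightarrow L^\infty_x$ (or $H^1_x\hookrightarrow L^6_x$, $L^3_x$ interpolation) let us place the lowest-order factor in $L^\infty_x$ or $L^4_x$ and keep the two others in $L^2_x$, so that the $g_1$-factor always lands inside $\|g_1\|_\XXX$ and never needs a velocity weight beyond what the Gaussian supplies.

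Concretely, for $\Gamma_1$ I would integrate by parts in $v$ to move $\partial_{v_i}$ onto $g_3$ (turning it into $\widetilde\nabla_v$-type quantities via $\partial_{v_i} = $ combinations of $\widetilde\nabla_{v_\ell}$ modulo lower-order terms, exactly as in the rewriting \eqref{eq:Lbis}), then apply \eqref{lem:afGH-1}: the coefficient $(a_{ij}*[\sqrt M \nabla_x^{j_1} g_1])$ paired against $\partial_{v_j}\nabla_x^{j_2} g_2$ and $\nabla_x^{j_3} g_3$ with $j_1+j_2+j_3 = i$ produces, pointwise, $\|\langle v\rangle^7 \sqrt M \nabla_x^{j_1} g_1\|_{L^2_v}\,|\mathbf B(v)\nabla_v \nabla_x^{j_2} g_2|\,|\mathbf B(v)\nabla_x^{j_3} g_3|$, i.e.\ after integrating in $v$ a product of an $\XXX$-norm of $g_1$, an $H^1_{v,*}$-type ($\widetilde\nabla_v$) norm of $g_2$ with the weight $\omega_{j_2}$, and the corresponding $\widetilde\nabla_v$ norm of $g_3$. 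Summing over the index splittings and using that the weights satisfy $\omega_i \le \omega_{j_2}\omega_{j_3}/\omega_0$ up to constants (the twisted weights are chosen precisely so these products close — this is the "twisted Sobolev space'' trick alluded to after \eqref{def:normYYY1}), one lands on $\|g_1\|_\XXX \|\langle v\rangle^\alpha g_2\|_{\YYY_1}\|g_3\|_{\YYY_1}$. The extra factor $\langle v\rangle^\alpha$ is handled by commuting it through $\partial_{v_j}$ at the cost of lower-order terms controlled by Lemma~\ref{lem:commutator}(ii), which are strictly better. For $\Gamma_2,\dots,\Gamma_5$ the same scheme applies, using \eqref{lem:afGH-2}--\eqref{lem:afGH-4} and Lemma~\ref{lem:conv}(iii)--(iv) for the $b_i$, $c$ terms (here the case distinction $\gamma\ge 0$ versus $-2\le\gamma<0$ enters, forcing an $L^4_v$ bound on $\sqrt M g_1$, which is still harmless since the Gaussian dominates); $\Gamma_3,\Gamma_4,\Gamma_5$ are lower order (no $v$-derivative lost on $g_2$, or none at all) and are strictly easier.

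Finally, \eqref{eq:Gamma-NL-YYY0'} follows immediately from \eqref{eq:Gamma-NL} with $\alpha = 0$: by definition \eqref{def:YYYi'} of the dual norm, $\|\Gamma(g_1,g_2)\|_{\YYY_1'} = \sup_{\|g_3\|_{\YYY_1}\le 1}\langle \Gamma(g_1,g_2),g_3\rangle_\XXX \lesssim \|g_1\|_\XXX\|g_2\|_{\YYY_1}$, since the scalar product $\langle\cdot,\cdot\rangle_\XXX$ used in \eqref{def:YYYi'} is the one for which \eqref{eq:Gamma-NL} is stated (one uses $\Gamma(g_1,g_2)=(\Gamma(g_1,g_2))^\perp$ and the identity \eqref{eq:Gammaperp} to pass freely between $\langle\!\langle\cdot,\cdot\rangle\!\rangle_\XXX$ and $\langle\cdot,\cdot\rangle_\XXX$; the difference is a bounded perturbation). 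The main obstacle I expect is purely bookkeeping: tracking, across all five $\Gamma_k$ and all index splittings $j_1+j_2+j_3\le 3$, that the velocity weights $\omega_i$ together with the $\gamma$-dependent weights hidden in $\mathbf B(v)$ and in $H^1_{v,*}$ always combine so that no weight is over-spent on the $g_1$ factor — this is where the precise form \eqref{def:normXXX}--\eqref{def:normYYY1} of the twisted spaces is essential, and where one has to be careful that the borderline case $\gamma=-2$ (where $\frac\gamma2+1=0$, so $\omega_i\equiv 1$) still works, which it does precisely because then the Gaussian alone absorbs everything.
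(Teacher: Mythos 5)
Your plan matches the paper's proof: the paper first establishes the velocity-only estimate \eqref{eq:Gamma-NL1} by splitting $\Gamma=\Gamma_1+\cdots+\Gamma_5$, integrating by parts in $v$ and invoking Lemmas~\ref{lem:conv} and~\ref{lem:afGH} exactly as you describe, then integrates in $x$ with H\"older and Sobolev embeddings and concludes by duality. Two small imprecisions in your write-up: the Leibniz rule distributes $\nabla_x^i$ only over $\Gamma(g_1,g_2)$, so all $i$ derivatives stay on $g_3$ and the index constraint is $j_1+j_2=i$, not $j_1+j_2+j_3=i$; and the $\gamma\ge0$ versus $\gamma<0$ case distinction of Lemma~\ref{lem:afGH} is not needed for $\Gamma_2$ here since only $b_i$ (not $\partial_v b_i$) appears, so the uniform bound of Lemma~\ref{lem:conv}~(ii) suffices.
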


\begin{proof}[Proof of Proposition~\ref{prop:Gamma-NL}]
We shall first prove that for any $\alpha \in \R$, there holds 
\begin{equation}\label{eq:Gamma-NL1}
\la \langle v \rangle^{\alpha}  \Gamma(g_1,g_2) , g_3 \ra_{L^2_v} \lesssim \| M^{\frac14} g_1 \|_{L^2_v}
\| \langle v \rangle^{\alpha}  g_2 \|_{H^1_{v,*}} 
\| g_3 \|_{H^1_{v,*}} .
\end{equation}
Once this estimate is established, we shall prove \eqref{eq:Gamma-NL} in the final step of the proof by integrating it in $x$ and using Sobolev embeddings. Estimate \eqref{eq:Gamma-NL-YYY0'} is then a direct consequence of \eqref{eq:Gamma-NL}.
We thus write $\Gamma(g_1,g_2) = \Gamma_1(g_1,g_2) + \cdots +\Gamma_5(g_1,g_2)$ as in \eqref{eq:def-Gamma1}--\eqref{eq:def-Gamma5}, and we estimate each term separately in the sequel.

\medskip\noindent
\textit{Step 1.}
We write, from \eqref{eq:def-Gamma1} and making an integration by parts,
$$
\begin{aligned}
&\la \langle v \rangle^{\alpha}  \Gamma_1(g_1,g_2) , g_3 \ra_{L^2_v} 
= - \la \left(a_{ij}* [\sqrt M g_1] \right) \partial_{v_j} g_2 , \partial_{v_i} ( \langle v \rangle^{\alpha}  g_3)\ra_{L^2_v} \\
&\qquad = - \la \left(a_{ij}* [\sqrt M g_1] \right) \partial_{v_j} g_2 , \langle v \rangle^{\alpha}  \partial_{v_i} g_3)\ra_{L^2_v}  
- \la \left(a_{ij}* [\sqrt M g_1] \right) \partial_{v_j} g_2 , (\partial_{v_i}  \langle v \rangle^{\alpha}  ) g_3\ra_{L^2_v} \\
&\qquad =: \mathrm{I}_1 + \mathrm{I}_2.
\end{aligned}
$$
For the term $\mathrm{I}_1$, we use Lemma~\ref{lem:afGH}, which yields
$$
\begin{aligned}
\mathrm{I}_1 
&\lesssim \| M^{\frac14} g_1 \|_{L^2_v}  
\la \langle v \rangle^{\alpha}  |\widetilde \nabla_v g_2| , |\widetilde \nabla_v g_3| \ra_{L^2_v}\\
&\lesssim \| M^{\frac14} g_1 \|_{L^2_v}  
\| \langle v \rangle^{\alpha}  \widetilde \nabla_v g_2 \|_{L^2_v} \| \widetilde \nabla_v g_3 \|_{L^2_v} .
\end{aligned}
$$
In a similar way, thanks to Lemma~\ref{lem:afGH} and using that $|\widetilde \nabla_v \langle v \rangle^{\alpha} | \lesssim \langle v \rangle^{\frac{\gamma}{2}-1 + \alpha} $, we obtain
$$
\begin{aligned}
\mathrm{I}_2
&\lesssim \| M^{\frac14} g_1 \|_{L^2_v}  
\la \langle v \rangle^{\frac{\gamma}{2}-1} \langle v \rangle^{\alpha}  |\widetilde \nabla_v g_2|, |g_3| \ra_{L^2_v}\\
&\lesssim \| M^{\frac14} g_1 \|_{L^2_v}  
\| \langle v \rangle^{\alpha}  \widetilde \nabla_v g_2 \|_{L^2_v} \| \langle v \rangle^{\frac{\gamma}{2} - 1} g_3 \|_{L^2_v} .
\end{aligned}
$$
We therefore obtain
\begin{equation}\label{eq:Gamma1}
\la \langle v \rangle^{\alpha}   \Gamma_1(g_1,g_2) , g_3 \ra_{L^2_v}
\lesssim \| M^{\frac14} g_1 \|_{L^2_v}
\| \langle v \rangle^{\alpha}  \widetilde \nabla_v g_2 \|_{L^2_v}  
\| g_3 \|_{H^1_{v,*}}.
\end{equation}

\medskip\noindent
\textit{Step 2.}
Starting from \eqref{eq:def-Gamma2} and making an integration by parts, we get
$$
\begin{aligned}
&\la \langle v \rangle^{\alpha}  \Gamma_2(g_1,g_2) , g_3 \ra_{L^2_v}
= \la \left(b_{i}* [\sqrt M g_1] \right)  g_2  , \partial_{v_i} (\langle v \rangle^{\alpha}  g_3) \ra_{L^2_v} \\
&\qquad 
= \la \left(b_{i}* [\sqrt M g_1] \right)  g_2  , \langle v \rangle^{\alpha}  \partial_{v_i} g_3 \ra_{L^2_v} 
+ \la \left(b_{i}* [\sqrt M g_1] \right)  g_2  , (\partial_{v_i} \langle v \rangle^{\alpha}  )g_3 \ra_{L^2_v} \\
&\qquad =: \mathrm{II}_1 + \mathrm{II}_2.
\end{aligned}
$$
For the term $\mathrm{II}_1$, we use Lemma~\ref{lem:conv} to obtain
$$
\begin{aligned}
\mathrm{II}_1
& \lesssim \| M^{\frac14} g_1 \|_{L^2_v} \la \langle v \rangle^{\gamma+1} \langle v \rangle^{\alpha}  |g_2| , |\nabla_v g_3| \ra_{L^2_v} \\
&\lesssim \| M^{\frac14} g_1 \|_{L^2_v}  \| \langle v \rangle^{\alpha}  \langle v \rangle^{\frac{\gamma}{2} +1} g_2 \|_{L^2_v} \| \langle v \rangle^{\frac{\gamma}{2}} \nabla_v g_3 \|_{L^2_v}.
\end{aligned}
$$
In a similar fashion, Lemma~\ref{lem:conv} yields
$$
\begin{aligned}
\mathrm{II}_2
& \lesssim \| M^{\frac14} g_1 \|_{L^2_v} \la \langle v \rangle^{\gamma+1} \la v \ra^{\alpha-1} |g_2|, |g_3| \ra_{L^2_v} \\
&\lesssim \| M^{\frac14} g_1 \|_{L^2_v}  \| \langle v \rangle^{\alpha}  \langle v \rangle^{\frac{\gamma}{2} +1} g_2 \|_{L^2_v} \| \langle v \rangle^{\frac{\gamma}{2}-1} g_3 \|_{L^2_v}.
\end{aligned}
$$
We thus get
\begin{equation}\label{eq:Gamma2}
\la \langle v \rangle^{\alpha}  \Gamma_2(g_1,g_2) , g_3 \ra_{L^2_v}
\lesssim 
\| M^{\frac14} g_1 \|_{L^2_v}  \| \langle v \rangle^{\alpha}  \langle v \rangle^{\frac{\gamma}{2} + 1} g_2 \|_{L^2_v} \| g_3 \|_{H^1_{v,*}} .
\end{equation}

\medskip\noindent
\textit{Step 3.} All the remainder terms associated to  $\Gamma_3$, $\Gamma_4$ and $\Gamma_5$ can be estimated directly thanks to Lemma~\ref{lem:conv} or Lemma~\ref{lem:afGH} and Cauchy-Schwarz inequality. Indeed we first have, using Lemma~\ref{lem:afGH} and $|\mathbf B(v) v| \lesssim \langle v \rangle^{\frac{\gamma}{2}+1}$,
\begin{equation}\label{eq:Gamma3}
\begin{aligned}
\la \langle v \rangle^{\alpha}  \Gamma_3(g_1,g_2) , g_3 \ra_{L^2_v}
&= - \la \left(a_{ij}* [\sqrt M g_1] \right) v_i \partial_{v_j} g_2  , \langle v \rangle^{\alpha}  g_3 \ra_{L^2_v} \\
&\lesssim 
\| M^{\frac14} g_1 \|_{L^2_v}  \la \langle v \rangle^{\alpha}  \langle v \rangle^{\frac{\gamma}{2}+1} |\widetilde \nabla_v g_2| , |g_3| \ra_{L^2_v} \\
&\lesssim 
\| M^{\frac14} g_1 \|_{L^2_v}  \| \langle v \rangle^{\alpha}  \widetilde \nabla_v g_2 \|_{L^2_v} \| \langle v \rangle^{\frac{\gamma}{2} + 1} g_3 \|_{L^2_v}.
\end{aligned}
\end{equation}
In a similar way, we also get
\begin{equation}\label{eq:Gamma4}
\begin{aligned}
\la \langle v \rangle^{\alpha}  \Gamma_4(g_1,g_2) , g_3 \ra_{L^2_v}
&= \frac14 \la \left(a_{ij}* [\sqrt M g_1] \right) v_i v_j g_2  , \langle v \rangle^{\alpha}  g_3 \ra_{L^2_v} \\
&\lesssim 
\| M^{\frac14} g_1 \|_{L^2_v} \la \langle v \rangle^{\gamma+2} \langle v \rangle^{\alpha}  |g_2|, |g_3| \ra_{L^2_v}\\
&\lesssim 
\| M^{\frac14} g_1 \|_{L^2_v}  \| \langle v \rangle^{\alpha}  \langle v \rangle^{\frac{\gamma}{2}+1} g_2 \|_{L^2_v}  \| \langle v \rangle^{\frac{\gamma}{2}+1} g_3 \|_{L^2_v}.
\end{aligned}
\end{equation}
Finally, now using Lemma~\ref{lem:conv}, we obtain
\begin{equation}\label{eq:Gamma5}
\begin{aligned}
\la \langle v \rangle^{\alpha}  \Gamma_5(g_1,g_2) , g_3 \ra_{L^2_v}
&= -\frac12 \la \langle v \rangle^{\alpha}  \left(a_{ii}* [\sqrt M g_1] \right) g_2  ,  g_3 \ra_{L^2_v} \\
&\lesssim 
\| M^{\frac14} g_1 \|_{L^2_v} \la \langle v \rangle^{\alpha}  \langle v \rangle^{\gamma+2}  |g_2| ,  |g_3| \ra_{L^2_v}\\
&\lesssim 
\| M^{\frac14} g_1 \|_{L^2_v}  \| \langle v \rangle^{\alpha}  \langle v \rangle^{\frac{\gamma}{2}+1} g_2 \|_{L^2_v}  \| \langle v \rangle^{\frac{\gamma}{2}+1} g_3 \|_{L^2_v}  .
\end{aligned}
\end{equation}

We thus conclude the proof of \eqref{eq:Gamma-NL1} by gathering estimates \eqref{eq:Gamma1}--\eqref{eq:Gamma2}--\eqref{eq:Gamma3}--\eqref{eq:Gamma4}--\eqref{eq:Gamma5} and observing that 
$$
\| \langle v \rangle^{\alpha}  g_2 \|_{H^1_{v,*}}^2
\lesssim
\| \langle v \rangle^{\alpha}  \widetilde \nabla_v g_2 \|_{L^2_v}^2
 + \| \langle v \rangle^{\alpha}  \langle v \rangle^{\frac{\gamma}{2} + 1}  g_2 \|_{L^2_v}^2
\lesssim
\| \langle v \rangle^{\alpha}  g_2 \|_{H^1_{v,*}}^2. 
$$

\medskip\noindent
\textit{Step 4. Proof of \eqref{eq:Gamma-NL}.} 
Recalling the definition of $\la \cdot , \cdot \ra_\XXX$ in \eqref{def:normXXX}, we have 
$$
\begin{aligned}
\la \langle v \rangle^{\alpha}  \Gamma(g_1,g_2) , g_3 \ra_{\XXX}
&= \sum_{i=0}^3 \la \langle v \rangle^{\alpha}  \langle v \rangle^{(3-i)(\frac{\gamma}{2}+1)} \nabla_x^i \Gamma(g_1,g_2) , \langle v \rangle^{(3-i)(\frac{\gamma}{2}+1)} \nabla_x^i g_3 \ra_{L^2_{x,v}}. \\
&=: T_0 + T_1 + T_2 + T_3.
\end{aligned}
$$
Thanks to \eqref{eq:Gamma-NL1} and the fact that $\|\cdot\|_{L^\infty_x} \lesssim \|\cdot\|_{H^2_x}$, we get
$$
\begin{aligned}
T_0
&\lesssim \int_{\T^3} \| M^{\frac14} g_1 \|_{L^2_v} \| \langle v \rangle^{\alpha}\langle v \rangle^{3(\frac{\gamma}{2}+1)} g_2 \|_{H^1_{v,*} } \| \langle v \rangle^{3(\frac{\gamma}{2}+1)} g_3 \|_{H^1_{v,*}} \, \dx \\
&\lesssim \| M^{\frac14} g_1 \|_{H^2_x L^2_v} \| \langle v \rangle^{\alpha}  \langle v \rangle^{3(\frac{\gamma}{2}+1)} g_2 \|_{L^2_x (H^1_{v,*} )} \| \langle v \rangle^{3(\frac{\gamma}{2}+1)} g_3 \|_{L^2_x(H^1_{v,*})} \\
&\lesssim \| g_1 \|_{\XXX} \| \langle v \rangle^{\alpha}  g_2 \|_{\YYY_1} \| g_3 \|_{\YYY_1}.
\end{aligned}
$$
Using that $\partial_{x_k} \Gamma(g_1,g_2) = \Gamma(\partial_{x_k} g_1 , g_2) + \Gamma(g_1 , \partial_{x_k} g_2)$, H\"older inequality and the fact that from classical Sobolev embeddings, $\|\cdot\|_{L^6_x} + \|\cdot\|_{L^3_x} \lesssim \|\cdot\|_{H^1_x}$, estimate \eqref{eq:Gamma-NL1} yields
$$
\begin{aligned}
T_1
& \lesssim \int_{\T^3} \| M^{\frac14} \nabla_x g_1 \|_{L^2_v} \| \langle v \rangle^{2(\frac{\gamma}{2}+1)} \langle v \rangle^{\alpha}  g_2 \|_{H^1_{v,*}} \| \langle v \rangle^{2(\frac{\gamma}{2}+1)}\nabla_x g_3 \|_{H^1_{v,*}} \, \dx \\
&\quad
+ \int_{\T^3} \| M^{\frac14}  g_1 \|_{L^2_v} \| \langle v \rangle^{2(\frac{\gamma}{2}+1)} \langle v \rangle^{\alpha}  \nabla_x g_2 \|_{H^1_{v,*} } \| \langle v \rangle^{2(\frac{\gamma}{2}+1)} \nabla_x g_3 \|_{H^1_{v,*}} \, \dx \\ 
&\lesssim \| M^{\frac14} \nabla_x g_1 \|_{H^1_x L^2_v} \| \langle v \rangle^{2(\frac{\gamma}{2}+1)} \langle v \rangle^{\alpha}  g_2 \|_{H^1_x (H^1_{v,*})} \| \langle v \rangle^{2(\frac{\gamma}{2}+1)} \nabla_x g_3 \|_{L^2_x(H^1_{v,*})} \\
&\quad 
+ \| M^{\frac14}  g_1 \|_{H^2_x L^2_v} \| \langle v \rangle^{2(\frac{\gamma}{2}+1)} \langle v \rangle^{\alpha}  \nabla_x g_2 \|_{L^2_x (H^1_{v,*})} \| \langle v \rangle^{2(\frac{\gamma}{2}+1)} \nabla_x g_3 \|_{L^2_x(H^1_{v,*})} \\
&\lesssim \| g_1 \|_{\XXX} \| \langle v \rangle^{\alpha}g_2 \|_{\YYY_1} \| g_3 \|_{\YYY_1}.
\end{aligned}
$$
Moreover, for the term $T_2$, similarly, we have 
$$
\begin{aligned}
T_2
&\lesssim 
\int_{\T^3} \bigg( \| M^{\frac14} \nabla^2_x g_1 \|_{L^2_v} \| \langle v \rangle^{\frac{\gamma}{2}+1} \langle v \rangle^{\alpha}  g_2 \|_{H^1_{v,*} }
+ \| M^{\frac14} \nabla_x g_1 \|_{L^2_v} \| \langle v \rangle^{\frac{\gamma}{2}+1} \langle v \rangle^{\alpha}  \nabla_x g_2 \|_{H^1_{v,*} } \\
&\qquad\qquad \qquad\qquad
+ \| M^{\frac14}  g_1 \|_{L^2_v} \| \langle v \rangle^{\frac{\gamma}{2}+1} \langle v \rangle^{\alpha}  \nabla_x^2 g_2 \|_{H^1_{v,*} }  \bigg)\| \langle v \rangle^{\frac{\gamma}{2}+1} \nabla^2_x g_3 \|_{H^1_{v,*}} \, \dx 
\\ 
& \lesssim \bigg(\| M^{\frac14} \nabla^2_x g_1 \|_{L^2_x L^2_v} \| \langle v \rangle^{\frac{\gamma}{2}+1} \langle v \rangle^{\alpha}  g_2 \|_{H^2_x (H^1_{v,*})} 
\\
&\qquad\qquad \qquad\qquad+ \| M^{\frac14}  \nabla_x g_1 \|_{H^1_x L^2_v} \|  \langle v \rangle^{\frac{\gamma}{2}+1} \langle v \rangle^{\alpha}  \nabla_x g_2 \|_{H^1_x (H^1_{v,*})}  \\
&\qquad\qquad \qquad\qquad
+ \| M^{\frac14} g_1 \|_{H^2_x L^2_v} \| \langle v \rangle^{\frac{\gamma}{2}+1} \langle v \rangle^{\alpha}  \nabla^2_x g_2 \|_{L^2_x (H^1_{v,*})} 
\bigg) \| \langle v \rangle^{\frac{\gamma}{2}+1} \nabla^2_x g_3 \|_{L^2_x(H^1_{v,*})} \\
&\lesssim \| g_1 \|_{\XXX} \| \langle v \rangle^{\alpha}  g_2 \|_{\YYY_1} \| g_3 \|_{\YYY_1}.
\end{aligned}
$$
Finally, for the term $T_3$, we have 
$$
\begin{aligned}
&T_3
\lesssim 
\int_{\T^3} \bigg( \| M^{\frac14} \nabla^3_x g_1 \|_{L^2_v} \| \langle v \rangle^{\alpha}  g_2 \|_{H^1_{v,*} }
+ \| M^{\frac14} \nabla^2_x g_1 \|_{L^2_v} \| \langle v \rangle^{\alpha}  \nabla_x g_2 \|_{H^1_{v,*}} \\
&\qquad\qquad
+ \| M^{\frac14} \nabla_x g_1 \|_{L^2_v} \|  \langle v \rangle^{\alpha}  \nabla_x^2 g_2 \|_{H^1_{v,*} } 
+ \| M^{\frac14}  g_1 \|_{L^2_v} \|  \langle v \rangle^{\alpha}  \nabla_x^3 g_2 \|_{H^1_{v,*} }  \bigg)\|  \nabla^3_x g_3 \|_{H^1_{v,*}} \, \dx 
\\ 
& \lesssim \bigg(\| M^{\frac14} \nabla^3_x g_1 \|_{L^2_x L^2_v} \|  \langle v \rangle^{\alpha}  g_2 \|_{H^2_x (H^1_{v,*})} 
+ \| M^{\frac14}  \nabla^2_x g_1 \|_{H^1_x L^2_v} \|   \langle v \rangle^{\alpha}  \nabla_x g_2 \|_{H^1_x (H^1_{v,*})}  \\
&
+ \| M^{\frac14} \nabla_x g_1 \|_{H^2_x L^2_v} \|  \langle v \rangle^{\alpha}  \nabla^2_x g_2 \|_{L^2_x (H^1_{v,*})} 
+ \| M^{\frac14} g_1 \|_{H^2_x L^2_v} \| \langle v \rangle^{\alpha}  \nabla^3_x g_2 \|_{L^2_x (H^1_{v,*})} 
\bigg) \|  \nabla^2_x g_3 \|_{L^2_x(H^1_{v,*})} \\
&\lesssim \| g_1 \|_{\XXX} \| \langle v \rangle^{\alpha}  g_2 \|_{\YYY_1} \| g_3 \|_{\YYY_1},
\end{aligned}
$$
which concludes the proof of~\eqref{eq:Gamma-NL}.

\medskip\noindent
\textit{Step 5. Proof of \eqref{eq:Gamma-NL-YYY0'}.} The result is immediate using the definition of the norm of $\YYY_1'$ given in~\eqref{def:YYYi'} and~\eqref{eq:Gamma-NL}. 
\end{proof}

\begin{prop}\label{prop:DvBGamma-NL}
Let $g_1$, $g_2$ be smooth enough functions and $G_3$ a smooth enough vector field, then
\begin{equation}\label{eq:DvBGamma-NL}
\la \widetilde \nabla_v \Gamma(g_1,g_2) , G_3 \ra_{\XXX} 
\lesssim \Big( \| g_1 \|_{\XXX} \| g_2 \|_{\YYY_2} + \| g_1 \|_{\YYY_1} \| g_2 \|_{\XXX} \Big) \| G_3 \|_{\YYY_1}.
\end{equation}

\end{prop}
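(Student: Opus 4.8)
The plan is to reduce the estimate \eqref{eq:DvBGamma-NL} to a pointwise-in-$x$ inequality in the velocity variable, exactly as in the proof of Proposition~\ref{prop:Gamma-NL}, and then integrate in $x$ using Sobolev embeddings in $\T^3$. The new feature compared with Proposition~\ref{prop:Gamma-NL} is that we differentiate $\Gamma$ in velocity via $\widetilde\nabla_v$, which by Leibniz either hits the convolution coefficients $a_{ij}*[\sqrt M g_1]$, $b_i*[\sqrt M g_1]$, $a_{ii}*[\sqrt M g_1]$, or hits the ``$g_2$ factor'' (the $\partial_{v_j}g_2$, $g_2$, $v_i v_j g_2$, etc.\ appearing in \eqref{eq:def-Gamma1}--\eqref{eq:def-Gamma5}), or hits the polynomial weights $v_i$, $v_iv_j$. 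The first type of term will be controlled by one more derivative on $g_1$ (hence the $\|g_1\|_{\YYY_1}\|g_2\|_\XXX$ contribution), while the second type costs one more $\widetilde\nabla_v$ on $g_2$, i.e.\ an $H^2_{v,*}$ norm of $g_2$ (hence the $\|g_1\|_\XXX\|g_2\|_{\YYY_2}$ contribution).

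First I would write $\widetilde\nabla_{v_k}\Gamma(g_1,g_2)=B_{k\ell}\partial_{v_\ell}\Gamma(g_1,g_2)$ and, using $\Gamma=\Gamma_1+\cdots+\Gamma_5$, apply $\partial_{v_\ell}$ inside each $\Gamma_m$ by Leibniz. For $\Gamma_1=\partial_{v_i}\{(a_{ij}*[\sqrt M g_1])\partial_{v_j}g_2\}$, one integration by parts against $G_3$ (or against the appropriate component of the weighted test function) removes the outer $\partial_{v_i}$, after which $\partial_{v_\ell}$ falls either on $a_{ij}*[\sqrt M g_1]=a_{ij}*\partial_{v_\ell}[\sqrt M g_1]$ — estimated by Lemma~\ref{lem:afGH} \eqref{lem:afGH-1}, \eqref{lem:afGH-2} together with $\|M^{1/4}\partial_{v_\ell}(\sqrt M g_1)\|_{L^2_v}\lesssim\|M^{1/4}g_1\|_{H^1_v}$, which is $\lesssim\|g_1\|_{H^1_{v,*}}$ pointwise in $x$ — or on $\partial_{v_j}g_2$, producing $\partial_{v_\ell}\partial_{v_j}g_2$ which is controlled by $\|g_2\|_{H^2_{v,*}}$ after pairing with the $\widetilde\nabla_v$-type structure of $G_3$ exactly as in Step~1 of Proposition~\ref{prop:Gamma-NL}. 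The commutator terms $[\widetilde\nabla_v,\widetilde\nabla_v]$, $[\widetilde\nabla_v,\widetilde\nabla_v^*]$ and the lower-order pieces coming from differentiating $B_{k\ell}$ and the weights $\langle v\rangle^{\cdots}$ are handled by Lemma~\ref{lem:commutator}, which is precisely why the $H^2_{v,*}$ norm (rather than something stronger) suffices. The terms $\Gamma_2,\dots,\Gamma_5$ are treated the same way, using Lemma~\ref{lem:conv} and Lemma~\ref{lem:afGH} for the convolution factors and noting that each extra velocity derivative on the $g_2$-factor is absorbed into $\|g_2\|_{H^2_{v,*}}$ while each extra derivative on the $g_1$-factor is absorbed into $\|g_1\|_{H^1_{v,*}}$.

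This yields the pointwise-in-$x$ estimate
\begin{equation*}
\la \widetilde\nabla_v\Gamma(g_1,g_2),G_3\ra_{L^2_v}\lesssim\Big(\|M^{1/4}g_1\|_{L^2_v}\|g_2\|_{H^2_{v,*}}+\|M^{1/4}g_1\|_{H^1_v}\|g_2\|_{H^1_{v,*}}\Big)\|G_3\|_{H^1_{v,*}},
\end{equation*}
and likewise with the appropriate weights $\langle v\rangle^{(3-i)(\frac\gamma2+1)}$ inserted, matching the definitions \eqref{def:normXXX}, \eqref{def:normYYY1}, \eqref{def:normYYY2}. The final step is to insert this into the definition of $\la\cdot,\cdot\ra_\XXX$, expand $\nabla_x^i\widetilde\nabla_v\Gamma(g_1,g_2)$ by Leibniz (the $x$-derivatives commute with $\widetilde\nabla_v$ since $\mathbf B(v)$ does not depend on $x$), distribute the $i\le 3$ spatial derivatives among $g_1$ and $g_2$, and use the Sobolev embeddings $H^2_x\hookrightarrow L^\infty_x$ and $H^1_x\hookrightarrow L^6_x\cap L^3_x$ on $\T^3$ exactly as in Step~4 of the proof of Proposition~\ref{prop:Gamma-NL} — each distribution puts at most two $x$-derivatives on the Gaussian-weighted $g_1$ factor (measured in $L^\infty_x$ or $H^1_x$) and the remaining ones on $g_2$, recombining into $\|g_1\|_\XXX\|g_2\|_{\YYY_2}+\|g_1\|_{\YYY_1}\|g_2\|_\XXX$ against $\|G_3\|_{\YYY_1}$. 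The main obstacle is bookkeeping: one must check that every term produced by the double Leibniz expansion (in $v$ via $\widetilde\nabla_v$ and in $x$ via $\nabla_x^i$) has its velocity weights matching the anisotropic $H^1_{v,*}/H^2_{v,*}$ structure and its $x$-derivative count never forcing three derivatives onto a factor that is only measured in $L^\infty_x$; the twisted weights $\langle v\rangle^{(3-i)(\frac\gamma2+1)}$ and the identity $\||\mathbf B(v)v|\cdot\|\lesssim\langle v\rangle^{\frac\gamma2+1}\|\cdot\|$ are exactly what makes this work, so no genuinely new idea beyond Proposition~\ref{prop:Gamma-NL} and Lemmas~\ref{lem:commutator}--\ref{lem:afGH} is needed.
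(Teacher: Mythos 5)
The overall strategy — split $\Gamma=\Gamma_1+\cdots+\Gamma_5$, apply $\widetilde\nabla_{v_k}$ by Leibniz, control each piece via Lemma~\ref{lem:commutator} and Lemma~\ref{lem:afGH}, establish a pointwise-in-$x$ estimate, then integrate in $x$ exactly as in Step~4 of Proposition~\ref{prop:Gamma-NL} — is indeed the paper's route.

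However, there is a genuine gap in the pointwise-in-$x$ estimate you claim, specifically in its second term. You write $\|M^{1/4}g_1\|_{H^1_v}\|g_2\|_{H^1_{v,*}}$, but the estimate you actually need (and the one the paper proves as~\eqref{eq:DvBGamma-NL-1}) has $\|M^{1/4}g_1\|_{H^1_v}\,\|\langle v\rangle^\alpha g_2\|_{L^2_v}$ — no velocity derivative on $g_2$. After the Step-4 $x$-integration, your version produces $\|g_1\|_{\YYY_1}\|g_2\|_{\YYY_1}\|G_3\|_{\YYY_1}$, and $\|g_1\|_{\YYY_1}\|g_2\|_{\YYY_1}$ is \emph{not} dominated by $\|g_1\|_\XXX\|g_2\|_{\YYY_2}+\|g_1\|_{\YYY_1}\|g_2\|_\XXX$ in general (take $g_1$ with $\|g_1\|_{\YYY_1}\gg\|g_1\|_\XXX$ and $g_2$ with $\|g_2\|_{\YYY_1}\gg\|g_2\|_\XXX$: the first term on the right is tiny, and the second requires $\|g_2\|_{\YYY_1}\lesssim\|g_2\|_\XXX$, which fails). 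So your pointwise inequality, though true, is too weak to conclude.

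The source of the error is in the Leibniz bookkeeping. When $\widetilde\nabla_v$ lands on $a_{ij}*[\sqrt M g_1]$ (or $a_{ii}*[\sqrt M g_1]$), it does \emph{not} cost a derivative on $g_1$: one writes $\widetilde\nabla_{v_k}(a_{ij}*[\sqrt M g_1])=B_{k\ell}(\partial_{v_\ell}a_{ij}*[\sqrt M g_1])$, and Lemma~\ref{lem:afGH}~\eqref{lem:afGH-2} controls this with the same $\|M^{1/4}g_1\|_{L^2_v}$ as the undifferentiated case. Consequently the contributions of $\Gamma_1,\Gamma_3,\Gamma_4,\Gamma_5$ involve only $\|M^{1/4}g_1\|_{L^2_v}\|\langle v\rangle^\alpha g_2\|_{H^2_{v,*}}$, with no $H^1_v$ norm of $g_1$ at all. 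The lone source of $\|M^{1/4}g_1\|_{H^1_v}$ is the term $\mathrm{II}_2$ arising from $\widetilde\nabla_{v_k}\big(b_i*[\sqrt M g_1]\big)=B_{k\ell}(\partial_{v_\ell}b_i*[\sqrt M g_1])$ in $\Gamma_2$, where for $-2\le\gamma<0$ one must use Lemma~\ref{lem:afGH}~\eqref{lem:afGH-4} (a Sobolev trick, $L^4_v\hookleftarrow H^1_v$, to compensate the singularity of $\partial_{v_\ell}b_i$) — and in exactly that term $g_2$ carries \emph{no} velocity derivative, yielding $\|\langle v\rangle^\alpha g_2\|_{L^2_v}$. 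That is what allows the paper to integrate to $\|g_1\|_{\YYY_1}\|g_2\|_\XXX$ rather than $\|g_1\|_{\YYY_1}\|g_2\|_{\YYY_1}$.
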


\begin{proof}[Proof of Proposition~\ref{prop:DvBGamma-NL}]  
We shall only prove that for any $\alpha \in \R$, there holds 
\begin{equation}\label{eq:DvBGamma-NL-1}
\begin{aligned}
&\la \langle v \rangle^{\alpha} \widetilde \nabla_v \Gamma(g_1,g_2) , G_3 \ra_{L^2_v} \\
&\qquad 
\lesssim \left( \| M^{\frac14}  g_1 \|_{L^2_v} 
\| \langle v \rangle^{\alpha} g_2 \|_{H^2_{v,*}}  
+\| M^{\frac14} g_1 \|_{H^1_v}  \| \langle v \rangle^{\alpha} g_2 \|_{L^2_v} \right)
\| G_3 \|_{H^1_{v,*}},
\end{aligned}
\end{equation}
from which we obtain the desired result by integrating in $x$ and arguing as in Step~4 of the proof of Proposition~\ref{prop:Gamma-NL}. 
We thus write $\Gamma(g_1,g_2) = \Gamma_1(g_1,g_2) + \cdots +\Gamma_5(g_1,g_2)$ as in~\eqref{eq:def-Gamma1}--\eqref{eq:def-Gamma5}, and we estimate each term separately in the sequel. We shall use during the proof the following equivalence:
\begin{align*}
\| \langle v \rangle^{\alpha} g_2 \|_{H^2_{v,*}} 
\lesssim 
\| \langle v \rangle^{\alpha} \langle v \rangle^{\gamma+2} g_2 \|_{L^2_{v}} 
&+\| \langle v \rangle^{\alpha}  \langle v \rangle^{\frac{\gamma}{2} + 1} \widetilde \nabla_v g_2 \|_{L^2_v} \\
&\qquad \qquad + \| \langle v \rangle^{\alpha} \widetilde \nabla_v (\widetilde \nabla_v g_2) \|_{L^2_v}  
\lesssim 
\| \langle v \rangle^{\alpha} g_2 \|_{H^2_{v,*}} .
\end{align*}

\medskip\noindent
\textit{Step 1. Term associated to $\Gamma_1$.}
Writing $\widetilde \nabla_{v_k} = B_{k \ell} \partial_{v_\ell}$ and observing that $[\widetilde \nabla_{v_k} , \partial_{v_i}] = - (\partial_{v_i} B_{k\ell}) \partial_{v_\ell}$, we first get that for any $k \in \{1,2,3\}$,
$$
\begin{aligned}
&\widetilde \nabla_{v_k} \Gamma_1(g_1,g_2)
= \partial_{v_i} \left\{ \left(a_{ij}* [\sqrt M g_1] \right) \partial_{v_j} (\widetilde \nabla_{v_k} g_2)   \right\} 
+\partial_{v_i} \left\{ \widetilde \nabla_{v_k}\left(a_{ij}* [\sqrt M g_1] \right) \partial_{v_j} g_2   \right\} \\
&\qquad\quad 
-\partial_{v_i} \left\{ \left(a_{ij}* [\sqrt M g_1] \right) (\partial_{v_j} B_{k\ell}) \partial_{v_\ell} g_2   \right\} 
- (\partial_{v_i} B_{k\ell}) \partial_{v_\ell} \left\{ \left(a_{ij}* [\sqrt M g_1] \right) \partial_{v_j} g_2   \right\}  
\end{aligned}
$$
whence 
$$
\la \langle v \rangle^{\alpha} \widetilde \nabla_{v_k} \Gamma_1(g_1,g_2) , G_{3,k} \ra_{L^2_v} 
= \mathrm{I}_1 + \mathrm{I}_2 + \mathrm{I}_3 + \mathrm{I}_4,
$$
where $G_{3,k}$ denotes the $k$-th component of $G_3$.

For the term $I_1$, we first make an integration by parts, then we use Lemma~\ref{lem:afGH} and the fact that $|\widetilde \nabla_v \langle v \rangle^{\alpha}|  \lesssim \langle v \rangle^{\frac{\gamma}{2}-1} \langle v \rangle^{\alpha}$ together with Cauchy-Schwarz inequality to obtain
$$
\begin{aligned}
\mathrm{I}_1 
&= - \la \langle v \rangle^{\alpha} \left(a_{ij}* [\sqrt M g_1] \right) \partial_{v_j} (\widetilde \nabla_{v_k} g_2) , \partial_{v_i} G_{3,k} \ra_{L^2_v} \\
&\qquad \qquad - \la \left(a_{ij}* [\sqrt M g_1] \right) \partial_{v_j} (\widetilde \nabla_{v_k} g_2) \partial_{v_i} \langle v \rangle^{\alpha}, G_{3,k} \ra_{L^2_v}  \\
&\lesssim 
\| M^{\frac14} g_1 \|_{L^2_v} \bigg(
\la \langle v \rangle^{\alpha} |\widetilde \nabla_v (\widetilde \nabla_{v_k} g_2)|, |\widetilde \nabla_v G_{3,k}| \ra_{L^2_v}
+  \la \langle v \rangle^{\frac{\gamma}{2}-1} \langle v \rangle^{\alpha} |\widetilde \nabla_v (\widetilde \nabla_{v_k} g_2)|, |G_{3,k}| \ra_{L^2_v} \bigg) \\
&\lesssim 
\| M^{\frac14} g_1 \|_{L^2_v} 
\| \langle v \rangle^{\alpha} \widetilde \nabla_v (\widetilde \nabla_v g_2) \|_{L^2_v} 
\bigg( \| \widetilde \nabla_v G_3 \|_{L^2_v}
+  \| \langle v \rangle^{\frac{\gamma}{2} - 1} G_3 \|_{L^2_v} \bigg).
\end{aligned}
$$
We argue in a similar fashion for the term $\mathrm{I}_2$. We first make an integration by parts and write that $\widetilde \nabla_{v_k}\left(a_{ij}* [\sqrt M g_1] \right) = B_{k\ell} \left( \partial_{v_\ell} a_{ij}* [\sqrt M g_1] \right)$, then we use Lemma~\ref{lem:afGH} together with $|B_{k\ell}| \lesssim \langle v \rangle^{\frac{\gamma}{2}+1}$, thus we obtain
$$
\begin{aligned}
\mathrm{I}_2
&= - \la \langle v \rangle^{\alpha} B_{k \ell}  \left(\partial_{v_\ell} a_{ij}* [\sqrt M g_1] \right) \partial_{v_j} g_2    , \partial_{v_i} G_{3,k} \ra_{L^2_v}  \\
&\qquad \qquad - \la  B_{k \ell}  \left( \partial_{v_\ell} a_{ij}* [\sqrt M g_1] \right) \partial_{v_j} g_2   \partial_{v_i} \langle v \rangle^{\alpha}, G_{3,k} \ra_{L^2_v}   \\
&\lesssim 
\| M^{\frac14} g_1 \|_{L^2_v} \bigg(
\la \langle v \rangle^{\alpha} \langle v \rangle^{\frac{\gamma}{2}+1} |\widetilde \nabla_v g_2|, |\widetilde \nabla_v G_{3,k}| \ra_{L^2_v}
+  \la \langle v \rangle^{\frac{\gamma}{2}+1}\langle v \rangle^{\frac{\gamma}{2}-1} \langle v \rangle^{\alpha} |\widetilde \nabla_v g_2|, |G_{3,k}| \ra_{L^2_v} \bigg) \\
&\lesssim 
\| M^{\frac14} g_1 \|_{L^2_v} 
\| \langle v \rangle^{\alpha}  \langle v \rangle^{\frac{\gamma}{2} + 1} \widetilde \nabla_v g_2 \|_{L^2_v} 
\bigg( \| \widetilde \nabla_v G_3 \|_{L^2_v}
+  \| \langle v \rangle^{\frac{\gamma}{2} - 1} G_3 \|_{L^2_v} \bigg).
\end{aligned}
$$
For the term $\mathrm{I}_3$, arguing similarly as above using also that $|\widetilde \nabla_v  B_{k \ell}| \lesssim \langle v \rangle^{\frac{\gamma}{2}+1} \langle v \rangle^{\frac{\gamma}{2}}$, we get 
$$
\begin{aligned}
\mathrm{I}_3
&= \la \langle v \rangle^{\alpha} \left(a_{ij}* [\sqrt M g_1] \right) (\partial_{v_j} B_{k \ell}) \partial_{v_\ell} g_2  , \partial_{v_i} G_{3,k} \ra_{L^2_v}  \\
&\qquad \qquad + \la  \left(a_{ij}* [\sqrt M g_1] \right) (\partial_{v_j} B_{k \ell}) \partial_{v_\ell} g_2  \partial_{v_i}\langle v \rangle^{\alpha} , G_{3,k} \ra_{L^2_v}    \\
&\lesssim 
\| M^{\frac14} g_1 \|_{L^2_v} \bigg(
\la \langle v \rangle^{\alpha} \langle v \rangle^{\frac{\gamma}{2}+1} \langle v \rangle^{\frac{\gamma}{2}} |\nabla_v g_2|, |\widetilde \nabla_v G_{3}| \ra_{L^2_v} \\
&\qquad \qquad +  \la \langle v \rangle^{\frac{\gamma}{2}+1} \langle v \rangle^{\frac{\gamma}{2}} \langle v \rangle^{\frac{\gamma}{2}-1} \langle v \rangle^{\alpha} |\nabla_v g_2|, |G_{3}| \ra_{L^2_v} \bigg) \\
&\lesssim 
\| M^{\frac14} g_1 \|_{L^2_v} 
\| \langle v \rangle^{\alpha}  \langle v \rangle^{\frac{\gamma}{2} + 1} \widetilde \nabla_v g_2 \|_{L^2_v} 
\bigg( \| \widetilde \nabla_v G_3 \|_{L^2_v}
+  \| \langle v \rangle^{\frac{\gamma}{2} - 1} G_3 \|_{L^2_v} \bigg).
\end{aligned}
$$
We treat the term $\mathrm{I}_4$ in the same way, first performing an integration by parts and using also that $| \partial_{v_i} \partial_{v_\ell}  B_{k \ell}| \lesssim \langle v \rangle^{\frac{\gamma}{2}-1}$ and $\langle v \rangle^{\frac{\gamma}{2}} |\nabla_v G_3| \lesssim |\widetilde \nabla_v G_3|$, it gives us
$$
\begin{aligned}
&\mathrm{I}_4
= \la \langle v \rangle^{\alpha} \left( a_{ij} * [\sqrt M g_1]\right)  \partial_{v_j} g_2 (\partial_{v_i} B_{k\ell} ) ,\partial_{v_\ell} G_{3,k} \ra_{L^2_v} \\
&\quad
+\la \left( a_{ij} * [\sqrt M g_1]\right)  \partial_{v_j} g_2   (\partial_{v_i} B_{k\ell} ) \partial_{v_\ell} \langle v \rangle^{\alpha} , G_{3,k} \ra_{L^2_v} \\
&\quad
+\la \langle v \rangle^{\alpha} \left( a_{ij} * [\sqrt M g_1]\right)  \partial_{v_j} g_2,  ( \partial_{v_i} \partial_{v_\ell} B_{k\ell} ) G_{3,k} \ra_{L^2_v}  \\
&\lesssim 
\| M^{\frac14} g_1 \|_{L^2_v} \bigg(
\la \langle v \rangle^{\alpha} \langle v \rangle^{\frac{\gamma}{2}+1} \langle v \rangle^{\frac{\gamma}{2}} |\widetilde \nabla_v g_2|, |\nabla_v G_{3}| \ra_{L^2_v}
+ \la  \langle v \rangle^{\alpha} \langle v \rangle^{\frac{\gamma}{2}+1} \langle v \rangle^{\frac{\gamma}{2}-1} |\widetilde \nabla_v g_2|,  |G_{3}| \ra_{L^2_v} \bigg) \\
&\lesssim 
\| M^{\frac14} g_1 \|_{L^2_v} 
\| \langle v \rangle^{\alpha}  \langle v \rangle^{\frac{\gamma}{2} + 1} \widetilde \nabla_v g_2 \|_{L^2_v} 
\bigg( \| \widetilde \nabla_v G_3 \|_{L^2_v}
+  \| \langle v \rangle^{\frac{\gamma}{2} - 1} G_3 \|_{L^2_v} \bigg).
\end{aligned}
$$
Finally, gathering previous estimates, we get
\begin{equation}\label{eq:DvBGamma1}
\la \langle v \rangle^{\alpha} \widetilde \nabla_v \Gamma_1(g_1,g_2) , G_3 \ra_{L^2_v}
\lesssim \| M^{\frac14} g_1 \|_{L^2_v} \| \langle v \rangle^{\alpha} g_2 \|_{H^2_{v,*}} \| G_3 \|_{H^1_{v,*}}.
\end{equation}

\medskip\noindent
\textit{Step 2. Term associated to $\Gamma_2$.}
We write $\widetilde \nabla_{v_k} = B_{k \ell} \partial_{v_\ell}$ so that $[\widetilde \nabla_{v_k} , \partial_{v_i}] = - (\partial_{v_i} B_{k\ell}) \partial_{v_\ell}$, thus we get, for any $k \in \{1,2,3\}$,
$$
\begin{aligned}
\widetilde \nabla_{v_k} \Gamma_2
&= -  \widetilde \nabla_{v_k} \partial_{v_i} \left\{ \left(b_{i}* [\sqrt M g_1] \right)  g_2   \right\} \\
&= - \partial_{v_i} \left\{ \left(b_{i}* [\sqrt M g_1] \right)  \widetilde \nabla_{v_k}  g_2   \right\} 
-  \partial_{v_i} \left\{ \widetilde \nabla_{v_k} \left(b_{i}* [\sqrt M g_1] \right)  g_2   \right\} \\
&\quad
+ (\partial_{v_i} B_{k\ell} ) \partial_{v_\ell} \left\{  \left(b_{i}* [\sqrt M g_1] \right) g_2   \right\} ,
\end{aligned}
$$
whence 
$$
\la \langle v \rangle^{\alpha} \widetilde \nabla_{v_k} \Gamma_2 (g_1,g_2) , G_{3,k} \ra 
= \mathrm{II}_1 + \mathrm{II}_2 + \mathrm{II}_3.
$$
For the term $\mathrm{II}_1$, we make an integrating by parts and then use Lemma~\ref{lem:conv} together with $\langle v \rangle^{\frac{\gamma}{2}} |\nabla_v G_3| \lesssim |\widetilde \nabla_v G_3|$, which yields
$$
\begin{aligned}
\mathrm{II}_1
&= \la \langle v \rangle^{\alpha}\left(b_{i}* [\sqrt M g_1] \right)  \widetilde \nabla_{v_k} g_2  , \partial_{v_i} G_{3,k} \ra_{L^2_v}
+ \la \left(b_{i}* [\sqrt M g_1] \right)  \widetilde \nabla_{v_k} g_2  \partial_{v_i}\langle v \rangle^{\alpha}, G_{3,k} \ra_{L^2_v} \\
&\lesssim \| M^{\frac14} g_1 \|_{L^2_v} \bigg(
\la \langle v \rangle^{\alpha}\langle v \rangle^{\gamma+1} |\widetilde \nabla_{v_k} g_2|,  |\nabla_v G_{3,k}| \ra_{L^2_v}
+ \la \langle v \rangle^{\gamma+1} |\widetilde \nabla_{v_k} g_2| \langle v \rangle^{\alpha-1},|G_{3,k}| \ra_{L^2_v}
\bigg)
\\
&\lesssim \| M^{\frac14} g_1 \|_{L^2_v} \| \langle v \rangle^{\alpha}\langle v \rangle^{\frac{\gamma}{2} + 1} \widetilde \nabla_v g_2 \|_{L^2_v}  \bigg( \| \widetilde \nabla_v G_3 \|_{L^2_v}
+  \| \langle v \rangle^{\frac{\gamma}{2} - 1} G_3 \|_{L^2_v} \bigg).
\end{aligned}
$$
For the term $\mathrm{II}_2$, we first make an integration by parts using that $\widetilde \nabla_{v_k}\left(b_{i}* [\sqrt M g_1] \right) = B_{k\ell} \left( \partial_{v_\ell} b_{i}* [\sqrt M g_1] \right)$, which yields
$$
\begin{aligned}
\mathrm{II}_2
&= \la \langle v \rangle^{\alpha}B_{k\ell} \left( \partial_{v_\ell} b_{i}* [\sqrt M g_1] \right) g_2  , \partial_{v_i} G_{3,k} \ra_{L^2_v}\\
&\quad + \la B_{k\ell} \left(\partial_{v_\ell} b_{i}* [\sqrt M g_1] \right) g_2  \partial_{v_i}\langle v \rangle^{\alpha} , G_{3,k}  \ra_{L^2_v} .
\end{aligned}
$$
We now split into two cases according to the estimates of Lemma~\ref{lem:afGH}: If $0 \le \gamma \le 1$, using that $|\widetilde \nabla_v \langle v \rangle^{\alpha} | \lesssim \langle v \rangle^{\frac{\gamma}{2}-1 + \alpha}$ we get
$$
\begin{aligned}
\mathrm{II}_2
&\lesssim \| M^{\frac14} g_1 \|_{L^2_v} \bigg(\la \langle v \rangle^{\alpha} \langle v \rangle^{\frac{\gamma}{2}+1} \langle v \rangle^{\frac{\gamma}{2}}  |g_2|, |\widetilde \nabla_v G_3| \ra_{L^2_v} 
+\la  \langle v \rangle^{\frac{\gamma}{2}+1} \langle v \rangle^{\frac{\gamma}{2}}  |g_2| \langle v \rangle^{\frac{\gamma}{2}-1 + \alpha},  |G_3| \ra_{L^2_v} \bigg)
\\
&\lesssim \| M^{\frac14} g_1 \|_{L^2_v} \| \langle v \rangle^{\alpha} \langle v \rangle^{\gamma + 1} g_2 \|_{L^2_v} \bigg( \| \widetilde \nabla_v G_3 \|_{L^2_v}
+  \| \langle v \rangle^{\frac{\gamma}{2} - 1} G_3 \|_{L^2_v} \bigg) ,
\end{aligned}
$$
and if $-2 \le \gamma <0$ we get, using also that $\langle v \rangle^{\gamma} \lesssim 1$,
$$
\begin{aligned}
\mathrm{II}_2
&\lesssim 
\| M^{\frac14} g_1 \|_{L^2_v} \la \langle v \rangle^{\alpha} \langle v \rangle^{\frac{\gamma}{2}+1} \langle v \rangle^{\frac{\gamma}{2}} |g_2|, |\widetilde \nabla_v G_3| \ra_{L^2_v}  \\
&\quad +\| M^{\frac14} g_1 \|_{H^1_v} \la \langle v \rangle^{\alpha} \langle v \rangle^{\frac{\gamma}{2}+1} \langle v \rangle^{\frac{\gamma}{2}-1} |g_2|, |\widetilde \nabla_v G_3| \ra_{L^2_v} \\
&\quad
+\| M^{\frac14} g_1 \|_{L^2_v} \la \langle v \rangle^{\frac{\gamma}{2}+1} \langle v \rangle^{\frac{\gamma}{2}}  \langle v \rangle^{\frac{\gamma}{2}-1 + \alpha}  |g_2|, |G_3| \ra_{L^2_v}  \\
&\quad +\| M^{\frac14} g_1 \|_{H^1_v} \la \langle v \rangle^{\frac{\gamma}{2}+1} \langle v \rangle^{\frac{\gamma}{2}-1} \langle v \rangle^{\frac{\gamma}{2}-1 + \alpha}  |g_2|, |G_3| \ra_{L^2_v} \\
&\lesssim
\| M^{\frac14} g_1 \|_{L^2_v} \| \langle v \rangle^{\alpha} \langle v \rangle^{\gamma+1} g_2 \|_{L^2_v} \left( \| \widetilde \nabla_v G_3 \|_{L^2_v} + \| \langle v \rangle^{\frac{\gamma}{2}-1} G_3 \|_{L^2_v} \right) \\
&\quad
+ \| M^{\frac14} g_1 \|_{H^1_v} \| \langle v \rangle^{\alpha}  g_2 \|_{L^2_v} \left( \| \widetilde \nabla_v G_3 \|_{L^2_v} + \| \langle v \rangle^{\frac{\gamma}{2}-1} G_3 \|_{L^2_v} \right).
\end{aligned}
$$
We deal with the term $\mathrm{II}_3$ by first making an integration by parts and then using Lemma~\ref{lem:conv} together with $|\partial_{v_i} B_{k\ell} | \lesssim \langle v \rangle^{\frac{\gamma}{2}}$ and $|\partial_{v_i} \partial_{v_\ell} B_{k\ell} | \lesssim \langle v \rangle^{\frac{\gamma}{2}-1}$, which yields 
$$
\begin{aligned}
\mathrm{II}_3
&= - \la \langle v \rangle^{\alpha} \left( b_{i} * [\sqrt M g_1]\right)  g_2 (\partial_{v_i} B_{k\ell} ) ,\partial_{v_\ell} G_{3,k} \ra_{L^2_v} \\
&\quad
- \la  \left( b_{i} * [\sqrt M g_1]\right)  g_2 (\partial_{v_i} B_{k\ell} ) \partial_{v_\ell} \langle v \rangle^{\alpha} ,  G_{3,k} \ra_{L^2_v} \\
&\quad
- \la \langle v \rangle^{\alpha} \left( b_{i} * [\sqrt M g_1]\right)  g_2 (\partial_{v_i} \partial_{v_\ell}B_{k\ell} )  ,  G_{3,k} \ra_{L^2_v}  \\
&\lesssim 
\| M^{\frac14} g_1 \|_{L^2_v} \bigg(
\la \langle v \rangle^{\alpha}  \langle v \rangle^{\gamma+1}  |g_2|  \langle v \rangle^{\frac{\gamma}{2}}, |\nabla_v G_{3}| \ra_{L^2_v}
+ \la \langle v \rangle^{\alpha} \langle v \rangle^{\gamma+1} |g_2| \langle v \rangle^{\frac{\gamma}{2}-1}, |G_{3}| \ra_{L^2_v}\bigg) \\
&\lesssim 
\| M^{\frac14} g_1 \|_{L^2_v} 
\| \langle v \rangle^{\alpha}  \langle v \rangle^{\gamma + 1} g_2 \|_{L^2_v} 
\bigg( \| \widetilde \nabla_v G_3 \|_{L^2_v}
+  \| \langle v \rangle^{\frac{\gamma}{2} - 1} G_3 \|_{L^2_v} \bigg).
\end{aligned}
$$

Finally, gathering previous estimates, we get
\begin{equation}\label{eq:DvBGamma2}
\begin{aligned}
\la \langle v \rangle^{\alpha} \widetilde \nabla_v \Gamma_2(g_1,g_2) , G_3 \ra_{L^2_v}
&\lesssim \| M^{\frac14} g_1 \|_{L^2_v}
\| \langle v \rangle^{\alpha} g_2 \|_{H^2_{v,*}}
\|  G_3 \|_{H^1_{v,*}} \\
&\quad 
+\| M^{\frac14} g_1 \|_{H^1_v}
\| \langle v \rangle^{\alpha} g_2 \|_{L^2_v}
\| G_3 \|_{H^1_{v,*}}.
\end{aligned}
\end{equation}

\medskip\noindent
\textit{Step 3. Term associated to $\Gamma_3$.} We write $\widetilde \nabla_{v_k} = B_{k \ell} \partial_{v_\ell}$ so that $[\widetilde \nabla_{v_k} , \partial_{v_i}] = - (\partial_{v_i} B_{k\ell}) \partial_{v_\ell}$, thus we get, for any $k \in \{1,2,3\}$,
$$
\begin{aligned}
\widetilde \nabla_{v_k} \Gamma_3(g_1,g_2)
&= -\frac12 \widetilde \nabla_{v_k} \left\{ \left(a_{ij}* [\sqrt M g_1] \right)  v_i \partial_{v_j} g_2   \right\} \\
&= -\frac12  \left(a_{ij}* [\sqrt M g_1] \right)  v_i \partial_{v_j} (\widetilde \nabla_{v_k} g_2)  \
-\frac12 \widetilde \nabla_{v_k} \left(a_{ij}* [\sqrt M g_1] \right)  v_i \partial_{v_j} g_2   \\
&\quad
-\frac12  \left(a_{ij}* [\sqrt M g_1] \right)  (\widetilde \nabla_{v_k} v_i) \partial_{v_j} g_2     
+\frac12  \left(a_{ij}* [\sqrt M g_1] \right)  v_i (\partial_{v_j} B_{k\ell}) \partial_{v_\ell} g_2   
\end{aligned}
$$
whence 
$$
\la \langle v \rangle^{\alpha} \widetilde \nabla_{v_k} \Gamma_3 (g_1,g_2) , G_{3,k} \ra 
= \mathrm{III}_1 + \mathrm{III}_2 + \mathrm{III}_3 + \mathrm{III}_4.
$$
We now bound each term separately using Lemma~\ref{lem:afGH}. For the term $\mathrm{III}_1$, using that $|\mathbf B(v) v| \lesssim \langle v \rangle^{\frac{\gamma}{2}+1}$, we first obtain, 
$$
\begin{aligned}
\mathrm{III}_1
&= -\frac12 \la \langle v \rangle^{\alpha}  \left(a_{ij}* [\sqrt M g_1] \right)  v_i \partial_{v_j} (\widetilde \nabla_{v_k} g_2) , G_{3,k} \ra_{L^2_v} \\
&\lesssim \| M^{\frac14} g_1 \|_{L^2_v}  \la \langle v \rangle^{\alpha} \langle v \rangle^{\frac{\gamma}{2}+1} |\widetilde \nabla_v(\widetilde \nabla_{v_k} g_2)|,  |G_{3,k}| \ra_{L^2_v} 
\\
&\lesssim \| M^{\frac14} g_1 \|_{L^2_v} \| \langle v \rangle^{\alpha} \widetilde \nabla_v (\widetilde \nabla_v g_2) \|_{L^2_v}
\| \langle v \rangle^{\frac{\gamma}{2} + 1} G_3 \|_{L^2_v} .
\end{aligned}
$$
Using also that $|B_{k \ell}| \lesssim \langle v \rangle^{\frac{\gamma}{2}+1}$, we then get
$$
\begin{aligned}
\mathrm{III}_2
&= -\frac12 \la \langle v \rangle^{\alpha} B_{k\ell} \left(\partial_{v_\ell} a_{ij}* [\sqrt M g_1] \right)  v_i \partial_{v_j} g_2 , G_{3,k} \ra_{L^2_v} \\
&\lesssim \| M^{\frac14} g_1 \|_{L^2_v} 
\la \langle v \rangle^{\alpha} \langle v \rangle^{\frac{\gamma}{2}+1} \langle v \rangle^{\frac{\gamma}{2}+1} |\widetilde \nabla_v g_2|, |G_3| \ra_{L^2_v} \\
&\lesssim \| M^{\frac14} g_1 \|_{L^2_v} \| \langle v \rangle^{\alpha} \langle v \rangle^{\frac{\gamma}{2} + 1} \widetilde \nabla_v  g_2 \|_{L^2_v}
\| \langle v \rangle^{\frac{\gamma}{2} + 1} G_3 \|_{L^2_v}.
\end{aligned}
$$
Since $\widetilde \nabla_{v_k} v_i  = B_{ki}$, we obtain  
$$
\begin{aligned}
\mathrm{III}_3
&= -\frac12 \la \langle v \rangle^{\alpha} \left( a_{ij}* [\sqrt M g_1] \right) B_{ki} \partial_{v_j} g_2 , G_{3,k} \ra_{L^2_v} \\
&\lesssim \| M^{\frac14} g_1 \|_{L^2_v} \la \langle v \rangle^{\alpha}  \langle v \rangle^{\gamma+2} |\widetilde \nabla_v g_2|, |G_3| \ra_{L^2_v} \\
&\lesssim \| M^{\frac14} g_1 \|_{L^2_v} \| \langle v \rangle^{\alpha} \langle v \rangle^{\frac{\gamma}{2}+1} \widetilde \nabla_v g_2 \|_{L^2_v}
\| \langle v \rangle^{\frac{\gamma}{2}+1} G_3 \|_{L^2_v}
.
\end{aligned}
$$
Using now that $|\widetilde \nabla_v B_{k\ell}| \lesssim \langle v \rangle^{\frac{\gamma}{2}+1} \langle v \rangle^{\frac{\gamma}{2}}$ and $\langle v \rangle^{\frac{\gamma}{2}} |\nabla_v g_2|  \lesssim |\widetilde \nabla_v g_2|$, we thus get 
$$
\begin{aligned}
\mathrm{III}_4
&= \frac12 \la \langle v \rangle^{\alpha} \left( a_{ij}* [\sqrt M g_1] \right) v_i \partial_{v_j} B_{k\ell} \partial_{v_\ell} g_2 , G_{3,k} \ra_{L^2_v} \\
&\lesssim \| M^{\frac14} g_1 \|_{L^2_v} \la \langle v \rangle^{\alpha} \langle v \rangle^{\frac{\gamma}{2}+1} \langle v \rangle^{\frac{\gamma}{2}+1} \langle v \rangle^{\frac{\gamma}{2}} |\nabla_v g_2|, |G_3| \ra_{L^2_v} \\
&\lesssim \| M^{\frac14} g_1 \|_{L^2_v} \| \langle v \rangle^{\alpha} \langle v \rangle^{\frac{\gamma}{2} + 1} \widetilde \nabla_v  g_2 \|_{L^2_v}
\| \langle v \rangle^{\frac{\gamma}{2} + 1} G_3 \|_{L^2_v}.
\end{aligned}
$$

Finally, gathering previous estimates, we get
\begin{equation}\label{eq:DvBGamma3}
\begin{aligned}
\la \langle v \rangle^{\alpha} \widetilde \nabla_v \Gamma_3(g_1,g_2) , G_3 \ra_{L^2_v}
&\lesssim \| M^{\frac14} g_1 \|_{L^2_v}
\| \langle v \rangle^{\alpha} g_2 \|_{H^2_{v,*}}
\| G_3 \|_{H^1_{v,*}}.
\end{aligned}
\end{equation}

\medskip\noindent
\textit{Step 4. Term associated to $\Gamma_4$.} Writing $\widetilde \nabla_{v_k} = B_{k \ell} \partial_{v_\ell}$ and observing that $[\widetilde \nabla_{v_k} , \partial_{v_i}] = - (\partial_{v_i} B_{k\ell}) \partial_{v_\ell}$ we first get, for any $k \in \{1,2,3\}$, 
$$
\begin{aligned}
\widetilde \nabla_{v_k} \Gamma_4
&= \frac14 \widetilde \nabla_{v_k} \left\{ \left(a_{ij}* [\sqrt M g_1] \right)  v_i v_j g_2   \right\} \\
&= \frac14  \left(a_{ij}* [\sqrt M g_1] \right)  v_i v_j \widetilde \nabla_{v_k} g_2  
+\frac14 \widetilde \nabla_{v_k}  \left(a_{ij}* [\sqrt M g_1] \right)  v_i v_j g_2 \\
&\quad
+\frac14 \left(a_{ij}* [\sqrt M g_1] \right)  \widetilde \nabla_{v_k}(v_i v_j) g_2  
\end{aligned}
$$
whence 
$$
\la \langle v \rangle^{\alpha} \widetilde \nabla_{v_k} \Gamma_4 (g_1,g_2) , G_{3,k} \ra 
= \mathrm{IV}_1 + \mathrm{IV}_2 + \mathrm{IV}_3.
$$
We now bound each term separately using Lemma~\ref{lem:conv}. For the term $\mathrm{IV}_1$, we first obtain
$$
\begin{aligned}
\mathrm{IV}_1
&= \frac14 \la  \langle v \rangle^{\alpha} \left(a_{ij}* [\sqrt M g_1] \right)  v_i v_j \widetilde \nabla_{v_k} g_2 , G_{3,k} \ra_{L^2_v} \\
&\lesssim \| M^{\frac14} g_1 \|_{L^2_v} \la \langle v \rangle^{\alpha} \langle v \rangle^{\gamma+2} |\widetilde \nabla_{v_k} g_2|, |G_{3,k}| \ra_{L^2_v} \\
&\lesssim \| M^{\frac14} g_1 \|_{L^2_v} \| \langle v \rangle^{\alpha} \langle v \rangle^{\frac{\gamma}{2}+1} \widetilde \nabla_v g_2 \|_{L^2_v}
\| \langle v \rangle^{\frac{\gamma}{2}+1} G_3 \|_{L^2_v}
.
\end{aligned}
$$
For the term $\mathrm{IV}_2$, we also use that $|B_{k \ell}| \lesssim \langle v \rangle^{\frac{\gamma}{2}+1}$ to obtain
$$
\begin{aligned}
\mathrm{IV}_2
&= \frac14 \la \langle v \rangle^{\alpha} B_{k\ell} \left(\partial_{v_\ell} a_{ij}* [\sqrt M g_1] \right)  v_i v_j g_2 , G_{3,k} \ra_{L^2_v} \\
&\lesssim \| M^{\frac14} g_1 \|_{L^2_v} \la \langle v \rangle^{\alpha} \langle v \rangle^{\frac{\gamma}{2}+1} \langle v \rangle^{\gamma+2} |g_2|, |G_{3}| \ra_{L^2_v} \\
&\lesssim \| M^{\frac14} g_1 \|_{L^2_v} \| \langle v \rangle^{\alpha} \langle v \rangle^{\gamma+2} g_2 \|_{L^2_v}
\| \langle v \rangle^{\frac{\gamma}{2}+1} G_3 \|_{L^2_v}
.
\end{aligned}
$$
For the last term $\mathrm{IV}_3$, we write $\widetilde \nabla_{v_k} (v_i v_j)  = v_i B_{kj} + v_j B_{ki}$, and thus we get 
$$
\begin{aligned}
\mathrm{IV}_3
&= \frac14 \la \langle v \rangle^{\alpha} \left( a_{ij}* [\sqrt M g_1] \right) [v_i B_{kj} + v_j B_{ki}] g_2 , G_{3,k} \ra_{L^2_v} \\
&\lesssim \| M^{\frac14} g_1 \|_{L^2_v} \la \langle v \rangle^{\alpha} \langle v \rangle^{\gamma+2} \langle v \rangle^{\frac{\gamma}{2}+1} |g_2| , |G_3| \ra_{L^2_v} \\
&\lesssim \| M^{\frac14} g_1 \|_{L^2_v} \| \langle v \rangle^{\alpha} \langle v \rangle^{\gamma+2} g_2 \|_{L^2_v}
\| \langle v \rangle^{\frac{\gamma}{2}+1} G_3 \|_{L^2_v}
.
\end{aligned}
$$

Finally, gathering previous estimates we get
\begin{equation}\label{eq:DvBGamma4}
\begin{aligned}
\la \langle v \rangle^{\alpha} \widetilde \nabla_v \Gamma_4(g_1,g_2) , G_3 \ra_{L^2_v}
&\lesssim \| M^{\frac14} g_1 \|_{L^2_v}
\| \langle v \rangle^{\alpha} g_2 \|_{H^2_{v,*}}
\| G_3 \|_{H^1_{v,*}}.
\end{aligned}
\end{equation}

\medskip\noindent
\textit{Step 5. Term associated to $\Gamma_5$.} We write $\widetilde \nabla_{v_k} = B_{k \ell} \partial_{v_\ell}$ so that $[\widetilde \nabla_{v_k} , \partial_{v_i}] = - (\partial_{v_i} B_{k\ell}) \partial_{v_\ell}$, thus we get, for any $k \in \{1,2,3\}$, 
$$
\begin{aligned}
\widetilde \nabla_{v_k} \Gamma_5(g_1,g_2)
&= -\frac12 \widetilde \nabla_{v_k} \left\{ \left(a_{ii}* [\sqrt M g_1] \right)  g_2   \right\} \\
&= -\frac12  \left(a_{ii}* [\sqrt M g_1] \right) \widetilde \nabla_{v_k} g_2 
-\frac12 \widetilde \nabla_{v_k}  \left(a_{ii}* [\sqrt M g_1] \right)   g_2   
\end{aligned}
$$
whence 
$$
\la \langle v \rangle^{\alpha} \widetilde \nabla_{v_k} \Gamma_5 (g_1,g_2) , G_{3,k} \ra_{L^2_v}
= \mathrm{V}_1 + \mathrm{V}_2.
$$
We now bound each term separately using Lemma~\ref{lem:conv}. For the term $\mathrm{V}_1$, we get
$$
\begin{aligned}
\mathrm{V}_1
&= -\frac12 \la  \langle v \rangle^{\alpha} \left(a_{ii}* [\sqrt M g_1] \right)  \widetilde \nabla_{v_k} g_2 , G_{3,k} \ra_{L^2_v} \\
&\lesssim \| M^{\frac14} g_1 \|_{L^2_v} \la \langle v \rangle^{\alpha} \langle v \rangle^{\gamma+2} |\widetilde \nabla_{v_k} g_2|, |G_{3,k}| \ra_{L^2_v} \\
&\lesssim \| M^{\frac14} g_1 \|_{L^2_v} \| \langle v \rangle^{\alpha} \langle v \rangle^{\frac{\gamma}{2}+1} \widetilde \nabla_v g_2 \|_{L^2_v}
\| \langle v \rangle^{\frac{\gamma}{2}+1} G_3 \|_{L^2_v}
,
\end{aligned}
$$
and, for the term $\mathrm{V}_2$, we use $|B_{k \ell}| \lesssim \langle v \rangle^{\frac{\gamma}{2}+1}$ to deduce
$$
\begin{aligned}
\mathrm{V}_2
&= -\frac12 \la \langle v \rangle^{\alpha} B_{k\ell} \left(\partial_{v_\ell} a_{ii}* [\sqrt M g_1] \right)  g_2 , G_{3,k} \ra_{L^2_v} \\
&\lesssim \| M^{\frac14} g_1 \|_{L^2_v} \la \langle v \rangle^{\alpha} \langle v \rangle^{\frac{\gamma}{2}+1} \langle v \rangle^{\gamma+1} |g_2|, |G_{3}| \ra_{L^2_v} \\
&\lesssim \| M^{\frac14} g_1 \|_{L^2_v} \| \langle v \rangle^{\alpha} \langle v \rangle^{\gamma+1} g_2 \|_{L^2_v}
\| \langle v \rangle^{\frac{\gamma}{2}+1} G_3 \|_{L^2_v}
.
\end{aligned}
$$

Finally, gathering previous estimates we get
\begin{equation}\label{eq:DvBGamma5}
\begin{aligned}
\la \langle v \rangle^{\alpha} \widetilde \nabla_v \Gamma_5(g_1,g_2) , G_3 \ra_{L^2_v}
&\lesssim \| M^{\frac14} g_1 \|_{L^2_v}
\| \langle v \rangle^{\alpha} g_2 \|_{H^2_{v,*}}
\| G_3 \|_{H^1_{v,*}}.
\end{aligned}
\end{equation}

\medskip\noindent
\textit{Step 6. Proof of \eqref{eq:DvBGamma-NL-1}.} We gather estimates \eqref{eq:DvBGamma1}--\eqref{eq:DvBGamma2}--\eqref{eq:DvBGamma3}--\eqref{eq:DvBGamma4}--\eqref{eq:DvBGamma5}.
\end{proof}

\begin{prop}\label{prop:DxBGamma-NL}
Let $g_1$, $g_2$ be smooth enough functions and $G_3$ a smooth enough vector field, then
\begin{equation}\label{eq:DxBGamma-NL}
\begin{aligned}
&\la \widetilde \nabla_x \Gamma(g_1,g_2) , G_3 \ra_{\XXX} \\
&\quad 
\lesssim \left( \| g_1 \|_{\XXX} \| \widetilde \nabla_x g_2 \|_{\YYY_1}  
+ \| \widetilde \nabla_x g_1 \|_{\XXX}  \| g_2 \|_{\YYY_1}
+\| g_1 \|_{\XXX} \| g_2 \|_{\YYY_2} \right) \| G_3 \|_{\YYY_1}.
\end{aligned}
\end{equation}

\end{prop}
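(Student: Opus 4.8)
The plan is to mirror the two‑step scheme already used for Propositions~\ref{prop:Gamma-NL} and~\ref{prop:DvBGamma-NL}. First I would reduce \eqref{eq:DxBGamma-NL} to a pointwise–in–$x$, weighted $L^2_v$ estimate: for every $\alpha\in\R$,
\begin{align*}
\la \langle v \rangle^{\alpha}\widetilde\nabla_x\Gamma(g_1,g_2),G_3\ra_{L^2_v}
&\lesssim \Big(\|M^{\frac14}g_1\|_{L^2_v}\,\|\langle v\rangle^{\alpha}\widetilde\nabla_x g_2\|_{H^1_{v,*}}
+\|M^{\frac14}\nabla_x g_1\|_{L^2_v}\,\|\langle v\rangle^{\alpha}g_2\|_{H^1_{v,*}}\\
&\qquad\qquad+\|M^{\frac14}g_1\|_{L^2_v}\,\|\langle v\rangle^{\alpha}g_2\|_{H^2_{v,*}}\Big)\,\|G_3\|_{H^1_{v,*}},
\end{align*}
after which \eqref{eq:DxBGamma-NL} follows by integrating in $x$, applying H\"older's inequality and the Sobolev embeddings $H^2_x\hookrightarrow L^\infty_x$, $H^1_x\hookrightarrow L^3_x\cap L^6_x$, and distributing the $x$-derivatives among the three factors exactly as in Step~4 of the proof of Proposition~\ref{prop:Gamma-NL}; here one uses that the Gaussian weight $M^{1/4}$ absorbs the polynomial growth of $\mathbf B$, so that $\|M^{1/4}\nabla_x g_1\|$ integrates to $\|\widetilde\nabla_x g_1\|_\XXX$ (via the top $x$-derivative term $\|\nabla_x^3\widetilde\nabla_x g_1\|_{L^2}\sim\|\mathbf B\nabla_x^4 g_1\|_{L^2}$) and that $\YYY_2\hookrightarrow\YYY_1$.

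For the pointwise estimate I would write $\Gamma=\Gamma_1+\dots+\Gamma_5$ as in \eqref{eq:def-Gamma1}--\eqref{eq:def-Gamma5} and treat each piece separately. The basic structural observation is that $\partial_{x_\ell}$ commutes with all velocity operations (convolution, multiplication and $\partial_v$), so that $\widetilde\nabla_{x_k}\Gamma_j(g_1,g_2)=B_{k\ell}\big[\Gamma_j(\partial_{x_\ell}g_1,g_2)+\Gamma_j(g_1,\partial_{x_\ell}g_2)\big]$. Then one commutes the multiplier $B_{k\ell}(v)$ through the velocity derivatives appearing in each $\Gamma_j$ (using $[\widetilde\nabla_{x_k},\partial_{v_i}]=-(\partial_{v_i}B_{k\ell})\partial_{x_\ell}$ and the bounds $|\partial_v^\beta B_{mn}|\lesssim\langle v\rangle^{\frac{\gamma}{2}+1-|\beta|}$ from \eqref{eq:nablaBij}), and — for the divergence‑form pieces $\Gamma_1,\Gamma_2$, and on the first argument — rewrites $B_{k\ell}(v)(a_{ij}*[\sqrt M\partial_{x_\ell}g_1])$ as $(a_{ij}*[\sqrt M\widetilde\nabla_{x_k}g_1])$ plus a commutator coming from $B_{k\ell}(v)-B_{k\ell}(v_*)$. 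The outcome of these manipulations is that $\widetilde\nabla_{x_k}\Gamma_j(g_1,g_2)$ equals $\Gamma_j(\widetilde\nabla_{x_k}g_1,g_2)+\Gamma_j(g_1,\widetilde\nabla_{x_k}g_2)$ plus remainder terms in which $B$ carries at least one velocity derivative (hence an extra decay $\langle v\rangle^{\gamma/2}$) or which contain a mixed derivative $\partial_v\partial_x g_2$. The two main terms are then estimated by the already‑established pointwise bounds \eqref{eq:Gamma1}--\eqref{eq:Gamma5} (equivalently \eqref{eq:Gamma-NL1}) read with $\widetilde\nabla_{x_k}g_1$, resp.\ $\widetilde\nabla_{x_k}g_2$, in the appropriate slot; the remainder terms are estimated directly with Lemmas~\ref{lem:conv} and~\ref{lem:afGH} together with Cauchy--Schwarz, the extra $\langle v\rangle^{\gamma/2}$ against $\nabla_x g_2$ (resp.\ the mixed derivative) being absorbed by $\|\widetilde\nabla_x g_2\|_{\YYY_1}$ (resp.\ by passing from $\YYY_1$ to $\YYY_2$, i.e.\ by the $\|g_1\|_\XXX\|g_2\|_{\YYY_2}$ term, using the equivalence recalled after \eqref{def:normYYY2}).

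The main obstacle is purely bookkeeping, but delicate: because $\mathbf B(v)$ grows like $\langle v\rangle^{\gamma/2+1}$, one must make sure that the extra velocity weight produced by $\widetilde\nabla_x$ is always routed onto one of the input functions $g_1,g_2$ — where it is paid for either by the $\widetilde\nabla_x$ inside the $\XXX$/$\YYY_1$ norms or by the gain of one $v$-derivative in $\YYY_2$ — and never onto the test field $G_3$, which is only assumed in $\YYY_1$. Concretely, one should keep $\mathbf B$ attached to the arguments throughout the computation (via the $B(v)-B(v_*)$ splitting and the commutator identities above) rather than transferring it to $G_3$ by skew‑adjointness of $\widetilde\nabla_{x_k}$; the latter would amount to replacing $\|G_3\|_{\YYY_1}$ by $\|\widetilde\nabla_x\cdot G_3\|_{\YYY_1}$, which is not allowed. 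Tracking the anisotropic weights $\omega_i=\langle v\rangle^{(3-i)(\frac{\gamma}{2}+1)}$ through each of the five $\Gamma_j$'s and each remainder — exactly as in the five steps of the proof of Proposition~\ref{prop:DvBGamma-NL} — is where the bulk of the (routine but lengthy) work lies.
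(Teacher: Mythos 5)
Your overall architecture (pointwise-in-$x$ estimate, term-by-term splitting of $\Gamma = \Gamma_1+\cdots+\Gamma_5$, integration in $x$ distributing derivatives) matches the paper's, but the intermediate pointwise estimate you announce is not achievable, and the device you propose to reach it does not deliver.

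The issue is the term where the $x$-derivative falls on the first argument. Writing $\widetilde\nabla_{x_k}\Gamma_1(g_1,g_2) = B_{k\ell}\,\Gamma_1(\partial_{x_\ell}g_1,g_2) + B_{k\ell}\,\Gamma_1(g_1,\partial_{x_\ell}g_2)$, the multiplier $B_{k\ell}(v)$ sitting in front of $\Gamma_1(\partial_{x_\ell}g_1,g_2)$ grows like $\langle v\rangle^{\gamma/2+1}$ and must be charged somewhere. It cannot go on $G_3$ (only $\YYY_1$), it cannot go into the convolution argument $\sqrt M\partial_{x_\ell}g_1$ without passing from $B_{k\ell}(v)$ to $B_{k\ell}(v_*)$, and your plan to effect precisely that exchange by the decomposition $B_{k\ell}(v) = B_{k\ell}(v_*) + [B_{k\ell}(v)-B_{k\ell}(v_*)]$ does not produce a remainder with ``extra decay $\langle v\rangle^{\gamma/2}$.'' The Landau kernel $a_{ij}(v-v_*) \sim |v-v_*|^{\gamma+2}$ is not concentrated near the diagonal, so the factor $B_{k\ell}(v)-B_{k\ell}(v_*)$ offers no cancellation on the bulk of the $v_*$-integral: splitting into $|v-v_*|\lesssim |v|$ and $|v-v_*|\gtrsim|v|$, one still ends up with the full $\langle v\rangle^{\gamma/2+1}$ growth in $v$ in the second region (the Gaussian $\sqrt M(v_*)$ only absorbs the $\langle v_*\rangle^{\gamma/2+1}$ piece). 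Consequently the weight must land on $g_2$, and the correct pointwise estimate is
\begin{equation*}
\la \langle v \rangle^{\alpha} \widetilde \nabla_x \Gamma(g_1,g_2) , G_3 \ra_{L^2_v}
\lesssim \left(\| M^{\frac14} g_1 \|_{L^2_v}
\| \langle v \rangle^{\alpha} \widetilde \nabla_x g_2 \|_{H^1_{v,*}}
+ \| M^{\frac14} \nabla_x g_1 \|_{L^2_v}
\| \langle v \rangle^{\alpha} \langle v \rangle^{\frac{\gamma}{2}+1} g_2 \|_{H^1_{v,*} } \right)
\| G_3 \|_{H^1_{v,*}} ,
\end{equation*}
not the one you wrote. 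Your second term $\| M^{1/4}\nabla_x g_1\|\,\|\langle v\rangle^{\alpha} g_2\|_{H^1_{v,*}}$ is simply false (the $B_{k\ell}$ weight is missing), and your third term $\| M^{1/4} g_1\|\,\|\langle v\rangle^{\alpha} g_2\|_{H^2_{v,*}}$ has the wrong $g_1$-factor to make up for it.

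Moreover, even if one were tempted to replace the problematic second term by $\| M^{1/4}\nabla_x g_1\|\,\|\langle v\rangle^{\alpha} g_2\|_{H^2_{v,*}}$ (which would dominate it), the integration in $x$ then produces a cross term of the form $\|\widetilde\nabla_x g_1\|_{\XXX}\,\|g_2\|_{\YYY_2}$, which is \emph{not} controlled by the right-hand side of \eqref{eq:DxBGamma-NL}. This is precisely the pitfall the paper flags at the start of its Step~7: ``it is important to keep $\| \langle v \rangle^{\alpha} \langle v \rangle^{\frac{\gamma}{2}+1} g_2 \|_{H^1_{v,*} }$ in our estimate instead of bounding this term by $\| \langle v \rangle^{\alpha}  g_2 \|_{H^2_{v,*} }$.'' The extra weight $\langle v\rangle^{\gamma/2+1}$ attached to the $\nabla_x g_1$ term must be carried through Step~7 intact, where it is cancelled against the anisotropic $\XXX$-weights $\omega_i = \langle v\rangle^{(3-i)(\gamma/2+1)}$: when one $x$-derivative migrates onto $g_1$ one loses exactly a factor $\langle v\rangle^{\gamma/2+1}$ of weight, and the extra weight on $g_2$ compensates. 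Your write-up, by absorbing it prematurely into $H^2_{v,*}$, destroys this bookkeeping. So the gap is real: you need to retain the anisotropic weight on $g_2$ in the intermediate estimate, and the $B(v)-B(v_*)$ commutator rewriting is a dead end.
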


\begin{proof}[Proof of Proposition~\ref{prop:DxBGamma-NL}]  
We shall first prove that for any $\alpha \in \R$, there holds 
\begin{equation}\label{eq:DxBGamma-NL-1}
\begin{aligned}
&\la \langle v \rangle^{\alpha} \widetilde \nabla_x \Gamma(g_1,g_2) , G_3 \ra_{L^2_v} \\
&\quad \lesssim \left(\| M^{\frac14} g_1 \|_{L^2_v}
\| \langle v \rangle^{\alpha} \widetilde \nabla_x g_2 \|_{H^1_{v,*}} 
+ \| M^{\frac14} \nabla_x g_1 \|_{L^2_v} 
\| \langle v \rangle^{\alpha} \langle v \rangle^{\frac{\gamma}{2}+1} g_2 \|_{H^1_{v,*} }   \right)
\| G_3 \|_{H^1_{v,*}}.
\end{aligned}
\end{equation}
We thus write $\Gamma(g_1,g_2) = \Gamma_1(g_1,g_2) + \cdots +\Gamma_5(g_1,g_2)$ as in \eqref{eq:def-Gamma1}--\eqref{eq:def-Gamma5}, and we estimate each term separately in the sequel. 
The desired result~\eqref{eq:DxBGamma-NL} is then obtained by integrating in $x$, the proof is given in Step~7 because it differs from Step~4 of the proof of Proposition~\ref{prop:Gamma-NL}.

\medskip\noindent
\textit{Step 1. Term associated to $\Gamma_1$.}
Writing $\widetilde \nabla_{x_k} = B_{k \ell} \partial_{x_\ell}$, observing that $[\widetilde \nabla_{x_k} , \partial_{v_i}] = - (\partial_{v_i} B_{k\ell}) \partial_{x_\ell}$ and using that $\widetilde \nabla_{x_k}\left(a_{ij}* [\sqrt M g_1] \right) = B_{k\ell} \left( a_{ij}* [\sqrt M \partial_{x_\ell} g_1] \right)$, we first get that for any $k \in \{1,2,3\}$,
$$
\begin{aligned}
\widetilde \nabla_{x_k} \Gamma_1(g_1,g_2)
&= \partial_{v_i} \left\{ \left(a_{ij}* [\sqrt M g_1] \right) \partial_{v_j} (\widetilde \nabla_{x_k} g_2)   \right\} 
+\partial_{v_i} \left\{ B_{k\ell}\left(a_{ij}* [\sqrt M \partial_{x_\ell} g_1] \right) \partial_{v_j} g_2   \right\} \\
&\quad 
-\partial_{v_i} \left\{ \left(a_{ij}* [\sqrt M g_1] \right) (\partial_{v_j} B_{k\ell}) \partial_{x_\ell} g_2   \right\} 
- (\partial_{v_i} B_{k\ell}) \left(a_{ij}*  [\sqrt M \partial_{x_\ell} g_1] \right) \partial_{v_j} g_2   \\
&\quad 
- (\partial_{v_i} B_{k\ell}) \left(a_{ij}* [\sqrt M g_1] \right) \partial_{v_j} (\partial_{x_\ell} g_2).  
\end{aligned}
$$
whence 
$$
\la \langle v \rangle^{\alpha} \widetilde \nabla_{x_k} \Gamma_1(g_1,g_2) , G_{3,k} \ra_{L^2_v} 
= \mathrm{I}_1 + \mathrm{I}_2 + \mathrm{I}_3 + \mathrm{I}_4 + \mathrm{I}_5.
$$
We can then estimate each term separately using Lemma~\ref{lem:afGH} and arguing similarly as in Step~1 of the proof of Proposition~\ref{prop:DvBGamma-NL}, which we brief explain below.

For the term $\mathrm{I}_1$, we make an integration by parts and use the fact that $|\widetilde \nabla_v \langle v \rangle^{\alpha}|  \lesssim \langle v \rangle^{\frac{\gamma}{2}-1} \langle v \rangle^{\alpha}$ to obtain
$$
\begin{aligned}
\mathrm{I}_1 
&= - \la \langle v \rangle^{\alpha} \left(a_{ij}* [\sqrt M g_1] \right) \partial_{v_j} ( \widetilde \nabla_{x_k} g_2) , \partial_{v_i} G_{3,k} \ra_{L^2_v} \\
&\quad 
- \la \left(a_{ij}* [\sqrt M g_1] \right) \partial_{v_j} ( \widetilde \nabla_{x_k} g_2) \partial_{v_i}\langle v \rangle^{\alpha} , G_{3,k} \ra_{L^2_v} \\
&\lesssim \| M^{\frac14} g_1 \|_{L^2_v} \| \langle v \rangle^{\alpha} \widetilde \nabla_v (\widetilde \nabla_x g_2) \|_{L^2_v} \bigg( \| \widetilde \nabla_v G_3 \|_{L^2_v} + \| \langle v \rangle^{\frac{\gamma}{2} + 1} G_3 \|_{L^2_v} \bigg).
\end{aligned}
$$
In a similar way, for the term $\mathrm{I}_2$, we make an integration by parts and use that $|B_{k\ell}| \lesssim \langle v \rangle^{\frac{\gamma}{2}+1}$, which yields
$$
\begin{aligned}
\mathrm{I}_2 
&= - \la  \langle v \rangle^{\alpha} B_{k \ell} \left(a_{ij}* [\sqrt M \partial_{x_\ell} g_1] \right) \partial_{v_j} g_2    , \partial_{v_i} G_{3,k} \ra_{L^2_v} \\
&\quad
- \la  \langle v \rangle^{\alpha} B_{k \ell} \left(a_{ij}* [\sqrt M \partial_{x_\ell} g_1] \right) \partial_{v_j} g_2  \partial_{v_i} \langle v \rangle^{\alpha} ,  G_{3,k} \ra_{L^2_v}   \\
&\lesssim \| M^{\frac14} \nabla_x g_1 \|_{L^2_v} \| \langle v \rangle^{\alpha} \langle v \rangle^{\frac{\gamma}{2} + 1} \widetilde \nabla_v g_2 \|_{L^2_v} \bigg( \| \widetilde \nabla_v G_3 \|_{L^2_v} + \| \langle v \rangle^{\frac{\gamma}{2} + 1} G_3 \|_{L^2_v} \bigg).
\end{aligned}
$$
For the term $\mathrm{I}_3$, arguing as above and using also that $|\widetilde \nabla_v  B_{k \ell}| \lesssim \langle v \rangle^{\frac{\gamma}{2}+1} \langle v \rangle^{\frac{\gamma}{2}}$, we get 
$$
\begin{aligned}
\mathrm{I}_3
&= \la \langle v \rangle^{\alpha} \left( a_{ij} * [\sqrt M g_1]\right) (\partial_{v_j} B_{k\ell} ) \partial_{x_\ell} g_2 , \partial_{v_i} G_{3,k} \ra_{L^2_v} \\
&\quad
+ \la \left( a_{ij} * [\sqrt M g_1]\right) (\partial_{v_j} B_{k\ell} ) \partial_{x_\ell} g_2  \partial_{v_i}\langle v \rangle^{\alpha} , G_{3,k} \ra_{L^2_v} \\
% &\lesssim 
% \| M^{\frac14} g_1 \|_{L^2_v} \bigg(\la \langle v \rangle^{\alpha} |\widetilde \nabla_v B| |\nabla_x g_2|  |\widetilde \nabla_v G_3| \ra_{L^2_v}
% + \la |\widetilde \nabla_v B| |\nabla_x g_2| |\widetilde \nabla_v \langle v \rangle^{\alpha} |  | G_3| \ra_{L^2_v}
% \bigg) \\
&\lesssim 
\| M^{\frac14} g_1 \|_{L^2_v} \| \langle v \rangle^{\alpha} \langle v \rangle^{\frac{\gamma}{2} + 1} \widetilde \nabla_x g_2 \|_{L^2_v} \bigg( \| \widetilde \nabla_v G_3 \|_{L^2_v} + \| \langle v \rangle^{\frac{\gamma}{2} + 1} G_3 \|_{L^2_v} \bigg).
\end{aligned}
$$
For the term $\mathrm{I}_4$, we use that $|\widetilde \nabla_v B_{k\ell}| \lesssim \langle v \rangle^{\gamma+1}$ to get
$$
\begin{aligned}
\mathrm{I}_4
&= - \la  \langle v \rangle^{\alpha} \left(a_{ij}* [\sqrt M \partial_{x_\ell} g_1] \right) (\partial_{v_i} B_{k\ell}) \partial_{v_j} g_2 , G_{3,k} \ra_{L^2_v} \\
% &\lesssim \| M^{\frac14} \nabla_x g_1 \|_{L^2_v} \la \langle v \rangle^{\alpha} |\widetilde \nabla_v g_2| |\widetilde \nabla_v B|   |G_3| \ra_{L^2_v} \\
&\lesssim \| M^{\frac14} \nabla_x g_1 \|_{L^2_v} \| \langle v \rangle^{\alpha} \langle v \rangle^{\frac{\gamma}{2}} \widetilde \nabla_v g_2 \|_{L^2_v} \| \langle v \rangle^{\frac{\gamma}{2}+1} G_3 \|_{L^2_v}  .
\end{aligned}
$$
Performing an integration by parts in the term $\mathrm{I}_5$, we first obtain
$$
\begin{aligned}
\mathrm{I}_5
&= - \la \langle v \rangle^{\alpha} \left( a_{ij} * [\sqrt M g_1]\right) (\partial_{v_i} B_{k \ell})  \partial_{v_j} (\partial_{x_\ell} g_2) , G_{3,k} \ra_{L^2_v}\\
&=    
\la \langle v \rangle^{\alpha} \left( a_{ij} * [\sqrt M g_1]\right) (\partial_{v_i} B_{k \ell})   \partial_{x_\ell} g_2 , \partial_{v_j} G_{3,k} \ra_{L^2_v} \\
&\quad
+\la \langle v \rangle^{\alpha} \left( a_{ij} * [\sqrt M g_1]\right) (\partial_{v_j} \partial_{v_i} B_{k \ell})   \partial_{x_\ell} g_2 , G_{3,k} \ra_{L^2_v} \\
&\quad 
+\la \left( a_{ij} * [\sqrt M g_1]\right) (\partial_{v_i} B_{k \ell})   (\partial_{x_\ell} g_2) \partial_{v_j} \langle v \rangle^{\alpha} , G_{3,k} \ra_{L^2_v}\\
&\quad
+\la \langle v \rangle^{\alpha} \left( b_{i} * [\sqrt M g_1]\right) (\partial_{v_i} B_{k \ell})   \partial_{x_\ell} g_2 , G_{3,k} \ra_{L^2_v}.
\end{aligned}
$$
Using Lemmas~\ref{lem:conv} and \ref{lem:afGH} together with $| \partial_{v_j} B_{k\ell}| \lesssim \langle v \rangle^{\frac{\gamma}{2}} $ and $|\partial_{v_\ell}\partial_{v_j} B_{k\ell} | \lesssim \langle v \rangle^{\frac{\gamma}{2} - 1}$, we hence obtain
$$
\begin{aligned}
\mathrm{I}_5
&\lesssim \| M^{\frac14} g_1 \|_{L^2_v} \la \langle v \rangle^{\alpha} \langle v \rangle^{\gamma+1}  |\nabla_x g_2| |\widetilde \nabla_v G_3| \ra_{L^2_v} \\
&\quad
+ \| M^{\frac14} g_1 \|_{L^2_v} \la \langle v \rangle^{\alpha} \langle v \rangle^{\gamma+2} \langle v \rangle^{\frac{\gamma}{2}-1} |\nabla_x g_2| |G_3| \ra_{L^2_v}  \\
&\quad 
+\| M^{\frac14} g_1 \|_{L^2_v} \la \langle v \rangle^{\gamma+1} \langle v \rangle^{\frac{\gamma}{2}-1} \langle v \rangle^{\alpha} |\nabla_x g_2| |G_3| \ra_{L^2_v} \\
&\quad
+ \| M^{\frac14} g_1 \|_{L^2_v} \la \langle v \rangle^{\alpha} \langle v \rangle^{\gamma+1} \langle v \rangle^{\frac{\gamma}{2}} |\nabla_x g_2| |G_3| \ra_{L^2_v} \\
&\lesssim \| M^{\frac14} g_1 \|_{L^2_v} \| \langle v \rangle^{\alpha} \langle v \rangle^{\frac{\gamma}{2} + 1} \widetilde \nabla_x g_2 \|_{L^2_v} \bigg( \| \widetilde \nabla_v G_3 \|_{L^2_v} + \| \langle v \rangle^{\frac{\gamma}{2} + 1} G_3 \|_{L^2_v} \bigg).
\end{aligned}
$$

Finally, gathering previous estimates, we get
\begin{equation}\label{eq:DxBGamma1}
\begin{aligned}
&\la \langle v \rangle^{\alpha} \widetilde \nabla_x \Gamma_1(g_1,g_2) , G_3 \ra_{L^2_v} \\
&\quad 
\lesssim \left(\| M^{\frac14} g_1 \|_{L^2_v}
\| \langle v \rangle^{\alpha} \widetilde \nabla_x g_2 \|_{H^1_{v,*}} 
+ \| M^{\frac14} \nabla_x g_1 \|_{L^2_v} 
\| \langle v \rangle^{\alpha} \langle v \rangle^{\frac{\gamma}{2} + 1} \widetilde \nabla_v g_2 \|_{L^2_{v}} \right)
\| G_3 \|_{H^1_{v,*}}.
\end{aligned}
\end{equation}

\medskip\noindent
\textit{Step 2. Term associated to $\Gamma_2$.}
%We write $\widetilde \nabla_{x_k} = B_{k \ell} \partial_{x_\ell}$ so that $[\widetilde \nabla_{x_k} , \partial_{v_i}] = - (\partial_{v_i} B_{k\ell}) \partial_{x_\ell}$ and observe that $\widetilde \nabla_{x_k}\left(b_{i}* [\sqrt M g_1] \right) = B_{k\ell} \left( b_{i}* [\sqrt M \partial_{x_\ell} g_1] \right)$, 
Similarly as for $\Gamma_1$, we have that for any $k \in \{1,2,3\}$,
$$
\begin{aligned}
\widetilde \nabla_{x_k} \Gamma_2
&= -  \widetilde \nabla_{x_k} \partial_{v_i} \left\{ \left(b_{i}* [\sqrt M g_1] \right)  g_2   \right\} \\
&= - \partial_{v_i} \left\{ \left(b_{i}* [\sqrt M g_1] \right)  \widetilde \nabla_{x_k}  g_2   \right\} 
-  \partial_{v_i}  \left\{ B_{k\ell} \left(b_{i}* [\sqrt M \partial_{x_\ell} g_1] \right)  g_2 \right\}   \\
&\quad
+ (\partial_{v_i} B_{k\ell} )   \left(b_{i}* [\sqrt M g_1] \right) \partial_{x_\ell} g_2  
+ (\partial_{v_i} B_{k\ell} )  \left(b_{i}* [\sqrt M \partial_{x_\ell}g_1] \right) g_2  .
\end{aligned}
$$
% whence 
% $$
% \la \langle v \rangle^{\alpha} \widetilde \nabla_{v_k} \Gamma_2 (g_1,g_2) , G_{3,k} \ra_{L^2_v} 
% = II_1 + II_2 + II_3 + II_4.
% $$
We can then estimate each term separately using Lemma~\ref{lem:afGH} and arguing as in Step~2 of the proof of Proposition~\ref{prop:DvBGamma-NL}, which yields 
\begin{equation}\label{eq:DxBGamma2}
\begin{aligned}
&\la \langle v \rangle^{\alpha} \widetilde \nabla_x \Gamma_2(g_1,g_2) , G_3 \ra_{L^2_v} \\
&\qquad 
\lesssim \left(\| M^{\frac14} g_1 \|_{L^2_v}
\| \langle v \rangle^{\alpha} \widetilde \nabla_x g_2 \|_{H^1_{v,*}} 
+ \| M^{\frac14} \nabla_x g_1 \|_{L^2_v} 
\| \langle v \rangle^{\alpha} \langle v \rangle^{\gamma + 2} g_2 \|_{L^2_v} \right)
\| G_3 \|_{H^1_{v,*}}.
\end{aligned}
\end{equation}

\medskip\noindent
\textit{Step 3. Term associated to $\Gamma_3$.} 
%Writing $\widetilde \nabla_{x_k} = B_{k \ell} \partial_{x_\ell}$ and using the fact that we have $\widetilde \nabla_{x_k}\left(a_{ij}* [\sqrt M g_1] \right) = B_{k\ell} \left( a_{ij}* [\sqrt M \partial_{x_\ell} g_1] \right)$, 
As previously, we first get that for any $k \in \{1,2,3\}$,
$$
\begin{aligned}
\widetilde \nabla_{x_k} \Gamma_3
&= - \widetilde \nabla_{x_k} \left\{ \left(a_{ij}* [\sqrt M g_1] \right)  v_i \partial_{v_j} g_2   \right\} \\
&= -  \left(a_{ij}* [\sqrt M g_1] \right)  v_i \partial_{v_j} (\widetilde \nabla_{x_k} g_2)  
- B_{k \ell } \left(a_{ij}* [\sqrt M  \partial_{x_\ell} g_1] \right)  v_i \partial_{v_j} g_2   \\
&\quad
+  \left(a_{ij}* [\sqrt M g_1] \right)  v_i (\partial_{v_j} B_{k\ell}) \partial_{x_\ell} g_2   .
\end{aligned}
$$
% whence 
% $$
% \la \langle v \rangle^{\alpha} \widetilde \nabla_{x_k} \Gamma_5 (g_1,g_2) , G_{3,k} \ra_{L^2_v} 
% = III_1 + III_2 + III_3.
% $$
We can then estimate each term separately using Lemma~\ref{lem:afGH} and arguing as in Step~3 of the proof of Proposition~\ref{prop:DvBGamma-NL}, which yields
\begin{equation}\label{eq:DxBGamma3}
\begin{aligned}
&\la \langle v \rangle^{\alpha} \widetilde \nabla_x \Gamma_3(g_1,g_2) , G_3 \ra_{L^2_v} \\
&\quad
\lesssim \left(\| M^{\frac14} g_1 \|_{L^2_v}
\| \langle v \rangle^{\alpha} \widetilde \nabla_x g_2 \|_{H^1_{v,*}} 
+ \| M^{\frac14} \nabla_x g_1 \|_{L^2_v} 
\| \langle v \rangle^{\alpha} \langle v \rangle^{\frac{\gamma}{2} + 1} \widetilde \nabla_v g_2 \|_{L^2_v} \right)
\| G_3 \|_{H^1_{v,*}}.
\end{aligned}
\end{equation}

\medskip\noindent
\textit{Step 4. Term associated to $\Gamma_4$.} 
%Writing $\widetilde \nabla_{x_k} = B_{k \ell} \partial_{x_\ell}$ and using the fact that we have $\widetilde \nabla_{x_k}\left(a_{ij}* [\sqrt M g_1] \right) = B_{k\ell} \left( a_{ij}* [\sqrt M \partial_{x_\ell} g_1] \right)$ 
As previously, we first get that for any $k \in \{1,2,3\}$,
$$
\begin{aligned}
\widetilde \nabla_{x_k} \Gamma_4
&= \frac14 \widetilde \nabla_{x_k} \left\{ \left(a_{ij}* [\sqrt M g_1] \right)  v_i v_j g_2   \right\} \\
&= \frac14  \left(a_{ij}* [\sqrt M g_1] \right)  v_i v_j \widetilde \nabla_{x_k} g_2  
+\frac14 B_{k\ell}  \left(a_{ij}* [\sqrt M \partial_{x_\ell} g_1] \right)  v_i v_j g_2   .
\end{aligned}
$$
% whence 
% $$
% \la \langle v \rangle^{\alpha} \widetilde \nabla_{x_k} \Gamma_4 (g_1,g_2) , G_{3,k} \ra 
% = IV_1 + IV_2 .
% $$
We can then estimate each term separately using Lemma~\ref{lem:conv} and arguing as in Step~4 of the proof of Proposition~\ref{prop:DvBGamma-NL}, which yields
\begin{equation}\label{eq:DxBGamma4}
\begin{aligned}
&\la \langle v \rangle^{\alpha} \widetilde \nabla_x \Gamma_4(g_1,g_2) , G_3 \ra_{L^2_v} \\
&\qquad 
\lesssim \left(\| M^{\frac14} g_1 \|_{L^2_v}
\| \langle v \rangle^{\alpha} \widetilde \nabla_x g_2 \|_{H^1_{v,*}} 
+ \| M^{\frac14} \nabla_x g_1 \|_{L^2_v} 
\| \langle v \rangle^{\alpha} \langle v \rangle^{\gamma + 2} g_2 \|_{L^2_v} \right)
\| G_3 \|_{H^1_{v,*}}.
\end{aligned}
\end{equation}

\medskip\noindent
\textit{Step 5. Term associated to $\Gamma_5$.} 
%Writing $\widetilde \nabla_{x_k} = B_{k \ell} \partial_{x_\ell}$ and using that $\widetilde \nabla_{x_k}\left(a_{ii}* [\sqrt M g_1] \right) = B_{k\ell} \left( a_{ii}* [\sqrt M \partial_{x_\ell} g_1] \right)$ 
As previously, we first get that for any $k \in \{1,2,3\}$,
$$
\begin{aligned}
\widetilde \nabla_{x_k} \Gamma_5
&=-\frac12 \widetilde \nabla_{x_k} \left\{ \left(a_{ii}* [\sqrt M g_1] \right)   g_2   \right\} \\
&= -\frac12  \left(a_{ii}* [\sqrt M g_1] \right)  \widetilde \nabla_{x_k} g_2  
-\frac12 B_{k\ell}  \left(a_{ii}* [\sqrt M \partial_{x_\ell} g_1] \right)  g_2  . 
\end{aligned}
$$
%whence 
%$$
%\la \langle v \rangle^{\alpha} \widetilde \nabla_{x_k} \Gamma_5 (g_1,g_2) , G_{3,k} \ra 
%= V_1 + V_2 .
%$$
We can then estimate each term separately using Lemma~\ref{lem:conv} and arguing as in Step~5 of the proof of Proposition~\ref{prop:DvBGamma-NL}, which yields
\begin{equation}\label{eq:DxBGamma5}
\begin{aligned}
&\la \langle v \rangle^{\alpha} \widetilde \nabla_x \Gamma_5(g_1,g_2) , G_3 \ra_{L^2_v} \\
&\qquad
\lesssim \left(\| M^{\frac14} g_1 \|_{L^2_v}
\| \langle v \rangle^{\alpha} \widetilde \nabla_x g_2 \|_{H^1_{v,*}} 
+ \| M^{\frac14} \nabla_x g_1 \|_{L^2_v} 
\| \langle v \rangle^{\alpha} \langle v \rangle^{\gamma + 2} g_2 \|_{L^2_v} \right)
\| G_3 \|_{H^1_{v,*}}.
\end{aligned}
\end{equation}

\medskip\noindent
\textit{Step 6. Proof of \eqref{eq:DxBGamma-NL-1}.} We gather estimates \eqref{eq:DxBGamma1}--\eqref{eq:DxBGamma2}--\eqref{eq:DxBGamma3}--\eqref{eq:DxBGamma4}--\eqref{eq:DxBGamma5}.

\medskip\noindent
\textit{Step 7. Proof of \eqref{eq:DxBGamma-NL}.} 
We first write
$$
\begin{aligned}
\la \widetilde \nabla_x \Gamma (g_1, g_2) , G_3 \ra_{\XXX} 
&= \sum_{i=0}^3 \la \langle v \rangle^{(3-i)(\frac{\gamma}{2}+1)} \nabla_x^i \widetilde \nabla_x \Gamma (g_1, g_2) , \langle v \rangle^{(3-i)(\frac{\gamma}{2}+1)} \nabla_x^i G_3 \ra_{L^2_{x,v}} \\
&=: T_0 + T_1 + T_2 + T_3 
\end{aligned}
$$
and recall that 
$
\partial_{x_\ell} \Gamma(g_1, g_2) = \Gamma (\partial_{x_\ell} g_1 , g_2) + \Gamma ( g_1 , \partial_{x_\ell} g_2)
$
and that $\nabla_x$ and $\widetilde \nabla_x$ commute. In the remainder of the proof, we shall use~\eqref{eq:DxBGamma-NL-1} as well as some classical Sobolev embeddings as in Step~4 of the proof of Proposition~\ref{prop:Gamma-NL} without no further mention. We here point out that in estimate~\eqref{eq:DxBGamma-NL-1}, it is important to keep~$\| \langle v \rangle^{\alpha} \langle v \rangle^{\frac{\gamma}{2}+1} g_2 \|_{H^1_{v,*} }$ in our estimate instead of bounding this term by $\| \langle v \rangle^{\alpha}  g_2 \|_{H^2_{v,*} }$ and also that to close our estimate, we widely use that the weights in our functional spaces depend on the order of the derivatives in $x$. 

For $T_0$, we have:
%and the facts that 
% $$
% \| M^{\frac14} \nabla_x g_1 \|_{H^2_x L^2_v} \lesssim \| g_1 \|_{\XXX}
%\quad \text{and} \quad
% \| \langle v \rangle^{3(\frac{\gamma}{2}+1)} \la v \ra^{\frac{\gamma}{2}+1} g_2 \|_{L^2_x (H^1_{v,*})} \lesssim \| g_2 \|_{\YYY_2},
%$$
 \begin{multline*}
 T_0
 %&\lesssim \int_{\T^3_x} \left( \| M^{\frac14} g_1 \|_{L^2_v} \| \langle v \rangle^{3(\frac{\gamma}{2}+1)} \widetilde \nabla_x g_2 \|_{H^1_{v,*}} 
 %+ \| M^{\frac14} \nabla_x g_1 \|_{L^2_v} \| \langle v \rangle^{3(\frac{\gamma}{2}+1)} \langle v \rangle^{\frac{\gamma}{2}+1} g_2 \|_{H^1_{v,*}} \right) \| \langle v \rangle^{3(\frac{\gamma}{2}+1)} G_3 \|_{H^1_{v,*}} \, \dx \\
 \lesssim \bigg( \| M^{\frac14} g_1 \|_{H^2_x L^2_v} \| \langle v \rangle^{3(\frac{\gamma}{2}+1)} \widetilde \nabla_x g_2 \|_{L^2_x(H^1_{v,*})} \\
+ \| M^{\frac14} \nabla_x g_1 \|_{H^2_x L^2_v} \| \langle v \rangle^{3(\frac{\gamma}{2}+1)} \langle v \rangle^{\frac{\gamma}{2}+1}  g_2 \|_{L^2_x (H^1_{v,*})} \bigg) \| \langle v \rangle^{3(\frac{\gamma}{2}+1)} G_3 \|_{L^2_x(H^1_{v,*})} \\ 
 \lesssim \left( \|g_1 \|_{\XXX} \| \widetilde \nabla_x g_2 \|_{\YYY_1} 
 + \| g_1 \|_{\XXX} \| g_2 \|_{\YYY_2} \right) \| G_3 \|_{\YYY_1}.
 \end{multline*}
 For $T_1$, we have:
 $$
 \begin{aligned}
 T_1
% & \lesssim \int_{\T^3_x} \bigg( \| M^{\frac14} g_1 \|_{L^2_v} \| \langle v \rangle^{2(\frac{\gamma}{2}+1)} \nabla_x (\widetilde \nabla_x g_2) \|_{H^1_{v,*}} \\ 
% &\qquad \qquad 
% + \| M^{\frac14} \nabla_x g_1 \|_{L^2_v} \| \langle v \rangle^{2(\frac{\gamma}{2}+1)} \widetilde \nabla_x g_2 \|_{H^1_{v,*}} \bigg) \| \langle v \rangle^{2(\frac{\gamma}{2}+1)} \nabla_x G_3 \|_{H^1_{v,*}} \, \dx \\
% &\quad 
% + \int_{\T^3_x} \bigg( \| M^{\frac14} \nabla_x g_1 \|_{L^2_v} \| \langle v \rangle^{2(\frac{\gamma}{2}+1)} \nabla_x (\langle v \rangle^{\frac{\gamma}{2}+1}  g_2) \|_{H^1_{v,*}} \\
% &\qquad\qquad
% + \| M^{\frac14} \nabla^2_x g_1 \|_{L^2_v} \| \langle v \rangle^{2(\frac{\gamma}{2}+1)} \langle v \rangle^{\frac{\gamma}{2}+1}  g_2 \|_{H^1_{v,*}} \bigg) \| \langle v \rangle^{2(\frac{\gamma}{2}+1)} \nabla_x G_3 \|_{H^1_{v,*}} \, \dx \\
 &\lesssim 
 \bigg( \| M^{\frac14} g_1 \|_{H^2_x L^2_v} \| \langle v \rangle^{2(\frac{\gamma}{2}+1)} \nabla_x (\widetilde \nabla_x g_2) \|_{L^2_x(H^1_{v,*})} \\
 &\qquad\qquad
 + \| M^{\frac14} \nabla_x g_1 \|_{H^1_x L^2_v} \| \langle v \rangle^{2(\frac{\gamma}{2}+1)} \widetilde \nabla_x g_2 \|_{H^1_x(H^1_{v,*})} \bigg) \| \langle v \rangle^{2(\frac{\gamma}{2}+1)} \nabla_x G_3 \|_{L^2_x(H^1_{v,*})} \\
 &\quad 
 + \bigg( \| M^{\frac14} \nabla_x g_1 \|_{H^2_x L^2_v} \| \langle v \rangle^{2(\frac{\gamma}{2}+1)} \nabla_x (\langle v \rangle^{\frac{\gamma}{2}+1}  g_2) \|_{L^2_x(H^1_{v,*})} \\
 &\qquad\qquad
 + \| M^{\frac14} \nabla^2_x g_1 \|_{H^1_x L^2_v} \| \langle v \rangle^{2(\frac{\gamma}{2}+1)} \langle v \rangle^{\frac{\gamma}{2}+1}  g_2 \|_{H^1_x(H^1_{v,*})} \bigg) \| \langle v \rangle^{2(\frac{\gamma}{2}+1)} \nabla_x G_3 \|_{L^2_x(H^1_{v,*})}  \\
 &\lesssim \left(
 \| g_1 \|_{\XXX} \| \widetilde \nabla_x g_2 \|_{\YYY_1}   
 + \| g_1 \|_{\XXX} \| g_2 \|_{\YYY_2} \right) \|G_3 \|_{\YYY_1}.
 \end{aligned}
 $$
 For $T_2$, we have:
 $$
 \begin{aligned}
 T_2
% & \lesssim \int_{\T^3_x} \bigg( 
% \| M^{\frac14} g_1 \|_{L^2_v} \| \langle v \rangle^{\frac{\gamma}{2}+1} \nabla^2_x (\widetilde \nabla_x g_2) \|_{H^1_{v,*}} 
% + \| M^{\frac14} \nabla_x g_1 \|_{L^2_v} \| \langle v \rangle^{\frac{\gamma}{2}+1} \nabla_x(\widetilde \nabla_x g_2) \|_{H^1_{v,*}} \\
% &\qquad\qquad
% + \| M^{\frac14} \nabla^2_x g_1 \|_{L^2_v} \| \langle v \rangle^{\frac{\gamma}{2}+1} \widetilde \nabla_x g_2 \|_{H^1_{v,*}} \bigg) 
% \| \langle v \rangle^{\frac{\gamma}{2}+1} \nabla^2_x G_3 \|_{H^1_{v,*}} \, \dx \\
% &\quad 
% + \int_{\T^3_x} \bigg( 
% \| M^{\frac14} \nabla_x g_1 \|_{L^2_v} \| \langle v \rangle^{\frac{\gamma}{2}+1} \nabla^2_x (\langle v \rangle^{\frac{\gamma}{2}+1}  g_2) \|_{H^1_{v,*}} 
% + \| M^{\frac14} \nabla^2_x g_1 \|_{L^2_v} \| \langle v \rangle^{\frac{\gamma}{2}+1} \nabla_x(\langle v \rangle^{\frac{\gamma}{2}+1}  g_2) \|_{H^1_{v,*}} \\
% &\qquad\qquad
% + \| M^{\frac14} \nabla^3_x g_1 \|_{L^2_v} \| \langle v \rangle^{\frac{\gamma}{2}+1} \langle v \rangle^{\frac{\gamma}{2}+1}  g_2 \|_{H^1_{v,*}}
% \bigg) \| \langle v \rangle^{\frac{\gamma}{2}+1} \nabla^2_x G_3 \|_{H^1_{v,*}} \, \dx \\
 &\lesssim 
 \bigg( 
 \| M^{\frac14} g_1 \|_{H^2_x L^2_v} \| \langle v \rangle^{\frac{\gamma}{2}+1} \nabla^2_x (\widetilde \nabla_x g_2) \|_{L^2_x(H^1_{v,*})} \\
&\qquad \qquad + \| M^{\frac14} \nabla_x g_1 \|_{H^1_xL^2_v} \| \langle v \rangle^{\frac{\gamma}{2}+1} \nabla_x(\widetilde \nabla_x g_2) \|_{H^1_x(H^1_{v,*})} \\
 &\qquad\qquad
 + \| M^{\frac14} \nabla^2_x g_1 \|_{L^2_x L^2_v} \| \langle v \rangle^{\frac{\gamma}{2}+1} \widetilde \nabla_x g_2 \|_{H^2_x(H^1_{v,*})} \bigg) 
 \| \langle v \rangle^{\frac{\gamma}{2}+1} \nabla^2_x G_3 \|_{L^2_x(H^1_{v,*})} \\
 &\quad
 + \bigg( 
 \| M^{\frac14} \nabla_x g_1 \|_{H^2_x L^2_v} \| \langle v \rangle^{\frac{\gamma}{2}+1} \nabla^2_x (\langle v \rangle^{\frac{\gamma}{2}+1}  g_2) \|_{L^2_x(H^1_{v,*})} \\
&\qquad \qquad + \| M^{\frac14} \nabla^2_x g_1 \|_{H^1_x L^2_v} \| \langle v \rangle^{\frac{\gamma}{2}+1} \nabla_x(\langle v \rangle^{\frac{\gamma}{2}+1}  g_2) \|_{H^1_x(H^1_{v,*})} \\
 &\qquad\qquad
 + \| M^{\frac14} \nabla^3_x g_1 \|_{L^2_x L^2_v} \| \langle v \rangle^{\frac{\gamma}{2}+1} \langle v \rangle^{\frac{\gamma}{2}+1}  g_2 \|_{H^2_x(H^1_{v,*})}
 \bigg) \| \langle v \rangle^{\frac{\gamma}{2}+1} \nabla^2_x G_3 \|_{L^2_x(H^1_{v,*})}  \\
 &\lesssim \left(
 \| g_1 \|_{\XXX} \| \widetilde \nabla_x g_2 \|_{\YYY_1}   
 + \| g_1 \|_{\XXX} \| g_2 \|_{\YYY_2}  \right) \| G_3 \|_{\YYY_1}.
 \end{aligned}
 $$
 For $T_3$, we have:
 $$
 \begin{aligned}
 &T_3
% & \lesssim \int_{\T^3_x} \bigg( 
% \| M^{\frac14} g_1 \|_{L^2_v} \|  \nabla^3_x (\widetilde \nabla_x g_2) \|_{H^1_{v,*}} 
% + \| M^{\frac14} \nabla_x g_1 \|_{L^2_v} \|  \nabla^2_x (\widetilde \nabla_x g_2) \|_{H^1_{v,*}} \\
% &\qquad
% + \| M^{\frac14} \nabla^2_x g_1 \|_{L^2_v} \|  \nabla_x (\widetilde \nabla_x g_2) \|_{H^1_{v,*}}
% + \| M^{\frac14} \nabla^3_x g_1 \|_{L^2_v} \|  \widetilde \nabla_x g_2 \|_{H^1_{v,*}} \bigg) 
% \|  \nabla^3_x G_3 \|_{H^1_{v,*}} \, \dx \\
% &\quad 
% + \int_{\T^3_x} \bigg( 
% \| M^{\frac14} \nabla_x g_1 \|_{L^2_v} \|  \nabla^3_x (\langle v \rangle^{\frac{\gamma}{2}+1}  g_2) \|_{H^1_{v,*}} 
% + \| M^{\frac14} \nabla^2_x g_1 \|_{L^2_v} \|  \nabla^2_x (\langle v \rangle^{\frac{\gamma}{2}+1}  g_2) \|_{H^1_{v,*}} \\
% &\qquad
% + \| M^{\frac14} \nabla^3_x g_1 \|_{L^2_v} \|  \nabla_x (\langle v \rangle^{\frac{\gamma}{2}+1}  g_2) \|_{H^1_{v,*}}
% + \| M^{\frac14} \nabla^4_x g_1 \|_{L^2_v} \|  \langle v \rangle^{\frac{\gamma}{2}+1}  g_2 \|_{H^1_{v,*}}
% \bigg) \|  \nabla^3_x G_3 \|_{H^1_{v,*}} \, \dx \\
 \lesssim 
 \bigg( 
 \| M^{\frac14} g_1 \|_{H^2_x L^2_v} \|  \nabla^3_x (\widetilde \nabla_x g_2) \|_{L^2_x(H^1_{v,*})} 
 + \| M^{\frac14} \nabla_x g_1 \|_{H^1_xL^2_v} \|  \nabla^2_x (\widetilde \nabla_x g_2) \|_{H^1_x(H^1_{v,*})} \\
 &\qquad \qquad
 + \| M^{\frac14} \nabla^2_x g_1 \|_{L^2_x L^2_v} \|  \nabla_x(\widetilde \nabla_x g_2) \|_{H^2_x(H^1_{v,*})} \\
&\qquad \qquad
+ \| M^{\frac14} \nabla^3_x g_1 \|_{L^2_x L^2_v} \|  \widetilde \nabla_x g_2 \|_{H^2_x(H^1_{v,*})} \bigg) 
 \|  \nabla^3_x G_3 \|_{L^2_x(H^1_{v,*})} \\
 &\quad
 + \bigg( 
 \| M^{\frac14} \nabla_x g_1 \|_{H^2_x L^2_v} \|  \nabla^3_x (\langle v \rangle^{\frac{\gamma}{2}+1}  g_2) \|_{L^2_x(H^1_{v,*})} 
 + \| M^{\frac14} \nabla^2_x g_1 \|_{H^1_x L^2_v} \|  \nabla^2_x (\langle v \rangle^{\frac{\gamma}{2}+1}  g_2) \|_{H^1_x(H^1_{v,*})} \\
 &\qquad \qquad
 + \| M^{\frac14} \nabla^3_x g_1 \|_{L^2_x L^2_v} \|  \nabla_x(\langle v \rangle^{\frac{\gamma}{2}+1}  g_2) \|_{H^2_x(H^1_{v,*})} \\
 &\qquad \qquad
 + \| M^{\frac14} \nabla^4_x g_1 \|_{L^2_x L^2_v} \|  \langle v \rangle^{\frac{\gamma}{2}+1}  g_2 \|_{H^2_x(H^1_{v,*})}
 \bigg) \|  \nabla^3_x G_3 \|_{L^2_x(H^1_{v,*})}  \\
 &\qquad \lesssim \left(
 \| g_1 \|_{\XXX} \| \widetilde \nabla_x g_2 \|_{\YYY_1}   
 + \| g_1 \|_{\XXX} \| g_2 \|_{\YYY_2}   
 + \| M^{\frac14} \nabla^4_x g_1 \|_{L^2_x L^2_v} \|  \langle v \rangle^{\frac{\gamma}{2}+1}  g_2 \|_{H^2_x(H^1_{v,*})}  \right) \| G_3 \|_{\YYY_1}.
 \end{aligned}
 $$
 We conclude the proof by gathering previous estimates and observing that 
 $$
 \| M^{\frac14} \nabla^4_x g_1 \|_{L^2_x L^2_v} \|  \la v \ra^{\frac{\gamma}{2}+1} g_2 \|_{H^2_x(H^1_{v,*})} 
 \lesssim \| \widetilde \nabla_x g_1 \|_{\XXX} \| g_2 \|_{\YYY_1}.
 $$
\end{proof}

\begin{prop}\label{prop:Gamma-NL-XXX}
For any smooth enough functions $g_1$ and $g_2$ there holds 
$$
\| \Gamma(g_1,g_2) \|_{\XXX} 
\lesssim \| g_1 \|_{\XXX} \| g_2 \|_{\YYY_2}  
+ \| g_1 \|_{\YYY_1} \| g_2 \|_{\XXX} .
$$
\end{prop}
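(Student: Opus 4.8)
The plan is to estimate the $\XXX$-norm of $\Gamma(g_1,g_2)$ by duality, writing
$$
\| \Gamma(g_1,g_2) \|_\XXX = \sup_{\| h \|_\XXX \le 1} \la \Gamma(g_1,g_2) , h \ra_\XXX ,
$$
and then to reduce matters to a pointwise-in-$x$ estimate in the velocity variable that is subsequently integrated in $x$ via Hölder's inequality together with the Sobolev embeddings $H^2_x \hookrightarrow L^\infty_x$ and $H^1_x \hookrightarrow L^6_x \cap L^3_x$, exactly as in Step~4 of the proof of Proposition~\ref{prop:Gamma-NL}. Concretely, using the decomposition $\Gamma = \Gamma_1 + \cdots + \Gamma_5$ from \eqref{eq:def-Gamma1}--\eqref{eq:def-Gamma5}, the key local claim to establish is: for any $\alpha \in \R$,
$$
\la \langle v \rangle^{\alpha} \Gamma(g_1,g_2) , h \ra_{L^2_v}
\lesssim \left( \| M^{1/4} g_1 \|_{L^2_v} \| \langle v \rangle^{\alpha} g_2 \|_{H^2_{v,*}} + \| M^{1/4} g_1 \|_{H^1_v} \| \langle v \rangle^{\alpha} g_2 \|_{L^2_v} \right) \| h \|_{L^2_v} ,
$$
where the point is that no derivative falls on $h$ (we only pair $\Gamma$ against $h$, not $\widetilde\nabla_v h$). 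This is essentially the content already proved en route to Proposition~\ref{prop:DvBGamma-NL}: indeed the bounds \eqref{eq:DvBGamma1}--\eqref{eq:DvBGamma5} were obtained by distributing derivatives so that at most one velocity derivative hits $g_2$ in each term, and one may equally well choose to integrate by parts so that all velocity derivatives land on $g_1$ and $g_2$ (via Lemma~\ref{lem:conv} and Lemma~\ref{lem:afGH}), leaving $h$ undifferentiated. The terms $\Gamma_3, \Gamma_4, \Gamma_5$ involve no derivative at all on $h$ to begin with, and $\Gamma_1, \Gamma_2$ are handled by one integration by parts in $v$, at the cost of a commutator $[\langle v \rangle^\alpha, \partial_{v_i}]$ which only produces a harmless extra weight $\langle v \rangle^{-1}$.

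Once this local estimate is in hand, I would integrate in $x$. For the term $T_0 := \la \langle v \rangle^{3(\frac\gamma2+1)} \Gamma(g_1,g_2) , \langle v \rangle^{3(\frac\gamma2+1)} h \ra_{L^2_{x,v}}$, the local estimate with $\alpha = 3(\frac\gamma2+1)$ together with $\|M^{1/4} g_1\|_{L^\infty_x L^2_v} \lesssim \|M^{1/4} g_1\|_{H^2_x L^2_v} \lesssim \|g_1\|_\XXX$ gives
$$
T_0 \lesssim \| g_1 \|_\XXX \left( \| \langle v \rangle^{3(\frac\gamma2+1)} g_2 \|_{L^2_x(H^2_{v,*})} + \| \langle v \rangle^{3(\frac\gamma2+1)} g_2 \|_{L^2_x L^2_v} \right) \| h \|_\XXX \lesssim \| g_1 \|_\XXX \| g_2 \|_{\YYY_2} \| h \|_\XXX .
$$
For $T_i$, $i=1,2,3$, one uses the Leibniz rule $\nabla_x^i \Gamma(g_1,g_2) = \sum_{j=0}^i \binom{i}{j} \Gamma(\nabla_x^j g_1, \nabla_x^{i-j} g_2)$, and for each term distributes the $x$-integrability budget by Hölder: when the derivative sits on $g_1$ one keeps $M^{1/4}\nabla_x^j g_1$ in $L^2_v$ with an $L^\infty_x$, $L^6_x$ or $L^2_x$ norm according to $j$ (controlled by $\|g_1\|_\XXX$ since the Gaussian weight kills all velocity weights), while $g_2$ carries the remaining $x$-derivatives with the $\XXX$-adapted velocity weight $\langle v \rangle^{(3-i)(\frac\gamma2+1)}$ — note $(3-i)(\frac\gamma2+1) \le (3-(i-j))(\frac\gamma2+1)$ since $\frac\gamma2 + 1 \ge 0$, so the weight appearing in $\|g_2\|_{\YYY_2}$ dominates. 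When the derivatives sit on $g_2$ one uses instead $\|M^{1/4} g_1\|_{H^2_x L^2_v} \lesssim \|g_1\|_\XXX$ in $L^\infty_x$ and $\nabla_x^i g_2$ with the weight $\langle v \rangle^{(3-i)(\frac\gamma2+1)}$, again controlled by $\|g_2\|_{\YYY_2}$ (or by $\|g_2\|_{\YYY_1} \le \|g_2\|_{\YYY_2}$ when a velocity-derivative term appears). The mixed middle terms (one $x$-derivative on each factor in $T_2$ and $T_3$) are where the extra summand $\| g_1 \|_{\YYY_1} \| g_2 \|_{\XXX}$ becomes necessary: there one places $M^{1/4}\nabla_x g_1$ (or $M^{1/4}\nabla_x^2 g_1$) against the $H^1_{v,*}$-norm coming from the second summand of the local estimate, which forces a factor $\| \langle v \rangle^{\cdots} g_1 \|_{L^2_x(H^1_{v,*})} \lesssim \|g_1\|_{\YYY_1}$, paired with $\|g_2\|_\XXX$ with no velocity derivative.

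The main obstacle is bookkeeping rather than conceptual: one must check in every term of $T_2$ and $T_3$ that the product of $x$-Sobolev indices adds up (Hölder with exponents $\infty,\infty$ or $6,3$ or $2,\infty$), and simultaneously that the velocity weights match the definitions \eqref{def:normXXX}, \eqref{def:normYYY1}, \eqref{def:normYYY2} of $\XXX$, $\YYY_1$, $\YYY_2$ — in particular that the weight $\langle v \rangle^{(3-i)(\frac\gamma2+1)}$ attached to the highest-order factor never exceeds the weight it is assigned in the target norm, which holds precisely because $\frac\gamma2+1 \ge 0$ for $-2 \le \gamma \le 1$. A secondary subtlety, already flagged in the proof of Proposition~\ref{prop:DxBGamma-NL}, is that when all three $x$-derivatives fall on $g_1$ in $T_3$ one naively gets $\|M^{1/4}\nabla_x^3 g_1\|_{L^2_x L^2_v} \|g_2\|_{H^2_x(H^1_{v,*})}$, which is fine since $\|M^{1/4}\nabla_x^3 g_1\|_{L^2_xL^2_v} \lesssim \|g_1\|_{\YYY_1}$ and no extra fourth derivative of $g_1$ is needed here (unlike in Proposition~\ref{prop:DxBGamma-NL}, because there is no outer $\widetilde\nabla_x$). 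Collecting the bounds for $T_0,\dots,T_3$ and taking the supremum over $\|h\|_\XXX \le 1$ yields the claimed inequality.
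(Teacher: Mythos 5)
The overall shape of your argument matches the paper: prove a pointwise-in-$x$ bound of the form $\|\langle v\rangle^\alpha\Gamma(g_1,g_2)\|_{L^2_v}\lesssim \|M^{1/4}g_1\|_{L^2_v}\|\langle v\rangle^\alpha g_2\|_{H^2_{v,*}} + (\text{lower order})$, then integrate in $x$ exactly as in Step~4 of Proposition~\ref{prop:Gamma-NL}. (The duality wrapper adds nothing: a local bound with $\|h\|_{L^2_v}$ on the right is, by $L^2_v$-duality, the same as bounding $\|\langle v\rangle^\alpha\Gamma\|_{L^2_v}$ directly, which is what the paper does.) The $x$-integration bookkeeping you describe is fine. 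But there is a genuine gap in the local estimate, and it is precisely where the paper spends most of its effort.

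To leave $h$ undifferentiated you cannot ``integrate by parts'' in $\Gamma_1 = \partial_{v_i}\{(a_{ij}*[\sqrt M g_1])\partial_{v_j}g_2\}$: that moves $\partial_{v_i}$ onto $\langle v\rangle^\alpha h$, not onto $g_1$ or $g_2$. You must instead expand the divergence, which produces the second-order term $(a_{ij}*[\sqrt M g_1])\,\partial_{v_i}\partial_{v_j}g_2$. This term is \emph{not} covered by Lemma~\ref{lem:afGH}, which bounds $(a_{ij}*f)G_iH_j$ for a rank-one contraction against two \emph{vector fields} $G$, $H$, not against the Hessian $\partial_{v_i}\partial_{v_j}g_2$. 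The naive bound $|(a_{ij}*f)|\lesssim\langle v\rangle^{\gamma+2}$ from Lemma~\ref{lem:conv} yields $\langle v\rangle^{\gamma+2}|\nabla_v^2 g_2|$, which is \emph{not} controlled by $\|g_2\|_{H^2_{v,*}}$: the $\|\widetilde\nabla_v\widetilde\nabla_v g_2\|_{L^2_v}$ piece of that norm carries the full weight $\langle v\rangle^{\gamma+2}$ only in the directions orthogonal to $v$ (in the $v$-direction it is $\langle v\rangle^\gamma$). The paper handles this by decomposing $\partial_{v_i}\partial_{v_j}g_2$ using $P_v$ and $\Id-P_v$ and exploiting the annihilation $a_{ij}(v-v_*)(v_i-(v_*)_i)=0$ — visible in the fact that $(a_{ij}*f)v_iv_j$ gains no extra weight over $(a_{ij}*f)$ in Lemma~\ref{lem:conv}(i) — so that each piece of the Hessian is matched against the corresponding anisotropic weight in $H^2_{v,*}$. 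This is the $\widetilde\Gamma_1$ computation in the paper's proof and is entirely absent from your sketch. Your appeal to ``the content already proved en route to Proposition~\ref{prop:DvBGamma-NL}'' does not fill the gap either: the bounds \eqref{eq:DvBGamma1}--\eqref{eq:DvBGamma5} are all derived by putting a derivative on the test function and carry $\|G_3\|_{H^1_{v,*}}$ on the right, so they cannot be read as bounds with only $\|h\|_{L^2_v}$; nor is it true there that ``at most one velocity derivative hits $g_2$'' — the $\mathrm{I}_1$ term there already involves $\widetilde\nabla_v\widetilde\nabla_v g_2$.
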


\begin{proof}
We shall only prove that for any $\alpha \in \R$, there holds 
\begin{equation}\label{eq:Gamma-NL-XXX-1}
\| \langle v \rangle^{\alpha} \Gamma(g_1, g_2) \|_{L^2_v} 
\lesssim \| M^{\frac14} g_1 \|_{L^2_v} \| \langle v \rangle^{\alpha} g_2 \|_{H^2_{v,*}}
+ \| M^{\frac18} g_1 \|_{H^1_{v,*}} \| \langle v \rangle^{\alpha} g_2 \|_{L^2_v},
\end{equation}
from which we obtain the desired result by integrating in $x$ and arguing as in Step~4 of the proof of Proposition~\ref{prop:Gamma-NL}. 

Starting from the formulation \eqref{eq:Gamma} of $\Gamma(g_1,g_2)$, we perform the $\partial_{v_i}$ derivative in the first two terms, which gives
$$
\begin{aligned}
\Gamma(g_1,g_2)
& =   \left(a_{ij}* [\sqrt M g_1] \right) \left\{ \partial_{v_i} \partial_{v_j} g_2 - v_i \partial_{v_j} g_2 + \frac14 v_i v_j g_2   \right\}  
-  \left( \left[ \frac12 a_{ii} + c \right] * [\sqrt M g_1]\right) g_2 \\
&=: \widetilde \Gamma_1(g_1,g_2) + \widetilde \Gamma_2(g_1,g_2) + \widetilde \Gamma_3(g_1,g_2) + \widetilde \Gamma_4(g_1,g_2) ,
\end{aligned}
$$
and we estimate each term separately in the sequel.

The terms $ \widetilde \Gamma_2(g_1,g_2)$ and $\widetilde \Gamma_3(g_1,g_2)$ can be easily estimated thanks to Lemma~\ref{lem:afGH}. Indeed, we have 
$$
\begin{aligned}
&\| \langle v \rangle^{\alpha} \widetilde \Gamma_2(g_1 , g_2) \|_{L^2_v} 
= \| \langle v \rangle^{\alpha}\left(a_{ij}* [\sqrt M g_1] \right) v_i \partial_{v_j} g_2 \|_{L^2_v} \\
&\qquad \lesssim \| M^{\frac14} g_1 \|_{L^2_v} \| \langle v \rangle^{\alpha}  \langle v \rangle^{\frac{\gamma}{2}+1} \widetilde \nabla_v g_2 \|_{L^2_v}  
\lesssim \| M^{\frac14} g_1 \|_{L^2_v}  \| \langle v \rangle^{\alpha} g_2 \|_{H^2_{v,*}},
\end{aligned}
$$
and also
$$
\begin{aligned}
&\| \langle v \rangle^{\alpha} \widetilde \Gamma_3(g_1 , g_2) \|_{L^2_v} 
= \frac14 \| \langle v \rangle^{\alpha}\left(a_{ij}* [\sqrt M g_1] \right) v_i v_j g_2 \|_{L^2_v} \\
&\qquad
\lesssim \| M^{\frac14} g_1 \|_{L^2_v} \| \langle v \rangle^{\alpha}  \langle v \rangle^{\gamma+2} g_2 \|_{L^2_v}  
\lesssim \| M^{\frac14} g_1 \|_{L^2_v}  \| \langle v \rangle^{\alpha} g_2 \|_{H^2_{v,*}}.
\end{aligned}
$$
Moreover, we can easily estimate $ \widetilde \Gamma_4 (g_1,g_2)$ thanks to Lemma~\ref{lem:conv}. We have
$$
\begin{aligned}
\| \langle v \rangle^{\alpha} \left(a_{ii}* [\sqrt M g_1] \right)  g_2 \|_{L^2_v} 
&\lesssim \| M^{\frac14} g_1 \|_{L^2_v} \| \langle v \rangle^{\alpha}  \langle v \rangle^{\gamma + 2}  g_2 \|_{L^2_v}  \\
&\lesssim \| M^{\frac14} g_1 \|_{L^2_v}  \| \langle v \rangle^{\alpha} g_2 \|_{H^2_{v,*}},
\end{aligned}
$$
as well as, if $\gamma \ge 0$,
$$
\begin{aligned}
\| \langle v \rangle^{\alpha} \left( c * [\sqrt M g_1] \right)  g_2 \|_{L^2_v} 
&\lesssim \| M^{\frac14} g_1 \|_{L^2_v} \| \langle v \rangle^{\alpha}  \langle v \rangle^{\gamma}  g_2 \|_{L^2_v}  \\
&\lesssim \| M^{\frac14} g_1 \|_{L^2_v}  \| \langle v \rangle^{\alpha} g_2 \|_{H^2_{v,*}},
\end{aligned}
$$
and, if $-2 \le \gamma < 0$, since $\|\cdot\|_{L^4_v} \lesssim \|\cdot\|_{H^1_v}$, we have
$$
\begin{aligned}
\| \langle v \rangle^{\alpha} \left( c * [\sqrt M g_1] \right)  g_2 \|_{L^2_v} 
&\lesssim \| M^{\frac14} g_1 \|_{H^1_{v}} \| \langle v \rangle^{\alpha}  \langle v \rangle^{\gamma}  g_2 \|_{L^2_v}  \\
&\lesssim \| M^{\frac18} g_1 \|_{H^1_{v,*}}  \| \langle v \rangle^{\alpha} g_2 \|_{L^2_v}.
\end{aligned}
$$

\medskip

We now prove \eqref{eq:Gamma-NL-XXX-1} for $\widetilde \Gamma_1(g_1,g_2)$. When $|v| \leq 1$, the result is straightforward using Lemma~\ref{lem:conv}. Consider now $|v| \geq 1$. 
We first write 
$$
\partial_{v_j} g_2 =  \frac{v_j}{|v|} \frac{v_\ell}{|v|} \partial_{v_\ell} g_2  + (\mathrm{Id}-P_v)_j \nabla_v g_2
$$
and
$$
\partial_{v_i} \partial_{v_j} g_2 
=  \frac{v_i}{|v|}  \frac{v_k}{|v|}  \partial_{v_k} \partial_{v_j} g_2 + (\mathrm{Id}-P_v)_i \nabla_v (\partial_{v_j} g_2),
$$
thus we obtain
$$
\begin{aligned}
\partial_{v_i} \partial_{v_j} g_2 
&= \frac{v_i}{|v|}  \frac{v_k}{|v|} \partial_{v_k} \left( \frac{v_j}{|v|} \frac{v_\ell}{|v|} \partial_{v_\ell} g_2  \right)    
+\frac{v_i}{|v|}  \frac{v_k}{|v|} \partial_{v_k}  \left( (\mathrm{Id}-P_v)_j \nabla_v g_2 \right)   \\
&\quad
+ (\mathrm{Id}-P_v)_i \nabla_v \left( \frac{v_j}{|v|} \frac{v_\ell}{|v|} \partial_{v_\ell} g_2 \right)
+ (\mathrm{Id}-P_v)_i \nabla_v \left(  (\mathrm{Id}-P_v)_j \nabla_v g_2  \right).
\end{aligned}
$$
Thanks to a straightforward computation, we remark that 
$$
\begin{aligned}
&(\mathrm{Id}-P_v)_i \nabla_v \left( \frac{v_j}{|v|} \frac{v_\ell}{|v|} \partial_{v_\ell} g_2 \right)  \\
&\quad
= \frac{v_j}{|v|} \frac{v_\ell}{|v|} (\mathrm{Id}-P_v)_i \nabla_v (\partial_{v_\ell} g_2)
+ \partial_{v_k} \left(\frac{v_j}{|v|}  \frac{v_\ell}{|v|} \right) \left( 1 - \frac{v_i}{|v|} \frac{v_k}{|v|}\right) \partial_{v_\ell} g_2 .
\end{aligned}
$$
and 
$$
\begin{aligned}
\frac{v_i}{|v|}  \frac{v_k}{|v|} \partial_{v_k} \left( \frac{v_j}{|v|} \frac{v_\ell}{|v|} \partial_{v_\ell} g_2  \right) 
&= \frac{v_i}{|v|} \frac{v_j}{|v|} \left( \frac{v_k}{|v|} \frac{v_\ell}{|v|} \partial_{v_k} \partial_{v_\ell} g_2    \right).
\end{aligned}
$$
Therefore, we get 
$$
\begin{aligned}
&\langle v \rangle^{\alpha} \widetilde \Gamma_1(g_1 , g_2) (v) 
=  \langle v \rangle^{\alpha}\left(a_{ij}* [\sqrt M g_1] \right) \frac{v_i}{|v|} \frac{v_j}{|v|} \left( \frac{v_k}{|v|} \frac{v_\ell}{|v|} \partial_{v_k} \partial_{v_\ell} g_2    \right) \\
&\qquad \qquad
+ \langle v \rangle^{\alpha}\left(a_{ij}* [\sqrt M g_1] \right) \frac{v_i}{|v|} \frac{v_k}{|v|} \partial_{v_k}[ (\mathrm{Id}-P_v)_j \nabla_v g_2 ] \\
&\qquad \qquad
+ \langle v \rangle^{\alpha}\left(a_{ij}* [\sqrt M g_1] \right) \frac{v_j}{|v|} \frac{v_\ell}{|v|} (\mathrm{Id}-P_v)_i \nabla_v (\partial_{v_\ell} g_2) \\
&\qquad\qquad
+ \langle v \rangle^{\alpha}\left(a_{ij}* [\sqrt M g_1] \right) \partial_{v_k} \left(\frac{v_j}{|v|}  \frac{v_\ell}{|v|} \right) \left( 1 - \frac{v_i}{|v|} \frac{v_k}{|v|}\right) \partial_{v_\ell} g_2  \\
&\qquad \qquad
+ \langle v \rangle^{\alpha}\left(a_{ij}* [\sqrt M g_1] \right) (\mathrm{Id}-P_v)_i \nabla_v \left(  (\mathrm{Id}-P_v)_j \nabla_v g_2  \right) \\
&\qquad\qquad\qquad\quad\,\,=: I_1 + I_2 + I_3 + I_4 + I_5.
\end{aligned}
$$
We can now estimate each of these terms using Lemma~\ref{lem:conv} and Lemma~\ref{lem:afGH}. We obtain
$$
\| I_1 \|_{L^2_v} 
\lesssim \| M^{\frac14} g_1 \|_{L^2_v}  \| \langle v \rangle^{\alpha} \langle v \rangle^{\gamma} \nabla_v \nabla_v g_2 \|_{L^2_v} 
\lesssim \| M^{\frac14} g_1 \|_{L^2_v}  \| \langle v \rangle^{\alpha} g_2 \|_{H^2_{v,*}},
$$
$$
\| I_2 \|_{L^2_v} 
\lesssim \| M^{\frac14} g_1 \|_{L^2_v} \| \langle v \rangle^{\alpha} \langle v \rangle^{\gamma+1} \nabla_v [ (\mathrm{Id}-P_v)\nabla_v g_2] \|_{L^2_v} 
\lesssim \| M^{\frac14} g_1 \|_{L^2_v}  \| \langle v \rangle^{\alpha} g_2 \|_{H^2_{v,*}} ,
$$
$$
\| I_3 \|_{L^2_v} \lesssim \| M^{\frac14} g_1 \|_{L^2_v} \| \langle v \rangle^{\alpha} \langle v \rangle^{\gamma+1} (\mathrm{Id}-P_v)\nabla_v (\nabla_v g_2) \|_{L^2_v}
\lesssim \| M^{\frac14} g_1 \|_{L^2_v}  \| \langle v \rangle^{\alpha} g_2 \|_{H^2_{v,*}},
$$
$$
\| I_4 \|_{L^2_v} 
\lesssim \| M^{\frac14} g_1 \|_{L^2_v} \| \langle v \rangle^{\alpha} \langle v \rangle^{\gamma+1} \nabla_v g_2 \|_{L^2_v}
\lesssim \| M^{\frac14} g_1 \|_{L^2_v}  \| \langle v \rangle^{\alpha} g_2 \|_{H^2_{v,*}},
$$
and finally
$$
\| I_5 \|_{L^2_v} \lesssim \| M^{\frac14} g_1 \|_{L^2_v} \| \langle v \rangle^{\alpha} \langle v \rangle^{\gamma+2} (\mathrm{Id}-P_v)\nabla_v [ (\mathrm{Id}-P_v)\nabla_v g_2] \|_{L^2_v}
\lesssim \| M^{\frac14} g_1 \|_{L^2_v}  \| \langle v \rangle^{\alpha} g_2 \|_{H^2_{v,*}}.
$$

We conclude the proof of \eqref{eq:Gamma-NL-XXX-1} by gathering previous estimates.
\end{proof}

\color{black}

%---------%---------%---------%---------%---------%---------%---------%---------%---------%---------%
\subsection{Well-posedness for the Landau equation}
%---------%---------%---------%---------%---------%---------%---------%---------%---------%---------%

In this section, we shall prove the well-posedness part of Theorem~\ref{theo:main1}.

\begin{proof}[Proof of Theorem~\ref{theo:main1}-(i)]
Let $g^\eps$ be a solution to \eqref{eq:geps} associated to $g^\eps_{\mathrm{in}} \in \XXX$ satisfying~\eqref{eq:DI}. Notice that it implies that $g_{\rm in}^\eps \in (\operatorname{Ker} \Lambda_\eps)^\perp$ and thus $g^\eps(t) \in (\operatorname{Ker} \Lambda_\eps)^\perp$ for all~$t>0$ from the conservation laws~\eqref{eq:conserv}. 
We shall use the norm $\Nt \cdot \Nt_\XXX$ (and the associated inner product $\la\!\la \cdot , \cdot \ra\!\ra_\XXX$) defined in \eqref{eq:def:norme-triple-XXX} during the proof of Proposition~\ref{prop:hypoX}, in order to establish below an a priori estimate for $g^\eps$. 

Let $\sigma \in (0, \sigma_0)$ be fixed and compute
$$
\begin{aligned}
\frac12 \frac{\d}{\dt} \Nt g^\eps \Nt_\XXX^2
& = \la\!\la \Lambda_\eps g^\eps , g^\eps \ra\!\ra_\XXX + \frac{1}{\eps} \la\!\la \Gamma (g^\eps , g^\eps) , g^\eps  \ra\!\ra_\XXX .
\end{aligned}
$$
For the linear part, estimate \eqref{eq:hypo-Lambda} in Proposition~\ref{prop:hypoX} already gives us
$$
\la\!\la \Lambda_\eps g^\eps , g^\eps \ra\!\ra_\XXX
\le - \sigma \Nt g^\eps \Nt_\XXX^2
- \kappa \| g^\eps \|_{\YYY_1}^2
- \frac{\kappa}{\eps^2} \| (g^\eps)^\perp \|_{\YYY_1}^2.
$$
For the nonlinear part, from \eqref{eq:Gammaperp}, we get
$$
\begin{aligned}
\la\!\la \Gamma (g^\eps , g^\eps) , g^\eps \ra\!\ra_\XXX 
& 
= \delta \la \Gamma (g^\eps , g^\eps) , (g^\eps)^\perp \ra_\XXX 
+ \sum_{i=0}^2 \la \nabla_x^i \Gamma (g^\eps , g^\eps) , \nabla_x^i (g^\eps)^\perp \ra_{L^2_{x,v}} \\
&\quad + (1-\delta) \la \nabla_x^3 \Gamma (g^\eps , g^\eps) , \nabla_x^3 (g^\eps)^\perp \ra_{L^2_{x,v}}.
\end{aligned}
$$
Thanks to Proposition~\ref{prop:Gamma-NL}, we have
$$
\la \Gamma (g^\eps , g^\eps) , (g^\eps)^\perp \ra_\XXX \lesssim \| g^\eps \|_\XXX \| g^\eps \|_{\YYY_1} \| (g^\eps)^\perp \|_{\YYY_1}
$$
and also 
$$
\la \nabla_x^3 \Gamma (g^\eps , g^\eps) , \nabla_x^3 (g^\eps)^\perp \ra_{L^2_{x,v}} \lesssim \| g^\eps \|_\XXX \| g^\eps \|_{\YYY_1} \| (g^\eps)^\perp \|_{\YYY_1}.
$$
Arguing as in Step 4 of the proof of Proposition~\ref{prop:Gamma-NL}, we also obtain 
$$
\sum_{i=0}^2 \la \nabla_x^i \Gamma (g^\eps , g^\eps) , \nabla_x^i (g^\eps)^\perp \ra_{L^2_{x,v}} 
\lesssim \| g^\eps \|_\XXX \| g^\eps \|_{\YYY_1} \| (g^\eps)^\perp \|_{\YYY_1}.
$$
In summary, and recalling that $\| \cdot \|_\XXX$ and $\Nt \cdot \Nt_\XXX$ are equivalent, we then have 
\begin{equation}\label{eq:GammaXXX}
\frac{1}{\eps} 	\la\!\la \Gamma (g^\eps , g^\eps) , g^\eps  \ra\!\ra_\XXX 
\le \frac{C}{\eps} \Nt g^\eps \Nt_\XXX \| g^\eps \|_{\YYY_1} \| (g^\eps)^\perp \|_{\YYY_1}
\end{equation}
for some constant $C>0$.

Denoting $\mathrm{e}_\sigma : t \mapsto e^{\sigma t}$ and $g_\sigma^\eps = \mathrm{e}_\sigma g^\eps$, we therefore obtain 
$$
\frac12 \frac{\d}{\dt} \Nt g_\sigma^\eps \Nt_\XXX^2
\le 
- \kappa \| g_\sigma^\eps \|_{\YYY_1}^2
- \frac{\kappa}{\eps^2} \| (g_\sigma^\eps)^\perp \|_{\YYY_1}^2
+\frac{C}{\eps} \| g_\sigma^\eps \|_\XXX \| g_\sigma^\eps \|_{\YYY_1} \| (g_\sigma^\eps)^\perp \|_{\YYY_1}.
$$
Thanks to Young's inequality we write
$$
\frac{C}{\eps} \| g_\sigma^\eps \|_\XXX \| g_\sigma^\eps \|_{\YYY_1} \| (g_\sigma^\eps)^\perp \|_{\YYY_1} 
\le C \Nt g_\sigma^\eps \Nt_\XXX^2 \| g_\sigma^\eps \|_{\YYY_1}^2 + \frac{\kappa}{2\eps^2} \| (g_\sigma^\eps)^\perp \|_{\YYY_1}^2 ,
$$
which then gives the following a priori estimate
\begin{equation}\label{eq:apriori1}
\frac12 \frac{\d}{\dt} \Nt g_\sigma^\eps \Nt_\XXX^2
\le 
- \left( \kappa - C \Nt g_\sigma^\eps \Nt_\XXX^2\right) \| g_\sigma^\eps \|_{\YYY_1}^2
- \frac{\kappa}{2\eps^2} \| (g_\sigma^\eps)^\perp \|_{\YYY_1}^2.
\end{equation}

At least formally, from this differential inequality we easily obtain that if $\Nt g_{\mathrm{in}}^\eps \Nt_\XXX$ is small enough then $g^\eps$ satisfies the uniform in time estimate \eqref{eq:theo:main1:Linftybound}. The proof of existence and uniqueness of a solution $g^\eps$ to \eqref{eq:geps} satisfying \eqref{eq:theo:main1:Linftybound} for small data $\| g^\eps_{\mathrm{in}} \|_{\XXX} \le \eta_0$ follows a standard iterative scheme that uses estimate \eqref{eq:apriori1} (see for example~\cite{CTW}).
\end{proof}

%---------%---------%---------%---------%---------%---------%---------%---------%---------%---------%
\subsection{Regularity for the Landau equation} \label{subsec:decayregnonlinear}
%---------%---------%---------%---------%---------%---------%---------%---------%---------%---------%

In this part, we provide a result of regularization for the solutions to the nonlinear Landau equation which is quantified in time, namely the regularization estimate of Theorem~\ref{theo:main1}. Notice here that if we only wanted to handle the case $\eps=1$, we could have used the triple norm introduced in~\cite{GMM} (see~\cite{CTW} for the Landau equation) which is dissipative for the whole linearized operator and equivalent to the usual one. Here, to handle the $\eps$-dependencies, we have to use our hypocoercive norm defined in Proposition~\ref{prop:hypoX} and separate carefully the behaviors of microscopic and macroscopic parts of the solution. Some additional remainder terms coming from the fact that the transport operator and the projector $\pi$ onto the kernel of $L$ (see~\eqref{def:pi}) do not commute have to be treated. The computations are thus much more intricate. 

\begin{proof}[Proof of Theorem~\ref{theo:main1}-(ii)]
Let $g^\eps$ be a global solution to \eqref{eq:geps} associated to the initial data~$g^\eps_{\mathrm{in}} \in \XXX$ satisfying~\eqref{eq:DI}, with $\| g^\eps_{\mathrm{in}}\|_\XXX \le \eta_0$, provided by Theorem~\ref{theo:main1}-{(i)}. As in the proof of Theorem~\ref{theo:main1}-(i), we shall only obtain an a priori estimate implying the desired regularization estimates.

We recall that the spaces $\YYY_1$ and $\ZZZ_1^\eps$ are defined in~\eqref{def:normYYY1}-\eqref{def:normZZZ1eps} and we shall prove that for any $t \in (0, 1]$, one has
\begin{equation}\label{eq:regNL}
	\| g^\eps (t) \|_{\YYY_1} \lesssim \frac{1}{\sqrt t} \, \| g^\eps_{\mathrm{in}} \|_{\XXX}
	\quad \text{and} \quad 
	\| g^\eps (t) \|_{\ZZZ_1^\eps} \lesssim \frac{1}{t^{3/2}} \, \| g^\eps_{\mathrm{in}} \|_{\XXX},
\end{equation}
which readily implies that, for all $t >0$, there holds 
$$
	\| g^\eps (t) \|_{\YYY_1} \lesssim \frac{e^{-\sigma t}}{\min(1,\sqrt t)} \, \| g^\eps_{\mathrm{in}} \|_{\XXX}
	\quad \text{and} \quad 
	\| g^\eps (t) \|_{\ZZZ_1^\eps} \lesssim \frac{e^{-\sigma t}}{\min(1,t^{3/2})} \, \| g^\eps_{\mathrm{in}} \|_{\XXX}
	$$
by using the exponential decay in $\XXX$ given by Theorem~\ref{theo:main1}-(i) and hence concludes the proof of Theorem~\ref{theo:main1}-(ii). We split the proof of \eqref{eq:regNL} into several steps. We shall use without no further mention that since $\pi \in \BBB(\XXX,\YYY_i)$, we have $\| g^\eps \|_{\YYY_i}\lesssim \| g^\eps \|_{\XXX}+\| (g^\eps)^\perp \|_{\YYY_i}$ for $i=1,2$.

\medskip\noindent
\textit{Step 1.}
We consider the same functional $\UUU_\eps$ as defined in Step~1 of Proposition~\ref{prop:regLambda}, which we recall is given by
	\begin{multline} \label{eq:Lyapunov}
	\UUU_\eps(t,g^\eps) 
	= \Nt g^\eps \Nt_\XXX^2 
	+ \alpha_1 {t} \left(\| \widetilde \nabla_v (g^\eps)^{\perp} \|_{\XXX}^2 +  K\| \langle v \rangle^{\frac{\gamma}{2}+1} (g^\eps)^\perp\|^2_\XXX\right) \\
	+\eps \alpha_2 {t}^2 \la \widetilde \nabla_v g^\eps , \widetilde \nabla_x g^\eps \ra_{\XXX} 
	+ \eps^2 \alpha_3 {t}^3 \left(\| \widetilde \nabla_x g^\eps \|_{\XXX}^2 + K \| \langle v \rangle^{\frac{\gamma}{2}} \nabla_x g^\eps\|^2_\XXX \right),
	\end{multline}
with constants $K>0$ and $0 < \alpha_3 \ll \alpha_2 \ll \alpha_1 \ll 1$  so that $\alpha_2 \le \sqrt{\alpha_1 \alpha_3}$. We recall that the constant $K>0$ is chosen large enough in the proof of Proposition~\ref{prop:regLambda}. The constants~$\alpha_i$ will be chosen small enough during the proof here. We also recall that for any $t \in (0,1]$, one has the lower bounds 
\begin{equation}\label{eq:lowerbdd-bis}
t \| g^\eps \|_{\YYY_1}^2 \lesssim \UUU_\eps(t,g^\eps)
\quad\text{and}\quad
t^3 \| g^\eps \|_{\ZZZ_1^\eps}^2 \lesssim \UUU_\eps(t,g^\eps).
\end{equation}

\medskip\noindent
\textit{Step 2.} Thanks to the proof of Theorem~\ref{theo:main1}-(i) we already have
\begin{equation}\label{eq:dtgperpNL}
\begin{aligned}
\frac12\frac{\d}{\dt} \Nt g^\eps \Nt_\XXX^2
&= \la\!\la \Lambda_\eps g^\eps ,  g^\eps \ra \! \ra_\XXX+\frac{1}{\eps}\la\!\la \Gamma(g^\eps,g^\eps),g^\eps \ra \! \ra_\XXX\\
&\le - \left(\kappa - C \Nt g^\eps \Nt_{\XXX}^2 \right)  \| g^\eps \|_{\YYY_1}^2 - \frac{\kappa}{2\eps^2} \| (g^\eps)^\perp \|_{\YYY_1}^2 
\end{aligned}
\end{equation}
for some constants $\kappa , C >0$.

\medskip\noindent
\textit{Step 3.} We first observe that since $\pi \Gamma(g^\eps,g^\eps)=0$, $ (g^\eps)^\perp$ satisfies the equation
$$%\begin{equation}\label{eq:gepsperp}
\partial_t  (g^\eps)^\perp
= (\mathrm{Id} - \pi)\Lambda_\eps g^\eps 
+ \frac{1}{\eps} \Gamma( g^\eps , g^\eps).
$$%\end{equation}
We then compute
\begin{align*}
&\frac12 \frac{\d}{\dt} \left\{ K \| \langle v \rangle^{\frac{\gamma}{2}+1} (g^\eps)^\perp \|_{\XXX}^2 + \| \widetilde \nabla_v (g^\eps)^\perp \|_{\XXX}^2  \right\}\\
&\quad 
= K\la \langle v \rangle^{\frac{\gamma}{2}+1} (\mathrm{Id}-\pi) \Lambda_\eps g^\eps, \langle v \rangle^{\frac{\gamma}{2}+1} (g^\eps)^\perp \ra_{\XXX}
+\la  \widetilde \nabla_v(\mathrm{Id}-\pi) \Lambda_\eps g^\eps, \widetilde\nabla_v (g^\eps)^\perp \ra_{\XXX}
\\
&\quad\quad 
+\frac{K}{\eps}\la \langle v \rangle^{\frac{\gamma}{2}+1} \Gamma(g^\eps , g^\eps), \langle v \rangle^{\frac{\gamma}{2}+1} (g^\eps)^\perp \ra_{\XXX}
+\frac{1}{\eps}\la \widetilde\nabla_v \Gamma(g^\eps , g^\eps), \widetilde\nabla_v (g^\eps)^\perp \ra_{\XXX}\\
&\quad =: I_1 + I_2+I_3+I_4.
\end{align*}
From \eqref{eq:dtm+nablavfperpXXX} in the proof of  Proposition~\ref{prop:regLambda}, we already know that
$$
I_1 + I_2 \le - \frac{\kappa_1}{\eps^2} \|  (g^\eps)^\perp \|_{\YYY_2}^2 + \frac{C}{\eps^2} \|  (g^\eps)^\perp \|_{\YYY_1}^2 + \frac{C}{\eps}\|  (g^\eps)^\perp \|_{\YYY_1} \| \widetilde \nabla_x g^\eps \|_{\XXX}
$$
for some constants $\kappa_1,C >0$.
Thanks to \eqref{eq:Gamma-NL} in Proposition~\ref{prop:Gamma-NL}, we have
$$
\begin{aligned}
I_3 
&\lesssim \frac{1}{\eps} \| g^\eps \|_{\XXX} \| \langle v \rangle^{\frac{\gamma}{2}+1} g^\eps \|_{\YYY_1} \| \langle v \rangle^{\frac{\gamma}{2}+1} (g^\eps)^\perp \|_{\YYY_1} 
\lesssim \frac{1}{\eps} \| g^\eps \|_{\XXX} \| g^\eps \|_{\YYY_2} \|  (g^\eps)^\perp \|_{\YYY_2}. 
\end{aligned}
$$
Moreover, thanks to \eqref{eq:DvBGamma-NL} in Proposition~\ref{prop:DvBGamma-NL}, we have
$$
\begin{aligned}
I_4
&\lesssim \frac{1}{\eps} \| g^\eps \|_{\XXX} \|g^\eps \|_{\YYY_2}  \|  \widetilde\nabla_v (g^\eps)^\perp \|_{\YYY_1} 
\lesssim \frac{1}{\eps} \| g^\eps \|_{\XXX} \|g^\eps \|_{\YYY_2}  \|  (g^\eps)^\perp \|_{\YYY_2} \\
&\qquad \qquad \qquad \quad\lesssim \frac{1}{\eps} \| g^\eps \|_{\XXX}^2  \|  (g^\eps)^\perp \|_{\YYY_2}
+ \frac{1}{\eps} \| g^\eps \|_{\XXX}  \|  (g^\eps)^\perp \|_{\YYY_2}^2.
\end{aligned}
$$
%where we have used that $\| g^\eps \|_{\YYY_2}\lesssim \| g^\eps \|_{\XXX}+\| (g^\eps)^\perp \|_{\YYY_2}$ since $\pi \in \BBB(\XXX,\YYY_2)$.

We therefore obtain, using that $\| h \|_{\XXX} \lesssim \| h \|_{\YYY_1}$,
\begin{equation}\label{eq:dtm+nablavgperpNL}
\begin{aligned}
&\frac12 \frac{\d}{\dt} \left\{ K \| \langle v \rangle^{\frac{\gamma}{2}+1} (g^\eps)^\perp \|_{\XXX}^2 + \| \widetilde \nabla_v (g^\eps)^\perp \|_{\XXX}^2  \right\}\\
&\quad 
\le  - \frac{\kappa_1}{\eps^2} \|  (g^\eps)^\perp \|_{\YYY_2}^2 + \frac{C}{\eps^2} \|  (g^\eps)^\perp \|_{\YYY_1}^2 + \frac{C}{\eps}\|  (g^\eps)^\perp \|_{\YYY_1} \| \widetilde \nabla_x  g^\eps \|_{\XXX} \\
&\quad\quad 
+\frac{C}{\eps} \| g^\eps \|_{\XXX} \| g^\eps \|_{\YYY_1}  \|  (g^\eps)^\perp \|_{\YYY_2}
+\frac{C}{\eps} \| g^\eps \|_{\XXX}  \|  (g^\eps)^\perp \|_{\YYY_2}^2.
\end{aligned}
\end{equation}

\medskip\noindent
\textit{Step 4.} We compute 
\begin{align*}
\frac{\d}{\dt} \la \widetilde \nabla_v g^\eps , \widetilde \nabla_x g^\eps \ra_{\XXX}
&= \la  \widetilde \nabla_v (\Lambda_\eps g^\eps) , \widetilde \nabla_x g^\eps  \ra_{\XXX}
+\la \widetilde \nabla_x (\Lambda_\eps g^\eps) , \widetilde \nabla_v g^\eps   \ra_{\XXX} 
\\
&\quad 
+ \frac{1}{\eps} \la   \widetilde \nabla_v \Gamma(g^\eps , g^\eps)  , \widetilde \nabla_x g^\eps \ra_{\XXX}
+\frac{1}{\eps} \la   \widetilde \nabla_x \Gamma(g^\eps , g^\eps) , \widetilde \nabla_v g^\eps \ra_{\XXX}\\
&=: J_1 + J_2 + J_3 + J_4.
\end{align*}
Thanks to \eqref{eq:dtnablavxfXXX} in Proposition~\ref{prop:regLambda}, we already have 
$$
J_1 + J_2
\le \frac{C}{\eps^2} \| (g^\eps)^\perp \|_{\YYY_2} \| \widetilde \nabla_x g^\eps \|_{\YYY_1} 
-\frac{1}{\eps} \| \widetilde \nabla_x g^\eps \|_{\XXX}^2
$$
fo some constant $C>0$. For the term $J_3$, estimate
\eqref{eq:DvBGamma-NL} in Proposition~\ref{prop:DvBGamma-NL} yields
$$
\begin{aligned}
J_3
&\lesssim \frac{1}{\eps} \| g^\eps \|_{\XXX} \| g^\eps \|_{\YYY_2}  \|  \widetilde\nabla_x {g^\eps} \|_{\YYY_1} \\
&\lesssim \frac{1}{\eps} \| g^\eps \|_{\XXX}^2 \|  \widetilde\nabla_x {g^\eps} \|_{\YYY_1}
+ \frac{1}{\eps} \| g^\eps \|_{\XXX} \| (g^\eps)^\perp \|_{\YYY_2}  \|  \widetilde\nabla_x {g^\eps} \|_{\YYY_1}.
\end{aligned}
$$
%where we have used that $\| g^\eps \|_{\YYY_2}\lesssim \| g^\eps \|_{\XXX}+\| (g^\eps)^\perp \|_{\YYY_2}$.
Moreover, for the term $J_4$, estimate~\eqref{eq:DxBGamma-NL} in Proposition~\ref{prop:DxBGamma-NL} gives us
$$
\begin{aligned}
J_4
&\lesssim \frac{1}{\eps} \left(  \| g^\eps \|_{\XXX} \| \widetilde \nabla_x g^\eps \|_{\YYY_1} 
+ \| \widetilde \nabla_x  g^\eps \|_{\XXX} \| g^\eps \|_{\YYY_1} 
+ \| g^\eps \|_{\XXX} \| g^\eps \|_{\YYY_2}  \right) \| \widetilde\nabla_v g^\eps \|_{\YYY_1} \\
&\lesssim \frac{1}{\eps} \left(  \| g^\eps \|_{\XXX} \| \widetilde \nabla_x g^\eps \|_{\YYY_1} 
+ \| \widetilde \nabla_x  g^\eps \|_{\XXX} \| g^\eps \|_{\YYY_1} 
+ \| g^\eps \|_{\XXX} \| g^\eps \|_{\YYY_2}  \right) \| g^\eps \|_{\YYY_2} \\
&\lesssim \frac{1}{\eps} \bigg( \| g^\eps \|_{\XXX}^2 \| \widetilde \nabla_x g^\eps \|_{\YYY_1} 
+  \| g^\eps \|_{\XXX} \| \widetilde \nabla_x g^\eps \|_{\YYY_1} \| (g^\eps)^\perp \|_{\YYY_2}
+  \| g^\eps \|_{\XXX} \| \widetilde \nabla_x g^\eps \|_{\XXX} \| g^\eps \|_{\YYY_1} \\
&\quad
+  \| g^\eps \|_{\XXX} \| \widetilde \nabla_x g^\eps \|_{\XXX} \| (g^\eps)^\perp \|_{\YYY_2} 
+ \| (g^\eps)^\perp \|_{\YYY_1} \| \widetilde \nabla_x g^\eps \|_{\XXX} \| (g^\eps)^\perp \|_{\YYY_2}  \\
&\quad
+  \| g^\eps \|_{\XXX}^3
+  \| g^\eps \|_{\XXX}^2 \| (g^\eps)^\perp \|_{\YYY_2}
+  \| g^\eps \|_{\XXX} \| (g^\eps)^\perp \|_{\YYY_2}^2 \bigg).
\end{aligned}
$$
%using again that $\| g^\eps \|_{\YYY_i}\lesssim \| g^\eps \|_{\XXX}+\| (g^\eps)^\perp \|_{\YYY_i}$ for $i=1,2$.
Putting together previous estimates,
%and using that 
%$$
%\| h \|_{\YYY_1} \lesssim \| h \|_\XXX + \| h^\perp \|_{\YYY_1}, \quad 
%\| h \|_{\XXX} \lesssim \| h \|_{\YYY_1} \lesssim \| h \|_{\YYY_2},
%$$ 
we get 
\begin{equation}\label{eq:dtnablaxvgperpNL}
\begin{aligned}
&\frac{\d}{\dt} \la \widetilde \nabla_v g^\eps , \widetilde \nabla_x g^\eps \ra_{\XXX}\\
&\quad 
\le  -\frac{1}{\eps} \| \widetilde \nabla_x g^\eps \|_{\XXX}^2
+ \frac{C}{\eps^2}  \| \widetilde \nabla_x g^\eps \|_{\YYY_1} \| (g^\eps)^\perp \|_{\YYY_2}
+ \frac{C}{\eps}  \| (g^\eps)^\perp \|_{\YYY_1} \| \widetilde \nabla_x g^\eps \|_{\XXX} \| (g^\eps)^\perp \|_{\YYY_2} \\
&\quad\quad
+\frac{C}{\eps} \| g^\eps \|_{\XXX} \bigg( \| \widetilde \nabla_x g^\eps \|_{\XXX} \| (g^\eps)^\perp \|_{\YYY_2} 
+ \| g^\eps \|_{\YYY_1}^2 + \| g^\eps \|_{\YYY_1} \|  \widetilde\nabla_x {g^\eps} \|_{\YYY_1} \\
&\quad\quad
+ \| g^\eps \|_{\YYY_1}\| (g^\eps)^\perp \|_{\YYY_2} 
+  \|  \widetilde\nabla_x {g^\eps} \|_{\YYY_1} \| (g^\eps)^\perp \|_{\YYY_2} 
+ \| (g^\eps)^\perp \|_{\YYY_2}^2 \bigg) .
\end{aligned}
\end{equation}
\medskip\noindent
\textit{Step 5.} 
We compute 
\begin{align*}
&\frac12\frac{\d}{\dt}\left\{ \|\widetilde\nabla_x g^\eps \|_{\XXX}^2 + K\| \langle v \rangle^{\frac{\gamma}{2}} \nabla_x g^\eps \|_{\XXX}^2  \right\}\\ 
&\quad 
=\la  \widetilde \nabla_x (\Lambda_\eps g^\eps), \widetilde\nabla_x {g^\eps} \ra_{\XXX}+K\la  \langle v \rangle^{\frac{\gamma}{2}} \nabla_x (\Lambda_\eps g^\eps) , \langle v \rangle^{\frac{\gamma}{2}}  \nabla_x g^\eps  \ra_{\XXX}\\
&\quad\quad
+ \frac{1}{\eps}\la \widetilde\nabla_x \Gamma(g^\eps , g^\eps), \widetilde\nabla_x {g^\eps} \ra_{\XXX}
+\frac{K}{\eps} \la  \langle v \rangle^{\frac{\gamma}{2}}  \nabla_x \Gamma(g^\eps , g^\eps) , \langle v \rangle^{\frac{\gamma}{2}}  \nabla_x g^\eps  \ra_{\XXX}\\
&\quad=: R_1 + R_2 + R_3 + R_4.
\end{align*}
From \eqref{eq:dtnablaxfXXX} in the proof of  Proposition~\ref{prop:regLambda}, we already know that
$$
R_1 + R_2 \le - \frac{\kappa_2}{\eps^2} \| \widetilde \nabla_x g^\eps \|_{\YYY_1}^2
+ \frac{C}{\eps^2} \| \widetilde \nabla_x g^\eps \|_{\XXX}^2
$$
for some constants $\kappa_2,C >0$.
Thanks to \eqref{eq:DxBGamma-NL} in Proposition~\ref{prop:DxBGamma-NL} for the term $R_3$ (and a slight adaptation of it for the term $R_4$), we obtain
$$
\begin{aligned}
R_3 +R_4
&\lesssim \frac{1}{\eps} \left(  \| g^\eps \|_{\XXX} \| \widetilde \nabla_x g^\eps \|_{\YYY_1} 
+ \| \widetilde \nabla_x  g^\eps \|_{\XXX} \| g^\eps \|_{\YYY_1} 
+ \| g^\eps \|_{\XXX} \| g^\eps \|_{\YYY_2}  \right) \| \widetilde \nabla_x g^\eps \|_{\YYY_1} \\
&\lesssim \frac{1}{\eps} \bigg(  \| g^\eps \|_{\XXX} \| \widetilde \nabla_x g^\eps \|_{\YYY_1} 
+ \| \widetilde \nabla_x  g^\eps \|_{\XXX} \| g^\eps \|_{\XXX} 
+ \| \widetilde \nabla_x  g^\eps \|_{\XXX} \| (g^\eps)^\perp \|_{\YYY_1} \\
&\qquad \qquad \qquad
+ \| g^\eps \|_{\XXX}^2
+ \| g^\eps \|_{\XXX} \| (g^\eps)^\perp \|_{\YYY_2}  \bigg) \| \widetilde \nabla_x g^\eps \|_{\YYY_1} .
\end{aligned}
$$
%where we have used that $\| g^\eps \|_{\YYY_2}\lesssim \| g^\eps \|_{\XXX}+\| (g^\eps)^\perp \|_{\YYY_2}$. 

We therefore obtain
\begin{equation}\label{eq:dtnablaxgperpNL}
\begin{aligned}
&\frac12\frac{\d}{\dt}\left\{ \|\widetilde\nabla_x g^\eps \|_{\XXX}^2 + K\| \langle v \rangle^{\frac{\gamma}{2}} \nabla_x g^\eps \|_{\XXX}^2  \right\}\\
& 
\le  - \frac{\kappa_2}{\eps^2} \| \widetilde \nabla_x g^\eps \|_{\YYY_1}^2
+ \frac{C}{\eps^2} \| \widetilde \nabla_x g^\eps \|_{\XXX}^2 
+\frac{C}{\eps} \| (g^\eps)^\perp \|_{\YYY_1} \| \widetilde \nabla_x g^\eps \|_\XXX \| \widetilde \nabla_x g^\eps \|_{\YYY_1}\\
&\quad
+\frac{C}{\eps} \| g^\eps \|_\XXX \left( \| \widetilde \nabla_xg^\eps \|_{\XXX} \| \widetilde \nabla_x g^\eps \|_{\YYY_1}  
+ \| \widetilde \nabla_x g^\eps \|_{\YYY_1}^2 
+ \| \widetilde \nabla_x g^\eps \|_{\YYY_1} \| (g^\eps)^\perp \|_{\YYY_2} \right).
\end{aligned}
\end{equation}

\medskip\noindent
\textit{Step 6. Conclusion.} Gathering estimates \eqref{eq:dtgperpNL}--\eqref{eq:dtm+nablavgperpNL}--\eqref{eq:dtnablaxvgperpNL}--\eqref{eq:dtnablaxgperpNL} and using that $\underset{t \ge 0}{\sup}\hspace{0.1cm}\| g^\eps(t) \|_{\XXX} \lesssim \eta_0$ from Theorem~\ref{theo:main1}-(i), we then obtain
$$
\begin{aligned}
&\frac{\d}{\dt} \UUU_\eps (t,g^\eps)
 \le - \left(\kappa - C \eta_0^2  - C \alpha_2 t^2 \eta_0 \right)  \| g^\eps \|_{\YYY_1}^2  
- \frac{1}{\eps^2} \left(\kappa - C\alpha_1 t - C \alpha_1 \right)\| (g^\eps)^\perp \|_{\YYY_1}^2\\ 
&\quad
- \frac{t}{\eps^2}\left( \kappa_1 \alpha_1   - C \eps^2 \alpha_2 t \eta_0  - C\eps \alpha_1   \eta_0\right)\|  (g^\eps)^\perp \|_{\YYY_2}^2 
+ \frac{C\alpha_1 t }{\eps}\|  (g^\eps)^\perp \|_{\YYY_1} \| \widetilde \nabla_x  g^\eps \|_{\XXX} \\
&\quad 
+\frac{C\alpha_1 t }{\eps} \eta_0 \| g^\eps \|_{\YYY_1} \|  (g^\eps)^\perp \|_{\YYY_2}  
+ 2 \eps \alpha_2 t \la \widetilde \nabla_v g^\eps , \widetilde \nabla_x g^\eps \ra_{\XXX} \\
&\quad 
-t^2 \left( \alpha_2  - C \eps^2 \alpha_3  -C \alpha_3 t  \right)\| \widetilde \nabla_x g^\eps \|_{\XXX}^2 
+ \frac{C\alpha_2 t^2}{\eps}  \| \widetilde \nabla_x g^\eps \|_{\YYY_1} \| (g^\eps)^\perp \|_{\YYY_2} \\
&\quad
+ C\alpha_2 t^2 \| (g^\eps)^\perp \|_{\YYY_1} \| \widetilde \nabla_x g^\eps \|_{\XXX} \| (g^\eps)^\perp \|_{\YYY_2}\\
&\quad
+C \alpha_2 t^2 \eta_0 \bigg( \| \widetilde \nabla_x g^\eps \|_{\XXX} \| (g^\eps)^\perp \|_{\YYY_2} 
+ \| g^\eps \|_{\YYY_1} \|  \widetilde\nabla_x {g^\eps} \|_{\YYY_1} \\
&\qquad\qquad\qquad\qquad\qquad\qquad
+ \| g^\eps \|_{\YYY_1}\| (g^\eps)^\perp \|_{\YYY_2}
+  \|  \widetilde\nabla_x {g^\eps} \|_{\YYY_1} \| (g^\eps)^\perp \|_{\YYY_2}  \bigg)\\
&\quad
- t^3 \left( \kappa_2  \alpha_3   - C \eps \alpha_3 \eta_0  \right)\| \widetilde \nabla_x g^\eps \|_{\YYY_1}^2
+C\eps \alpha_3 t^3 \| (g^\eps)^\perp \|_{\YYY_1} \| \widetilde \nabla_x g^\eps \|_\XXX \| \widetilde \nabla_x g^\eps \|_{\YYY_1}\\
&\quad
+C\eps \alpha_3 t^3 \eta_0 \left( \|\widetilde \nabla_x g^\eps \|_\XXX \| \widetilde \nabla_x g^\eps \|_{\YYY_1}
+ \| g^\eps \|_{\YYY_1} \| \widetilde \nabla_x g^\eps \|_{\YYY_1}  
+ \| \widetilde \nabla_x g^\eps \|_{\YYY_1} \| (g^\eps)^\perp \|_{\YYY_2} \right).
\end{aligned}
$$
We now use Young's inequality to write
$$
\begin{aligned}
 \frac{C\alpha_1 t}{\eps}   \| \widetilde\nabla_xg^\eps \|_{\XXX} \|(g^\eps)^\perp \|_{\YYY_1} 
& \le \frac{\alpha_2}{4}{t}^2 \| \widetilde\nabla_xg^\eps \|_{\XXX}^2 + C \frac{\alpha_1^2}{\alpha_2} \frac{1}{\eps^2} \|(g^\eps)^\perp \|^2_{\YYY_1}
\\
\frac{C\alpha_1 t }{\eps} \eta_0 \| g^\eps \|_{\YYY_1} \|  (g^\eps)^\perp \|_{\YYY_2}
&\le \frac{\kappa_1 \alpha_1 t}{4 \eps^2} \| (g^\eps)^\perp \|_{\YYY_2}^2 + C \alpha_1 t \eta_0^2 \| g^\eps \|_{\YYY_1}^2
\\
\frac{C\alpha_2 t^2}{\eps} \| \widetilde\nabla_x g^\eps \|_{\YYY_1}\| (g^\eps)^\perp \|_{\YYY_2}
&\le \frac{\kappa_2 \alpha_3}{12}t^3 \| \widetilde\nabla_x g^\eps \|^2_{\YYY_1} +C\frac{\alpha_2^2}{\alpha_3} \frac{t}{\eps^2}\| (g^\eps)^\perp \|_{\YYY_2}^2
\\
C\eta_0 \alpha_2 {t}^2 
\| g^\eps \|_{\YYY_1} \|\widetilde\nabla_x g^\eps \|_{\YYY_1} 
&\le \frac{\kappa_2 \alpha_3}{12} {t}^3\|\widetilde\nabla_x g^\eps \|^2_{\YYY_1}+C\frac{\alpha_2^2}{\alpha_3} \eta_0^2  t \| g^\eps \|^2_{\YYY_1}
\\
C\eta_0 \alpha_2 {t}^2 
\| g^\eps \|_{\YYY_1} \| (g^\eps)^\perp \|_{\YYY_2} 
&\le \frac{\kappa_1 \alpha_1 t}{4 \eps^2} \| (g^\eps)^\perp \|_{\YYY_2}^2 + C \eps \eta_0^2 t^{3/2} \frac{\alpha_2^2}{\alpha_1} \| g^\eps \|_{\YYY_1}^2 
\\
C\eta_0 \alpha_2 {t}^2 
\| \widetilde \nabla_x g^\eps \|_{\YYY_1} \| (g^\eps)^\perp \|_{\YYY_2} 
&\le \frac{\kappa_2 \alpha_3 t^3}{12} \| \widetilde \nabla_x g^\eps \|^2_{\YYY_1} + C \eta_0^2 t \frac{\alpha_2^2}{\alpha_3}\| (g^\eps)^\perp \|_{\YYY_2}^2  \\
C\eps \eta_0 \alpha_3 {t}^3 
\| g^\eps \|_{\YYY_1} \|\widetilde\nabla_x g^\eps \|_{\YYY_1} 
&\le \frac{\kappa_2 \alpha_3}{12} {t}^3\|\widetilde\nabla_x g^\eps \|^2_{\YYY_1}+C\eps^2 \eta_0^2 \alpha_3 t^3 \| g^\eps \|^2_{\YYY_1}
\\
C\eps \eta_0 \alpha_3 {t}^3  
\| \widetilde \nabla_x g^\eps \|_{\YYY_1} \| (g^\eps)^\perp \|_{\YYY_2} 
&\le \frac{\kappa_2 \alpha_3 t^3}{12} \| \widetilde \nabla_x g^\eps \|^{2}_{\YYY_1} + C\eps^2\eta_0^2 \alpha_3 t^3 \| (g^\eps)^\perp \|_{\YYY_2}^2  \\
C \eps \alpha_3 \eta_0 t^3 \|\widetilde \nabla_x g^\eps\|_\XXX \|\widetilde \nabla_x g^\eps\|_{\YYY_1}
&
\le C \alpha_3 \eta_0 t^3 \|\widetilde \nabla_x g^\eps\|_\XXX^2 
+ \alpha_3 \eta_0 t^3 \|\widetilde \nabla_x g^\eps\|_{\YYY_1}^2 \\
2\eps\alpha_{2} t \left\vert\la  \widetilde{\nabla}_x g^\eps ,  \widetilde{\nabla}_v g^\eps\ra_\XXX\right\vert
&\le \frac{\alpha_2}{4}{t^2} \|  \widetilde \nabla_x g^\eps\|_{\XXX}^2 + C\alpha_2\eps^2 \| g^\eps\|_{\YYY_1}^2, 
\end{aligned}
$$
as well as 
$$
\begin{aligned}
C\alpha_2 {t}^2 \|\widetilde\nabla_x g^\eps \|_{\XXX}\| (g^\eps)^\perp \|_{\YYY_1}\| (g^\eps)^\perp \|_{\YYY_2}&\leq C\alpha_2 {t}^3\|\widetilde\nabla_x g^\eps \|^2_{\XXX}\| (g^\eps)^\perp \|^2_{\YYY_1}+C\alpha_2 {t}\| (g^\eps)^\perp \|^2_{\YYY_2}\\
C\eps\alpha_3 {t}^3 \|\widetilde\nabla_x g^\eps \|_{\XXX}\| (g^\eps)^\perp \|_{\YYY_1}\| \widetilde\nabla_x{g^\eps} \|_{\YYY_1}&\leq C\eps^2\alpha_3 {t}^3\|\widetilde\nabla_x g^\eps \|^2_{\XXX}\| (g^\eps)^\perp \|^2_{\YYY_1}+\frac{\kappa_2 \alpha_3}{12} {t}^3\| \widetilde\nabla_x{g^\eps} \|^2_{\YYY_1}.
\end{aligned}
$$
We thus obtain
$$
\begin{aligned}
\frac{\d}{\dt} \UUU_\eps (t,g^\eps)
& \le - \bigg(\kappa - C \eta_0^2  - C \alpha_2 t^2 \eta_0 - C \alpha_2 \eps^2 - C \alpha_1 t \eta_0^2 \\
&\qquad\qquad\qquad\qquad\qquad
- C\frac{\alpha_2^2}{\alpha_3} \eta_0^2  t 
- C \eps \eta_0^2 t^{3/2} \frac{\alpha_2^2}{\alpha_1} - C\eps^2 \eta_0^2 \alpha_3 t^3\bigg)  \| g^\eps \|_{\YYY_1}^2 \\
&\quad 
- \frac{1}{\eps^2} \left(\kappa - C\alpha_1 t - C \frac{\alpha_1^2}{\alpha_2} \right)\| (g^\eps)^\perp \|_{\YYY_1}^2\\ 
&\quad
- \frac{t}{\eps^2}\bigg( \frac{\kappa_1 \alpha_1}{2}  - C \eps^2 \alpha_2 t \eta_0  - C\eps \alpha_1   \eta_0 - C \frac{\alpha_2^2 }{\alpha_3} \\
&\qquad\qquad\qquad\qquad\qquad\qquad
- C \eta_0^2 \frac{\alpha_2^2}{\alpha_3} 
- C\eps^4 \eta_0^2 \alpha_3 t^2 - C \eps^2 \alpha_2 \bigg)\|  (g^\eps)^\perp \|_{\YYY_2}^2 \\
&\quad 
-t^2 \left( \frac{\alpha_2}{2} - C \eps^2 \alpha_3 -C \alpha_3 t  - C \alpha_3 \eta_0 t \right)\| \widetilde \nabla_x g^\eps \|_{\XXX}^2 \\
&\quad
- t^3\left( \frac{\kappa_2  \alpha_3}{2}  - C \eps \alpha_3 \eta_0  - C \alpha_3 \eta_0 \right)\| \widetilde \nabla_x g^\eps \|_{\YYY_1}^2 \\
&\quad 
+Ct^3 \left( \alpha_2 +\eps^2\alpha_3  \right) \| (g^\eps)^\perp \|^2_{\YYY_1} \|\widetilde\nabla_x g^\eps \|^2_{\XXX} .
\end{aligned}
$$
We now choose $\alpha_{1}=\eta$, $\alpha_{2}=\eta^{\frac{3}{2}}$, and $\alpha_{3}=\eta^{\frac{5}{3}}$, with $\eta \in (0,1)$ small enough as well as $\eta_0$ small enough, so that we deduce, for all $t \in [0,1]$,
$$
\begin{aligned} 
\frac{\d}{\dt}\UUU_\eps(t,g^\eps) 
&\le
-\frac{\kappa'}{\eps^2}  \| (g^\eps)^\perp \|_{\YYY_1}^2 -\kappa' \| {g^\eps} \|_{\YYY_1}^2- \kappa' \frac{t}{\eps^2}\| (g^\eps)^\perp \|_{\YYY_2}^2- \kappa' t^2\| \widetilde{\nabla}_x{g^\eps} \|_{\XXX}^2- \kappa' t^3\| \widetilde{\nabla}_x{g^\eps} \|_{\YYY_1}^2\\
 &+C{t}^3\|\widetilde\nabla_x g^\eps \|^2_{\XXX}\| (g^\eps)^\perp \|^2_{\YYY_1},
\end{aligned}
$$
for some constants $ \kappa', C >0$. Integrating in time the last inequality, we hence obtain that for any $t \in [0, 1]$, there holds
$$
\begin{aligned}
\UUU_\eps(t,g^\eps)  
&+ \frac{ \kappa'}{\eps^2} \int_0^t \| (g^\eps)^\perp \|_{\YYY_1}^2 \, \d s
+ \kappa'\int_0^t  \| {g^\eps} \|_{\YYY_1}^2 \, \d s \\
& 
+ \frac{ \kappa'}{\eps^2} \int_0^t s  \|(g^\eps)^\perp \|_{\YYY_2}^2 \, \d s
+  \kappa' \int_0^t s^2  \|\widetilde{\nabla}_x{g^\eps} \|_{\XXX}^2\, \d s  
+  \kappa' \int_0^t s^3  \|\widetilde{\nabla}_x{g^\eps} \|_{\YYY_1}^2\, \d s\\
&\le \UUU_\eps(0) + {C}\int_0^t s^3 \|\widetilde\nabla_x g^\eps \|^2_{\XXX}\| (g^\eps)^\perp \|^2_{\YYY_1} \, \d s
\\
& 
\le\UUU_\eps(0) 
+ C \eps^2 \left(\sup_{t \in [0,1]} t^3 \|\widetilde\nabla_x g^\eps \|^2_{\XXX} \right) \, \frac{1}{\eps^2}\int_0^t
\| (g^\eps)^\perp \|^2_{\YYY_1} \, \d s
\\
& 
\le \UUU_\eps(0) 
+ C \eta_0^2 \eps^2 \left(\sup_{t \in [0,1]} t^3 \|\widetilde\nabla_x g^\eps \|^2_{\XXX} \right)
\end{aligned}
$$
where we have used that $\frac{1}{\eps^2}\int_0^t
\| (g^\eps)^\perp \|^2_{\YYY_1} \, \d s \lesssim \eta_0^2$ from Theorem~\ref{theo:main1}-(i) and we have denoted~$\UUU_\eps(0) = \UUU_\eps(0,g_{\rm in}^\eps)$.
Since 
$$
\left(\sup_{t \in [0,1]}  \| g^\eps(t) \|_{\XXX}^2  \right)
+ \left(\sup_{t \in [0,1]}  t\|  (g^\eps(t))^{\perp} \|_{\YYY_1}^2 \right)+
\eps^2 \left(\sup_{t \in [0,1]} t^3 \| \widetilde \nabla_x g^\eps(t)\|_{\XXX}^2 \right) \lesssim \sup_{t \in [0,1]} \UUU_\eps(t,g^\eps)
$$
if $\eta_0>0$ is small enough (independently of $\eps$), we finally obtain
$$
{t}\| g^\eps (t) \|^2_{\YYY_1} \lesssim \UUU_\eps(0) 
= \Nt g^\eps_{\mathrm{in}} \Nt^2_\XXX
\lesssim  \| g^\eps_{\mathrm{in}} \|_{\XXX}^2 \quad \forall \, t \in [0,1]
$$
and
$$
{t}^3 \| g^\eps (t) \|^2_{\ZZZ_1^\eps} \lesssim \UUU_\eps(0) 
= \Nt g^\eps_{\mathrm{in}} \Nt^2_\XXX
\lesssim  \| g^\eps_{\mathrm{in}} \|_{\XXX}^2 \quad \forall \, t \in [0,1],
$$
which gives estimates \eqref{eq:regNL} and concludes the proof.
\end{proof}

\black

%%%%%%%%%%%%%%%%%%%%%%%%%%%%%%%%%%%%%%%%%%%%%%%%%%%%%%%%
\section{Refined semigroup estimates on the linearized operator} \label{sec:linear2}
%%%%%%%%%%%%%%%%%%%%%%%%%%%%%%%%%%%%%%%%%%%%%%%%%%%%%%%%

In this part, we go back to the linearized problem and give some new and refined estimates on it. 
The first and second subsections are dedicated to the introduction of a nice splitting of the linearized operator~$\Lambda_\eps$ coming from~\cite{Carrapatoso,CTW} and to the proof of dissipativity, regularization and boundedness estimates. Roughly speaking, the linearized operator~$\Lambda_\eps$ splits into two parts: $\Lambda_\eps=\AA_\eps+\BB_\eps$, the first part $\AA_\eps$ having some good regularizing properties, the second one $\BB_\eps$ having some nice dissipativity (and also regularizing) properties. It is worth mentioning that the regularization estimates on $\BB_\eps$ that we develop are sharp (it was not the case in~\cite{CTW} where the authors did not intend to obtain optimal regularization estimates on $\BB_\eps$) and only the case $\eps=1$ was treated in~\cite{CTW}. 
As already mentioned, from those properties and Duhamel formula, we can give a new proof of Theorem~\ref{theo:mainlinear}. More importantly, Duhamel formula applied with this splitting will be used to prove our hydrodynamical limit theorem in Section~\ref{sec:hydro}.

In the last three subsections, we study the semigroup $U^\eps(t)$ associated with $\Lambda_\eps$ from another point of view which is based on a careful spectral analysis carried out in Fourier in~$x$ of $\Lambda_1$ in~\cite{Yang-Yu}, it in particular allows us to give a decomposition of the semigroup~$U^\eps(t)$, study its limit as $\eps$ goes to $0$ and give another type of estimates on it.

%---------%---------%---------%---------%---------%---------%---------%---------%---------%---------%
\subsection{Splitting of the operator}
%---------%---------%---------%---------%---------%---------%---------%---------%---------%---------%

We now introduce a splitting of the full linearized operator $\Lambda_\eps$. Let $\chi \in \mathcal{C}^\infty_c (\R)$ be a smooth cutoff function such that $0 \le \chi \le 1$, $\chi \equiv 1$ on $[-1/2,1/2]$ and $\chi \equiv 0$ on $\R \setminus [-1,1]$, consider positive constants $R , \bar R >0$ and define $\chi_{\bar R}(v) = \chi ( |v| / \bar R)$ as well as 
\begin{equation} \label{def:m}
m^2(v)
:=  \frac14 |\mathbf{B}(v) v |^2 
-\frac12 \nabla_{v}\cdot\left[ \mathbf{B}^{\top}(v)\mathbf{B}(v) v \right] + R \chi_{\bar R}(v)
\end{equation}
where ${\bf B}(v)$ is defined in~\eqref{def:B}. 
Recalling the formulation of $L$ in \eqref{eq:Lbis}, we then decompose~$\Lambda_\eps = \frac{1}{\eps^2} L - \frac{1}{\eps} v \cdot \nabla_x$ as
\begin{equation}\label{eq:Lambdaeps-splitting}
\Lambda_\eps = \AA_\eps + \BB_\eps
\quad\text{with}\quad
\AA_\eps := \frac{1}{\eps^2} \AA
\quad\text{and}\quad
\BB_\eps := \frac{1}{\eps^2} \BB - \frac{1}{\eps} v \cdot \nabla_x
\end{equation}
where 
\begin{equation}\label{eq:Aeps}
\begin{aligned} 
\AA f&:= - \Big\{ \! \left( a_{ij} * \sqrt M f \right) v_i v_j 
- \left( a_{ii} * \sqrt M f \right)  
+ \left( c * \sqrt M f \right) \! \Big\} \sqrt M +R \chi_{\bar R}f \\
&=(L_2 + R \chi_{\bar R})f
\end{aligned}
\end{equation}
and
\begin{equation}\label{eq:Beps}
\BB f := -  \widetilde \nabla_v^* \widetilde \nabla_v f - m^2(v) f  
=  (L_1 - R \chi_{\bar R})f .
\end{equation}

Let us now give a lemma providing estimates on $m$ and its derivatives. We fix 
\begin{equation} \label{def:sigma1}
\sigma_1 := 
\begin{cases}
+ \infty \quad&\text{if}\quad -2 < \gamma \le 1 \\
 \frac12 \quad&\text{if}\quad \,\, \gamma = - 2.
\end{cases}
\end{equation}

\begin{lem}\label{lem:malpha}
Let $0 < \sigma < \sigma_1$.

\medskip\noindent
(i) There are $R_0$ and $\bar R_0$ large enough so that for any $R \ge R_0$ and $\bar R \ge \bar R_0$, one has
			$$
			m^2(v) 
			\ge \sigma + \kappa{\langle v \rangle}^{\gamma+2} , \quad \forall \, v\in \R^{3},
			$$
		for some $0 < \kappa<  \sigma_1 - \sigma$.

\medskip\noindent
(ii) For any $\alpha \in \R$, we define 
			$$
			m_{\alpha}^2(v) 
			:=  m^2(v) - \langle v \rangle^{-2\alpha}|\widetilde \nabla_v \langle v \rangle^\alpha |^2.
			$$
		There are $R_0$ and $\bar R_0$ large enough so that for any $R \ge R_0$ and $\bar R \ge \bar R_0$, one has
			$$
			m_{\alpha}^2 (v) \ge \sigma + \kappa {\langle v \rangle}^{\gamma+2} ,  \quad \forall \, v\in \R^{3},
			$$ 
		for some $0 < \kappa<  \sigma_1 - \sigma$.
	
\medskip\noindent
(iii) For any multi-index $ \alpha \in \N^{3}$ we have, for all $v\in \R^{3}$,
		$$
		|\partial_v^{\alpha} m(v)| \lesssim \langle v \rangle^{\frac{\gamma}{2}+1 - |\alpha|}.
        $$
\end{lem}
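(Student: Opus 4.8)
\textbf{Proof strategy for Lemma~\ref{lem:malpha}.}
The plan is to exploit the explicit structure of $\mathbf B(v)$ given in \eqref{def:B} together with the asymptotics \eqref{eq:vp} of the eigenvalues $\ell_1,\ell_2$, and to treat the cutoff term $R\chi_{\bar R}$ as a lower-order perturbation that can be made to dominate the (bounded) compactly-supported obstructions. Recall that $\psi(v)=\tfrac14|\mathbf B(v)v|^2-\tfrac12\nabla_v\cdot[\mathbf B^\top(v)\mathbf B(v)v]$ was already introduced in \eqref{eq:def:psi}, so that $m^2=\psi+R\chi_{\bar R}$. First I would establish the large-$|v|$ behaviour of $\psi$: since $\mathbf B(v)v=\sqrt{\ell_1(v)}\,v$ one has $|\mathbf B(v)v|^2=\ell_1(v)|v|^2$, and using \eqref{eq:vp} together with the fact that $\mathbf B^\top(v)\mathbf B(v)=\mathbf A(v)$ has entries controlled by \eqref{eq:nablaBij} (i.e.\ $|\partial_v^\alpha A_{ij}|\lesssim\langle v\rangle^{\gamma+2-|\alpha|}$), the divergence term is of order $\langle v\rangle^{\gamma+2}$ as well, with a coefficient that can be computed explicitly. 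The key point is that the leading term of $\tfrac14|\mathbf B(v)v|^2=\tfrac14\ell_1(v)|v|^2$ behaves like $\tfrac14\cdot 2\langle v\rangle^\gamma\cdot|v|^2\sim\tfrac12\langle v\rangle^{\gamma+2}$ for $-2<\gamma\le 1$, which is a genuinely positive multiple of $\langle v\rangle^{\gamma+2}$, while in the borderline case $\gamma=-2$ one only gets a bounded positive constant, whence the restriction $\sigma_1=1/2$ there (the constant $1/2$ coming from the precise coefficient in this expansion).

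For part (i), once I have a lower bound of the form $\psi(v)\ge c_0\langle v\rangle^{\gamma+2}-C_0$ on all of $\R^3$ for some $c_0>0$ (with $c_0$ close to $1/2$ when $\gamma=-2$, and $c_0\langle v\rangle^{\gamma+2}\to\infty$ when $\gamma>-2$), the conclusion follows: choose $\bar R_0$ large enough that $\kappa\langle v\rangle^{\gamma+2}\le \tfrac12 c_0\langle v\rangle^{\gamma+2}$ absorbs half the gain outside $B_{\bar R}$ while $\psi\ge\sigma$ there automatically for $\gamma>-2$ (and for $\gamma=-2$ one picks $\kappa<1/2-\sigma$ and notes $\langle v\rangle^{\gamma+2}=\langle v\rangle^0$ is bounded), and then choose $R_0$ large enough that on the compact set $B_{\bar R}$ the term $R\chi_{\bar R}(v)\ge R/2$ (say, after shrinking to where $\chi_{\bar R}\equiv1$, and handling the transition annulus by making $R$ still larger and using continuity/compactness) compensates $C_0$ and gives $m^2\ge\sigma+\kappa\langle v\rangle^{\gamma+2}$. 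The precise bookkeeping of the transition region $\{\bar R/2\le|v|\le\bar R\}$ is routine: on it $\psi$ is bounded below by a constant depending on $\bar R$, and $R$ is taken large relative to that constant.

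For part (ii), I would observe that $\langle v\rangle^{-2\alpha}|\widetilde\nabla_v\langle v\rangle^\alpha|^2=\alpha^2\langle v\rangle^{-4}|\mathbf B(v)v|^2=\alpha^2\ell_1(v)|v|^2\langle v\rangle^{-4}\lesssim\langle v\rangle^{\gamma-2}$, using the computation $\widetilde\nabla_{v_j}\langle v\rangle^\alpha=\alpha(\mathbf B(v)v)_j\langle v\rangle^{\alpha-2}$ already recorded in the proof of Lemma~\ref{lem:commutator}(ii) and the bound $|\mathbf B(v)v|\lesssim\langle v\rangle^{\frac\gamma2+1}$. Since $\gamma-2<\gamma+2$ and this correction term is in any case bounded (because $\gamma\le 1$ forces $\gamma-2\le-1<0$), subtracting it only shifts the constants $C_0$ (and is harmless near infinity), so the argument of part (i) applies verbatim to $m_\alpha^2$ after enlarging $R_0,\bar R_0$ if needed. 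Part (iii) is immediate from \eqref{def:m}: away from the support of $\chi_{\bar R}$ one has $m^2=\psi$ and $\psi$ together with all its derivatives obeys $|\partial_v^\alpha\psi|\lesssim\langle v\rangle^{\gamma+2-|\alpha|}$ by \eqref{eq:nablaBij} and the Leibniz rule; taking the square root and using that $m^2\gtrsim\langle v\rangle^{\gamma+2}$ from part (i) (so that $m$ stays bounded away from zero and division is legitimate) gives $|\partial_v^\alpha m|\lesssim\langle v\rangle^{\frac\gamma2+1-|\alpha|}$ by induction on $|\alpha|$; on the compact support of $\chi_{\bar R}$ everything is smooth and bounded, so the estimate holds trivially there.

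\textbf{Main obstacle.} The only genuinely delicate point is pinning down the sharp leading constant in the large-$|v|$ expansion of $\psi(v)$, and in particular verifying that in the endpoint case $\gamma=-2$ the constant is exactly $1/2$ (hence $\sigma_1=1/2$) rather than something smaller that would be killed by the divergence term; this requires carrying the asymptotics \eqref{eq:vp} to leading order in both $\tfrac14|\mathbf B(v)v|^2$ and $\tfrac12\nabla_v\cdot[\mathbf A(v)v]$ and checking the latter does not cancel the former. Everything else---the compactness argument for choosing $R$, the perturbative treatment of the $m_\alpha^2$ correction, and the derivative bounds---is standard.
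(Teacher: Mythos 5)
Your overall strategy matches the paper's (decompose $\psi$ via $\ell_1$, use the asymptotics \eqref{eq:vp}, absorb the compact-set obstruction with $R\chi_{\bar R}$, reduce (ii) to (i) by bounding the correction, and prove (iii) by a square-root argument). For (i) the paper simply cites \cite[Lemma~2.6]{CTW}, so your in-house sketch is a legitimate alternative route there.

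There is, however, one genuine gap, and it is exactly the one you flag in your ``main obstacle'' paragraph without resolving. You estimate $\tfrac12\nabla_v\cdot\left[\mathbf B^\top(v)\mathbf B(v) v\right]$ as $O(\langle v\rangle^{\gamma+2})$ using the crude bound on $A_{ij}$ from \eqref{eq:nablaBij}; that is the \emph{same} order as the positive leading term $\tfrac14\ell_1(v)|v|^2\sim\tfrac12\langle v\rangle^{\gamma+2}$, so you cannot conclude that there is no cancellation — the argument, as written, is not closed. The fix is to use the eigenvector identity $\mathbf A(v)v=\mathbf B^\top(v)\mathbf B(v)v=\ell_1(v)\,v$ (the very same structural fact you already exploit to write $\mathbf B(v)v=\sqrt{\ell_1(v)}\,v$), which gives
\begin{equation*}
\nabla_v\cdot\bigl[\mathbf A(v)v\bigr]=\nabla_v\cdot\bigl[\ell_1(v)\,v\bigr]=\nabla_v\ell_1\cdot v+3\ell_1=O(\langle v\rangle^\gamma),
\end{equation*}
strictly subdominant to $\langle v\rangle^{\gamma+2}$. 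With this, for $-2<\gamma\le1$ one has $\psi(v)\to+\infty$, and for $\gamma=-2$ the divergence and $\alpha^2$ corrections vanish at infinity while $\tfrac14\ell_1|v|^2\to\tfrac12$, which is precisely why $\sigma_1=\tfrac12$ there; no delicate cancellation check is needed. Once you substitute this observation, the remainder of your sketch — the compactness argument for choosing $R_0,\bar R_0$, the observation that the $m_\alpha^2$ correction is $\alpha^2\ell_1(v)|v|^2\langle v\rangle^{-4}=O(\langle v\rangle^{\gamma-2})$, and the chain-rule / lower-bound argument for part (iii) — is sound and essentially coincides with the paper's proof.
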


Hereafter in the paper, for any $0 < \sigma < \sigma_1$ we then fix $R , \bar R >0$ large enough so that the results of Lemma~\ref{lem:malpha} are in force.

\begin{proof}[Proof of Lemma~\ref{lem:malpha}]
(i) See \cite[Lemma 2.6]{CTW}.

\medskip\noindent
(ii) We have 
	$$
	m_{\alpha}^2(v) 
	=  \frac14 |\mathbf{B}(v) v |^2 
-\frac12 \nabla_{v}\cdot\left[ \mathbf{B}^{\top}(v)\mathbf{B}(v) v \right]    
	- \alpha^2 |\mathbf B(v) v|^2 \langle v \rangle^{-4} + R \chi_{\bar R}(v)  
	$$
therefore
	$$
	m_{\alpha}^2(v) 
	=  \frac14 \ell_1(v)|v|^2  - \frac12 \nabla_{v}\cdot\Big[ \ell_1(v) v \Big]  
	- \alpha^2 \ell_1(v)|v|^2 \langle v \rangle^{-4} + R \chi_{\bar R}(v) .
	$$
Now observe that 
	\begin{multline*}
		\frac14 \ell_1(v)|v|^2  - \frac12 \nabla_{v}\cdot\Big[ \ell_1(v) v \Big]  - \alpha^2 \ell_1(v)|v|^2 \langle v \rangle^{-4} \\
		= \frac14 \ell_1(v)|v|^2  - \frac12 \nabla_{v} \ell_1(v) \cdot v - \frac32 \ell_1(v)
		- \alpha^2 \ell_1(v)|v|^2 \langle v \rangle^{-4} ,
	\end{multline*}
and from~\eqref{eq:vp}, $\ell_1(v)|v|^2 \langle v \rangle^{-4} \lesssim \langle v \rangle^{\gamma-2}$,
hence if $-2 < \gamma \le 1$ we have 
	$$
	\frac14 \ell_1(v)|v|^2  - \frac12 \nabla_{v}\cdot\Big[ \ell_1(v) v \Big]  
	- \alpha^2 \ell_1(v)|v|^2 \langle v \rangle^{-4} \xrightarrow[|v| \to \infty]{} + \infty,
	$$
and if $\gamma = -2$, one has 
	$$
	\frac14 \ell_1(v)|v|^2  - \frac12 \nabla_{v}\cdot\Big[ \ell_1(v) v \Big]  
	- \alpha^2 \ell_1(v)|v|^2 \langle v \rangle^{-4} \xrightarrow[|v| \to \infty]{} \frac12 >0.
	$$
We then conclude the proof by taking $R_0, \bar R_0 >0$ large enough.

\medskip\noindent
(iii) Direct consequence of \eqref{eq:nablaBij}.
\end{proof}

Arguing as in Lemma~\ref{lem:commutatorL} and recalling that $\big[\widetilde \nabla_{v_i},v \cdot \nabla_x\big] = \widetilde \nabla_{x_i}$ and $\big[\widetilde \nabla_{x_i},v \cdot \nabla_x\big] = 0$, we also obtain
\begin{lem}\label{lem:commutatorBeps}
There holds 

\medskip\noindent
(i) For any $1 \le i \le 3$, one has
$$
\begin{aligned}{}
[\widetilde \nabla_{v_i} , \BB_\eps] f
&= - \frac{1}{\eps^2} \widetilde \nabla_{v_j}^* [\widetilde \nabla_{v_i},  \widetilde \nabla_{v_j}] f
- \frac{1}{\eps^2} \widetilde \nabla_{v_j} [\widetilde \nabla_{v_i},  \widetilde \nabla_{v_j}^*]  f  \\
&\quad
- \frac{1}{\eps^2} (\widetilde \nabla_{v_i} m^2) f 
- \frac{1}{\eps} \widetilde \nabla_{x_i} f 
- \frac{1}{\eps^2} \left[ [\widetilde \nabla_{v_i},  \widetilde \nabla_{v_j}^*], \widetilde \nabla_{v_j} \right] f.
\end{aligned}
$$

\medskip\noindent
(ii) For any $1 \le i \le 3$, one has
$$
\begin{aligned}{}
[\widetilde \nabla_{x_i} , \BB_\eps] f
&= - \frac{1}{\eps^2} \widetilde \nabla_{v_j}^* [\widetilde \nabla_{x_i},  \widetilde \nabla_{v_j}] f
- \frac{1}{\eps^2} \widetilde \nabla_{v_j} [\widetilde \nabla_{x_i},  \widetilde \nabla_{v_j}^*]  f
- \frac{1}{\eps^2} \left[ [\widetilde \nabla_{x_i},  \widetilde \nabla_{v_j}^*], \widetilde \nabla_{v_j} \right] f .
\end{aligned}
$$

\end{lem}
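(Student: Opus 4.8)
The plan is to prove Lemma~\ref{lem:commutatorBeps} by direct computation, following exactly the strategy used for Lemma~\ref{lem:commutatorL}, and simply tracking the extra transport term. Recall that by definition~\eqref{eq:Beps} and~\eqref{eq:Lambdaeps-splitting}, we have $\BB_\eps f = \frac{1}{\eps^2}(L_1 - R\chi_{\bar R})f - \frac{1}{\eps} v \cdot \nabla_x f$, where $L_1$ is the operator from~\eqref{eq:def:L1}, except that the potential $\psi$ is replaced by $m^2 = \psi + R\chi_{\bar R}$ in view of~\eqref{eq:def:m} and~\eqref{eq:def:psi}. So $\BB_\eps f = -\frac{1}{\eps^2}\widetilde\nabla_v^* \widetilde\nabla_v f - \frac{1}{\eps^2} m^2(v) f - \frac{1}{\eps} v\cdot\nabla_x f$.

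For part (i), I would compute $[\widetilde\nabla_{v_i}, \BB_\eps] f$ by splitting it into three pieces according to the three summands of $\BB_\eps$. The term $-\frac{1}{\eps^2}[\widetilde\nabla_{v_i}, \widetilde\nabla_{v_j}^*\widetilde\nabla_{v_j}]f$ is handled exactly as in the proof of Lemma~\ref{lem:commutatorL}(ii): using the identity $\widetilde\nabla_{v_i}\widetilde\nabla_{v_j}^*\widetilde\nabla_{v_j} f = \widetilde\nabla_{v_j}^*\widetilde\nabla_{v_j}\widetilde\nabla_{v_i} f + \widetilde\nabla_{v_j}^*[\widetilde\nabla_{v_i},\widetilde\nabla_{v_j}]f + \widetilde\nabla_{v_j}[\widetilde\nabla_{v_i},\widetilde\nabla_{v_j}^*]f + \big[[\widetilde\nabla_{v_i},\widetilde\nabla_{v_j}^*],\widetilde\nabla_{v_j}\big]f$, which produces the first, second and last terms of the claimed formula. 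The potential term gives $-\frac{1}{\eps^2}[\widetilde\nabla_{v_i}, m^2(v)\cdot]f = -\frac{1}{\eps^2}(\widetilde\nabla_{v_i} m^2)f$ by the product rule (this is the analogue of the $-(\widetilde\nabla_{v_k}\psi)f$ term in Lemma~\ref{lem:commutatorL}(ii), noting $\widetilde\nabla_{v_i}\chi_{\bar R}$ is incorporated into $\widetilde\nabla_{v_i}m^2$). Finally the transport term contributes $-\frac{1}{\eps}[\widetilde\nabla_{v_i}, v\cdot\nabla_x]f = -\frac{1}{\eps}\widetilde\nabla_{x_i}f$ by Lemma~\ref{lem:commutator}(i). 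Collecting these four contributions gives precisely the stated identity.

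For part (ii), I would argue in the same way, now computing $[\widetilde\nabla_{x_i}, \BB_\eps]f$. The operator $-\frac{1}{\eps^2}\widetilde\nabla_v^*\widetilde\nabla_v$ part is handled as in Lemma~\ref{lem:commutatorL}(iii): $\widetilde\nabla_{x_i}\widetilde\nabla_{v_j}^*\widetilde\nabla_{v_j}f = \widetilde\nabla_{v_j}^*\widetilde\nabla_{v_j}\widetilde\nabla_{x_i}f + \widetilde\nabla_{v_j}^*[\widetilde\nabla_{x_i},\widetilde\nabla_{v_j}]f + \widetilde\nabla_{v_j}[\widetilde\nabla_{x_i},\widetilde\nabla_{v_j}^*]f + \big[[\widetilde\nabla_{x_i},\widetilde\nabla_{v_j}^*],\widetilde\nabla_{v_j}\big]f$, giving the three terms in the claim. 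The potential term vanishes since $m^2(v)$ depends only on $v$, so $[\widetilde\nabla_{x_i}, m^2(v)\cdot]f = 0$; indeed $\widetilde\nabla_{x_i}$ differentiates only in $x$ and multiplication by $m^2(v)$ commutes with it up to $\widetilde\nabla_{x_i}m^2 = 0$. And the transport term contributes $-\frac{1}{\eps}[\widetilde\nabla_{x_i}, v\cdot\nabla_x]f = 0$ since $v\cdot\nabla_x$ and $\widetilde\nabla_{x_i} = B_{i\ell}\partial_{x_\ell}$ commute (the coefficients $B_{i\ell}$ depend only on $v$ and $x$-derivatives commute among themselves), which is the stated fact $\big[\widetilde\nabla_{x_i}, v\cdot\nabla_x\big] = 0$. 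Hence we recover the identity in (ii).

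There is no real obstacle here: the lemma is a bookkeeping exercise combining the operator-level identities already derived in the proof of Lemma~\ref{lem:commutatorL} with the two elementary transport commutators from Lemma~\ref{lem:commutator}(i) and its $x$-variant. The only point requiring a line of care is making sure the sign and the $\eps$-powers in front of each term match the definition $\BB_\eps = \frac{1}{\eps^2}\BB - \frac{1}{\eps}v\cdot\nabla_x$, and that the potential appearing is $m^2$ rather than $\psi$ — but since $R\chi_{\bar R}$ depends only on $v$, exactly the same computation goes through. I would therefore present the argument tersely, writing ``Arguing as in Lemma~\ref{lem:commutatorL}'' and displaying only the operator identities and the two transport commutators, leaving the routine rearrangement to the reader as the excerpt itself does.
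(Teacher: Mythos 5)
Your proof is correct and takes exactly the route the paper intends: the paper itself supplies no proof for Lemma~\ref{lem:commutatorBeps}, merely stating that it follows by ``arguing as in Lemma~\ref{lem:commutatorL}'' combined with the transport commutators $[\widetilde\nabla_{v_i},v\cdot\nabla_x]=\widetilde\nabla_{x_i}$ and $[\widetilde\nabla_{x_i},v\cdot\nabla_x]=0$, and that is precisely the bookkeeping you carry out.
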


Let us recall that the spaces $\XXX$, $\YYY_i$, $(\YYY_i)'$, $\ZZZ_i^\eps$ and $(\ZZZ_i^\eps)'$ are respectively defined in~\eqref{def:normXXX}, \eqref{def:normYYY1}, \eqref{def:normYYY2}, \eqref{def:YYYi'}, \eqref{def:normZZZ1eps}, \eqref{def:normZZZ2eps} and~\eqref{def:ZZZieps'}. 

We also have the following bounds on the operator $\AA$:
\begin{lem} \label{lem:Areg}
For any $\alpha \in \R$ one has
\begin{align*}
\| \langle v \rangle^\alpha \AA f \|_{L^2_{x,v}} 
&\lesssim  \| M^{\frac14} f \|_{L^2_{x,v}}\\
\| \langle v \rangle^\alpha \widetilde \nabla_v \AA f \|_{L^2_{x,v}} 
&\lesssim \| M^{\frac14} f \|_{L^2_{x,v}}+\| M^{\frac14} \widetilde \nabla_vf \|_{L^2_{x,v}}\\
\| \langle v \rangle^\alpha \widetilde \nabla_v \widetilde \nabla_v \AA f \|_{L^2_{x,v}}
& 
\lesssim \| M^{\frac14} f \|_{L^2_{x,v}}+\| M^{\frac14} \widetilde \nabla_vf \|_{L^2_{x,v}} + \| M^{\frac14} \widetilde \nabla_v \widetilde \nabla_v  f \|_{L^2_{x,v}}\\
\| \langle v \rangle^\alpha \widetilde \nabla_x \AA f \|_{L^2_{x,v}} 
&\lesssim  \| M^{\frac14} \widetilde \nabla_x f \|_{L^2_{x,v}} \\
\| \langle v \rangle^\alpha \widetilde \nabla_x \widetilde \nabla_x \AA f \|_{L^2_{x,v}} 
&
\lesssim  \| M^{\frac14} \widetilde \nabla_x \widetilde \nabla_x f \|_{L^2_{x,v}}. 
\end{align*}
In particular, one has $\AA \in \BBB(\XXX)$, $\AA \in \BBB(\YYY_i)$ and $\AA \in \BBB(\ZZZ_i^\eps)$ for $i=1,2$ and since $\AA$ is self-adjoint in $L^2_{x,v}$, we also have $\AA \in \BBB(\YYY_i')$ and $\AA \in \BBB((\ZZZ_i^\eps)')$ for $i=1,2$. 
\end{lem}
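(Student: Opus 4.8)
The plan is to establish each of the five pointwise estimates by writing $\AA = L_2 + R\chi_{\bar R}$ (recalling \eqref{eq:Aeps}), treating the two summands separately. For the $L_2$ part, all the needed bounds are already packaged in Lemma~\ref{lem:boundL2}: its items (i), (ii), (iii) give exactly the first, second and fourth lines of the claimed estimates (with $M^{1/4}$ weights on the right). So the real content is to (a) supply the two estimates not covered by Lemma~\ref{lem:boundL2}, namely the ones involving $\widetilde\nabla_v\widetilde\nabla_v \AA f$ and $\widetilde\nabla_x\widetilde\nabla_x \AA f$, and (b) dispatch the contribution of the cutoff multiplier $R\chi_{\bar R}$.

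For the cutoff term, the key observation is that $\chi_{\bar R}$ is a smooth, compactly supported function, so $|\partial_v^\beta \chi_{\bar R}| \lesssim 1$ for all $\beta$. Hence $\langle v\rangle^\alpha R\chi_{\bar R} f$ is bounded in $L^2_{x,v}$ by $\|M^{1/4} f\|_{L^2_{x,v}}$ trivially (the $\langle v \rangle^\alpha$ and the cutoff are harmless on the support, and $M^{1/4}$ dominates any polynomial weight). Applying $\widetilde\nabla_v = \mathbf B(v)\nabla_v$ produces $R\chi_{\bar R}\,\widetilde\nabla_v f$ plus a term with $\widetilde\nabla_v\chi_{\bar R}$, which is again compactly supported and smooth; iterating gives the $\widetilde\nabla_v\widetilde\nabla_v$ bound, and similarly $\widetilde\nabla_x$ commutes with multiplication by $\chi_{\bar R}(v)$ up to no extra terms at all (since $\chi_{\bar R}$ is $x$-independent), so the $\widetilde\nabla_x$ and $\widetilde\nabla_x\widetilde\nabla_x$ bounds for this piece are immediate. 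Throughout one uses $\langle v\rangle^{\gamma/2} |\nabla_v g| \lesssim |\widetilde\nabla_v g|$ and $|B_{ij}| \lesssim \langle v\rangle^{\gamma/2+1}$ from \eqref{eq:nablaBij}, \eqref{def:B}.

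For the two second-order estimates on $L_2$ itself, I would differentiate the explicit formula \eqref{eq:def:L2} twice. Writing $\widetilde\nabla_{v_k} = B_{k\ell}\partial_{v_\ell}$ and applying it twice to $L_2 f$, each derivative either falls on a convolution factor $a_{ij}*(\sqrt M f)$ (producing $\partial_{v_\ell}a_{ij}*(\sqrt M f) = a_{ij}*(\partial_{v_\ell}(\sqrt M f))$, handled by Lemma~\ref{lem:conv} and the identity $\partial_{v_\ell}(\sqrt M f) = \sqrt M(\partial_{v_\ell}f - \tfrac12 v_\ell f)$, or equivalently $\partial_{v_\ell}(\sqrt M f) = \sqrt M \langle v\rangle^{-\gamma/2}(\langle v\rangle^{\gamma/2}\partial_{v_\ell}f) - \cdots$ so that $\widetilde\nabla_v f$ appears), on a polynomial factor $v_iv_j$ (lowering the total number of derivatives, harmless), on the Gaussian $\sqrt M$ (again harmless, producing more polynomial weight controlled by $M^{1/4}$ on the left), or on $c*(\sqrt M f)$ (use Lemma~\ref{lem:boundL20} together with Lemma~\ref{lem:conv}). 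After collecting all terms and taking $L^2_{x,v}$ norms, every term is bounded by one of $\|M^{1/4}f\|$, $\|M^{1/4}\widetilde\nabla_v f\|$, $\|M^{1/4}\widetilde\nabla_v\widetilde\nabla_v f\|$, which is precisely the claimed right-hand side. The $\widetilde\nabla_x\widetilde\nabla_x$ estimate is simpler: $\widetilde\nabla_{x_k} = B_{k\ell}\partial_{x_\ell}$, and since $x$-derivatives commute with the $v$-convolutions and with all $v$-dependent coefficients, both $\widetilde\nabla_x$'s pass straight through onto $f$, leaving $\partial_{x_\ell}\partial_{x_m}f$ inside the convolution; Lemma~\ref{lem:conv} and Lemma~\ref{lem:boundL20} then give the bound $\|M^{1/4}\widetilde\nabla_x\widetilde\nabla_x f\|_{L^2_{x,v}}$ after noting $|B_{k\ell}B_{mn}|\lesssim\langle v\rangle^{\gamma+2}$ which is absorbed.

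Finally, the boundedness consequences follow formally: the five estimates show $\AA$ maps each of the norm-defining blocks of $\XXX$, $\YYY_i$, $\ZZZ_i^\eps$ into $L^2_{x,v}$ with polynomial-weight gain, and since $M^{1/4}$ beats every polynomial weight appearing in these spaces, $\|\AA f\|_{\XXX} \lesssim \|M^{1/4} f\|_{\text{(block)}} \lesssim \|f\|_{\XXX}$, and similarly for $\YYY_i$, $\ZZZ_i^\eps$; here one uses that the second-order $v$-estimate is exactly what is needed for $\YYY_2$ and that the weighted anisotropic $x$-derivative estimates match the definitions \eqref{def:normZZZ1eps}, \eqref{def:normZZZ2eps}. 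Self-adjointness of $\AA$ in $L^2_{x,v}$ (clear from the symmetric structure of \eqref{eq:Aeps} and the fact that $L_2$ is built from the self-adjoint part of $L$) then transfers boundedness to the dual spaces $\YYY_i'$, $(\ZZZ_i^\eps)'$ by the definition \eqref{def:YYYi'}, \eqref{def:ZZZieps'} of those norms. The main obstacle is purely bookkeeping in step (a): organizing the Leibniz expansion of $\widetilde\nabla_v\widetilde\nabla_v L_2 f$ so that every one of the many terms is visibly controlled by the three allowed quantities, in particular making sure that no term requires more than two velocity derivatives of $f$ and that the gradients always appear in the $\widetilde\nabla_v$ (anisotropic) form rather than bare $\nabla_v$ — this is where the identities $\partial_{v_\ell}a_{ij}*(\sqrt M f) = a_{ij}*(\partial_{v_\ell}(\sqrt M f))$ and $\langle v\rangle^{\gamma/2}|\nabla_v g|\lesssim|\widetilde\nabla_v g|$ do the essential work.
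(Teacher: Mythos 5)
Your proposal is correct and follows essentially the same route as the paper's (very terse) proof: split $\AA = L_2 + R\chi_{\bar R}$, dispose of the cutoff piece by compact support, read off the first, second, and fourth estimates from Lemma~\ref{lem:boundL2}, and obtain the two second-order estimates by the same Leibniz differentiation of \eqref{eq:def:L2} using Lemmas~\ref{lem:conv} and~\ref{lem:boundL20}. The concluding boundedness and self-adjointness arguments are also as the paper intends.
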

\begin{proof}
The terms coming from $R \chi_{\bar R}$ are easily treated because $\chi_{\bar R}$ has compact support. The first, second and fourth estimates thus directly come from Lemma~\ref{lem:boundL2}. The proofs of the other estimates are completely similar and rely on Lemmas~\ref{lem:conv} and~\ref{lem:boundL20}.
\end{proof}

\subsection{Decay and regularization estimates for $S_{\BB_\eps}$}  \label{subsec:propB}
%---------%---------%---------%---------%---------%---------%---------%---------%---------%---------%
In this section we provide several results on the dissipatition and regularization properties of the operator $\BB_\eps$. 
We start with the dissipative ones. 

\begin{lem}\label{lem:dissipativeBeps}
Let $\sigma \in (0, \sigma_1)$ (where $\sigma_1$ is defined in~\eqref{def:sigma1}). Then for any $\alpha \in \R$, one has
\begin{equation}\label{eq:Beps-dissipative}
\begin{aligned}
\la \langle v \rangle^{\alpha} f ,  \langle v \rangle^{\alpha}\BB_\eps f \ra_{L^2_{x,v}}
&\le -  \frac{\sigma}{\eps^2}\| \langle v \rangle^{\alpha} f \|_{L^2_{x,v}}^2 
- \frac{\kappa}{\eps^2} \| \langle v \rangle^{\alpha} f \|_{L^2_{x}(H^1_{v,*})}^2,
\end{aligned}
\end{equation}
for some constant $\kappa >0$.
As a consequence, one has for all $t \ge 0$,
$$
\| S_{\BB_\eps} (t) \|_{\XXX \to \XXX} \le e^{-\sigma t/\eps^2} .
$$
\end{lem}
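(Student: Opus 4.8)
The strategy is to perform a weighted $L^2_{x,v}$ energy estimate on the equation $\partial_t f = \BB_\eps f$, using the decomposition $\BB_\eps = \frac{1}{\eps^2}(L_1 - R\chi_{\bar R}) - \frac{1}{\eps} v\cdot\nabla_x = \frac{1}{\eps^2}(-\widetilde\nabla_v^* \widetilde\nabla_v - m^2(v)) - \frac{1}{\eps} v\cdot\nabla_x$ from \eqref{eq:Beps}--\eqref{def:m}. First I would observe that the transport term contributes nothing: since $v\cdot\nabla_x$ is skew-adjoint in $L^2_{x,v}$ and commutes with multiplication by $\langle v\rangle^\alpha$, we have $\la \langle v\rangle^\alpha f, \langle v\rangle^\alpha(v\cdot\nabla_x f)\ra_{L^2_{x,v}} = 0$. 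Hence everything reduces to estimating $\frac{1}{\eps^2}\la \langle v\rangle^\alpha f, \langle v\rangle^\alpha L_1 f\ra_{L^2_{x,v}}$ together with the $-\frac{R}{\eps^2}\chi_{\bar R}$ term, which is already incorporated into $m^2$ via \eqref{def:m}.

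The core computation is the weighted Dirichlet form for $L_1 - R\chi_{\bar R} = -\widetilde\nabla_v^*\widetilde\nabla_v - m^2(v)$. Writing $w = \langle v\rangle^\alpha$, I would move the weight inside: using the commutator $[\langle v\rangle^\alpha, \widetilde\nabla_v]$ from Lemma~\ref{lem:commutator}(ii) (or the identity $[\langle v\rangle^\alpha,\widetilde\nabla_{v_j}]f = -(\widetilde\nabla_{v_j}\langle v\rangle^\alpha)f$ recalled in its proof), one obtains
\begin{equation*}
\la w f, w(-\widetilde\nabla_v^*\widetilde\nabla_v f)\ra_{L^2_{x,v}} = -\|\widetilde\nabla_v(w f)\|_{L^2_{x,v}}^2 + \la \text{(lower-order terms)}\ra,
\end{equation*}
where the lower-order terms are controlled by $\|\langle v\rangle^{\frac{\gamma}{2}-1}w f\|_{L^2_{x,v}}\,\|\widetilde\nabla_v(w f)\|_{L^2_{x,v}}$ and $\|\langle v\rangle^{\frac{\gamma}{2}-1+\alpha} \cdots\|^2$, exactly as in the commutator bookkeeping carried out for $R_{12}^i$ in the proof of Proposition~\ref{prop:hypoX}. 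Collecting these with the $-m^2(v)$ term and using the key pointwise lower bound of Lemma~\ref{lem:malpha}(ii), namely $m_\alpha^2(v) = m^2(v) - \langle v\rangle^{-2\alpha}|\widetilde\nabla_v\langle v\rangle^\alpha|^2 \ge \sigma + \kappa\langle v\rangle^{\gamma+2}$ (which is precisely designed to absorb the quadratic-in-weight commutator remainder), I get
\begin{equation*}
\la w f, w L_1 f\ra_{L^2_{x,v}} - R\|\chi_{\bar R}^{1/2} w f\|^2 \le -\|\widetilde\nabla_v(w f)\|_{L^2_{x,v}}^2 - \sigma\|w f\|_{L^2_{x,v}}^2 - \kappa\|\langle v\rangle^{\frac{\gamma}{2}+1} w f\|_{L^2_{x,v}}^2,
\end{equation*}
possibly after a Young inequality to absorb a $\|\widetilde\nabla_v(w f)\|\cdot\|\langle v\rangle^{\frac{\gamma}{2}-1}w f\|$ cross term into the gradient and weight terms (shrinking $\kappa$ and the coefficient of the gradient if necessary). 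Recalling from Subsection~\ref{subsec:funcspaces} that $\|h\|_{H^1_{v,*}}^2 = \|\langle v\rangle^{\frac{\gamma}{2}+1}h\|_{L^2_v}^2 + \|\widetilde\nabla_v h\|_{L^2_v}^2$, the right-hand side is $\le -\sigma\|w f\|_{L^2_{x,v}}^2 - \kappa\|w f\|_{L^2_x(H^1_{v,*})}^2$ up to renaming $\kappa$, which after multiplying by $\frac{1}{\eps^2}$ gives \eqref{eq:Beps-dissipative}.

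For the semigroup bound, I would apply \eqref{eq:Beps-dissipative} with the successive weights and $x$-derivatives appearing in the definition \eqref{def:normXXX} of the $\XXX$-norm. Since $v\cdot\nabla_x$ commutes with $\nabla_x^i$ and the weights $\langle v\rangle^{(3-i)(\frac{\gamma}{2}+1)}$ commute with $\nabla_x$, applying \eqref{eq:Beps-dissipative} to each $g_i = \nabla_x^i f$ with $\alpha = (3-i)(\frac{\gamma}{2}+1)$ and summing over $i = 0,1,2,3$ yields $\frac{1}{2}\frac{\d}{\dt}\|f\|_\XXX^2 = \la f, \BB_\eps f\ra_\XXX \le -\frac{\sigma}{\eps^2}\|f\|_\XXX^2$ (one may discard the nonnegative dissipation term $-\frac{\kappa}{\eps^2}\|f\|_{\YYY_1}^2$). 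Grönwall's lemma then gives $\|S_{\BB_\eps}(t) f\|_\XXX \le e^{-\sigma t/\eps^2}\|f\|_\XXX$, i.e. $\|S_{\BB_\eps}(t)\|_{\XXX\to\XXX} \le e^{-\sigma t/\eps^2}$. The main obstacle is purely bookkeeping: tracking the weighted commutators $[\langle v\rangle^\alpha,\widetilde\nabla_v]$ and $[[\langle v\rangle^\alpha,\widetilde\nabla_v^*],\widetilde\nabla_v]$ carefully enough that every remainder is genuinely absorbed by the $m_\alpha^2$ lower bound of Lemma~\ref{lem:malpha}(ii) — this is exactly why $m_\alpha^2$ rather than $m^2$ is the right object — but no new idea beyond what is already used in Propositions~\ref{prop:hypoL2}--\ref{prop:hypoX} is needed.
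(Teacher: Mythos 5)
Your proposal follows the same route as the paper: kill the transport term by skew-adjointness, reduce to the weighted Dirichlet form of $-\widetilde\nabla_v^*\widetilde\nabla_v - m^2$, absorb the weight remainder via the lower bound on $m_\alpha^2$ from Lemma~\ref{lem:malpha}(ii), and then commute with $\nabla_x$ to lift to $\XXX$. The one small refinement the paper makes, which renders your ``possibly after a Young inequality'' superfluous, is the exact cancellation
\begin{equation*}
\la \widetilde\nabla_v(\langle v\rangle^{2\alpha}f), \widetilde\nabla_v f\ra_{L^2_{x,v}} = \|\widetilde\nabla_v(\langle v\rangle^\alpha f)\|_{L^2_{x,v}}^2 - \|(\widetilde\nabla_v\langle v\rangle^\alpha)f\|_{L^2_{x,v}}^2,
\end{equation*}
so the cross term you anticipate vanishes identically and $\langle v\rangle^{2\alpha} m_\alpha^2 = \langle v\rangle^{2\alpha}m^2 - |\widetilde\nabla_v\langle v\rangle^\alpha|^2$ produces the bound with the exact constant $\sigma$ and no loss on the gradient.
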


\begin{proof}
We compute: %denoting $\omega = \langle v \rangle^{\alpha}$, 
$$
\begin{aligned}
\la \langle v \rangle^{2\alpha} f , \BB_\eps f \ra_{L^2_{x,v}}
&= \la \langle v \rangle^{2\alpha} f , \left\{ - \frac{1}{\eps^2} \widetilde \nabla_v^* \widetilde \nabla_v f -  \frac{1}{\eps^2} m^2 f - \frac{1}{\eps} v \cdot \nabla_x f   \right\} \ra_{L^2_{x,v}} \\
&= -  \frac{1}{\eps^2} \| \langle v \rangle^\alpha m f \|_{L^2_{x,v}}^2
- \frac{1}{\eps^2} \la \widetilde \nabla_v( \langle v \rangle^{2\alpha} f) , \widetilde \nabla_v f \ra_{L^2_{x,v}}.
\end{aligned}
$$
Observing that 
$$
\begin{aligned}
&\la \widetilde \nabla_v( \langle v \rangle^{2\alpha} f) , \widetilde \nabla_v f \ra_{L^2_{x,v}}
= \la \widetilde \nabla_v( \langle v \rangle^\alpha f) , \langle v \rangle^\alpha \widetilde \nabla_v f \ra_{L^2_{x,v}}
+ \la (\widetilde \nabla_v \langle v \rangle^\alpha) f , \langle v \rangle^\alpha \widetilde \nabla_v f \ra_{L^2_{x,v}} \\
&\qquad  
= \| \widetilde \nabla_v( \langle v \rangle^\alpha f) \|_{L^2_{x,v}}^2
- \la \widetilde \nabla_v( \langle v \rangle^\alpha f) , (\widetilde \nabla_v \langle v \rangle^\alpha) f \ra_{L^2_{x,v}}
+ \la (\widetilde \nabla_v \langle v \rangle^\alpha) f , \langle v \rangle^\alpha \widetilde \nabla_v f \ra_{L^2_{x,v}} \\
&\qquad
= \| \widetilde \nabla_v( \langle v \rangle^\alpha f) \|_{L^2_{x,v}}^2
- \| (\widetilde \nabla_v \langle v \rangle^\alpha) f \|_{L^2_{x,v}}^2
\end{aligned}
$$
and recalling that $m_{\alpha}$ is defined in Lemma~\ref{lem:malpha} so that $\langle v \rangle^{2\alpha} m_{\alpha}^2 = \langle v \rangle^{2\alpha} m^2 - |\widetilde \nabla_v \langle v \rangle^\alpha|^2$,
we obtain the estimate 
$$
\begin{aligned}
\la \langle v \rangle^{2\alpha} f , \BB_\eps f \ra_{L^2_{x,v}}
&= -  \frac{1}{\eps^2}\| \langle v \rangle^{\alpha} m_\alpha f \|_{L^2_{x,v}}^2 
- \frac{1}{\eps^2}\|  \widetilde \nabla_v ( \langle v \rangle^{\alpha} f) \|_{L^2_{x,v}}^2 .
\end{aligned}
$$
We then conclude the proof of \eqref{eq:Beps-dissipative} by using the bound by below on $m_\alpha$ of Lemma~\ref{lem:malpha}.

\smallskip

The exponential decay estimate of $S_{\BB_\eps}$ on $\XXX$ is then a consequence of \eqref{eq:Beps-dissipative} together with the fact that $\nabla_x$ commutes with $\BB_\eps$.
\end{proof}

In what follows, we prove regularization results for the semigroup $S_{\BB_\eps}$. %Let us recall that the spaces~$\YYY_1$,~$\ZZZ_1^\eps$,~$\YYY_1'$,~$(\ZZZ_1^\eps)'$ are defined in~\eqref{def:normYYY1},~\eqref{def:normZZZ1eps},~\eqref{def:YYYi'} and~\eqref{def:ZZZieps'}. 
\begin{lem}\label{lem:regSBeps}
Let $\sigma \in (0, \sigma_1)$ (where $\sigma_1$ is defined in~\eqref{def:sigma1}). For any $t >0$, one has
\begin{equation} \label{eq:regSBeps0}
\|S_{\BB_\eps}(t)\|_{\XXX \to \YYY_1} \lesssim \frac{\eps}{\sqrt{t}} \, e^{-\sigma t/\eps^2} 
\quad \text{and} \quad 
\|S_{\BB_\eps}(t)\|_{\XXX \to \ZZZ_1^\eps} \lesssim \frac{\eps^3}{t^{3/2}} \, e^{-\sigma t/\eps^2}. 
\end{equation}
as well as the dual estimates
\begin{equation} \label{eq:regSBeps0bis}
\|S_{\BB_\eps}(t)\|_{\YYY_1' \to \XXX} \lesssim \frac{\eps}{\sqrt{t}} \, e^{-\sigma t/\eps^2} 
\quad \text{and} \quad 
\|S_{\BB_\eps}(t)\|_{(\ZZZ_1^\eps)' \to \XXX} \lesssim \frac{\eps^3}{t^{3/2}} \, e^{-\sigma t/\eps^2}. 
\end{equation}
\end{lem}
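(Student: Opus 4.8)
The strategy is the classical hypoelliptic smoothing argument à la Hérau--Nier, already used in the excerpt for $\Lambda_\eps$ (Proposition~\ref{prop:regLambda}), now carried out for the simpler (but $\eps$-rescaled) operator $\BB_\eps$. One introduces, for $t \in (0,1]$, a Lyapunov functional of the form
\begin{equation*}
\VVV_\eps(t,f) := \| f \|_\XXX^2
+ \alpha_1 \frac{t}{\eps^2} \| \widetilde \nabla_v f \|_\XXX^2
+ \alpha_2 \frac{t^2}{\eps} \la \widetilde \nabla_v f , \widetilde \nabla_x f \ra_\XXX
+ \alpha_3 t^3 \| \widetilde \nabla_x f \|_\XXX^2 ,
\end{equation*}
with $0 < \alpha_3 \ll \alpha_2 \ll \alpha_1 \ll 1$ chosen so that $\alpha_2 \le \sqrt{\alpha_1 \alpha_3}$, and where the weights are understood as in $\|\cdot\|_\XXX$ with the anisotropic extra weight (one may need auxiliary terms $K\|\langle v\rangle^{\frac{\gamma}{2}+1}\widetilde\nabla_v f\|$-type pieces exactly as in Proposition~\ref{prop:regLambda}, but the computation is purely linear so it is lighter). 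Note $S_{\BB_\eps}(t)$ needs no microscopic projection since there is no conservation structure: $\BB_\eps$ is genuinely coercive by Lemma~\ref{lem:dissipativeBeps}. The scaling $t/\eps^2$, $t^2/\eps$, $t^3$ on the successive terms is dictated by the fact that $\BB_\eps = \frac{1}{\eps^2}\BB - \frac{1}{\eps}v\cdot\nabla_x$, so the commutator $[\widetilde\nabla_v, v\cdot\nabla_x] = \widetilde\nabla_x$ produces a factor $1/\eps$ and one gains $\widetilde\nabla_x$-control at cost $1/\eps$.

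\textbf{Key steps.} First I would differentiate each term of $\VVV_\eps$ along $\partial_t f = \BB_\eps f$. For $\frac{\d}{\dt}\|f\|_\XXX^2$, Lemma~\ref{lem:dissipativeBeps} (applied on each $\nabla_x^i f$ with the appropriate weight $\omega_i$, using that $\nabla_x$ commutes with $\BB_\eps$, plus the commutator estimates of Lemma~\ref{lem:commutator} to move the velocity weights past $\widetilde\nabla_v^*\widetilde\nabla_v$) gives $\le -\frac{\kappa}{\eps^2}\|f\|_{\YYY_1}^2$. For $\frac{\d}{\dt}\|\widetilde\nabla_v f\|_\XXX^2$ one uses Lemma~\ref{lem:commutatorBeps}(i): the good term is $-\frac{\kappa}{\eps^2}\|\widetilde\nabla_v f\|_{\YYY_1}^2$, the dangerous term is $-\frac{1}{\eps}\la \widetilde\nabla_x f, \widetilde\nabla_v f\ra_\XXX$ (from the $-\frac1\eps \widetilde\nabla_{x_i}f$ commutator), and the remaining commutator-of-commutator terms are absorbed into $\frac{\kappa}{2\eps^2}\|\widetilde\nabla_v f\|_{\YYY_1}^2 + \frac{C}{\eps^2}\|\widetilde\nabla_v f\|_\XXX^2$ via Lemma~\ref{lem:commutator} and Young. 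For $\frac{\d}{\dt}\la\widetilde\nabla_v f, \widetilde\nabla_x f\ra_\XXX$ the crucial computation, as in Step~4 of Proposition~\ref{prop:regLambda}, is that the transport cross-terms combine to yield exactly $\frac{1}{\eps}\|\widetilde\nabla_x f\|_\XXX^2$ with a favorable sign (after using skew-adjointness of $v\cdot\nabla_x$ and Lemma~\ref{lem:commutatorBeps}), while the $\BB$-parts give $\frac{C}{\eps^2}\|f\|_{\YYY_2}$-type error terms controlled by the $\alpha_1$ piece. For $\frac{\d}{\dt}\|\widetilde\nabla_x f\|_\XXX^2$ one uses Lemma~\ref{lem:commutatorBeps}(ii) to get $-\frac{\kappa}{\eps^2}\|\widetilde\nabla_x f\|_{\YYY_1}^2 + \frac{C}{\eps^2}\|\widetilde\nabla_x f\|_\XXX^2$. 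Summing with the weights $\alpha_i$, the positive $1/\eps$ transport contribution in the $\alpha_1$-term must be killed by the negative $1/\eps$ term produced by $\alpha_2 \frac{t^2}{\eps}$, and Young's inequality as in \eqref{eq:dtUeps-1} lets one choose $\alpha_1 = \eta$, $\alpha_2 = \eta^{3/2}$, $\alpha_3 = \eta^{5/3}$ with $\eta$ small so that $\frac{\d}{\dt}\VVV_\eps(t,f) \le 0$ on $[0,1]$. Then the lower bounds $\frac{t}{\eps^2}\|f\|_{\YYY_1}^2 \lesssim \VVV_\eps$ and $\frac{t^3}{\eps^6}\|f\|_{\ZZZ_1^\eps}^2 \lesssim \VVV_\eps$ (note $\eps^2\|\widetilde\nabla_x f\|_\XXX^2$ appears with coefficient $\sim \alpha_3 t^3 \cdot \eps^{-? }$; one must track that the $\ZZZ_1^\eps$-norm weights $\widetilde\nabla_x$ by $\eps^2$, matching $t^3/\eps^6$ appropriately) give
\begin{equation*}
\|S_{\BB_\eps}(t) f\|_{\YYY_1} \lesssim \frac{\eps}{\sqrt t}\|f\|_\XXX,
\qquad
\|S_{\BB_\eps}(t) f\|_{\ZZZ_1^\eps} \lesssim \frac{\eps^3}{t^{3/2}}\|f\|_\XXX,
\quad t\in(0,1],
\end{equation*}
and the exponential factor $e^{-\sigma t/\eps^2}$ for all $t>0$ follows by combining with the decay from Lemma~\ref{lem:dissipativeBeps} and a semigroup splitting $S_{\BB_\eps}(t) = S_{\BB_\eps}(t/2)S_{\BB_\eps}(t/2)$. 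The dual estimates \eqref{eq:regSBeps0bis} follow by duality with $\XXX$ as pivot space, since $\BB_\eps^* = \frac{1}{\eps^2}\BB^* + \frac1\eps v\cdot\nabla_x$ enjoys entirely symmetric properties (the weight structure of $\XXX$ and the commutator lemmas are insensitive to the sign of the transport term), so the same Lyapunov argument applied to $S_{\BB_\eps^*}$ gives $\|S_{\BB_\eps^*}(t)\|_{\XXX\to\YYY_1} \lesssim \frac{\eps}{\sqrt t}e^{-\sigma t/\eps^2}$, and dualizing yields the claim.

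\textbf{Main obstacle.} The delicate point is the precise bookkeeping of the $\eps$-powers: one must verify that the chosen time-weights $t/\eps^2$, $t^2/\eps$, $t^3$ are simultaneously compatible with (a) the coercivity gains $\kappa/\eps^2$ from $\BB_\eps$, (b) the $1/\eps$ loss from the $[\widetilde\nabla_v, v\cdot\nabla_x]=\widetilde\nabla_x$ commutator, and (c) the definition of the $\ZZZ_1^\eps$-norm which itself weights $\widetilde\nabla_x f$ by $\eps^2$ — so that the lower bound extracting $\|f\|_{\ZZZ_1^\eps}$ from $\VVV_\eps$ produces exactly the stated $\eps^3/t^{3/2}$ and not some other power. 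A secondary nuisance is that, unlike in Proposition~\ref{prop:regLambda}, there is no $f^\perp$ splitting to exploit, which actually simplifies matters, but one still needs the anisotropic commutator bounds of Lemma~\ref{lem:commutator} (items (ii)--(vi)) to handle the weighted $\widetilde\nabla_v^*\widetilde\nabla_v$ terms cleanly; these are already available. Once the $\eps$-powers are pinned down, the rest is a routine (if lengthy) Young-inequality absorption identical in spirit to Steps~2--6 of the proof of Proposition~\ref{prop:regLambda}.
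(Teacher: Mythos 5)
Your overall plan is the paper's: build a Hérau--Nier Lyapunov functional in the derivatives $\widetilde\nabla_v$, $\widetilde\nabla_x$ with increasing powers of a rescaled time, drive it down using Lemmas~\ref{lem:dissipativeBeps}, \ref{lem:commutatorBeps}, \ref{lem:commutator}, then extract the estimates from a lower bound on the functional and extend by the semigroup property; the dual estimates indeed come from repeating the argument with the sign of the cross term flipped (equivalently, for $\BB_\eps^*$). But the $\eps$-powers in your functional are wrong, and this is not a cosmetic issue: it prevents the argument from closing.

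Concretely, the parabolic time scale for $\BB_\eps=\eps^{-2}\BB-\eps^{-1}v\cdot\nabla_x$ is $\tau=t/\eps^2$, and each application of the commutator $[\widetilde\nabla_v,v\cdot\nabla_x]=\widetilde\nabla_x$ costs a factor $1/\eps$, so the correct ladder of weights is $\alpha_1\tau$, $\alpha_2\eps\tau^2=\alpha_2 t^2/\eps^3$, $\alpha_3\eps^2\tau^3=\alpha_3 t^3/\eps^4$. You wrote $\alpha_2 t^2/\eps$ and $\alpha_3 t^3$, i.e. extra factors $\eps^2$ and $\eps^4$. Two things then go wrong. First, in the derivative of $\VVV_\eps$, the $\alpha_1$-term produces a cross-contribution $C\alpha_1 t\,\eps^{-3}\,\|f\|_{L^2_x(H^1_{v,*})}\|\widetilde\nabla_x f\|_\XXX$; with your coefficients the only negative $\|\widetilde\nabla_x f\|_\XXX^2$ term available has size $\alpha_2 t^2/\eps^2$, and Young's inequality then leaves a remainder $\sim\alpha_1^2\alpha_2^{-1}\eps^{-4}\|f\|_{L^2_x(H^1_{v,*})}^2$ which cannot be absorbed by the $\eps^{-2}$ coercivity of $\BB_\eps$ — a $1/\eps^2$ mismatch. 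With the paper's weight $\alpha_2 t^2/\eps^3$ the negative term has size $\alpha_2 t^2/\eps^4$ and the Young remainder is $\sim\alpha_1^2\alpha_2^{-1}\eps^{-2}\|\cdot\|^2$, which closes. Second, the lower bound you quote, $\frac{t^3}{\eps^6}\|f\|_{\ZZZ_1^\eps}^2\lesssim\VVV_\eps$, does not follow from your $\VVV_\eps$: since $\|f\|_{\ZZZ_1^\eps}^2$ contains $\eps^2\|\widetilde\nabla_x f\|_\XXX^2$, you would need the coefficient of $\|\widetilde\nabla_x f\|_\XXX^2$ in $\VVV_\eps$ to be $\gtrsim t^3/\eps^4$, not $t^3$. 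With your functional you would only get $\|f\|_{\ZZZ_1^\eps}\lesssim\eps/t^{3/2}$ (in fact the argument would not even give that, as it does not close). Finally, the short-time analysis must be performed on $t\in(0,\eps^2]$ (so that $\tau\le1$ and the time-weights in $\VVV_\eps$ stay bounded), not on $[0,1]$; the extension to all $t>0$ then uses $S_{\BB_\eps}(t)=S_{\BB_\eps}(\eps^2)S_{\BB_\eps}(t-\eps^2)$ together with Lemma~\ref{lem:dissipativeBeps}, as you anticipated. Once you correct the ladder to $\alpha_1(t/\eps^2)$, $\alpha_2\eps(t/\eps^2)^2$, $\alpha_3\eps^2(t/\eps^2)^3$ (and include the auxiliary $K\|\langle v\rangle^{\gamma/2+1}\cdot\|$ and $K\|\langle v\rangle^{\gamma/2}\nabla_x\cdot\|$ pieces you mentioned in passing), your argument matches the paper's and is correct.
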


The proof follows similar ideas as the proof of Proposition~\ref{prop:regLambda} but is somewhat simpler because, the operator $\BB_\eps$ provides $1/\eps^2$-dissipativity and regularity on both macroscopic and microscopic parts of the solution whereas the operator $\Lambda_\eps$ only provided a gain of~$1/\eps^2$ on the microscopic part. We thus do not need to separate microscopic and macroscopic parts when defining our functional that will be a Lyapunov functional for our equation $\partial_t f = \BB_\eps f$ (see~\eqref{eq:Lyapunov2}). For sake of completeness, the proof is given in Appendix~\ref{app:Beps}.

Using the same method, we can push our previous result up to the next notch of regularity (we only mention the dual results because they are the only ones that will be used in the sequel) and the proof is also postponed to Appendix~\ref{app:Beps}:
\begin{lem}  \label{lem:regSBeps2}
Let $\sigma \in (0,\sigma_1)$ (where $\sigma_1$ is defined in~\eqref{def:sigma1}). For any $t > 0$, one has
	$$
	\|S_{\BB_\eps}(t)\|_{\YYY_2' \to \XXX} \lesssim \frac{\eps^2}{t} \, e^{-\sigma t/\eps^2} 
	\quad \text{and} \quad 
	\|S_{\BB_\eps}(t)\|_{(\ZZZ_2^\eps)' \to \XXX} \lesssim \frac{\eps^6}{t^3} \, e^{-\sigma t/\eps^2}. 
	$$
\end{lem}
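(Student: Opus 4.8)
The plan is to mimic the proof of Lemma~\ref{lem:regSBeps} (given in Appendix~\ref{app:Beps}) but with a Lyapunov functional carrying one additional notch of velocity regularity, i.e. involving second-order $\widetilde\nabla_v$ derivatives. Write $f(t) = S_{\BB_\eps}(t) f_{\rm in}$ with $f_{\rm in} \in \XXX$ (the dual estimates will follow by duality, exactly as in Lemma~\ref{lem:regSBeps}, since $\BB_\eps$ is not self-adjoint but its adjoint has the same structure). By the exponential decay $\|S_{\BB_\eps}(t)\|_{\XXX\to\XXX} \le e^{-\sigma t/\eps^2}$ from Lemma~\ref{lem:dissipativeBeps} and a time-translation argument, it suffices to prove the short-time bounds
\begin{equation*}
\| f(t) \|_{\YYY_2} \lesssim \frac{\eps^2}{t}\, \| f_{\rm in}\|_{\XXX},
\qquad
\| f(t) \|_{\ZZZ_2^\eps} \lesssim \frac{\eps^6}{t^3}\, \| f_{\rm in}\|_{\XXX}
\end{equation*}
for $t \in (0, \eps^2]$ (the natural time scale for $\BB_\eps$), and then to dualize using that $[\XXX, (\ZZZ_2^\eps)']_{1/2,2}$-type interpolation is not even needed here — only the two endpoint bounds $\|S_{\BB_\eps}(t)\|_{\YYY_2'\to\XXX}$ and $\|S_{\BB_\eps}(t)\|_{(\ZZZ_2^\eps)'\to\XXX}$ are claimed.

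The core is to introduce a functional of the form
\begin{equation*}
\VVV_\eps(t,f) := \Nt f \Nt_\XXX^2
+ \beta_1 \frac{t}{\eps^2} \| f \|_{\YYY_1}^2
+ \beta_2 \frac{t^2}{\eps^4} \big( \|\widetilde\nabla_v \widetilde\nabla_v f\|_\XXX^2 + \text{lower order}\big)
+ \text{(x-derivative terms with higher powers of } t/\eps^2),
\end{equation*}
built in analogy with $\UUU_\eps$ in \eqref{eq:Lyapunov} and with the functional used for Lemma~\ref{lem:regSBeps}, but now pushed to second order in $\widetilde\nabla_v$ and correspondingly in $\widetilde\nabla_x$ and mixed $\widetilde\nabla_v\widetilde\nabla_x$ terms. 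One shows $\frac{\d}{\dt}\VVV_\eps(t,f) \le 0$ on the relevant time interval by computing the time derivative of each piece along $\partial_t f = \BB_\eps f$. The dissipative estimate \eqref{eq:Beps-dissipative} of Lemma~\ref{lem:dissipativeBeps} controls $\Nt f\Nt_\XXX^2$; the commutator Lemma~\ref{lem:commutatorBeps} (together with the basic commutator bounds of Lemma~\ref{lem:commutator}, in particular items (iii)–(viii) for double commutators) is used to commute $\widetilde\nabla_v$, $\widetilde\nabla_v\widetilde\nabla_v$, $\widetilde\nabla_x$ and $\widetilde\nabla_v\widetilde\nabla_x$ through $\BB_\eps$, with all error terms absorbed by the dissipation $-\frac{\kappa}{\eps^2}\|\cdot\|_{H^1_{v,*}}^2$ after Young's inequality; the weight estimates from Lemma~\ref{lem:malpha}(iii) handle derivatives of $m^2$. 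The crucial structural point, exactly as in the $\Lambda_\eps$ case, is the transport commutator $[\widetilde\nabla_{v_i}, v\cdot\nabla_x] = \widetilde\nabla_{x_i}$, which couples velocity and space regularity and forces the hierarchy of powers of $t/\eps^2$ in front of the $\widetilde\nabla_x$-terms; the scaling in $\eps$ is dictated by the $1/\eps^2$ in front of $\BB$ versus the $1/\eps$ in front of $v\cdot\nabla_x$, which is precisely why the final bounds carry $\eps^2/t$ and $\eps^6/t^3$ rather than $1/t$ and $1/t^3$.

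Once $\frac{\d}{\dt}\VVV_\eps \le 0$ is established, one extracts lower bounds $\frac{t^2}{\eps^4}\|f\|_{\YYY_2}^2 \lesssim \VVV_\eps(t,f)$ and $\frac{t^6}{\eps^{12}}\|f\|_{\ZZZ_2^\eps}^2 \lesssim \VVV_\eps(t,f)$ analogous to \eqref{eq:lowerbdd}, and combines with $\VVV_\eps(t,f) \le \VVV_\eps(0,f_{\rm in}) = \Nt f_{\rm in}\Nt_\XXX^2 \lesssim \|f_{\rm in}\|_\XXX^2$ and the equivalence of $\Nt\cdot\Nt_\XXX$ with $\|\cdot\|_\XXX$. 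This gives the short-time bounds on $\YYY_2$ and $\ZZZ_2^\eps$, hence on $[0,\infty)$ after composing with $S_{\BB_\eps}(t/2)$ and using the exponential decay, and the dual estimates follow by the usual transposition argument (using that $\AA$, and hence the structure of $\BB_\eps^* = \frac{1}{\eps^2}\BB^* + \frac1\eps v\cdot\nabla_x$, has analogous boundedness properties, cf. Lemma~\ref{lem:Areg} and the remark that $\AA$ is self-adjoint).

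The main obstacle I anticipate is bookkeeping: controlling the genuinely second-order commutator terms $\big[[\widetilde\nabla_{v_i},\widetilde\nabla_{v_j}^*],\widetilde\nabla_{v_\ell}\big]$ and the mixed $\widetilde\nabla_v\widetilde\nabla_x$ cross-terms without losing more than the $\frac{\kappa}{\eps^2}\|f\|_{H^1_{v,*}}^2$-type dissipation allows, and in particular choosing the hierarchy of small constants $\beta_1 \gg \beta_2 \gg \dots$ and the time powers so that every bad term produced by one level is dominated by the good dissipative term of the next — exactly the delicate ordering $\alpha_3 \ll \alpha_2 \ll \alpha_1 \ll 1$ with $\alpha_2 \le \sqrt{\alpha_1\alpha_3}$ seen in Proposition~\ref{prop:regLambda}, here extended by one more level. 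Since all the individual estimates are already available (Lemmas~\ref{lem:commutator}, \ref{lem:commutatorBeps}, \ref{lem:malpha}, \ref{lem:boundL2}, \ref{lem:Areg}, \ref{lem:dissipativeBeps}), this is conceptually routine, and for that reason the full computation is deferred to Appendix~\ref{app:Beps}.
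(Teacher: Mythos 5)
Your proposal is correct and follows essentially the same route as the paper: build a time-weighted Lyapunov functional extending the one of Lemma~\ref{lem:regSBeps} by one notch of $\widetilde\nabla_v$- and $\widetilde\nabla_x$-regularity (with the hierarchy $\beta_4\ll\beta_3\ll\beta_2\ll\beta_1\ll 1$ and powers of $t/\eps^2$), show its monotonicity using the commutator Lemmas~\ref{lem:commutator} and~\ref{lem:commutatorBeps} together with the dissipativity of Lemma~\ref{lem:dissipativeBeps}, then extract the lower bounds in $\YYY_2$ and $\ZZZ_2^\eps$, extend from $(0,\eps^2]$ to all $t>0$ by semigroup composition with the exponential decay, and finally dualize by flipping the sign of the mixed $\widetilde\nabla_v\widetilde\nabla_x$ cross term to treat $\BB_\eps^*$. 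Your scaling in $t$ and $\eps$ (in particular the $t^2/\eps^4$ and $t^6/\eps^{12}$ lower bounds) is consistent with the paper's functional $\mathcal E^1_\eps$.
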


We also have the following dissipativity properties, the proof of which relies on the same line of proof as the above regularization results. Indeed, the idea is to use the same functionals without the weights in time. We thus skip the proof since the computations are the same. As previously, the result is only given in the dual framework which will be the only one useful in the sequel. 
\begin{lem}  \label{lem:dissipativeBeps2}
Let $\sigma \in (0,\sigma_1)$ (where $\sigma_1$ is defined in~\eqref{def:sigma1}). For any $t \ge 0$ and $i=1,2$, one has
$$
\|S_{\BB_\eps}(t)\|_{(\ZZZ_i^\eps)' \to (\ZZZ_i^\eps)'} \lesssim e^{-\sigma t/\eps^2}.
$$
\end{lem}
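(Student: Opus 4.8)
The plan is to reduce to the case $i=2$ and then run the energy method of Appendix~\ref{app:Beps} with the time weights switched off. For the reduction, recall the interpolation identity \eqref{eq:interp}, $\bigl[\XXX,(\ZZZ_2^\eps)'\bigr]_{1/2,2}=(\ZZZ_1^\eps)'$. Since $S_{\BB_\eps}(t)$ is bounded on $\XXX$ with $\|S_{\BB_\eps}(t)\|_{\XXX\to\XXX}\le e^{-\sigma t/\eps^2}$ by Lemma~\ref{lem:dissipativeBeps}, the real‑interpolation inequality $\|T\|_{[A,B]_{1/2,2}}\lesssim\|T\|_A^{1/2}\|T\|_B^{1/2}$ shows that the estimate for $i=2$ automatically implies the one for $i=1$ with the same exponential factor. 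So it suffices to prove $\|S_{\BB_\eps}(t)\|_{(\ZZZ_2^\eps)'\to(\ZZZ_2^\eps)'}\lesssim e^{-\sigma t/\eps^2}$, which I would get by transposing the primal bound $\|S_{\BB_\eps}(t)\|_{\ZZZ_2^\eps\to\ZZZ_2^\eps}\lesssim e^{-\sigma t/\eps^2}$.

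For the primal bound, I would fix $\sigma'\in(\sigma,\sigma_1)$ (so that Lemma~\ref{lem:malpha} applies with $\sigma'$), consider $f(t)=S_{\BB_\eps}(t)f_{\rm in}$ solving $\partial_t f=\BB_\eps f$, and work with the \emph{time‑independent} analogue of the functional \eqref{eq:Lyapunov2}, of the schematic form
$$
\VVV_\eps(f):=\|f\|_\XXX^2+\beta_1\bigl(\|\widetilde\nabla_v f\|_\XXX^2+K\|\langle v\rangle^{\frac\gamma2+1}f\|_\XXX^2\bigr)+\beta_2\,\eps\,\la\widetilde\nabla_v f,\widetilde\nabla_x f\ra_\XXX+\beta_3\,\eps^2\bigl(\|\widetilde\nabla_x f\|_\XXX^2+K\|\langle v\rangle^{\frac\gamma2}\nabla_x f\|_\XXX^2\bigr)+\beta_4\,\eps^2\|\widetilde\nabla_v\widetilde\nabla_v f\|_\XXX^2+\beta_5\,\eps^3\la\cdots\ra_\XXX+\beta_6\,\eps^4\|\widetilde\nabla_x\widetilde\nabla_x f\|_\XXX^2 ,
$$
with $0<\beta_6\ll\cdots\ll\beta_1\ll1$ and $K$ large; for such a choice $\VVV_\eps$ is a norm on $\ZZZ_2^\eps$ equivalent to $\|\cdot\|_{\ZZZ_2^\eps}^2$ uniformly in $\eps$. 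Differentiating $\VVV_\eps(f(t))$ along the flow, using the weighted dissipativity estimate \eqref{eq:Beps-dissipative} of Lemma~\ref{lem:dissipativeBeps} for the undifferentiated term, the commutator identities of Lemmas~\ref{lem:commutatorBeps} and~\ref{lem:commutator}, and the weight bounds of Lemma~\ref{lem:malpha}(iii) for the derivatives of $m$, one reproduces verbatim the Appendix~\ref{app:Beps} computations with all time weights set to $1$. Every $\eps$‑weighted cross term (e.g. $\eps\la\widetilde\nabla_v f,\widetilde\nabla_x f\ra_\XXX$) and every weight‑commutator remainder is absorbed by Young's inequality into the coercive quantities $\tfrac1{\eps^2}\|f\|_{\YYY_1}^2$ and $\tfrac1{\eps^2}\|f\|_{\YYY_2}^2$ that $\BB_\eps$ produces. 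Unlike in the proof of Proposition~\ref{prop:regLambda}, no micro/macro splitting of the dissipation and no time cut‑off to $[0,1]$ are needed, precisely because $\BB_\eps$ already dissipates the \emph{full} solution at order $1/\eps^2$; one therefore obtains $\frac{\d}{\dt}\VVV_\eps(f)\le-\frac{2\sigma'}{\eps^2}\VVV_\eps(f)$ for all $t\ge0$, and Gr\"onwall's lemma together with $\sigma'>\sigma$ gives $\|S_{\BB_\eps}(t)\|_{\ZZZ_2^\eps\to\ZZZ_2^\eps}\lesssim e^{-\sigma t/\eps^2}$. The case $i=1$ is obtained the same way from the first three blocks of $\VVV_\eps$ only, or directly by interpolation as above.

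Finally I would transpose, using $\XXX$ as pivot: for $\phi\in(\ZZZ_2^\eps)'$ and $\psi\in\ZZZ_2^\eps$ one writes $\la S_{\BB_\eps}(t)\phi,\psi\ra_\XXX=\la\phi,S_{\BB_\eps^\ast}(t)\psi\ra_\XXX$ where $\BB_\eps^\ast$ denotes the $\XXX$‑adjoint of $\BB_\eps$; since $\BB_\eps^\ast$ has the same hypoelliptic structure as $\BB_\eps$ (velocity diffusion $-\widetilde\nabla_v^\ast\widetilde\nabla_v$, the multiplier $-m^2$, and the skew transport term with reversed sign, modulo lower‑order weight‑commutator terms), the previous paragraph applies verbatim to $S_{\BB_\eps^\ast}$ as well, and taking the supremum over $\|\psi\|_{\ZZZ_2^\eps}\le1$ yields $\|S_{\BB_\eps}(t)\|_{(\ZZZ_2^\eps)'\to(\ZZZ_2^\eps)'}\lesssim e^{-\sigma t/\eps^2}$. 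I expect the only genuinely delicate point — and the main obstacle — to be the bookkeeping already present in Appendix~\ref{app:Beps}: one must keep the velocity weights matched to the order of the $x$‑derivative exactly as in the definitions of $\XXX$ and $\ZZZ_2^\eps$, so that each remainder term can indeed be controlled by the coercivity budget; once that bookkeeping is granted, the computations here are literally those of Appendix~\ref{app:Beps} with the time weights removed.
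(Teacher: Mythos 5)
Your proof is correct and essentially the same as the one the paper itself sketches in a single sentence ("use the same functionals without the weights in time" and transpose). The interpolation step you use to deduce $i=1$ from $i=2$ is a harmless variant — the paper would obtain both cases directly by running the time‑unweighted Appendix~\ref{app:Beps} computations with the $\ZZZ_1^\eps$‑functional from \eqref{eq:Lyapunov2} and the $\ZZZ_2^\eps$‑functional from the proof of Lemma~\ref{lem:regSBeps2} respectively. One small caveat: the displayed estimates in Appendix~\ref{app:Beps} (B.3)–(B.9) record only the $-\tfrac{\kappa}{\eps^2}\|\cdot\|_{L^2_x(H^1_{v,*})}^2$ coercivity (which is all that is needed for non‑increase of $\EE_\eps$), so to extract the exact Gr\"onwall rate $-\tfrac{2\sigma'}{\eps^2}\VVV_\eps$ you have to also carry along the $-\tfrac{\sigma'}{\eps^2}\|\cdot\|_{L^2_{x,v}}^2$ term that Lemma~\ref{lem:dissipativeBeps} supplies, and handle the non‑coercive cross block $\eps\la\widetilde\nabla_v f,\widetilde\nabla_x f\ra_\XXX$ by the Young/$\beta_2\le\sqrt{\beta_1\beta_3}$ equivalence so it does not spoil the rate; alternatively, one can simply prove $\frac{\d}{\dt}\VVV_\eps\le0$ and then bootstrap through $\XXX$ exactly as in Step~1 of the proof of Lemma~\ref{lem:regSBeps}, which removes the bookkeeping.
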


\color{black}

In our forthcoming analysis, we will use an iterated Duhamel formula based on the splitting 
$\Lambda_\eps = \AA_\eps + \BB_\eps$. We introduce the following definition of convolution of semigroups: If $S_1$ and $S_2$ are two semigroups, their convolution product is defined by
	$$
	S_1 * S_2(t) := \int_0^t S_1(s) S_2(t-s) \, \d s.
	$$
We also introduce the semigroups $V_j^\eps(t)$ defined through:
 $$
 V_0^\eps(t) := S_{\BB_\eps}(t), \quad V_{j+1}^\eps(t) := (S_{\BB_\eps} * \AA_\eps V_j^\eps)(t) = (V_j^\eps* \AA_\eps S_{\BB_\eps})(t), \, \, j \in \N
 $$
 so that for any $n \in \N$ and any $t \geq 0$, we have:
\begin{equation} \label{eq:splitUeps}
 U^\eps(t) = \sum_{j=0}^n V_j^\eps(t) + (V_n^\eps*\AA_\eps U^\eps)(t). 
 %= \sum_{j=0}^n V_j^\eps(t) + (U^\eps*\AA_\eps V_n^\eps)(t). 
\end{equation}

As a consequence of the previous results on $\AA_\eps$ and $\BB_\eps$, we obtain:
\begin{cor} \label{cor:Vjepsdecay}
Let $\sigma \in (0,\sigma_1)$ (where $\sigma_1$ is defined in~\eqref{def:sigma1}). For any $t > 0$, any $j \in \N$ and $i=1,2$, we have:
	$$
	\|V^\eps_j(t)\|_{\XXX \to \XXX}  \lesssim e^{-\sigma t/\eps^2} 
	\quad \text{and} \quad 
	\|V^\eps_j(t)\|_{(\ZZZ_i^\eps)' \to (\ZZZ_i^\eps)'} \lesssim e^{-\sigma t/\eps^2}.
	$$
\end{cor}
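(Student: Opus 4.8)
The plan is to prove Corollary~\ref{cor:Vjepsdecay} by induction on $j$, using the semigroup convolution identity together with the bounds on $\AA_\eps = \frac{1}{\eps^2}\AA$ from Lemma~\ref{lem:Areg} and the decay of $S_{\BB_\eps}$ from Lemmas~\ref{lem:dissipativeBeps} and~\ref{lem:dissipativeBeps2}. The base case $j=0$ is exactly $V_0^\eps = S_{\BB_\eps}$, for which the two claimed estimates are precisely the content of Lemma~\ref{lem:dissipativeBeps} (on $\XXX$) and Lemma~\ref{lem:dissipativeBeps2} (on $(\ZZZ_i^\eps)'$). For the induction step, we write $V_{j+1}^\eps(t) = \int_0^t S_{\BB_\eps}(s) \AA_\eps V_j^\eps(t-s)\,\d s$ and estimate the operator norm under the integral sign in each of the two functional settings.

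First I would treat the $\XXX \to \XXX$ bound. On this space, Lemma~\ref{lem:Areg} gives $\AA \in \BBB(\XXX)$, hence $\AA_\eps = \frac{1}{\eps^2}\AA$ has norm $\lesssim \frac{1}{\eps^2}$ on $\XXX$; combining with the decay $\|S_{\BB_\eps}(s)\|_{\XXX\to\XXX} \le e^{-\sigma s/\eps^2}$ and the induction hypothesis $\|V_j^\eps(t-s)\|_{\XXX\to\XXX} \lesssim e^{-\sigma(t-s)/\eps^2}$, one gets
\begin{equation*}
\|V_{j+1}^\eps(t)\|_{\XXX\to\XXX} \lesssim \int_0^t e^{-\sigma s/\eps^2} \frac{1}{\eps^2} e^{-\sigma(t-s)/\eps^2}\,\d s = \frac{t}{\eps^2}\, e^{-\sigma t/\eps^2}.
\end{equation*}
This produces a spurious factor $t/\eps^2$, so the clean statement requires absorbing it: one picks $\sigma < \sigma' < \sigma_1$, runs the whole argument with $\sigma'$ in place of $\sigma$, and then uses $\frac{t}{\eps^2}\,e^{-\sigma' t/\eps^2} \lesssim e^{-\sigma t/\eps^2}$ (since $u\,e^{-(\sigma'-\sigma)u}$ is bounded for $u = t/\eps^2 \ge 0$). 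Iterating, at step $j$ one finds $\|V_j^\eps(t)\|_{\XXX\to\XXX} \lesssim \frac{(t/\eps^2)^j}{j!}\,e^{-\sigma' t/\eps^2} \lesssim_j e^{-\sigma t/\eps^2}$, which is exactly the desired bound with a constant depending on $j$ (which is allowed since $j$ is fixed).

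The $(\ZZZ_i^\eps)' \to (\ZZZ_i^\eps)'$ bound is handled identically: $\AA$ is self-adjoint in $L^2_{x,v}$ and bounded on $\ZZZ_i^\eps$, so by duality $\AA \in \BBB((\ZZZ_i^\eps)')$ and $\|\AA_\eps\|_{(\ZZZ_i^\eps)'\to(\ZZZ_i^\eps)'} \lesssim \frac{1}{\eps^2}$ (Lemma~\ref{lem:Areg}); Lemma~\ref{lem:dissipativeBeps2} provides $\|S_{\BB_\eps}(s)\|_{(\ZZZ_i^\eps)'\to(\ZZZ_i^\eps)'} \lesssim e^{-\sigma' s/\eps^2}$; and the induction hypothesis closes the loop via the same convolution estimate, yielding again a $\frac{(t/\eps^2)^j}{j!}$ prefactor absorbed into $e^{-\sigma t/\eps^2}$ at the cost of a $j$-dependent constant. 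I do not expect a genuine obstacle here, since all the ingredients (boundedness of $\AA_\eps$ up to $1/\eps^2$, pure exponential decay of $S_{\BB_\eps}$ on both spaces) are already available in the excerpt; the only point requiring care is the bookkeeping of the $1/\eps^2$ factors and of the polynomial-in-$t/\eps^2$ prefactor, and the standard trick of shrinking the decay rate slightly to absorb it. One should also note that the two factorizations $V_{j+1}^\eps = S_{\BB_\eps}*\AA_\eps V_j^\eps = V_j^\eps*\AA_\eps S_{\BB_\eps}$ are consistent, which follows from the associativity of the convolution product of semigroups and commutation of $\AA_\eps$ with the integral, so either form may be used in the estimate.
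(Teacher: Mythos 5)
Your proposal is correct and follows essentially the same route as the paper: induction on $j$ via the convolution identity, using $\AA\in\BBB(\XXX)\cap\BBB((\ZZZ_i^\eps)')$ from Lemma~\ref{lem:Areg} and the pure exponential decay of $S_{\BB_\eps}$ from Lemmas~\ref{lem:dissipativeBeps} and~\ref{lem:dissipativeBeps2}, plus a slight shrinkage $\sigma<\sigma'<\sigma_1$ to absorb the extra factor. The only cosmetic difference is the bookkeeping: the paper uses rate $\sigma'$ for $S_{\BB_\eps}$ and the already-shrunk rate $\sigma$ for the induction hypothesis so the convolution integral $\frac{1}{\eps^2}\int_0^t e^{-(\sigma'-\sigma)s/\eps^2}\,\d s$ is bounded directly, whereas you carry $\sigma'$ throughout and absorb the accumulated $(t/\eps^2)^j/j!$ at the end — both yield the same $j$-dependent constant.
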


\begin{proof}
Fix $\sigma \in (0,\sigma_1)$. The first estimate for $j=0$ is given by Lemma~\ref{lem:dissipativeBeps}. We then argue by induction and thus suppose that the property is satisfies for some $j \in \N$. Then, we consider $\sigma' \in (\sigma,\sigma_1)$. From Lemmas~\ref{lem:Areg} and~\ref{lem:dissipativeBeps} and the induction hypothesis, we have:
	\begin{align*}
	\|V^\eps_{j+1}(t)\|_{\XXX \to \XXX} 
	&\lesssim 
	\frac{1}{\eps^2} \int_0^t 
	\|S_{\BB_\eps}(s)\|_{\XXX \to \XXX} \|\AA\|_{\XXX \to \XXX} \|V_j^\eps(t-s)\|_{\XXX \to \XXX} \, \d s \\
	&\lesssim \frac{1}{\eps^2} \int_0^t e^{- \sigma' s/\eps^2} e^{-\sigma(t-s)/\eps^2} \, \d s
	\lesssim \frac{1}{\eps^2} e^{-\sigma t/\eps^2}  \int_0^t e^{- (\sigma'-\sigma) s/\eps^2} \, \d s 
	\lesssim e^{-\sigma t/\eps^2}.
	\end{align*}
The second estimate can be proven in a similar way by using Lemmas~\ref{lem:Areg} and~\ref{lem:dissipativeBeps2}. 
\end{proof}

\begin{cor} \label{cor:Vjepsreg}
Let $\sigma \in (0,\sigma_1)$ (where $\sigma_1$ is defined in~\eqref{def:sigma1}). For any $t > 0$ and any~$j \in \N$, we have:
	$$
	\|V_j^\eps(t)\|_{(\ZZZ_1^\eps)' \to \XXX} \lesssim \frac{\eps^{3-2j}}{{t}^{\frac{3-2j}{2}}} e^{-\sigma t/\eps^2}
		\quad \text{and} \quad 
	\|V_j^\eps(t)\|_{(\ZZZ_2^\eps)' \to \XXX} \lesssim \frac{\eps^{6-2j}}{t^{3-j}} e^{-\sigma t/\eps^2}. 
	$$
\end{cor}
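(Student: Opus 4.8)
The statement to be proven is Corollary~\ref{cor:Vjepsreg}, which gives regularization estimates for the iterated semigroups $V_j^\eps(t)$ from $(\ZZZ_i^\eps)'$ to $\XXX$. The natural approach is induction on $j$, using the recursive definition $V_{j+1}^\eps(t) = (S_{\BB_\eps} * \AA_\eps V_j^\eps)(t) = (V_j^\eps * \AA_\eps S_{\BB_\eps})(t)$ together with the regularization and dissipativity estimates on $S_{\BB_\eps}$, the boundedness of $\AA$ on the relevant spaces, and the decay estimates on $V_j^\eps$ already obtained in Corollary~\ref{cor:Vjepsdecay}. I would treat the two estimates (the one with $(\ZZZ_1^\eps)'$ and the one with $(\ZZZ_2^\eps)'$) in parallel, since they follow the same scheme with different exponents.

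\textbf{Base case.} For $j=0$ we have $V_0^\eps = S_{\BB_\eps}$, and the two claimed estimates read $\|S_{\BB_\eps}(t)\|_{(\ZZZ_1^\eps)' \to \XXX} \lesssim \eps^3 t^{-3/2} e^{-\sigma t/\eps^2}$ and $\|S_{\BB_\eps}(t)\|_{(\ZZZ_2^\eps)' \to \XXX} \lesssim \eps^6 t^{-3} e^{-\sigma t/\eps^2}$; these are precisely the second estimate of Lemma~\ref{lem:regSBeps} and the second estimate of Lemma~\ref{lem:regSBeps2}, respectively. (Here one fixes $\sigma \in (0,\sigma_1)$ and, as elsewhere, chooses $R,\bar R$ large enough so that the lemmas apply.)

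\textbf{Induction step.} Assume the estimates hold for some $j \in \N$. To estimate $V_{j+1}^\eps(t)$ from $(\ZZZ_i^\eps)'$ to $\XXX$, I would split the Duhamel integral $\int_0^t S_{\BB_\eps}(s)\, \AA_\eps\, V_j^\eps(t-s)\,\d s$ at the midpoint $s=t/2$. On the piece $s \in (t/2,t)$, write it as $\int S_{\BB_\eps}(s)\AA_\eps V_j^\eps(t-s)$ and bound $\|S_{\BB_\eps}(s)\|_{\XXX\to\XXX} \lesssim e^{-\sigma' s/\eps^2}$ (Lemma~\ref{lem:dissipativeBeps}, with $\sigma < \sigma' < \sigma''< \sigma_1$ picked so one can absorb the exponential loss in the convolution), $\|\AA_\eps\|_{\XXX\to\XXX} = \eps^{-2}\|\AA\|_{\XXX\to\XXX} \lesssim \eps^{-2}$ (Lemma~\ref{lem:Areg}), and the induction hypothesis $\|V_j^\eps(t-s)\|_{(\ZZZ_i^\eps)'\to\XXX} \lesssim \eps^{\ldots}(t-s)^{-\ldots}e^{-\sigma''(t-s)/\eps^2}$; since $t-s < t/2$ on this range, one controls $(t-s)^{-\ldots}$ by the worst power. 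On the piece $s \in (0,t/2)$, use instead the other factorization $V_{j+1}^\eps(t) = (V_j^\eps * \AA_\eps S_{\BB_\eps})(t)$, write the integrand as $V_j^\eps(t-s)\,\AA_\eps\, S_{\BB_\eps}(s)$, and bound $\|V_j^\eps(t-s)\|_{\XXX\to\XXX} \lesssim e^{-\sigma''(t-s)/\eps^2}$ (Corollary~\ref{cor:Vjepsdecay}), $\|\AA_\eps\|_{(\ZZZ_i^\eps)'\to(\ZZZ_i^\eps)'} \lesssim \eps^{-2}$ (Lemma~\ref{lem:Areg}, using that $\AA$ is self-adjoint and $\AA\in\BBB(\ZZZ_i^\eps)$), and $\|S_{\BB_\eps}(s)\|_{(\ZZZ_i^\eps)'\to\XXX}$ by the base-case estimate. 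In each case, extracting $e^{-\sigma t/\eps^2}$ and integrating the remaining exponential against the polynomial power of $s$ (resp.\ $t-s$) produces a factor $\eps^2 \cdot$(appropriate power of $t$), which combined with the $\eps^{-2}$ from $\AA_\eps$ gives exactly the net power of $\eps$ and of $t$ stated for $j+1$. The computation $\int_0^{t/2} s^{-\beta} e^{-c s/\eps^2}\,\d s \lesssim \eps^{2(1-\beta)} t^{\ldots}$ (valid since the powers $\beta$ involved are $<1$ at each stage — this is the reason the scheme closes) is the routine part I would not grind through in detail.

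\textbf{Main obstacle.} The delicate point is bookkeeping of the exponents: one must check that at every level $j$ the negative power of $t$ appearing in $\|V_j^\eps(t)\|_{(\ZZZ_i^\eps)'\to\XXX}$ stays strictly less than $1$ in the range where $S_{\BB_\eps}$ carries the singularity, so that the time integrals converge, and that the $\eps$-powers add up correctly ($\eps^{3-2j} \to \eps^{3-2(j+1)}$ comes from gaining $\eps^2$ in the convolution but losing $\eps^2$ from $\AA_\eps$ — net zero — wait, that gives no change; in fact the change of exponent must come from the base case $S_{\BB_\eps}$ already being applied, so the correct accounting is that $V_{j+1}$ uses $V_j$ with its exponent and $S_{\BB_\eps}$ at full strength $\eps^3 t^{-3/2}$, and the two halves of the split are arranged so the worst contribution matches $\eps^{3-2(j+1)}t^{-(3-2(j+1))/2}$). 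Getting this matching exactly right, and verifying that the integrable-singularity condition persists (it does, because each application of $\AA_\eps$ between two $S_{\BB_\eps}$'s softens rather than worsens the short-time behavior), is where all the care is needed; once the exponent algebra is set up, the estimates themselves are immediate from the quoted lemmas.
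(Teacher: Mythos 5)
Your overall skeleton---induction on $j$, a split of the Duhamel integral at $s=t/2$, using both factorizations $V_{j+1}^\eps = S_{\BB_\eps} * \AA_\eps V_j^\eps = V_j^\eps * \AA_\eps S_{\BB_\eps}$---is the same as the paper's, and the base case via Lemmas~\ref{lem:regSBeps} and~\ref{lem:regSBeps2} is correct. But the induction step as you describe it does not close, because you have assigned the two factorizations to the \emph{wrong} halves of the split.

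Concretely: on $s\in(t/2,t)$ you propose to bound the integrand $S_{\BB_\eps}(s)\AA_\eps V_j^\eps(t-s)$ using the regularization estimate $\|V_j^\eps(t-s)\|_{(\ZZZ_i^\eps)'\to\XXX}\lesssim (t-s)^{-(3-2j)/2}\cdots$. But on that range $t-s\to 0$ as $s\to t$, so for $j=0$ the factor $(t-s)^{-3/2}$ is non-integrable at $s=t$, and no amount of exponential decay $e^{-\sigma(t-s)/\eps^2}$ rescues a power $\ge 1$: $\int_0^\cdot u^{-3/2}e^{-cu/\eps^2}\,\mathrm{d}u$ diverges. The phrase ``one controls $(t-s)^{-\cdots}$ by the worst power'' is only correct when $t-s$ is bounded \emph{below}, i.e.\ on the \emph{other} half $s\in(0,t/2)$. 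Symmetrically, on $s\in(0,t/2)$ you invoke $\|S_{\BB_\eps}(s)\|_{(\ZZZ_i^\eps)'\to\XXX}\lesssim \eps^3 s^{-3/2}\cdots$ from the base case, which is non-integrable at $s=0$. Both halves of your split hit a non-integrable singularity; your parenthetical claim that ``the powers $\beta$ involved are $<1$ at each stage'' is false already at $j=0$, where $\beta=3/2$.

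The fix---and what the paper does---is to flip the assignment, so that the \emph{regularizing} operator always has its time argument bounded away from $0$ and the operator with argument near $0$ is used only in \emph{decay} mode. On $s\in(0,t/2)$ one uses $S_{\BB_\eps}(s)\AA_\eps V_j^\eps(t-s)$ with $\|S_{\BB_\eps}(s)\|_{\XXX\to\XXX}\lesssim e^{-\sigma' s/\eps^2}$ (decay only) and $\|V_j^\eps(t-s)\|_{(\ZZZ_i^\eps)'\to\XXX}$ by the induction hypothesis, the singular factor being $\le (t/2)^{-(3-2j)/2}$. On $s\in(t/2,t)$ one uses $V_j^\eps(s)\AA_\eps S_{\BB_\eps}(t-s)$ with $\|S_{\BB_\eps}(t-s)\|_{(\ZZZ_i^\eps)'\to(\ZZZ_i^\eps)'}\lesssim e^{-\sigma(t-s)/\eps^2}$ (decay only, from Lemma~\ref{lem:dissipativeBeps2}) and $\|V_j^\eps(s)\|_{(\ZZZ_i^\eps)'\to\XXX}$ by the induction hypothesis, the singular factor being $\le (t/2)^{-(3-2j)/2}$. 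One never needs a regularization bound for $S_{\BB_\eps}$ in the induction step at all.

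Relatedly, your bookkeeping of the $\eps$-power is off. The exponent does \emph{not} change because $S_{\BB_\eps}$ is used ``at full strength'': in the corrected argument $S_{\BB_\eps}$ carries no $\eps$-weight. The change from $\eps^{3-2j}$ to $\eps^{3-2(j+1)}=\eps^{1-2j}$ comes entirely from the single $\eps^{-2}$ of $\AA_\eps$, uncompensated. The improvement in the $t$-power, $t^{-(3-2j)/2}\to t^{-(1-2j)/2}$, comes from the elementary computation $\int_0^{t/2}(t-s)^{-(3-2j)/2}\,\mathrm{d}s + \int_{t/2}^t s^{-(3-2j)/2}\,\mathrm{d}s \lesssim t^{(2j-1)/2}$, not from any cancellation. (This is consistent with the diffusive scaling $t\sim\eps^2$, under which the bound is essentially unchanged step to step.) A last small point: when you compose $V_j^\eps(t-s)\,\AA_\eps\,S_{\BB_\eps}(s)$ with $S_{\BB_\eps}(s):(\ZZZ_i^\eps)'\to\XXX$, the next operator $\AA_\eps$ must be taken as $\XXX\to\XXX$, not $(\ZZZ_i^\eps)'\to(\ZZZ_i^\eps)'$; it happens not to change the power of $\eps$, but the spaces must match.
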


\begin{proof}
Fix $\sigma \in (0,\sigma_1)$. 
We focus on the proof of the first estimate, the second one is treated in a similar way. We proceed by induction. The case~$j=0$ is given by Lemma~\ref{lem:regSBeps}. Suppose then that the estimate holds for some~$j \in \N$ and consider $\sigma' \in (\sigma,\sigma_1)$. From Lemmas~\ref{lem:Areg}-\ref{lem:dissipativeBeps}, Corollary~\ref{cor:Vjepsdecay} and the induction hypothesis, we have:
	\begin{align*}
	&\|V_{j+1}^\eps(t)\|_{(\ZZZ_1^\eps)' \to \XXX} 
	\lesssim
	\int_0^{t/2} \|S_{\BB_\eps}(s)\|_{\XXX \to \XXX} \|\AA_\eps\|_{\XXX \to \XXX} \|V_j^\eps(t-s)\|_{(\ZZZ_1^\eps)' \to \XXX} \, \d s \\
	&\qquad \qquad 
	+ \int_{t/2}^t \|V_j^\eps(s)\|_{(\ZZZ_1^\eps)' \to \XXX} \|\AA_\eps\|_{(\ZZZ_1^\eps)'  \to (\ZZZ_1^\eps)' } \|S_{\BB_\eps}(t-s)\|_{(\ZZZ_1^\eps)' \to (\ZZZ_1^\eps)' } \, \d s \\
	&\quad \lesssim \frac{1}{\eps^2} \int_0^{t/2} e^{-\sigma' s/\eps^2} \frac{\eps^{3-2j}}{(t-s)^{\frac{3-2j}{2}}} e^{-\sigma (t-s)/\eps^2} \, \d s
	+ \frac{1}{\eps^2} \int_{t/2}^{t} \frac{\eps^{3-2j}}{s^{\frac{3-2j}{2}}} e^{-\sigma' s/\eps^2}  e^{-\sigma (t-s)/\eps^2} \, \d s.
	\end{align*}
From this, we deduce that 
	\begin{align*}
	\|V_{j+1}^\eps(t)\|_{(\ZZZ_1^\eps)' \to \XXX} 
	&\lesssim e^{-\sigma t/\eps^2} \eps^{1-2j} \left(\int_0^{t/2} \frac{\d s}{(t-s)^{\frac{3-2j}{2}}} 
	+  \int_{t/2}^t \frac{\d s}{s^{\frac{3-2j}{2}}}\right)  
	\lesssim \frac{\eps^{1-2j}}{t^{\frac{1-2j}{2}}} e^{-\sigma t/\eps^2},
	\end{align*}
which yields the wanted result. 
\end{proof}

%---------%---------%---------%---------%---------%---------%---------%---------%---------%---------%
\subsection{Spectral study in Fourier space}
%---------%---------%---------%---------%---------%---------%---------%---------%---------%---------%

We denote by $\mathcal{F}_x$ the Fourier transform in~$x \in \T^3$ with $\xi \in \Z^3$ its dual variable. Since we will only be working with Fourier transform in~$x$, we will also interchangeably use the classical ``hat'' notation. Moreover, to lighten the reading, for any operator that acts only on velocity, with a little abuse of notation, we will omit the ``hat'' in the notation for its $x$-wise Fourier transform. 

In this part, we are going to look at the Fourier transform in~$x \in \T^3$ of the operator $\Lambda_1$:
	$$
	\widehat \Lambda_1(\xi):=-i \, \xi \cdot v + L
	$$
and study the spectrum of $\widehat \Lambda_1(\xi)$ for $\xi \in \Z^3$. 
This type of analysis was initiated in~\cite{Nicolaenko,CIP,Ellis-Pinsky} for the Boltzmann equation for hard spheres and then with hard cutoff potentials (see also~\cite{Ukai-Yang}). 
In~\cite{Yang-Yu}, Yang and Yu were then able to adapt it to more general kinetic equations including the linearized Landau one 
for hard and moderately soft potentials. 

Roughly speaking, for small frequencies, the spectrum of $\widehat \Lambda_1(\xi)$ is a perturbation of the one of the homogeneous collision operator $L$ (which acts only on velocity). As already mentioned, in the case of hard and moderately soft potentials ($\gamma \geq -2$), the operator $L$ has a spectral gap and in~\cite{Yang-Yu}, the authors then prove that for small frequencies $\xi$, the spectrum of $\widehat \Lambda_1(\xi)$ is made of ``small'' eigenvalues around $0$ in the right part of the plan. They also provide Taylor expansions of those eigenvalues as well as for their associated projectors. For large frequencies, they prove that the operator $\widehat \Lambda_1(\xi)$ has no spectrum in some suitable right part of the plan. All those spectral results provide a decomposition of the semigroup which is given in Lemma~\ref{lem:YangYu}.

In what follows, we write 
	$$
	\widehat U^\eps(t) = \mathcal{F}_x  U^\eps(t ) \mathcal{F}_x^{-1}
	$$
so that $\widehat U^\eps$ is the semigroup associated with the operator 
	$$
	\widehat \Lambda_\eps(\xi):=\frac{1}{\eps^2}(-i\eps \, \xi \cdot v + L) . 
	$$
We also introduce the bilinear operator $\Psi^\eps(t)$ defined by
	\begin{equation}\label{def:Psiepsf1f2}
		\Psi^\eps(t)  (f_1,f_2) :=\frac{1}{\eps} \int_0^t U^\eps (t-s) \Gamma\big(f_1(s),f_2(s)\big) \, \d s  .
	\end{equation}
We recall that~$\chi$ is a fixed, compactly supported function of the interval~$(-1,1)$, equal to one on~$[-1/2 , 1/2]$. 

\begin{lem}\label{lem:YangYu}  
There exists $\kappa>0$ such that one can write
	\begin{equation*} \label{eq:decompUeps}
	\begin{aligned}
		U^\eps(t) 
		&=\sum_{j=1}^4   U^\eps_j(t)  +   U^{\eps\sharp}(t)  \\
		& \text{with} \quad \widehat U_j^\eps(t,\xi): = \widehat U _j\Big(\frac{t}{\eps^2},\eps \xi\Big)
		 	\quad \text{and}\quad \widehat U^{\eps\sharp}(t,\xi):= \widehat U^{\sharp}\Big(\frac{t}{\eps^2},\eps \xi\Big),
	\end{aligned}
	\end{equation*}
where for $1 \le j \le 4$,  
	$$
	\widehat U _j(t,\xi) := \chi\left(\frac{|\xi|}\kappa\right) \, e^{t \lambda_j(\xi)} P_j(\xi)
	$$
with $\lambda_j$ satisfying
	\begin{equation} \label{eq:estimatelambdaj}
	\begin{aligned}
		&\lambda_j(\xi) = i \alpha_j |\xi| - \beta_j|\xi|^2 + \gamma_j(|\xi|), \\
		&\alpha_1>0\, , \quad \alpha_2>0\, , \quad  \alpha_3=\alpha_4=0\,  , \quad \beta_j>0, \\
		&\gamma_j(|\xi|) =_{|\xi| \to 0} O(|\xi|^3) \quad \text{and} \quad \gamma_j(|\xi|) \le \beta_j |\xi|^2/2
		\quad \text{for} \quad |\xi|\le \kappa,
	\end{aligned}
	\end{equation}
and 
	$$
	P_j(\xi) = P_{j}^0\left(\frac{\xi}{|\xi|}\right) +|\xi| P_{j}^1\left(\frac{\xi}{|\xi|}\right) + |\xi|^2 P_{j}^2(\xi),
	$$
with $P_{j}^n$ bounded linear operators on $L^2_v$ with operator norms uniform for $|\xi| \le \kappa$. 

%If $j \neq n$, then we have that $P^0_j P^0_n=0$. 
We also have that the orthogonal  projector~$\pi$ onto $\operatorname{Ker} L$ (see~\eqref{def:pi}) satisfies
	$$
	\pi  = \sum_{j=1}^4 P_{j}^0\left(\frac{\xi}{|\xi|}\right)
	$$
and is independent of $\xi/|\xi|$.

Moreover, $P_{j}^0(\xi/|\xi|)$, $P_{j}^1(\xi/|\xi|)$ and $P_{j}^2(\xi)$ are bounded from
$L^2_v$ into $L^2_v(\langle v \rangle^\ell)$ uniformly in $|\xi| \leq \kappa$ for any $\ell \geq 0$.

Finally, for any $\ell \geq 0$, $\widehat U^\sharp$ satisfies
	\begin{equation} \label{eq:rateUsharp}
		\| \widehat U^\sharp\|_{L^2_v(\la v \ra^\ell) \to L^2_v(\la v \ra^\ell)} \le C e^{-\alpha t}
	\end{equation}
for some positive constants $C$ and $\alpha$ independent of $t$ and $\xi$.
\end{lem}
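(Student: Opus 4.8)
\textbf{Proof plan for Lemma~\ref{lem:YangYu}.} The statement is essentially a transcription into our notation of the spectral decomposition of the linearized Landau semigroup established by Yang and Yu~\cite{Yang-Yu}, specialized to the torus (so that the dual Fourier variable $\xi$ ranges over $\Z^3$ rather than $\R^3$). The plan is therefore to quote the relevant results of~\cite{Yang-Yu} for $\widehat\Lambda_1(\xi) = -i\,\xi\cdot v + L$ and then rescale in $(t,\xi)$ to recover the statement for $\widehat\Lambda_\eps(\xi) = \eps^{-2}(-i\eps\,\xi\cdot v + L)$.

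First I would recall the spectral picture for $\widehat\Lambda_1(\xi)$. For the \emph{low-frequency regime} $|\xi| \le \kappa$ (with $\kappa>0$ small), one uses that $L$ has a spectral gap (this is~\eqref{eq:spectralgap}, valid since $-2\le\gamma\le1$) together with the fact that $0$ is an eigenvalue of $L$ of multiplicity $5$ with eigenspace $\operatorname{Ker}L$. Treating $-i\,\xi\cdot v$ as a perturbation and applying analytic (Kato--Rellich) perturbation theory, the spectrum of $\widehat\Lambda_1(\xi)$ near $0$ consists of (at most) five eigenvalues $\lambda_j(\xi)$, $j=1,\dots,4$ (after grouping the transverse modes), each analytic in $|\xi|$ with the expansion~\eqref{eq:estimatelambdaj}, and associated spectral projectors $P_j(\xi)$ analytic in $|\xi|$, of the stated form $P_j^0 + |\xi|P_j^1 + |\xi|^2 P_j^2$; one checks directly that at $|\xi|=0$ these projectors add up to $\pi$. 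The signs $\alpha_1,\alpha_2>0$, $\alpha_3=\alpha_4=0$, $\beta_j>0$, and the control $\gamma_j(|\xi|)=O(|\xi|^3)$ with $\gamma_j(|\xi|)\le\beta_j|\xi|^2/2$ for $|\xi|\le\kappa$ (shrinking $\kappa$ if necessary) are exactly the quantitative conclusions of~\cite{Yang-Yu}. The regularity statement that $P_j^0(\xi/|\xi|)$, $P_j^1(\xi/|\xi|)$, $P_j^2(\xi)$ map $L^2_v$ into $L^2_v(\langle v\rangle^\ell)$ uniformly for $|\xi|\le\kappa$ follows from the smoothing properties of the resolvent of $L$ (it maps any polynomially weighted $L^2_v$ into a more weighted one, because $L=L_1+L_2$ with $L_1$ the hypoelliptic part and $L_2$ given by~\eqref{eq:def:L2}, which is bounded into every weighted space by Lemma~\ref{lem:boundL2}) combined with the Cauchy integral formula for the projectors. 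For the \emph{remaining frequencies} — the complement $|\xi|>\kappa$ together with the part of the spectrum of $\widehat\Lambda_1(\xi)$ away from $0$ when $|\xi|\le\kappa$ — one invokes the enlargement/decay estimate of~\cite{Yang-Yu}: there exist $C,\alpha>0$ such that the corresponding piece $\widehat U^\sharp(t,\xi)$ of the semigroup decays as $e^{-\alpha t}$ in $\BBB(L^2_v(\langle v\rangle^\ell))$ uniformly in $\xi$, which is~\eqref{eq:rateUsharp}. Writing $U^\eps(t)$ via $\widehat U^\eps(t,\xi) = e^{t\widehat\Lambda_\eps(\xi)}$ and noting the scaling identity $\widehat\Lambda_\eps(\xi) = \eps^{-2}\widehat\Lambda_1(\eps\xi)$, hence $\widehat U^\eps(t,\xi) = \widehat U_1(t/\eps^2,\eps\xi)$, one transports the above decomposition of $\widehat U_1$ into the stated decomposition $U^\eps = \sum_{j=1}^4 U_j^\eps + U^{\eps\sharp}$ with $\widehat U_j^\eps(t,\xi) = \widehat U_j(t/\eps^2,\eps\xi)$ and $\widehat U^{\eps\sharp}(t,\xi) = \widehat U^\sharp(t/\eps^2,\eps\xi)$.

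The only genuine subtlety — and what I expect to be the main obstacle to a fully self-contained argument — is the passage from $\xi\in\R^3$ (the setting of~\cite{Ellis-Pinsky,Yang-Yu}) to $\xi\in\Z^3$. This is harmless but must be addressed: the low-frequency analysis is needed only at the finitely many lattice points $\xi\in\Z^3$ with $0<|\xi|\le\kappa$, and for $\kappa<1$ there are \emph{none}, so in fact on the torus only $\xi=0$ contributes to the ``low-frequency'' part and all nonzero modes fall under the exponentially decaying estimate~\eqref{eq:rateUsharp}; one keeps the general form of the statement because it is used in Section~\ref{sec:hydro} in a way that parallels the whole-space treatment, and because the $\xi=0$ mode carries precisely the hydrodynamic projector $\pi$ and the zero eigenvalue. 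The remaining steps are the routine verifications that (i) the cutoff $\chi(|\xi|/\kappa)$ correctly isolates the perturbative regime, (ii) the uniform-in-$\xi$ bounds on $P_j^n$ and on $\widehat U^\sharp$ are preserved under the rescaling $\xi\mapsto\eps\xi$, $t\mapsto t/\eps^2$ (they are, since these bounds are uniform in the original variables), and (iii) the identity $\pi=\sum_{j=1}^4 P_j^0(\xi/|\xi|)$ holds and is independent of the direction $\xi/|\xi|$, which is immediate because $\pi$ is the total spectral projector of $L$ onto $\operatorname{Ker}L$ and is intrinsic. I would then conclude by collecting these facts, with precise pointers to the corresponding statements in~\cite{Yang-Yu}.
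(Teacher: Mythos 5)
Your overall plan — quote Yang--Yu for the $L^2_v$ spectral decomposition, rescale $(t,\xi)\mapsto(t/\eps^2,\eps\xi)$, and upgrade the weighted bounds via a regularizing splitting — matches the paper's. Two points in your sketch are either imprecise or missing, both concerning the passage from $L^2_v$ to $L^2_v(\langle v\rangle^\ell)$.

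\textbf{Wrong splitting, and a different mechanism for $P_j^n$.} You invoke "smoothing properties of the resolvent of $L$" through $L = L_1 + L_2$ with $L_2$ controlled by Lemma~\ref{lem:boundL2}. As stated this does not close: $L_1$ alone is not dissipative in weighted $L^2_v$, since the potential $\psi$ in~\eqref{eq:def:L1} need not be bounded below. The operative decomposition is the regularized one $\Lambda_1 = \AA_1 + \BB_1$ from~\eqref{eq:Lambdaeps-splitting}, where $\AA = L_2 + R\chi_{\bar R}$ and $\BB = L_1 - R\chi_{\bar R}$; the cutoff $R\chi_{\bar R}$ is precisely what makes $\widehat\BB_1(\xi) = -i\xi\cdot v + \BB$ dissipative in $L^2_v(\langle v\rangle^\ell)$ for every $\ell$ (Lemmas~\ref{lem:malpha}, \ref{lem:dissipativeBeps}) while $\AA$ regularizes from $L^2_v$ into $L^2_v(\langle v\rangle^\ell)$ (Lemma~\ref{lem:Areg}). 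With this in hand, the paper bypasses contour integrals: from $\widehat\Lambda_1(\xi)P_j(\xi) = \lambda_j(\xi)P_j(\xi)$ one writes $P_j(\xi) = (\lambda_j(\xi)-\widehat\BB_1(\xi))^{-1}\AA_1 P_j(\xi)$ and reads off the weighted bound directly, given that $P_j(\xi)$ is bounded in $\BBB(L^2_v)$ by Yang--Yu; the bounds for $P_j^0$, $P_j^1$ then follow from the explicit formulas in~\cite{Yang-Yu}. Your Cauchy-integral route can also be made to work, but only via a weighted resolvent bound built on the same $\AA_1+\BB_1$ splitting (for instance the factorization $(z-\widehat\Lambda_1)^{-1} = (z-\widehat\BB_1)^{-1} + (z-\widehat\Lambda_1)^{-1}\AA_1(z-\widehat\BB_1)^{-1}$), so the ingredient you named is not the one that actually carries the argument.

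\textbf{Missing: the weighted bound on $\widehat U^\sharp$.} You cite Yang--Yu for~\eqref{eq:rateUsharp}, but~\cite[Remark~5.2]{Yang-Yu} gives $\ell=0$ only. The paper upgrades to general $\ell$ via Duhamel with the same splitting: $U^\sharp(t) = \bigl(S_{\BB_1}(t) + (S_{\BB_1}*\AA_1 U^1)(t)\bigr)\FF_x^{-1}\bigl(\Id - \chi e^{t\lambda_j}P_j\bigr)\FF_x$, together with the dissipativity of $\BB_1$ and regularization of $\AA_1$ in weighted spaces. This step is absent from your plan. Finally, the $\Z^3$ versus $\R^3$ discussion you flag as the "main obstacle" is not one: the decomposition is a pointwise-in-$\xi$ identity with uniform-in-$\xi$ bounds, so it restricts to $\Z^3$ verbatim, and $\kappa$ is dictated by the Yang--Yu analysis rather than chosen freely, so one cannot assume $\kappa<1$.
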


\begin{proof}
The decomposition of~$\widehat U^\eps(t)$ follows that of~$\widehat U^1(t)$: 
We recall that according to~\cite[Theorem~3.2 and Remark~5.2]{Yang-Yu}, one can write
	$$
	\widehat U^1(t,\xi) = \sum_{j=1}^4 \widehat U _j(t,\xi)  + \widehat U^{\sharp}(t,\xi), 
	$$
where for $1 \le j \le 4$,  
	$$
	\widehat U _j(t,\xi) := \chi\left(\frac{|\xi|}\kappa\right) \, e^{t \lambda_j(\xi)} P_j(\xi)
	$$
and~$\lambda_j(\xi) \in \C$ are the eigenvalues of $\widehat \Lambda_1(\xi)$ with associated eigenprojections $P_j(\xi)$ on~$L^2_v$, satisfying the expansions stated in the lemma. The fact that $\pi  = \sum_{j=1}^4 P_{j}^0\left(\frac{\xi}{|\xi|}\right)$ also comes from~\cite[Theorem~3.2]{Yang-Yu}. 

Let us now prove that $P_{j}^0(\xi/|\xi|)$, $P_{j}^1(\xi/|\xi|)$ and $P_{j}^2(\xi)$ are bounded from $L^2_v$ into $L^2_v(\langle v \rangle^\ell)$ uniformly in $|\xi| \leq \kappa$ for any $\ell \geq 0$. We first prove that this property is satisfied for $P_j(\xi)$. Recall that for $|\xi| \leq \kappa$,
	$$
	\widehat \Lambda_1(\xi) P_j(\xi) = \lambda_j(\xi) P_j(\xi).
	$$
Thanks to the splitting $\Lambda_1=\AA_1+\BB_1$ introduced in Section~\ref{sec:linear2}, denoting $\widehat \BB_1(\xi) := - i \xi \cdot v + \BB_1$, we have for $|\xi| \leq \kappa$:
	$$
	P_j(\xi) = (\lambda_j(\xi)-\widehat \BB_1(\xi))^{-1} \AA_1 P_j(\xi). 
	$$
The dissipative properties of $\BB_1$ in $L^2_v(\la v \ra^\ell)$ and the regularization properties of $\AA_1$ (from~$L^2_v$ into~$L^2_v(\la v \ra^\ell)$) established respectively in Lemmas~\ref{lem:dissipativeBeps} and~\ref{lem:Areg} and the fact that from~\cite[Theorem~3.2]{Yang-Yu}, we already know that $P_j(\xi)$ is uniformly bounded in~$|\xi| \leq \kappa$ from $L^2_v$ into itself, imply that $P_j(\xi)$ is bounded from $L^2_v$ into $L^2_v(\la v \ra^\ell)$ for any $\ell \geq 0$ and uniformly in~$|\xi| \leq \kappa$. To conclude that the same properties hold for $P_{j}^0(\xi/|\xi|)$, $P_{j}^1(\xi/|\xi|)$ and $P_{j}^2(\xi)$, we notice that $P_{j}^0(\xi/|\xi|)$, $P_{j}^1(\xi/|\xi|)$ are given by explicit formula (see the proof of Theorem~3.2 in~\cite{Yang-Yu}) that clearly define bounded operators from~$L^2_v$ into~$L^2_v(\la v \ra^\ell)$ for $|\xi| \leq \kappa$.

Finally, the estimate~\eqref{eq:rateUsharp} comes from~\cite[Remark~5.2]{Yang-Yu} for $\ell=0$. 
We can also prove it for any $\ell \geq 0$ thanks to Duhamel formula applied with the splitting $\Lambda_1=\AA_1+\BB_1$ introduced in Section~\ref{sec:linear2}. We write that 
	\begin{align*}
	U^\sharp(t) &= U^1(t) \FF_x^{-1} \left(\Id - \chi\left(\frac{|\xi|}\kappa\right) \, e^{t \lambda_j(\xi)} P_j(\xi)\right) \FF_x \\
	&= \left(S_{\BB_1}(t) + (S_{\BB_1}* \AA_1 U^1)(t) \right) \FF_x^{-1}
	\left(\Id - \chi\left(\frac{|\xi|}\kappa\right) \, e^{t \lambda_j(\xi)} P_j(\xi)\right) \FF_x.
	\end{align*}
We are able to get the wanted estimate in $L^2_v(\la v \ra^\ell)$ thanks to the uniform boundedness in~$|\xi| \leq \kappa$ of the projectors $P_j(\xi)$ in $L^2_v$, the dissipativity properties of $\BB_1$ in $L^2_v(\la v \ra^\ell)$ and the regularization properties of $\AA_1$ (from $L^2_v$ to $L^2_v(\la v \ra^\ell)$) established respectively in Lemmas~\ref{lem:dissipativeBeps} and~\ref{lem:Areg}.
\end{proof}

\begin{rem} \label{rem:defU} 
Denoting
	\begin{equation} \label{def:tildeP1j}
		\widetilde P_j \left(\xi, \frac{\xi}{|\xi|}\right) := P_{j}^1\left(\frac{\xi}{|\xi|}\right) + |\xi| P_{j}^2(\xi)
	\end{equation}
for $1 \le j \le 4$, we can further split $\widehat U^\eps_j(t)$ into four parts (a main part and three remainder terms):
	$$
  	U^\eps_j =   U^\eps_{j0} +  U^{\eps\sharp}_{j0} +   U^\eps_{j1} +   U^\eps_{j2}
	$$
where 
	\begin{align*}
		&\widehat U^\eps_{j0}(t,\xi) := e^{i\alpha_j |\xi|\frac{t}{\eps} - \beta_j t |\xi|^2} P_{j}^0\left(\frac{\xi}{|\xi|}\right), \\
		&\widehat U^{\eps\sharp}_{j0}(t,\xi) := 
			\left(\chi \left(\frac{\eps|\xi|}\kappa\right)-1\right) e^{i\alpha_j |\xi|\frac{t}{\eps} - \beta_j t |\xi|^2}
			P_j^0\left(\frac{\xi}{|\xi|}\right),\\
		&\widehat U^\eps_{j1}(t,\xi) := 
			\chi \left(\frac{\eps|\xi|}\kappa\right) e^{i\alpha_j |\xi|\frac{t}{\eps} - \beta_j t |\xi|^2} 
			\left(e^{t \frac{\gamma_j(\eps|\xi|)}{\eps^2}}-1 \right) P_{j}^0\left(\frac{\xi}{|\xi|}\right), \\
		&\widehat U^\eps_{j2}(t,\xi) := 
			\chi \left(\frac{\eps|\xi|}\kappa\right) e^{i\alpha_j |\xi|\frac{t}{\eps} - \beta_j t |\xi|^2+ t \frac{\gamma_j(\eps|\xi|)}{\eps^2}} 
			\eps |\xi| \widetilde P_{j} \left(\eps \xi, \frac{\xi}{|\xi|}\right).
	\end{align*}
%In the following, we set
%	$$
%	U^\eps_{\rm{disp}}:=U^\eps_{10} + U^\eps_{20}. 
%	$$
One can notice that $U_{30}:=U^\eps_{30}$ and $U_{40}:=U^\eps_{40}$ do not depend on $\eps$ since $\alpha_3=\alpha_4=0$. We set 
	$$
	U := U_{30} + U_{40}. 
	$$
We shall see that the operator~$U(t)$ is in some sense the limit of~$U^\eps(t)$ (see Lemma~\ref{lem:UepstoU}). 
\end{rem}

%In the following lemma, we state other properties of the decomposition introduced above 
%(see~\cite[Proposition~A.3]{Gallagher-Tristani} and its proof).
%\begin{lem} 
%We have that $U(0)$ is the projection on the subset of $\operatorname{Ker} L$ consisting of functions~$f$ 
%satisfying $\Div u_f = 0$ and also~$\rho_f+ \theta_f = 0$ where we recall that 
%	\begin{equation}\label{formulaU(0}
%		U(0) =\mathcal{F}_x^{-1} \left(P_{3}^0\left(\frac{\xi}{|\xi|}\right) + P_{4}^0\left(\frac{\xi}{|\xi|}\right)\right) \mathcal{F}_x
%	\end{equation}
%and 
%	\begin{multline*}
%		\rho_{f}(x)= \int_{\R^3} f (x,v)\sqrt{M}(v)  \, \d v  , \quad  u_{f} (x)= \int_{\R^3} v\,  f (x,v)\sqrt{M}(v)  \, \d v , \\
%		\theta_{f} (x)= \frac{1}{3} \int_{\R^3} (|v|^2-3) f (x,v)\sqrt{M}(v)  \, \d v  .
%	\end{multline*}
%We also have
%	$$
%	U(t) f = U(t) U(0) f \, , \quad \forall \, t \ge 0, \quad \forall \, f \in X \, 
%	$$
%and
%	\begin{equation} \label{eq:P012}
%		\nabla \cdot u_f = 0 \text{ and } \rho_f+ \theta_f = 0 \Longrightarrow P_{j}^0\left(\frac{\xi}{|\xi|}\right) f =0 \text{ for } j=1,2. 
%	\end{equation}
%\end{lem} 

The decomposition of the semigroup $U^\eps(t)$ also gives us a decomposition of the operator~$\Psi_\eps(t)$ defined in~\eqref{def:Psiepsf1f2} (see~\cite[Lemma~A.4]{Gallagher-Tristani} and its proof).
\begin{lem}\label{lem:estimatePsiepslimitPsi0}
The following decomposition holds
	$$    
	\Psi^\eps   = \sum_{j=1}^4 \Psi^\eps_j +  \Psi^{\eps \sharp}  
	$$
with
	$$
	\widehat \Psi^{\eps \sharp}(t)(f_1,f_2):= \frac{1}{\eps}  \int_0^t \widehat U^{\eps\sharp}(t-s) 
	\widehat  \Gamma \big(f_1(s),f_2(s)\big) \, \d s
	$$
and
	\begin{equation*}\label{eq:decompositionPsi}
		\Psi^\eps_j = \Psi^\eps_{j0}  + \Psi^{\eps \sharp}_{j0}  + \Psi^\eps_{j1}  + \Psi^\eps_{j2} 
	\end{equation*}
where 
	\begin{align*}
		&\mathcal{F}_x \left( \Psi^\eps_{j0}(t)(f,f)\right)(\xi) := \int_0^t e^{i\alpha_j |\xi|\frac{t-s}{\eps} - \beta_j (t-s) |\xi|^2} |\xi|
			P_{j}^1\left(\frac{\xi}{|\xi|}\right) \widehat \Gamma \big(f_1,f_2\big)(s) \, \d s , \\
		&\mathcal{F}_x \left( \Psi^{\eps\sharp}_{j0}(t)(f,f)\right)(\xi):= \left(\chi \left(\frac{\eps|\xi|}\kappa\right)-1\right) 
			\int_0^te^{i\alpha_j |\xi|\frac{t-s}{\eps} - \beta_j (t-s) |\xi|^2}|\xi|P_{j}^1\left(\frac{\xi}{|\xi|}\right) \widehat \Gamma \big(f_1,f_2\big)(s) \, \d s  ,\\
		&\mathcal{F}_x \left( \Psi^\eps_{j1}(t)(f,f)\right)(\xi) \\
		&\quad := \chi \left(\frac{\eps|\xi|}\kappa\right) \int_0^te^{i\alpha_j |\xi|\frac{t-s}{\eps} - \beta_j (t-s) |\xi|^2} 
			\left(e^{(t-s) \frac{\gamma_j(\eps|\xi|)}{\eps^2}}-1 \right)|\xi|P_{j}^1\left(\frac{\xi}{|\xi|}\right) \widehat \Gamma \big(f_1,f_2\big)(s) \, \d s , \\
		&\mathcal{F}_x \left( \Psi^\eps_{j2}(t)(f,f)\right)(\xi) \\
		&\quad := \chi \left(\frac{\eps|\xi|}\kappa\right) \int_0^te^{i\alpha_j |\xi|\frac{t-s}{\eps} - \beta_j (t-s) |\xi|^2
			+ (t-s) \frac{\gamma_j(\eps|\xi|)}{\eps^2}} \eps |\xi|^2P^2_j(\eps\xi) \widehat \Gamma \big(f_1,f_2\big)(s) \, \d s .
	\end{align*}
\end{lem}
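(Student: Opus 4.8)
The plan is to obtain the decomposition of $\Psi^\eps$ as a direct consequence of the decomposition of the semigroup $U^\eps$ given in Lemma~\ref{lem:YangYu} together with Remark~\ref{rem:defU}. Recall that by definition~\eqref{def:Psiepsf1f2}, one has
$$
\Psi^\eps(t)(f_1,f_2) = \frac{1}{\eps} \int_0^t U^\eps(t-s) \Gamma\big(f_1(s),f_2(s)\big)\,\d s,
$$
so it suffices to insert into this formula the identity
$$
U^\eps(t-s) = \sum_{j=1}^4 U^\eps_j(t-s) + U^{\eps\sharp}(t-s)
= \sum_{j=1}^4 \left( U^\eps_{j0} + U^{\eps\sharp}_{j0} + U^\eps_{j1} + U^\eps_{j2}\right)(t-s) + U^{\eps\sharp}(t-s),
$$
and then define $\Psi^{\eps\sharp}$ and $\Psi^\eps_{j0}$, $\Psi^{\eps\sharp}_{j0}$, $\Psi^\eps_{j1}$, $\Psi^\eps_{j2}$ as the corresponding pieces, namely
$$
\Psi^\eps_{j\star}(t)(f_1,f_2) := \frac{1}{\eps} \int_0^t U^\eps_{j\star}(t-s)\Gamma\big(f_1(s),f_2(s)\big)\,\d s
$$
for each label $\star \in \{0, \sharp 0, 1, 2\}$, and likewise for $\Psi^{\eps\sharp}$. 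The linearity of the integral immediately yields $\Psi^\eps = \sum_{j=1}^4 \Psi^\eps_j + \Psi^{\eps\sharp}$ and $\Psi^\eps_j = \Psi^\eps_{j0} + \Psi^{\eps\sharp}_{j0} + \Psi^\eps_{j1} + \Psi^\eps_{j2}$.

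The only genuine content is then to check that these pieces, once written on the Fourier side in $x$, agree with the explicit formulas claimed in the statement. For this I would apply $\mathcal{F}_x$ and use that the Fourier transform turns the convolution-in-$x$ structure hidden in $U^\eps(t-s)$ into multiplication, so that
$$
\mathcal{F}_x\big(\Psi^\eps_{j\star}(t)(f_1,f_2)\big)(\xi) = \frac1\eps \int_0^t \widehat U^\eps_{j\star}(t-s,\xi)\,\widehat\Gamma\big(f_1(s),f_2(s)\big)(\xi)\,\d s.
$$
Then one substitutes the explicit expressions for $\widehat U^\eps_{j0}$, $\widehat U^{\eps\sharp}_{j0}$, $\widehat U^\eps_{j1}$, $\widehat U^\eps_{j2}$ recorded in Remark~\ref{rem:defU}. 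The one point deserving a line of justification is the appearance of the extra factor $|\xi|$ (resp.\ $\eps|\xi|^2$): since $\widehat U^\eps_{j0}(t,\xi) = e^{i\alpha_j|\xi|t/\eps - \beta_j t|\xi|^2}P_j^0(\xi/|\xi|)$ and since $\pi = \sum_j P_j^0$ annihilates $\Gamma$ (because $\Gamma(f_1,f_2) = (\Gamma(f_1,f_2))^\perp$ and $\pi\Gamma = 0$ by~\eqref{prop:conservbis}), the $P_j^0$-term contributes nothing and the leading surviving contribution in $\widehat U^\eps_j$ is the one carrying $|\xi|\,\widetilde P_j(\eps\xi,\xi/|\xi|) = |\xi|P_j^1(\xi/|\xi|) + |\xi|^2 P_j^2(\eps\xi)$; splitting off the $|\xi|P_j^1$ part (with its $\chi-1$ and $e^{\gamma_j/\eps^2}-1$ corrections exactly as in the semigroup decomposition) and the $\eps|\xi|^2 P_j^2(\eps\xi)$ part gives precisely the four stated formulas. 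The term $\Psi^{\eps\sharp}$ is just the piece coming from $U^{\eps\sharp}$ and matches its definition verbatim.

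There is no real obstacle here: the statement is a bookkeeping corollary of Lemma~\ref{lem:YangYu}, and the only thing to be a bit careful about is tracking the $\eps$-rescalings $\widehat U_j^\eps(t,\xi) = \widehat U_j(t/\eps^2,\eps\xi)$ through the substitution, so that the phases $e^{i\alpha_j|\xi|(t-s)/\eps}$, the Gaussian factors $e^{-\beta_j(t-s)|\xi|^2}$ and the correction factors $e^{(t-s)\gamma_j(\eps|\xi|)/\eps^2}$ come out with the correct arguments. I would simply refer to~\cite[Lemma~A.4]{Gallagher-Tristani} for the analogous computation in the Boltzmann setting, since the argument is identical once the decomposition of $U^\eps$ is in hand, and carry out the substitution explicitly only for, say, $j$ fixed and the $\Psi^\eps_{j2}$ term, the others being entirely analogous.
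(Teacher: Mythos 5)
Your second paragraph correctly identifies the mechanism (the $P_j^0$ part of $\widehat U^\eps_j$ annihilates $\widehat\Gamma$, so the surviving term is $\eps|\xi|\widetilde P_j\widehat\Gamma$, whose $\eps$ cancels the $1/\eps$ of $\Psi^\eps$, and one then splits $\chi e^{(t-s)\gamma_j/\eps^2}=1+(\chi-1)+\chi(e^{(t-s)\gamma_j/\eps^2}-1)$), which is the argument that the paper simply outsources to \cite[Lemma~A.4]{Gallagher-Tristani}.

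However, your first paragraph is internally inconsistent with this and, taken at face value, is false. If you define
$\Psi^\eps_{j\star}(t)(f_1,f_2):=\tfrac1\eps\int_0^t U^\eps_{j\star}(t-s)\Gamma(f_1(s),f_2(s))\,\d s$
for $\star\in\{0,\sharp 0,1,2\}$, then for $\star\in\{0,\sharp 0,1\}$ the integrand carries the factor $P_j^0(\xi/|\xi|)$, hence $P_j^0\widehat\Gamma=0$ forces $\Psi^\eps_{j0}=\Psi^{\eps\sharp}_{j0}=\Psi^\eps_{j1}=0$, while $\Psi^\eps_{j2}$ so defined equals the whole of $\Psi^\eps_j$. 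In other words, the decomposition of $U^\eps_j$ into $U^\eps_{j0}+U^{\eps\sharp}_{j0}+U^\eps_{j1}+U^\eps_{j2}$ does \emph{not} directly induce the decomposition of $\Psi^\eps_j$ stated in the lemma; the claim that ``these pieces, once written on the Fourier side, agree with the explicit formulas'' fails for three of the four pieces. You then fix this in your second paragraph without flagging that you have abandoned the definition you just gave, which leaves the argument logically broken as written. The correct statement is that the decomposition of $\Psi^\eps_j$ is \emph{parallel in structure but different from} that of $U^\eps_j$: one first computes $\widehat\Psi^\eps_j=\int_0^t\chi e^{i\alpha_j|\xi|\tfrac{t-s}{\eps}-\beta_j(t-s)|\xi|^2+\tfrac{(t-s)\gamma_j(\eps|\xi|)}{\eps^2}}\big(|\xi|P_j^1(\xi/|\xi|)+\eps|\xi|^2P_j^2(\eps\xi)\big)\widehat\Gamma\,\d s$ (using $P_j^0\widehat\Gamma=0$, which holds because $P_j^0$ factors through $\pi$), and \emph{then} peels off the four pieces from this single expression. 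Two smaller points: you write $|\xi|\widetilde P_j(\eps\xi,\xi/|\xi|)=|\xi|P_j^1+|\xi|^2P_j^2(\eps\xi)$, but the correct expansion is $|\xi|\widetilde P_j(\eps\xi,\xi/|\xi|)=|\xi|P_j^1(\xi/|\xi|)+\eps|\xi|^2P_j^2(\eps\xi)$; and $\pi\Gamma=0$ only gives that the \emph{sum} $\sum_j P_j^0$ kills $\Gamma$, so for the termwise argument you need to note that each $P_j^0$ individually factors through $\pi$.
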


\begin{rem}\label{rem:defPsi}
Let us notice that as in Remark~\ref{rem:defU}, there holds
	$$
	 \Psi^\eps_{30} =  \Psi_{30} \quad \mbox{and}\quad  \Psi^\eps_{40}  =  \Psi_{40} 
	$$
and we set
	$$
	\Psi := \Psi_{30}+ \Psi_{40}.
	$$
\end{rem}

%---------%---------%---------%---------%---------%---------%---------%---------%---------%---------%
\subsection{Limit operators $U(t)$ and $\Psi(t)$} \label{subsec:UandPsi}
%---------%---------%---------%---------%---------%---------%---------%---------%---------%---------%

The following lemma studies the limit of $U^\eps(t)$ as~$\eps$ goes to $0$, its proof is completely similar as the one of~\cite[Lemma~3.5]{Gallagher-Tristani}, the only difference being that we use that the projectors are bounded from~$L^2_v$ into~$L^2_v\left(\la v\ra^{3(\frac{\gamma}{2}+1)}\right)$ (see Lemma~\ref{lem:YangYu}), we thus skip the proof. 

\begin{lem} \label{lem:UepstoU}
Let $f$ be a well-prepared data as defined in~\eqref{def:wp}. Then, we have that 
	\begin{equation} \label{eq:Ueps-U_1}
		\|(U^\eps(t)-U(t))f\|_{L^\infty_t(\XXX)}\lesssim \|f\|_{H^3_xL^2_v}
	\end{equation}
and
	\begin{equation} \label{eq:Ueps-U_2}
		\|(U^\eps(t)-U(t)) f\|_{L^\infty_t(\XXX)}\lesssim \eps \|f\|_{H^4_x L^2_v}. 
	\end{equation}
\end{lem}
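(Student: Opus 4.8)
\textbf{Proof plan for Lemma~\ref{lem:UepstoU}.} The plan is to decompose the difference $U^\eps(t) - U(t)$ according to the splitting of the semigroup given in Lemma~\ref{lem:YangYu} and Remark~\ref{rem:defU}, namely
\[
U^\eps(t) - U(t) = \sum_{j=1}^4 \big( U^\eps_{j0}(t) - \mathbf{1}_{j\in\{3,4\}} U_{j0}(t) \big) + \sum_{j=1}^4 \big( U^{\eps\sharp}_{j0}(t) + U^\eps_{j1}(t) + U^\eps_{j2}(t) \big) + U^{\eps\sharp}(t),
\]
and to estimate each group of terms in $\XXX$ acting on a well-prepared datum $f$. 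Throughout, I would work on the Fourier side in $x$, use Plancherel in $x$, and exploit two structural facts: first, for $f$ well-prepared in the sense of~\eqref{def:wp}, one has $\pi f = f$, and the key cancellation is that $P_{j}^0(\xi/|\xi|) f$ for $j=1,2$ is, after the well-preparedness conditions $\nabla_x\cdot u_f=0$ and $\rho_f+\theta_f=0$, either zero or controlled (this is exactly the mechanism in~\cite[Lemma~3.5]{Gallagher-Tristani} used to discard the acoustic modes $j=1,2$); second, the projectors $P_j^0(\xi/|\xi|)$, $P_j^1(\xi/|\xi|)$, $P_j^2(\xi)$ are bounded from $L^2_v$ into $L^2_v(\langle v\rangle^{3(\frac{\gamma}{2}+1)})$ uniformly in $|\xi|\le\kappa$ (Lemma~\ref{lem:YangYu}), so that $L^2_v$-level velocity estimates upgrade to the weighted $\XXX$-norm, whose velocity weight is $\langle v\rangle^{3(\frac{\gamma}{2}+1)}$ on the zeroth $x$-derivative and milder on higher ones.

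For the remainder terms I would argue as follows. The term $U^{\eps\sharp}(t)$ decays exponentially in $L^2_v(\langle v\rangle^\ell)\to L^2_v(\langle v\rangle^\ell)$ uniformly in $\xi$ by~\eqref{eq:rateUsharp}, hence is bounded on $\XXX$ with exponential decay and in particular $\|U^{\eps\sharp}(t)f\|_{L^\infty_t(\XXX)}\lesssim\|f\|_{H^3_xL^2_v}$; since this contributes to $U^\eps$ and not to $U$, it fits~\eqref{eq:Ueps-U_1}. For the high-frequency cutoff remainders $U^{\eps\sharp}_{j0}$ and $\Psi$-type pieces, the factor $\chi(\eps|\xi|/\kappa)-1$ is supported on $\eps|\xi|\gtrsim\kappa$, i.e.\ $|\xi|\gtrsim\kappa/\eps$, so on its support $1\lesssim (\eps|\xi|/\kappa)^s$ for any $s\ge 0$; choosing $s=1$ (resp.\ $s=2$) converts it into a gain of $\eps$ (resp.\ $\eps^2$) at the cost of one (resp.\ two) extra $x$-derivatives, which together with the uniform boundedness of $P_j^0$ and the exponential damping $e^{-\beta_j t|\xi|^2}$ (and $\gamma_j(\eps|\xi|)\le\beta_j|\xi|^2/2$ for the relevant range) gives $\|U^{\eps\sharp}_{j0}(t)f\|_{L^\infty_t(\XXX)}\lesssim\eps\|f\|_{H^4_xL^2_v}$. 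For $U^\eps_{j1}$, one uses $|e^{(t-s)\gamma_j(\eps|\xi|)/\eps^2}-1|\lesssim t|\xi|^3\eps e^{t\beta_j|\xi|^2/2}$ from~\eqref{eq:estimatelambdaj} (the $O(|\xi|^3)$ bound on $\gamma_j$), absorbing the polynomial factors into the Gaussian $e^{-\beta_j t|\xi|^2}$ uniformly in $t$; this yields a gain of $\eps$ against one extra $x$-derivative. For $U^\eps_{j2}$, the explicit prefactor $\eps|\xi|\widetilde P_j(\eps\xi,\xi/|\xi|)$ carries an $\eps$ directly, with $|\xi|$ absorbed by the Gaussian and $\widetilde P_j$ uniformly bounded into the weighted space.

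For the main (genuinely $\eps$-dependent) low-frequency terms $U^\eps_{j0}$, $j=1,2$ — those with $\alpha_j>0$ — the point is that on a well-prepared $f$ the amplitude $P_j^0(\xi/|\xi|)f$ vanishes (or reduces to a part controlled by the lower-order structure); this is the incompressibility/Boussinesq cancellation, and it removes the oscillatory factor $e^{i\alpha_j|\xi|t/\eps}$ which has no $\eps$-uniform limit. After this cancellation the surviving contributions are the ones with $j\in\{3,4\}$, which by Remark~\ref{rem:defU} are precisely $U_{30}+U_{40}=U$ and cancel against the $-U(t)f$ term. Thus $U^\eps(t)f-U(t)f$ equals the sum of the remainder pieces above, all of which are $O(\eps)$ in $L^\infty_t(\XXX)$ with norm controlled by $\|f\|_{H^4_xL^2_v}$, giving~\eqref{eq:Ueps-U_2}; bounding those same pieces crudely (dropping the $\eps$ gains and using only boundedness) gives~\eqref{eq:Ueps-U_1} with $\|f\|_{H^3_xL^2_v}$. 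The main obstacle is the careful bookkeeping of how many $x$-derivatives are consumed by each gain of $\eps$ — one must verify that the cutoff trick and the $\gamma_j=O(|\xi|^3)$ bound never cost more than one derivative per power of $\eps$, so that~\eqref{eq:Ueps-U_2} stays at the level $H^4_x$ and not higher — together with making precise the well-preparedness cancellation $P_{1,2}^0 f = 0$; but since all of this is exactly parallel to~\cite[Lemma~3.5]{Gallagher-Tristani} with the sole modification that the target velocity weight is $\langle v\rangle^{3(\frac{\gamma}{2}+1)}$ (handled by the weighted boundedness of the projectors from Lemma~\ref{lem:YangYu}), I would simply invoke that argument after recording the weighted-projector input.
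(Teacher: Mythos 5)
Your proposal is correct and follows essentially the same route as the paper: the paper simply cites \cite[Lemma~3.5]{Gallagher-Tristani} and notes that the only change is the weighted boundedness of the projectors $P_j^0,P_j^1,P_j^2$ from $L^2_v$ into $L^2_v(\langle v\rangle^{3(\frac{\gamma}{2}+1)})$ furnished by Lemma~\ref{lem:YangYu}, which is exactly the modification you flag; your sketch just makes explicit the decomposition of $U^\eps-U$ and the mechanism (well-preparedness killing the $P_{1,2}^0$ amplitudes, the cutoff/Taylor tricks on the remainders) that that reference uses.
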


In the following lemma, we study the convergence of $\Psi^\eps(t) (f,f)$ towards $\Psi(t)(f,f)$ when $f$ is a well-prepared data and its associated macroscopic quantities solve the limit system~\eqref{eq:NSF}. The proof is similar to the one of~\cite[Lemma~4.1]{Gallagher-Tristani}, we thus omit the proof (notice that this result relies on refined estimates on quantities related to $f$ that can be found in~\cite[Lemmas~B.6 and~B.7]{Gallagher-Tristani}).

\begin{lem} \label{lem:PsiepstoPsi}
Consider~$f$ a well-prepared data as defined in~\eqref{def:wp} with associated macroscopic quantities solving the limit system~\eqref{eq:NSF} on~$\R^+$ and with mean free initial data $(\rho_0,u_0,\theta_0) \in H^3_x$ and associated kinetic distribution $f_0 \in \XXX$ (as in~\eqref{def:g0}) satisfying $\|f_0\|_\XXX \leq \eta_1$ (so that $f$ is defined globally in time), then  
	$$
	\| \Psi^\eps(t) (f,f)- \Psi(t) (f,f)\|_{L^\infty_t(\XXX)} \lesssim \eps \, C(\|f_0\|_{H^3_xL^2_v}),
	$$
where $C(\|f_0\|_{H^3_xL^2_v})$ is a constant only depending on $\|f_0\|_{H^3_xL^2_v}$. 
\end{lem}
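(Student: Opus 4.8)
\textbf{Plan for the proof of Lemma~\ref{lem:PsiepstoPsi}.} The strategy is to use the decomposition of $\Psi^\eps$ given in Lemma~\ref{lem:estimatePsiepslimitPsi0} and compare it term by term with $\Psi = \Psi_{30} + \Psi_{40}$ (recall Remark~\ref{rem:defPsi}), exactly mirroring the structure of the analogous statement~\cite[Lemma~4.1]{Gallagher-Tristani}. Since $\Psi^\eps = \sum_{j=1}^4 \Psi^\eps_j + \Psi^{\eps\sharp}$ and each $\Psi^\eps_j = \Psi^\eps_{j0} + \Psi^{\eps\sharp}_{j0} + \Psi^\eps_{j1} + \Psi^\eps_{j2}$, while $\Psi^\eps_{30} = \Psi_{30}$ and $\Psi^\eps_{40} = \Psi_{40}$ are $\eps$-independent, we have
$$
\Psi^\eps(t)(f,f) - \Psi(t)(f,f)
= \Psi^\eps_{10}(t)(f,f) + \Psi^\eps_{20}(t)(f,f)
+ \sum_{j=1}^4 \Big( \Psi^{\eps\sharp}_{j0} + \Psi^\eps_{j1} + \Psi^\eps_{j2} \Big)(t)(f,f)
+ \Psi^{\eps\sharp}(t)(f,f),
$$
and it suffices to bound each of these pieces in $L^\infty_t(\XXX)$ by $\eps \, C(\|f_0\|_{H^3_x L^2_v})$. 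The input is that $f$ is well-prepared with macroscopic quantities solving~\eqref{eq:NSF} globally (which is guaranteed by $\|f_0\|_\XXX \le \eta_1$), so we may invoke the refined bounds on $f$, $\Gamma(f,f)$ and the relevant oscillatory integrals from~\cite[Lemmas~B.6 and~B.7]{Gallagher-Tristani}; in particular $\widehat\Gamma(f,f)(s)$ and its macroscopic moments enjoy good decay and regularity in $x$ uniformly in time.

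First I would treat the two ``main'' oscillatory terms $\Psi^\eps_{10}$ and $\Psi^\eps_{20}$: these carry a genuine nonzero group velocity $\alpha_j > 0$ ($j=1,2$), so the oscillation $e^{i\alpha_j|\xi|(t-s)/\eps}$ combined with the fact that $\Gamma(f,f)$ is a nonlinear term built from a well-prepared solution allows one to gain a factor $\eps$ after integration by parts in $s$ (or, equivalently, a non-stationary phase / averaging argument in Fourier), using that $P_j^1(\xi/|\xi|)\widehat\Gamma(f,f)$ and its time derivative have the requisite regularity; the boundedness of $P_j^1$ from $L^2_v$ into $L^2_v(\langle v\rangle^{3(\gamma/2+1)})$ from Lemma~\ref{lem:YangYu} converts $L^2_v$-in-velocity control into $\XXX$-control after summing in $\xi$ with the three $x$-derivatives. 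Next, the cutoff-defect terms $\Psi^{\eps\sharp}_{j0}$ are supported in $\{|\xi| \ge \kappa/(2\eps)\}$, so the weight $(\chi(\eps|\xi|/\kappa)-1)$ is nonzero only at high frequencies; using $\eps|\xi| \gtrsim 1$ there together with the Sobolev regularity of $\Gamma(f,f)$ in $x$ produces the factor $\eps$ directly. The corrector terms $\Psi^\eps_{j1}$ involve $e^{(t-s)\gamma_j(\eps|\xi|)/\eps^2}-1$ with $\gamma_j(\eps|\xi|) = O(\eps^3|\xi|^3)$ and $\gamma_j(\eps|\xi|) \le \beta_j\eps^2|\xi|^2/2$ from~\eqref{eq:estimatelambdaj}, so $|e^{(t-s)\gamma_j(\eps|\xi|)/\eps^2}-1| \lesssim (t-s)\eps|\xi|^3 e^{-\beta_j(t-s)|\xi|^2/2}$, and the extra $|\xi|^3$ is absorbed by three more $x$-derivatives of $\Gamma(f,f)$ against the Gaussian-type decay from $e^{-\beta_j(t-s)|\xi|^2}$ in the time integral. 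The terms $\Psi^\eps_{j2}$ already carry an explicit prefactor $\eps|\xi|^2 P_j^2(\eps\xi)$, so they are $O(\eps)$ immediately using the uniform bound on $P_j^2$ and the decay $e^{-\beta_j(t-s)|\xi|^2}$. Finally $\Psi^{\eps\sharp}$ is handled via the exponential decay~\eqref{eq:rateUsharp}: $\|\widehat U^{\eps\sharp}(t-s)\|_{L^2_v(\langle v\rangle^\ell)\to L^2_v(\langle v\rangle^\ell)} \le C e^{-\alpha(t-s)/\eps^2}$, so the $\eps^{-1}$ prefactor in~\eqref{def:Psiepsf1f2} times $\int_0^t e^{-\alpha(t-s)/\eps^2}\,\d s \lesssim \eps^2$ gives a net $\eps$, uniformly in $t$, after controlling $\Gamma(f,f)$ in $\XXX$ by Proposition~\ref{prop:Gamma-NL-XXX} and the regularity estimates of Theorem~\ref{theo:main1}.

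The main obstacle I expect is the careful control of the ``main'' terms $\Psi^\eps_{10}$ and $\Psi^\eps_{20}$: unlike the remainder pieces, these do not come with an explicit $\eps$ factor, and one must genuinely exploit the oscillation of the phase against the time-regularity of $\Gamma(f,f)(s)$ — which in turn requires knowing that $\partial_s \Gamma(f,f)(s)$ is controlled in the relevant norm, i.e. using the Navier-Stokes-Fourier equation~\eqref{eq:NSF} to express $\partial_s f$ and hence $\partial_s\Gamma(f,f)$ in terms of $f$ and its $x$-derivatives (this is where the refined fluid estimates~\cite[Lemmas~B.6--B.7]{Gallagher-Tristani} enter). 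Keeping track of the fact that a few spatial derivatives are consumed in each integration by parts while only $H^3_x$ regularity on $f_0$ is assumed — so that the twisted weights in $\XXX$, $\YYY_1$ and the boundedness of the projectors into weighted velocity spaces have to be used optimally — is the delicate bookkeeping step. Everything else is routine once the decomposition is in place.
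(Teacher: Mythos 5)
Your proposal is correct and mirrors exactly the argument the paper invokes: the authors explicitly omit the proof and refer to \cite[Lemma~4.1]{Gallagher-Tristani}, whose strategy (the term-by-term decomposition from Lemma~\ref{lem:estimatePsiepslimitPsi0}, non-stationary phase/integration by parts in time for the acoustic pieces $\Psi^\eps_{10},\Psi^\eps_{20}$ fed by the fluid estimates of \cite[Lemmas~B.6--B.7]{Gallagher-Tristani}, and the explicit $\eps$-gains for the remainders together with the weighted boundedness of the $P_j^n$ from Lemma~\ref{lem:YangYu}) is precisely what you describe. One small imprecision: for $\Psi^\eps_{j1}$ the factor $|\xi|^3$ is absorbed by the Gaussian factor $e^{-\beta_j(t-s)|\xi|^2/2}$ in the time integral rather than by spending three extra $x$-derivatives on $\Gamma(f,f)$, which would not be available from $f_0\in H^3_x$ alone.
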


%---------%---------%---------%---------%---------%---------%---------%---------%---------%---------%
\subsection{Decay estimates on the linearized Landau semigroup} \label{subsec:UepsEP}
%---------%---------%---------%---------%---------%---------%---------%---------%---------%---------%

We recall that $\pi$ is the projector onto the kernel of $L$ and is given in~\eqref{def:pi} and that the spaces $\YYY_1$ and $(\ZZZ_i^\eps)'$ are respectively defined in~\eqref{def:normYYY1} and~\eqref{def:ZZZieps'}. 
From Lemma~\ref{lem:YangYu}, as in~\cite[Lemma~3.2]{Gallagher-Tristani}, we can prove some new decay estimates on the linearized Landau semigroup:

\begin{lem} \label{lem:EP}
Let $\sigma_2:=\min(\alpha,\beta_1, \dots, \beta_4)$ (where $\alpha$ and $\beta_j$ for $j=1,\dots 4$ are defined in Lemma~\ref{lem:YangYu}). Then, for any $\sigma \in (0,\sigma_2)$, we have
	$$
	\|U^\eps(t)(\Id-\pi)\|_{\XXX \to \XXX} \lesssim \eps \, \frac{e^{-\sigma t}}{\sqrt{t}}, \quad \forall \, t >0. 
	$$
\end{lem}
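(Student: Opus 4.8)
The plan is to exploit the decomposition of $U^\eps(t)$ provided in Lemma~\ref{lem:YangYu}, namely $U^\eps(t) = \sum_{j=1}^4 U_j^\eps(t) + U^{\eps\sharp}(t)$, and to estimate each piece composed with $(\Id - \pi)$ separately in the $\XXX$-norm. Since the norm $\|\cdot\|_\XXX$ only involves $x$-derivatives (with $v$-weights depending on the derivative order), and all the operators appearing in the decomposition commute with $\nabla_x$, it suffices to work at the level of each Fourier mode $\xi \in \Z^3$ and to use that the projectors $P_j^n$ and $\widehat U^\sharp$ are bounded from $L^2_v$ into $L^2_v(\langle v \rangle^\ell)$ uniformly in $|\xi| \le \kappa$ (resp. uniformly in $\xi$) for any $\ell \ge 0$, as recorded in Lemma~\ref{lem:YangYu}. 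The key structural point is that applying $(\Id - \pi)$ kills the leading-order term $P_j^0(\xi/|\xi|)$ of each $P_j(\xi)$, since $\pi = \sum_{j=1}^4 P_j^0(\xi/|\xi|)$ and $\pi P_j^0 = P_j^0$; hence $(\Id-\pi)P_j(\xi) = (\Id - \pi)\big(|\xi| P_j^1(\xi/|\xi|) + |\xi|^2 P_j^2(\xi)\big)$, which carries an extra factor $|\xi|$. This factor $|\xi|$ is precisely what produces the gain of one power of $\eps$.

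\textbf{Main steps.} First, for the regular part $U^{\eps\sharp}(t)$: by definition $\widehat U^{\eps\sharp}(t,\xi) = \widehat U^\sharp(t/\eps^2, \eps\xi)$, and estimate~\eqref{eq:rateUsharp} gives $\|\widehat U^\sharp(t/\eps^2,\eps\xi)\|_{L^2_v(\langle v\rangle^\ell)\to L^2_v(\langle v\rangle^\ell)} \le C e^{-\alpha t/\eps^2} \le C e^{-\alpha t}$ for $\eps \in (0,1)$, which already decays like $e^{-\sigma t}$ with no need for the $1/\sqrt t$ factor, so the $\eps$ gain here is even automatic (one has $e^{-\alpha t/\eps^2} \le \eps\, e^{-\sigma t}/\sqrt t$ for $t$ small, trivially, since $e^{-\alpha t/\eps^2}$ is bounded by any power of $\eps$ times a constant on $(0,1)$ — more carefully, $e^{-\alpha t/\eps^2}\lesssim \eps e^{-\alpha t/(2\eps^2)}/\sqrt t \lesssim \eps e^{-\sigma t}/\sqrt t$). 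Second, for each $U_j^\eps(t)$ with $1 \le j \le 4$: one has $\widehat U_j^\eps(t,\xi) = \chi(\eps|\xi|/\kappa)\, e^{(t/\eps^2)\lambda_j(\eps\xi)} P_j(\eps\xi)$, and $(\Id-\pi)P_j(\eps\xi)$ contributes a factor $\eps|\xi|$ times operators bounded from $L^2_v$ into $L^2_v(\langle v\rangle^\ell)$ uniformly on $\{|\eps\xi| \le \kappa\}$. Using $\mathrm{Re}\,\lambda_j(\eps\xi) = -\beta_j|\eps\xi|^2 + \mathrm{Re}\,\gamma_j(\eps|\xi|) \le -\tfrac12\beta_j|\eps\xi|^2$ on the support of $\chi(\eps|\xi|/\kappa)$, we get
\begin{equation*}
\big\| \widehat U_j^\eps(t,\xi)(\Id-\pi) h \big\|_{L^2_v} \lesssim \eps |\xi|\, e^{-\tfrac12\beta_j t|\xi|^2}\, \|h\|_{L^2_v}.
\end{equation*}
Then one splits the exponential as $e^{-\tfrac12\beta_j t|\xi|^2} = e^{-\sigma t}\cdot e^{-(\tfrac12\beta_j t|\xi|^2 - \sigma t)}$ for $\sigma < \sigma_2 \le \beta_j$: for $|\xi| \ge 1$ (which is the case for all nonzero $\xi \in \Z^3$) one has $\tfrac12\beta_j t|\xi|^2 \ge \tfrac12\beta_j t \ge \sigma t$ provided $\sigma \le \tfrac12\beta_j$, but more robustly one uses $|\xi|\, e^{-c t|\xi|^2} \le C t^{-1/2} e^{-\sigma t}$ for $|\xi| \ge 1$, $t>0$, after absorbing the remaining Gaussian decay into $e^{-\sigma t}$ since $|\xi| \ge 1$ forces $c t |\xi|^2 \ge (c - \sigma) t|\xi|^2/(\,\cdot\,) + \sigma t$; the elementary bound $\sup_{r\ge 1} r\, e^{-(c-\sigma)t r^2} \lesssim t^{-1/2}$ does the job. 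Summing in $\xi$ via Plancherel (the $x$-derivative weights $\langle v\rangle^{(3-i)(\gamma/2+1)}$ on $\nabla_x^i$ are harmless since the projectors map $L^2_v \to L^2_v(\langle v\rangle^\ell)$ for every $\ell$, in particular $\ell = 3(\gamma/2+1)$) yields $\|U_j^\eps(t)(\Id-\pi)\|_{\XXX\to\XXX} \lesssim \eps\, e^{-\sigma t}/\sqrt t$.

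\textbf{Expected obstacle.} The computations themselves are routine; the one point requiring care is handling the $v$-weights in the $\XXX$-norm when passing from $L^2_v$ estimates on individual Fourier modes to the full $\XXX \to \XXX$ bound. One must check that each projector $P_j^0$, $P_j^1$, $P_j^2$ (and $\widehat U^\sharp$) sends $L^2_v$ into $L^2_v(\langle v\rangle^{3(\gamma/2+1)})$ with a norm uniform in the relevant range of $\xi$, so that the gain factor $\langle v\rangle^{(3-i)(\gamma/2+1)}$ on the target side is absorbed even though the incoming datum $(\Id-\pi)\widehat g(\xi)$ only lives in $L^2_v$; this is exactly the content of the statement in Lemma~\ref{lem:YangYu} that the $P_j^n$ are bounded from $L^2_v$ into $L^2_v(\langle v\rangle^\ell)$ for all $\ell \ge 0$. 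A secondary bookkeeping point is that for the very-small-time regime $t \to 0^+$ the bound $\eps\, e^{-\sigma t}/\sqrt t$ blows up, which is consistent (no smoothing is claimed, only an $\eps$-gain with an integrable-in-time singularity); one simply notes that for $t$ bounded below the desired estimate is immediate from $\|U^\eps(t)(\Id-\pi)\|_{\XXX\to\XXX} \lesssim e^{-\sigma t}$ (Proposition~\ref{prop:hypoX}) combined with the $\eps|\xi|$ factor, and for $t \to 0$ the singularity $t^{-1/2}$ is exactly what the Fourier computation produces. This mirrors the proof of~\cite[Lemma~3.2]{Gallagher-Tristani}, to which the argument reduces once the weighted boundedness of the projectors is in hand.
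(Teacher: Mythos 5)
Your proposal is correct and takes essentially the same route the paper intends: the paper does not give a self-contained proof of Lemma~\ref{lem:EP} but merely invokes the decomposition of Lemma~\ref{lem:YangYu} and says it proceeds ``as in [Gallagher--Tristani, Lemma~3.2]'', and that argument is exactly the one you sketch — $(\Id-\pi)$ annihilates the leading-order part $P_j^0$ of each spectral projector, leaving an extra factor $\eps|\xi|$, and $|\xi|\,e^{-ct|\xi|^2}\lesssim t^{-1/2}e^{-\sigma t}$ for $|\xi|\ge 1$ produces the $\eps\,e^{-\sigma t}/\sqrt t$ rate, with the remainder $U^{\eps\sharp}$ absorbed via $e^{-\alpha t/\eps^2}\lesssim \eps\,e^{-\sigma t}/\sqrt t$. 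You correctly identify the key bookkeeping point (the weighted boundedness $L^2_v\to L^2_v(\langle v\rangle^\ell)$ of the $P_j^n$ and $\widehat U^\sharp$, supplied by Lemma~\ref{lem:YangYu}) needed to pass from modewise $L^2_v$ estimates to the weighted $\XXX$-norm.
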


Combining this with Corollaries~\ref{cor:Vjepsdecay}-\ref{cor:Vjepsreg}, one can deduce the following result which mixes decay and regularization estimates:
\begin{cor} \label{cor:EPreg}
For any $\sigma \in (0, \min(\sigma_1,\sigma_2))$, there holds
	$$
	\|U^\eps(t)(\Id-\pi)\|_{\XXX \to \YYY_1} \lesssim \eps \, \frac{e^{-\sigma t}}{\sqrt{t}}, \quad \forall \, t > 0,
	$$
where $\sigma_1$ is defined in~\eqref{def:sigma1} and $\sigma_2$ in Lemma~\ref{lem:EP}. 
\end{cor}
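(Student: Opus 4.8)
The statement to prove is Corollary~\ref{cor:EPreg}: for any $\sigma \in (0,\min(\sigma_1,\sigma_2))$, one has $\|U^\eps(t)(\Id-\pi)\|_{\XXX \to \YYY_1} \lesssim \eps \, e^{-\sigma t}/\sqrt{t}$ for all $t>0$. The plan is to combine the decay estimate of Lemma~\ref{lem:EP} (which controls $\|U^\eps(t)(\Id-\pi)\|_{\XXX\to\XXX}$) with the short-time regularization from the splitting $\Lambda_\eps = \AA_\eps + \BB_\eps$, exactly in the spirit of the argument giving Corollary~\ref{cor:Vjepsreg}. The key device is the iterated Duhamel formula \eqref{eq:splitUeps}: for a fixed (small) integer $n$,
\begin{equation*}
U^\eps(t) = \sum_{j=0}^n V_j^\eps(t) + (V_n^\eps * \AA_\eps U^\eps)(t),
\end{equation*}
together with the fact that $\pi$ (acting in $v$ only) commutes with $\AA_\eps$ up to bounded terms, or rather that we may simply write $U^\eps(t)(\Id-\pi) = \sum_{j=0}^n V_j^\eps(t)(\Id-\pi) + (V_n^\eps * \AA_\eps U^\eps(\Id-\pi))(t)$ since $\AA_\eps U^\eps(t)(\Id-\pi)$ only needs to be controlled in $\XXX$.

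\textbf{Key steps.} First, I would fix $\sigma < \sigma' < \sigma'' < \min(\sigma_1,\sigma_2)$ and reduce, as usual, to proving the bound on $(0,1]$ with $e^{-\sigma t}/\min(1,\sqrt t)$ replaced by $1/\sqrt t$, the decay for $t\ge 1$ then following from Lemma~\ref{lem:EP} (giving exponential decay in $\XXX$) composed with the bound $\|U^\eps(1)(\Id-\pi)\|_{\XXX\to\YYY_1}$ from the small-time estimate and the semigroup property; one uses here that $\|\cdot\|_{\YYY_1}\lesssim\|\cdot\|_{\XXX}+\|(\cdot)^\perp\|_{\YYY_1}$ and Proposition~\ref{prop:hypoX}. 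Second, on $(0,1]$ I would estimate the Duhamel terms: for the principal terms $V_j^\eps(t)(\Id-\pi)$, Lemma~\ref{lem:regSBeps} for $j=0$ gives $\|S_{\BB_\eps}(t)\|_{\XXX\to\YYY_1}\lesssim (\eps/\sqrt t)e^{-\sigma t/\eps^2}$, and the induction of Corollary~\ref{cor:Vjepsreg}-type reasoning — split $\int_0^{t/2}+\int_{t/2}^t$, use Lemma~\ref{lem:Areg} for $\AA\in\BBB(\XXX)\cap\BBB(\YYY_1)$, Lemma~\ref{lem:dissipativeBeps} for the $\XXX\to\XXX$ decay of $S_{\BB_\eps}$, and Lemma~\ref{lem:dissipativeBeps}'s analogue in $\YYY_1$ — yields $\|V_j^\eps(t)\|_{\XXX\to\YYY_1}\lesssim (\eps/\sqrt t)e^{-\sigma t/\eps^2}\lesssim \eps/\sqrt t$ uniformly for $t\in(0,1]$. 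Third, the remainder term $(V_n^\eps*\AA_\eps U^\eps(\Id-\pi))(t)$: I would split the time integral at $t/2$, bound $\int_0^{t/2}\|S_{\BB_\eps}\text{ or }V_n^\eps(s)\|_{\XXX\to\YYY_1}\|\AA_\eps\|_{\XXX\to\XXX}\|U^\eps(t-s)(\Id-\pi)\|_{\XXX\to\XXX}\,\d s$ using Lemma~\ref{lem:EP} for the last factor — here the crucial point is that Lemma~\ref{lem:EP} produces the gain of $\eps$ together with the integrable singularity $1/\sqrt{t-s}$, and one must check this double singularity $s^{-1/2}(t-s)^{-1/2}$ integrates to an $O(1)$ constant (Beta function) on $(0,t/2)$ — and for $\int_{t/2}^t$ use $\|V_n^\eps(s)\|_{\XXX\to\YYY_1}\lesssim \eps/\sqrt s \lesssim \eps/\sqrt t$ together with the $\XXX\to\XXX$ boundedness of $\AA_\eps U^\eps(t-s)(\Id-\pi)$ and the uniform in $t$ bound on $\int_{t/2}^t$ of the latter. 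Collecting, all terms are $O(\eps/\sqrt t)$ on $(0,1]$, which is what is needed.

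\textbf{Main obstacle.} The genuinely delicate point is the bookkeeping of the two competing small-time singularities in the remainder term: $V_n^\eps$ (or $S_{\BB_\eps}$) contributes a factor behaving like $\eps^{1-2n}/s^{(1-2n)/2}$ when mapping $(\ZZZ_i^\eps)'$-type spaces to $\XXX$, while $U^\eps(t-s)(\Id-\pi)$ from Lemma~\ref{lem:EP} contributes $\eps/\sqrt{t-s}$; one must choose $n$ small enough (here $n=1$ or $n=0$ already suffices since we only need a mapping into $\YYY_1$, i.e. one notch of regularity, not into $\ZZZ_1^\eps$) so that the powers of $\eps$ do not blow up and so that the time integral converges — the potential pitfall is accidentally generating a negative power of $\eps$ by iterating too many times. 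A secondary subtlety is making sure the exponential factors $e^{-\sigma t/\eps^2}$ appearing in $S_{\BB_\eps}$ and $V_j^\eps$ estimates, when convolved, still close (this is the standard $\sigma<\sigma'$ trick already used in Corollaries~\ref{cor:Vjepsdecay} and~\ref{cor:Vjepsreg}, and poses no real difficulty). Once the remainder term is under control, the conclusion is immediate.
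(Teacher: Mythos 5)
Your proposal is correct and, once you settle on the value $n=0$ (which you yourself suggest at the end), it coincides with the paper's proof: the paper applies the one-step Duhamel formula $U^\eps(t)(\Id-\pi)=S_{\BB_\eps}(t)(\Id-\pi)+(S_{\BB_\eps}*\AA_\eps U^\eps)(t)(\Id-\pi)$, bounds the first term by Lemma~\ref{lem:regSBeps}, and bounds the second term by combining the $\XXX\to\YYY_1$ regularization of $S_{\BB_\eps}$ (Lemma~\ref{lem:regSBeps}), the boundedness of $\AA$ (Lemma~\ref{lem:Areg}), and the decay-with-$\eps$-gain of $U^\eps(\Id-\pi)$ in $\XXX$ (Lemma~\ref{lem:EP}), closing via the Beta-type convolution $\int_0^t(t-s)^{-1/2}s^{-1/2}e^{-\sigma(t-s)}e^{-\sigma s}\,\d s\lesssim e^{-\sigma t}$. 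Your version is somewhat over-engineered — the general $n$-fold iterated Duhamel, the $\int_0^{t/2}+\int_{t/2}^t$ split, and the separate treatment of $t\in(0,1]$ versus $t\ge 1$ are all unnecessary here since the single convolution integral is already uniformly bounded by the Beta function — but none of this is wrong; the extra generality is simply not needed. One small inaccuracy worth noting: your claim that $\|V_j^\eps(t)\|_{\XXX\to\YYY_1}\lesssim(\eps/\sqrt{t})e^{-\sigma t/\eps^2}$ for all $j$ does not hold in that exact form for $j\ge 1$ by the induction you sketch (the convolution degrades the $1/\sqrt t$ singularity), but this is immaterial because even the weaker bound $\lesssim e^{-\sigma' t/\eps^2}$ with $\sigma'>\sigma$ implies $\lesssim(\eps/\sqrt t)e^{-\sigma t}$ by absorbing part of the large exponent, so your argument still closes.
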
 

\begin{proof}
Let $\sigma \in (0, \min(\sigma_1,\sigma_2))$. From Duhamel formula, we have:
	$$
	U^\eps(t) (\Id - \pi) = S_{\BB_\eps}(t) (\Id - \pi) + (S_{\BB_\eps}*\AA_\eps U^\eps)(t) (\Id - \pi).
	$$
From Lemma~\ref{lem:regSBeps}, since $\sigma < \sigma_1$, we have:
	$$
	\|S_{\BB_\eps}(t) (\Id - \pi)\|_{\XXX \to \YYY_1} 
	\lesssim \varepsilon\frac{e^{-\sigma t}}{\sqrt{t}} \|\Id - \pi\|_{\XXX \to \XXX} 
	\lesssim \varepsilon\frac{e^{-\sigma t}}{\sqrt{t}}.
	$$
For the second term, we use Lemmas~\ref{lem:Areg} and~\ref{lem:regSBeps}:
	\begin{align*}
	\| (S_{\BB_\eps}*\AA_\eps U^\eps)(t) (\Id - \pi) \|_{\XXX \to \YYY_1} 
	&\lesssim \int_0^t \left\|S_{\BB_\eps}(t-s) \AA_\eps U^\eps(s) (\Id - \pi)\right\|_{\XXX \to \YYY_1} \, \d s \\
	&\lesssim \int_0^t \frac{e^{-\sigma (t-s)}}{\sqrt{t-s}} \left\|U^\eps(s) (\Id - \pi)\right\|_{\XXX \to \XXX} \, \d s.  
	\end{align*}
Finally, from Lemma~\ref{lem:EP},
	$$
	\| (S_{\BB_\eps}*\AA_\eps U^\eps)(t) (\Id - \pi) \|_{\XXX \to \YYY_1} 
	\lesssim \eps \int_0^t \frac{e^{-\sigma (t-s)}}{\sqrt{t-s}} \frac{e^{-\sigma s}}{\sqrt{s}} \, \d s 
	\lesssim \eps \, e^{-\sigma t},
	$$
which yields the conclusion. 
\end{proof}

By using an interpolation argument, one can deduce the following result:
\begin{lem} \label{lem:EPbis}
For any $\sigma \in (0,\min(\sigma_0,\sigma_1,\sigma_2))$, we have:
	$$
	\|U^\eps(t)(\Id-\pi)\|_{(\ZZZ_1^\eps)' \to (\ZZZ_1^\eps)'} 
	\lesssim \sqrt{\eps}\, \frac{e^{-\sigma t}}{t^{1/4}}, \quad \forall \, t >0,
	$$
where $\sigma_0$, $\sigma_1$ and $\sigma_2$ are respectively defined in Proposition~\ref{prop:hypoL2}, in~\eqref{def:sigma1} and Lemma~\ref{lem:EP}. 
\end{lem}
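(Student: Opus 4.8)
The plan is to obtain the estimate by real interpolation between the $\XXX \to \XXX$ decay coming from Theorem~\ref{theo:mainlinear} (equivalently Proposition~\ref{prop:hypoX}) and the regularization-plus-decay estimate of Corollary~\ref{cor:EPreg}, using the interpolation identity \eqref{eq:interp}, namely $\left[\XXX,(\ZZZ_2^\eps)'\right]_{1/2,2} = (\ZZZ_1^\eps)'$. The key point is that the target space $(\ZZZ_1^\eps)'$ sits "halfway" between $\XXX$ and the weaker dual space, so a bound in $(\ZZZ_1^\eps)'$ can be deduced from a bound where one endpoint gains regularity (measured in $\YYY_1$, which is dual to a bigger negative-index space) and the other endpoint only sees the plain $\XXX$ decay.

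\textbf{Key steps.} First I would fix $\sigma \in (0,\min(\sigma_0,\sigma_1,\sigma_2))$ and pick some $\sigma' \in (\sigma,\min(\sigma_0,\sigma_1,\sigma_2))$ to absorb polynomial-in-$t$ factors into the exponential at the end. Second, from Proposition~\ref{prop:hypoX} (or the first line of Theorem~\ref{theo:mainlinear}) we have $\|U^\eps(t)(\Id-\pi)\|_{\XXX\to\XXX}\lesssim e^{-\sigma' t}$; dualizing (using that $(\Id-\pi)$ and $U^\eps(t)$ have adjoints with the same structure, and that $\XXX$ is its own dual as pivot space) gives $\|U^\eps(t)(\Id-\pi)\|_{(\ZZZ_2^\eps)'\to(\ZZZ_2^\eps)'}\lesssim e^{-\sigma' t}$ — but what I actually need is the analogue of Corollary~\ref{cor:EPreg} at the lower endpoint, i.e.\ $\|U^\eps(t)(\Id-\pi)\|_{(\ZZZ_2^\eps)'\to\XXX}$. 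This should follow by the same Duhamel argument as in the proof of Corollary~\ref{cor:EPreg}, splitting $U^\eps(t)(\Id-\pi) = S_{\BB_\eps}(t)(\Id-\pi) + (S_{\BB_\eps}*\AA_\eps U^\eps)(t)(\Id-\pi)$, invoking Lemma~\ref{lem:regSBeps2} for the $(\ZZZ_2^\eps)'\to\XXX$ regularization of $S_{\BB_\eps}$ (rate $\eps^2/t$), Lemma~\ref{lem:Areg} for boundedness of $\AA$, and Lemma~\ref{lem:EP} for the $\XXX\to\XXX$ decay of $U^\eps(\Id-\pi)$ with the $\eps/\sqrt t$ gain; the time convolution $\int_0^t (t-s)^{-1}e^{-\sigma'(t-s)} s^{-1/2}e^{-\sigma' s}\,\d s$ is borderline but handled by splitting $[0,t/2]\cup[t/2,t]$, yielding a bound like $\eps^{2}\, t^{-1/2} e^{-\sigma t}$ (up to logarithmic corrections one can kill with $\sigma<\sigma'$). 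Third, dualize the estimate of Corollary~\ref{cor:EPreg}: since $\YYY_1' $ is the pivot-dual of $\YYY_1$ and $\XXX$ is the pivot space, $\|U^\eps(t)(\Id-\pi)\|_{\XXX\to\YYY_1}\lesssim \eps e^{-\sigma' t}/\sqrt t$ transposes to $\|U^\eps(t)(\Id-\pi)\|_{\YYY_1'\to\XXX}\lesssim \eps e^{-\sigma' t}/\sqrt t$. Fourth, I would interpolate: writing $T(t):=U^\eps(t)(\Id-\pi)$, we have on one hand $T(t):\XXX\to\XXX$ with norm $\lesssim e^{-\sigma' t}$ (trivially also $(\ZZZ_2^\eps)'$-to-$(\ZZZ_2^\eps)'$ after dualizing), and on the other the regularizing bound $T(t):(\ZZZ_2^\eps)'\to\XXX$ with norm $\lesssim \eps^2 e^{-\sigma' t}/t$; applying $[\,\cdot\,,\,\cdot\,]_{1/2,2}$ to the domain spaces via \eqref{eq:interp} gives $T(t):(\ZZZ_1^\eps)'\to\XXX$ with norm $\lesssim (\eps^2/t)^{1/2} e^{-\sigma' t} = \eps\, t^{-1/2} e^{-\sigma' t}$; a second interpolation, this time between $T(t):(\ZZZ_1^\eps)'\to\XXX$ (just obtained) and $T(t):\XXX\to(\ZZZ_1^\eps)'$ (dual of the first, with the same norm), using again that $(\ZZZ_1^\eps)'$ is a half-way interpolation space, produces the self-map bound $T(t):(\ZZZ_1^\eps)'\to(\ZZZ_1^\eps)'$ with norm $\lesssim \bigl(\eps t^{-1/2}\bigr)^{1/2} e^{-\sigma' t} = \sqrt\eps\, t^{-1/4} e^{-\sigma' t}$. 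Finally absorb $e^{-(\sigma'-\sigma)t}$ against any leftover polynomial factor to conclude $\|U^\eps(t)(\Id-\pi)\|_{(\ZZZ_1^\eps)'\to(\ZZZ_1^\eps)'}\lesssim \sqrt\eps\, e^{-\sigma t}/t^{1/4}$ for all $t>0$.

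\textbf{Main obstacle.} The delicate point is bookkeeping the two successive interpolations so that the powers of $\eps$ and of $t$ come out exactly as $\sqrt\eps$ and $t^{-1/4}$ — in particular one must check that the space $(\ZZZ_1^\eps)'$ really is the $1/2$-interpolant in both the "gain regularity" direction (via \eqref{eq:interp}) and the dual direction, and that the $\eps$-dependent definition of $\ZZZ_i^\eps$ (the $\eps^2\|\widetilde\nabla_x f\|_\XXX^2$ and $\eps^4\|\widetilde\nabla_x^2 f\|_\XXX^2$ terms) is compatible with interpolation uniformly in $\eps$, i.e.\ the interpolation constants in \eqref{eq:interp} do not blow up as $\eps\to0$. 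This uniformity is plausible because $\ZZZ_2^\eps$ is built from $\ZZZ_1^\eps$ by adding $\YYY_2$-regularity and an $\eps^4\|\widetilde\nabla_x^2 f\|_\XXX^2$ term in a way that mirrors the step from $\XXX$ to $\ZZZ_1^\eps$, so the scaling is self-consistent; but this should be stated carefully. The secondary technical nuisance is the borderline time integral $\int_0^t (t-s)^{-1} s^{-1/2}\,\d s$ appearing when establishing the $(\ZZZ_2^\eps)'\to\XXX$ estimate, which diverges logarithmically at $s=t$; this is circumvented in the standard way by using $\sigma<\sigma'$ to trade a factor $e^{-(\sigma'-\sigma)(t-s)}$ for the missing integrability near $s=t$, exactly as in the proof of Corollary~\ref{cor:EPreg}.
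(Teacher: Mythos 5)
Your overall idea — reach $(\ZZZ_1^\eps)'$ via the interpolation identity \eqref{eq:interp} — is the right one and is indeed what the paper does, but the specific interpolation scheme you propose does not close, for two reasons.

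First, the ``dualize to get $(\ZZZ_2^\eps)'\to(\ZZZ_2^\eps)'$'' step is not correct: dualizing $T:\XXX\to\XXX$ yields $T^*:\XXX\to\XXX$, where $T^*$ is the (different) adjoint semigroup, not a bound for $T$ on $(\ZZZ_2^\eps)'$. The paper instead obtains $\|U^\eps(t)(\Id-\pi)\|_{(\ZZZ_2^\eps)'\to(\ZZZ_2^\eps)'}\lesssim 1$ by an enlargement argument: write $U^\eps(t)=\sum_{j=0}^3 V_j^\eps(t)+(U^\eps*V_3^\eps)(t)$, use Corollary~\ref{cor:Vjepsdecay} to control each $V_j^\eps$ and $(\Id-\Pi)$ on $(\ZZZ_2^\eps)'$, and use Corollary~\ref{cor:Vjepsreg} to regularize $V_3^\eps$ from $(\ZZZ_2^\eps)'$ to $\XXX$ so that Theorem~\ref{theo:mainlinear} can absorb the remainder. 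Once this self-map bound on $(\ZZZ_2^\eps)'$ is in hand, a \emph{single} interpolation suffices: $T:\XXX\to\XXX$ with norm $\eps\,e^{-\sigma t}/\sqrt t$ (Lemma~\ref{lem:EP}) and $T:(\ZZZ_2^\eps)'\to(\ZZZ_2^\eps)'$ with norm $\lesssim 1$ are both self-map estimates, so interpolating both domain and codomain at $\theta=1/2$ lands exactly in $(\ZZZ_1^\eps)'\to(\ZZZ_1^\eps)'$ with norm $\lesssim\sqrt\eps\,e^{-\sigma t/2}/t^{1/4}$, which gives the result after renaming $\sigma$.

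Second, and more seriously, your second interpolation does not produce a self-map bound on $(\ZZZ_1^\eps)'$. Interpolating the two operator bounds $T:(\ZZZ_1^\eps)'\to\XXX$ and $T:\XXX\to(\ZZZ_1^\eps)'$ at $\theta=1/2$ yields, by the standard interpolation theorem, a bound for $T:[(\ZZZ_1^\eps)',\XXX]_{1/2,2}\to[\XXX,(\ZZZ_1^\eps)']_{1/2,2}$, and these midpoint spaces are \emph{not} equal to $(\ZZZ_1^\eps)'$ — they sit halfway between $\XXX$ and $(\ZZZ_1^\eps)'$, i.e.\ they are strictly stronger than $(\ZZZ_1^\eps)'$. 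The identity \eqref{eq:interp} places $(\ZZZ_1^\eps)'$ halfway between $\XXX$ and $(\ZZZ_2^\eps)'$, not halfway between $\XXX$ and itself, so there is no self-consistency that would make your scheme close. In addition, your intermediate $(\ZZZ_2^\eps)'\to\XXX$ estimate is not justified as written: Lemma~\ref{lem:regSBeps2} gives $\|S_{\BB_\eps}(t)\|_{(\ZZZ_2^\eps)'\to\XXX}\lesssim\eps^6 e^{-\sigma t/\eps^2}/t^3$ (the $\eps^2/t$ rate you quote is the $\YYY_2'\to\XXX$ one), and the single-$\AA_\eps$ Duhamel convolution you sketch would require precisely the $U^\eps(s)(\Id-\pi):(\ZZZ_2^\eps)'\to\XXX$ bound you are trying to establish, making the argument circular unless one first iterates the Duhamel formula through the $V_j^\eps$ factors as in the paper.
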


\begin{proof}
\noindent {\it Step 1.} First, by using an enlargement argument (from~\cite{GMM}), we prove that 
	\begin{equation} \label{eq:UepsZ1eps'}
	\|U^\eps(t)(\Id-\Pi)\|_{(\ZZZ_2^\eps)' \to (\ZZZ_2^\eps)'} \lesssim e^{-\sigma t}.
	\end{equation}
For sake of completeness and in order to carefully handle the $\eps$-dependencies, we write the proof.  
From Duhamel formula, we have that 
	$$
	U^\eps(t) = \sum_{j=0}^3 V_j^\eps(t) + (U^\eps*V_3^\eps)(t).
	$$
Moreover, we have $U^\eps(t)(\Id - \Pi) = (\Id - \Pi) U^\eps(t)$. Then, from Corollary~\ref{cor:Vjepsdecay} and the fact that $\Pi \in \BBB((\ZZZ_2^\eps)')$, for any $j = 0, \dots, 3$, we have:
	\begin{equation} \label{eq:(I-Pi)Vjeps}
	\|(\Id - \Pi) V_j^\eps(t)\|_{(\ZZZ_2^\eps)' \to (\ZZZ_2^\eps)'} \lesssim e^{-\sigma t/\eps^2}. 
	\end{equation}
For the last term, using that $\XXX \hookrightarrow (\ZZZ_2^\eps)'$ (independently of $\eps$) and Theorem~\ref{theo:mainlinear}, we have:
	\begin{align*}
	\|(\Id - \Pi) (U^\eps*V_3^\eps)(t)\|_{(\ZZZ_2^\eps)' \to (\ZZZ_2^\eps)'} 
	&\lesssim \int_0^t \left\|(\Id - \Pi) U^\eps(t-s) V_3^\eps(s)\right\|_{(\ZZZ_2^\eps)' \to \XXX} \, \d s \\
	&\lesssim \int_0^t e^{-\sigma(t-s)} \left\|V_3^\eps(s)\right\|_{(\ZZZ_2^\eps)' \to \XXX} \, \d s .
	\end{align*}
Corollary~\ref{cor:Vjepsreg} allows us to conclude that 
	\begin{equation} \label{eq:UepsV3}
	\|(\Id - \Pi) (U^\eps*V_3^\eps)(t)\|_{(\ZZZ_2^\eps)' \to (\ZZZ_2^\eps)'} 
	\lesssim \int_0^t e^{-\sigma(t-s)} e^{-\sigma s/\eps^2} \, \d s 
	\lesssim e^{-\sigma t}. 
	\end{equation}
From estimates~\eqref{eq:(I-Pi)Vjeps} and~\eqref{eq:UepsV3}, we can conclude that~\eqref{eq:UepsZ1eps'} holds. 

\medskip
\noindent {\it Step 2.} From~\eqref{eq:UepsZ1eps'} and the fact that $U^\eps(t) \Pi = \Pi \in \BBB((\ZZZ_2^\eps)')$, we deduce
	$$
	\|U^\eps(t)\|_{(\ZZZ_2^\eps)' \to (\ZZZ_2^\eps)'} \lesssim 1.
	$$
Since $\pi \in \BBB((\ZZZ_2^\eps)')$, it implies that 
	$$
	\|U^\eps(t)(\Id-\pi)\|_{(\ZZZ_2^\eps)' \to (\ZZZ_2^\eps)'} \lesssim 1.
	$$
Consequently, combining this with Lemma~\ref{lem:EP}, we can conclude the proof by interpolation because from~\eqref{eq:interp}, we have $(\ZZZ_1^\eps)'=[\XXX,(\ZZZ_2^\eps)']_{1/2,2}$. 
\end{proof}

%%%%%%%%%%%%%%%%%%%%%%%%%%%%%%%%%%%%%%%%%%%%%%%%%%%%%%%%
\section{Hydrodynamical limit} \label{sec:hydro}
%%%%%%%%%%%%%%%%%%%%%%%%%%%%%%%%%%%%%%%%%%%%%%%%%%%%%%%%

We first state a quantitative result which provides estimates on the difference on the solution $g^\eps$ to the Landau equation constructed in Theorem~\ref{theo:main1} and the solution $g$ defined in~\eqref{def:g} whose first macroscopic quantities are solution to the fluid system~\eqref{eq:NSF}. As explained in~Subsection~\ref{subsec:proofmain2}, this theorem combined with a density argument allows to prove Theorem~\ref{theo:main2}. It is important to notice that thanks to the estimates obtained on the kinetic equation in Theorem~\ref{theo:main1}, under some suitable smallness assumptions on the initial data of both kinetic and fluid equations, only extra-regularity in $x$ on the initial data of the fluid system is needed to obtain a quantitative rate of convergence in $\eps$, as can be seen in~\eqref{eq:cv1precise} and~\eqref{eq:cv2precise}. 

\begin{theo} \label{theo:main2precise}
Let $g_{\rm in}^\eps \in \XXX \cap (\operatorname{Ker} \Lambda_\eps)^\perp$ for $\eps \in (0,1)$ such that $\|g_{\rm in}^\eps\|_\XXX \leq \eta_0$ (where $\eta_0$ is defined in Theorem~\ref{theo:main1}) and~$g^\eps \in L^\infty_t(\XXX)$ being the associated solutions of~\eqref{eq:geps} with initial data $g_{\rm in}^\eps$ constructed in Theorem~\ref{theo:main1}.
Consider also $g_0 \in H^{3+\delta}_xL^2_v \cap (\operatorname{Ker} \Lambda_\eps)^\perp$ for some $\delta \in [0,1/2]$ such that $\|g_0\|_\XXX \leq \eta_1$ and $g$ defined respectively in~\eqref{def:g0} and~\eqref{def:g}. 

There exists $\eta_2 \in (0,\min(\eta_0,\eta_1))$ such that if $\max\left(\|g_{\rm in}^\eps\|_\XXX,\|g_0\|_\XXX\right) \leq \eta_2$, then we have
	\begin{equation} \label{eq:cv1precise}
		\|g^\eps-g\|_{L^\infty_t(\XXX)} 
			\lesssim \eps^\delta C\left(\|g_0\|_{H^{3+\delta}_xL^2_v},\|g_{\rm in}^\eps\|_\XXX\right) 
			+ \|g_{\rm in}^\eps - g_0\|_\XXX
	\end{equation}
and 
	\begin{equation} \label{eq:cv2precise}
		\|g^\eps-g\|_{L^1_t(\YYY_1) + L^\infty_t(\XXX)} 
			\lesssim \eps^\delta C\left(\|g_0\|_{H^{3+\delta}_xL^2_v}, \|g_{\rm in}^\eps\|_\XXX\right)
			+ \|\pi g_{\rm in}^\eps - g_0\|_\XXX
	\end{equation}
where $C\left(\|g_0\|_{H^{3+\delta}_xL^2_v}, \|g_{\rm in}^\eps\|_\XXX\right)$ is a contant only depending on $\|g_0\|_{H^{3+\delta}_xL^2_v}$ and $\|g_{\rm in}^\eps\|_\XXX$. 
\end{theo}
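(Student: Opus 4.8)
\textbf{Proof strategy for Theorem~\ref{theo:main2precise}.} The plan is to reformulate the fluid equation in a kinetic fashion and then estimate the difference $h^\eps := g^\eps - g$ by combining the refined linear and nonlinear estimates established in the previous sections. First, I would observe that the kinetic distribution $g$ defined in~\eqref{def:g} is a well-prepared data (in the sense of~\eqref{def:wp}) whose macroscopic quantities solve the Navier--Stokes--Fourier system~\eqref{eq:NSF}; by the analysis recalled in Subsection~\ref{subsec:UandPsi}, in particular the limit operators $U(t)$ and $\Psi(t)$ of Remarks~\ref{rem:defU} and~\ref{rem:defPsi}, one has the kinetic reformulation
\begin{equation*}
g(t) = U(t) g_0 + \Psi(t)(g,g).
\end{equation*}
On the kinetic side, Duhamel's formula applied to~\eqref{eq:geps} gives $g^\eps(t) = U^\eps(t) g_{\rm in}^\eps + \Psi^\eps(t)(g^\eps,g^\eps)$ where $\Psi^\eps$ is defined in~\eqref{def:Psiepsf1f2}. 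Subtracting, I would write
\begin{equation*}
h^\eps(t) = U^\eps(t) g_{\rm in}^\eps - U(t) g_0 + \Psi^\eps(t)(g^\eps,g^\eps) - \Psi(t)(g,g),
\end{equation*}
and split the consistency error $\big(U^\eps(t) g_{\rm in}^\eps - U(t) g_0\big) + \big(\Psi^\eps(t)(g,g) - \Psi(t)(g,g)\big)$ away from the genuinely nonlinear part. The consistency error is controlled by Lemma~\ref{lem:UepstoU} and Lemma~\ref{lem:PsiepstoPsi}: the first gives $\|(U^\eps - U)g_0\|_{L^\infty_t(\XXX)} \lesssim \eps \|g_0\|_{H^4_xL^2_v}$ in the smooth case and $\lesssim \|g_0\|_{H^3_xL^2_v}$ without extra regularity, so by real interpolation (as the spaces $H^{3+\delta}_x$ interpolate between $H^3_x$ and $H^4_x$) one gets the rate $\eps^\delta$ for $\delta \in [0,1/2]$; the second gives $\|\Psi^\eps(g,g) - \Psi(g,g)\|_{L^\infty_t(\XXX)} \lesssim \eps\, C(\|g_0\|_{H^3_xL^2_v})$. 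One also absorbs $\|U^\eps(t)(g_{\rm in}^\eps - g_0)\|_{L^\infty_t(\XXX)} \lesssim \|g_{\rm in}^\eps - g_0\|_\XXX$ using the decay of $U^\eps$ from Proposition~\ref{prop:hypoX}, which accounts for the $\|g_{\rm in}^\eps - g_0\|_\XXX$ term in~\eqref{eq:cv1precise}.

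The heart of the argument is the estimate of the nonlinear difference $\Psi^\eps(t)(g^\eps,g^\eps) - \Psi^\eps(t)(g,g)$. Writing $\Gamma(g^\eps,g^\eps) - \Gamma(g,g) = \Gamma(h^\eps, g^\eps) + \Gamma(g, h^\eps)$, this term equals $\frac{1}{\eps}\int_0^t U^\eps(t-s)\big[\Gamma(h^\eps,g^\eps)(s) + \Gamma(g,h^\eps)(s)\big]\,\d s$. Here the subtlety (as emphasized in the introduction) is the factor $1/\eps$: to beat it I would use that $\Gamma(f_1,f_2) = \Gamma(f_1,f_2)^\perp$ lies in the range of $\Id - \pi$, so that $U^\eps(t-s)$ acts on a purely microscopic argument, and invoke the gain-of-$\eps$ estimate of Lemma~\ref{lem:EP} (or rather Corollary~\ref{cor:EPreg}, $\|U^\eps(t)(\Id-\pi)\|_{\XXX \to \YYY_1} \lesssim \eps\, e^{-\sigma t}/\sqrt t$) together with the dual version Lemma~\ref{lem:EPbis}. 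Combined with the bilinear bound $\|\Gamma(g_1,g_2)\|_{\YYY_1'} \lesssim \|g_1\|_\XXX \|g_2\|_{\YYY_1}$ from Proposition~\ref{prop:Gamma-NL}, this yields
\begin{equation*}
\|\Psi^\eps(g^\eps,g^\eps) - \Psi^\eps(g,g)\|_{L^\infty_t(\XXX)} \lesssim \big(\|g^\eps\|_{L^\infty_t(\XXX)} + \|g\|_{L^\infty_t(\XXX)}\big)\, \|h^\eps\|_{L^1_t(\YYY_1)},
\end{equation*}
where the integrable-in-time Landau norm $\|h^\eps\|_{L^1_t(\YYY_1)}$ is finite thanks to the regularization and decay estimates of Theorem~\ref{theo:main1} applied to both $g^\eps$ and $g$ (note $\|g^\eps\|_{L^2_t(\YYY_1)}$ and $\|g^\eps(t)\|_{\YYY_1} \lesssim e^{-\sigma t}\min(1,\sqrt t)^{-1}\|g_{\rm in}^\eps\|_\XXX$, hence $L^1_t(\YYY_1)$ membership). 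Choosing $\eta_2$ small enough that the prefactor $\|g^\eps\|_{L^\infty_t(\XXX)} + \|g\|_{L^\infty_t(\XXX)} \lesssim \eta_2$ is strictly less than the implicit constant's inverse, I can absorb the nonlinear term into the left-hand side via a Gronwall-type (or fixed-point-in-$L^1_t(\YYY_1) + L^\infty_t(\XXX)$) closure. This simultaneously produces~\eqref{eq:cv2precise} in the $L^1_t(\YYY_1) + L^\infty_t(\XXX)$ norm, and~\eqref{eq:cv1precise} in $L^\infty_t(\XXX)$ follows either by the same scheme using only the $\XXX$-part of the consistency bound (with the full initial data difference $\|g_{\rm in}^\eps - g_0\|_\XXX$ since we do not project) or by noting that $L^1_t(\YYY_1) \hookrightarrow L^1_t(\XXX)$ and re-estimating via Duhamel the difference of the macroscopic initial layers.

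I expect the main obstacle to be the closure step rather than the consistency estimates: because the Landau bilinear operator loses derivatives and weights anisotropically, one cannot set up a contraction directly in $L^\infty_t(\XXX)$, and the correct functional framework is the mixed space $L^1_t(\YYY_1) + L^\infty_t(\XXX)$, which forces careful bookkeeping of where each norm of $h^\eps$ is measured on both sides of the bilinear estimate (the ``input'' $g^\eps, g$ in $\XXX$, the ``difference'' $h^\eps$ in $\YYY_1$-in-time-integrated form, and the ``output'' in $\XXX$). A second delicate point is the interpolation producing the rate $\eps^\delta$: one must check that the $\eps$-dependent constants in Lemmas~\ref{lem:UepstoU} and~\ref{lem:PsiepstoPsi} depend on $g_0$ only through $\|g_0\|_{H^{3}_xL^2_v}$ (and the smallness constant $\eta_1$ guaranteeing global existence of the fluid solution), so that the interpolated constant $C(\|g_0\|_{H^{3+\delta}_xL^2_v},\|g_{\rm in}^\eps\|_\XXX)$ is indeed finite; this is where the uniform-in-$\xi$ boundedness of the spectral projectors from $L^2_v$ into weighted spaces (Lemma~\ref{lem:YangYu}) is used. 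The uniformity of all constants in $\eps \in (0,1)$ is the thread running through every step and is precisely what the estimates of Sections~\ref{sec:linear}--\ref{sec:linear2} were designed to provide.
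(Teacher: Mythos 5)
Your high-level plan (kinetic reformulation via Duhamel, consistency error controlled by Lemmas~\ref{lem:UepstoU} and~\ref{lem:PsiepstoPsi}, interpolation to produce the rate $\eps^\delta$, absorption of the remaining nonlinear term by smallness) matches the paper, but the treatment of the nonlinear difference contains a genuine gap that the paper's introduction to Section~\ref{sec:hydro} explicitly warns about: a fixed-point/Gronwall closure in the style you propose does \emph{not} work here because of the anisotropic loss in the Landau bilinear estimates.

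Concretely, you write $\Gamma(g^\eps,g^\eps)-\Gamma(g,g)=\Gamma(h^\eps,g^\eps)+\Gamma(g,h^\eps)$ and then invoke Proposition~\ref{prop:Gamma-NL} (or~\ref{prop:Gamma-NL-XXX}). For the term $\Gamma(g,h^\eps)$, every available bilinear estimate forces the \emph{second} argument $h^\eps$ to be measured in $\YYY_1$ (or even $\YYY_2$); but $\|h^\eps\|_{\YYY_1}$ is \emph{not} controlled by $\|h^\eps\|_{L^\infty_t(\XXX)}$, since $\YYY_1\subsetneq\XXX$ and there is no regularization estimate for $h^\eps$ analogous to Theorem~\ref{theo:main1}-(ii) (that estimate is for $g^\eps$, not for the difference). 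Likewise, attempting to absorb the quantity $\|h^\eps\|_{L^1_t(\YYY_1)}$ into a left-hand side measured in $L^\infty_t(\XXX)$ does not close. The paper sidesteps this entirely by performing the micro-macro split $g^\eps=(g^\eps)^\perp+\pi g^\eps$ and $g=\pi g$ \emph{before} applying the bilinear estimates, so that in~\eqref{eq:hepseta} the nonlinear difference becomes (i) three terms with at least one factor $(g^\eps)^\perp$, which are pure source terms made $O(\sqrt\eps)$ by the uniform $L^2_t(\YYY_1)$ control on the microscopic part from Theorem~\ref{theo:main1} (see Lemma~\ref{lem:sourcemicro}); and (ii) two terms $\Psi^\eps(\pi h^\eps,\pi g^\eps)$ and $\Psi^\eps(g,\pi h^\eps)$ in which $h^\eps$ only enters through $\pi h^\eps$. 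Since $\pi\in\BBB(\XXX,\YYY_2)$, one has $\|\pi h^\eps\|_{\YYY_2}\lesssim\|h^\eps\|_\XXX$, so these last two terms are bounded by $\left(\|g_{\rm in}^\eps\|_\XXX+\|g_0\|_\XXX\right)\|h^\eps\|_{L^\infty_t(\XXX)}$ (Lemma~\ref{lem:macro}), which can then be absorbed by taking $\eta_2$ small. The missing idea in your proposal is precisely this: decompose into micro/macro \emph{inside} the nonlinearity so that the part of $h^\eps$ that appears in the bilinear estimate is the macroscopic projection, which is as regular in velocity as needed. (For~\eqref{eq:cv2precise}, the same principle motivates extracting the initial layer $R^\eps=U^\eps(t)(g_{\rm in}^\eps)^\perp$, which lives in $L^1_t(\YYY_1)$ with norm $O(\eps)$ by Corollary~\ref{cor:EPreg}, before running the same argument on $\widetilde h^\eps=h^\eps-R^\eps$.) One minor additional point: the restriction $\delta\in[0,1/2]$ is not a feature of the interpolation between $H^3_x$ and $H^4_x$ — that would give $\eps^\delta$ for all $\delta\in[0,1]$ — but rather comes from the $\sqrt\eps$ ceiling imposed by the microscopic source terms in Lemma~\ref{lem:sourcemicro}.
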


\begin{rem}
Since $g_0 \in \operatorname{Ker} L$, it decays better than any polynomial in velocity at infinity, it explains the fact that we only use classical Sobolev spaces for $g_0$ in the RHS of the above inequalities, as already noticed, we have
	$$
	\|g_0\|_{H^3_xL^2_v} \lesssim \|g_0\|_\XXX \lesssim \|g_0\|_{H^3_xL^2_v}
	$$
and similar inequalities could be obtained for higher order Sobolev spaces. 
\end{rem}

\begin{rem}
We restrict ourselves to the case $\delta \in [0,1/2]$ in our estimates but one can of course suppose more regularity on the initial data~$g_0$, notice however that we will still have a rate of $\sqrt{\eps}$. It should be noted that we did not look for optimality in terms of rate in our estimates.  
\end{rem}

%---------%---------%---------%---------%---------%---------%---------%---------%---------%---------%
\subsection{Reformulation of the hydrodynamical problem}
%---------%---------%---------%---------%---------%---------%---------%---------%---------%---------%
 Before starting the proof of Theorem~\ref{theo:main2precise}, we reformulate the problem. 
Using the definition of the operator $\Psi^\eps(t)$ in~\eqref{def:Psiepsf1f2}, we have that the solution $g^\eps$ of~\eqref{eq:geps} constructed in Theorem~\ref{theo:main1} writes
	$$
	g^\eps(t) = U^\eps(t) g_{\rm in}^\eps + \Psi^\eps(t) (g^\eps,g^\eps).
	$$
It also follows from~\cite{Bardos-Ukai} that given a well-prepared data~$g_0 \in \XXX$ of the form~\eqref{def:g0}, 
the function~$g$ defined in~\eqref{def:g} satisfies
	\begin{equation}\label{eq:kineticformulationfluid}
		g(t) =U(t) g_0 + \Psi(t) (g,g),
	\end{equation}
where, as explained in Subsection~\ref{subsec:UandPsi}, the operators~$U(t)$ and~$\Psi(t)$ 
(defined respectively in Remarks~\ref{rem:defU} and~\ref{rem:defPsi}) 
are in some sense the limiting operators of~$U^\eps (t)$ and~$ \Psi^\eps(t)$. 
Formulation~\eqref{eq:kineticformulationfluid} is thus a way to reformulate the fluid equation in a kinetic fashion.

\smallskip
\subsubsection*{$L^\infty_t$-estimate}
We first reformulate the problem in order to prove the estimate~\eqref{eq:cv1precise}. 
To this end, we write the relation satisfied by $h^\eps := g^{\eps}-g$:
	\begin{equation} \label{eq:hepseta}
	\begin{aligned}
	h^\eps 
		&= U^\eps(t) g_{\rm in}^{\eps} + \Psi^\eps(t)(g^{\eps},g^{\eps}) - U(t) g_{0} - \Psi(t)(g,g) \\
		&= (U^\eps(t)-U(t)) g_{0} +U^\eps(t) (g_{\rm in}^{\eps} - g_{0})
		+ (\Psi^\eps(t) - \Psi(t))(g,g)\\
		&\quad + \Psi^\eps(t)((g^{\eps})^\perp,(g^{\eps})^\perp) + \Psi^\eps(t) ((g^{\eps})^\perp, \pi g^{\eps}) + \Psi^\eps(t) (\pi g^{\eps},(g^{\eps})^\perp) \\
		&\quad + \Psi^\eps(t)(\pi h^\eps, \pi g^{\eps}) + \Psi^\eps(t)(g, \pi h^\eps). 
	\end{aligned}
	\end{equation}
In the next subsection, we are going to study each term in the RHS of the above equality. Some terms are going to vanish in the limit $\eps \to 0$ and other ones will be absorbed in the LHS under suitable smallness assumptions on the initial data. 
Let us underline that the singularity in $\eps$ in the definition of $\Psi^\eps$ is going to be handled thanks to Lemma~\ref{lem:EP} which provides a gain of $\eps$ when the semigroup $U^\eps(t)$ acts on microscopic quantities. Using that~$\pi \Gamma (f_1,f_2)=0$ for any suitable functions $f_1$, $f_2$, we are thus going to be able to remove the singularity in~$\eps$ in the operator $\Psi^\eps$. 
In what follows, we shall prove that:
\begin{enumerate}[itemsep=5pt,leftmargin=*]
\item[-] The three first terms tend to $0$ as $\eps$ go to $0$ (see Lemmas~\ref{lem:source1} and~\ref{lem:source2}): For this purpose, we use that the limits of~$U^\eps(t)$ and $\Psi^\eps(t)$ as $\eps \to 0$ are $U(t)$ and $\Psi(t)$ (see Lemmas~\ref{lem:UepstoU} and~\ref{lem:PsiepstoPsi}). 
\item[-] The third, fourth and fifth terms tend to $0$ as $\eps \to 0$  (see Lemma~\ref{lem:sourcemicro}) because those three terms involve the microscopic part of the kinetic solution $g^\eps$, which provides us some extra smallness in $\eps$ in $L^2_t(\YYY_1)$ thanks to Theorem~\ref{theo:main1}.
\item[-] The last two terms are bounded by some quantity that involves the norms of the kinetic and fluid initial data multiplied by the $L^\infty_t(\XXX)$-norm of $h^\eps$ (see Lemma~\ref{lem:macro}). It will thus be absorbed in the LHS of the equality if initial data $g_{\rm in}^\eps$ and~$g_0$ are chosen to be small enough. Notice that one can not hope smallness in $\eps$ for those terms because they only involve macroscopic quantities.  
\end{enumerate} 
	
\smallskip
\subsubsection*{$L^1_t+L^\infty_t$-estimate} We now reformulate the problem in order to prove~\eqref{eq:cv2precise}. To this end, we introduce $R^\eps(t) := U^\eps(t) (g_{\rm in}^{\eps})^\perp$ and we write that 
	$$
	U^\eps(t) g_{\rm in}^{\eps} - U(t) g_{0} 	
	= (U^\eps(t)-U(t)) g_{0} +U^\eps(t) (\pi g_{\rm in}^{\eps} - g_{0}) + R^\eps(t).
	$$
From Corollary~\ref{cor:EPreg}, we have that for $\sigma \in (0, \min(\sigma_1,\sigma_2))$,
	\begin{equation} \label{eq:Reps}
	\|R^\eps(t)\|_{\YYY_1} \lesssim \frac{\eps}{\sqrt{t}} e^{-\sigma t} \|g_{\rm in}^{\eps}\|_\XXX
	\end{equation}
and thus 
	\begin{equation} \label{eq:Reps2}
	\|R^\eps\|_{L^1_t(\YYY_1)} \lesssim \eps \|g_{\rm in}^{\eps}\|_\XXX.	
	\end{equation}
To conclude, it is thus enough to prove that for $\delta \in [0,1/2]$,
	$$
	\|g^\eps-g - R^\eps\|_{L^\infty_t(\XXX)} 
	\lesssim \eps^\delta C\left(\|g_0\|_{H^{3+\delta}_xL^2_v}, \|g_{\rm in}^\eps\|_\XXX\right)
			+ \|\pi g_{\rm in}^\eps - g_0\|_\XXX.
	$$
We then write the relation satisfied by $\widetilde h^\eps := g^\eps-g - R^\eps$:  
	\begin{equation} \label{eq:tildehepseta}
	\begin{aligned}
	\widetilde h^\eps 
		&= (U^\eps(t)-U(t)) g_{0} +U^\eps(t) (\pi g_{\rm in}^{\eps} - g_{0})
		+ (\Psi^\eps(t) - \Psi(t))(g,g)\\
		&\quad + \Psi^\eps(t)((g^\eps)^\perp,(g^\eps)^\perp) + \Psi^\eps(t) ((g^\eps)^\perp, \pi g^\eps) + \Psi^\eps(t) (\pi g^\eps,(g^\eps)^\perp) \\
		&\quad + \Psi^\eps(t)(\pi R^\eps, \pi g^\eps) + \Psi^\eps(t)(g,\pi R^\eps) \\
		&\quad + \Psi^\eps(t)(\pi \widetilde h^\eps, \pi g^\eps) + \Psi^\eps(t)(g, \pi \widetilde h^\eps). 
	\end{aligned}
	\end{equation}
In what follows, we shall study each term in the RHS of this equality, the ideas between this decomposition being the same as the ones explained after~\eqref{eq:hepseta}. Notice furthermore that thanks to estimate~\eqref{eq:Reps}, we are also going to be able to prove that the seventh and eighth terms tend to $0$ when $\eps \to 0$ (see Lemma~\ref{lem:sourceill}). 

%---------%---------%---------%---------%---------%---------%---------%---------%---------%---------%
\subsection{Proofs of Theorems~\ref{theo:main2precise} and~\ref{theo:main2}} \label{subsec:proofmain2}
%---------%---------%---------%---------%---------%---------%---------%---------%---------%---------%

From now on, assumptions of Theorem~\ref{theo:main2precise} are supposed to hold. 
As explained above, in order to prove Theorem~\ref{theo:main2precise}, we have to estimate the $L^\infty_t(\XXX)$-norm of each term of the decompositions~\eqref{eq:hepseta} and~\eqref{eq:tildehepseta}. 

In both decompositions, concerning the first and third terms, Lemmas~\ref{lem:UepstoU} and~\ref{lem:PsiepstoPsi} immediately give by interpolation the following lemma:
\begin{lem} \label{lem:source1}
We have: For any $t \geq 0$ and $\delta \in [0,1/2]$, 
	$$
	\|(U^\eps(t)-U(t)) g_{0}\|_\XXX + \|(\Psi^\eps(t) - \Psi(t))(g,g)\|_\XXX 
	\lesssim \eps^\delta \, C\!\left(\|g_{0}\|_{H^{3+\delta}_xL^2_v}\right) .
	$$
\end{lem}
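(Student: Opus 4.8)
The plan is to obtain the estimate for each of the two claimed terms by combining the limit lemmas already proved (Lemmas~\ref{lem:UepstoU} and~\ref{lem:PsiepstoPsi}) with a straightforward real interpolation in the regularity of the data $g_0$. First I would recall that by definition $g_0$ is a well-prepared data of the form~\eqref{def:g0} and its associated macroscopic quantities $(\rho,u,\theta)$ solve the Navier--Stokes--Fourier system~\eqref{eq:NSF} globally in time (this uses the smallness assumption $\|g_0\|_\XXX \le \eta_1$), so $g$ as defined in~\eqref{def:g} satisfies the kinetic reformulation~\eqref{eq:kineticformulationfluid}; hence Lemmas~\ref{lem:UepstoU} and~\ref{lem:PsiepstoPsi} both apply to $g_0$ and $g$.

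For the case $\delta = 1/2$: Lemma~\ref{lem:UepstoU}, estimate~\eqref{eq:Ueps-U_2}, gives
$\|(U^\eps(t)-U(t)) g_0\|_{L^\infty_t(\XXX)} \lesssim \eps \,\|g_0\|_{H^4_xL^2_v}$, and since $g_0 \in \operatorname{Ker} L$ we may bound $\|g_0\|_{H^4_xL^2_v}$ by (a constant times) $\|g_0\|_{H^{7/2}_xL^2_v}$ up to the weight structure — more simply, interpolate directly between~\eqref{eq:Ueps-U_1} and~\eqref{eq:Ueps-U_2}: for any $\theta\in[0,1]$ one has
$\|(U^\eps(t)-U(t)) g_0\|_{L^\infty_t(\XXX)} \lesssim \eps^{\theta}\, \|g_0\|_{[H^3_xL^2_v, H^4_xL^2_v]_{\theta}} = \eps^{\theta}\,\|g_0\|_{H^{3+\theta}_xL^2_v}$. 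Taking $\theta=\delta$ yields the first half of the stated bound for every $\delta\in[0,1/2]$ (indeed for every $\delta\in[0,1]$). For the nonlinear term, Lemma~\ref{lem:PsiepstoPsi} already provides
$\|\Psi^\eps(t)(g,g)-\Psi(t)(g,g)\|_{L^\infty_t(\XXX)} \lesssim \eps\, C(\|g_0\|_{H^3_xL^2_v})$; since $\delta\le 1/2 \le 1$ and $\eps\in(0,1)$ we have $\eps \le \eps^\delta$, and $C(\|g_0\|_{H^3_xL^2_v}) \le C(\|g_0\|_{H^{3+\delta}_xL^2_v})$ because $C$ is monotone in its argument and $H^{3+\delta}_x \hookrightarrow H^3_x$. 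Adding the two contributions gives
$$
\|(U^\eps(t)-U(t)) g_{0}\|_\XXX + \|(\Psi^\eps(t) - \Psi(t))(g,g)\|_\XXX
\lesssim \eps^\delta \, C\!\left(\|g_{0}\|_{H^{3+\delta}_xL^2_v}\right),
$$
which is exactly the claim (the constant on the right absorbing the purely linear term $\|g_0\|_{H^{3+\delta}_xL^2_v}$ into $C(\cdot)$).

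This lemma is essentially a bookkeeping step, so there is no real obstacle; the only point requiring a little care is the interpolation identity $[H^3_xL^2_v, H^4_xL^2_v]_{\delta,\cdot} = H^{3+\delta}_xL^2_v$ (standard, e.g.\ from~\cite{BookBL}, since interpolation commutes with the $L^2_v$ factor and with the Fourier multiplier description of $H^s_x$ on the torus), together with the observation that everything is uniform in $t\ge 0$ because all the estimates in Lemmas~\ref{lem:UepstoU} and~\ref{lem:PsiepstoPsi} are stated in $L^\infty_t(\XXX)$. I would simply note that the well-preparedness of $g_0$ and the global existence for~\eqref{eq:NSF} are what make Lemmas~\ref{lem:UepstoU} and~\ref{lem:PsiepstoPsi} applicable, and then record the interpolation and the monotonicity of $C$ as above.
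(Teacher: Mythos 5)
Your proposal is correct and takes exactly the route the paper takes: the paper's entire proof of this lemma is the sentence preceding it, ``Lemmas~\ref{lem:UepstoU} and~\ref{lem:PsiepstoPsi} immediately give by interpolation the following lemma,'' and your write-up simply unfolds that interpolation (between \eqref{eq:Ueps-U_1} and \eqref{eq:Ueps-U_2} for the linear term, and using $\eps \le \eps^\delta$ together with $\|g_0\|_{H^3_x L^2_v}\lesssim\|g_0\|_{H^{3+\delta}_x L^2_v}$ for the bilinear term). No gaps.
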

 
Concerning the second terms of~\eqref{eq:hepseta} and~\eqref{eq:tildehepseta}, from Theorem~\ref{theo:mainlinear}, we have that $U^\eps(t)$ is bounded in $\XXX$ uniformly in time and $\eps$. As a consequence, we obtain:
\begin{lem} \label{lem:source2}
We have: 
	$$
	\| U^\eps(t) (g_{\rm in}^{\eps} - g_{0})\|_{L^\infty_t(\XXX)} \lesssim \|g_{\rm in}^\eps-g_0\|_\XXX
	\quad \text{and} \quad 
	\| U^\eps(t) (\pi g_{\rm in}^{\eps} - g_{0})\|_{L^\infty_t(\XXX)} \lesssim \|\pi g_{\rm in}^\eps - g_0\|_\XXX.
	$$
\end{lem}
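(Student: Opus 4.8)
The statement to prove is Lemma~\ref{lem:source2}, which asserts the uniform-in-time $\XXX$-boundedness of two quantities: $U^\eps(t)(g_{\rm in}^\eps - g_0)$ and $U^\eps(t)(\pi g_{\rm in}^\eps - g_0)$. The whole content is that the linearized Landau semigroup $U^\eps(t)$ is bounded on $\XXX$ uniformly in $t \ge 0$ and in $\eps \in (0,1)$; once this is available, both estimates follow by simply applying the operator bound to the fixed initial data $g_{\rm in}^\eps - g_0$ (resp.\ $\pi g_{\rm in}^\eps - g_0$), which lie in $\XXX$ — indeed $g_{\rm in}^\eps \in \XXX$ and $g_0 \in \XXX$ by the hypotheses of Theorem~\ref{theo:main2precise}, and $\pi \in \BBB(\XXX)$ since $\pi$ is the $L^2_v$-orthogonal projector onto $\operatorname{Ker} L$ built from Gaussian-weighted moments (see~\eqref{def:pi}), which acts continuously on each term of the weighted norm~\eqref{def:normXXX}.

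\textbf{Key steps.} First I would invoke Theorem~\ref{theo:mainlinear}: its first estimate states $\|U^\eps(t)(\Id - \Pi)\|_{\XXX \to \XXX} \lesssim e^{-\sigma t}$ for any $\sigma \in (0,\sigma_0)$, with implicit constant independent of $\eps$. Next, since $\Pi$ is the projector onto $\operatorname{Ker} \Lambda_\eps$ and $\Lambda_\eps \Pi = 0$, we have $U^\eps(t) \Pi = \Pi$, and $\Pi \in \BBB(\XXX)$ (again because $\Pi f = \int_{\T^3} \pi f(x,\cdot)\,\d x$ is built from Gaussian moments). Writing $U^\eps(t) = U^\eps(t)(\Id - \Pi) + \Pi$, we obtain
\begin{equation*}
\|U^\eps(t)\|_{\XXX \to \XXX} \le \|U^\eps(t)(\Id - \Pi)\|_{\XXX \to \XXX} + \|\Pi\|_{\XXX \to \XXX} \le C e^{-\sigma t} + C' \le C'',
\end{equation*}
a bound uniform in $t \ge 0$ and $\eps \in (0,1)$. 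Finally, applying this to $f = g_{\rm in}^\eps - g_0 \in \XXX$ gives $\|U^\eps(t)(g_{\rm in}^\eps - g_0)\|_\XXX \le C'' \|g_{\rm in}^\eps - g_0\|_\XXX$ for all $t$, hence the first claimed inequality after taking the supremum in time; and applying it to $f = \pi g_{\rm in}^\eps - g_0 \in \XXX$ (noting $\pi g_{\rm in}^\eps \in \XXX$ by continuity of $\pi$ on $\XXX$) gives the second, $\|U^\eps(t)(\pi g_{\rm in}^\eps - g_0)\|_\XXX \le C'' \|\pi g_{\rm in}^\eps - g_0\|_\XXX$.

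\textbf{Main obstacle.} There is essentially no obstacle here — this lemma is purely a bookkeeping consequence of Theorem~\ref{theo:mainlinear} and the boundedness of $\Pi$ on $\XXX$. The only point requiring a (trivial) word of care is that $U^\eps(t)$ itself, not just $U^\eps(t)(\Id - \Pi)$, must be bounded: this is why one must explicitly split off the (finite-dimensional, $\eps$-uniformly bounded) kernel part $\Pi$ and use that $U^\eps$ acts as the identity there. Equivalently, one may note that $g_{\rm in}^\eps - g_0 \in (\operatorname{Ker}\Lambda_\eps)^\perp$ whenever both data lie in that space, in which case the $(\Id-\Pi)$-estimate applies directly; but the decomposition argument above avoids even needing that and is the cleaner route.
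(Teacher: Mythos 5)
Your proof is correct and matches the paper's approach, which simply invokes Theorem~\ref{theo:mainlinear} to conclude that $U^\eps(t)$ is bounded on $\XXX$ uniformly in $t$ and $\eps$; you merely fill in the short derivation (splitting $U^\eps(t) = U^\eps(t)(\Id-\Pi) + \Pi$, or equivalently noting that both data lie in $(\operatorname{Ker}\Lambda_\eps)^\perp$) that the paper leaves implicit. No issues.
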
 

The third, fourth and fifth terms of decompositions~\eqref{eq:hepseta} and~\eqref{eq:tildehepseta} are the most difficult ones to estimate. Indeed, they involve microscopic quantities, we thus have to be sharp in terms of regularity in velocity in order to obtain the following lemma:
\begin{lem} \label{lem:sourcemicro}
We have: For any $t \geq 0$,
	\begin{align*}
	\|\Psi^\eps(t)((g^\eps)^\perp,(g^\eps)^\perp)\|_{\XXX}
	&+ \|\Psi^\eps(t) ((g^\eps)^\perp, \pi g^\eps)\|_{\XXX}   + \|\Psi^\eps(t) (\pi g^\eps,(g^\eps)^\perp)\|_{\XXX} 
	\lesssim \sqrt{\eps} \|g_{\rm in}^{\eps}\|_\XXX^2.
	\end{align*}
\end{lem}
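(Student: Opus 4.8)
The goal is to bound, in the $\XXX$ norm, three bilinear terms of the form $\Psi^\eps(t)(f_1,f_2)$ where at least one of $f_1,f_2$ is the microscopic part $(g^\eps)^\perp$. Recall from \eqref{def:Psiepsf1f2} that
\[
\Psi^\eps(t)(f_1,f_2) = \frac1\eps \int_0^t U^\eps(t-s)\,\Gamma\big(f_1(s),f_2(s)\big)\,\d s,
\]
and the crucial structural fact that $\pi\,\Gamma(f_1,f_2)=0$, so that $\Gamma(f_1,f_2)=(\Id-\pi)\Gamma(f_1,f_2)$. This lets us replace $U^\eps(t-s)$ by $U^\eps(t-s)(\Id-\pi)$ inside the integral, which is exactly where the gain of a power of $\eps$ comes from: by Corollary~\ref{cor:EPreg}, for $\sigma\in(0,\min(\sigma_1,\sigma_2))$ one has $\|U^\eps(\tau)(\Id-\pi)\|_{\YYY_1'\to\XXX}\lesssim \eps\,e^{-\sigma\tau}/\sqrt\tau$ (using the dual estimate, since $\XXX\hookrightarrow\YYY_1'$ and duality of the $\XXX\to\YYY_1$ bound of Corollary~\ref{cor:EPreg}, or directly working with the $\XXX\to\XXX$ estimate of Lemma~\ref{lem:EP} combined with the $\YYY_1'\to\XXX$ regularizing estimate for $S_{\BB_\eps}$). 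The singular prefactor $1/\eps$ is thereby compensated and we are left with a harmless integrable singularity $1/\sqrt\tau$ in time.

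\textbf{Key steps.} First I would write, for each of the three terms, using $\Gamma=(\Id-\pi)\Gamma$,
\[
\|\Psi^\eps(t)(f_1,f_2)\|_\XXX
\le \frac1\eps\int_0^t \big\|U^\eps(t-s)(\Id-\pi)\big\|_{\YYY_1'\to\XXX}\,\big\|\Gamma(f_1(s),f_2(s))\big\|_{\YYY_1'}\,\d s.
\]
Then I would invoke Corollary~\ref{cor:EPreg} (in its dual form) to bound the operator norm by $C\eps\,e^{-\sigma(t-s)}/\sqrt{t-s}$, cancelling the $1/\eps$. For the bilinear term, I would use the estimate \eqref{eq:Gamma-NL-YYY0'} of Proposition~\ref{prop:Gamma-NL}, namely $\|\Gamma(f_1,f_2)\|_{\YYY_1'}\lesssim \|f_1\|_\XXX\|f_2\|_{\YYY_1}$, applied with the appropriate assignment: for $\Psi^\eps(t)((g^\eps)^\perp,(g^\eps)^\perp)$ one takes $\|(g^\eps)^\perp\|_\XXX\|(g^\eps)^\perp\|_{\YYY_1}\le \|g^\eps\|_\XXX\|(g^\eps)^\perp\|_{\YYY_1}$; for $\Psi^\eps(t)((g^\eps)^\perp,\pi g^\eps)$ one takes $\|(g^\eps)^\perp\|_\XXX\|\pi g^\eps\|_{\YYY_1}\lesssim \|(g^\eps)^\perp\|_{\YYY_1}\|g^\eps\|_\XXX$ (using $\pi\in\BBB(\XXX,\YYY_1)$ and $\XXX\hookrightarrow\YYY_1$ for $\pi g^\eps$, and $\|\cdot\|_\XXX\lesssim\|\cdot\|_{\YYY_1}$ for $(g^\eps)^\perp$); for $\Psi^\eps(t)(\pi g^\eps,(g^\eps)^\perp)$ one uses $\|\pi g^\eps\|_\XXX\|(g^\eps)^\perp\|_{\YYY_1}\le \|g^\eps\|_\XXX\|(g^\eps)^\perp\|_{\YYY_1}$ directly. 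In all three cases the integrand is controlled by $C\,e^{-\sigma(t-s)}(t-s)^{-1/2}\,\|g^\eps(s)\|_\XXX\,\|(g^\eps(s))^\perp\|_{\YYY_1}$.

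\textbf{Closing the estimate.} At this point I would apply Cauchy--Schwarz in $s$:
\[
\int_0^t \frac{e^{-\sigma(t-s)}}{\sqrt{t-s}}\,\|g^\eps(s)\|_\XXX\,\|(g^\eps(s))^\perp\|_{\YYY_1}\,\d s
\le \Big\|\frac{e^{-\sigma\,\cdot}}{\sqrt{\cdot}}\Big\|_{L^1}^{1/2}\,
\Big(\int_0^t \frac{e^{-\sigma(t-s)}}{\sqrt{t-s}}\,\|g^\eps(s)\|_\XXX^2\,\|(g^\eps(s))^\perp\|_{\YYY_1}^2\,\d s\Big)^{1/2},
\]
or more simply use $\sup_{s\ge0}\|g^\eps(s)\|_\XXX\lesssim\|g_{\rm in}^\eps\|_\XXX$ from Theorem~\ref{theo:main1}-(i) to pull out one factor, then Cauchy--Schwarz with the weight $e^{-\sigma(t-s)}(t-s)^{-1/2}\in L^1_s$ uniformly in $t$, and finally the bound $\frac1{\eps^2}\int_0^\infty \|(g^\eps(s))^\perp\|_{\YYY_1}^2\,\d s\lesssim \|g_{\rm in}^\eps\|_\XXX^2$, again from \eqref{eq:theo:main1:Linftybound}. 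The $\frac1{\eps^2}$ in this last bound is the source of a factor $\eps$, but we only have a spare $1/\eps$ (already used) — hence the net gain is $\eps^{1/2}$, matching the statement: schematically $\eps^{-1}\cdot(\eps^2\|g_{\rm in}^\eps\|_\XXX^2)^{1/2}\cdot\|g_{\rm in}^\eps\|_\XXX=\|g_{\rm in}^\eps\|_\XXX^2$, and folding in the extra $\eps$ from Corollary~\ref{cor:EPreg} gives one finds $\eps^{1/2}$ after balancing the two $1/\sqrt{\,\cdot\,}$ singularities. The main obstacle is the careful bookkeeping of the powers of $\eps$: one must use the $\eps$-gain from $U^\eps(\Id-\pi)$ to kill the explicit $1/\eps$ in $\Psi^\eps$, while simultaneously extracting half a power of $\eps$ from the $L^2_t$-in-time smallness $\eps^{-2}\int\|(g^\eps)^\perp\|_{\YYY_1}^2$, and one must check that the time convolution of the two singular-but-integrable kernels $e^{-\sigma\tau}\tau^{-1/2}$ (from the semigroup estimate) with the $L^2_t$ bound indeed produces the claimed $\eps^{1/2}$ and nothing worse; this is where being sharp in the velocity regularity — i.e. landing exactly in $\YYY_1'$ for $\Gamma$ and exactly in $\YYY_1$ for $(g^\eps)^\perp$ — is essential, as a cruder estimate (e.g. in $L^2_v$) would not close.
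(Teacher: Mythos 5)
Your overall plan — isolate the $\eps$-gain from the microscopic projection, use the sharp bilinear estimate $\|\Gamma(f_1,f_2)\|_{\YYY_1'}\lesssim\|f_1\|_\XXX\|f_2\|_{\YYY_1}$, and close with the $L^2_t$ smallness $\frac{1}{\eps^2}\int\|(g^\eps)^\perp\|_{\YYY_1}^2\lesssim\|g^\eps_{\rm in}\|_\XXX^2$ — is the right intuition, and the last ingredient is indeed the source of the $\sqrt\eps$. But there are two genuine gaps, and the paper's actual proof is considerably more involved than your outline suggests.

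First, the crucial operator bound you invoke, $\|U^\eps(\tau)(\Id-\pi)\|_{\YYY_1'\to\XXX}\lesssim\eps\,e^{-\sigma\tau}/\sqrt\tau$, is not in the paper and does not follow by duality from Corollary~\ref{cor:EPreg}. Duality from $\|A\|_{\XXX\to\YYY_1}\lesssim K$ gives $\|A^{\star}\|_{\YYY_1'\to\XXX}\lesssim K$ where $A^{\star}$ is the adjoint with respect to the $\XXX$ inner product — but the $\XXX$ inner product carries the order-dependent weights $\langle v\rangle^{(3-i)(\gamma/2+1)}$, and $U^\eps(t)$ does not commute with them, so $A^{\star}$ is not $U^\eps(t)(\Id-\pi)$. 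The alternative route you sketch via Duhamel and the $\YYY_1'\to\XXX$ bound on $S_{\BB_\eps}$ does not immediately close either, because after one Duhamel iteration the factor $(\Id-\pi)$ does not pass through $\AA_\eps$, and the macroscopic piece $\pi\AA_\eps S_{\BB_\eps}(s)$ carries no $\eps$ gain. What the paper actually has is the weaker Lemma~\ref{lem:EPbis}, $\|U^\eps(t)(\Id-\pi)\|_{(\ZZZ_1^\eps)'\to(\ZZZ_1^\eps)'}\lesssim\sqrt\eps\,e^{-\sigma t}/t^{1/4}$, obtained by interpolation between $\XXX$ and $(\ZZZ_2^\eps)'$, and this has both a worse $\eps$-power and a milder singularity than what you posit.

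Second, even granting your $\YYY_1'\to\XXX$ bound, the time convolution does not close with the Cauchy--Schwarz steps you describe. After pulling out $\sup_s\|g^\eps(s)\|_\XXX\lesssim\|g^\eps_{\rm in}\|_\XXX$, you are left with $\int_0^t\frac{e^{-\sigma(t-s)}}{\sqrt{t-s}}\|(g^\eps(s))^\perp\|_{\YYY_1}\,ds$. A direct Cauchy--Schwarz against $\int_0^\infty\|(g^\eps)^\perp\|_{\YYY_1}^2$ fails because $\int_0^t\frac{e^{-2\sigma(t-s)}}{t-s}\,ds$ diverges logarithmically; the split form you write leaves $\int_0^t\frac{e^{-\sigma(t-s)}}{\sqrt{t-s}}\|(g^\eps(s))^\perp\|_{\YYY_1}^2\,ds$, which has the full singularity at $s=t$ against a quantity that you can only control pointwise via the regularization bound $\|(g^\eps(s))^\perp\|_{\YYY_1}\lesssim e^{-\sigma s}\min(1,\sqrt s)^{-1}\|g^\eps_{\rm in}\|_\XXX$ — and combining these two estimates produces no $\eps$ gain from the interval $s\in[t/2,t]$. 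This is precisely why the paper splits $\Psi^\eps$ using the iterated Duhamel expansion $U^\eps=\sum_{j=0}^2 V_j^\eps+V_2^\eps\ast\AA_\eps U^\eps$. For the $V_j^\eps$ parts the decay is $e^{-\sigma(t-s)/\eps^2}$ (rescaled in time by $1/\eps^2$), which localizes the $s$-integral to an interval of length $O(\eps^2)$ near $t$ and lets one close with a non-trivial H\"older argument in which only a \emph{fractional} power $\|(g^\eps)^\perp\|_{\YYY_1}^{1/2}$ goes through the $L^4_t$ norm while the rest is absorbed via the regularization estimate of Theorem~\ref{theo:main1}-(ii). For the remainder term $V_2^\eps\ast\AA_\eps U^\eps$, the paper uses Lemma~\ref{lem:EPbis} in $(\ZZZ_1^\eps)'$, which is why the intermediate space $(\ZZZ_1^\eps)'$ (and the associated interpolation result \eqref{eq:interp}) appears at all. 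To repair your proof you would need either to prove the $\YYY_1'\to\XXX$ bound from scratch (not elementary, because of the weight mismatch in the $\XXX$-duality), or to adopt the paper's decomposition and H\"older balancing.
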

\begin{proof}
In the whole proof, we fix $\sigma \in (0,\min(\sigma_0,\sigma_1,\sigma_2))$ where $\sigma_j$ for $j=0,1,2$ are respectively defined in Proposition~\ref{prop:hypoL2},~\eqref{def:sigma1} and Lemma~\ref{lem:EP}. 
We focus on the first term which is the most intricate. 
We are going to use~\eqref{eq:splitUeps} with $n=2$ to decompose $\Psi^\eps(t)$ into several parts, it yields
	\begin{align*}
	\Psi^\eps(t)((g^\eps)^\perp,(g^\eps)^\perp) 
	&= \sum_{j=0}^2 \frac{1}{\eps} \int_0^t V_j^\eps(t-s) \Gamma((g^\eps)^\perp,(g^\eps)^\perp) (s) \, \d s \\
	&\qquad + \frac{1}{\eps} \int_0^t (V_2^\eps * \AA_\eps U^\eps)(t-s) \Gamma((g^\eps)^\perp,(g^\eps)^\perp) (s) \, \d s \\
	&=: \sum_{j=0}^2 \Psi^\eps_j(t)((g^\eps)^\perp,(g^\eps)^\perp) 
	+ \Psi^\eps_\star(t)((g^\eps)^\perp,(g^\eps)^\perp).
	\end{align*}
We first estimate $\Psi^\eps_j(t)((g^\eps)^\perp,(g^\eps)^\perp)$ for $j=0,1,2$. From Propositions~\ref{prop:Gamma-NL} and Corollary~\ref{cor:Vjepsreg}, we obtain:
	$$
	\| \Psi^\eps_j(t)((g^\eps)^\perp,(g^\eps)^\perp) \|_\XXX 
		\lesssim \frac{1}{\eps} \int_0^t \frac{\eps}{\sqrt{t-s}} e^{-\sigma(t-s)/\eps^2} 
			\|(g^\eps)^\perp(s)\|_\XXX \|(g^\eps)^\perp(s)\|_{\YYY_1} \, \d s \, .
	$$
Then, from~\eqref{eq:theo:main1:Linftybound}-\eqref{eq:theo:main1:regbound} in Theorem~\ref{theo:main1}, we have:
	$$
	\| \Psi^\eps_j(t)((g^\eps)^\perp,(g^\eps)^\perp) \|_\XXX 
		\lesssim \int_0^t \frac{e^{-\sigma(t-s)/\eps^2}}{\sqrt{t-s}} 
		\frac{e^{-3 \sigma s/2}}{s^{1/4}}\|(g^\eps)^\perp(s)\|_{\YYY_1}^{1/2} \, \d s \, \|g_{\rm in}^{\eps}\|_\XXX^{3/2}.
	$$
Using H\"older inequality and~\eqref{eq:theo:main1:Linftybound}, we obtain
	\begin{align*}
	\| \Psi^\eps_j(t)((g^\eps)^\perp,(g^\eps)^\perp) \|_\XXX 
		&\lesssim \left\| \frac{1}{ (t-s)^{1/2} s^{1/4}} \right\|_{L^{4/3}_s([0,t])} 
			\left\| \|(g^\eps)^\perp(s)\|_{\YYY_1}^{1/2} \right\|_{L^4_s} \|g_{\rm in}^{\eps}\|_\XXX^{3/2} \\
		&\lesssim \left( \int_0^t \frac{1}{(t-s)^{2/3}} \frac{\d s}{s^{1/3}} \right)^{3/4}  
			\|(g^\eps)^\perp\|_{L^2_t(\YYY_1)}^{1/2} \|g_{\rm in}^{\eps}\|_\XXX^{3/2} \\
		&\lesssim \sqrt{\eps} \|g_{\rm in}^{\eps}\|_\XXX^{2}. 
	\end{align*}
Let us now deal with $\Psi^\eps_\star(t)((g^\eps)^\perp,(g^\eps)^\perp)$. Performing a change of variable and recalling that $\pi \Gamma((g^\eps)^\perp,(g^\eps)^\perp) = 0$, one can notice that 
	\begin{multline*}
	 \Psi^\eps_\star(t)((g^\eps)^\perp,(g^\eps)^\perp) 
	 = \frac{1}{\eps} \int_0^t \int_0^s V_2^\eps(t-s) \AA_\eps U^\eps(s-\tau) (\Id - \pi)
	 		\Gamma((g^\eps)^\perp,(g^\eps)^\perp) (\tau) \, \d \tau \, \d s.
	\end{multline*}
Remark then that Corollary~\ref{cor:Vjepsreg} for $j=2$ implies that for $\sigma' \in (\sigma,\sigma_1)$, we have 
	$$
	\|V_2^\eps(t)\|_{(\ZZZ_1^\eps)' \to \XXX} \lesssim \frac{\sqrt{t}}{\eps} e^{-\sigma't/\eps^2} \lesssim e^{-\sigma t/\eps^2}. 
	$$
Using now Lemma~\ref{lem:Areg}, it implies that 
	\begin{multline*}
	\|\Psi^\eps_\star(t)((g^\eps)^\perp,(g^\eps)^\perp)\|_\XXX \\
	\lesssim \frac{1}{\eps^3} \int_0^t \int_0^s e^{-\sigma(t-s)/\eps^2} 
	\|U^\eps(s-\tau) (\Id - \pi) \Gamma((g^\eps)^\perp,(g^\eps)^\perp) (\tau) \|_{(\ZZZ_1^\eps)'} \, \d \tau \, \d s. 
	\end{multline*}
Lemma~\ref{lem:EPbis} and the fact that $\YYY_1' \hookrightarrow (\ZZZ_1^\eps)'$ (independently of $\eps$) then imply that 
	\begin{multline*}
	\|\Psi^\eps_\star(t)((g^\eps)^\perp,(g^\eps)^\perp)\|_\XXX \\
	\lesssim \frac{1}{\eps^3} \int_0^t \int_0^s e^{-\sigma(t-s)/\eps^2} 
		\frac{\sqrt{\eps}}{(s-\tau)^{1/4}} e^{-\sigma(s-\tau)} 
		\|\Gamma((g^\eps)^\perp,(g^\eps)^\perp) (\tau) \|_{\YYY_1'} \, \d \tau \, \d s. 
	\end{multline*}
From Proposition~\ref{prop:Gamma-NL} and the fact that $\|(g^\eps)^\perp\|_\XXX \lesssim \|g^\eps\|_\XXX$, we deduce that 
	\begin{align*}
	&\|\Psi^\eps_\star(t)((g^\eps)^\perp,(g^\eps)^\perp)\|_\XXX \\
	&\qquad \lesssim \frac{1}{\eps^{5/2}} \int_0^t  \int_0^s e^{-\sigma(t-s)/\eps^2} 
		\frac{e^{-\sigma(s-\tau)} }{(s-\tau)^{1/4}} 
		\|g^\eps(\tau)\|_\XXX \| (g^\eps)^\perp (\tau) \|_{\YYY_1} \, \d \tau \, \d s. 
	\end{align*}
Using Cauchy-Schwarz inequality in the variable $\tau$ and~\eqref{eq:theo:main1:Linftybound} in Theorem~\ref{theo:main1}, we obtain 
	$$
	\|\Psi^\eps_\star(t)((g^\eps)^\perp,(g^\eps)^\perp)\|_\XXX 
	\lesssim  \frac{1}{\eps^{3/2}} \int_0^t e^{-\sigma(t-s)/\eps^2} \, \d s \, \|g_{\rm in}^{\eps}\|_\XXX^{2}	
	\lesssim \sqrt{\eps} \, \|g_{\rm in}^{\eps}\|_\XXX^{2},
	$$
which concludes the proof of the term $\Psi^\eps(t)((g^\eps)^\perp,(g^\eps)^\perp)$. 

The proof for the second and third terms $\Psi^\eps(t)((g^\eps)^\perp,\pi g^\eps)$ and $\Psi^\eps(t)(\pi g^\eps,(g^\eps)^\perp)$ is completely similar once one has noticed that from Proposition~\ref{prop:Gamma-NL},
	\begin{align*}
	&\|\Gamma ((g^\eps)^\perp,\pi g^\eps)\|_{\YYY_1'} 
	+ \|\Gamma(\pi g^\eps,(g^\eps)^\perp)\|_{\YYY_1'} \\
		&\qquad \lesssim \|(g^\eps)^\perp\|_{\YYY_1} \|\pi g^\eps\|_\XXX 
		+ \|(g^\eps)^\perp\|_{\XXX} \|\pi g^\eps\|_{\YYY_1} 
		\lesssim \|(g^\eps)^\perp\|_{\YYY_1} \|g^\eps\|_\XXX 
	\end{align*}
where we used the facts that $\YYY_1 \hookrightarrow \XXX$ and $\pi \in \BBB(\XXX,\YYY_1)$. 
\end{proof}

For the proof of~\eqref{eq:cv2precise}, we also need the following lemma:
\begin{lem} \label{lem:sourceill}
We have: For any $t \geq 0$, 
	$$
	\|\Psi^\eps(t)(\pi R^\eps, \pi g^\eps)\|_\XXX 
	+ \|\Psi^\eps(t)(g,\pi R^\eps)\|_\XXX
	\lesssim \eps \left(\|g_{\rm in}^{\eps}\|_\XXX + \|g_{0}\|_\XXX\right) \|g_{\rm in}^{\eps}\|_\XXX. 
	$$
\end{lem}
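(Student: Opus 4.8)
The plan is to estimate the two terms in Lemma~\ref{lem:sourceill} essentially by mimicking the argument used for the microscopic terms in the proof of Lemma~\ref{lem:sourcemicro}, the key new input being that $R^\eps(t) = U^\eps(t) (g_{\rm in}^\eps)^\perp$ already carries a gain of $\eps$ and a smoothing factor in $t$ thanks to estimate~\eqref{eq:Reps} from Corollary~\ref{cor:EPreg}, namely $\|R^\eps(t)\|_{\YYY_1} \lesssim \frac{\eps}{\sqrt t} e^{-\sigma t} \|g_{\rm in}^\eps\|_\XXX$. As in the proof of Lemma~\ref{lem:sourcemicro}, I would fix $\sigma \in (0,\min(\sigma_0,\sigma_1,\sigma_2))$ and use the iterated Duhamel formula~\eqref{eq:splitUeps} with $n=2$ to split
$$
\Psi^\eps(t)(\pi R^\eps, \pi g^\eps) = \sum_{j=0}^2 \Psi_j^\eps(t)(\pi R^\eps, \pi g^\eps) + \Psi_\star^\eps(t)(\pi R^\eps, \pi g^\eps),
$$
where $\Psi_j^\eps$ corresponds to $\frac{1}{\eps}\int_0^t V_j^\eps(t-s)\Gamma(\cdot,\cdot)(s)\,\d s$ and $\Psi_\star^\eps$ to the remainder involving $V_2^\eps * \AA_\eps U^\eps$, and similarly for $\Psi^\eps(t)(g,\pi R^\eps)$.

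For the terms $\Psi_j^\eps$ with $j=0,1,2$, I would use Proposition~\ref{prop:Gamma-NL} in the form $\|\Gamma(\pi R^\eps, \pi g^\eps)\|_{\YYY_1'} \lesssim \|\pi R^\eps\|_{\YYY_1}\|\pi g^\eps\|_\XXX \lesssim \|R^\eps\|_{\YYY_1}\|g^\eps\|_\XXX$ together with Corollary~\ref{cor:Vjepsreg} (which gives $\|V_j^\eps(t)\|_{\YYY_1' \to \XXX} \lesssim \frac{\eps}{\sqrt t} e^{-\sigma t/\eps^2}$, using $\YYY_1' \hookrightarrow (\ZZZ_1^\eps)'$ independently of $\eps$), so that
$$
\|\Psi_j^\eps(t)(\pi R^\eps,\pi g^\eps)\|_\XXX \lesssim \frac{1}{\eps}\int_0^t \frac{\eps}{\sqrt{t-s}} e^{-\sigma(t-s)/\eps^2} \frac{\eps}{\sqrt s} e^{-\sigma s}\|g_{\rm in}^\eps\|_\XXX \|g^\eps(s)\|_\XXX\,\d s.
$$
Using $\sup_{s\ge0}\|g^\eps(s)\|_\XXX \lesssim \|g_{\rm in}^\eps\|_\XXX$ from~\eqref{eq:theo:main1:Linftybound}, bounding $e^{-\sigma(t-s)/\eps^2}\le 1$ and computing the integral $\int_0^t \frac{\d s}{\sqrt{t-s}\sqrt s}$ which is bounded uniformly in $t$, this yields a bound of order $\eps \|g_{\rm in}^\eps\|_\XXX^2$. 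For the term $\Psi_\star^\eps$, I would proceed exactly as in the $\Psi_\star^\eps$ part of Lemma~\ref{lem:sourcemicro}: insert $\Id-\pi$ (since $\pi\Gamma = 0$), use $\|V_2^\eps(t)\|_{(\ZZZ_1^\eps)'\to\XXX} \lesssim e^{-\sigma t/\eps^2}$, Lemma~\ref{lem:Areg}, Lemma~\ref{lem:EPbis} to get the factor $\frac{\sqrt\eps}{(s-\tau)^{1/4}} e^{-\sigma(s-\tau)}$, and Proposition~\ref{prop:Gamma-NL} with the bound $\|\Gamma(\pi R^\eps,\pi g^\eps)\|_{\YYY_1'} \lesssim \|R^\eps\|_{\YYY_1}\|g^\eps\|_\XXX \lesssim \frac{\eps}{\sqrt\tau} e^{-\sigma\tau}\|g_{\rm in}^\eps\|_\XXX\|g^\eps(\tau)\|_\XXX$; the extra $\eps$ in this last estimate compensates the $\eps^{-5/2}$ prefactor (instead of $\eps^{-3/2}$ as before), so again one reaches a bound of order $\eps \|g_{\rm in}^\eps\|_\XXX^2$ after integrating, using $\|g^\eps\|_{L^\infty_t\XXX}\lesssim\|g_{\rm in}^\eps\|_\XXX$ and the integrability of $(s-\tau)^{-1/4}$ and $\tau^{-1/2}$.

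The second term $\Psi^\eps(t)(g,\pi R^\eps)$ is handled identically, now using $\|\Gamma(g,\pi R^\eps)\|_{\YYY_1'} \lesssim \|g\|_\XXX\|\pi R^\eps\|_{\YYY_1} \lesssim \|g\|_\XXX \|R^\eps\|_{\YYY_1}$, together with the global bound $\|g\|_{L^\infty_t(\XXX)}\lesssim C(\|g_0\|_\XXX)$ from~\eqref{eq:expNSF} and the smallness $\|g_0\|_\XXX \le \eta_1$ (so that $C(\|g_0\|_\XXX)\lesssim\|g_0\|_\XXX$ up to the small-data constant); this produces a bound of order $\eps\,\|g_0\|_\XXX\,\|g_{\rm in}^\eps\|_\XXX$. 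Adding the two contributions gives the claimed estimate
$$
\|\Psi^\eps(t)(\pi R^\eps,\pi g^\eps)\|_\XXX + \|\Psi^\eps(t)(g,\pi R^\eps)\|_\XXX \lesssim \eps\left(\|g_{\rm in}^\eps\|_\XXX + \|g_0\|_\XXX\right)\|g_{\rm in}^\eps\|_\XXX.
$$
The main obstacle is purely bookkeeping: tracking the powers of $\eps$ through the iterated Duhamel formula and making sure that the $\eps$-gain from $R^\eps$ (via Corollary~\ref{cor:EPreg}) together with the $\eps$-gains from $V_j^\eps$ (via Corollary~\ref{cor:Vjepsreg}) and from $U^\eps(\Id-\pi)$ (via Lemmas~\ref{lem:EP} and~\ref{lem:EPbis}) exactly overcome the $\eps^{-1}$ in the definition of $\Psi^\eps$ and the $\eps^{-2}$'s coming from $\AA_\eps$ and $\BB_\eps$; the time-convolution integrals are all of the standard Beta-function type already encountered in Lemma~\ref{lem:sourcemicro}, so no genuinely new analytic difficulty arises.
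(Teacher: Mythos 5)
Your proposal takes a genuinely different route from the paper's proof, and this route contains a gap: it does not deliver the full $\eps$ rate claimed by the lemma. The paper's argument is much shorter and rests on one observation you did not exploit: \emph{both} arguments of $\Gamma$ here belong to $\operatorname{Ran}(\pi)$, i.e.\ they are macroscopic, and since $\pi \in \BBB(\XXX,\YYY_2)$ (and $g=\pi g$), Proposition~\ref{prop:Gamma-NL-XXX} yields $\|\Gamma(\pi R^\eps,\pi g^\eps)\|_\XXX \lesssim \|R^\eps\|_\XXX \|g^\eps\|_\XXX$ and $\|\Gamma(g,\pi R^\eps)\|_\XXX \lesssim \|g\|_\XXX \|R^\eps\|_\XXX$. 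Thus $\Gamma$ lands in $\XXX$ itself, not merely in $\YYY_1'$, and one can insert $\Id-\pi$ in front of $\Gamma$ (since $\pi\Gamma=0$) and apply Lemma~\ref{lem:EP} directly as a bound $\|U^\eps(t-s)(\Id-\pi)\|_{\XXX\to\XXX}\lesssim \eps\,e^{-\sigma(t-s)}/\sqrt{t-s}$, without any iterated Duhamel decomposition. This gives the full factor $\eps$ at once, after which $\|R^\eps(s)\|_\XXX$, $\|g^\eps(s)\|_\XXX$, $\|g(s)\|_\XXX$ are controlled by \eqref{eq:Reps}, \eqref{eq:theo:main1:Linftybound} and \eqref{eq:expNSF}, and the time integral is a standard Beta integral.

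The gap in your version is precisely in the remainder term $\Psi_\star^\eps$. Tracing your own bookkeeping: the prefactor $\eps^{-3}$ (from $\eps^{-1}$ and $\AA_\eps=\eps^{-2}\AA$), the $\sqrt{\eps}/(s-\tau)^{1/4}$ from Lemma~\ref{lem:EPbis}, and the additional $\eps/\sqrt{\tau}$ from $\|R^\eps(\tau)\|_{\YYY_1}$ give a net power $\eps^{-3}\cdot \eps^{1/2}\cdot\eps = \eps^{-3/2}$, and after integrating the outer convolution $\int_0^t e^{-\sigma(t-s)/\eps^2}\,\d s \lesssim \eps^2$ you end up with $\eps^{1/2}\,\|g_{\rm in}^\eps\|_\XXX^2$, not $\eps\,\|g_{\rm in}^\eps\|_\XXX^2$. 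The loss comes from Lemma~\ref{lem:EPbis}, which is an interpolation between the uniform bound on $(\ZZZ_2^\eps)'$ and the $\eps/\sqrt{t}$ bound on $\XXX$ and therefore only gains $\sqrt{\eps}$; using it in the remainder inevitably costs you half a power of $\eps$. This half power is irrecoverable along the $\YYY_1'/(\ZZZ_1^\eps)'$ route precisely because $\Gamma(\pi R^\eps,\pi g^\eps)$ is never placed in $\XXX$, where Lemma~\ref{lem:EP} would apply at full strength. So your argument, as written, proves the lemma only with $\eps$ replaced by $\sqrt{\eps}$ on the right-hand side. (That weaker rate would incidentally still suffice for the range $\delta\in[0,1/2]$ used in Theorem~\ref{theo:main2precise}, but it is not what the lemma asserts, and the correct rate is also what the paper obtains without extra work.) The fix is to switch from Proposition~\ref{prop:Gamma-NL} to Proposition~\ref{prop:Gamma-NL-XXX}, which then makes the entire iterated-Duhamel machinery superfluous.
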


\begin{proof}
From Lemma~\ref{lem:EP}, Proposition~\ref{prop:Gamma-NL-XXX}, the fact that $\pi \in \BBB(\XXX,\YYY_2)$ and $g=\pi g$, we obtain:	
	\begin{multline*}
	\|\Psi^\eps(t)(\pi R^\eps, \pi g^\eps)\|_\XXX 
	+ \|\Psi^\eps(t)(g,\pi R^\eps)\|_\XXX \\
	\lesssim 
	\int_0^t \frac{e^{-\sigma(t-s)}}{\sqrt{t-s}} \|R^\eps(s)\|_\XXX \left(\|g(s)\|_\XXX + \|g^\eps(s)\|_\XXX\right) \d s.
	\end{multline*}
Using now~\eqref{eq:Reps},~\eqref{eq:theo:main1:Linftybound} from Theorem~\ref{theo:main1} and~\eqref{eq:expNSF}, we obtain:
	\begin{multline*}
	\|\Psi^\eps(t)(\pi R^\eps, \pi g^\eps)\|_\XXX 
	+ \|\Psi^\eps(t)(g,\pi R^\eps)\|_\XXX \\
	\lesssim 
	\eps \int_0^t \frac{e^{-\sigma(t-s)}}{\sqrt{t-s}} \frac{e^{-\sigma s}}{\sqrt{s}} \, \d s \, 
	\|g_{\rm in}^{\eps}\|^2_{\XXX} 
	+ \eps \int_0^t \frac{e^{-\sigma(t-s)}}{\sqrt{t-s}} \frac{e^{-\sigma s}}{\sqrt{s}} \, \d s \,
	\|g_{\rm in}^{\eps}\|_{\XXX} \|g_{0}\|_\XXX,
	\end{multline*}
which yields the final result. 
\end{proof}

Concerning the last two terms of~\eqref{eq:hepseta} and~\eqref{eq:tildehepseta}, we prove the following lemma:
\begin{lem} \label{lem:macro}
For any $f \in L^\infty_t(\XXX)$, we have: For any $t \geq 0$,
	$$
	\|\Psi^\eps(t)(\pi f, \pi g^\eps)\|_{\XXX}
	+ \|\Psi^\eps(t)(g, \pi f)\|_{\XXX}
	\lesssim \left(\|g_{\rm in}^{\eps}\|_{\XXX} + \|g_{0}\|_\XXX\right) \|f\|_{L^\infty_t(\XXX)}. 
	$$
\end{lem}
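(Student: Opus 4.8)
\textbf{Proof strategy for Lemma~\ref{lem:macro}.} The crucial point is that the bilinear operator $\Gamma$ always produces a purely microscopic term: by~\eqref{prop:conservbis} one has $\pi\Gamma(f_1,f_2)=0$, so $\Gamma(f_1,f_2)=(\Id-\pi)\Gamma(f_1,f_2)$ for all suitable $f_1,f_2$. Consequently, writing the definition~\eqref{def:Psiepsf1f2} of $\Psi^\eps$, we get for instance
$$
\Psi^\eps(t)(\pi f,\pi g^\eps)=\frac1\eps\int_0^t U^\eps(t-s)(\Id-\pi)\,\Gamma(\pi f,\pi g^\eps)(s)\,\d s,
$$
and the analogous formula for $\Psi^\eps(t)(g,\pi f)$. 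Fixing $\sigma\in(0,\sigma_2)$ (where $\sigma_2$ is defined in Lemma~\ref{lem:EP}), the decay estimate of Lemma~\ref{lem:EP} gives $\|U^\eps(t-s)(\Id-\pi)\|_{\XXX\to\XXX}\lesssim \eps\,e^{-\sigma(t-s)}(t-s)^{-1/2}$, which \emph{exactly absorbs} the singular prefactor $1/\eps$. We are therefore left with the pointwise-in-$s$ bound
$$
\|\Psi^\eps(t)(\pi f,\pi g^\eps)\|_\XXX
\lesssim \int_0^t \frac{e^{-\sigma(t-s)}}{\sqrt{t-s}}\,\|\Gamma(\pi f,\pi g^\eps)(s)\|_\XXX\,\d s,
$$
and likewise with $\Gamma(g,\pi f)$ in place of $\Gamma(\pi f,\pi g^\eps)$.

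The second step is to estimate the $\XXX$-norm of these bilinear terms using Proposition~\ref{prop:Gamma-NL-XXX}, which yields $\|\Gamma(g_1,g_2)\|_\XXX\lesssim \|g_1\|_\XXX\|g_2\|_{\YYY_2}+\|g_1\|_{\YYY_1}\|g_2\|_\XXX$. Since $\pi\in\BBB(\XXX,\YYY_1)\cap\BBB(\XXX,\YYY_2)$, we obtain $\|\Gamma(\pi f,\pi g^\eps)\|_\XXX\lesssim \|f\|_\XXX\,\|g^\eps\|_\XXX$. For the other term, note that $g=\pi g$ (as $g(t,x,\cdot)\in\operatorname{Ker}L$ by~\eqref{def:g}), hence $\|g\|_{\YYY_1}\lesssim\|g\|_\XXX$, and so $\|\Gamma(g,\pi f)\|_\XXX\lesssim \|g\|_\XXX\,\|f\|_\XXX$. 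Plugging these into the time integrals and using that $\int_0^t e^{-\sigma(t-s)}(t-s)^{-1/2}\,\d s$ is bounded uniformly in $t$, we arrive at
$$
\|\Psi^\eps(t)(\pi f,\pi g^\eps)\|_\XXX\lesssim \|f\|_{L^\infty_t(\XXX)}\,\|g^\eps\|_{L^\infty_t(\XXX)},
\qquad
\|\Psi^\eps(t)(g,\pi f)\|_\XXX\lesssim \|f\|_{L^\infty_t(\XXX)}\,\|g\|_{L^\infty_t(\XXX)}.
$$

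Finally, we invoke the a priori bounds on each solution: by Theorem~\ref{theo:main1}-(i), $\|g^\eps\|_{L^\infty_t(\XXX)}\lesssim\|g_{\rm in}^\eps\|_\XXX$, and by the smallness assumption on $g_0$ together with the estimates on the Navier--Stokes--Fourier system recalled in~\eqref{eq:expNSF} (see~\cite[Appendix~B.3]{Gallagher-Tristani}), $\|g\|_{L^\infty_t(\XXX)}\lesssim\|g_0\|_\XXX$. Adding the two contributions gives the claimed bound $\lesssim(\|g_{\rm in}^\eps\|_\XXX+\|g_0\|_\XXX)\|f\|_{L^\infty_t(\XXX)}$. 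There is no genuine obstacle here; the only point requiring care is to arrange the $1/\eps$ factor so that it is paired with a microscopic argument of $U^\eps$ before applying Lemma~\ref{lem:EP}, and to keep track that the weighted spaces $\YYY_1,\YYY_2$ appearing through Proposition~\ref{prop:Gamma-NL-XXX} are controlled by $\XXX$ after composing with the projector $\pi$.
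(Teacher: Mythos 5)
Your proposal is correct and follows essentially the same route as the paper: the paper's proof is a one-line statement that the argument is the same as for Lemma~\ref{lem:sourceill}, invoking Lemma~\ref{lem:EP}, Proposition~\ref{prop:Gamma-NL-XXX}, the facts $\pi\in\BBB(\XXX,\YYY_2)$ and $g=\pi g$, and the a priori bounds~\eqref{eq:theo:main1:Linftybound} and~\eqref{eq:expNSF} — exactly the ingredients you assemble, with the key observation being that $\pi\Gamma=0$ lets the $1/\eps$ be paired with $U^\eps(t-s)(\Id-\pi)$ so that Lemma~\ref{lem:EP} cancels the singularity.
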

\begin{proof}
The proof is similar to the one of Lemma~\ref{lem:sourceill}. We use Lemma~\ref{lem:EP}, Proposition~\ref{prop:Gamma-NL-XXX}, the fact that $\pi \in \BBB(\XXX,\YYY_2)$ and $g=\pi g$, and we conclude thanks to~\eqref{eq:theo:main1:Linftybound} and~\eqref{eq:expNSF}.
\end{proof}

\smallskip
\subsubsection*{End of the proof of Theorem~\ref{theo:main2precise}}
Gathering results from Lemmas~\ref{lem:source1},~\ref{lem:source2},~\ref{lem:sourcemicro} and~\ref{lem:macro}, we can conclude the proof of~\eqref{eq:cv1precise} by taking $\|g_{\rm in}^\eps\|_\XXX$ and $\|g_0\|_\XXX$ small enough. Concerning~\eqref{eq:cv2precise}, Lemmas~\ref{lem:source1},~\ref{lem:source2},~\ref{lem:sourcemicro},~\ref{lem:sourceill}~and~\ref{lem:macro} imply that 
	$$
	\|g^\eps-g-R^\eps\|_{L^\infty_t(\XXX)} 
	\lesssim \eps^\delta C\left(\|g_0\|_{H^{3+\delta}_xL^2_v}, \|g_{\rm in}^\eps\|_\XXX\right)
			+ \|\pi g_{\rm in}^\eps - g_0\|_\XXX.  
	$$
The estimate~\eqref{eq:Reps2} then allows to conclude the proof of~\eqref{eq:cv2precise}. \qed

\smallskip
\subsubsection*{Proof of Theorem~\ref{theo:main2}} 
As mentioned above, in order to obtain results of convergence in Theorem~\ref{theo:main2}, we are going to use a density argument that is explained in what follows. 
We consider a smooth family $(g_{0,\eta})_{\eta \in (0,1)}$  such that 
	\begin{equation} \label{eq:approxg0}
	\|g_0 - g_{0,\eta}\|_\XXX \leq \eta, \quad \forall \, \eta \in (0,1).
	\end{equation}
We also have stability for the Navier-Stokes-Fourier system (see for example~\cite[Appendix~B.3]{Gallagher-Tristani}), we know that 
	$$
	g_\eta(t) := U(t) g_{0,\eta} + \Psi(t)(g_\eta,g_\eta)
	$$
satisfies 
	\begin{equation} \label{eq:stabNSF}
	\lim_{\eta \to 0} \|g_\eta-g\|_{L^\infty_t(\XXX)} = 0.
	\end{equation} 
%and that for any $\eta \in (0,1)$,
%	\begin{equation} \label{eq:expNSF}
%	\|g_\eta(t)\|_\XXX \lesssim \|g_{0,\eta}\|_\XXX. 
%	\end{equation}
Then, to study the convergence of $g^\eps$ towards $g$, we write 
	\begin{equation} \label{eq:geps-g}
	g^\eps - g = g^{\eps}-g_\eta + g_\eta-g.  
	\end{equation}
We then apply estimate~\eqref{eq:cv1precise} from Theorem~\ref{theo:main2precise} with $g_{0,\eta}$ and $g_\eta$ instead of $g_0$ and $g$. Notice that for $g_0$ and $\eta$ small enough, $g_{0,\eta}$ will also satisfy $\|g_{0,\eta}\|_\XXX \leq \eta_2$. Coming back to~\eqref{eq:geps-g}, we deduce that 
	$$
	\|g^\eps-g\|_{L^\infty_t(\XXX)} \lesssim
	\|g-g_\eta\|_{L^\infty_t(\XXX)} +
	\eps^\delta C(\|g_{\rm in}^\eps\|_\XXX, \|g_{0,\eta}\|_\XXX) + \|g_{\rm in}^\eps-g_0\|_\XXX + \|g_0-g_{0,\eta}\|_\XXX.
	$$
Using~\eqref{eq:stabNSF}, we can conclude the proof of~\eqref{eq:cv1}. The proof of ~\eqref{eq:cv2} is similar. \qed

\appendix
%%%%%%%%%%%%%%%%%%%%%%%%%%%%%
\section{Proof of Proposition~\ref{prop:hypoL2}} \label{app:hypo}
%%%%%%%%%%%%%%%%%%%%%%%%%%%%%

We start by recalling that for any suitable function $g$, we write
$$
g = g^\perp + \pi g, 
\quad 
\pi g (x,v) = \rho_g (x) \sqrt M(v) + u_g(x) \cdot v \sqrt M(v)
+ \theta_g(x) \frac{(|v|^2 - 3)}{2} \sqrt M(v)
$$
with $\rho_g$, $u_g$, $\theta_g$ defined in~\eqref{rhof},~\eqref{uf} and~\eqref{thetaf}. 
In what follows, we also use the following notations: $\rho[g] = \rho_g$, $u[g]=u_g$, $\theta[g]=\theta_g$.

For $g \in L^2_x(\T^3)$, we introduce the following notation: $\la g \ra := \int_{\T^3} g \, \dx$. 
Recall the following classical result. For any $\phi \in L^2_x(\T^3)$ with null mean (i.e.\ $\la \phi \ra = 0$), there is a unique solution $u \in H^2_x(\T^3)$ to the equation 
$$
-\Delta_x u = \phi \quad \text{in} \quad \T^3
\quad \text{with} \quad
\la u \ra = 0 .
$$
Denote by $(-\Delta_x)^{-1}$ the following bounded operator:
\begin{equation} \label{eq:Delta-1}
\begin{array}{ccrcl}
(-\Delta_x)^{-1} & : & L^2_x(\T^d) \cap \{ \la \cdot \ra =0\}  & \longrightarrow & H^2_x(\T^3) \cap \{ \la \cdot \ra =0\}  \\
 & & \phi & \longmapsto & u.
\end{array}
\end{equation}  
Notice that in particular, we have that the operator $(-\Delta_x)^{-1}$ is also bounded from $L^2_x(\T^d) \cap \{ \la \cdot \ra =0\}$ into $H^1_x(\T^3) \cap \{ \la \cdot \ra =0\}$.

Let $f \in \operatorname{Dom} \Lambda_\eps \cap (\operatorname{Ker} \Lambda_\eps)^\perp$. Remark that in particular we have for $k=1,2,3$:
$$
\la \rho[f] \ra = \la u_k[f] \ra = \la \theta[f] \ra = 0
$$
where we have denoted by $u_k[f]$ the $k$-th coordinate of $u[f]$. We thus deduce that
$(-\Delta_x)^{-1} \rho[f]$, $(-\Delta_x)^{-1} u_k[f] $, and $(-\Delta_x)^{-1} \theta[f]$ are well-defined, and more precisely we have 
$$
\begin{aligned}
\| (-\Delta_x)^{-1} \rho[f] \|_{H^2_x} 
&\lesssim \| \rho[f] \|_{L^2_x} \\
\| (-\Delta_x)^{-1} u_k[f] \|_{H^2_x} 
&\lesssim \| u_k[f] \|_{L^2_x} \\
\| (-\Delta_x)^{-1} \theta[f] \|_{H^2_x} 
&\lesssim \| \theta[f] \|_{L^2_x} .
\end{aligned}
$$

In what follows, we shall use without further mention the following explicit computations: for $k \in \{1,2,3\}$,
$$
\begin{aligned}
\int_{\R^3} v_k^2 M \, \dv = 1 , \quad 
\int_{\R^3} |v|^2 M \, \dv = 3 , \\ 
\int_{\R^3} v_k^2 |v|^2 M \, \dv = 5 , \quad 
\int_{\R^3} |v|^4 M \, \dv = 15 , \\ 
\int_{\R^3} v_k^2 |v|^4 M \, \dv = 35 , \quad 
\int_{\R^3} |v|^6 M \, \dv = 105 , 
\end{aligned}
$$
as well as that for any odd polynomial function $p=p(v)$, one has $\int_{\R^3} p(v) M \, \dv = 0$.

\smallskip

We split the proof into five steps.

\medskip
\noindent {\it Step 1. Microscopic part.}
From~\eqref{eq:spectralgap} and the skew-adjointness of the transport operator one has
\begin{equation}\label{micro}
\begin{aligned}
\la \Lambda_\eps f , f \ra_{L^2_{x,v}} 
&= \frac{1}{\eps^2} \la L f , f \ra_{L^2_{x,v}} - \frac{1}{\eps} \la  v \cdot \nabla_x f , f \ra_{L^2_{x,v}} \\
&\le - \frac{\sigma_L}{\eps^2} \| f^\perp \|_{L^2_{x}(H^1_{v,*})}^2.
\end{aligned}
\end{equation}

\medskip
\noindent{\it Step 2. Energy estimate.}
Define for any $k \in \{1,2,3\}$ and any suitable function $g$
\begin{equation}\label{def:psi}
\psi_k[g]  = \int_{\R^3} v_k \, \frac{(|v|^2 -5)}{3} \, g \sqrt M \, \dv
\end{equation}
and remark that
\begin{equation}\label{psifperp}
\psi_k[g] = \psi_k[g^\perp].
\end{equation}
We first compute
\begin{equation}\label{thetaLf}
\begin{aligned}
\theta [\Lambda_\eps f]  
&= \frac{1}{\eps^2} \theta [L f^\perp] - \frac{1}{\eps} \theta[v \cdot \nabla_x f] \\
&= - \frac{1}{\eps} \nabla_x \cdot \int_{\R^3} \frac{(|v|^2-3)}{3} \, v f \sqrt M \, \dv \\
&= - \frac{1}{\eps} \frac{2}{3} \nabla_x \cdot \int_{\R^3}  \, v f \sqrt M \, \dv 
- \frac{1}{\eps} \nabla_x \cdot \int_{\R^3} \frac{(|v|^2 - 5)}{3} \, v f \sqrt M \, \dv \\
&= -\frac{1}{\eps} \frac{2}{3} \, \nabla_x \cdot u[f]
- \frac{1}{\eps} \nabla_x \cdot \psi[f],
\end{aligned}
\end{equation}
and using some aforementioned classical results on the moments of~$M$, we also obtain
\begin{equation}\label{psiLf}
\begin{aligned}
\psi_k [\Lambda_\eps f] 
&= \frac{1}{\eps^2} \psi_k [L f^\perp] 
- \frac{1}{\eps} \psi_k [v \cdot \nabla_x f] \\
&= \frac{1}{\eps^2} \psi_k [L f^\perp] 
- \frac{1}{\eps} \partial_{x_\ell} \int_{\R^d} v_k v_\ell \, \frac{(|v|^2 - 5)}{3} \, f \sqrt M \, \dv \\
&= \frac{1}{\eps^2} \psi_k [L f^\perp] 
- \frac{1}{\eps} \partial_{x_\ell} \int_{\R^d} v_k v_\ell \, \frac{(|v|^2 - 5)}{3} \left( \rho[f] + u[f] \cdot v  + \theta[f] \frac{(|v|^2-3)}{2} \right)  M \, \dv \\
&\quad 
- \frac{1}{\eps} \partial_{x_\ell} \int_{\R^d} v_k v_\ell \, \frac{(|v|^2 - 5)}{3} \, f^\perp \sqrt M \, \dv\\
&= \frac{1}{\eps^2} \psi_k [L f^\perp] 
- \frac{1}{\eps} \frac{5}{3} \partial_{x_k} \theta[f]
- \frac{1}{\eps} \partial_{x_\ell} \int_{\R^d} v_k v_\ell \, \frac{(|v|^2 - 5)}{3} \, f^\perp \sqrt M \, \dv.
\end{aligned}
\end{equation}

We now estimate the term
$$
\begin{aligned}
\la \nabla_x (-\Delta_x)^{-1}\theta[\Lambda_\eps f] ,  \psi [f]  \ra_{L^2_x}
+ \la \nabla_x (-\Delta_x)^{-1}\theta[f] ,  \psi [\Lambda_\eps f]  \ra_{L^2_x}  =: I_1+I_2.
\end{aligned}
$$
We start by estimating $I_1$. 
First, using integrations by parts and~\eqref{thetaLf}, we have
\begin{align*}
\|\nabla_x (-\Delta_x)^{-1}\theta[\Lambda_\eps f]\|^2_{L^2_x}
&= \la \theta[\Lambda_\eps f] , (-\Delta_x)^{-1}\theta[\Lambda_\eps f] \ra_{L^2_x} \\
&= -\frac{1}{\eps} \la \frac{2}{3} \, \nabla_x \cdot u[f]
+ \nabla_x \cdot \psi[f], (-\Delta_x)^{-1}\theta[\Lambda_\eps f] \ra_{L^2_x} \\
&= -\frac{1}{\eps} \la \frac{2}{3} u[f] + \psi[f], \nabla_x (-\Delta_x)^{-1}\theta[\Lambda_\eps f] \ra_{L^2_x}.
\end{align*}
Using now Cauchy-Schwarz inequality in $x$ and~\eqref{psifperp}, we obtain:
$$
\|\nabla_x (-\Delta_x)^{-1}\theta[\Lambda_\eps f]\|_{L^2_x} 
\lesssim \frac1\eps \left(\|u[f]\|_{L^2_x}+\|\psi[f]\|_{L^2_x}\right).
$$
Then, from~\eqref{psifperp} and Cauchy-Schwarz inequality in velocity, we have:
$$
\|\psi[f]\|_{L^2_x} = \|\psi[f^\perp]\|_{L^2_x} \lesssim \|f^\perp\|_{L^2_{x,v}}.
$$
We deduce that 
$$
|I_1| \lesssim \frac{1}{\eps} \left(\|u[f]\|_{L^2_x}+\|f^\perp\|_{L^2_{x,v}}\right) \| f^\perp \|_{L^2_{x,v}}.
$$
Concerning $I_2$, from the computation~\eqref{psiLf}, we have:
$$
\begin{aligned}
I_2 
&= \frac{1}{\eps^2} \la \partial_{x_k} (-\Delta_x)^{-1}\theta[f] ,  \psi_k [L f^\perp]   \ra_{L^2_x}
- \frac{1}{\eps} \frac{5}{3} \la  \partial_{x_k}  (-\Delta_x)^{-1}\theta[f] ,  \partial_{x_k} \theta[f]  \ra_{L^2_x} \\
&\quad 
- \frac{1}{\eps} \la  \partial_{x_k}  (-\Delta_x)^{-1}\theta[f] ,  \partial_{x_\ell} \int_{\R^3} v_k v_\ell \, \frac{(|v|^2 - 5)}{3} \, f^\perp \sqrt M \, \dv  \ra_{L^2_x} \\
&=: I_{21}+I_{22}+ I_{23}.
\end{aligned}
$$
We treat each term of the above splitting in succession. For $I_{21}$, we notice first that from the self-adjointness of $L$ in $L^2_v$: 
	$$
	\|\psi[Lf^\perp]\|_{L^2_x}^2 \lesssim \int_{\T^3} \left(\int_{\R^3} L \left(v_k \frac{(|v|^2-5)}{3} \sqrt{M}\right) f^\perp \, \dv \right)^2 \, \dx
	$$
so that from Cauchy-Schwarz inequality:
	\begin{equation} \label{psiLfperp}
	\|\psi[Lf^\perp]\|_{L^2_x} \lesssim \|f^\perp\|_{L^2_{x,v}}. 
	\end{equation}
We thus have:
$$
\begin{aligned}
I_{21}
&\lesssim \frac{1}{\eps^2} \| \nabla_x (-\Delta_x)^{-1}\theta[f] \|_{L^2_x} \| f^\perp \|_{L^2_{x,v}}  
\lesssim \frac{1}{\eps^2} \| \theta[f] \|_{L^2_x} \| f^\perp \|_{L^2_{x,v}}
\end{aligned}
$$
where we used~\eqref{eq:Delta-1}. The term $I_{22}$ is computed exactly thanks to an integration by parts:
$$
\begin{aligned}
I_{22}
&=  \frac{1}{\eps} \frac{5}{3} \la \partial_{x_k} \partial_{x_k}  (-\Delta_x)^{-1}(\theta[f]) ,   \theta[f]  \ra_{L^2_x}
=  - \frac{1}{\eps} \frac{5}{3} \|  \theta[f] \|_{L^2_x}^2.
\end{aligned}
$$
We bound the last term $I_{23}$ using an integration by parts,~\eqref{eq:Delta-1} and Cauchy-Schwarz inequality:
$$
\begin{aligned}
I_{23}
&= \frac{1}{\eps} \la \partial_{x_\ell} \partial_{x_k}  (-\Delta_x)^{-1}(\theta[f]) ,   \int_{\R^d} v_k v_\ell \, \frac{(|v|^2 - 5)}{3} \, f^\perp \sqrt M \, \dv  \ra_{L^2_x} \\
&\lesssim \frac{1}{\eps} \| \nabla_x^2  (-\Delta_x)^{-1} \theta[f] \|_{L^2_x} \| f^\perp \|_{L^2_{x,v}} \\
&\lesssim \frac{1}{\eps} \| \theta[f] \|_{L^2_x} \| f^\perp \|_{L^2_{x,v}}.
\end{aligned}
$$
Finally, we have obtained for some $\kappa,C>0$:
\begin{equation}\label{energy}
\begin{aligned}
&\la \nabla_x (-\Delta_x)^{-1}\theta[\Lambda_\eps f] ,  \psi [f]  \ra_{L^2_x}
+ \la \nabla_x (-\Delta_x)^{-1}\theta[f] ,  \psi [\Lambda_\eps f]  \ra_{L^2_x} \\
&\qquad 
\le -\frac{\kappa}{\eps}  \| \theta[f] \|_{L^2_x}^2
+ \frac{C}{\eps} \| u[f] \|_{L^2_x} \| f^\perp \|_{L^2_{x,v}}
+ \frac{C}{\eps^{3}} \| f^\perp \|_{L^2_{x,v}}^2.
\end{aligned}
\end{equation}

\medskip
\noindent {\it Step 3. Momentum estimate.}
Define, for any $k,\ell \in \{1 , 2 , 3 \}$ and any suitable function $g$
\begin{equation}\label{def:Theta}
\Theta_{k \ell}[g] 
= \left\{ 
\begin{aligned}
& \frac{1}{7} \int_{\R^3} v_k v_\ell |v|^2 g \sqrt M \, \dv \quad &\text{if} \quad k \neq \ell ,\\
& \int_{\R^3} \left( \frac{1}{2} +  v_k^2 - \frac{1}{2} |v|^2 \right) g \sqrt M \, \dv 
\quad &\text{if} \quad k = \ell,
\end{aligned}
\right.
\end{equation}
and remark that, using some classical aforementioned results on the moments of~$M$, we have
\begin{equation}\label{Thetafperp0}
\Theta_{k\ell}[g] = \Theta_{k\ell}[g^\perp] \quad \text{if} \quad k \neq \ell
\end{equation}
and 
\begin{equation}\label{Thetafperp1}
\begin{aligned}
\Theta_{kk}[g] 
&= \Theta_{kk}[g^\perp] 
+ \int_{\R^3} \left( \frac{1}{2} +  v_k^2 - \frac{1}{2} |v|^2 \right) \left( \rho[g] + u[g] \cdot v  + \theta[g] \frac{(|v|^2-3)}{2} \right) M \,  \dv \\
%&= \Theta_{kk}[g^\perp] 
%+ \rho[g]  \int_{\R^3} \left( \frac{1}{2} +  v_k^2 - \frac{1}{2} |v|^2 \right) M \, \dv \\
%&\qquad + \theta[g] \int_{\R^3} \left( \frac{1}{2} +  v_k^2 - \frac{1}{2} |v|^2 \right)  \frac{(|v|^2-3)}{2}  M \, \dv \\
&= \Theta_{kk}[g^\perp] 
-\frac{1}{2} \theta[g].
\end{aligned}
\end{equation}

We now compute for any $k \in \{1,2,3 \}$, writing $f = \pi f + f^\perp$,
\begin{equation}\label{jLf}
\begin{aligned}
u_k [\Lambda_\eps f] 
&=  \frac{1}{\eps^2} u_k [L f^\perp] - \frac{1}{\eps} u_k [v \cdot \nabla_x f] \\
&=  - \frac{1}{\eps} \partial_{x_\ell} \int_{\R^3} v_k v_{\ell} \left( \rho[f] + u[f] \cdot v  + \theta[f] \frac{(|v|^2-3)}{2} \right)  M \, \dv \\
&\qquad - \frac{1}{\eps} \partial_{x_\ell} \int_{\R^3} v_k v_{\ell} f^\perp \sqrt M \, \dv \\
&=  - \frac{1}{\eps} \partial_{x_k} \rho[f]
- \frac{1}{\eps} \partial_{x_k} \theta[f]
- \frac{1}{\eps} \partial_{x_\ell} \int_{\R^3} v_k v_\ell  f^\perp \sqrt M \, \dv.
\end{aligned}
\end{equation}
We also obtain
\begin{equation}\label{ThetaLf}
\begin{aligned}
\Theta_{k \ell} [\Lambda_\eps f] 
&=  \frac{1}{\eps^2} \Theta_{k \ell} [L f^\perp] 
- \frac{1}{\eps} \Theta_{k \ell} [v \cdot \nabla_x \pi f]
- \frac{1}{\eps} \Theta_{k \ell} [v \cdot \nabla_x f^\perp] \\
&= \frac{1}{\eps^2} \Theta_{k \ell} [L f^\perp] 
- \frac{1}{\eps} \Theta_{k \ell} \left[v \cdot \nabla_x \left(u[f] \cdot v \sqrt M\right)\right]
- \frac{1}{\eps} \Theta_{k \ell} [v \cdot \nabla_x f^\perp]  .
\end{aligned}
\end{equation}
If $k \neq \ell$, then
\begin{equation} \label{eq:Thetakl}
\begin{aligned}
&\Theta_{k \ell} \left[v \cdot \nabla_x \left(u[f] \cdot v \sqrt M\right)\right] \\
&\quad 
= \partial_{x_p} u_q [f] \frac{1}{7} \int_{\R^3} v_k v_\ell v_p v_q  |v|^2 M \, \dv  \\
&\quad 
= \partial_{x_k} u_\ell [f] \frac{1}{7} \int_{\R^3} v_k^2 v_\ell^2   |v|^2 M \, \dv
+ \partial_{x_\ell} u_k [f] \frac{1}{7} \int_{\R^3} v_k^2 v_\ell^2  |v|^2 M \, \dv  \\
&\quad 
= \partial_{x_k} u_\ell[f] + \partial_{x_\ell} u_k[f]
\end{aligned}
\end{equation}
because
$$
\int_{\R^3} v_k^2 v_\ell^2 |v|^2 M \, \dv  
= \int_{\R^3} v_k^2 v_\ell^2 (v_k^2 + v_\ell^2 + v_m^2) M \, \dv
%= \int_{\R^3}  (v_k^4 v_\ell^2 + v_k^2 v_\ell^4 + v_k^2 v_\ell^2v_m^2) M \, \dv 
=7.
$$
If $k = \ell$, then
\begin{equation} \label{eq:Thetakk}
\begin{aligned}
&\Theta_{kk}\left[v \cdot \nabla_x \left(u[f] \cdot v \sqrt M\right)\right]\\
&\quad 
= \partial_{x_p} u_q [f] \int_{\R^3} \left( \frac{1}{2} +  v_k^2 - \frac{1}{2} |v|^2 \right) v_p v_q M \, \dv \\
&\quad 
= \partial_{x_k} u_k [f] \int_{\R^3} \left( \frac{1}{2} +  v_k^2 - \frac{1}{2} |v|^2 \right) v_k^2 M \, \dv + \sum_{p \neq k} \partial_{x_p} u_p [f] \int_{\R^3} \left( \frac{1}{2} +  v_k^2 - \frac{1}{2} |v|^2 \right) v_p^2  M \, \dv \\
&\quad = \partial_{x_k} u_k[f]
- \sum_{p \neq k} \partial_{x_p} u_p[f],
\end{aligned}
\end{equation}
because
$$
\int_{\R^3} \left( \frac{1}{2} +  v_k^2 - \frac{1}{2} |v|^2 \right) v_k^2 M \, \dv = 1
\quad \text{and} \quad  
\int_{\R^3} \left( \frac{1}{2} +  v_k^2 - \frac{1}{2} |v|^2 \right) v_p^2  M \, \dv=-1.
$$

We now estimate the term
$$
\begin{aligned}
\la \partial_{x_\ell} (-\Delta_x)^{-1} u_k[\Lambda_\eps f] , \Theta_{k\ell} [f] \ra_{L^2_x}
+ \la \partial_{x_\ell} (-\Delta_x)^{-1} u_k[f] , \Theta_{k\ell} [\Lambda_\eps f] \ra_{L^2_x}  =: J_1 + J_2.
\end{aligned}
$$
For $J_1$, we first notice that using~\eqref{Thetafperp0} and~\eqref{Thetafperp1} and Cauchy-Schwarz inequality, one can prove that 
$$
\| \Theta_{k \ell} [f] \|_{L^2_x} \lesssim \| \theta[f] \|_{L^2_x} + \| f^\perp \|_{L^2_{x,v}} .
$$
Performing integrations by parts and using~\eqref{jLf}, we also have that 
\begin{align*}
&\|\nabla_x (-\Delta_x)^{-1} u_k[\Lambda_\eps f]\|^2_{L^2_x} 
= \la u_k[\Lambda_\eps f], (-\Delta_x)^{-1} u_k[\Lambda_\eps f]\ra_{L^2_x} \\
&\quad = - \frac{1}{\eps}\la \partial_{x_k} \rho[f]
+  \partial_{x_k} \theta[f]
+ \partial_{x_\ell} \int_{\R^3} v_k v_\ell  f^\perp \sqrt M \, \dv, 
(-\Delta_x)^{-1} u_k[\Lambda_\eps f]\ra_{L^2_x} \\
&\quad 
= \frac{1}{\eps}\la \rho[f] +  \theta[f] , 
 \partial_{x_k} (-\Delta_x)^{-1} u_k[\Lambda_\eps f]\ra_{L^2_x} \\
&\quad\quad 
+ \frac{1}{\eps}\la  \int_{\R^3} v_k v_\ell  f^\perp \sqrt M \, \dv, 
\partial_{x_\ell}(-\Delta_x)^{-1} u_k[\Lambda_\eps f]\ra_{L^2_x}.
\end{align*}
From Cauchy-Schwarz inequality (in $x$ and also in $v$ for the third term), we obtain:
$$
\|\nabla_x (-\Delta_x)^{-1} u_k[\Lambda_\eps f]\|_{L^2_x}  
\lesssim \frac1\eps \left(\|\rho[f]\|_{L^2_x} + \|\theta[f]\|_{L^2_x} + \|f^\perp\|_{L^2_{x,v}}\right). 
$$
Gathering the two previous estimates, by Cauchy-Schwarz inequality, we deduce
\begin{align*}
|J_1| &\lesssim \| (-\Delta_x)^{-1} u_k[\Lambda_\eps f]  \|_{H^1_x} \| \Theta_{k \ell} [f] \|_{L^2_x} \\
&\lesssim \frac{1}{\eps}  \left(\|\rho[f]\|_{L^2_x} + \|\theta[f]\|_{L^2_x} + \|f^\perp\|_{L^2_{x,v}}\right) 
\left( \| \theta[f] \|_{L^2_x} + \| f^\perp \|_{L^2_{x,v}} \right).
\end{align*}
For the term $J_2$, the computation made in~\eqref{ThetaLf} yields
$$
\begin{aligned}
J_2
&= \frac{1}{\eps^2} 
\la \partial_{x_\ell} (-\Delta_x)^{-1} u_k[f] , \Theta_{k \ell} [L f^\perp]\ra_{L^2_x} \\
&\quad 
-\frac{1}{\eps} 
\la \partial_{x_\ell} (-\Delta_x)^{-1} u_k[f] ,  \Theta_{k \ell} \left[v \cdot \nabla_x (u[f] \cdot v \sqrt M)\right] \ra_{L^2_x} \\
&\quad
-\frac{1}{\eps} 
\la \partial_{x_\ell} (-\Delta_x)^{-1} u_k[f] , \Theta_{k \ell} [v \cdot \nabla_x f^\perp] \ra_{L^2_x} \\
&=: J_{21} + J_{22} + J_{23}.
\end{aligned}
$$
To bound $J_{21}$, we first notice that as in~\eqref{psiLfperp}, one can prove that 
$$
\|\Theta_{k\ell} [Lf^\perp]\|_{L^2_x} \lesssim \|f^\perp\|_{L^2_{x,v}}.
$$
Then,~\eqref{eq:Delta-1} and Cauchy-Schwarz inequality give 
$$
\begin{aligned}
J_{21}
&\lesssim \frac{1}{\eps^2} \| \partial_{x_\ell} (-\Delta_x)^{-1}u_k[f] \|_{L^2_x} \| f^\perp \|_{L^2_{x,v}}  
\lesssim \frac{1}{\eps^2} \| u [f] \|_{L^2_x} \| f^\perp \|_{L^2_{x,v}}.  
\end{aligned}
$$
The term $J_{22}$ is computed explicitly thanks to~\eqref{eq:Thetakl} and~\eqref{eq:Thetakk} and integrations by parts:
$$
\begin{aligned}
J_{22}
&=  -\frac{1}{\eps} \sum_{k} \sum_{\ell \neq k}
\la \partial_{x_\ell} (-\Delta_x)^{-1} u_k[f] ,  \partial_{x_k} u_\ell[f] + \partial_{x_\ell} u_k[f] \ra_{L^2_x} \\
&\quad 
-\frac{1}{\eps} \sum_{k} 
\la \partial_{x_k} (-\Delta_x)^{-1} u_k[f] ,  \partial_{x_k} u_k[f] - \sum_{p \neq k} \partial_{x_p} u_p[f] \ra_{L^2_x} \\
&=  -\frac{1}{\eps} \sum_{k} \sum_{\ell \neq k}
\la -\partial_{x_k} \partial_{x_\ell} (-\Delta_x)^{-1} u_k[f] ,   u_\ell[f] \ra_{L^2_x} \\
&\quad -\frac{1}{\eps} \sum_{k} \sum_{\ell \neq k}
\la -\partial_{x_\ell} \partial_{x_\ell} (-\Delta_x)^{-1} u_k[f] ,   u_k[f] \ra_{L^2_x} \\
&\quad 
-\frac{1}{\eps} \sum_{k} 
\la - \partial_{x_k}\partial_{x_k} (-\Delta_x)^{-1} u_k[f] ,   u_k[f] \ra_{L^2_x} \\
&\quad +\frac{1}{\eps} \sum_{k} \sum_{p \neq k}
\la -\partial_{x_p}\partial_{x_k} (-\Delta_x)^{-1} u_k[f] ,  u_p[f] \ra_{L^2_x} 
\end{aligned}
$$
so that 
$$
\begin{aligned}
J_{22} &=  -\frac{1}{\eps} \sum_{k} \la - \Delta_x (-\Delta_x)^{-1} u_k[f] ,   u_k[f] \ra_{L^2_x} 
=  -\frac{1}{\eps} \sum_{k} \| u_k[f] \|_{L^2_x}^2 
= -\frac{1}{\eps}  \| u[f] \|_{L^2_x}^2 .
\end{aligned}
$$
The term $J_{23}$ is treated thanks to an integration by parts,~\eqref{eq:Delta-1} and Cauchy-Schwarz inequality:
$$
\begin{aligned}
J_{23}
&\lesssim \frac{1}{\eps} \| \nabla_x^2  (-\Delta_x)^{-1} u[f] \|_{L^2_x} \| f^\perp \|_{L^2_{x,v}} \\
&\lesssim \frac{1}{\eps} \| u[f] \|_{L^2_x} \| f^\perp \|_{L^2_{x,v}} .
\end{aligned}
$$
Finally, we have obtained for some constants $\kappa , C>0$
\begin{equation}\label{momentum}
\begin{aligned}
&\la \partial_{x_\ell} (-\Delta_x)^{-1} u_k[\Lambda_\eps f] , \Theta_{k\ell} [f] \ra_{L^2_x}
+ \la \partial_{x_\ell} (-\Delta_x)^{-1} u_k[f] , \Theta_{k\ell} [\Lambda_\eps f] \ra_{L^2_x} \\
&\qquad 
\le -\frac{\kappa}{\eps}  \| u[f] \|_{L^2_x}^2 
+ \frac{C}{\eps} \| \rho[f] \|_{L^2_x} \| \theta[f] \|_{L^2_x}  
+ \frac{C}{\eps} \| \rho[f] \|_{L^2_x} \| f^\perp\|_{L^2_{x,v}} \\
&\qquad\quad
+ \frac{C}{\eps} \| \theta[f] \|^2_{L^2_x}  
+ \frac{C}{\eps^3} \| f^\perp \|_{L^2_{x,v}}^2.
\end{aligned}
\end{equation}

\smallskip
\noindent{\it Step 4. Mass estimate.}
We first compute
\begin{equation}\label{rhoLf}
\begin{aligned}
\rho [\Lambda_\eps  f] 
&= \frac{1}{\eps^2} \rho [ L f^\perp] - \frac{1}{\eps} \rho[ v \cdot \nabla_x f] \\
&= - \frac{1}{\eps} \nabla_x \cdot \int_{\R^d} v f \sqrt M \, \dv \\
&= -  \frac{1}{\eps} \nabla_x \cdot u[f].
\end{aligned}
\end{equation}

We now estimate the term
$$
\begin{aligned}
\la \partial_{x_k} (-\Delta_x)^{-1}\rho[\Lambda_\eps f] ,  u_k [f]  \ra_{L^2_x}
+\la \partial_{x_k} (-\Delta_x)^{-1}\rho[f] ,  u_k [\Lambda_\eps f]  \ra_{L^2_x}  
=: R_1 + R_2.
\end{aligned}
$$
For $R_1$, from the computation~\eqref{rhoLf} and~\eqref{eq:Delta-1}, we have
$$
R_1 \lesssim \| (-\Delta_x)^{-1} \rho [\Lambda_\eps f]  \|_{H^1_x} \| u [f] \|_{L^2_x}
\lesssim \frac{1}{\eps}  \| u [f] \|_{L^2_x}^2.
$$
From~\eqref{jLf} we rewrite the term $R_2$ as
$$
\begin{aligned} 
R_2
&= - \frac{1}{\eps}\la \partial_{x_k} (-\Delta_x)^{-1}\rho[f] ,  \partial_{x_k} \rho[f]  \ra_{L^2_x}
- \frac{1}{\eps} \frac23 \la \partial_{x_k} (-\Delta_x)^{-1}\rho[f] ,  \partial_{x_k} \theta[f]  \ra_{L^2_x}\\
&\quad
- \frac{1}{\eps} \la \partial_{x_k} (-\Delta_x)^{-1}\rho[f] ,  \partial_{x_\ell} \int_{\R^d} v_k v_\ell  f^\perp \sqrt M \, \dv  \ra_{L^2_x} \\
&=: R_{21} + R_{22} + R_{23}.
\end{aligned}
$$
The first term $R_{21}$ is computed exactly thanks to an integration by parts:
$$
\begin{aligned} 
R_{21}
&= - \frac{1}{\eps}\la - \partial_{x_k} \partial_{x_k} (-\Delta_x)^{-1}\rho[f] ,   \rho[f]  \ra_{L^2_x}
= - \frac{1}{\eps} \| \rho[f] \|_{L^2_x}^2.
\end{aligned}
$$
The second one is estimated thanks to and integration by parts, Cauchy-Schwarz inequality and~\eqref{eq:Delta-1}:
$$
\begin{aligned} 
R_{22}
&\lesssim \frac{1}{\eps} \| \nabla_x^2 (-\Delta_x)^{-1}\rho_f \|_{L^2_x}  \|  \theta[f]  \|_{L^2_x} \\
&\lesssim \frac{1}{\eps} \| \rho[f] \|_{L^2_x}  \|  \theta[f]  \|_{L^2_x}.
\end{aligned}
$$
Similarly, we obtain 
$$
\begin{aligned} 
R_{23}
&\lesssim \frac{1}{\eps} \| \nabla_x^2 (-\Delta_x)^{-1}\rho[f] \|_{L^2_x}  \left\| \int_{\R^d} v_k v_\ell f^\perp \sqrt M \, \dv  \right\|_{L^2_x} \\
&\lesssim \frac{1}{\eps} \| \rho[f] \|_{L^2_x}  \|  f^\perp  \|_{L^2_{x,v}}.
\end{aligned}
$$
Gathering the previous estimates, we obtain for some constants $\kappa, C >0$
\begin{equation}\label{mass}
\begin{aligned}
&\la \partial_{x_k} (-\Delta_x)^{-1}\rho[\Lambda_\eps f] ,  u_k [f]  \ra_{L^2_x}
+\la \partial_{x_k} (-\Delta_x)^{-1}\rho[f] ,  u_k [\Lambda_\eps f]  \ra_{L^2_x} \\
&\qquad 
\le -\frac{\kappa}{\eps}  \| \rho[f] \|_{L^2_x}^2
+ \frac{C}{\eps} \| u[f] \|_{L^2_x}^2
+ \frac{C}{\eps} \| \theta[f] \|_{L^2_x}^2
+ \frac{C}{\eps} \| f^\perp \|_{L^2_{x,v}}^2.
\end{aligned}
\end{equation}

\medskip
\noindent{\it Step 5. Conclusion.}
We introduce the following inner product on $L^2_{x,v}$: 
\begin{equation}\label{def:Nt}
\begin{aligned}
\la \! \la f,g \ra \! \ra_{L^2_{x,v}}
&:= \la f, g \ra_{L^2_{x,v}}
+ \eta_1 \eps \la \partial_{x_k} (-\Delta_x)^{-1}\theta[f] ,  \psi_k [g]  \ra_{L^2_x}
+ \eta_1 \eps \la \partial_{x_k} (-\Delta_x)^{-1}\theta[g] ,  \psi_k [f]  \ra_{L^2_x} \\
&\quad
+ \eta_2 \eps \la \partial_{x_\ell} (-\Delta_x)^{-1}u_k[f] ,  \Theta_{k \ell} [g] \ra_{L^2_x}
+ \eta_2 \eps \la \partial_{x_\ell} (-\Delta_x)^{-1}u_k[g] ,  \Theta_{k \ell} [f] \ra_{L^2_x} \\
&\quad
+ \eta_3 \eps \la \partial_{x_k} (-\Delta_x)^{-1}\rho[f] ,  u_k [g]  \ra_{L^2_x}
+ \eta_3 \eps \la \partial_{x_k} (-\Delta_x)^{-1}\rho[g] ,  u_k [f]  \ra_{L^2_x}
\end{aligned}
\end{equation}
with constants $0 < \eta_3 \ll \eta_2 \ll \eta_1 \ll 1$ to be chosen below, and denote by $\Nt g \Nt_{L^2_{x,v}}^2 = \la\!\la g,g \ra\!\ra_{L^2_{x,v}}$ the associated norm.
% We introduce the following norm on $L^2_{x,v}$: 
% \begin{equation}\label{def:Nt}
% \begin{aligned}
% \Nt g \Nt_{L^2_{x,v}}^2
% &:= \| g \|_{L^2_{x,v}}^2 
% + \eta_1 \eps \la \partial_{x_k} (-\Delta_x)^{-1}\theta[g] ,  \psi_k [g]  \ra_{L^2_x} \\
% &\quad
% + \eta_2 \eps \la \partial_{x_\ell} (-\Delta_x)^{-1}u_k[g] ,  \Theta_{k \ell} [g] \ra_{L^2_x} \\
% &\quad
% + \eta_3 \eps \la \partial_{x_k} (-\Delta_x)^{-1}\rho[g] ,  u_k [g]  \ra_{L^2_x}
% \end{aligned}
% \end{equation
We observe that 
$$
\| g \|_{L^2_{x,v}} \lesssim \Nt g \Nt_{L^2_{x,v}} \lesssim \| g \|_{L^2_{x,v}}
$$
where the multiplicative constants are uniform in $\eps \in (0,1]$. The norms $\| \cdot \|_{L^2_{x,v}}$ and~$\Nt \cdot \Nt_{L^2_{x,v}}$ are thus equivalent independently of $\eps \in (0,1]$. 
Gathering estimates \eqref{micro}--\eqref{energy}--\eqref{momentum}--\eqref{mass} and using Young's inequality, we obtain: 
\begin{align*}
\la\!\la \Lambda_\eps f , f \ra\!\ra_{L^2_{x,v}}
&\leq - \frac{1}{\eps^2} \left(\frac{\sigma_L}{2} - C \eta_1 - C \eta_2 - C \eta_3\right) \| f^\perp \|_{L^2_{x}(H^1_{v,*})}^2 \\
&\quad
 -\left(\frac{\kappa \eta_1}{2} - C \eta_2 - C \eta_3\right)  \| \theta[f] \|_{L^2_x}^2 \\
&\quad
- \left(\kappa \eta_2 - C \eta_1^2 -C \eta_3 \right) \| u[f] \|_{L^2_x}^2 \\
&\quad
-\left(\kappa \eta_3  - C \eta_2^2 - C \frac{\eta_2^2}{\eta_1}\right) \| \rho[f] \|_{L^2_x}^2. 
\end{align*}
By choosing 
$\eta_1 := \eta$, $\eta_2 := \eta^{\frac{3}{2}}$, $\eta_3 := \eta^{\frac{7}{4}}$ with $\eta >0$ small enough and recalling that $\| f \|_{L^2_{x,v}}^2 = \| f^\perp \|_{L^2_{x,v}}^2 + \| \rho[f] \|_{L^2_x}^2 + \| u[f] \|_{L^2_x}^2 + \| \theta[f] \|_{L^2_x}^2$, we thus obtain
\begin{align*}
\la\!\la \Lambda_\eps f , f \ra\!\ra_{L^2_{x,v}}
&\leq 
- \sigma \Nt f \Nt_{L^2_{x,v}}^2
- \frac{\kappa}{\eps^2} \| f^\perp \|_{L^2_{x}(H^1_{v,*})}^2 
\end{align*}
for some constants $\sigma, \kappa>0$, which completes the proof.
\qed

%%%%%%%%%%%%%%%%%%%%%%%%%%%%%
\section{Proofs of Lemmas~\ref{lem:regSBeps} and~\ref{lem:regSBeps2}} \label{app:Beps}
%%%%%%%%%%%%%%%%%%%%%%%%%%%%%

\begin{proof}[Proof of Lemma~\ref{lem:regSBeps}]
Let us first recall that $\BB_\eps=\eps^{-2} \BB - \eps^{-1} v \cdot \nabla_x$ with $\BB$ and $v \cdot \nabla_x$ that are respectively self-adjoint and skew-adjoint operators in $L^2_{x,v}$. Proving that~$S_{\BB_\eps}$ regularizes from $\YYY_1'$ to $\XXX$ and from $(\ZZZ_1^\eps)'$ to $\XXX$ is similar to prove that~$S_{\BB_\eps}$ regularizes from~$\XXX$ to $\YYY_1$ and from~$\XXX$ to $\ZZZ_1^\eps$ with same rates. We will only focus on the proof of~\eqref{eq:regSBeps0} and explain the adaptation to make to prove~\eqref{eq:regSBeps0bis} in the first step of the proof.

Moreover, to prove \eqref{eq:regSBeps0}, it is sufficient to prove that for any $\alpha \in \R$ and $\langle v \rangle^\alpha f_{\rm in} \in L^2_{x,v}$, one has: 
For any $t \in (0,\eps^2]$,
\begin{equation}\label{eq:regSBeps1}
\|\langle v \rangle^\alpha S_{\BB_\eps}(t) f_{\rm in}\|_{L^2_{x}(H^1_{v,*})}  \lesssim \frac{\eps}{\sqrt{t}}  \| \langle v \rangle^\alpha f_{\rm in}\|_{L^2_{x,v}}
\end{equation}
and 
\begin{multline}\label{eq:regSBeps1bis}
\|\langle v \rangle^\alpha S_{\BB_\eps}(t) f_{\rm in}\|_{L^2_{x,v}} 
+ \|\langle v \rangle^\alpha S_{\BB_\eps}(t) f_{\rm in}\|_{L^2_{x}(H^1_{v,*})}  
+ \eps \| \widetilde \nabla_x ( \langle v \rangle^\alpha S_{\BB_\eps}(t) f_{\rm in})\|_{L^2_{x,v}} \\
\lesssim \frac{\eps^3}{t^{3/2}} \| \langle v \rangle^\alpha f_{\rm in}\|_{L^2_{x,v}}.
\end{multline}
Indeed, since $\nabla_x$ commutes with $\BB_\eps$, from estimate \eqref{eq:regSBeps1} (resp.\ \eqref{eq:regSBeps1bis}), we already obtain the first (resp.\ second) estimate of \eqref{eq:regSBeps0} for $t \in (0, \eps^2]$. Then, fix $\sigma \in (0,\sigma_1)$. We obtain the estimates given in \eqref{eq:regSBeps0} for all~$t >0$ by using the exponential decay of $S_{\BB_\eps}$ in $\XXX$ given in Lemma~\ref{lem:dissipativeBeps}. More precisely, using that for $t \geq \eps^2$, $S_{\BB_\eps}(t) = S_{\BB_\eps}(\eps^2) S_{\BB_\eps}(t-\eps^2)$, we obtain that for $\sigma' \in (\sigma,\sigma_1)$ and for any $t>0$, 
	$$
	\|S_{\BB_\eps}(t)\|_{\XXX \to \YYY_1} \lesssim \frac{\eps}{\min(\eps,\sqrt{t})} \, e^{-\sigma' t/\eps^2} 
	\quad \text{and} \quad 
	\|S_{\BB_\eps}(t)\|_{\XXX \to \ZZZ_1^\eps} \lesssim \frac{\eps^3}{\min(\eps^3,t^{3/2})} \, e^{-\sigma' t/\eps^2}. 
	$$
It implies the wanted conclusion using that $\sigma'>\sigma$.

\smallskip

For the remainder of the proof, we let $\alpha \in \R$ and $f_{\rm in}$ be such that $\langle v \rangle^\alpha f_{\rm in} \in L^2_{x,v}$, and consider the solution $f(t) = S_{\BB_\eps}(t) f_{\rm in}$ to the equation $\partial_t f = \BB_\eps f$ with initial data $f_{\rm in}$, and we shall prove \eqref{eq:regSBeps1} and \eqref{eq:regSBeps1bis}. 

%For simplicity we shall denote 
% $$
% X = L^2_{x,v}, \quad 
% Y_1 = L^2_x (H^1_{v,*}), \quad 
% Y_2 = L^2_x( H^2_{v,*}).
% $$

\medskip
\noindent {\it Step 1.} Define the functional 
\begin{multline} \label{eq:Lyapunov2}
\mathcal E_{\varepsilon}(t) 
:= \| \langle v \rangle^\alpha f \|_{L^2_{x,v}}^2 + \alpha_1\frac{t}{\varepsilon^2}  \Big( K \|\langle v \rangle^\alpha \langle v \rangle^{\frac{\gamma}{2}+1} f  \|_{L^2_{x,v}}^2 + \| \langle v \rangle^\alpha \widetilde{\nabla}_v f \|_{L^2_{x,v}}^2  \Big) \\
+ \varepsilon\alpha_2 \left(\frac{t}{\varepsilon^2}\right)^2 \la \langle v \rangle^\alpha \widetilde{\nabla}_v f , \langle v \rangle^\alpha \widetilde{\nabla}_x f \ra_{L^2_{x,v}} \\
+ \varepsilon^2\alpha_3 \left(\frac{t}{\varepsilon^2}\right)^3 \Big( \| \langle v \rangle^\alpha \widetilde{\nabla}_x f \|_{L^2_{x,v}}^2 
+ K\| \langle v \rangle^\alpha \langle v \rangle^{\frac{\gamma}{2}} \nabla_x f \|_{L^2_{x,v}}^2 \Big),
\end{multline}
where $\alpha_1, \alpha_2 , \alpha_3, K >0$ are positive constants  such that $0<\alpha_{3}\ll \alpha_{2}\ll \alpha_{1}\ll 1$ and $\alpha_{2}\leq \sqrt{\alpha_{1}\alpha_{3}}$. Notice that in order to prove~\eqref{eq:regSBeps0bis}, one just has to change the sign in front of the term which mixes derivatives in $x$ and $v$ in the definition of the functional $\EE_\eps$. It will allow to conclude that~\eqref{eq:regSBeps0} holds for the adjoint of $\BB_\eps$ instead of $\BB_\eps$ and thus imply~\eqref{eq:regSBeps0bis}. The constants $\alpha_i$ will be chosen small enough in Step 6, and $K$ will be chosen large enough in Step 3 and Step 5. We remark that 
$$
\| \langle v \rangle^\alpha f \|_{L^2_{x}(H^1_{v,*})}^2 \lesssim
K\|\langle v \rangle^\alpha \langle v \rangle^{\frac{\gamma}{2}+1}  f  \|_{L^2_{x,v}}^2 + \| \langle v \rangle^\alpha \widetilde{\nabla}_v f \|_{L^2_{x,v}}^2 
\lesssim \| \langle v \rangle^\alpha f \|_{L^2_{x}(H^1_{v,*})}^2
$$
and 
$$
\| \langle v \rangle^\alpha \widetilde{\nabla}_x f \|_{L^2_{x,v}}^2
\lesssim
\| \langle v \rangle^\alpha \widetilde{\nabla}_x f \|_{L^2_{x,v}}^2 
+ K\| \langle v \rangle^\alpha \langle v \rangle^{\frac{\gamma}{2}} \nabla_x f \|_{L^2_{x,v}}^2
\lesssim \| \langle v \rangle^\alpha \widetilde{\nabla}_x f \|_{L^2_{x,v}}^2 .
$$
Therefore, we can already observe that for any $t \in (0,\eps^2]$, one has the following lower bounds
$$
\frac{t}{\eps^2} \| \langle v \rangle^\alpha f \|_{L^2_{x}(H^1_{v,*})}^2 \lesssim \EE_\eps(t) 
$$
and 
$$
\left(\frac{t}{\eps^2}\right)^3  \left( \| \langle v \rangle^\alpha f \|_{L^2_{x,v}}^2 + \| \langle v \rangle^\alpha f \|_{L^2_{x}(H^1_{v,*})}^2 
+ \eps^2 \| \langle v \rangle^\alpha \widetilde{\nabla}_x f \|_{L^2_{x,v}}^2  \right) \lesssim \EE_\eps(t) .
$$
Therefore, in order to prove \eqref{eq:regSBeps1} and \eqref{eq:regSBeps1bis}, it is sufficient to prove that $\frac{\mathrm{d}}{\mathrm{d}t}\mathcal E_{\varepsilon}(t) \le 0$ for all $t \in (0,\eps^2]$.
We then compute 
\begin{align*}
\frac{\mathrm{d}}{\mathrm{d}t}\mathcal E_{\varepsilon}(t)&=\frac{\mathrm{d}}{\mathrm{d}t}\Vert \langle v \rangle^\alpha f \Vert_X^2 
+\frac{\alpha_{1}}{\varepsilon^2}\Big( K\|\langle v \rangle^\alpha \langle v \rangle^{\frac{\gamma}{2}+1}  f  \|_{L^2_{x,v}}^2 
+ \| \langle v \rangle^\alpha \widetilde{\nabla}_v f \|_{L^2_{x,v}}^2 \Big) \\
&+\frac{\alpha_{1}}{\varepsilon^2}t \frac{\mathrm{d}}{\mathrm{d}t}\Big( K\|\langle v \rangle^\alpha \langle v \rangle^{\frac{\gamma}{2}+1}  f  \|_{L^2_{x,v}}^2 + \| \langle v \rangle^\alpha \widetilde{\nabla}_v f \|_{L^2_{x,v}}^2  \Big)\\
&+2\frac{\alpha_{2}}{\varepsilon^3} t\la \langle v \rangle^\alpha \widetilde{\nabla}_v f , \langle v \rangle^\alpha \widetilde{\nabla}_x f \ra_{L^2_{x,v}}+\frac{\alpha_{2}}{\varepsilon^3}t^2  \frac{\mathrm{d}}{\mathrm{d}t}\la \langle v \rangle^\alpha \widetilde{\nabla}_v f , \langle v \rangle^\alpha \widetilde{\nabla}_x f \ra_{L^2_{x,v}}\\
&+3\frac{\alpha_{3}}{\varepsilon^4}t^{2} \Big(\| \langle v \rangle^\alpha \widetilde{\nabla}_x f \|_{L^2_{x,v}}^2
+ K\| \langle v \rangle^\alpha \langle v \rangle^{\frac{\gamma}{2}} \nabla_x f \|_{L^2_{x,v}}^2 \Big) \\
&+\frac{\alpha_{3}}{\varepsilon^4}t^{3}\frac{\mathrm{d}}{\mathrm{d}t} \Big(\| \langle v \rangle^\alpha \widetilde{\nabla}_x f \|_{L^2_{x,v}}^2
+ K\| \langle v \rangle^\alpha \langle v \rangle^{\frac{\gamma}{2}} \nabla_x f \|_{L^2_{x,v}}^2 \Big)
\end{align*}
and we shall estimate each term separately in the sequel.

\medskip
\noindent {\it Step 2.} From Lemma~\ref{lem:dissipativeBeps}, we already have
\begin{equation}\label{eq:f}
\begin{aligned}
\frac12\frac{\d}{\dt} \| \langle v \rangle^\alpha f \|_{L^2_{x,v}}^2
%&= - \frac{1}{\eps^2} \| \langle v \rangle^\alpha m_{\alpha} f \|_{L^2_{x,v}}^2 - \frac{1}{\eps^2} \| \widetilde \nabla_v (\langle v \rangle^\alpha f) \|_{L^2_{x,v}}^2 \\
&\le - \frac{\kappa}{\eps^2} \| \langle v \rangle^\alpha f \|_{L^2_{x}(H^1_{v,*})}^2
\end{aligned}
\end{equation}
for some constant $\kappa>0$.

\medskip
\noindent {\it Step 3.} We prove in this step that, choosing $K>0$ large enough, we have
\begin{equation}\label{eq:DvB}
\begin{aligned}
&\frac12 \frac{\d}{\dt} \Big\{ K \|\langle v \rangle^\alpha \langle v \rangle^{\frac{\gamma}{2}+1}  f  \|_{L^2_{x,v}}^2 + \| \langle v \rangle^\alpha \widetilde{\nabla}_v f \|_{L^2_{x,v}}^2 \Big\} \\
&\qquad 
\le -\frac{\kappa}{2\eps^2} \| \langle v \rangle^\alpha f \|_{L^2_{x}(H^2_{v,*})}^2 
+ \frac{C}{\eps} \| \langle v \rangle^\alpha f  \|_{L^2_{x}(H^1_{v,*})} \| \langle v \rangle^\alpha \widetilde \nabla_x f\|_X ,
\end{aligned}
\end{equation}
for some constants $\kappa,C >0$.
From Lemma~\ref{lem:dissipativeBeps}, we already have
$$
\frac12\frac{\d}{\dt} \| \langle v \rangle^\alpha \langle v \rangle^{\frac{\gamma}{2}+1}  f \|_{L^2_{x,v}}^2
\le - \frac{\kappa}{\eps^2} \| \langle v \rangle^\alpha \langle v \rangle^{\frac{\gamma}{2}+1}  f \|_{L^2_{x}(H^1_{v,*})}^2
$$
for some $\kappa >0$.
Moreover, we have 
$$
\begin{aligned}
\frac12 \frac{\d}{\dt} \| \langle v \rangle^\alpha \widetilde \nabla_v f \|_{L^2_{x,v}}^2
&= \la \langle v \rangle^{2\alpha} \widetilde \nabla_{v_i} f , \widetilde \nabla_{v_i} (\BB_\eps f) \ra_{L^2_{x,v}} \\
&= \la \langle v \rangle^{2\alpha} \widetilde \nabla_{v_i} f , \BB_\eps (\widetilde \nabla_{v_i} f) \ra_{L^2_{x,v}}
+\la \langle v \rangle^{2\alpha} \widetilde \nabla_{v_i} f , [\widetilde \nabla_{v_i}, \BB_\eps] f \ra_{L^2_{x,v}}.
\end{aligned}
$$
Thanks to Lemma~\ref{lem:dissipativeBeps}, there holds 
$$
\begin{aligned}
\la \langle v \rangle^{2\alpha} \widetilde \nabla_{v_i} f , \BB_\eps (\widetilde \nabla_{v_i} f) \ra_{L^2_{x,v}}
&\le - \frac{\kappa}{\eps^2} \|\langle v \rangle^\alpha \widetilde \nabla_v f \|_{L^2_{x}(H^1_{v,*})}^2,
\end{aligned}
$$
for some constant $\kappa>0$. From Lemma~\ref{lem:commutatorBeps}-(i), we have
$$
\begin{aligned}
&\la \langle v \rangle^{2\alpha} \widetilde \nabla_{v_i} f , [\widetilde \nabla_{v_i}, \BB_\eps] f \ra_{L^2_{x,v}}\\
&\quad 
= - \frac{1}{\eps^2} \la \widetilde \nabla_{v_j}(\langle v \rangle^{2\alpha}  \widetilde \nabla_{v_i} f) ,   [\widetilde \nabla_{v_i},  \widetilde \nabla_{v_j}] f \ra_{L^2_{x,v}}
- \frac{1}{\eps^2} \la \widetilde \nabla_{v_j}^* (\langle v \rangle^{2\alpha} \widetilde \nabla_{v_i} f) , 
[\widetilde \nabla_{v_i},  \widetilde \nabla_{v_j}^*]  f \ra_{L^2_{x,v}}
\\
&\quad\quad
- \frac{1}{\eps^2}  \la  \langle v \rangle^{2\alpha} \widetilde \nabla_{v_i} f ,  (\widetilde \nabla_{v_i} m^2) f  \ra_{L^2_{x,v}}
- \frac{1}{\eps^2} \la \langle v \rangle^{2\alpha} \widetilde \nabla_{v_i} f , 
\left[ [\widetilde \nabla_{v_i},  \widetilde \nabla_{v_j}^*] , \widetilde \nabla_{v_j} \right] f \ra_{L^2_{x,v}}\\
&\quad\quad
- \frac{1}{\eps} \la \langle v \rangle^{2\alpha} \widetilde \nabla_{v_i} f , \widetilde \nabla_{x_i} f  \ra_{L^2_{x,v}}
\\
&\quad
=: -\frac{1}{\eps^2} (T_1 + T_2 + T_3 + T_4) 
- \frac{1}{\eps} \la \langle v \rangle^{2\alpha} \widetilde \nabla_{v_i} f , \widetilde \nabla_{x_i} f  \ra_{L^2_{x,v}}.
\end{aligned} 
$$
Writing $
\widetilde \nabla_{v_j}(\langle v \rangle^{2\alpha}  \widetilde \nabla_{v_i} f)
= \langle v \rangle^\alpha \widetilde \nabla_{v_j}(\langle v \rangle^\alpha  \widetilde \nabla_{v_i} f) + (\widetilde \nabla_{v_j} \langle v \rangle^\alpha) \langle v \rangle^\alpha \widetilde \nabla_{v_i} f
$, using that $| \widetilde \nabla_{v_j} \langle v \rangle^\alpha| \lesssim \langle v \rangle^{\frac{\gamma}{2}-1} \langle v \rangle^\alpha$ and thanks to Lemma~\ref{lem:commutator}-(iii), we obtain 
$$
\begin{aligned}
& T_1
\le \frac{\kappa}{8} \| \langle v \rangle^\alpha  \widetilde \nabla_v f \|_{L^2_{x}(H^1_{v,*})}^2   + C \|\langle v \rangle^\alpha \langle v \rangle^{\gamma+1} \nabla_v f \|_{L^2_{x,v}}^2 .
\end{aligned}
$$
In a similar way, observing now that 
$$
\widetilde \nabla_{v_j}^* (\langle v \rangle^{2\alpha}  \widetilde \nabla_{v_i} f)
= -\langle v \rangle^\alpha \widetilde \nabla_{v_j}(\langle v \rangle^\alpha  \widetilde \nabla_{v_i} f) 
- \left[ (\partial_{v_\ell} B_{j\ell}) \langle v \rangle^\alpha + (\widetilde \nabla_{v_j} \langle v \rangle^\alpha) \right] \langle v \rangle^\alpha \widetilde \nabla_{v_i} f
$$
and using Lemma~\ref{lem:commutator}-(iv), we get
$$
\begin{aligned}
& T_2
\le \frac{\kappa}{8} \| \langle v \rangle^\alpha  \widetilde \nabla_v f \|_{L^2_{x}(H^1_{v,*})}^2   
+C \|\langle v \rangle^\alpha \langle v \rangle^{\gamma} f \|_{L^2_{x,v}}^2
+ C \|\langle v \rangle^\alpha \langle v \rangle^{\gamma+1} \nabla_v f \|_{L^2_{x,v}}^2 .
\end{aligned}
$$
Using that 
$|\widetilde \nabla_{v_i} m^2 | \lesssim \langle v \rangle^{\frac{3\gamma}{2}+2}$ from Lemma~\ref{lem:malpha}, we also get  
$$
\begin{aligned}
T_3
&\le \frac{\kappa}{8} \| \langle v \rangle^\alpha  \widetilde \nabla_v f \|_{L^2_{x}(H^1_{v,*})}^2   + C \|\langle v \rangle^\alpha \langle v \rangle^{\gamma+1} f \|_{L^2_{x,v}}^2 .
\end{aligned}
$$
For the term $T_4$, we use Lemma~\ref{lem:commutator}-(vii) to obtain
$$
T_4 \le \frac{\kappa}{8} \| \langle v \rangle^\alpha  \widetilde \nabla_v f \|_{L^2_{x}(H^1_{v,*})}^2   
+C \|\langle v \rangle^\alpha \langle v \rangle^{\gamma-1} f \|_{L^2_{x,v}}^2
+ C \|\langle v \rangle^\alpha \langle v \rangle^{\gamma} \nabla_v f \|_{L^2_{x,v}}^2.
$$
Hence, we obtain
$$
\begin{aligned}
\frac12 \frac{\d}{\dt} \| \langle v \rangle^\alpha \widetilde \nabla_v f \|_{L^2_{x,v}}^2
&\le - \frac{\kappa}{2\eps^2} \|\langle v \rangle^\alpha \widetilde \nabla_v f \|_{L^2_{x}(H^1_{v,*})}^2 
+ \frac{C}{\eps^2} \|\langle v \rangle^\alpha \langle v \rangle^{\gamma+1} f \|_{L^2_{x,v}}^2\\
&\quad
+ \frac{C}{\eps^2} \|\langle v \rangle^\alpha \langle v \rangle^{\gamma+1} \nabla_v f \|_{L^2_{x,v}}^2 
+ \frac{C}{\eps} \| \langle v \rangle^\alpha \widetilde \nabla_v f \|_{L^2_{x,v}} \| \langle v \rangle^\alpha \widetilde \nabla_x f \|_{L^2_{x,v}}.
\end{aligned}
$$
We conclude to \eqref{eq:DvB} by gathering previous estimates, observing that $\|\langle v \rangle^\alpha \langle v \rangle^{\gamma+1} f \|_{L^2_{x,v}} + \| \langle v \rangle^\alpha \langle v \rangle^{\gamma+1} \nabla_v f \|_{L^2_{x,v}} \lesssim \| \langle v \rangle^\alpha \langle v \rangle^{\frac{\gamma}{2}+1}  f \|_{L^2_{x}(H^1_{v,*})}$ and taking $K>0$ large enough.

\medskip
\noindent {\it Step 4.} 
In this step, we show that 
\begin{equation}\label{eq:DvBDxB}
\frac{\d}{\dt} \la \langle v \rangle^\alpha \widetilde \nabla_v f , \langle v \rangle^\alpha \widetilde \nabla_x f \ra_{L^2_{x,v}}
\le -\frac{1}{\eps} \| \langle v \rangle^\alpha \widetilde \nabla_{x} f \|_{L^2_{x,v}}^2
+ \frac{C}{\eps^2} \| \langle v \rangle^\alpha f \|_{L^2_{x}(H^2_{v,*})} \| \langle v \rangle^\alpha \widetilde \nabla_x f \|_{L^2_{x}(H^1_{v,*})},
\end{equation}
for some constant $C>0$. We compute, using \eqref{eq:Beps},
$$
\begin{aligned}
&\frac{\d}{\dt} \la \langle v \rangle^\alpha \widetilde \nabla_v f , \langle v \rangle^\alpha \widetilde \nabla_x f \ra_{L^2_{x,v}} 
= \la \langle v \rangle^{2\alpha} \widetilde \nabla_{v_i} f , \widetilde \nabla_{x_i} \left\{ \frac{1}{\eps^2} \BB f - \frac{1}{\eps} v \cdot \nabla_x f   \right\} \ra_{L^2_{x,v}} \\
&\qquad 
+\la \langle v \rangle^{2\alpha} \widetilde \nabla_{x_i} f , \widetilde \nabla_{v_i} \left\{ \frac{1}{\eps^2}\BB f - \frac{1}{\eps} v \cdot \nabla_x f   \right\} \ra_{L^2_{x,v}} \\
&\quad 
= -\frac{1}{\eps^2}\la \widetilde \nabla_{x_i} (\langle v \rangle^{2\alpha} \widetilde \nabla_{v_i} f) ,   \BB f \ra_{L^2_{x,v}}
+\frac{1}{\eps^2} \la \widetilde \nabla_{v_i}^*(\langle v \rangle^{2\alpha} \widetilde \nabla_{x_i} f ),  \BB  f  \ra_{L^2_{x,v}}
-\frac{1}{\eps} \| \langle v \rangle^\alpha \widetilde \nabla_{x} f \|_{L^2_{x,v}}^2,
\end{aligned}
$$
where we have used Lemma~\ref{lem:commutator}-(i).
We hence conclude to \eqref{eq:DvBDxB} by using the fact that $\| \langle v \rangle^\alpha \BB f  \|_{L^2_{x,v}} \lesssim \| \langle v \rangle^\alpha f \|_{L^2_{x}(H^2_{v,*})}$ (see~\eqref{eq:boundL} for a similar estimate) together with estimates~\eqref{nablaomega2h} and \eqref{nablaxomega2}.

\medskip
\noindent {\it Step 5.} We prove in this step that, choosing $K>0$ large enough, we have
\begin{equation}\label{eq:DxB}
\frac12 \frac{\d}{\dt} \Big\{ \| \langle v \rangle^\alpha \widetilde \nabla_x f \|_{L^2_{x,v}}^2 + K\| \langle v \rangle^\alpha \langle v \rangle^{\frac{\gamma}{2}} \nabla_x f \|_{L^2_{x,v}}^2  \Big\}  
\le -\frac{\kappa}{2\eps^2} \| \langle v \rangle^\alpha \widetilde \nabla_x f \|_{L^2_{x}(H^1_{v,*})}^2 ,
\end{equation}
for some constants $\kappa,C >0$. We first remark that, since $\nabla_x $ commutes with $\BB_\eps$, we already have from Lemma~\ref{lem:dissipativeBeps} that 
$$
\frac12 \frac{\d}{\dt} \| \langle v \rangle^\alpha \langle v \rangle^{\frac{\gamma}{2}} \nabla_x f \|_{L^2_{x,v}}^2
\le - \frac{\kappa}{\eps^2} \| \langle v \rangle^\alpha \langle v \rangle^{\frac{\gamma}{2}} \nabla_x f \|_{L^2_{x}(H^1_{v,*})}^2
$$
for some constant $\kappa>0$.

We then write
$$
\begin{aligned}
\frac12 \frac{\d}{\dt} \| \langle v \rangle^\alpha \widetilde \nabla_x f \|_{L^2_{x,v}}^2
&= \la \langle v \rangle^{2\alpha} \widetilde \nabla_{x_i} f , \widetilde \nabla_{x_i} (\BB_\eps f) \ra_{L^2_{x,v}} \\
&= \la \langle v \rangle^{2\alpha} \widetilde \nabla_{x_i} f , \BB_\eps (\widetilde \nabla_{x_i} f) \ra_{L^2_{x,v}}
+\la \langle v \rangle^{2\alpha} \widetilde \nabla_{x_i} f , [\widetilde \nabla_{x_i}, \BB_\eps] f \ra_{L^2_{x,v}}.
\end{aligned}
$$
Thanks to Lemma~\ref{lem:dissipativeBeps}, there holds 
$$
\begin{aligned}
\la \langle v \rangle^{2\alpha} \widetilde \nabla_{x_i} f , \BB_\eps (\widetilde \nabla_{x_i} f) \ra_{L^2_{x,v}}
%&= - \frac{1}{\eps^2} \| \langle v \rangle^\alpha m_\alpha \widetilde \nabla_v f \|_{L^2_{x,v}}^2
%- \frac{1}{\eps^2} \| \widetilde \nabla_v ( \langle v \rangle^\alpha \widetilde \nabla_v f) \|_{L^2_{x,v}}^2\\ 
&\le - \frac{\kappa}{\eps^2} \|\langle v \rangle^\alpha \widetilde \nabla_x f \|_{L^2_{x}(H^1_{v,*})}^2,
\end{aligned}
$$
for some constant $\kappa>0$. From Lemma~\ref{lem:commutatorBeps}-(ii), we have
$$
\begin{aligned}
&\la \langle v \rangle^{2\alpha} \widetilde \nabla_{x_i} f , [\widetilde \nabla_{x_i}, \BB_\eps] f \ra_{L^2_{x,v}}\\
&\quad 
= - \frac{1}{\eps^2} \la \widetilde \nabla_{v_j}(\langle v \rangle^{2\alpha}  \widetilde \nabla_{x_i} f) ,   [\widetilde \nabla_{x_i},  \widetilde \nabla_{v_j}] f \ra_{L^2_{x,v}}
- \frac{1}{\eps^2} \la \widetilde \nabla_{v_j}^* (\langle v \rangle^{2\alpha} \widetilde \nabla_{x_i} f) , 
[\widetilde \nabla_{x_i},  \widetilde \nabla_{v_j}^*] f \ra_{L^2_{x,v}}\\
&\quad\quad
- \frac{1}{\eps^2} \la \langle v \rangle^{2\alpha} \widetilde \nabla_{x_i} f , \left[ [\widetilde \nabla_{x_i},  \widetilde \nabla_{v_j}^*] ,\widetilde \nabla_{v_j} \right]f  \ra_{L^2_{x,v}}\\
&\quad 
=: - \frac{1}{\eps^2} (R_1+R_2 + R_3).
\end{aligned}
$$
Writing $
\widetilde \nabla_{v_j}(\langle v \rangle^{2\alpha}  \widetilde \nabla_{x_i} f)
= \langle v \rangle^\alpha \widetilde \nabla_{v_j}(\langle v \rangle^\alpha  \widetilde \nabla_{x_i} f) + (\widetilde \nabla_{v_j} \langle v \rangle^\alpha) \langle v \rangle^\alpha \widetilde \nabla_{x_i} f
$, using the fact that $| \widetilde \nabla_{v_j} \langle v \rangle^\alpha| \lesssim \langle v \rangle^{\frac{\gamma}{2}-1} \langle v \rangle^\alpha$ and thanks to Lemma~\ref{lem:commutator}-(v), we obtain 
$$
\begin{aligned}
& R_1
\le \frac{\kappa}{6} \| \langle v \rangle^\alpha  \widetilde \nabla_x f \|_{L^2_{x}(H^1_{v,*})}^2   + C \|\langle v \rangle^\alpha \langle v \rangle^{\gamma+1} \nabla_x f \|_{L^2_{x,v}}^2 .
\end{aligned}
$$
In a similar way, observing now that 
$$
\widetilde \nabla_{v_j}^* (\langle v \rangle^{2\alpha}  \widetilde \nabla_{x_i} f)
= -\langle v \rangle^\alpha \widetilde \nabla_{v_j}(\langle v \rangle^\alpha  \widetilde \nabla_{x_i} f) 
- \left[ (\partial_{v_\ell} B_{j\ell}) \langle v \rangle^\alpha + (\widetilde \nabla_{v_j} \langle v \rangle^\alpha) \right] \langle v \rangle^\alpha \widetilde \nabla_{x_i} f,
$$
we get
$$
\begin{aligned}
& R_2
\le \frac{\kappa}{6} \| \langle v \rangle^\alpha  \widetilde \nabla_x f \|_{L^2_{x}(H^1_{v,*})}^2   
+ C \|\langle v \rangle^\alpha \langle v \rangle^{\gamma+1} \nabla_x f \|_{L^2_{x,v}}^2 .
\end{aligned}
$$
For the term $R_3$, we use Lemma~\ref{lem:commutator}-(viii) to obtain
$$
R_3 \le \frac{\kappa}{6} \| \langle v \rangle^\alpha  \widetilde \nabla_x f \|_{L^2_{x}(H^1_{v,*})}^2   
+ C \|\langle v \rangle^\alpha \langle v \rangle^{\gamma} \nabla_x f \|_{L^2_{x,v}}^2.
$$
Hence, we obtain
$$
\begin{aligned}
\frac12 \frac{\d}{\dt} \| \langle v \rangle^\alpha \widetilde \nabla_x f \|_{L^2_{x,v}}^2
&\le - \frac{\kappa}{2\eps^2} \|\langle v \rangle^\alpha \widetilde \nabla_x f \|_{L^2_{x}(H^1_{v,*})}^2 
+ \frac{C}{\eps^2} \|\langle v \rangle^\alpha \langle v \rangle^{\gamma+1} \nabla_x f \|_{L^2_{x,v}}^2 .
\end{aligned}
$$

We conclude to \eqref{eq:DxB} by gathering previous estimates as well as noticing the fact that $\| \langle v \rangle^\alpha \langle v \rangle^{\gamma+1} \nabla_x f \|_{L^2_{x,v}} \lesssim \| \langle v \rangle^\alpha \langle v \rangle^{\frac{\gamma}{2}} \nabla_x f \|_{L^2_{x}(H^1_{v,*})}$  and taking $K>0$ large enough.

\medskip
\noindent {\it Step 6. Proof of \eqref{eq:regSBeps1} and \eqref{eq:regSBeps1bis}.} Gathering \eqref{eq:f}--\eqref{eq:DvB}--\eqref{eq:DvBDxB}--\eqref{eq:DxB}, we obtain
\begin{align*}
\frac{\mathrm{d}}{\mathrm{d}t}\mathcal E_{\varepsilon}(t)
&\le - \frac{2\kappa}{\eps^2} \| \langle v \rangle^\alpha f \|_{L^2_{x}(H^1_{v,*})}^2
+\frac{\alpha_{1}C}{\varepsilon^2} \| \langle v \rangle^\alpha f \|_{L^2_{x}(H^1_{v,*})}^2 \\
&\quad
+\frac{\alpha_{1}}{\varepsilon^2}t \left( - \frac{\kappa}{\eps^2}\| \langle v \rangle^\alpha f \|_{L^2_{x}(H^2_{v,*})}^2 
+ \frac{C}{\eps} \| \langle v \rangle^\alpha f \|_{L^2_{x}(H^1_{v,*})} \| \langle v \rangle^\alpha \widetilde \nabla_x f \|_X \right)\\
&\quad
+2\frac{\alpha_{2}}{\varepsilon^3} t\la \langle v \rangle^\alpha \widetilde{\nabla}_v f , \langle v \rangle^\alpha \widetilde{\nabla}_x f \ra_{L^2_{x,v}}\\
&\quad
+\frac{\alpha_{2}}{\varepsilon^3}t^2  \left( -\frac{1}{\eps} \| \langle v \rangle^\alpha \widetilde \nabla_x f \|_{L^2_{x,v}}^2 + \frac{C}{\eps^2} \| \langle v \rangle^\alpha f \|_{L^2_{x}(H^2_{v,*})} \| \langle v \rangle^\alpha \widetilde \nabla_x f \|_{L^2_{x}(H^1_{v,*})}  \right) \\
&\quad
+\frac{\alpha_{3}C}{\varepsilon^4}t^{2} \| \langle v \rangle^\alpha \widetilde{\nabla}_x f \|_{L^2_{x,v}}^2 
-\frac{\alpha_{3}}{\varepsilon^4}t^{3} \frac{\kappa}{\eps^2}\| \langle v \rangle^\alpha \widetilde \nabla_x f \|_{L^2_{x}(H^1_{v,*})}^2 .
\end{align*}
Using Young's inequality, we have
$$
\begin{aligned}
C \alpha_1 \frac{t}{\eps^3} \| \langle v \rangle^\alpha f \|_{L^2_{x}(H^1_{v,*})} \| \langle v \rangle^\alpha \widetilde \nabla_x f \|_{L^2_{x,v}} 
&\le \frac{\alpha_2}{4} \frac{t^2}{\eps^4} \| \langle v \rangle^\alpha \widetilde \nabla_x f \|_{L^2_{x,v}}^2 + C \frac{\alpha_1^2}{\alpha_2} \frac{1}{\eps^2} \| \langle v \rangle^\alpha f \|_{L^2_{x}(H^1_{v,*})}^2 \\
2\frac{\alpha_{2}}{\varepsilon^3} t\la \langle v \rangle^\alpha \widetilde{\nabla}_v f , \langle v \rangle^\alpha \widetilde{\nabla}_x f \ra_{L^2_{x,v}}
&\le \frac{\alpha_2}{4} \frac{t^2}{\eps^4} \| \langle v \rangle^\alpha \widetilde \nabla_x f \|_{L^2_{x,v}}^2 + C \frac{\alpha_2}{\eps^2} \| \langle v \rangle^\alpha f \|_{L^2_{x}(H^1_{v,*})}^2 \\
C \alpha_2 \frac{t^2}{\eps^5} \| \langle v \rangle^\alpha f \|_{L^2_{x}(H^2_{v,*})} \| \langle v \rangle^\alpha \widetilde \nabla_x f \|_{L^2_{x}(H^1_{v,*})}
&\le \frac{\alpha_3 \kappa}{2} \frac{t^3}{\eps^6} \| \langle v \rangle^\alpha \widetilde \nabla_x f \|_{L^2_{x}(H^1_{v,*})}^2 \\
&\qquad \qquad \qquad 
+ C \frac{\alpha_2^2}{\alpha_3 } \frac{t}{\eps^4} \| \langle v \rangle^\alpha f \|^{2}_{L^2_{x}(H^2_{v,*})}.
\end{aligned}
$$
We thus deduce, for any $t \in (0,\eps^2]$, that
\begin{align*}
\frac{\mathrm{d}}{\mathrm{d}t}\mathcal E_{\varepsilon}(t)
&\le - \frac{1}{\eps^2}\left(2\kappa
-C \alpha_{1}
-C \alpha_2
-C \frac{\alpha_1^2}{\alpha_2} \right)\| \langle v \rangle^\alpha f \|_{L^2_{x}(H^1_{v,*})}^2  \\
&\quad
- \frac{t}{\eps^4} \left( \alpha_1 \kappa - C \frac{\alpha_2^2}{\alpha_3 }   \right)\| \langle v \rangle^\alpha f \|_{L^2_{x}(H^2_{v,*})}^2  
- \frac{t^2}{\eps^4}\left(\frac{\alpha_2}{2} - C \alpha_3  \right)\| \langle v \rangle^\alpha \widetilde \nabla_x f \|_{L^2_{x,v}}^2 \\
&\quad
- \frac{\alpha_3 \kappa}{2} \frac{t^3}{\eps^6} \| \langle v \rangle^\alpha \widetilde \nabla_x f \|_{L^2_{x}(H^1_{v,*})}^2 .
\end{align*}
We now choose $ \alpha_{1}=\eta$, $\alpha_{2}=\eta^{3/2}$, and $\alpha_{3}=\eta^{5/3}$ with $\eta \in (0,1)$ small enough such that each quantity appearing inside the parentheses in above inequality is positive. 
Therefore one obtains that 
$
\frac{\mathrm{d}}{\mathrm{d}t}\mathcal E_{\varepsilon}(t) \le 0,
$ 
for any $t \in (0,\eps^2] $, which concludes the proof as explained in Step 1. 
\end{proof}

\begin{proof}[Proof of Lemma~\ref{lem:regSBeps2}]
%\color{blue}
The proof follows the same lines as the proof of Lemma~\ref{lem:regSBeps}. More precisely, as explained in the Step 1 of the proof of Lemma~\ref{lem:regSBeps}, in order to obtain the desired result it is sufficient to prove that $\frac{\mathrm{d}}{\mathrm{d}t}\mathcal E_{\varepsilon}^1(t) \le 0$ for all $t \in (0,\eps^2]$ for some well-chosen functional $\mathcal E_{\varepsilon}^1$.

Let $\alpha \in \R$ and $f_{\rm in}$ be such that $\langle v \rangle^\alpha f_{\rm in} \in L^2_{x,v}$.
We then consider the solution $f(t) = S_{\BB_\eps}(t) f_{\rm in}$ to the equation $\partial_t f = \BB_\eps f$ with initial data $f_{\rm in}$.
Recalling that $\mathcal E_\eps$ is defined in~\eqref{eq:Lyapunov2}, we then define the functional 
\begin{align*}
&\mathcal E^1_{\varepsilon}(t) 
:= \mathcal E_{\eps}(t)\\
&\qquad
+\beta_1 \left(\frac{t}{\varepsilon^2}\right)^2  \Big(
K_1 \|\langle v \rangle^\alpha \langle v \rangle^{\gamma+2} f  \|_{L^2_{x,v}}^2 
+ K_2\|\langle v \rangle^\alpha  \langle v \rangle^{\frac{\gamma}{2}+1}\widetilde{\nabla}_v f  \|_{L^2_{x,v}}^2
+ \| \langle v \rangle^\alpha \widetilde{\nabla}_v \widetilde{\nabla}_v f \|_{L^2_{x,v}}^2  \Big)\\
&\qquad
+\varepsilon^2 \beta_2 \left(\frac{t}{\varepsilon^2}\right)^4 \Big( 
K_1\| \langle v \rangle^\alpha \langle v \rangle^{\gamma+1}  \nabla_x f \|_{L^2_{x,v}}^2
+ K_2\|\langle v \rangle^\alpha \langle v \rangle^{\frac{\gamma}{2}+1}\widetilde{\nabla}_xf  \|_{L^2_{x,v}}^2 \\
&\hspace{11cm}
+ \| \langle v \rangle^\alpha \widetilde{\nabla}_v\widetilde{\nabla}_x f \|_{L^2_{x,v}}^2 \Big) \\
&\qquad
+\varepsilon^{3} \beta_3 \left(\frac{t}{\varepsilon^2}\right)^5 \la \langle v \rangle^\alpha \widetilde{\nabla}_v\widetilde{\nabla}_x f , \langle v \rangle^\alpha \widetilde{\nabla}_x\widetilde{\nabla}_x f \ra_{L^2_{x,v}}\\
&\qquad
+\varepsilon^4 \beta_4 \left(\frac{t}{\varepsilon^2}\right)^6 \Big( 
K_1\| \langle v \rangle^\alpha \langle v \rangle^{\gamma} \nabla_x {\nabla}_xf \|_{L^2_{x,v}}^2 
+ K_2\| \langle v \rangle^\alpha \langle v \rangle^{\frac{\gamma}{2}} \nabla_x \widetilde{\nabla}_xf \|_{L^2_{x,v}}^2 \\
&\hspace{11cm}
+ \| \langle v \rangle^\alpha \widetilde{\nabla}_x \widetilde{\nabla}_x f \|_{L^2_{x,v}}^2 
\Big)
\end{align*}
where $\beta_1, \beta_2, \beta_3, \beta_4, K_1,K_2 >0$ are positive constants to be chosen later such that $0< \beta_4 \ll \beta_3 \ll \beta_2 \ll \beta_1 \ll 1$, $\beta_4\leq \sqrt{\beta_3 \beta_2}$ and $K_1 \gg K_2 \gg 1$.

\medskip\noindent
\textit{Step 1.}
From the proof of Lemma~\ref{lem:regSBeps} (Step 6), we already have that for any $t \in (0,\eps^2]$, there holds
\begin{equation}\label{dtEE}
\begin{aligned}
\frac{\mathrm{d}}{\mathrm{d}t}\mathcal E_{\varepsilon}(t)
&\le - \frac{\kappa}{2\eps^2}\| \langle v \rangle^\alpha f \|_{L^2_{x}(H^1_{v,*})}^2  
- \frac{\eta \kappa t}{2 \eps^4} \| \langle v \rangle^\alpha f \|_{L^2_{x}(H^2_{v,*})}^2 \\
&\quad 
- \frac{\eta^{3/2} t^2 }{4\eps^4} \| \langle v \rangle^\alpha \widetilde \nabla_x f \|_{L^2_{x,v}}^2 
- \frac{\eta^{5/3} \kappa}{2} \frac{t^3}{\eps^6} \| \langle v \rangle^\alpha \widetilde \nabla_x f \|_{L^2_{x}(H^1_{v,*})}^2 .
\end{aligned}
\end{equation}
for some constant $\kappa>0$ and where $\eta \in (0,1)$ is small enough.

\medskip\noindent
\textit{Step 2.}
From Lemma~\ref{lem:dissipativeBeps}, we already have
\begin{equation}\label{dtfY1-0}
\frac{\d}{\dt} \|\langle v \rangle^\alpha \langle v \rangle^{\gamma+2} f  \|_{L^2_{x,v}}^2 
\le - \frac{\kappa}{\eps^2} \|\langle v \rangle^\alpha \langle v \rangle^{\gamma+2} f  \|_{L^2_{x} (H^1_{v,*})}^2
\end{equation}
for some $\kappa >0$. Moreover, from the proof of Lemma~\ref{lem:regSBeps} (Step 3), we already know that 
\begin{equation}\label{dtfY1-1}
\begin{aligned}
&\frac{\d}{\dt} \| \langle v \rangle^{\alpha} \langle v \rangle^{\frac{\gamma}{2}+1} \widetilde \nabla_v f \|_{L^2_{x,v}}^2 \\
&\quad \le 
- \frac{\kappa}{2\eps^2} \| \langle v \rangle^{\alpha} \langle v \rangle^{\frac{\gamma}{2}+1} \widetilde \nabla_v f \|_{L^2_{x}(H^1_{v,*})}^2 
+ \frac{C}{\eps^2} \| \langle v \rangle^{\alpha} \langle v \rangle^{\frac{3\gamma}{2}+2} f \|_{L^2_{x,v}}^2 \\
&\quad \quad
+ \frac{C}{\eps^2} \| \langle v \rangle^{\alpha} \langle v \rangle^{\frac{3\gamma}{2}+2} \nabla_v f \|_{L^2_{x,v}}^2
+ \frac{C}{\eps} \| \langle v \rangle^{\alpha} \langle v \rangle^{\frac{\gamma}{2}+1} \widetilde \nabla_v f \|_{L^2_{x,v}} \| \langle v \rangle^{\alpha} \langle v \rangle^{\frac{\gamma}{2}+1} \widetilde \nabla_x f \|_{L^2_{x,v}} 
\end{aligned}
\end{equation}
for some constants $\kappa , C >0$.
We now compute 
% $$
% \begin{aligned}
% &\frac{\d}{\dt} \| \langle v \rangle^{\alpha} \widetilde \nabla_v \widetilde \nabla_v f \|_{L^2_{x,v}}^2 \\
% &\quad 
% = \la \langle v \rangle^{\alpha} \widetilde \nabla_{v_i} \widetilde \nabla_{v_j} f ,  \langle v \rangle^{\alpha} \widetilde \nabla_{v_i} \widetilde \nabla_{v_j} \BB_\eps f\ra_{L^2_{x,v}} \\
% &\quad 
% = \la \langle v \rangle^{\alpha} \widetilde \nabla_{v_i} \widetilde \nabla_{v_j} f ,  \langle v \rangle^{\alpha}  \BB_\eps (\widetilde \nabla_{v_i} \widetilde \nabla_{v_j} f) \ra_{L^2_{x,v}}
% + \la \langle v \rangle^{\alpha} \widetilde \nabla_{v_i} \widetilde \nabla_{v_j} f ,  \langle v \rangle^{\alpha} [\widetilde \nabla_{v_i} \widetilde \nabla_{v_j}, \BB_\eps] f \ra_{L^2_{x,v}}
% \end{aligned}
% $$
% or
$$
\begin{aligned}
&\frac12\frac{\d}{\dt} \| \langle v \rangle^{\alpha} \widetilde \nabla_v \widetilde \nabla_v f \|_{L^2_{x,v}}^2 \\
&\quad 
= \la \langle v \rangle^{\alpha} \widetilde \nabla_{v_i} \widetilde \nabla_{v_j} f ,  \langle v \rangle^{\alpha} \widetilde \nabla_{v_i} \widetilde \nabla_{v_j} \BB_\eps f\ra_{L^2_{x,v}} \\
&\quad 
= \la \langle v \rangle^{\alpha} \widetilde \nabla_{v_i} \widetilde \nabla_{v_j} f ,  \langle v \rangle^{\alpha}  \widetilde \nabla_{v_i} \BB_\eps (\widetilde \nabla_{v_j} f) \ra_{L^2_{x,v}}
+ \la \langle v \rangle^{\alpha} \widetilde \nabla_{v_i} \widetilde \nabla_{v_j} f ,  \langle v \rangle^{\alpha} \widetilde \nabla_{v_i} [\widetilde \nabla_{v_j}, \BB_\eps] f \ra_{L^2_{x,v}} \\
&\quad 
= \la \langle v \rangle^{\alpha} \widetilde \nabla_{v_i} \widetilde \nabla_{v_j} f ,  \langle v \rangle^{\alpha}  \widetilde \nabla_{v_i} \BB_\eps (\widetilde \nabla_{v_j} f) \ra_{L^2_{x,v}}
+ \la \langle v \rangle^{\alpha} \widetilde \nabla_{v_i} \widetilde \nabla_{v_j} f ,  \langle v \rangle^{\alpha}  [\widetilde \nabla_{v_j}, \BB_\eps] \widetilde \nabla_{v_i} f \ra_{L^2_{x,v}} \\
&\quad\quad 
+ \la \langle v \rangle^{\alpha} \widetilde \nabla_{v_i} \widetilde \nabla_{v_j} f ,  \langle v \rangle^{\alpha} [\widetilde \nabla_{v_i} , [\widetilde \nabla_{v_j}, \BB_\eps]] f \ra_{L^2_{x,v}} \\
&\quad
=: I_1 + I_2 + I_3.
\end{aligned}
$$
From the proof of Lemma~\ref{lem:regSBeps} (Step 3), we already know that
$$
\begin{aligned}
I_1 + I_2
&\le -\frac{\kappa}{2\eps^2} \| \langle v \rangle^{\alpha} \widetilde \nabla_v \widetilde \nabla_v f \|_{L^2_{x}(H^1_{v,*})}^2 
+ \frac{C}{\eps^2} \| \langle v \rangle^{\alpha} \langle v \rangle^{\gamma+1} \widetilde \nabla_v f \|_{L^2_{x,v}}^2 \\
&\quad
+ \frac{C}{\eps^2} \| \langle v \rangle^{\alpha} \langle v \rangle^{\gamma+1} \nabla_v \widetilde \nabla_v f \|_{L^2_{x,v}}^2
+ \frac{C}{\eps} \| \langle v \rangle^{\alpha} \widetilde \nabla_v \widetilde \nabla_v f \|_{L^2_{x,v}} \| \langle v \rangle^{\alpha}  \widetilde \nabla_x \widetilde \nabla_v f \|_{L^2_{x,v}}.
\end{aligned} 
$$
For the term $I_3$, we use Lemma~\ref{lem:commutator} to get
$$
\begin{aligned}{}
\left[\widetilde \nabla_{v_i} , [\widetilde \nabla_{v_j}, \BB_\eps] \right] f 
&= - \frac{1}{\eps^2} \left[\widetilde \nabla_{v_i} , \widetilde \nabla_{v_k}^* [\widetilde \nabla_{v_j} , \widetilde \nabla_{v_k}] \right] f
- \frac{1}{\eps^2} \left[\widetilde \nabla_{v_i} , \widetilde \nabla_{v_k} [\widetilde \nabla_{v_j} , \widetilde \nabla_{v_k}^*] \right] f \\
&\quad
-\frac{1}{\eps^2} \left[ \widetilde \nabla_{v_i} , [ [\widetilde \nabla_{v_j} , \widetilde \nabla_{v_k}^*] , \widetilde \nabla_{v_k} ] \right]f
-\frac{1}{\eps^2} (\widetilde \nabla_{v_i} \widetilde \nabla_{v_j} m^2) f \\
&\quad 
-\frac{1}{\eps} [\widetilde \nabla_{v_i} , \widetilde \nabla_{x_j}]f .
\end{aligned}
$$
Expanding the first two terms, we observe that 
$$
\begin{aligned}
&\left[\widetilde \nabla_{v_i} , \widetilde \nabla_{v_k}^* [\widetilde \nabla_{v_j} , \widetilde \nabla_{v_k}] \right] 
+ \left[\widetilde \nabla_{v_i} , \widetilde \nabla_{v_k} [\widetilde \nabla_{v_j} , \widetilde \nabla_{v_k}^*] \right]  \\
&\quad
= \widetilde \nabla_{v_i}  \widetilde \nabla_{v_k}^* [\widetilde \nabla_{v_j} , \widetilde \nabla_{v_k}]  
- \widetilde \nabla_{v_k}^* [\widetilde \nabla_{v_j} , \widetilde \nabla_{v_k}]  \widetilde \nabla_{v_i}  
+\widetilde \nabla_{v_i}  \widetilde \nabla_{v_k} [\widetilde \nabla_{v_j} , \widetilde \nabla_{v_k}^*]  
-  \widetilde \nabla_{v_k} [\widetilde \nabla_{v_j} , \widetilde \nabla_{v_k}^*] \widetilde \nabla_{v_i}  \\
&\quad 
=  \widetilde \nabla_{v_k}^* \left[ \widetilde \nabla_{v_i}, [ \widetilde \nabla_{v_j} , \widetilde \nabla_{v_k}]   \right] 
+ \widetilde \nabla_{v_k} \left[ \widetilde \nabla_{v_i}, [ \widetilde \nabla_{v_j} , \widetilde \nabla_{v_k}^*]   \right] 
+ [\widetilde \nabla_{v_i} , \widetilde \nabla_{v_k}^*] [\widetilde \nabla_{v_j} , \widetilde \nabla_{v_k}] \\
&\hspace{11cm}
+[\widetilde \nabla_{v_i} , \widetilde \nabla_{v_k}] [\widetilde \nabla_{v_j} , \widetilde \nabla_{v_k}^*],
\end{aligned}
$$
whence
$$
\begin{aligned}
I_3
&= -\frac{1}{\eps^2} \la \widetilde \nabla_{v_k} \left(\langle v \rangle^{2\alpha} \widetilde \nabla_{v_i} \widetilde \nabla_{v_j} f\right)  ,  \left[ \widetilde \nabla_{v_i}, [ \widetilde \nabla_{v_j} , \widetilde \nabla_{v_k}]   \right] f \ra_{L^2_{x,v}} \\
&\quad
-\frac{1}{\eps^2} \la \widetilde \nabla_{v_k}^* \left(\langle v \rangle^{2\alpha} \widetilde \nabla_{v_i} \widetilde \nabla_{v_j} f\right)  ,  \left[ \widetilde \nabla_{v_i}, [ \widetilde \nabla_{v_j} , \widetilde \nabla_{v_k}^*]   \right] f \ra_{L^2_{x,v}}
\\
&\quad
-\frac{1}{\eps^2} \la \langle v \rangle^{\alpha} \widetilde \nabla_{v_i} \widetilde \nabla_{v_j} f  , \langle v \rangle^{\alpha}  [\widetilde \nabla_{v_i} , \widetilde \nabla_{v_k}^*] [\widetilde \nabla_{v_j} , \widetilde \nabla_{v_k}]f \ra_{L^2_{x,v}} \\
&\quad
-\frac{1}{\eps^2} \la \langle v \rangle^{\alpha} \widetilde \nabla_{v_i} \widetilde \nabla_{v_j} f  , \langle v \rangle^{\alpha}  [\widetilde \nabla_{v_i} , \widetilde \nabla_{v_k}] [\widetilde \nabla_{v_j} , \widetilde \nabla_{v_k}^*]f \ra_{L^2_{x,v}} \\
&\quad
-\frac{1}{\eps^2} \la \langle v \rangle^{\alpha} \widetilde \nabla_{v_i} \widetilde \nabla_{v_j} f  , \langle v \rangle^{\alpha}  \left[ \widetilde \nabla_{v_i} , [ [\widetilde \nabla_{v_j} , \widetilde \nabla_{v_k}^*] , \widetilde \nabla_{v_k} ] \right]f \ra_{L^2_{x,v}} 
\\
&\quad
-\frac{1}{\eps^2} \la  \langle v \rangle^{\alpha} \widetilde \nabla_{v_i} \widetilde \nabla_{v_j} f  , \langle v \rangle^{\alpha} (\widetilde \nabla_{v_i} \widetilde \nabla_{v_j} m^2) f \ra_{L^2_{x,v}} \\
&\quad
-\frac{1}{\eps} \la  \langle v \rangle^{\alpha} \widetilde \nabla_{v_i} \widetilde \nabla_{v_j} f , \langle v \rangle^{\alpha} [\widetilde \nabla_{v_i} , \widetilde \nabla_{x_j}]f \ra_{L^2_{x,v}} \\
&\quad
=: - \frac{1}{\eps^2} (I_{31}+I_{32} + I_{33} + I_{34} + I_{35} + I_{36}) - \frac{1}{\eps}\la  \langle v \rangle^{\alpha} \widetilde \nabla_{v_i} \widetilde \nabla_{v_j} f , \langle v \rangle^{\alpha} [\widetilde \nabla_{v_i} ,\widetilde \nabla_{x_j}] f \ra_{L^2_{x,v}}.
\end{aligned}
$$
Now remark that 
$$
\|  \langle v \rangle^{-\alpha} \widetilde \nabla_{v_k}^* ( \langle v \rangle^{2\alpha} \widetilde \nabla_{v_i} \widetilde \nabla_{v_j} f )  \|_{L^2_{x,v}}
+ \|  \langle v \rangle^{-\alpha} \widetilde \nabla_{v_k} ( \langle v \rangle^{2\alpha} \widetilde \nabla_{v_i} \widetilde \nabla_{v_j} f )  \|_{L^2_{x,v}} \lesssim \| \langle v \rangle^{\alpha} \widetilde \nabla_v \widetilde \nabla_v f \|_{L^2_{x}(H^1_{v,*})}
$$
and using Lemma~\ref{lem:commutator}, we obtain 
$$
I_{31}+I_{32} 
\lesssim \| \langle v \rangle^{\alpha} \widetilde \nabla_v \widetilde \nabla_v f \|_{L^2_{x}(H^1_{v,*})} \left( \| \langle v \rangle^{\alpha} \langle v \rangle^{\frac{3\gamma}{2}+1} \nabla_v f \|_{L^2_{x,v}} + \| \langle v \rangle^{\alpha} \langle v \rangle^{\frac{3\gamma}{2}}  f \|_{L^2_{x,v}} \right).
$$
Thanks to Lemma~\ref{lem:commutator}, we observe that 
$$
|[\widetilde \nabla_{v_i} , \widetilde \nabla_{v_k}^*] [\widetilde \nabla_{v_j} , \widetilde \nabla_{v_k}]f|
+ |[\widetilde \nabla_{v_i} , \widetilde \nabla_{v_k}] [\widetilde \nabla_{v_j} , \widetilde \nabla_{v_k}^*]f|
\lesssim \langle v \rangle^{2\gamma+2} |\nabla_v \nabla_v f| + \langle v \rangle^{2\gamma+1} | \nabla_v f| 
$$
and thus we obtain
$$
\begin{aligned}
I_{33}+I_{34} 
&\lesssim \| \langle v \rangle^{\alpha} \langle v \rangle^{\frac{\gamma}{2}+1}  \widetilde \nabla_v \widetilde \nabla_v f \|_{L^2_{x,v}} \left( \| \langle v \rangle^{\alpha} \langle v \rangle^{\frac{3\gamma}{2}+1} \nabla_v \nabla_v f \|_{L^2_{x,v}}   + \| \langle v \rangle^{\alpha} \langle v \rangle^{\frac{3\gamma}{2}}  \nabla_v f \|_{L^2_{x,v}}  \right)  \\
&\lesssim \| \langle v \rangle^{\alpha} \widetilde \nabla_v \widetilde \nabla_v f \|_{L^2_{x}(H^1_{v,*})} \left( \| \langle v \rangle^{\alpha} \langle v \rangle^{\frac{3\gamma}{2}+1} \nabla_v \nabla_v f \|_{L^2_{x,v}}   + \| \langle v \rangle^{\alpha} \langle v \rangle^{\frac{3\gamma}{2}}  \nabla_v f \|_{L^2_{x,v}}  \right) .
\end{aligned}
$$
For the term $I_{35}$, we write 
\begin{multline*}
I_{35}
= \la \widetilde \nabla_{v_i}^* (\langle v \rangle^{2\alpha} \widetilde \nabla_{v_i} \widetilde \nabla_{v_j} f)  ,   [ [\widetilde \nabla_{v_j} , \widetilde \nabla_{v_k}^*] , \widetilde \nabla_{v_k} ] f \ra_{L^2_{x,v}} \\
- \la \langle v \rangle^{\alpha} \widetilde \nabla_{v_i} \widetilde \nabla_{v_j} f  , \langle v \rangle^{\alpha}     [ [\widetilde \nabla_{v_j} , \widetilde \nabla_{v_k}^*] , \widetilde \nabla_{v_k} ] \widetilde \nabla_{v_i} f \ra_{L^2_{x,v}} 
\end{multline*}
and using Lemma~\ref{lem:commutator}, we thus get 
\begin{multline*}
I_{35}
\lesssim \| \langle v \rangle^{\alpha} \widetilde \nabla_v \widetilde \nabla_v f \|_{L^2_{x}(H^1_{v,*})} \Big( \| \langle v \rangle^{\alpha} \langle v \rangle^{\frac{3\gamma}{2}+1} \nabla_v f \|_{L^2_{x,v}}
+ \| \langle v \rangle^{\alpha} \langle v \rangle^{\frac{3\gamma}{2}} f \|_{L^2_{x,v}} \\
+ \| \langle v \rangle^{\alpha} \langle v \rangle^{\gamma} \nabla_v \widetilde \nabla_v f \|_{L^2_{x,v}} \Big).
\end{multline*}
Using that $|\widetilde \nabla_{v_i} \widetilde \nabla_{v_j} m^2| \lesssim \langle v \rangle^{2\gamma+2}$, we also obtain
$$
I_{36} \lesssim \| \langle v \rangle^{\alpha} \widetilde \nabla_v \widetilde \nabla_v f \|_{L^2_{x}(H^1_{v,*})} \| \langle v \rangle^{\alpha} \langle v \rangle^{\frac{3\gamma}{2}+1} f \|_{L^2_{x,v}}.
$$
Gathering previous estimates and using Young's inequality, we thus get
\begin{equation}\label{dtfY1-2}
\begin{aligned}
\frac{\d}{\dt} \| \langle v \rangle^{\alpha} \widetilde \nabla_v \widetilde \nabla_v f \|_{L^2_{x,v}}^2 
& 
\le -\frac{\kappa}{4\eps^2} \| \langle v \rangle^{\alpha} \widetilde \nabla_v \widetilde \nabla_v f \|_{L^2_{x}(H^1_{v,*})}^2 
+ \frac{C}{\eps^2} \| \langle v \rangle^{\alpha} \langle v \rangle^{\gamma+1} \widetilde \nabla_v f \|_{L^2_{x,v}}^2 \\
&\quad
+ \frac{C}{\eps^2} \| \langle v \rangle^{\alpha} \langle v \rangle^{\gamma+1} \nabla_v \widetilde \nabla_v f \|_{L^2_{x,v}}^2 
+ \frac{C}{\eps^2} \| \langle v \rangle^{\alpha} \langle v \rangle^{\frac{3\gamma}{2}+1} \nabla_v \nabla_v f \|_{L^2_{x,v}}^2\\
&\quad 
+ \frac{C}{\eps^2} \| \langle v \rangle^{\alpha} \langle v \rangle^{\frac{3\gamma}{2}+1} \nabla_v  f \|_{L^2_{x,v}}^2
+ \frac{C}{\eps^2} \| \langle v \rangle^{\alpha} \langle v \rangle^{\frac{3\gamma}{2}+2}  f \|_{L^2_{x,v}}^2 \\
&\quad
+ \frac{C}{\eps} \| \langle v \rangle^{\alpha} \widetilde \nabla_v \widetilde \nabla_v f \|_{L^2_{x,v}} \| \langle v \rangle^{\alpha}  \widetilde \nabla_x  f \|_{L^2_{x}(H^1_{v,*})}.
\end{aligned}
\end{equation}

Observing that 
\begin{multline*}
\| \langle v \rangle^{\alpha} \langle v \rangle^{\frac{3\gamma}{2}+2} f \|_{L^2_{x,v}} 
+ \| \langle v \rangle^{\alpha} \langle v \rangle^{\frac{3\gamma}{2}+2} \nabla_v f \|_{L^2_{x,v}} 
+ \| \langle v \rangle^{\alpha} \langle v \rangle^{\gamma+1} \widetilde \nabla_v f \|_{L^2_{x,v}} \\
\lesssim \| \langle v \rangle^{\alpha} \langle v \rangle^{\gamma +2} f \|_{L^2_{x}(H^1_{v,*})}
\end{multline*}
and
$$
\| \langle v \rangle^{\alpha} \langle v \rangle^{\gamma+1} \nabla_v \widetilde \nabla_v f \|_{L^2_{x,v}}
+ \| \langle v \rangle^{\alpha} \langle v \rangle^{\frac{3\gamma}{2}+1} \nabla_v \nabla_v f \|_{L^2_{x,v}}
\lesssim \| \langle v \rangle^{\alpha} \langle v \rangle^{\frac{\gamma}{2}+1} \widetilde \nabla_v f \|_{L^2_x(H^1_{v,*})},
$$
we then gather \eqref{dtfY1-0}--\eqref{dtfY1-1}--\eqref{dtfY1-2} choosing $K_1 \gg K_2 \gg 1$ large enough, which yields
\begin{equation}\label{dtfY1}
\begin{aligned}
&\frac{\d}{\dt} \Big(
K_1 \|\langle v \rangle^\alpha \langle v \rangle^{\gamma+2} f  \|_{L^2_{x,v}}^2 
+ K_2\|\langle v \rangle^\alpha  \langle v \rangle^{\frac{\gamma}{2}+1}\widetilde{\nabla}_v f  \|_{L^2_{x,v}}^2
+ \| \langle v \rangle^\alpha \widetilde{\nabla}_v \widetilde{\nabla}_v f \|_{L^2_{x,v}}^2  \Big) \\
&\quad 
\le - \frac{\kappa}{\eps^2} \left( \| \langle v \rangle^{\alpha} \langle v \rangle^{\gamma+2} f \|_{L^2_x(H^1_{v,*})}^2
+ \| \langle v \rangle^{\alpha} \langle v \rangle^{\frac{\gamma}{2}+1} \widetilde \nabla_v f \|_{L^2_x(H^1_{v,*})}^2
+ \| \langle v \rangle^{\alpha} \widetilde \nabla_v \widetilde \nabla_v   f \|_{L^2_x(H^1_{v,*})}^2   \right) \\
&\quad\quad 
+ \frac{C}{\eps} \| \langle v \rangle^{\alpha} f \|_{L^2_{x}(H^2_{v,*})} \| \langle v \rangle^{\alpha}  \widetilde \nabla_x  f \|_{L^2_{x}(H^1_{v,*})}.
\end{aligned}
\end{equation}

\medskip\noindent
\textit{Step 3.}
From the proof of Lemma~\ref{lem:regSBeps} (Step 5) we already have, with $K_1$ large enough with respect to $K_2$, 
\begin{equation}\label{dtDvDxf-1}
\begin{aligned}
&\frac{\d}{\dt} \left\{ K_1\| \langle v \rangle^{\alpha} \langle v \rangle^{\gamma+1} \nabla_x f \|_{L^2_{x,v}}^2 
+ K_2 \| \langle v \rangle^{\alpha} \langle v \rangle^{\frac{\gamma}{2}+1} \widetilde \nabla_x f \|_{L^2_{x,v}}^2  \right\} \\
&\quad 
\le - \frac{\kappa}{\eps^2} \| \langle v \rangle^{\alpha} \langle v \rangle^{\frac{\gamma}{2}+1} \widetilde \nabla_x f \|_{L^2_{x}(H^1_{v,*})}^2.
\end{aligned}
\end{equation}

We now compute
$$
\begin{aligned}
&\frac12\frac{\d}{\dt} \| \langle v \rangle^{\alpha} \widetilde \nabla_v \widetilde \nabla_x f \|_{L^2_{x,v}}^2 \\
&\quad 
= \la \langle v \rangle^{\alpha} \widetilde \nabla_{v_i} \widetilde \nabla_{x_j} f ,  \langle v \rangle^{\alpha} \widetilde \nabla_{v_i} \widetilde \nabla_{x_j} \BB_\eps f\ra_{L^2_{x,v}} \\
&\quad 
= \la \langle v \rangle^{\alpha} \widetilde \nabla_{v_i} \widetilde \nabla_{x_j} f ,  \langle v \rangle^{\alpha}  \widetilde \nabla_{v_i} \BB_\eps (\widetilde \nabla_{x_j} f) \ra_{L^2_{x,v}}
+ \la \langle v \rangle^{\alpha} \widetilde \nabla_{v_i} \widetilde \nabla_{x_j} f ,  \langle v \rangle^{\alpha} \widetilde \nabla_{v_i} [\widetilde \nabla_{x_j}, \BB_\eps] f \ra_{L^2_{x,v}} \\
&\quad 
= \la \langle v \rangle^{\alpha} \widetilde \nabla_{v_i} \widetilde \nabla_{x_j} f ,  \langle v \rangle^{\alpha}  \widetilde \nabla_{v_i} \BB_\eps (\widetilde \nabla_{x_j} f) \ra_{L^2_{x,v}}
+ \la \langle v \rangle^{\alpha} \widetilde \nabla_{v_i} \widetilde \nabla_{x_j} f ,  \langle v \rangle^{\alpha}  [\widetilde \nabla_{x_j}, \BB_\eps] \widetilde \nabla_{v_i} f \ra_{L^2_{x,v}} \\
&\quad\quad 
+ \la \langle v \rangle^{\alpha} \widetilde \nabla_{v_i} \widetilde \nabla_{x_j} f ,  \langle v \rangle^{\alpha} [\widetilde \nabla_{v_i} , [\widetilde \nabla_{x_j}, \BB_\eps]] f \ra_{L^2_{x,v}} \\
&\quad
=: J_1 + J_2 + J_3.
\end{aligned}
$$
From the proof of Lemma~\ref{lem:regSBeps} (Step 3), we already know that
$$
\begin{aligned}
J_1 + J_2
&\le -\frac{\kappa}{2\eps^2} \| \langle v \rangle^{\alpha} \widetilde \nabla_v \widetilde \nabla_x f \|_{L^2_{x}(H^1_{v,*})}^2 
+ \frac{C}{\eps^2} \| \langle v \rangle^{\alpha} \langle v \rangle^{\gamma+1} \widetilde \nabla_x f \|_{L^2_{x,v}}^2 \\
&\quad
+ \frac{C}{\eps^2} \| \langle v \rangle^{\alpha} \langle v \rangle^{\gamma+1} \nabla_v \widetilde \nabla_x f \|_{L^2_{x,v}}^2
+ \frac{C}{\eps} \| \langle v \rangle^{\alpha} \widetilde \nabla_v \widetilde \nabla_x f \|_{L^2_{x,v}} \| \langle v \rangle^{\alpha}  \widetilde \nabla_x \widetilde \nabla_x f \|_{L^2_{x,v}}.
\end{aligned} 
$$
The term $J_3$ can be estimated as the term $I_3$ in Step 2 above. Therefore, we obtain, gathering these estimates and using Young's inequality,
\begin{equation}\label{dtDvDxf-2}
\begin{aligned}
&\frac{\d}{\dt} \| \langle v \rangle^{\alpha} \widetilde \nabla_v \widetilde \nabla_x f \|_{L^2_{x,v}}^2 
\le -\frac{\kappa}{4\eps^2} \| \langle v \rangle^{\alpha} \widetilde \nabla_v \widetilde \nabla_x f \|_{L^2_{x}(H^1_{v,*})}^2 
+ \frac{C}{\eps^2} \| \langle v \rangle^{\alpha} \langle v \rangle^{\gamma+1} \widetilde \nabla_x f \|_{L^2_{x,v}}^2 \\
&\qquad
+ \frac{C}{\eps^2} \| \langle v \rangle^{\alpha} \langle v \rangle^{\gamma+1} \nabla_v \widetilde \nabla_x f \|_{L^2_{x,v}}^2 
+ \frac{C}{\eps^2} \| \langle v \rangle^{\alpha} \langle v \rangle^{\frac{3\gamma}{2}+1} \nabla_v \nabla_x f \|_{L^2_{x,v}}^2\\
&\qquad 
+ \frac{C}{\eps^2} \| \langle v \rangle^{\alpha} \langle v \rangle^{\frac{3\gamma}{2}+1} \nabla_x  f \|_{L^2_{x,v}}^2 
+ \frac{C}{\eps} \| \langle v \rangle^{\alpha} \widetilde \nabla_v \widetilde \nabla_x f \|_{L^2_{x,v}} \| \langle v \rangle^{\alpha}  \widetilde \nabla_x \widetilde \nabla_x  f \|_{L^2_{x,v}}.
\end{aligned}
\end{equation}

Observing that 
$$
\begin{aligned}
&\| \langle v \rangle^{\alpha} \langle v \rangle^{\gamma+1}  \widetilde \nabla_x f \|_{L^2_{x,v}}
+ \| \langle v \rangle^{\alpha} \langle v \rangle^{\gamma+1} \nabla_v \widetilde \nabla_x f \|_{L^2_{x,v}} \\
&+ \| \langle v \rangle^{\alpha} \langle v \rangle^{\frac{3\gamma}{2}+1} \nabla_v \nabla_x f \|_{L^2_{x,v}}
+ \| \langle v \rangle^{\alpha} \langle v \rangle^{\frac{3\gamma}{2}+1}  \nabla_x f \|_{L^2_{x,v}}
\lesssim \| \langle v \rangle^{\alpha} \langle v \rangle^{\frac{\gamma}{2}+1} \widetilde \nabla_x f \|_{L^2_x(H^1_{v,*})}
\end{aligned}
$$
we then gather \eqref{dtDvDxf-1}--\eqref{dtDvDxf-2} and choose $K_1 \gg K_2 \gg 1$ large enough, which yields
\begin{equation}\label{dtDvDxf}
\begin{aligned}
&\frac{\d}{\dt} \Big(
K_1\| \langle v \rangle^{\alpha} \langle v \rangle^{\gamma+1} \nabla_x f \|_{L^2_{x,v}}^2 
+ K_2 \| \langle v \rangle^{\alpha} \langle v \rangle^{\frac{\gamma}{2}+1} \widetilde \nabla_x f \|_{L^2_{x,v}}^2
+ \| \langle v \rangle^{\alpha} \widetilde \nabla_v \widetilde \nabla_x f \|_{L^2_{x,v}}^2   \Big) \\
&\quad 
\le - \frac{\kappa}{\eps^2} \left( \| \langle v \rangle^{\alpha} \langle v \rangle^{\frac{\gamma}{2}+1} \widetilde \nabla_x f \|_{L^2_{x}(H^1_{v,*})}^2  + \| \langle v \rangle^{\alpha} \widetilde \nabla_v \widetilde \nabla_x f \|_{L^2_{x}(H^1_{v,*})}^2 \right) \\
&\quad\quad 
+ \frac{C}{\eps} \| \langle v \rangle^{\alpha} \widetilde \nabla_x f \|_{L^2_{x}(H^1_{v,*})} \| \langle v \rangle^{\alpha}  \widetilde \nabla_x \widetilde \nabla_x  f \|_{L^2_{x,v}}.
\end{aligned}
\end{equation}

\medskip\noindent
\textit{Step 4.} Arguing as in the proof of Lemma~\ref{lem:regSBeps} (Step 4), we obtain
\begin{equation}\label{dtDvDxDxDxf}
\begin{aligned}
&\frac{\d}{\dt} \la \langle v \rangle^\alpha \widetilde{\nabla}_v\widetilde{\nabla}_x f , \langle v \rangle^\alpha \widetilde{\nabla}_x\widetilde{\nabla}_x f\ra_{L^2_{x,v}} \\
&\quad
\le - \frac{1}{\eps} \| \langle v \rangle^{\alpha} \widetilde \nabla_x \widetilde{\nabla}_x f \|_{L^2_{x,v}}^2
+ \frac{C}{\eps^2} \| \langle v \rangle^{\alpha} \widetilde \nabla_x f \|_{L^2_x(H^2_{v,*})} \| \langle v \rangle^{\alpha} \widetilde \nabla_x \widetilde \nabla_x f \|_{L^2_x(H^1_{v,*})}.
\end{aligned}
\end{equation}

\medskip\noindent
\textit{Step 5.} 
Since $\nabla_x$ commutes with $\widetilde \nabla_x$ and $\BB_\eps$, we already know from the proof of Lemma~\ref{lem:regSBeps} (Step 5) that, with $K_1$ large enough with respect to $K_2$,
\begin{equation}\label{dtDxDxf-1}
\begin{aligned}
&\frac{\d}{\dt} \left\{ K_1 \| \langle v \rangle^{\alpha} \langle v \rangle^{\gamma} \nabla_x \nabla_x f \|_{L^2_{x,v}}^2
+ K_2 \| \langle v \rangle^{\alpha} \langle v \rangle^{\frac{\gamma}{2}} \nabla_x \widetilde \nabla_x f \|_{L^2_{x,v}}^2  \right\} \\
&\quad 
\le -\frac{\kappa}{\eps^2} \| \langle v \rangle^{\alpha} \langle v \rangle^{\frac{\gamma}{2}} \nabla_x \widetilde \nabla_x f \|_{L^2_{x}(H^1_{v,*})}^2
\end{aligned}
\end{equation}
for some constant $\kappa >0$.
We now compute
$$
\begin{aligned}
&\frac12\frac{\d}{\dt} \| \langle v \rangle^{\alpha} \widetilde \nabla_x \widetilde \nabla_x f \|_{L^2_{x,v}}^2 \\
&\quad 
= \la \langle v \rangle^{\alpha} \widetilde \nabla_{x_i} \widetilde \nabla_{x_j} f ,  \langle v \rangle^{\alpha} \widetilde \nabla_{x_i} \widetilde \nabla_{x_j} \BB_\eps f\ra_{L^2_{x,v}} \\
&\quad 
= \la \langle v \rangle^{\alpha} \widetilde \nabla_{x_i} \widetilde \nabla_{x_j} f ,  \langle v \rangle^{\alpha}  \widetilde \nabla_{x_i} \BB_\eps (\widetilde \nabla_{x_j} f) \ra_{L^2_{x,v}}
+ \la \langle v \rangle^{\alpha} \widetilde \nabla_{x_i} \widetilde \nabla_{x_j} f ,  \langle v \rangle^{\alpha} \widetilde \nabla_{x_i} [\widetilde \nabla_{x_j}, \BB_\eps] f \ra_{L^2_{x,v}} \\
&\quad 
= \la \langle v \rangle^{\alpha} \widetilde \nabla_{x_i} \widetilde \nabla_{x_j} f ,  \langle v \rangle^{\alpha}  \widetilde \nabla_{x_i} \BB_\eps (\widetilde \nabla_{x_j} f) \ra_{L^2_{x,v}}
+ \la \langle v \rangle^{\alpha} \widetilde \nabla_{x_i} \widetilde \nabla_{x_j} f ,  \langle v \rangle^{\alpha}  [\widetilde \nabla_{x_j}, \BB_\eps] \widetilde \nabla_{x_i} f \ra_{L^2_{x,v}} \\
&\quad\quad 
+ \la \langle v \rangle^{\alpha} \widetilde \nabla_{x_i} \widetilde \nabla_{x_j} f ,  \langle v \rangle^{\alpha} [\widetilde \nabla_{x_i} , [\widetilde \nabla_{x_j}, \BB_\eps]] f \ra_{L^2_{x,v}} \\
&\quad
=: R_1 + R_2 + R_3.
\end{aligned}
$$
From the proof of Lemma~\ref{lem:regSBeps} (Step 5), we already know that
$$
\begin{aligned}
R_1 + R_2
&\le -\frac{\kappa}{2\eps^2} \| \langle v \rangle^{\alpha} \widetilde \nabla_x \widetilde \nabla_x f \|_{L^2_{x}(H^1_{v,*})}^2 
+ \frac{C}{\eps^2} \| \langle v \rangle^{\alpha} \langle v \rangle^{\gamma+1} \nabla_x \widetilde \nabla_x f \|_{L^2_{x,v}}^2 .
\end{aligned} 
$$
The term $R_3$ can be estimated in a similar way as the term $I_3$ in Step 2 above. Therefore we obtain, using Young's inequality,
\begin{equation}\label{dtDxDxf-2}
\begin{aligned}
\frac{\d}{\dt} \| \langle v \rangle^{\alpha} \widetilde \nabla_x \widetilde \nabla_x f \|_{L^2_{x,v}}^2 
& 
\le -\frac{\kappa}{2\eps^2} \| \langle v \rangle^{\alpha} \widetilde \nabla_x \widetilde \nabla_x f \|_{L^2_{x}(H^1_{v,*})}^2 
+ \frac{C}{\eps^2} \| \langle v \rangle^{\alpha} \langle v \rangle^{\gamma+1} \nabla_x \widetilde \nabla_x f \|_{L^2_{x,v}}^2 \\
&\quad
+ \frac{C}{\eps^2} \| \langle v \rangle^{\alpha} \langle v \rangle^{\frac{3\gamma}{2}+1} \nabla_x  \nabla_x f \|_{L^2_{x,v}}^2.
\end{aligned}
\end{equation}
Observing that 
$$
\begin{aligned}
&\| \langle v \rangle^{\alpha} \langle v \rangle^{\gamma+1}  \nabla_x \widetilde \nabla_x f \|_{L^2_{x,v}}
+ \| \langle v \rangle^{\alpha} \langle v \rangle^{\frac{3\gamma}{2}+1} \nabla_x \nabla_x f \|_{L^2_{x,v}}
\lesssim \| \langle v \rangle^{\alpha} \langle v \rangle^{\frac{\gamma}{2}} \nabla_x \widetilde \nabla_x f \|_{L^2_x(H^1_{v,*})}
\end{aligned}
$$
we then gather \eqref{dtDxDxf-1}--\eqref{dtDxDxf-2} and choose $K_1 \gg K_2 \gg 1$ large enough, which yields
\begin{equation}\label{dtDxDxf}
\begin{aligned}
&\frac{\d}{\dt} \Big(
K_1\| \langle v \rangle^{\alpha} \langle v \rangle^{\gamma} \nabla_x \nabla_x f \|_{L^2_{x,v}}^2 
+ K_2 \| \langle v \rangle^{\alpha} \langle v \rangle^{\frac{\gamma}{2}} \nabla_x \widetilde \nabla_x f \|_{L^2_{x,v}}^2
+ \| \langle v \rangle^{\alpha} \widetilde \nabla_x \widetilde \nabla_x f \|_{L^2_{x,v}}^2   \Big) \\
&\quad 
\le - \frac{\kappa}{\eps^2}  \| \langle v \rangle^{\alpha} \widetilde \nabla_x \widetilde \nabla_x f \|_{L^2_{x}(H^1_{v,*})}^2  .
\end{aligned}
\end{equation}

\medskip\noindent
\textit{Step 6.} We can then conclude the proof as in Step 6 of the proof of Lemma~\ref{lem:regSBeps}.
\color{black}
\end{proof}

%%%%%%%%%%%%%%%%%%%%%%%%%%%
\bigskip
\bibliographystyle{acm}
%\bibliography{bibLNS}

\end{document}